\newtheorem{thm}{Theorem}[section]
\newtheorem{prop}[thm]{Proposition}
\newtheorem{lem}[thm]{Lemma}
\newtheorem{cor}[thm]{Corollary}
\newtheorem{setting}[thm]{Setting}
\newtheorem{mthm}{Theorem}
\theoremstyle{definition}
\newtheorem{defn}[thm]{Definition}
\newtheorem{exa}[thm]{Example}
\theoremstyle{remark}
\newtheorem{rem}[thm]{Remark}
\numberwithin{equation}{section}
\newcommand{\QQ}{\mathbb{Q}}
\newcommand{\RR}{\mathbb{R}}
\newcommand{\ZZ}{\mathbb{Z}}
\newcommand{\NN}{\mathbb{N}}
\DeclareMathOperator{\cl}{\mathrm{cl}}
\DeclareMathOperator{\scl}{\mathrm{scl}}
\DeclareMathOperator{\Ker}{\mathrm{Ker}}
\DeclareMathOperator{\Aut}{\mathrm{Aut}}
\DeclareMathOperator{\Inn}{\mathrm{Inn}}
\DeclareMathOperator{\Out}{\mathrm{Out}}
\DeclareMathOperator{\asdim}{asdim}
\DeclareMathOperator{\asydim}{asdim}
\newcommand{\tor}{\mathrm{tor}}
\newcommand{\Gg}{G}
\newcommand{\Ng}{N}
\newcommand{\Lg}{L}
\newcommand{\Ff}{F}
\newcommand{\Mf}{M}
\newcommand{\Kf}{K}
\newcommand{\xg}{x}
\newcommand{\yg}{y}
\newcommand{\zg}{z}
\newcommand{\vf}{v}
\newcommand{\wf}{w}
\newcommand{\gG}{g}
\newcommand{\ff}{f}
\newcommand{\bff}{\underline{f}}
\newcommand{\bffi}{\underline{f}_i}
\newcommand{\bffone}{\underline{f}_1}
\newcommand{\bffell}{\underline{f}_{\ell}}
\newcommand{\QG}{\Gamma}
\newcommand{\nug}{\nu}
\newcommand{\muf}{\mu}
\newcommand{\psg}{\psi}
\newcommand{\phf}{\phi}
\newcommand{\kg}{k}
\newcommand{\hf}{h}
\newcommand{\pp}{p}
\newcommand{\ppi}{\pi}
\newcommand{\Rel}{R}
\newcommand{\WW}{\mathrm{W}}
\newcommand{\Wcal}{\mathcal{W}}
\newcommand{\PPh}{\Phi}
\newcommand{\PPs}{\Psi}
\newcommand{\Psig}{\sigma}
\newcommand{\Ptau}{\tau}
\newcommand{\CEt}{\tilde{\Theta}}
\newcommand{\CE}{\Theta}
\newcommand{\Rdim}{\dim_{\mathbb{R}}}
\newcommand{\DD}{\mathscr{D}}
\newcommand{\DDN}{\mathscr{D}_{\Ng}}
\newcommand{\DDL}{\mathscr{D}_{\Lg}}
\newcommand{\ODDN}{\hat{\mathscr{D}}_{\Ng}}
\newcommand{\ODDL}{\hat{\mathscr{D}}_{\Lg}}
\newcommand{\Ecal}{\mathcal{E}}
\newcommand{\om}{\omega}
\newcommand{\Coarse}{\mathbf{Coarse}}
\newcommand{\genus}{g}
\newcommand{\ttil}{\tilde{t}}
\newcommand{\util}{a'}
\newcommand{\bA}{\mathbf{A}}
\newcommand{\bB}{\mathbf{B}}
\newcommand{\bfalpha}{{\boldsymbol \alpha}}
\newcommand{\vm}{\vec{m}}
\newcommand{\vn}{\vec{n}}
\newcommand{\vu}{\vec{u}}
\newcommand{\va}{\vec{a}}
\newcommand{\vb}{\vec{b}}
\renewcommand{\Im}{\mathrm{Im}}
\newcommand{\Homeo}{\mathrm{Homeo}}
\newcommand{\IAA}{\mathrm{IA}}
\newcommand{\QQQ}{\mathrm{Q}}
\newcommand{\HHH}{\mathrm{H}}
\newcommand{\GL}{\mathrm{GL}}
\newcommand{\SL}{\mathrm{SL}}
\newcommand{\Sp}{\mathrm{Sp}}
\newcommand{\Mod}{\mathrm{Mod}}
\newcommand{\rot}{\mathrm{rot}}
\newcommand{\diam}{\mathrm{diam}}
\newcommand{\Hom}{\mathrm{Hom}_{\mathbb{R}}}
\newsavebox{\@brx}
\newcommand{\llangle}[1][]{\savebox{\@brx}{\(\m@th{#1\langle}\)}%
  \mathopen{\copy\@brx\kern-0.5\wd\@brx\usebox{\@brx}}}
\newcommand{\rrangle}[1][]{\savebox{\@brx}{\(\m@th{#1\rangle}\)}%
  \mathclose{\copy\@brx\kern-0.5\wd\@brx\usebox{\@brx}}}
\keywords{stable commutator length, coarse groups, quasimorphisms, coarse kernels, coarse geometry}
\subjclass[2020]{primary 20F69; secondary 51F30, 20F65, 20F12}
\begin{document}

\title{Coarse group theoretic study on stable mixed commutator length}

\author[M. Kawasaki]{Morimichi Kawasaki}
\address[Morimichi Kawasaki]{Department of Mathematics, Faculty of Science, Hokkaido University, North 10, West 8, Kita-ku, Sapporo, Hokkaido 060-0810, Japan}
\email{kawasaki@math.sci.hokudai.ac.jp}

\author[M.~Kimura]{Mitsuaki Kimura}
\address[Mitsuaki Kimura]{Department of Mathematics, Osaka Dental University, 8-1 Kuzuha Hanazono-cho, Hirakata, Osaka 573-1121, Japan}
\email{kimura-m@cc.osaka-dent.ac.jp}

\author[S. Maruyama]{Shuhei Maruyama}
\address[Shuhei Maruyama]{School of Mathematics and Physics, College of Science and Engineering, Kanazawa University, Kakuma-machi, Kanazawa, Ishikawa, 920-1192, Japan}
\email{smaruyama@se.kanazawa-u.ac.jp}

\author[T. Matsushita]{Takahiro Matsushita}
\address[Takahiro Matsushita]{Department of Mathematical Sciences, Faculty of Science, Shinshu University, Matsumoto, Nagano, 390-8621, Japan}
\email{matsushita@shinshu-u.ac.jp}

\author[M. Mimura]{Masato Mimura}
\address[Masato Mimura]{Mathematical Institute, Tohoku University, 6-3, Aramaki Aza-Aoba, Aoba-ku, Sendai 980-8578, Japan}
\email{m.masato.mimura.m@tohoku.ac.jp}

\begin{abstract}
Let  $G$ be a group and $N$ a normal subgroup of $G$. We study the large scale behavior, not the exact values themselves, of the stable mixed commutator length $\mathrm{scl}_{G,N}$ on the mixed commutator subgroup $[G,N]$; when $N=G$, $\mathrm{scl}_{G,N}$ equals the stable commutator length $\mathrm{scl}_G$ on the commutator subgroup $[G,G]$. For this purpose, we regard $\mathrm{scl}_{G,N}$ not only as a function from $[G,N]$ to $\mathbb{R}_{\geq 0}$, but as a bi-invariant metric function $d^+_{\mathrm{scl}_{G,N}}\colon [G,N] \times [G,N]\to \mathbb{R}_{\geq 0}$ defined as  $d^+_{\mathrm{scl}_{G,N}}(x,y)=\mathrm{scl}_{G,N}(x^{-1}y)+1/2$ if $x\ne y$ and $d^+_{\mathrm{scl}_{G,N}}(x,y)=0$ if $x=y$ for $x,y\in [G,N]$. Our main focus is coarse group theoretic structures of $([G,N],d^+_{\mathrm{scl}_{G,N}})$. Our preliminary result (the absolute version) connects,  via the Bavard duality, $([G,N],d^+_{\mathrm{scl}_{G,N}})$ and the quotient vector space of the space of $G$-invariant quasimorphisms on $N$ over one of such homomorphisms. In particular, we prove that the dimension of  this vector space equals the asymptotic dimension of $([G,N],d^+_{\mathrm{scl}_{G,N}})$.

Our main result is the comparative version: we connect the coarse kernel, formulated by Leitner and Vigolo, of the coarse homomorphism $\iota_{G,N}$ from $([G,N],d^+_{\mathrm{scl}_{G,N}})$ to $([G,N],d^+_{\mathrm{scl}_{G}})$ sending $y\in [G,N]$ to $y$, and  a certain quotient vector space $\mathrm{W}(G,N)$ of the space of invariant quasimorphisms. Assume that $N=[G,G]$ and that $\mathrm{W}(G,N)$ is finite dimensional with dimension $\ell$. Then we prove that the coarse kernel of $\iota_{G,N}$ is isomorphic to $\mathbb{Z}^{\ell}$ as a coarse group. In contrast to  the absolute version, the space $\mathrm{W}(G,N)$ is finite dimensional in many cases, including all $(G,N)$  with finitely generated $G$ and nilpotent $G/N$.
As an application of our result, given a group homomorphism $\varphi\colon G\to  H$ between finitely generated groups, we define an $\mathbb{R}$-linear map `inside' the groups, which is dual to the naturally defined $\mathbb{R}$-linear map from $\mathrm{W}(H,[H,H])$ to $\mathrm{W}(G,[G,G])$ induced by $\varphi$.
\end{abstract}

\maketitle

\tableofcontents

\section{Introduction}

\subsection{Digests of our main results: absolute version and comparative version}\label{subsec=digest}

In the present paper, we study coarse geometry and coarse group structures of the mixed commutator subgroup $[\Gg,\Ng]$ equipped with the $\scl$-almost-metric $d_{\scl_{\Gg,\Ng}}$  (see the discussion at the beginning of Subsection~\ref{subsec=absolute} for the difference between the words `coarse geometry' and `coarse group structure' in this paper). More precisely, we study this in the \emph{comparative} aspect. Before presenting our results in the \emph{absolute} version (Proposition~\ref{prop=presclkika}) and the comparative version (Theorem~\ref{thm=main1}), we start from basic settings and motivations. Let $\Gg$ be a group and $\Ng$ a normal subgroup of $\Gg$. Let $\mathcal{S}_{\Gg,\Ng}=\{[\gG,\xg]=\gG\xg\gG^{-1}\xg^{-1}\,|\,\gG\in \Gg,\xg\in \Ng\}$ be the set of simple $(\Gg,\Ng)$-commutators. The \emph{mixed commutator subgroup} $[\Gg,\Ng]$ is the group generated by $\mathcal{S}_{\Gg,\Ng}$; the \emph{mixed commutator length} $\cl_{\Gg,\Ng}\colon [\Gg,\Ng]\to \ZZ_{\geq 0}$ is the word length with respect to $\mathcal{S}_{\Gg,\Ng}$. The \emph{stable mixed commutator length} $\scl_{\Gg,\Ng}\colon [\Gg,\Ng]\to \RR_{\geq 0}$ is the stabilization of $\cl_{\Gg,\Ng}$, more precisely, for every $\yg\in [\Gg,\Ng]$, we define
\[
\scl_{\Gg,\Ng}(\yg)=\lim_{n\to \infty}\frac{\cl_{\Gg,\Ng}(\yg^n)}{n}.
\]
For the case of $\Ng=\Gg$, $\cl_{\Gg,\Gg}$ and $\scl_{\Gg,\Gg}$ coincide with the (ordinary) commutator length $\cl_{\Gg}$ and stable commutator length $\scl_{\Gg}$, respectively. We refer the reader to \cite{Calegari} for backgrounds of $\scl_{\Gg}$, and to \cite{KKMMMsurvey} for a survey on $\scl_{\Gg,\Ng}$.

The main theme of the present paper is \emph{large scale behavior} of the stable mixed commutator length $\scl_{\Gg,\Ng}$. To study this, we regard $\scl_{\Gg,\Ng}$ not only as a function $\scl_{\Gg,\Ng}\colon [\Gg,\Ng]\to \RR_{\geq 0}$, but as an \emph{almost metric} on $[\Gg,\Ng]$. More precisely, we define the following \emph{$\scl_{\Gg,\Ng}$-almost-metric}
\begin{equation}\label{eq=dscl}
d_{\scl_{\Gg,\Ng}}\colon [\Gg,\Ng]\times [\Gg,\Ng]\to \mathbb{R}_{\geq 0};\quad (\yg_1,\yg_2)\mapsto d_{\scl_{\Gg,\Ng}}(\yg_1,\yg_2)=\scl_{\Gg,\Ng}(\yg_1^{-1}\yg_2),
\end{equation}
and we study the almost metric space $([\Gg,\Ng],d_{\scl_{\Gg,\Ng}})$. Here the terminology of the `almost metric' means that it satisfies the triangle inequality \emph{only up to a uniform additive error}; we describe in  Subsection~\ref{subsec=absolute} (and Subsection~\ref{subsec=scl-distance}) more details and motivations. The almost metric $d_{\scl_{\Gg,\Ng}}$ is bi-$[\Gg,\Ng]$-invariant, meaning that for every $\lambda,\lambda',\yg_1,\yg_2\in [\Gg,\Ng]$,
\begin{equation}\label{eq=biinv}
d_{\scl_{\Gg,\Ng}}(\lambda\yg_1\lambda',\lambda\yg_2\lambda')=d_{\scl_{\Gg,\Ng}}(\yg_1,\yg_2).
\end{equation}
Thus, we equip $([\Gg,\Ng],d_{\scl_{\Gg,\Ng}})$ with the \emph{coarse group structure}. Here coarse groups are group objects of the category of coarse spaces, and the theory of coarse groups has been recently developed by Leitner and Vigolo \cite{LV}; we briefly state basic definitions in Subsection~\ref{subsec=CK}. In this manner, our goal is to study \emph{coarse group structures} of $([\Gg,\Ng],d_{\scl_{\Gg,\Ng}})$.

The celebrated \emph{Bavard duality theorem} \cite{Bavard} connects $\scl_{\Gg}$ with the quotient vector space $\QQQ(\Gg)/\HHH^1(\Gg)$. Here, the symbol $\HHH^1(\Gg)$ means $\HHH^1(\Gg;\RR)$ (the first group cohomology), namely, the space of genuine ($\RR$-valued) homomorphisms on $\Gg$. The symbol $\QQQ(\Gg)$ denotes the vector space of \emph{homogeneous quasimorphisms} on $\Gg$: we say a map $\psg\colon \Gg\to \RR$ is a \emph{quasimorphism} if the defect of $\psg$,
\begin{equation}\label{eq=defect}
\DD(\psg)=\sup\left\{\left|\psg(\gG_1\gG_2)-\psg(\gG_1)-\psg(\gG_2)\right| \,\middle|\, \gG_1,\gG_2\in \Gg\right\}
\end{equation}
is finite. A function from $\Gg$ to $\RR$ is \emph{homogeneous} if its restriction to every cyclic subgroup is a homomorphism. The \emph{Bavard duality theorem for $\scl_{\Gg,\Ng}$}, proved in \cite{KKMM1}, generalizes this duality to that between $\scl_{\Gg,\Ng}$ and the quotient space $\QQQ(\Ng)^{\Gg}/\HHH^1(\Ng)^{\Gg}$. Here, we equip the space of $\RR$-valued functions on $\Ng$ with the $\Gg$-action by conjugation: for $\psg\colon \Ng\to \RR$ and for $\gG\in \Gg$, $(\gG\cdot \psg)(\xg)=\psg(\gG^{-1}\xg\gG)$ $(\xg\in \Ng)$. The symbol $\cdot^{\Gg}$ means the invariant part of the $\Gg$-action above. In this setting, the Bavard duality theorem for $\scl_{\Gg,\Ng}$,  \cite{KKMM1} and \cite{KK}, states that for every $\yg\in [\Gg,\Ng]$,
\begin{equation}\label{eq=KKMMBavard}
\scl_{\Gg,\Ng}(\yg)=\sup_{[\nug]\in \QQQ(\Ng)^{\Gg}/\HHH^1(\Ng)^{\Gg}} \frac{|\nug(\yg)|}{2\DD(\nug)}.
\end{equation}
holds, where $[\nug]$ is the equivalence class in $\QQQ(\Ng)^{\Gg}/\HHH^1(\Ng)^{\Gg}$ of $\nug$. Here, we regard $\scl_{\Gg,\Ng}\equiv 0$ if $\QQQ(\Ng)^{\Gg}/\HHH^1(\Ng)^{\Gg}=0$. We rephrase this as Theorem~\ref{thm=Bavard} in the present paper. In this manner, we have the coarse group $([\Gg,\Ng],d_{\scl_{\Gg,\Ng}})$ (`\emph{scl-side}') and the vector space $\QQQ(\Ng)^{\Gg}/\HHH^1(\Ng)^{\Gg}$ (`\emph{quasimorphism-side}'), and the Bavard duality theorem for $\scl_{\Gg,\Ng}$ \eqref{eq=KKMMBavard} connects these two concepts. Our first result, the \emph{absolute} version, determines the coarse group structure of $([\Gg,\Ng],d_{\scl_{\Gg,\Ng}})$ as long as $\Rdim \left(\QQQ(\Ng)^{\Gg}/\HHH^1(\Ng)^{\Gg}\right)<\infty$. We emphasize that Proposition~\ref{prop=presclkika} is \emph{not} our main result, as we state it as a proposition. Here we set $\ZZ^{0}=0$ if $\ell=0$.

\begin{prop}[absolute version]\label{prop=presclkika}
Let $\Gg$ be a group and $\Ng$ a normal subgroup of $\Gg$. Assume that $\QQQ(\Ng)^{\Gg}/\HHH^1(\Ng)^{\Gg}$ is finite dimensional, and  set $\ell=\Rdim \left(\QQQ(\Ng)^{\Gg}/\HHH^1(\Ng)^{\Gg}\right)$.
Then, the group $([\Gg,\Ng],d_{\scl_{\Gg,\Ng}})$  is isomorphic to $(\ZZ^{\ell},\|\cdot\|_1)$ \textup{(}equipped with the $\ell^1$-norm\textup{)} as a coarse group, and this coarse group isomorphism may be given by a quasi-isometry.
\end{prop}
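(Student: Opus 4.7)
The proposal is to build, from a basis $[\nug_1],\ldots,[\nug_\ell]$ of $V := \QQQ(\Ng)^{\Gg}/\HHH^1(\Ng)^{\Gg}$, the evaluation map $\Phi \colon [\Gg,\Ng] \to \RR^{\ell}$, $\yg \mapsto (\nug_1(\yg),\ldots,\nug_\ell(\yg))$, and to show that it is a quasi-isometry onto a coarsely dense subset of $(\RR^\ell, \|\cdot\|_1)$ and is simultaneously a coarse group homomorphism. Composing with the standard coarse equivalence $(\RR^\ell, \|\cdot\|_1) \to (\ZZ^\ell, \|\cdot\|_1)$ then yields the desired coarse group isomorphism. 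The case $\ell = 0$ reduces via \eqref{eq=KKMMBavard} to $\scl_{\Gg,\Ng} \equiv 0$, so $[\Gg,\Ng]$ is coarsely a point.

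The map $\Phi$ is independent of the choice of lifts $\nug_i$, because any $h \in \HHH^1(\Ng)^{\Gg}$ satisfies $h([\gG,\xg]) = h(\xg) - h(\xg) = 0$ and hence vanishes on all of $[\Gg,\Ng]$. Homogeneity of $\nug_i$ and the definition of the defect give, for all $\yg_1, \yg_2 \in [\Gg,\Ng]$,
\[
\bigl|\nug_i(\yg_1^{-1}\yg_2) - (\nug_i(\yg_2) - \nug_i(\yg_1))\bigr| \leq \DD(\nug_i),
\]
so $\Phi$ is a quasi-homomorphism with $\ell^1$-defect $K_0 := \sum_i \DD(\nug_i)$; the identity $\Phi(\xg\yg) = \Phi(\xg)+\Phi(\yg)+O(K_0)$ is exactly the coarse-homomorphism condition.

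The core quantitative comparison is $\scl_{\Gg,\Ng}(\yg) \asymp \|\Phi(\yg)\|_1$ on $[\Gg,\Ng]$. Applying Bavard duality \eqref{eq=KKMMBavard} to $\nug = \nug_i$ gives $|\nug_i(\yg)| \leq 2\DD(\nug_i)\,\scl_{\Gg,\Ng}(\yg)$, hence the upper bound. For the lower bound, note that $\DD$ descends to a bona fide norm on the finite-dimensional quotient $V$, and that each $\yg \in [\Gg,\Ng]$ defines a linear functional $L_{\yg} \in V^*$ by $L_{\yg}([\nug]) = \nug(\yg)$; in the dual basis of $[\nug_1],\ldots,[\nug_\ell]$, the coordinates of $L_{\yg}$ are precisely $\Phi(\yg)$, and \eqref{eq=KKMMBavard} rewrites as $\scl_{\Gg,\Ng}(\yg) = \tfrac{1}{2}\|L_{\yg}\|_{*}$, where $\|\cdot\|_{*}$ is the norm on $V^{*}$ dual to $\DD$. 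Because $V^*$ is finite-dimensional, $\|\cdot\|_*$ is equivalent to the $\ell^1$-norm, yielding the lower bound; together with the quasi-homomorphism identity this shows that $\Phi$ is a quasi-isometric embedding.

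Coarse density is proved by contradiction. If $\Phi([\Gg,\Ng])$ lay in a proper linear subspace of $\RR^\ell$, some nonzero $\nug = \sum a_i \nug_i$ would vanish identically on $[\Gg,\Ng]$. Writing $(\xg\yg)^n = \xg^n \yg^n c_n$ with $c_n \in [\Ng,\Ng] \subseteq [\Gg,\Ng]$ and invoking homogeneity, one obtains $|\nug(\xg\yg) - \nug(\xg) - \nug(\yg)| \leq 2\DD(\nug)/n$ for every $n \geq 1$, forcing $\nug$ to be additive and contradicting $[\nug]\neq 0$ in $V$. Hence there exist $\yg_1,\ldots,\yg_\ell \in [\Gg,\Ng]$ with $\Phi(\yg_1),\ldots,\Phi(\yg_\ell)$ linearly independent in $\RR^\ell$; the identity $\Phi(\yg_j^{n_j}) = n_j \Phi(\yg_j)$ together with the quasi-homomorphism bound places $\Phi(\yg_1^{n_1}\cdots\yg_\ell^{n_\ell})$ within an $(\ell-1)K_0$-neighborhood of the full-rank lattice $\bigoplus_j \ZZ\,\Phi(\yg_j)$, which is $\ell^1$-coarsely dense in $\RR^\ell$. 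The principal bookkeeping is propagating the defect $K_0$ and the norm-equivalence constants through these steps, but beyond Bavard duality and finite-dimensional linear algebra no new input is required.
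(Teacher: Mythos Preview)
Your proposal is correct and follows essentially the same approach as the paper: the evaluation map is the paper's $\Psig^{\RR}$, the quasi-isometric embedding is obtained from Bavard duality plus finite-dimensional norm equivalence (the paper's Proposition~4.3), and coarse surjectivity comes from products $\yg_1^{m_1}\cdots\yg_\ell^{m_\ell}$ (the paper's $\Ptau$). The only packaging differences are that the paper selects the $\yg_i$ and $\nug_j$ simultaneously via a function-space lemma so that $\nug_j(\yg_i)=\delta_{i,j}$, and then verifies both compositions $\Ptau\circ\Psig$ and $\Psig\circ\Ptau$ are close to the identity explicitly, whereas you pick $\yg_i$ with merely linearly independent images and argue coarse density; your dual-norm rewriting $\scl_{\Gg,\Ng}(\yg)=\tfrac12\|L_\yg\|_{*}$ is a slightly slicker version of the paper's lower-bound computation.
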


However, Proposition~\ref{prop=presclkika} is \emph{not} as powerful as its appearance. Indeed, some of the readers might notice that there are many examples of group pairs $(\Gg,\Ng)$ for which the space $\QQQ(\Ng)^{\Gg}/\HHH^1(\Ng)^{\Gg}$ is infinite dimensional. For instance, if $\Gg$ is a non-elementary Gromov-hyperbolic group (more generally, an acylindrically hyperbolic group) and if $\Ng$ is an infinite normal subgroup, then this space is always \emph{infinite dimensional} (Example~\ref{exa=AH}). Proposition~\ref{prop=prescldim} stated in Subsection~\ref{subsec=absolute} implies that the asymptotic dimension of $([\Gg,\Ng],d_{\scl_{\Gg,\Ng}})$ is infinite in this case. Hence, $([\Gg,\Ng],d_{\scl_{\Gg,\Ng}})$ is `\emph{too huge}' as a coarse group in many cases.

Our main theorem is the \emph{comparative version} of the study on coarse group structures of $([\Gg,\Ng],d_{\scl_{\Gg,\Ng}})$. More precisely, we replace $\QQQ(\Ng)^{\Gg}/\HHH^1(\Ng)^{\Gg}$ (quasimorphism-side) and $([\Gg,\Ng],d_{\scl_{\Gg,\Ng}})$ ($\scl$-side) simultaneously with the following `comparative' versions.

\noindent
\underline{\emph{The space $\WW(\Gg,\Ng)$}.} We define the quotient vector space $\WW(\Gg,\Ng)$ as
\begin{equation}\label{eq=W-space}
\WW(\Gg,\Ng)=\QQQ(\Ng)^{\Gg}/\left(\HHH^1(\Ng)^{\Gg}+i^{\ast}\QQQ(\Gg)\right).
\end{equation}
Here the inclusion map $i\colon \Ng\hookrightarrow \Gg$ induces an $\RR$-linear map $i^{\ast}\colon \QQQ(\Gg)=\QQQ(\Gg)^{\Gg}\to \QQQ(\Ng)^{\Gg}$ (\emph{cf.} Lemma~\ref{lem=Ginv}); namely, for $\psg\in \QQQ(\Gg)$, $i^{\ast}\psg=\psg\circ i=\psg|_{\Ng}$. Hence, elements in $i^{\ast}\QQQ(\Gg)$ may be seen as  ($\Gg$-invariant) quasimorphisms on $\Ng$ \emph{that are extendable to $\Gg$}.

\noindent
\underline{\emph{The coarse kernel} of the map $\iota_{\Gg,\Ng}$.} We consider the following map
\begin{equation}\label{eq=iotaGN}
\iota_{\Gg,\Ng}\colon ([\Gg,\Ng],d_{\scl_{\Gg,\Ng}})\to ([\Gg,\Ng],d_{\scl_{\Gg}});\quad \yg\mapsto \yg.
\end{equation}
Clearly, this map $\iota_{\Gg,\Ng}$ is  an isomorphism in the category of groups. However, $\iota_{\Gg,\Ng}$ is \emph{not} necessarily an isomorphism in the category of \emph{coarse groups}. Indeed, despite the fact that $\iota_{\Gg,\Ng}$ is a \emph{coarse homomorphism} in the sense of \cite{LV} (homomorphism in the category of coarse groups), the group-theoretical inverse map $\iota_{\Gg,\Ng}^{-1}\colon ([\Gg,\Ng],d_{\scl_{\Gg}})\to ([\Gg,\Ng],d_{\scl_{\Gg,\Ng}});$ $\yg\mapsto \yg$ is \emph{not} necessarily a coarse homomorphism. More precisely, for every $\yg_1,\yg_2\in [\Gg,\Ng]$ we have
\begin{equation}\label{eq=sclnohikaku}
d_{\scl_{\Gg}}(\yg_1,\yg_2)\leq d_{\scl_{\Gg,\Ng}}(\yg_1,\yg_2);
\end{equation}
however, there is \emph{no} estimate in the converse direction to \eqref{eq=sclnohikaku}. In \cite{LV}, Leitner and Vigolo introduced the concept of the \emph{coarse kernel} of a coarse homomorphism: the coarse kernel $A$ of $\iota_{\Gg,\Ng}$ is the maximal subset up to coarse containments among all subsets of $[\Gg,\Ng]$ that are bounded in $d_{\scl_{\Gg}}$; we will discuss it in more detail in Subsection~\ref{subsec=comparative}. When $\iota_{\Gg,\Ng}$ is not an isomorphism in the category of coarse groups, it should be explained by the non-triviality of the coarse kernel ($\iota_{\Gg,\Ng}$ is always an epimorphism in the category of coarse groups).

Theorem~\ref{thm=main1} mentioned below is a special case of our main result, which will show up as Theorem~\ref{mthm=main} in Section~\ref{sec=intro2}. To state Theorem~\ref{thm=main1} and other results in this paper, the following formulation of group presentations is convenient.

\begin{defn}[`group presentations' in this paper]\label{defn=pres}
Let $\Gg$ be a group. Then, we say that $(\Ff\,|\,\Rel)$ is a \emph{presentation} of $\Gg$ if $\Ff$ is a free group, $\Rel$ is a normal subgroup of $\Ff$, and $\Gg \cong  \Ff/\Rel$ holds.
\end{defn}
A system $(S\,|\,\mathcal{R})$ is called a presentation of a group $\Gg$ in the standard literature if and only if $(\Ff(S) \,|\, \llangle \mathcal{R}\rrangle)$ is a presentation of $\Gg$ in the sense of Definition~\ref{defn=pres}. Here, $\Ff(S)$ denotes the free group with free basis $S$, and $\llangle \mathcal{R}\rrangle$ means the normal closure of $\mathcal{R}$ in $F(S)$.

\begin{thm}[comparative version: special case of our main theorem (Theorem~\ref{mthm=main})]\label{thm=main1}
Let $\Gg$ be a group and $\Ng$ a normal subgroup of $\Gg$. Let $i\colon \Ng\hookrightarrow \Gg$ be the inclusion map. Assume that either of the following two conditions is fulfilled:
\begin{enumerate}[label=\textup{(\alph*)}]
 \item $\Ng=[\Gg,\Gg]$; or
 \item  $\Ng\geqslant [\Gg,\Gg]$ and $\Gg$ admits a presentation $(\Ff\,|\,\Rel)$ \textup{(}in the sense of Definition~$\ref{defn=pres}$\textup{)} such that $\Rel \leqslant [\Ff,\Ff]$.
\end{enumerate}
Assume that $\WW(\Gg,\Ng)$ \textup{(}defined in \eqref{eq=W-space}\textup{)} is finite dimensional, and set $\ell=\Rdim \WW(\Gg,\Ng)$. Then the coarse kernel of the map $\iota_{\Gg,\Ng}$ \textup{(}defined in \eqref{eq=iotaGN}\textup{)} is isomorphic to $(\ZZ^{\ell},\|\cdot\|_1)$ as a coarse group. Furthermore, this coarse isomorphism may be given by a quasi-isometry.
\end{thm}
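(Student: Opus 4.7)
The plan is to construct an explicit quasi-isometry, which will simultaneously be a coarse homomorphism, from a representative of the coarse kernel of $\iota_{\Gg,\Ng}$ onto $(\RR^{\ell},\|\cdot\|_1)$, using a basis of $\WW(\Gg,\Ng)$ as coordinates. Concretely, I would choose homogeneous quasimorphisms $\nu_1,\dots,\nu_\ell\in\QQQ(\Ng)^{\Gg}$ whose classes form a basis of $\WW(\Gg,\Ng)$, fix a constant $D>0$ large enough that the set $A:=\{y\in[\Gg,\Ng]:\scl_{\Gg}(y)\le D\}$ represents the coarse kernel of $\iota_{\Gg,\Ng}$ (equipped with the restriction of $d_{\scl_{\Gg,\Ng}}$), and define $\Phi\colon A\to\RR^{\ell}$ by $\Phi(y)=(\nu_1(y),\dots,\nu_\ell(y))$. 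Because each $\nu_i$ has finite defect, $\Phi$ is automatically a coarse homomorphism, so it suffices to verify that $\Phi$ is a quasi-isometric embedding with quasi-dense image; one then invokes the standard coarse group isomorphism between $(\RR^{\ell},\|\cdot\|_1)$ and $(\ZZ^{\ell},\|\cdot\|_1)$ to finish.

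\textbf{Two-sided quasi-isometric estimates.} The Lipschitz direction is immediate from the Bavard duality \eqref{eq=KKMMBavard}: the bound $|\nu_i(y_1^{-1}y_2)|\le 2\DD(\nu_i)\,\scl_{\Gg,\Ng}(y_1^{-1}y_2)$ together with the homogeneity inequality $|\nu_i(y_1^{-1}y_2)-(\nu_i(y_2)-\nu_i(y_1))|\le\DD(\nu_i)$ yields $\|\Phi(y_1)-\Phi(y_2)\|_1\le C_1\,d_{\scl_{\Gg,\Ng}}(y_1,y_2)+C_2$. For the co-Lipschitz direction I would again invoke the Bavard duality for $\scl_{\Gg,\Ng}$: for any $\nu\in\QQQ(\Ng)^{\Gg}$, decompose $\nu=\sum_i c_i\nu_i+\mu+i^{\ast}\phi$ with $\mu\in\HHH^1(\Ng)^{\Gg}$ and $\phi\in\QQQ(\Gg)$. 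The summand $\mu$ annihilates every element of $[\Gg,\Ng]$, since a $\Gg$-invariant homomorphism on $\Ng$ vanishes on every simple $(\Gg,\Ng)$-commutator. On $A$, the contribution $|i^{\ast}\phi(y_1^{-1}y_2)|$ is bounded by a constant multiple of $\DD(\phi)$, using the Bavard duality for $\scl_{\Gg}$ and the fact that the $\scl_{\Gg}$-diameter of $A$ is finite. Finally, because $\WW(\Gg,\Ng)$ is finite dimensional, the quotient seminorm $\|[\nu]\|:=\inf\{\DD(\nu+r):r\in\HHH^1(\Ng)^{\Gg}+i^{\ast}\QQQ(\Gg)\}$ is a genuine norm (a closedness argument is needed here), and hence is equivalent to the coordinate $\ell^1$-norm $\sum|c_i|$; selecting the decomposition approximately realizing the infimum controls $\DD(\phi)$ by $\DD(\nu)$ as well. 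Inserting all of this back into the Bavard supremum produces $\scl_{\Gg,\Ng}(y_1^{-1}y_2)\le C_3\|\Phi(y_1)-\Phi(y_2)\|_1+C_4$ for all $y_1,y_2\in A$.

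\textbf{Quasi-density and the main obstacle.} The remaining — and in my view the central — task is to show that $\Phi(A)$ is quasi-dense in $\RR^{\ell}$. By a duality argument this is equivalent to: any $\nu\in\QQQ(\Ng)^{\Gg}$ whose restriction to $A$ is bounded must lie in $\HHH^1(\Ng)^{\Gg}+i^{\ast}\QQQ(\Gg)$; equivalently, modulo a $\Gg$-invariant homomorphism, such a $\nu$ must extend to a quasimorphism on $\Gg$. This is precisely where the hypotheses (a) and (b) become essential. In case (a), where $\Ng=[\Gg,\Gg]$, one exploits that $[\Gg,\Ng]=[\Gg,[\Gg,\Gg]]$ is generated by iterated simple commutators in $\Gg$, each of which has $\scl_{\Gg}\le 1/2$ and so lies in $A$; boundedness of $\nu$ on this rich family is used to manufacture the desired extension to $\Gg$. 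In case (b), the presentation $(\Ff\mid\Rel)$ with $\Rel\leqslant[\Ff,\Ff]$ permits one to transport the construction through the free group $\Ff$, where commutator arithmetic is explicit, in order to produce the required quasimorphism on $\Gg$. This construction of the extension — equivalently, the dual task of producing elements of $[\Gg,\Ng]$ of bounded $\scl_{\Gg}$ with arbitrary prescribed $\nu_i$-values — is the heart of the comparative theorem and the step I expect to require the most care.
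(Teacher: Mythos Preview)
Your architecture matches the paper's three-step plan, and your two-sided QI estimate on a $d_{\scl_{\Gg}}$-bounded set $A$ is essentially the paper's Step~1 (Theorem~\ref{thm=evmap}); the ``closedness argument'' you flag is exactly the \emph{comparison theorem of defects} (Theorem~\ref{thm=comparisonDD}), proved via the open mapping theorem on the Banach space $(\QQQ(\Ng)^{\Gg}/\HHH^1(\Ng)^{\Gg},\hat{\DD})$.

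The gap is in the quasi-density step. First, your duality reduction is not complete as stated: knowing that every $\nu$ bounded on $A$ lies in $\HHH^1(\Ng)^{\Gg}+i^{\ast}\QQQ(\Gg)$ only prevents $\Phi(A)$ from being confined near a hyperplane; turning this into quasi-density requires an additional argument (e.g.\ that $\Phi(A)$ is coarsely a coarse subgroup of $\RR^{\ell}$, hence asymptotic to a linear subspace). Second, and more seriously, you do not actually establish the dual statement: your sketch in case~(a) gestures at the right phenomenon but omits the mechanism. The paper does not proceed by duality at all; instead it constructs an explicit coarse right inverse
\[
\PPs\colon(m_1,\dots,m_{\ell})\longmapsto[\gG_{1}^{(1)},(\gG'_{1}{}^{(1)})^{m_1}]\cdots[\gG_{t_{\ell}}^{(\ell)},(\gG'_{t_{\ell}}{}^{(\ell)})^{m_{\ell}}],
\]
which is visibly $d_{\cl_{\Gg}}$-bounded. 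Locating the tuples $(\gG_s^{(i)},\gG'_s{}^{(i)})$ is the content of the paper's \emph{core extractor} theory (Sections~\ref{sec=CE}--\ref{sec=CEabel}): one lifts to a free presentation $\ppi\colon\Ff\twoheadrightarrow\Gg$ with $\Mf=\ppi^{-1}(\Ng)$, decomposes $\ppi^{\ast}\nu=\hf+i^{\ast}\phf$ with $\hf\in\HHH^1(\Mf)^{\Ff}$, and shows (using a Zorn's-lemma extendability criterion, Theorem~\ref{thm=orerei}) that $[\nu]\mapsto\hf|_{\Mf\cap\Rel}$ is an \emph{injective} linear map into a space of genuine invariant homomorphisms. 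Lemma~\ref{lem=dual} then produces $w_1,\dots,w_{\ell}\in[\Ff,\Ff]\cap\Rel$ with $\hf_j(w_i)=\delta_{ij}$; expressing each $w_i$ as a product of simple commutators and pushing down via $\ppi$ yields the $(\gG_s^{(i)},\gG'_s{}^{(i)})$. Conditions~(a) and~(b) enter precisely to guarantee that such a lift with the required properties (i), (ii), (D) exists (Lemma~\ref{lem=maintogeneral}). This explicit $\PPs$ is what your proposal is missing.
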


We note that in general a coarse kernel of a coarse homomorphism may \emph{not} exist. Therefore, the existence of the coarse kernel of $\iota_{\Gg,\Ng}$ itself is a part of the assertion of Theorem~\ref{thm=main1}. The vector space $\WW(\Gg,\Ng)$ is the cokernel of the map $\QQQ(\Gg)/\HHH^1(\Gg)\to \QQQ(\Ng)^{\Gg}/\HHH^1(\Ng)^{\Gg}$ induced by $i\colon \Ng \hookrightarrow \Gg$. By the Bavard duality \eqref{eq=KKMMBavard}, it might be natural to expect that this space relates to the coarse kernel of $\iota_{\Gg,\Ng}$. In addition, we note that the space $\WW(\Gg,\Ng)$ played a key role to a certain study in symplectic geometry in \cite{KKMM2}.

We emphasize that $\Rdim\WW(\Gg,\Ng)$ can be computed for several pairs $(\Gg,\Ng)$. The background here is the study of  $\WW(\Gg,\Ng)$  pursued in \cite{KKMMM}; there, we showed that $\WW(\Gg,\Ng)$ is isomorphic to a certain subspace of the \emph{ordinary} cohomology $\HHH^2(\Gg)$ (\emph{not} bounded cohomology $\HHH^2_b(\Gg)$), provided that $\QG=\Gg/\Ng$ is boundedly $3$-acyclic (Definition~\ref{defn=bdd_acyc}). Amenable groups, including all nilpotent groups, are boundedly $3$-acyclic. See Subsection~\ref{subsec=cohom} (Theorem~\ref{thm=KKMMMmain2} and related results) for more details. In contrast to the space $\QQQ(\Ng)^{\Gg}/\HHH^1(\Ng)^{\Gg}$, the space $\WW(\Gg,\Ng)$ is \emph{finite dimensional} in many cases. For instance, if either (a) or (b) in Theorem~\ref{thm=main1} is satisfied, then $\WW(\Gg,\Ng)$ is always finite dimensional as long as $\Gg$ is finitely generated.

In several examples, this dimension estimate enables us to compute $\Rdim \WW(\Gg,\Ng)$ without providing concrete linearly independent elements in $\WW(\Gg,\Ng)$. In particular, for several pairs $(\Gg,\Ng)$, we can show that $\WW(\Gg,\Ng)\ne 0$ by checking $\Rdim \WW(\Gg,\Ng)>0$; we can take this approach \emph{without constructing} any concrete element $\nug$ in $\QQQ(\Ng)^{\Gg}$ representing a non-zero element in $\WW(\Gg,\Ng)$. For instance, if we set  $\Gg$ as  the surface group $\pi_1(\Sigma_{\genus})$ of genus $\genus$,  with $\genus\in \NN_{\geq 2}$, and if we set $\Ng=[\Gg,\Gg]$, then we have
\begin{equation}\label{eq=surfacedim}
\Rdim \WW(\Gg,\Ng)=1
\end{equation}
(Theorem~\ref{thm=WW}~(2)). For this example, we had obtained \eqref{eq=surfacedim} in \cite{KKMMM} without providing any $\nug\in \QQQ(\Ng)^{\Gg}$  that represents a generator of the one-dimensional space $\WW(\Gg,\Ng)$. Later, a concrete example of such $\nug\in \QQQ(\Ng)^{\Gg}$ was constructed in \cite{MMM}. By Theorem~\ref{thm=main1} (for $(\Gg,\Ng)$ satisfying condition (a)), in this example the coarse kernel of $\iota_{\Gg,\Ng}$ is isomorphic to $(\ZZ,|\cdot|)$ as a coarse group. In Example~\ref{exa=surfacecup}~(2), we will also exhibit an example of $(\Gg,\Ng)$ with $\Ng\ne [\Gg,\Gg]$ for which condition (b) in Theorem~\ref{thm=main1} is fulfilled with positive $\ell$.

\subsection{Motivation of studying coarse geometry/coarse group structures of $([\Gg,\Ng],d_{\scl_{\Gg,\Ng}})$}\label{subsec=absolute}

Here, we describe our motivation of studying coarse geometry and the coarse group structure of $([\Gg,\Ng],d_{\scl_{\Gg,\Ng}})$ with providing basic definitions.  In this paper, we use the word `coarse geometry' for the study of coarse structures. More precisely, even for a group with a bi-invariant metric, we regard it just as a coarse space, but \emph{not} as a coarse group when we refer to as coarse structures. In Example~\ref{exa=CG}, we will describe a notable difference between the study of coarse structures alone and that of coarse group structures.

Specially for the case of $\Ng=\Gg$, the stable commutator length $\scl_{\Gg}$ has been studied by various researchers. Some importance of $\scl_{\Gg}$ comes from its geometric interpretation in terms of admissible surfaces (see for instance, \cite[Proposition~2.10]{Calegari}). In \cite[Section~4]{KKMM1}, some of the authors provided a geometric interpretation of $\scl_{\Gg,\Ng}$. By definition, the map $\scl_{\Gg,\Ng}\colon [\Gg,\Ng]\to \RR_{\geq 0}$ is \emph{semi-homogeneous}: for every $\yg\in [\Gg,\Ng]$ and for every $n\in \ZZ$,
\begin{equation}\label{eq=semihom}
\scl_{\Gg,\Ng}(\yg^n)=|n|\cdot \scl_{\Gg,\Ng}(\yg).
\end{equation}
Also in this aspect, the values of $\scl_{\Gg,\Ng}$ might be seen as some spectral-like invariant (such as the Lyapnov exponents in dynamical systems). The $\scl_{\Gg}$ (and $\scl_{\Gg,\Ng}$) have been studied intensively, for instance, in the following two directions:
\begin{itemize}
 \item (elementwise) study the exact value or number theoretic properties of it;
 \item (local structure) study when a sequence $(\yg_m)_{m\in \NN}$ in $[\Gg,\Ng]$ satisfies that $\lim\limits_{m\to\infty}\scl_{\Gg,\Ng}(\yg_m)=0$.
\end{itemize}
Results in the first direction include a well-known result by Calegari \cite{Calegari09}, stating that for every free group $F$, $\scl_{F}$ takes values in $\QQ$ and providing an explicit algorithm to compute the value. In the second direction, \emph{gap theorems} have been proved for instance in \cite{CaFu10}, \cite{BBF16} and \cite{Chen20} for several class of groups. These theorems assert the existence of a uniform gap of $\scl_{\Gg}(\gG)$ from $0$ over all $\gG\in [\Gg,\Gg]$ with $\scl_{\Gg}(\gG)\ne 0$, as well as they characterize elements $\gG\in [\Gg,\Gg]$ satisfying $\scl_{\Gg}(\gG)=0$.

Contrastingly, our focus in the present paper is the \emph{large scale geometry} of $([\Gg,\Ng],d_{\scl_{\Gg,\Ng}})$. Interests in this direction already appeared in the work \cite{CZ08} of Calegari and Zhuang in 2008 for the $\cl$-metric. There, they proved that if $\Gg$ is finitely presented, then $([\Gg,\Gg],d_{\cl_{\Gg}})$ is large scale simply connected. They also proved that if $\Gg$ is a torsion-free Gromov-hyperbolic group, then $([\Gg,\Gg],d_{\cl_{\Gg}})$ is one-ended, and has asymptotic dimension at least $2$. See \cite[Theorem~A, Theorem~B and Corollary~5.3]{CZ08} for details. One of the importance of large scale perspective naturally appears if we regard $\scl_{\Gg,\Ng}\colon [\Gg,\Ng]\to \RR_{\geq 0}$ as a `metric-like' function $d_{\scl_{\Gg,\Ng}}\colon [\Gg,\Ng]\times [\Gg,\Ng]\to \RR_{\geq 0}$ by \eqref{eq=dscl}. Despite the fact that $d_{\scl_{\Gg,\Ng}}$ is \emph{not} a genuine metric in general, by replacing $d_{\scl_{\Gg,\Ng}}$ with the function $d^+_{\scl_{\Gg,\Ng}}=d^{+,1/2}_{\scl_{\Gg,\Ng}}$, defined for every $\yg_1,\yg_2\in [\Gg,\Ng]$ as
\begin{equation}\label{eq=scl+half}
d_{\scl_{\Gg,\Ng}}^{+}(\yg_1,\yg_2)=\begin{cases}
d_{\scl_{\Gg,\Ng}}(\yg_1,\yg_2)+\frac{1}{2}, & \textrm{if }\yg_1\ne \yg_2,\\
0, & \textrm{if }\yg_1= \yg_2,
\end{cases}
\end{equation}
we obtain a genuine metric. By this, we say that $d_{\scl_{\Gg,\Ng}}$ is an \emph{almost} metric; see Definition~\ref{defn=generalizedmet} for details. This replacement process destroys the local structure; for instance, the new genuine metric space $([\Gg,\Ng], d^+_{\scl_{\Gg,\Ng}})$ is \emph{uniformly discrete}, meaning that, $\inf\{d^+_{\scl_{\Gg,\Ng}}(\yg_1,\yg_2)\,|\yg_1\ne\yg_2 \in [\Gg,\Ng]\}>0$. Nevertheless, the coarse structure of $([\Gg,\Ng], d^+_{\scl_{\Gg,\Ng}})$ is \emph{exactly the same} as that of $([\Gg,\Ng], d_{\scl_{\Gg,\Ng}})$. Thus, we may regard as if $([\Gg,\Ng], d_{\scl_{\Gg,\Ng}})$ were a genuine metric space, as long as we only study large scale geometry. We emphasize that the perspective of large scale geometry naturally shows up in this manner if we study $\scl_{\Gg,\Ng}$ not elementwise, but as a metric-like object.

On  almost metric spaces, the concepts of \emph{controlled maps}, \emph{closeness} between maps, \emph{quasi-isometries}, \emph{quasi-isometric embeddings}, \emph{coarse embeddings} and \emph{asymptotic dimensions} are defined and they have been intensively studied in coarse geometry (we review basic concepts including them in Subsections~\ref{subsec=coarsegeom} and \ref{subsec=asdim}; see \cite{Roe} for a comprehensive treatise on coarse geometry).  Here we only recall the definitions of controlled maps, closeness and quasi-isometric embeddings: let $(X,d_X)$ and $(Y,d_Y)$ be two almost metric spaces. A map $\alpha\colon (X,d_X)\to (Y,d_Y)$ is called a \emph{controlled map} if for every $S\in \RR_{\geq 0}$ there exists $T\in \RR_{\geq 0}$ such that for every $x,x'\in X$, $d_X(x,x')\leq S$ implies $d_Y(\alpha(x),\alpha(x'))\leq T$. Two maps $\alpha,\beta\colon (X,d_X)\to (Y,d_Y)$ are said to be \emph{close}, $\alpha\approx \beta$, if $\sup\limits_{x\in X}d_Y(\alpha(x),\beta(x))<\infty$ holds. A map $\alpha \colon (X,d_X)\to (Y,d_Y)$ is called a \emph{quasi-isometric embedding} if there exist $C_1,C_2\in \RR_{>0}$ and $D_1,D_2\in \RR_{\geq 0}$ such that for every $x,x'\in X$,
\[
C_1\cdot d_X(x,x')-D_1\leq d_Y(\alpha(x),\alpha(x'))\leq C_2\cdot d_X(x,x')+D_2
\]
holds. In particular, quasi-isometric embeddings are controlled maps.

As we mentioned in Subsection~\ref{subsec=digest}, an almost metric $d_{\scl_{\Gg,\Ng}}$ is bi-$[\Gg,\Ng]$-invariant (see \eqref{eq=biinv}. Moreover, if we regard $d_{\scl_{\Gg,\Ng}}$ as an almost  generalized metric on $\Gg$, then it is bi-$\Gg$-invariant; see Subsection~\ref{subsec=scl-distance}). Therefore, we can furthermore study the \emph{coarse group structure} of $([\Gg,\Ng],d_{\scl_{\Gg,\Ng}})$. The theory of coarse groups has been recently developed by Leitner and Vigolo \cite{LV}; we recall fundamental notions that will be used in the present paper in Subsection~\ref{subsec=CK}.  For two groups $(\Gg_1,d_1)$ and $(\Gg_2,d_2)$ equipped with bi-invariant almost metrics, we say that a map $\alpha\colon (\Gg_1,d_1)\to (\Gg_2,d_2)$ is a \emph{pre-coarse homomorphism} if
\[
\sup_{\gG,\gG'\in \Gg_1}d_2(\alpha(\gG\gG'),\alpha(\gG)\alpha(\gG'))<\infty
\]
holds. Leitner and Vigolo \cite{LV} formulated the concept of coarse homomorphisms: $\alpha\colon (\Gg_1,d_1)\to (\Gg_2,d_2)$ is a \emph{coarse homomorphism} if it is a pre-coarse homomorphism and if it is a controlled map; see also Remark~\ref{rem=bf} below.
 Two groups $(\Gg_1,d_1)$ and $(\Gg_2,d_2)$ equipped with bi-invariant almost metrics are isomorphic \emph{as coarse groups} if there exist two pre-coarse homomorphisms $\alpha\colon (\Gg_1,d_1)\to (\Gg_2,d_2)$ and $\beta\colon (\Gg_2,d_2)\to (\Gg_1,d_1)$ such that $\alpha$ and $\beta$ are both controlled maps (hence, coarse homomorphisms), and they are \emph{coarse inverse} to each other, meaning that, $\beta\circ \alpha\approx \mathrm{id}_{\Gg_1}$ and $\alpha\circ \beta\approx \mathrm{id}_{\Gg_2}$.

\begin{rem}\label{rem=bf}
Strictly speaking, the terminology of a coarse homomorphism in \cite{LV} is defined for a \emph{coarse map}, \emph{i.e.,} an equivalence class of controlled maps with respect to closeness (Definition~\ref{defn=coarse_map}). However, by abuse of notation, we use this terminology also for a set map. In the present paper, we write coarse notions in bold symbol (such as a coarse map $\bfalpha$), and we write set theoretical ones in non-bold symbol (such as a set map $\alpha$).
\end{rem}

Strong attentions have been paid to coarse geometric study on a finitely generated group $H$ equipped with the word metric with respect to a finite generating set. In contrast, our main object $([\Gg,\Ng],d_{\scl_{\Gg,\Ng}})$ is  \emph{locally infinite}, namely, bounded subsets can consist of infinitely many elements. However if we study coarse group theoretic objects coming from a genuine group, then the (almost) metric is supposed to be bi-invariant and hence locally infinite spaces naturally show up in general. Our work supplies new examples of coarse subgroups of interest, including one in Remark~\ref{rem=retract}.

With these backgrounds, we explain the precise form of  Proposition~\ref{prop=presclkika} as follows: if $\QQQ(\Ng)^{\Gg}/\HHH^1(\Ng)^{\Gg}$ is a finite dimensional $\RR$-vector space with dimension $\ell$, then there exist two maps  $\Psig\colon [\Gg,\Ng]\to \ZZ^{\ell}$ and $\Ptau\colon \ZZ^{\ell}\to [\Gg,\Ng]$ satisfying the following properties:
\begin{enumerate}[label=(\arabic*)]
  \item $\Psig\colon [\Gg,\Ng]\to (\ZZ^{\ell},\|\cdot\|_1)$ and $\Ptau\colon \ZZ^{\ell}\to ([\Gg,\Ng],d_{\scl_{\Gg,\Ng}})$ are both \emph{pre-coarse homomorphisms}.
  \item $\Psig\colon ([\Gg,\Ng],d_{\scl_{\Gg,\Ng}})\to (\ZZ^{\ell},\|\cdot\|_1)$ and $\Ptau\colon (\ZZ^{\ell},\|\cdot\|_1)\to ([\Gg,\Ng],d_{\scl_{\Gg,\Ng}})$ are both \emph{quasi-isometric embeddings}.
  \item $\Psig\colon ([\Gg,\Ng],d_{\scl_{\Gg,\Ng}})\to (\ZZ^{\ell},\|\cdot\|_1)$ and $\Ptau\colon (\ZZ^{\ell},\|\cdot\|_1)\to ([\Gg,\Ng],d_{\scl_{\Gg,\Ng}})$ are \emph{coarse inverse} to each other.
\end{enumerate}

We also have the following result, which characterizes $\Rdim\left(\QQQ(\Ng)^{\Gg}/\HHH^1(\Ng)^{\Gg}\right)$ in terms of coarse geometric properties of $[\Gg,\Ng]$. This result works even when $\Rdim\left(\QQQ(\Ng)^{\Gg}/\HHH^1(\Ng)^{\Gg}\right)=\infty$.
\begin{prop}[coarse geometric characterization of $\Rdim \left(\QQQ(\Ng)^{\Gg}/\HHH^1(\Ng)^{\Gg}\right)$]\label{prop=prescldim}
Let $\Gg$ be a group and $\Ng$ a normal subgroup of $\Gg$. Then,
\[
\Rdim \left(\QQQ(\Ng)^{\Gg}/\HHH^1(\Ng)^{\Gg}\right) =\asdim ([\Gg,\Ng],d_{\scl_{\Gg,\Ng}})
\]
and
\[
\Rdim \left(\QQQ(\Ng)^{\Gg}/\HHH^1(\Ng)^{\Gg}\right) \leq \asdim ([\Gg,\Ng],d_{\cl_{\Gg,\Ng}}).
\]
Here, $\asdim$ denotes the asymptotic dimension.
\end{prop}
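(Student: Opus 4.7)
The plan is to leverage the Bavard duality \eqref{eq=KKMMBavard} to realize $([\Gg,\Ng],d_{\scl_{\Gg,\Ng}})$, up to a uniform additive error, as a subspace of the dual of the normed space $V:=\QQQ(\Ng)^{\Gg}/\HHH^1(\Ng)^{\Gg}$ with norm induced by the defect $\DD$. Define the evaluation map
\[
\ev\colon [\Gg,\Ng]\to V^{\ast},\quad \ev(\yg)\colon [\nug]\mapsto \nug(\yg),
\]
which is well-defined because any $\Gg$-invariant homomorphism on $\Ng$ vanishes on $[\Gg,\Ng]$. Bavard duality gives $2\scl_{\Gg,\Ng}(\yg)=\|\ev(\yg)\|_{V^{\ast}}$, and combining this with the homogeneous-quasimorphism estimate $|\nug(\yg_1^{-1}\yg_2)-\nug(\yg_2)+\nug(\yg_1)|\leq \DD(\nug)$ produces
\[
\bigl|\,2\,d_{\scl_{\Gg,\Ng}}(\yg_1,\yg_2)-\|\ev(\yg_1)-\ev(\yg_2)\|_{V^{\ast}}\,\bigr|\leq 1.
\]
Hence $\ev$ is a quasi-isometric embedding with uniform additive error. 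When $\ell:=\Rdim V<\infty$, the ambient $V^{\ast}$ is an $\ell$-dimensional normed space with $\asdim V^{\ast}=\ell$, so monotonicity of $\asdim$ under coarse embeddings yields $\asdim([\Gg,\Ng],d_{\scl_{\Gg,\Ng}})\leq \ell$; if $\ell=\infty$, this half is vacuous.

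For the reverse inequality, for each finite $k\leq \Rdim V$ I plan to produce a quasi-isometric embedding $\Theta\colon (\ZZ^{k},\|\cdot\|_1)\hookrightarrow ([\Gg,\Ng],d_{\scl_{\Gg,\Ng}})$. Taking linearly independent $[\nug_1],\ldots,[\nug_k]\in V$, I first verify that the restrictions $\nug_i|_{[\Gg,\Ng]}$ remain linearly independent as $\RR$-valued functions on $[\Gg,\Ng]$: if $\sum c_i\nug_i$ vanishes on $[\Gg,\Ng]\supseteq [\Ng,\Ng]$, then it descends to a homogeneous quasimorphism on the abelian group $\Ng^{ab}$, which must be a homomorphism since $\QQQ=\HHH^1$ for amenable groups, forcing $\sum c_i[\nug_i]=0$ in $V$. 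Hence I can choose $\yg_1,\ldots,\yg_k\in[\Gg,\Ng]$ making the matrix $A_{ij}=\nug_i(\yg_j)$ invertible, and set $\Theta(\vec{n})=\yg_1^{n_1}\cdots\yg_k^{n_k}$. Iterating the quasimorphism inequality gives $\|\ev(\Theta(\vec{n}))-\sum_j n_j\ev(\yg_j)\|_{V^{\ast}}\leq O(1)$, so combining with the previous display,
\[
\bigl|\,2\,d_{\scl_{\Gg,\Ng}}(\Theta(\vec{n}),\Theta(\vec{m}))-\bigl\|\sum_j (m_j-n_j)\ev(\yg_j)\bigr\|_{V^{\ast}}\,\bigr|\leq O(1).
\]
Since $A$ is invertible, $\{\ev(\yg_j)\}_{j=1}^{k}$ is linearly independent in $V^{\ast}$, and on the $k$-dimensional subspace it spans, the norm $\|\cdot\|_{V^{\ast}}$ is bi-Lipschitz-equivalent to $\|\cdot\|_1$ in the coordinates $(m_j-n_j)$. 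Thus $\Theta$ is a quasi-isometric embedding, and the first equality follows.

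For the second inequality the same $\Theta$ works: the lower bound on $d_{\cl_{\Gg,\Ng}}(\Theta(\vec{n}),\Theta(\vec{m}))$ in terms of $\|\vec{n}-\vec{m}\|_1$ is inherited from $d_{\cl_{\Gg,\Ng}}\geq d_{\scl_{\Gg,\Ng}}$. For the upper bound, I rearrange $\Theta(\vec{n})^{-1}\Theta(\vec{m})$ into the canonical form $\yg_1^{m_1-n_1}\cdots\yg_k^{m_k-n_k}$ by a bubble-sort-type procedure that costs $O(k^{2})$ swaps; each swap produces a single commutator $[\yg_i^{a},\yg_j^{b}]$ with $\yg_i^{a}\in\Gg$ and $\yg_j^{b}\in\Ng$, which is a simple $(\Gg,\Ng)$-commutator of $\cl_{\Gg,\Ng}$-value $1$. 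The canonical form itself has $\cl_{\Gg,\Ng}\leq \sum_i|m_i-n_i|\,\cl_{\Gg,\Ng}(\yg_i)$, giving a linear upper bound $C\|\vec{n}-\vec{m}\|_1+O(k^{2})$. The main obstacle is securing enough test elements $\yg_j$ genuinely inside $[\Gg,\Ng]$ (not merely in $\Ng$) to detect linear independence in $V$; this is exactly what the amenability-based injectivity of the restriction $V\to\{\text{functions on }[\Gg,\Ng]\}$ delivers, and once it is in hand the remaining quasimorphism estimates and the commutator swap are routine.
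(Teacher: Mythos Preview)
Your proof is correct and follows essentially the same two-sided embedding strategy as the paper: an evaluation map for the upper bound on $\asdim$, and a power map $\vec n\mapsto \yg_1^{n_1}\cdots\yg_k^{n_k}$ for the lower bound (the paper's $\Ptau$ in Proposition~\ref{prop=dimQ}).

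A few minor remarks. Your framing via the dual space $V^{\ast}$ is a clean alternative to the paper's coordinate map $\Psig^{\RR}\colon[\Gg,\Ng]\to\RR^{\ell}$ (Proposition~\ref{prop=evmap}); the paper actually proves more in the finite-dimensional case, namely a full coarse equivalence $([\Gg,\Ng],d_{\scl_{\Gg,\Ng}})\cong(\ZZ^{\ell},\|\cdot\|_1)$ (Proposition~\ref{prop=sclkika}), but only the embedding is needed for the $\asdim$ statement. For the linear-independence step, your amenability/descent argument is slightly roundabout: once $\mu=\sum c_i\nug_i$ vanishes on $[\Ng,\Ng]$ it vanishes on every single commutator, so Bavard's defect formula (Proposition~\ref{prop=Bavard}) gives $\DD(\mu)=0$ directly---this is Corollary~\ref{cor=uptohom}. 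Finally, your $O(k^{2})$ bubble-sort bound for the $\cl_{\Gg,\Ng}$ upper bound works, but the paper's argument (moving each $\yg_i^{n_i-m_i}$ past a single block in $\Ng$) gives the sharper additive error $k-1$; either suffices.
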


See Subsection~\ref{subsec=asdim} for details on the asymptotic dimension; here we only note that $\asdim(\ZZ^{n},\|\cdot\|_1)=n$ for every $n\in \ZZ_{\geq 0}$ (we recall this as Theorem~\ref{thm=asdimZ}).
As we mentioned in Subsection~\ref{subsec=digest}, if $\Gg$ is an acylindrically hyperbolic group and if $\Ng$ is an infinite normal subgroup (we allow $\Ng=\Gg$), then $\QQQ(\Ng)^{\Gg}/\HHH^1(\Ng)^{\Gg}$ is infinite dimensional. Hence, Proposition~\ref{prop=prescldim} implies that $\asdim ([\Gg,\Ng],d_{\cl_{\Gg,\Ng}})=\infty$ in this case. This result greatly generalizes the aforementioned result \cite[Corollary~5.3]{CZ08}, stating $\asdim ([\Gg,\Gg],d_{\cl_{\Gg}})\geq 2$ for a torsion-free non-elementary Gromov-hyperbolic group $\Gg$. This is an application of our absolute versions.

\subsection{Coarse kernels and more details on the comparative version}\label{subsec=comparative}
First, let us explain  precisely what the `coarse kernel' of $\iota_{\Gg,\Ng}$ means in Theorem~\ref{thm=main1}. We note that $\iota_{\Gg,\Ng}$ is  a coarse homomorphism. The point here is the group theoretical inverse map $([\Gg,\Ng],d_{\scl_{\Gg}})\to ([\Gg,\Ng],d_{\scl_{\Gg,\Ng}})$; $\yg\mapsto \yg$ is \emph{not} necessarily a controlled map; this is why $\iota_{\Gg,\Ng}$ may \emph{not} be a monomorphism as a coarse homomorphism. In \cite{LV}, Leitner and Vigolo introduced the concept of \emph{coarse kernels} in the category of coarse groups. We briefly describe the definition in our setting;  see Examples~\ref{exa=CKgroup} and \ref{exa=CKiota} for more details. A subset $A$ of a set $X$ equipped with an almost metric $d_1$ is \emph{$d_1$-bounded} if the diameter of $A$ with respect to $d_1$ is finite (Definition~\ref{defn=bounded}). For an almost metric space $(X,d_2)$ and for $A,B\subseteq X$, $B$ is \emph{coarsely contained} in $A$ (in the almost metric $d_2$), written as $B\preccurlyeq_{d_2} A$, if there exists $D\in \RR_{\geq 0}$ such that $B\subseteq \mathcal{N}_{D,d_2}(A)$. Here, $\mathcal{N}_{D,d_2}(A)=\left\{x\in X\,\middle|\, \inf\limits_{a\in A}d_2(a,x)\leq D\right\}$. These sets $A$ and $B$ are \emph{asymptotic}, written as $A\asymp_{d_2} B$, if $A\preccurlyeq_{d_2} B$ and $B\preccurlyeq_{d_2} A$. The equivalence class with respect to $\asymp_{d_2}$ is called a \emph{coarse subspace}. Then, a subset $A$ of $[\Gg,\Ng]$ is a \emph{coarse kernel} of $\iota_{\Gg,\Ng}$ if we have
\begin{enumerate}[label=(\arabic*)]
  \item the set $A$ is $d_{\scl_{\Gg}}$-bounded; and
  \item the set $A$ is \emph{maximal} with respect to $\preccurlyeq_{d_{\scl_{\Gg,\Ng}}}$ among all $d_{\scl_{\Gg}}$-bounded sets in $[\Gg,\Ng]$. That is, for every $d_{\scl_{\Gg}}$-bounded subset $B$ of $[\Gg,\Ng]$, $B\preccurlyeq_{d_{\scl_{\Gg,\Ng}}} A$  holds.
\end{enumerate}
If such $A$ exists, then the coarse subspace $\mathbf{A}$ (with respect to $\asymp_{d_{\scl_{\Gg,\Ng}}}$) represented by $A$ is uniquely determined. Strictly speaking, $\mathbf{A}$ is referred to as \emph{the} coarse kernel of $\iota_{\Gg,\Ng}$;  recall Remark~\ref{rem=bf}.

We present the following strong form of Theorem~\ref{thm=main1}.
\begin{thm}[strong form of Theorem~\ref{thm=main1}: coarse group isomorphism class of the coarse kernel]\label{thm=main1precise}
We stick to the setting of Theorem~$\ref{thm=main1}$.
Then, there exist two maps $\PPh\colon ([\Gg,\Ng],d_{\scl_{\Gg,\Ng}})\to (\ZZ^{\ell},\|\cdot\|_1)$ and $\PPs\colon (\ZZ^{\ell},\|\cdot\|_1)\to ([\Gg,\Ng],d_{\scl_{\Gg,\Ng}})$ such that the following hold true.
\begin{enumerate}[label=\textup{(}$\arabic*$\textup{)}]
  \item The maps $\PPh$ and $\PPs$ are pre-coarse homomorphisms.
  \item The image $\PPs(\ZZ^{\ell})$ is $d_{\scl_{\Gg}}$-bounded.
  \item The map $\PPs$ is a quasi-isometric embedding. For every $d_{\scl_{\Gg}}$-bounded set $A\subseteq [\Gg,\Ng]$, the map $\PPh|_A$ is a quasi-isometric embedding.
  \item If we regard $\PPs$ as a map from $\ZZ^{\ell}$ to $\PPs(\ZZ^{\ell})$, then $\PPh|_{\PPs(\ZZ^{\ell})}$ and $\PPs$ are coarse inverse to each other.
  \item For every $d_{\scl_{\Gg}}$-bounded set $A\subseteq [\Gg,\Ng]$, $\sup\limits_{\yg\in A}d_{\scl_{\Gg,\Ng}}(\yg,(\PPs\circ \PPh)(\yg))<\infty$ holds.
\end{enumerate}
In particular, the coarse kernel of the map $\iota_{\Gg,\Ng}$ is the coarse subspace represented by $\PPs(\ZZ^{\ell})$.
\end{thm}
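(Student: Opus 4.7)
My plan mirrors the construction behind the absolute version (Proposition~\ref{prop=presclkika}), but carried out modulo $i^{\ast}\QQQ(\Gg)$. Fix representatives $\nug_1,\dots,\nug_\ell\in \QQQ(\Ng)^{\Gg}$ whose classes form a basis of $\WW(\Gg,\Ng)$, and let $D=\max_i \DD(\nug_i)$. Build $\PPh\colon [\Gg,\Ng]\to\ZZ^\ell$ by rounding: for a sufficiently small constant $c>0$ chosen depending on $D$, let $\PPh(\yg)_i$ be the integer closest to $c\cdot\nug_i(\yg)$. Then $\PPh$ is automatically a pre-coarse homomorphism into $(\ZZ^\ell,\|\cdot\|_1)$ by the bounded-defect arithmetic of quasimorphisms.

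The bulk of the work is the construction of $\PPs$. I seek $y_1,\dots,y_\ell\in[\Gg,\Ng]$ satisfying (i)~$c\cdot\nug_i(y_j)=\delta_{ij}$ (after rescaling, using the semi-homogeneity \eqref{eq=semihom} to adjust values by taking appropriate powers), and (ii)~$\phf(y_j)=0$ for every $\phf\in\QQQ(\Gg)$, equivalently $\scl_\Gg(y_j)=0$ by the Bavard duality for $\scl_\Gg$. Set $\PPs(n_1,\dots,n_\ell)=y_1^{n_1}\cdots y_\ell^{n_\ell}$; this is a pre-coarse homomorphism for the same reason as $\PPh$. The real obstacle is to achieve (i) and (ii) simultaneously: (i) alone follows from $\RR$-linear independence of $[\nug_1],\dots,[\nug_\ell]$ in $\QQQ(\Ng)^{\Gg}/\HHH^1(\Ng)^{\Gg}$, but (ii) asks that the dual basis lie in the annihilator of $i^{\ast}\QQQ(\Gg)$. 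Here hypothesis (a) or (b) enters: under either, $i^{\ast}\QQQ(\Gg)$ should surject precisely onto the kernel of the projection $\QQQ(\Ng)^{\Gg}/\HHH^1(\Ng)^{\Gg}\twoheadrightarrow \WW(\Gg,\Ng)$, so the annihilator of $i^{\ast}\QQQ(\Gg)$ in $[\Gg,\Ng]$ is still rich enough to produce dual elements pairing diagonally with the $\nug_i$'s. I would carry this out by a finite-dimensional Hahn--Banach-style duality argument that reduces the construction to Proposition~\ref{prop=presclkika} applied to an $\ell$-dimensional quotient picture isolating the $\WW$-directions; this is the technical heart of the comparative version.

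Granted such $y_j$'s, the five properties reduce to standard quasimorphism estimates. Property~(1) was built in. Property~(2) follows from (ii): for any homogeneous $\phf\in\QQQ(\Gg)$, $|\phf(\PPs(\vec n))|\le \sum_j |n_j\phf(y_j)|+(\ell-1)\DD(\phf)=(\ell-1)\DD(\phf)$, so Bavard duality gives $\scl_\Gg(\PPs(\vec n))\le (\ell-1)/2$ uniformly in $\vec n$. Property~(3) is Bavard duality combined with (i): evaluating the sup in \eqref{eq=KKMMBavard} on the $\nug_i$'s yields a lower bound linear in $\|\vec n-\vec m\|_1$ for $d_{\scl_{\Gg,\Ng}}(\PPs(\vec n),\PPs(\vec m))$, matching the obvious upper bound coming from the triangle inequality for $d_{\scl_{\Gg,\Ng}}$; for $\PPh|_A$ with $A$ any $d_{\scl_\Gg}$-bounded subset, the $i^{\ast}\QQQ(\Gg)$-directions contribute only a bounded amount, so the same pairing argument gives a quasi-isometric embedding onto a subset of $\ZZ^\ell$. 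Properties~(4) and~(5) then follow by combining (i)--(ii) with the triangle inequality. Finally, the coarse-kernel identification is formal: (2) shows $\PPs(\ZZ^\ell)$ is $d_{\scl_\Gg}$-bounded, and (5) yields that any other $d_{\scl_\Gg}$-bounded subset $B\subseteq[\Gg,\Ng]$ is $d_{\scl_{\Gg,\Ng}}$-close to $(\PPs\circ\PPh)(B)\subseteq\PPs(\ZZ^\ell)$, so $B\preccurlyeq_{d_{\scl_{\Gg,\Ng}}} \PPs(\ZZ^\ell)$, making $\PPs(\ZZ^\ell)$ a representative of the coarse kernel of $\iota_{\Gg,\Ng}$.
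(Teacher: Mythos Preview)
Your construction of $\PPh$ is fine and essentially matches the paper's. The problem is entirely in your construction of $\PPs$. You look for elements $y_1,\dots,y_\ell\in[\Gg,\Ng]$ with $\scl_\Gg(y_j)=0$ and $\nug_i(y_j)=\delta_{ij}$, and then set $\PPs(\vec n)=y_1^{n_1}\cdots y_\ell^{n_\ell}$. This cannot work in general, and the paper explains why at the beginning of Section~\ref{sec=CE}. If $\Gg$ is a non-elementary Gromov-hyperbolic group (for instance $\Gg=\pi_1(\Sigma_\genus)$, the main motivating example), the gap theorem of Calegari--Fujiwara implies that any $y\in[\Gg,\Gg]$ with $\scl_\Gg(y)=0$ has a power conjugate to its inverse; the paper then shows this forces $\scl_{\Gg,\Ng}(y)=0$ as well. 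By the Bavard duality \eqref{eq=KKMMBavard}, this means $\nug(y)=0$ for \emph{every} $\nug\in\QQQ(\Ng)^\Gg\setminus\HHH^1(\Ng)^\Gg$. So your conditions (i) and (ii) are incompatible: the ``annihilator of $i^\ast\QQQ(\Gg)$ in $[\Gg,\Ng]$'' collapses to the zero locus of all of $\QQQ(\Ng)^\Gg/\HHH^1(\Ng)^\Gg$, and no Hahn--Banach argument can rescue this.

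The paper's solution is to abandon the ansatz $\PPs(\vec n)=y_1^{n_1}\cdots y_\ell^{n_\ell}$ entirely. Instead one builds maps $\beta_i\colon\ZZ\to[\Gg,\Ng]$ of the form $m\mapsto [g_1,(g'_1)^m]\cdots[g_{t_i},(g'_{t_i})^m]$, with $g_s\in\Gg$ and $g'_s\in\Lg$; see \eqref{eq=defnPPs2}. Such a $\beta_i$ has $\cl_{\Gg,\Lg}(\beta_i(m))\le t_i$ uniformly in $m$, giving $d_{\scl_\Gg}$-boundedness automatically, while $\nug_j(\beta_i(m))$ can still grow linearly in $m$. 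The point is that $\beta_i$ is \emph{not} a power map, so it sidesteps the semi-homogeneity obstruction $\scl_\Gg(y^m)=|m|\scl_\Gg(y)$ that kills your approach. Finding the right $g_s,g'_s$ is the content of the \emph{core extractor} theory in Sections~\ref{sec=CE}--\ref{sec=CEabel}: one lifts $(\Gg,\Gg,\Ng)$ to a triple $(\Ff,\Kf,\Mf)$ with $\Ff$ free, extracts from each $[\nug]\in\WW(\Gg,\Ng)$ a genuine invariant \emph{homomorphism} on $\Mf\cap R$ (the ``core''), and uses injectivity of this extraction (Theorem~\ref{thm=InjCE}) together with Lemma~\ref{lem=dual} to locate elements $w_i\in[\Ff,\Kf]\cap R$ that play the role your $y_i$ were supposed to play --- but one level up, where homomorphisms rather than quasimorphisms are available. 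This is where hypotheses (a) and (b) actually enter (Lemma~\ref{lem=maintogeneral}), not in any linear-algebra duality on $[\Gg,\Ng]$ itself.
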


\begin{rem}\label{rem=zerodim}
In \cite[Theorem~2.1~(1)]{KKMMM}, the authors already showed that in general, $\scl_{\Gg}$ and $\scl_{\Gg,\Ng}$ are \emph{bi-Lipschitzly equivalent} on $[\Gg,\Ng]$ if $\WW(\Gg,\Ng)=0$. That is, there exists $C\in \RR_{\geq 1}$ such that for every $\yg\in [\Gg,\Ng]$, $\scl_{\Gg,\Ng}(\yg)\leq C\cdot \scl_{\Gg}(\yg)$ holds (if furthermore $\Ng\geqslant [\Gg,\Gg]$, then by \cite[Theorem~2.1~(3)]{KKMMM} $\scl_{\Gg}$ and $\scl_{\Gg,\Ng}$ in fact coincide on $[\Gg,\Ng]$). Hence, if $\ell=0$ in Theorem~\ref{thm=main1precise}, then the coarse kernel of $\iota_{\Gg,\Ng}$ is trivial (namely, represented by the trivial subgroup). Therefore, Theorem~\ref{thm=main1precise}  is new for the case where $\ell$ lies in $\NN=\ZZ_{>0}$.
\end{rem}

In the previous work \cite{KK}, \cite{KKMMM} and \cite{MMM},  the authors studied the problem whether $\scl_{\Gg}$ and $\scl_{\Gg,\Ng}$ are bi-Lipschitzly equivalent on $[\Gg,\Ng]$. Theorem~\ref{thm=main1precise} can be seen as a far-reaching `generalization' of the answers given in these results in the following two aspects.
\begin{enumerate}[label=(\arabic*)]
  \item We obtain a general machinery for comparison theorems of mixed $\scl$'s.  \item We formulate a refined problem.
\end{enumerate}
First, we explain the details of (1): in \cite{KK}, the first counterexample to this problem was given from symplectic geometry as follows. Let $\genus \in \NN_{\geq 2}$, and let $\Sigma_{\genus}$ denote the closed connected orientable surface of genus $\genus$. Equip $\Sigma_{\genus}$ with a symplectic form $\omega$. Let $\Gg=\mathrm{Symp}_0(\Sigma_{\genus},\omega)$, \emph{i.e.,} the identity component of the group of  symplectomorphisms of $(\Sigma_{\genus},\omega)$, and $\Ng=[\Gg,\Gg]$ (this group equals $\mathrm{Ham}(\Sigma_{\genus},\omega)$, \emph{i.e.,} the group of Hamiltonian diffeomorphisms of $(\Sigma_{\genus},\omega)$), then $\scl_{\Gg}$ and $\scl_{\Gg,\Ng}$ are not bi-Lipschitzly equivalent on $[\Gg,\Ng]$. Later, a similar example was provided with a smaller $\Gg$ in \cite[Example~7.15]{KKMMM} (based on \cite{KKMM2}). In \cite{MMM}, some of the authors proved that $\scl_{\Gg}$ and $\scl_{\Gg,\Ng}$ are not bi-Lipschitzly equivalent on $[\Gg,\Ng]$ if $\Gg$ is the surface group $\pi_1(\Sigma_{\genus})$ with $\genus\in \NN_{\geq 2}$ and $\Ng=[\Gg,\Gg]$: this is the first family of such examples for finitely generated $\Gg$. In all of these examples, the proof of the non-(bi-Lipschitz-)equivalence used information of  a concrete quasimorphism $\nug\in \QQQ(\Ng)^{\Gg}$ whose equivalence class $[\nug]\in \WW(\Gg,\Ng)$ is non-zero. In \cite{KK} and \cite{KKMMM}, Py's Calabi quasimorphism \cite{Py06} was employed as such $\nug$; in \cite{MMM}, some of the authors constructed such $\nug$ from a surface group action on a circle (see also \cite{KKMMMsurvey} for the constructions of these invariant quasimorphisms). By Remark~\ref{rem=zerodim}, if $\scl_{\Gg}$ and $\scl_{\Gg,\Ng}$ are not bi-Lipschitzly equivalent on $[\Gg,\Ng]$, then $\WW(\Gg,\Ng)\ne 0$. A natural question may be to ask whether the converse holds in general. Recall from Subsection~\ref{subsec=digest} that showing $\WW(\Gg,\Ng)\ne 0$ by the computation of $\Rdim \WW(\Gg,\Ng)$ is considerably easier than constructing a concrete element $\nug$ in $\QQQ(\Ng)^{\Gg}$ such that the equivalence class $[\nug]\in\WW(\Gg,\Ng)$ is non-zero. Therefore, this question is a challenge.

Nevertheless, the conclusions of Theorem~\ref{thm=main1precise} in particular imply that $\scl_{\Gg}$ and $\scl_{\Gg,\Ng}$ are not bi-Lipschitzly equivalent on $[\Gg,\Ng]$; see the discussion below on (2). The point here is that our general machinery in the proof of Theorem~\ref{thm=main1precise} does \emph{not} require  any information of concrete quasimorphisms $\nug$ satisfying $[\nug]\ne 0$. We call the heart of this machinery the theory of \emph{core extractors}, and will discuss it in Sections~\ref{sec=CE} and \ref{sec=CEabel}; see Subsection~\ref{subsec=org} for the organization of the present paper. As a corollary to our main results, we state as Corollary~\ref{cor=notbilip} a partial answer to the natural question above in a more general framework (we switch from a pair $(\Gg,\Ng)$ to a triple $(\Gg,\Lg,\Ng)$; see Subsection~\ref{subsec=mainthm}). In particular, if $\Ng=[\Gg,\Gg]$, then $\WW(\Gg,\Ng)\ne 0$ implies the non-(bi-Lipschitz-)equivalence of $\scl_{\Gg}$ and $\scl_{\Gg,\Ng}$ on $[\Gg,\Ng]$.

Next, we explain (2): by \cite[Chapter~7]{LV}, once we have the coarse kernel of a coarse homomorphism, we can apply the isomorphism theorem in the category of coarse groups. In particular, in the setting of Theorem~\ref{thm=main1precise}, if we set the coarse kernel of $\iota_{\Gg,\Ng}$ as $\mathbf{A}$, then we have the coarse group isomorphism
\begin{equation}\label{eq=coarsequot}
([\Gg,\Ng],d_{\scl_{\Gg,\Ng}})/\mathbf{A} \cong ([\Gg,\Ng],d_{\scl_{\Gg}}).
\end{equation}
If $\iota_{\Gg,\Ng}$ has a non-trivial coarse kernel, then $\scl_{\Gg}$ and $\scl_{\Gg,\Ng}$ are \emph{not} bi-Lipschitzly equivalent on $[\Gg,\Ng]$. Furthermore, the coarse group isomorphism \eqref{eq=coarsequot} explains \emph{the} difference between $([\Gg,\Ng],d_{\scl_{\Gg,\Ng}})$ and $([\Gg,\Ng],d_{\scl_{\Gg}})$. Hence, coarse group theoretic language, specially the coarse kernel, supplies the right formulation for the comparison problem of mixed $\scl$'s in a large scale aspect.

The following theorem is a special case of Theorem~\ref{mthm=dim} in Section~\ref{sec=intro2}.

\begin{thm}[special case of Theorem~\ref{mthm=dim}: coarse characterization of $\Rdim \WW(\Gg,\Ng)$]\label{thm=main2}
Let $\Gg$ be a group and $\Ng$ a normal subgroup of $\Gg$. Assume that either of the conditions \textup{(a)} and \textup{(b)} in Theorem~$\ref{thm=main1}$ is fulfilled. Then, we have
\[
\Rdim \WW(\Gg,\Ng)=\sup\{\asdim (A,d_{\scl_{\Gg,\Ng}})\,|\, A\subseteq [\Gg,\Ng] \mathrm{\ is\ }d_{\scl_{\Gg}}\textrm{-}\mathrm{bounded}\}
\]
and
\[
\Rdim \WW(\Gg,\Ng)\leq \sup\{\asdim (A,d_{\cl_{\Gg,\Ng}})\,|\, A\subseteq [\Gg,\Ng] \mathrm{\ is\ }d_{\cl_{\Gg}}\textrm{-}\mathrm{bounded}\}.
\]
\end{thm}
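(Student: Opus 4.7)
The approach is to derive both statements from the strong form of our main result, Theorem~\ref{thm=main1precise}, which supplies quasi-inverse maps $\PPh \colon [\Gg,\Ng] \to \ZZ^\ell$ and $\PPs \colon \ZZ^\ell \to [\Gg,\Ng]$, together with the computation $\asdim(\ZZ^n, \|\cdot\|_1) = n$ recorded as Theorem~\ref{thm=asdimZ}. I first treat the case $\ell := \Rdim \WW(\Gg,\Ng) < \infty$.

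For the $\scl$-equality, let $A \subseteq [\Gg,\Ng]$ be any $d_{\scl_{\Gg}}$-bounded subset. By Theorem~\ref{thm=main1precise}(3), the restriction $\PPh|_A \colon (A, d_{\scl_{\Gg,\Ng}}) \to (\ZZ^\ell, \|\cdot\|_1)$ is a quasi-isometric embedding; monotonicity of $\asdim$ under quasi-isometric embeddings, combined with $\asdim(\ZZ^\ell, \|\cdot\|_1) = \ell$, then gives $\asdim(A, d_{\scl_{\Gg,\Ng}}) \leq \ell$. Conversely, taking $A = \PPs(\ZZ^\ell)$, which is $d_{\scl_{\Gg}}$-bounded by Theorem~\ref{thm=main1precise}(2) and on which $\PPs$ is a quasi-isometric bijection by (3) and (4), we obtain $\asdim(A, d_{\scl_{\Gg,\Ng}}) = \ell$, so the supremum is realized.

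For the $\cl$-inequality I must exhibit some $d_{\cl_{\Gg}}$-bounded set $A$ with $\asdim(A, d_{\cl_{\Gg,\Ng}}) \geq \ell$. The plan is to arrange the core extractor $\PPs$ so that its image is actually $d_{\cl_{\Gg}}$-bounded, not merely $d_{\scl_{\Gg}}$-bounded. Inspection of the construction underlying Theorem~\ref{thm=main1precise} (developed in Sections~\ref{sec=CE} and~\ref{sec=CEabel}) should show that each $\PPs(\vn)$ can be written as a bounded-length product of simple $(\Gg,\Gg)$-commutators, so that $\sup_{\vn} \cl_{\Gg}(\PPs(\vn)) < \infty$. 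Since $d_{\scl_{\Gg,\Ng}} \leq d_{\cl_{\Gg,\Ng}}$, the lower quasi-isometric estimate for $\PPs$ in $d_{\scl_{\Gg,\Ng}}$ passes at once to $d_{\cl_{\Gg,\Ng}}$; a matching linear upper bound on $\cl_{\Gg,\Ng}(\PPs(\vm)^{-1}\PPs(\vn))$ in $\|\vm - \vn\|_1$ follows from a direct word-length count along the same recipe, using the pre-coarse homomorphism property (1). Hence $\PPs$ gives a quasi-isometric embedding $(\ZZ^\ell, \|\cdot\|_1) \to ([\Gg,\Ng], d_{\cl_{\Gg,\Ng}})$ with $d_{\cl_{\Gg}}$-bounded image, forcing the supremum to be $\geq \ell$. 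The case $\ell = \infty$ reduces to the finite case by applying the core extractor framework to any $k$ linearly independent classes in $\WW(\Gg,\Ng)$ and letting $k \to \infty$. The hard part will be precisely this $\cl$-version: Theorem~\ref{thm=main1precise} as a black box controls only the $\scl$ almost metric, so the $d_{\cl_{\Gg}}$-boundedness of $\PPs(\ZZ^\ell)$ and the upper quasi-isometric estimate in $d_{\cl_{\Gg,\Ng}}$ require opening up the core extractor construction and carefully bookkeeping commutator length along it, rather than appealing to the strong form as a black box.
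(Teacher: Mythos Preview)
Your approach is essentially the same as the paper's. The paper derives Theorem~\ref{thm=main2} as the $q=2$ case of Theorem~\ref{mthm=dim}, whose proof in turn cites Theorem~\ref{thm=maingeneral} (finite-dimensional case) and Theorem~\ref{thm=PPs} (infinite-dimensional case), together with Theorem~\ref{thm=asdimZ} and Proposition~\ref{prop=asdimCE} --- exactly the ingredients you identify.

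One clarification worth making: what you flag as the ``hard part'' --- the $\cl$-boundedness of $\PPs(\ZZ^\ell)$ and the $d_{\cl_{\Gg,\Ng}}$ upper QI estimate --- is not extra work beyond what the paper has already recorded. Theorem~\ref{thm=PPs} (the construction of $\PPs$) explicitly states that $\PPs$ is $d_{\cl_{\Gg,\Lg}}$-bounded (item~(3)) and gives the upper bound in $d_{\cl_{\Gg,\Ng}}$ (item~(4), inequality~\eqref{eq=PPsabove}); these feed into the general form Theorem~\ref{thm=maingeneral}, items~(1-2) and~(1-4). So the commutator-length bookkeeping you anticipate needing to do is precisely the content of Theorem~\ref{thm=PPs}, and the paper's proof of Theorem~\ref{mthm=dim} simply cites it rather than reopening the construction. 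Your instinct that Theorem~\ref{thm=main1precise} as stated (with only $\scl$-bounds) is insufficient for the $\cl$-inequality is correct; the paper resolves this by citing the sharper Theorem~\ref{thm=maingeneral} instead. For the infinite-dimensional case, your reduction to arbitrary finite $k$ via the core extractor is exactly how the paper proceeds (Setting~\ref{setting=main2} only requires $\ell \leq \Rdim\Wcal(\Gg,\Lg,\Ng)$).
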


\subsection{Application of determining the coarse kernel of $\iota_{\Gg,\Ng}$}\label{subsec=intro_main}
We have already explained  in \eqref{eq=coarsequot} an importance of coarse kernels. Here, we present a further theoretical importance of Theorem~\ref{thm=main1precise}. In the present paper, we use the following notation for the lower central series, where we set $\NN=\{1,2,\ldots\}$.

\begin{defn}[formulation of the lower central series]\label{defn=lowercentral}
Let $\Gg$ be a group. Then, the  \emph{lower central series} $(\gamma_q(\Gg))_{q\in \NN}$ of $\Gg$ is defined as $\gamma_1(\Gg)=\Gg$ and for every $q\in \NN$, $\gamma_{q+1}(\Gg)=[\Gg,\gamma_q(\Gg)]$.
\end{defn}

Let $\Gg$ and $H$ be two groups, and let $\varphi\colon \Gg\to H$ be a group homomorphism. It is then straightforward to see that $\varphi$ induces an $\RR$-linear map $T^{\varphi}\colon \WW(H,\gamma_2(H))\to \WW(\Gg,\gamma_2(\Gg))$.
Now we furthermore assume that $\WW(\Gg,\gamma_2(\Gg))$ and $\WW(H,\gamma_2(H))$ are both finite dimensional (this holds if $\Gg$ and $H$ are finitely generated; see Corollary~\ref{cor=nilp}). Set $\ell=\Rdim\WW(\Gg,\gamma_2(\Gg))$ and $\ell'=\Rdim\WW(H,\gamma_2(H))$. Then, by using coarse kernels, we can define an \emph{$\RR$-linear map} $S_{\varphi}\colon \RR^{\ell}\to \RR^{\ell'}$ that is \emph{dual} to $T^{\varphi}$ in an appropriate sense. The outline of the construction of this map $S_{\varphi}$ goes as follows. Let $\mathbf{A}_{\Gg}$ and $\mathbf{A}_{H}$ be the coarse kernels of $\iota_{\Gg,\gamma_2(\Gg)}$ and $\iota_{H,\gamma_2(H)}$, respectively. They are coarse subspaces of $(\gamma_3(\Gg),d_{\scl_{\Gg,\gamma_2(\Gg)}})$ and $(\gamma_3(H),d_{\scl_{H,\gamma_2(H)}})$, and by Theorem~\ref{thm=main1precise}, they are isomorphic to $(\ZZ^{\ell},\|\cdot\|_1)$ and $(\ZZ^{\ell'},\|\cdot\|_1)$, and hence to $(\RR^{\ell},\|\cdot\|_1)$ and $(\RR^{\ell'},\|\cdot\|_1)$ as coarse groups, respectively. By the universality of the coarse kernel $\mathbf{A}_{H}$, $\varphi$ induces a coarse homomorphism $\mathbf{S}_{\varphi}\colon \mathbf{A}_{\Gg}\to \mathbf{A}_{H}$  (recall Remark~\ref{rem=bf}). If we fix coarse isomorphisms $\mathbf{A}_{\Gg}\cong (\RR^{\ell},\|\cdot\|_1)$ and $\mathbf{A}_{H}\cong (\RR^{\ell'},\|\cdot\|_1)$, then we obtain a unique representative of $\mathbf{S}_{\varphi}$ by an $\RR$-linear map (we will argue in Lemma~\ref{lem=crushing}); this $\RR$-linear map is defined to be $S_{\varphi}\colon \RR^{\ell}\to \RR^{\ell'}$.

The $\RR$-linear map $T^{\varphi}$ is the map induced on function spaces, and there is no mystery to obtain an $\RR$-linear map. Contrastingly, $S_{\varphi}$ is a map `induced on subsets of groups,' and there is a priori \emph{no}  real vector space structure. Nevertheless, by Theorem~\ref{thm=main1precise} we are able to find such structures to define the \emph{$\RR$-linear map} $S_{\varphi}$. Furthermore, there is a certain duality between $S_{\varphi}$ and $T^{\varphi}$ (the \emph{coarse duality formula} in  Theorem~\ref{thm=duality}), which enables us to switch from function spaces to coarse kernels, and vice versa. We discuss this in Subsection~\ref{subsec=induced}.

As an application of this theory, we in particular obtain the following \emph{crushing theorem}.

\begin{thm}[special case of crushing theorem (Theorem~\ref{thm=crushing})] \label{thm=crushingq2}
Let $\Gg$ and $H$ be groups, and let $\varphi \colon G \to H$ be a group homomorphism. Assume that $\WW(\Gg, \gamma_2(\Gg))$ and  $\WW(H, \gamma_2(H))$ are both finite dimensional. Set $\ell=\Rdim\WW(\Gg, \gamma_2(\Gg))$ and $\ell'=\Rdim\WW(H, \gamma_2(H))$. Assume furthermore that $\ell>\ell'$.
Then, there exists $X \subseteq \gamma_{3}(\Gg)$ satisfying the following three conditions:
\begin{enumerate}[label=\textup{(\arabic*)}]
\item the set $X$ is \emph{not} $d_{\scl_{\Gg, \gamma_{2}(\Gg)}}$-bounded;
\item the set $X$ is $d_{\scl_\Gg}$-bounded; and
\item the image $\varphi(X)$ is $d_{\scl_{H},\gamma_{2}(H)}$-bounded.
\end{enumerate}
Moreover, there exists $X \subseteq \gamma_{3}(\Gg)$ satisfying \textup{(2)} and \textup{(3)} such that $\asdim(X,d_{\scl_{\Gg, \gamma_{2}(\Gg)}})\geq \ell-\ell'$.
\end{thm}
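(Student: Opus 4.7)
The plan is to invoke Theorem~\ref{thm=main1precise} (the strong form of the comparative version) for both pairs $(\Gg, \gamma_2(\Gg))$ and $(H, \gamma_2(H))$---condition~(a) of Theorem~\ref{thm=main1} is automatic here---to produce quasi-isometric coarse isomorphisms $\PPs_\Gg \colon (\ZZ^\ell, \|\cdot\|_1) \to \gamma_3(\Gg)$ and $\PPs_H \colon (\ZZ^{\ell'}, \|\cdot\|_1) \to \gamma_3(H)$ whose images represent the coarse kernels $\mathbf{A}_\Gg$ and $\mathbf{A}_H$ of $\iota_{\Gg, \gamma_2(\Gg)}$ and $\iota_{H, \gamma_2(H)}$, respectively. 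Restricting $\varphi$ to a map $\gamma_3(\Gg) \to \gamma_3(H)$ and applying the functoriality developed in Subsection~\ref{subsec=induced} yields a coarse homomorphism $\mathbf{S}_\varphi \colon \mathbf{A}_\Gg \to \mathbf{A}_H$. With the coarse isomorphisms above fixed, Lemma~\ref{lem=crushing} supplies a unique $\RR$-linear representative $S_\varphi \colon \RR^\ell \to \RR^{\ell'}$ of $\mathbf{S}_\varphi$, characterised by $\varphi \circ \PPs_\Gg \approx \PPs_H \circ S_\varphi$ up to bounded $d_{\scl_{H, \gamma_2(H)}}$-error.

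Because $\ell > \ell'$, the rank $r := \mathrm{rank}(S_\varphi)$ satisfies $r \leq \ell' < \ell$, so $\ker S_\varphi$ is an $\RR$-subspace of dimension $\ell - r \geq \ell - \ell'$. Choose $B \in GL_\ell(\RR)$ sending the standard coordinate subspace $\{0\}^r \times \RR^{\ell - r}$ bijectively onto an $(\ell - r)$-dimensional subspace of $\ker S_\varphi$; by the identification of coarse automorphisms of $(\ZZ^\ell, \|\cdot\|_1)$ with $GL_\ell(\RR)$ implicit in Lemma~\ref{lem=crushing}, we may pre-compose $\PPs_\Gg$ with the coarse automorphism represented by $B$ and thereby assume $\ker S_\varphi \supseteq \{0\}^r \times \RR^{\ell - r}$. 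Then $L := \{0\}^r \times \ZZ^{\ell - r}$ is a rank-$(\ell - r)$ sublattice of $\ZZ^\ell$ contained in $\ker S_\varphi$, and we set $X := \PPs_\Gg(L) \subseteq \gamma_3(\Gg)$.

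The three required properties then fall into place. For~(2), $X \subseteq \PPs_\Gg(\ZZ^\ell)$ is $d_{\scl_\Gg}$-bounded by Theorem~\ref{thm=main1precise}~(2). For~(1), the restriction $\PPs_\Gg|_L$ is a quasi-isometric embedding into $(\gamma_3(\Gg), d_{\scl_{\Gg, \gamma_2(\Gg)}})$ by Theorem~\ref{thm=main1precise}~(3), so $X$ is unbounded and $\asdim(X, d_{\scl_{\Gg, \gamma_2(\Gg)}}) = \asdim(L, \|\cdot\|_1) = \ell - r \geq \ell - \ell'$ via Theorem~\ref{thm=asdimZ}. For~(3), $S_\varphi(n) = 0$ for every $n \in L$, so the approximate commutativity furnishes a uniform constant $C$ with $d_{\scl_{H, \gamma_2(H)}}(\varphi(\PPs_\Gg(n)), \PPs_H(0)) \leq C$; hence $\varphi(X)$ lies in a $C$-neighbourhood of the singleton $\{\PPs_H(0)\}$ and is $d_{\scl_{H, \gamma_2(H)}}$-bounded.

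The step I expect to be most delicate is the change-of-basis reduction to a rational kernel in the second paragraph: without it, if $\ker S_\varphi$ is irrational, a naive candidate such as $\ker S_\varphi \cap \ZZ^\ell$ can have asymptotic dimension strictly smaller than $\ell - r$ (compare the sparse Diophantine behaviour of lattice points near an irrational line). The identification of coarse automorphisms of $(\ZZ^\ell, \|\cdot\|_1)$ with $GL_\ell(\RR)$ embedded in Lemma~\ref{lem=crushing} is exactly what allows us to absorb this irrationality into the choice of coarse isomorphism; everything else is a direct assembly of the properties (1)--(5) of Theorem~\ref{thm=main1precise}.
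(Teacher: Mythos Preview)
Your proof is correct and follows the same underlying strategy as the paper: construct the $\RR$-linear map $S_\varphi$ via Subsection~\ref{subsec=induced}, then take $X$ to be the $\PPs$-image of (a representative of) its kernel. There are two technical differences worth noting. First, the irrationality issue you flag as delicate is simply sidestepped in the paper by working with $\PPs^{\RR}_{\Gg} = \PPs_\Gg \circ \rho_\ell \colon \RR^\ell \to \gamma_3(\Gg)$ and setting $X = \PPs^{\RR}_\Gg(\ker S_\varphi)$ directly; since $(\ker S_\varphi, \|\cdot\|_1)$ is itself coarsely isomorphic to $\ZZ^{\dim\ker S_\varphi}$ and $\PPs^{\RR}_\Gg$ is a quasi-isometric coarse homomorphism, no $GL_\ell(\RR)$ change of basis is needed. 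Your route works, but the paper's choice of domain makes the problem evaporate. Second, you bound $\dim\ker S_\varphi \geq \ell - \ell'$ by the elementary rank estimate $\mathrm{rank}\, S_\varphi \leq \ell'$, whereas the paper first proves the more general Theorem~\ref{thm=crushing}, computing $\dim\ker S_\varphi = \Rdim\mathrm{Coker}(T^{\varphi})$ exactly via the duality formula (Theorem~\ref{thm=duality}), and then specialises; for the statement at hand your rank bound suffices and is more direct.
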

We will develop this theory in a much broader framework in a forthcoming work.

\section{Main results and outlined proofs}\label{sec=intro2}
Our main results, Theorem~\ref{mthm=main}, Theorem~\ref{mthm=dim} and Theorem~\ref{mthm=dimfin}, may be seen as comparative versions of study on coarse group structures of stable mixed commutator lengths for \emph{triples} $(\Gg,\Lg,\Ng)$. In Subsection~\ref{subsec=mainthm}, we present them. In Subsection~\ref{subsec=CKapplication}, we exhibit examples  of the coarse kernels. In Subsection~\ref{subsec=org}, we provide the outlines of the proofs of Proposition~\ref{prop=sclkika} (absolute version) and Theorem~\ref{mthm=main} (comparative version). Finally, we describe the organization of the present paper.

\subsection{Main theorems}\label{subsec=mainthm}

We can generalize the comparative version, Theorem~\ref{thm=main1precise}, to a triple $(\Gg,\Lg,\Ng)$, where $\Lg$ and $\Ng$ are normal subgroups of $\Gg$ with $\Lg \geqslant \Ng$. In this case, we  compare $([\Gg,\Ng],d_{\scl_{\Gg,\Lg}})$ and $([\Gg,\Ng],d_{\scl_{\Gg,\Ng}})$. For this purpose, we present the following generalizations of the vector space $\WW(\Gg,\Ng)$ and the map $\iota_{\Gg,\Ng}$ to the setting of group triples.

\begin{defn}[the vector space $\Wcal(\Gg,\Lg,\Ng)$]\label{defn=thespaceV}
Let $\Gg$ be a group, and let $\Lg$ and $\Ng$ be two normal subgroups of $\Gg$ with $\Lg\geqslant \Ng$. The inclusion map $i\colon \Ng \hookrightarrow \Lg$ induces an $\RR$-linear map $i^{\ast}\colon \QQQ(\Lg)^{\Gg}\to \QQQ(\Ng)^{\Gg}$; $\psg\mapsto \psg|_{\Ng}$. The vector space $\Wcal(\Gg,\Lg,\Ng)$ is defined as \[
\Wcal(\Gg,\Lg,\Ng)=\QQQ(\Ng)^{\Gg}/\left(\HHH^1(\Ng)^{\Gg}+i^{\ast}\QQQ(\Lg)^{\Gg}\right).
\]

For the case where $\Lg=\Gg$, we write $\WW(\Gg,\Ng)$ for $\Wcal(\Gg,\Gg,\Ng)$.
\end{defn}

\begin{defn}[the map $\iota_{(\Gg,\Lg,\Ng)}$]\label{defn=iota}
We stick to the setting of Definition~$\ref{defn=thespaceV}$. Define the map $\iota_{(\Gg,\Lg,\Ng)}$ as
\[
\iota_{(\Gg,\Lg,\Ng)}\colon ([\Gg,\Ng],d_{\scl_{\Gg,\Ng}})\to ([\Gg,\Ng],d_{\scl_{\Gg,\Lg}});\ \yg\mapsto \yg.
\]

For the case where $\Lg=\Gg$, then we write $\iota_{\Gg,\Ng}$ for $\iota_{(\Gg,\Gg,\Ng)}$ (as in \eqref{eq=iotaGN}).
\end{defn}

By Lemma~\ref{lem=Ginv}, the formulation of $\WW(\Gg,\Ng)$ in Definition~\ref{defn=thespaceV} coincides with \eqref{eq=W-space}. In Theorem~\ref{thm=main1precise}, we assume that the group pair $(\Gg,\Ng)$ satisfies either (a) or (b). We will generalize this to a group triple $(\Gg,\Lg,\Ng)$. In our main theorem (Theorem~\ref{mthm=main}), we are able to treat the case where $\Ng$ and $\Lg$ differ \emph{by one step} in the lower central series of $\Gg$. Recall our convention of the lower central series from Definition~\ref{defn=lowercentral}.

\begin{mthm}[main theorem 1: determining coarse group isomorphism class of the coarse kernel]\label{mthm=main}
Let $\Gg$ be a group, and let $\Lg$, $\Ng$ be two normal subgroups of $\Gg$ with $\Lg\geqslant \Ng$. Assume that there exists $q\in \NN_{\geq 2}$ such that either of the following two conditions is fulfilled:
\begin{enumerate}
  \item[\textup{(a$_q$)}]  $\Ng=\gamma_{q}(\Gg)$ and $\Lg=\gamma_{q-1}(\Gg)$; or
  \item[\textup{(b$_q$)}]   $\Ng\geqslant \gamma_{q}(\Gg)$, $\Lg=\gamma_{q-1}(\Gg)\Ng$, and $\Gg$ admits a group presentation  $(\Ff\,|\,\Rel)$  such that $\Rel\leqslant \gamma_{q}(\Ff)$.
\end{enumerate}
Assume that $\Wcal(\Gg,\Lg,\Ng)$ is finite dimensional, and set $\ell=\Rdim \Wcal(\Gg,\Lg,\Ng)$. Then there exist two maps $\PPh\colon ([\Gg,\Ng],d_{\scl_{\Gg,\Ng}})\to (\ZZ^{\ell},\|\cdot\|_1)$ and $\PPs\colon (\ZZ^{\ell},\|\cdot\|_1)\to ([\Gg,\Ng],d_{\scl_{\Gg,\Ng}})$ such that the following hold true.
\begin{enumerate}[label=\textup{(}$\arabic*$\textup{)}]
  \item The maps $\PPh$ and $\PPs$ are pre-coarse homomorphisms.
  \item The image $\PPs(\ZZ^{\ell})$ is $d_{\scl_{\Gg,\Lg}}$-bounded.
  \item The map $\PPs$ is a quasi-isometric embedding.
  \item For every $d_{\scl_{\Gg,\Lg}}$-bounded set $A\subseteq [\Gg,\Ng]$, $\PPh|_A$ is a quasi-isometric embedding.
  \item For every $d_{\scl_{\Gg,\Lg}}$-bounded set $A\subseteq [\Gg,\Ng]$, $\sup\limits_{\yg\in A}d_{\scl_{\Gg,\Ng}}(\yg,(\PPs\circ \PPh)(\yg))<\infty$.
  \item We have $\PPh\circ \PPs\approx \mathrm{id}_{(\ZZ^{\ell},\|\cdot\|_1)}$. That is, $\sup \left\{\left\|(\PPh\circ \PPs)(\vm)- \vm\right\|_1\,\middle|\,\vm\in \ZZ^{\ell}\right\}<\infty$.
  \item The coarse subspace represented by $\PPs(\ZZ^{\ell})$ is the coarse kernel of the map $\iota_{(\Gg,\Lg,\Ng)}$.
\end{enumerate}
In particular, if we regard $\PPs$ as a map from $\ZZ^{\ell}$ to $\PPs(\ZZ^{\ell})$, then $\PPh|_{\PPs(\ZZ^{\ell})}$ and $\PPs$ give coarse isomorphisms between  the coarse kernel of $\iota_{(\Gg,\Lg,\Ng)}$ and $(\ZZ^{\ell},\|\cdot\|_1)$.

Furthermore, we may take these maps $\PPh$ and $\PPs$ in such a way that $\PPh\circ \PPs=\mathrm{id}_{\ZZ^{\ell}}$ holds.
\end{mthm}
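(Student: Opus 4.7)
The plan is to construct $\PPh$ and $\PPs$ in a dual manner from a chosen basis of the $\ell$-dimensional quotient $\Wcal(\Gg,\Lg,\Ng)$. First I would fix lifts $\nug_1,\ldots,\nug_\ell\in\QQQ(\Ng)^{\Gg}$ of such a basis. The map $\PPs\colon\ZZ^{\ell}\to[\Gg,\Ng]$ will be built from an explicit family of ``test commutators'' $\yg_j(n)\in[\Gg,\Ng]$ via $\PPs(\vm)=\yg_1(m_1)\yg_2(m_2)\cdots\yg_\ell(m_\ell)$, while $\PPh\colon[\Gg,\Ng]\to\ZZ^{\ell}$ will be obtained by evaluating $(\nug_1,\ldots,\nug_\ell)$ on the argument, rescaling by a fixed linear change of coordinates in $\RR^{\ell}$, and rounding componentwise to the nearest integer. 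The rescaling will be coupled to the choice of the $\yg_j(n)$ so that $\PPh\circ\PPs=\mathrm{id}_{\ZZ^{\ell}}$ on the nose, yielding the final clause of the theorem as well as property (6).

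The heart of the argument is the construction of a \emph{core extractor}: a family $\yg_j(n)$ such that each $\yg_j(n)$ is a single simple $(\Gg,\Lg)$-commutator --- so that $\scl_{\Gg,\Lg}(\PPs(\vm))\le\ell/2$ uniformly in $\vm$, which is property (2) --- while the $\ell\times\ell$ matrix $\bigl(\nug_i(\yg_j(n))\bigr)_{i,j}$ depends linearly in $n$ and stays quantitatively non-degenerate. Under (a$_q$) the natural candidate is $\yg_j(n)=[\gG_j,\zg_j^{\,n}]$ with $\gG_j\in\Gg$ and $\zg_j\in\gamma_{q-1}(\Gg)=\Lg$: the identity $[\Gg,\gamma_{q-1}(\Gg)]=\gamma_q(\Gg)=\Ng$ places this element in $\Ng$ while keeping it a simple $(\Gg,\Lg)$-commutator. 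Under (b$_q$), I would lift to the presentation $(\Ff\,|\,\Rel)$ and perform an analogous construction in $\Ff$ using $\gamma_{q-1}(\Ff)$-level entries, then push back down to $\Gg$; the hypothesis $\Rel\leqslant\gamma_q(\Ff)$ ensures that passage to the quotient does not destroy the simple $(\Gg,\Lg)$-commutator structure. The main obstacle is the quantitative non-degeneracy of the pairing matrix: one has to convert the qualitative linear independence of $[\nug_1],\ldots,[\nug_\ell]$ in the quotient $\Wcal(\Gg,\Lg,\Ng)$ into explicit separation at the level of evaluations on specific group elements, and this is the step where the theory of core extractors must really be developed.

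Once the core extractor is in hand, the remaining verifications are standard Bavard-style estimates. The pre-coarse-homomorphism property (1) follows from the defect bounds on the $\nug_i$ combined with semi-homogeneity of $\scl_{\Gg,\Ng}$; property (2) is by construction; the upper bound in the quasi-isometric estimate of (3) comes from subadditivity and semi-homogeneity of $\scl_{\Gg,\Ng}$, and the matching lower bound is obtained by applying the Bavard duality~\eqref{eq=KKMMBavard} to the $\nug_i$ that are detected by $\PPh$ via the non-degenerate pairing matrix. For (4), on a $d_{\scl_{\Gg,\Lg}}$-bounded set $A$ every $\Gg$-invariant quasimorphism on $\Ng$ lying in $i^{\ast}\QQQ(\Lg)^{\Gg}$ is automatically bounded on $A$, so that only the basis classes $[\nug_i]\in\Wcal(\Gg,\Lg,\Ng)$ detect $d_{\scl_{\Gg,\Ng}}$-distances on $A$ --- precisely the data captured by $\PPh$. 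Property (5) then follows from (4) applied to $A\cup(\PPs\circ\PPh)(A)$. Finally, for (7), the $d_{\scl_{\Gg,\Lg}}$-boundedness of $\PPs(\ZZ^{\ell})$ is (2), while the maximality of $\PPs(\ZZ^{\ell})$ among $d_{\scl_{\Gg,\Lg}}$-bounded subsets of $[\Gg,\Ng]$ with respect to $\preccurlyeq_{d_{\scl_{\Gg,\Ng}}}$ is immediate from (5): any such bounded set $B$ lies within a uniform $d_{\scl_{\Gg,\Ng}}$-neighborhood of $(\PPs\circ\PPh)(B)\subseteq\PPs(\ZZ^{\ell})$, which is exactly $B\preccurlyeq_{d_{\scl_{\Gg,\Ng}}}\PPs(\ZZ^{\ell})$.
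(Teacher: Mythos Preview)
Your overall architecture is right: an evaluation map for $\PPh$, commutator-type test elements for $\PPs$, and Bavard-duality estimates to glue them together. But the construction of $\PPs$ has a genuine gap, and it is precisely the gap that the paper's core-extractor machinery is built to close.

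The map $\PPs$ must take values in $[\Gg,\Ng]$, not merely in $\Ng$. In case (a$_q$) your $y_j(n)=[\gG_j,\zg_j^{\,n}]$ with $\zg_j\in\gamma_{q-1}(\Gg)$ lies in $[\Gg,\gamma_{q-1}(\Gg)]=\gamma_q(\Gg)=\Ng$, but there is no reason for it to lie in $[\Gg,\Ng]=\gamma_{q+1}(\Gg)$. The paper fixes this by \emph{always} lifting to a tuple $(\Ff,\Kf,\Mf,\Rel,\ppi)$ with $\Ff$ free (even in case (a$_q$); see Lemma~\ref{lem=maintogeneral}) and choosing the base elements $\wf_i$ to be \emph{relators}: $\wf_i\in[\Ff,\Kf]\cap\Rel$. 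Then $\alpha_{\bffi}(m)\equiv\wf_i^{\,m}\pmod{[\Ff,\Mf]}$ (Proposition~\ref{prop=alpha}), so $\beta_{\bffi}(m)=\ppi(\alpha_{\bffi}(m))\in\ppi(\wf_i)^m[\Gg,\Ng]=[\Gg,\Ng]$ because $\ppi(\wf_i)=e_\Gg$. This is the point of requiring $\wf_i\in\Rel$, and it cannot be achieved by working directly in $\Gg$. Relatedly, a single simple commutator per coordinate is not enough in general: the relator $\wf_i$ may require an $[\Ff,\Kf]$-expression of length $t_i>1$, so each $\beta_{\bffi}(m)$ is a product of $t_i$ simple commutators (see \eqref{eq=defnPPs2}).

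Your description of the core extractor as ``a family $y_j(n)$'' misses its content. In the paper the core extractor is an injective $\RR$-linear map $\CE\colon\Wcal(\Gg,\Lg,\Ng)\to i_{\Mf\cap\Rel,\Mf}^{\ast}\HHH^1(\Mf)^{\Ff}$: for $\nug\in\QQQ(\Ng)^\Gg$ one decomposes $\ppi^\ast\nug=\hf+i^\ast\phf$ on the lift (where $\Wcal(\Ff,\Kf,\Mf)=0$), and $\CE[\nug]=\hf|_{\Mf\cap\Rel}$. Injectivity (Theorem~\ref{thm=InjCE}) together with the dual-basis Lemma~\ref{lem=dual} is what produces $\wf_1,\ldots,\wf_\ell\in[\Ff,\Kf]\cap\Rel$ and $\nug_1,\ldots,\nug_\ell$ with $\CEt_{\nug_j}(\wf_i)=\delta_{i,j}$. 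This is the mechanism you flag as ``the step where the theory of core extractors must really be developed''; without it there is no way to manufacture the non-degenerate pairing you need.

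For (4) your qualitative reasoning is correct, but to make it quantitative you need the comparison theorem of defects (Theorem~\ref{thm=comparisonDD}): writing $\nug=\kg+i^\ast\psg+\sum a_j\nug_j$, one must bound $\DD(\psg)$ and $\sum|a_j|$ by $\DD(\nug)$, and this requires the inverse mapping theorem applied to the Banach spaces $(\QQQ(\Lg)^\Gg/\HHH^1(\Lg)^\Gg,\hat\DD)$ and $(\QQQ(\Ng)^\Gg/\HHH^1(\Ng)^\Gg,\hat\DD)$. Your derivation of (5) from (4) and (6) via $A\cup(\PPs\circ\PPh)(A)$ is fine, and your argument for (7) is correct.
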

Similar to Remark~\ref{rem=zerodim}, the main case of Theorem~\ref{mthm=main} is that of $\ell\in \NN$; see also Theorem~\ref{thm=biLip}.

The next result is the second main theorem of this paper, which treats the coarse group theoretic characterization of the dimension of the space $\Wcal(\Gg,\Lg,\Ng)$. We say a map $\alpha\colon (X,d_X)\to (Y,d_Y)$ between almost metric spaces is \emph{coarsely proper} (Definition~\ref{defn=several_maps} and Example~\ref{exa=several_maps}) if for every $S \in \RR_{>0}$ there exists $T\in \RR_{>0}$ such that for every $x_1,x_2\in X$, $d_X(x_1,x_2) > T$ implies $d_Y(\alpha(x_1),\alpha(x_2)) > S$; we say $\alpha$ is \emph{$d_Y$-bounded} (Definition~\ref{defn=bounded}) if $\alpha(X)$ is $d_Y$-bounded.

\begin{mthm}[main theorem~2: coarse group theoretic characterization of $\Rdim \Wcal(\Gg,\Lg,\Ng)$]\label{mthm=dim}
Let $\Gg$ be a group, and let $\Lg$, $\Ng$ be two normal subgroups of $\Gg$ with $\Lg\geqslant \Ng$. Assume that there exists $q\in \NN_{\geq 2}$ such that either of the conditions \textup{(a$_q$)} and \textup{(b$_q$)} in Theorem~{\textup{\ref{mthm=main}}} is fulfilled. Then, we have
\begin{align*}
&\Rdim \Wcal(\Gg,\Lg,\Ng)\\
={}&\sup\left\{\ell\in \ZZ_{\geq 0}\,\middle|\,\begin{gathered}\exists \mathrm{\ coarsely\ proper\ }d_{\scl_{\Gg,\Lg}}\textrm{-}\mathrm{bounded\ coarse\ homomorphism}\\ (\ZZ^{\ell},\|\cdot\|_1)\to ([\Gg,\Ng],d_{\scl_{\Gg,\Ng}})
\end{gathered}\right\} \\
={}&\inf\left\{\ell\in \ZZ_{\geq 0}\,\middle|\,\begin{gathered}\forall A\subseteq [\Gg,\Ng]\ d_{\scl_{\Gg,\Lg}}\textrm{-}\mathrm{bounded;}\\
\exists \mathrm{\ coarsely\ proper\ coarse\ homomorphism}\ (A,d_{\scl_{\Gg,\Ng}})\to (\ZZ^{\ell},\|\cdot\|_1)
\end{gathered}\right\}
\end{align*}
and
\begin{align*}
&\Rdim \Wcal(\Gg,\Lg,\Ng)\\
\leq{} &\sup\left\{\ell\in \ZZ_{\geq 0}\,\middle|\,\begin{gathered} \exists \mathrm{\ coarsely\ proper\ }d_{\cl_{\Gg,\Lg}}\textrm{-}\mathrm{bounded\ coarse\ homomorphism}\\ (\ZZ^{\ell},\|\cdot\|_1)\to ([\Gg,\Ng],d_{\cl_{\Gg,\Ng}})
\end{gathered}\right\}
\end{align*}
In particular, we  have
\[
\Rdim \Wcal(\Gg,\Lg,\Ng)=\sup\{\asdim (A,d_{\scl_{\Gg,\Ng}})\,|\, A\subseteq [\Gg,\Ng] \mathrm{\ is\ }d_{\scl_{\Gg,\Lg}}\textrm{-}\mathrm{bounded}\}
\]
and
\[
\Rdim \Wcal(\Gg,\Lg,\Ng)\leq \sup\{\asdim (A,d_{\cl_{\Gg,\Ng}})\,|\, A\subseteq [\Gg,\Ng] \mathrm{\ is\ }d_{\cl_{\Gg,\Lg}}\textrm{-}\mathrm{bounded}\}.
\]
\end{mthm}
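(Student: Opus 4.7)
The plan is to derive Theorem~\ref{mthm=dim} from Theorem~\ref{mthm=main}. Set $\ell := \Rdim\Wcal(\Gg,\Lg,\Ng)$ and first suppose $\ell<\infty$, so that Theorem~\ref{mthm=main} supplies maps $\PPh$ and $\PPs$ with properties (1)--(7). The core coarse-geometric fact I would repeatedly invoke is that, by Theorem~\ref{thm=asdimZ} together with the monotonicity of $\asdim$ under coarse embeddings, any coarsely proper coarse homomorphism $(\ZZ^{\ell_1},\|\cdot\|_1)\to (\ZZ^{\ell_2},\|\cdot\|_1)$ must satisfy $\ell_1 \leq \ell_2$.

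For the first (sup) equality, the lower bound is witnessed by $\PPs$ itself: by (1), (2), (3) it is a pre-coarse homomorphism with $d_{\scl_{\Gg,\Lg}}$-bounded image, and a quasi-isometric embedding (hence coarsely proper). For the upper bound, any coarsely proper $d_{\scl_{\Gg,\Lg}}$-bounded coarse homomorphism $\PPs'\colon (\ZZ^{\ell'},\|\cdot\|_1)\to ([\Gg,\Ng],d_{\scl_{\Gg,\Ng}})$ has image contained in some $d_{\scl_{\Gg,\Lg}}$-bounded set $A$, and $\PPh|_A\circ \PPs'\colon \ZZ^{\ell'}\to \ZZ^{\ell}$ is a coarsely proper coarse homomorphism by (1) and (4), so $\ell'\leq \ell$. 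For the second (inf) equality, $\PPh|_A$ witnesses the upper bound for every $d_{\scl_{\Gg,\Lg}}$-bounded $A$ by (1) and (4), while specialising to $A=\PPs(\ZZ^{\ell})$ and composing with the quasi-isometry $\PPs$ produces a coarsely proper coarse homomorphism $\ZZ^\ell\to \ZZ^{\ell'}$, forcing $\ell\leq \ell'$. The asymptotic-dimension characterisation then follows immediately: $\PPs(\ZZ^{\ell})$ realises the sup at $\ell$, whereas $\PPh|_A\colon A\to \ZZ^\ell$ gives $\asdim(A,d_{\scl_{\Gg,\Ng}}) \leq \ell$ for every $d_{\scl_{\Gg,\Lg}}$-bounded $A$. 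The corresponding $d_{\cl}$-inequalities follow at once because $d_{\cl}\geq d_{\scl}$.

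When $\ell=\infty$, I must show that each right-hand side is also $\infty$. For any $k\in \NN$ I would pick $k$ linearly independent classes in $\Wcal(\Gg,\Lg,\Ng)$ and apply the core-extractor construction of Sections~\ref{sec=CE}--\ref{sec=CEabel} underlying Theorem~\ref{mthm=main} to the $k$-dimensional subspace they span; the output should be maps $\PPs_k,\PPh_k$ satisfying the analogues of (1)--(5), which by the argument of the preceding paragraph force each right-hand side to be at least $k$. The main obstacle is therefore to verify that the core-extractor apparatus localises cleanly to arbitrary finite-dimensional subspaces of $\Wcal(\Gg,\Lg,\Ng)$ without requiring $\Wcal(\Gg,\Lg,\Ng)$ itself to be finite-dimensional; once this is in hand, the remainder of the argument is a formal consequence of the coarse-kernel structure from Theorem~\ref{mthm=main} together with the asymptotic dimension of $\ZZ^\ell$.
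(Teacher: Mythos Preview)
Your overall strategy matches the paper's: derive the finite-dimensional case from Theorem~\ref{mthm=main} (the paper does this via the general form Theorem~\ref{thm=maingeneral}), and handle the infinite-dimensional case by constructing $\PPs_k$ for each $k\leq\Rdim\Wcal(\Gg,\Lg,\Ng)$ (the paper simply cites Theorem~\ref{thm=PPs} for this). Two corrections are needed.

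The claim ``the corresponding $d_{\cl}$-inequalities follow at once because $d_{\cl}\geq d_{\scl}$'' is incorrect as stated. From $d_{\cl}\geq d_{\scl}$ one deduces that coarsely proper into $d_{\scl_{\Gg,\Ng}}$ implies coarsely proper into $d_{\cl_{\Gg,\Ng}}$, but \emph{not} that $d_{\scl_{\Gg,\Lg}}$-bounded implies $d_{\cl_{\Gg,\Lg}}$-bounded, nor that controlled into $d_{\scl_{\Gg,\Ng}}$ implies controlled into $d_{\cl_{\Gg,\Ng}}$. The fix is that the map $\PPs$ constructed in Theorem~\ref{thm=PPs} already has the $\cl$-level properties: it is $d_{\cl_{\Gg,\Lg}}$-bounded (item~(3) there) and large-scale Lipschitz into $d_{\cl_{\Gg,\Ng}}$ (item~(4)). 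These strengthenings are recorded in Theorem~\ref{thm=maingeneral}~(1-2),(1-4) but not in the bare statement of Theorem~\ref{mthm=main}, so you must cite the construction, not just Theorem~\ref{mthm=main}.

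Your stated obstacle in the infinite-dimensional case dissolves once you observe that only $\PPs_k$ is needed, not $\PPh_k$. The core-extractor output (Corollary~\ref{cor=InjCE}(2), hence Theorem~\ref{thm=PPs}) is stated under Setting~\ref{setting=main2}, which only assumes $\ell\leq\Rdim\Wcal(\Gg,\Lg,\Ng)$; thus $\PPs_k$ exists for every $k$ with all the listed properties. This alone forces both sup-quantities and the asymptotic-dimension sup to be at least $k$ for all $k$, and taking $A=\PPs_{\ell'+1}(\ZZ^{\ell'+1})$ shows the inf exceeds $\ell'$ for every $\ell'$. By contrast, the construction of $\PPh$ in Theorem~\ref{thm=evmap} genuinely requires $\Wcal(\Gg,\Lg,\Ng)$ to be finite-dimensional, since it rests on the comparison-of-defects Theorem~\ref{thm=comparisonDD}; so no analogue $\PPh_k$ is available in this regime---but none is required.
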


Theorem~\ref{thm=main1precise} and Theorem~\ref{thm=main2} immediately follow from Theorem~\ref{mthm=main} and Theorem~\ref{mthm=dim} for $q=2$, respectively.

Theorem~\ref{mthm=dim} assumes that either of the conditions (a${}_q$) and (b${}_q$) is satisfied for some $q\in \NN_{\geq 2}$. If we drop this assumption and treat the general case, then at present  we only have an inequality in one direction in Theorem~\ref{mthm=dim}.

\begin{mthm}[main theorem~3: dimension estimate for general cases]\label{mthm=dimfin}
Let $\Gg$ be a group, and let $\Lg$, $\Ng$ be two normal subgroups of $\Gg$ with $\Lg\geqslant \Ng$. Then, we have
\begin{align*}
\Rdim \Wcal(\Gg,\Lg,\Ng)
\geq &\inf\left\{\ell\in \ZZ_{\geq 0}\,\middle|\,\begin{gathered}\forall A\subseteq [\Gg,\Ng]\ d_{\scl_{\Gg,\Lg}}\textrm{-}\mathrm{bounded;}\\
\exists \mathrm{\ coarsely\ proper\ coarse\ homomorphism}\\
(A,d_{\scl_{\Gg,\Ng}})\to (\ZZ^{\ell},\|\cdot\|_1)
\end{gathered}\right\}.
\end{align*}
In particular, we have
\[
\Rdim \Wcal(\Gg,\Lg,\Ng)\geq \sup\{\asdim (A,d_{\scl_{\Gg,\Ng}})\,|\, A\subseteq [\Gg,\Ng] \mathrm{\ is\ }d_{\scl_{\Gg,\Lg}}\textrm{-}\mathrm{bounded}\}.
\]
\end{mthm}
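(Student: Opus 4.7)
The claim is vacuous when $\Rdim \Wcal(\Gg,\Lg,\Ng) = \infty$, so I assume $\ell := \Rdim \Wcal(\Gg,\Lg,\Ng) < \infty$ and exhibit $\ell$ as a member of the set whose infimum appears: I construct one pre-coarse homomorphism $\PPh\colon ([\Gg,\Ng],d_{\scl_{\Gg,\Ng}}) \to (\ZZ^{\ell},\|\cdot\|_1)$ whose restriction to every $d_{\scl_{\Gg,\Lg}}$-bounded $A\subseteq[\Gg,\Ng]$ is coarsely proper. The asymptotic-dimension inequality then follows from coarse invariance of $\asdim$ and $\asdim(\ZZ^\ell,\|\cdot\|_1)=\ell$ (Theorem~\ref{thm=asdimZ}).

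Fix representatives $\nug_1,\ldots,\nug_\ell \in \QQQ(\Ng)^{\Gg}$ of a basis of $\Wcal(\Gg,\Lg,\Ng)$ and set $\PPh(\yg) := (\lfloor \nug_1(\yg) \rfloor,\ldots,\lfloor \nug_\ell(\yg) \rfloor)$. The pre-coarse homomorphism property comes from the defects $\DD(\nug_i)$; the controlled-map property from $|\nug_i(\yg_1^{-1}\yg_2)| \le 2\DD(\nug_i)\scl_{\Gg,\Ng}(\yg_1^{-1}\yg_2)$, which is Bavard's duality~\eqref{eq=KKMMBavard} applied to $\nug_i$ itself.

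The heart of the proof is coarse properness of $\PPh|_A$, which I would deduce from the key estimate
\[
\scl_{\Gg,\Ng}(\zg) \;\le\; C_1 \max_i|\nug_i(\zg)| \,+\, C_2 R \qquad \text{for every } \zg\in[\Gg,\Ng] \text{ with } \scl_{\Gg,\Lg}(\zg)\le R,
\]
where $R := \diam_{d_{\scl_{\Gg,\Lg}}}(A)$ and $C_1, C_2$ depend only on the basis; applied to $\zg = \yg_1^{-1}\yg_2$, it yields precisely what is needed. To prove the estimate, apply Bavard duality to $\scl_{\Gg,\Ng}$ and, for each $\nug \in \QQQ(\Ng)^{\Gg}$, decompose its class in the Banach space $V := \QQQ(\Ng)^{\Gg}/\HHH^1(\Ng)^{\Gg}$ (with $\DD$ as norm) along the algebraic direct sum $V = W_0 \oplus V'_L$, where $W_0 := \mathrm{span}([\nug_1],\ldots,[\nug_\ell])$ and $V'_L$ is the image of the restriction map $\QQQ(\Lg)^{\Gg}/\HHH^1(\Lg)^{\Gg} \to V$. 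Writing $[\nug] = \sum_i c_i[\nug_i] + [\muf]$, evaluating at $\zg$ (where $\Gg$-invariant homomorphisms on $\Ng$ vanish), and lifting $[\muf]$ to $\tilde\muf \in \QQQ(\Lg)^{\Gg}$, Bavard for $\scl_{\Gg,\Lg}$ yields $|\nug(\zg)| \le (\sum_i |c_i|)\max_i|\nug_i(\zg)| + 2\DD(\tilde\muf)R$; dividing by $2\DD(\nug)$ and taking the supremum over $\nug$ gives the key estimate, \emph{provided} one has uniform bounds $\sum_i|c_i|\le K_1\DD(\nug)$ and $\DD(\tilde\muf)\le K_2\DD(\nug)$ for appropriate choices of lift.

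Securing these two uniform bounds is the main obstacle. The starting point is that $V'_L$ is closed in $V$: the image of any bounded linear operator between Banach spaces whose algebraic cokernel is finite-dimensional is closed (apply the open mapping theorem to the bijection $(X/\ker T)\oplus Z \to Y$ induced by choosing an algebraic complement $Z$), and in our case the algebraic cokernel $V/V'_L = \Wcal(\Gg,\Lg,\Ng)$ is $\ell$-dimensional. Closedness of $V'_L$ then yields (i) a bounded projection $V\to W_0$, giving $\sum_i|c_i|\le K_1\DD(\nug)$ by equivalence of norms on the finite-dimensional $W_0$; and (ii), via the open mapping theorem applied to the bounded surjection $\QQQ(\Lg)^{\Gg}/\HHH^1(\Lg)^{\Gg} \twoheadrightarrow V'_L$, a lift $\tilde\muf$ with $\DD(\tilde\muf) \le K\DD(\muf)$, which combined with (i) gives $\DD(\tilde\muf)\le K_2\DD(\nug)$. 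With both bounds in hand, the key estimate holds, $\PPh|_A$ is coarsely proper for every $d_{\scl_{\Gg,\Lg}}$-bounded $A$, and Theorem~\ref{mthm=dimfin} follows.
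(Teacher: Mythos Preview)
Your proof is correct and takes essentially the same approach as the paper. The paper constructs the same evaluation map (Theorem~\ref{thm=evmap}) and obtains the same QI-type lower bound on $d_{\scl_{\Gg,\Lg}}$-bounded sets via the same open-mapping-theorem argument on the defect-normed Banach spaces $\QQQ(\Lg)^{\Gg}/\HHH^1(\Lg)^{\Gg}$ and $\QQQ(\Ng)^{\Gg}/\HHH^1(\Ng)^{\Gg}$; the only difference is that the paper isolates your bounds $\sum_i|c_i|\le K_1\DD(\nug)$ and $\DD(\tilde\muf)\le K_2\DD(\nug)$ as a standalone ``comparison theorem of defects'' (Theorem~\ref{thm=comparisonDD}, proved via Proposition~\ref{prop=inverse}), whereas you inline that argument directly.
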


Theorem~\ref{mthm=main} and Theorem~\ref{mthm=dim} have more general forms, Theorem~\ref{thm=maingeneral} and Theorem~\ref{thm=dimgeneral}, respectively. A natural question might be to ask whether we can treat the case for the triple of the form $(\Gg,\gamma_{p}(\Gg),\gamma_q(\Gg))$ for $p,q\in \NN$ with $p<q$. We will address this question in a forthcoming work.

\begin{rem}\label{rem=retract}
In the conclusions of Theorem~\ref{mthm=main}, take two maps $\PPh$ and $\PPs$ such that moreover $\PPh\circ \PPs=\mathrm{id}_{\ZZ^{\ell}}$. Set $A=(\PPs\circ \PPh)([\Gg,\Ng])(=\PPs(\ZZ^{\ell}))$, and view $\PPs$ as a map from $\ZZ^{\ell}$ to $A$.
Then, this set $A$ is a representative of the coarse kernel of $\iota_{(\Gg,\Lg,\Ng)}$. This $A\subseteq [\Gg,\Ng]$ might be seen as a `retract,' and the map $\PPs\circ \PPh\colon [\Gg,\Ng]\to A$ might be seen as a `retraction.' More precisely, $\PPs\circ \PPh$ is a coarse homomorphism and $(\PPs\circ \PPh)|_A=\mathrm{id}_A$. The point we stress here is that $(\PPs\circ \PPh)|_A$ is \emph{not merely close to $\mathrm{id}_A$, but identical to $\mathrm{id}_A$.} This might have a potential application in future to study the coarse kernel of $\iota_{(\Gg,\Lg,\Ng)}$ \emph{set theoretically}. For instance,  we can define the  coarse group multiplication `$\bullet$' and the coarse group inverse $c$ on $A$ set theoretically as follows: for every $\vm,\vn\in \ZZ^{\ell}$,
\[
\PPs(\vm) \bullet \PPs(\vn)=\PPs(\vm+\vn)\quad \textrm{and}\quad c(\PPs(\vm))=\PPs(-\vm).
\]
Then, $((A,d_{\scl_{\Gg,\Ng}}),\bullet,e_{\Gg},c)$ is isomorphic to $(\ZZ^{\ell},\|\cdot\|_1)$ as a coarse group. This $\bullet$ does not coincide with the genuine group multiplication. In fact, since $A$ is $d_{\scl_{\Gg,\Lg}}$-bounded, even starting from a genuine group, we can construct a (set theoretical) coarse `subgroup' that is far from any genuine subgroup in general.
\end{rem}

\begin{exa}\label{exa=CG}
Define $S_1^{\sharp}\colon (\RR_{\geq 0},|\cdot|)\to (\mathbb{C},|\cdot|)$ by $S_1^{\sharp}(u)=ue^{\sqrt{-1}\log u}$ for every $u\in \RR_{>0}$ and $S_1^{\sharp}(0)=0$. Then, since $\big|(S_1^{\sharp})'(u)\big|\leq \sqrt{2}$ for $u\in \RR_{>0}$, we have
\[
|u_1-u_2|\leq |S_1^{\sharp}(u_1)-S_1^{\sharp}(u_2)|\leq \sqrt{2}|u_1-u_2|
\]
for every $u_1,u_2\in \RR_{\geq 0}$; in particular, $S_1^{\sharp}$ is a  bi-Lipschitz map. Since the natural map $\rho \colon (\mathbb{C},|\cdot|)\to (\RR^2,\|\cdot\|_1)$ is a bi-Lipschitz map, so is $S_2^{\sharp}=\rho \circ S_1^{\sharp}$. Now, define a map $S^{\sharp}\colon (\RR,|\cdot|)\to (\RR^4,\|\cdot\|_1)$ by
\[
S^{\sharp}(u)=\begin{cases}
(S_2^{\sharp}(|u|), 0,0), & \textrm{if }u\in \RR_{\geq 0},\\
(0,0,S_2^{\sharp}(|u|)) & \textrm{if }u\in \RR_{<0}
\end{cases}
\]
for every $u\in \RR$. Then, we can show that this map $S^{\sharp}$ is a bi-Lipschitz map. In particular, if we regard $(\RR,|\cdot|)$ and $(\RR^4,\|\cdot\|_1)$ as coarse spaces, \emph{not} equipped with the coarse group structures, then this map $S^{\sharp}$ is a controlled map that is coarsely proper; in other words, $S^{\sharp}$ is a coarse embedding. The existence of such a map $S^{\sharp}$ makes the study of coarse embeddings from $(\RR,|\cdot|)$ to $(\RR^4,\|\cdot\|_1)$ (or, even that of quasi-isometric embeddings from $(\RR,|\cdot|)$ to $(\RR^4,\|\cdot\|_1)$) up to closeness quite difficult.

However, if we regard $(\RR,|\cdot|)$ and $(\RR^4,\|\cdot\|_1)$ as coarse groups, then the map $S^{\sharp}$ is \emph{not} a coarse homomorphism. In fact, Lemma~\ref{lem=crushing} implies that every coarse homomorphism $\mathbf{S}\colon(\RR,|\cdot|)\to(\RR^4,\|\cdot\|_1)$ (as a coarse map; recall Remark~\ref{rem=bf}) admits a unique $\RR$-linear representative. Therefore, the classification of coarsely proper coarse homomorphisms  from $(\RR,|\cdot|)$ to $(\RR^4,\|\cdot\|_1)$ is straightforward: such a coarse map corresponds bijectively to a non-zero vector $\xi$ in $\RR^4$ by the $\RR$-linear representative $\RR\ni u\mapsto u\xi\in \RR^4$.

This example suggests that for a subset $A$ of a coarse group, the coarse group structure of $A$ (if it exists) be much finer than the coarse structure of $A$.
\end{exa}

\subsection{Concrete examples and applications of coarse kernels}\label{subsec=CKapplication}

We will see several examples of Theorem~\ref{mthm=main} in Section~\ref{sec=examplemain}; in Section~\ref{sec=remarks}, we also see applications of the coarse kernels in Theorem~\ref{mthm=main}. Here, we present some concrete examples from these two sections. For simplicity, we consider the setting of pairs $(\Gg,\Ng)$ (Theorem~\ref{thm=main1precise}). In some cases, we are able to obtain an explicit representative of the coarse kernel; see discussions above Theorem~\ref{thm=WW} for terminology appearing in Proposition~\ref{prop=explicitexample}. Here, we recall the \emph{Andreadakis--Johnson filtration} \cite{Andreadakis} of the automorphism group of a free group.  Let $n\in \NN_{\geq 2}$ and let $F_n$ denote a free group of rank $n$. For every $q\in \NN$, the action of $\Aut(F_n)$ on the $(q+1)$-st nilpotent group quotient $F_n/\gamma_{q+1}(F_n)$ induces the group homomorphism $\Aut(F_n)\to \Aut (F_n/\gamma_{q+1}(F_n))$. Its kernel is denoted by $\mathcal{A}_{n}(q)$; thus, we have the descending filtration
\begin{equation}\label{eq=Andreadakis}
\Aut(F_n) \geqslant \mathcal{A}_{n}(1)\geqslant \mathcal{A}_{n}(2)\geqslant \cdots\geqslant \mathcal{A}_{n}(q)\geqslant  \mathcal{A}_{n}(q+1)\geqslant\cdots.
\end{equation}
We note that $\mathcal{A}_{n}(1)$ equals $\IAA_n$, the group of IA-automorphisms of $F_n$.

\begin{prop}[examples with explicit coarse kernels]\label{prop=explicitexample}
For a group $\Gg$ and a normal subgroup $\Ng$ of $\Gg$, let $\iota_{\Gg,\Ng}$ be the map defined as \eqref{eq=iotaGN}.
\begin{enumerate}[label=\textup{(\arabic*)}]
  \item \textup{(}surface group\textup{)} Let $\genus\in \NN_{\geq 2}$. Let $\Gg=\pi_1(\Sigma_{\genus})$ be the surface group of genus $\genus$:
\[
\Gg= \langle a_1,  \cdots,a_{\genus}, b_1, \cdots, b_{\genus} \, | \, [a_1, b_1] \cdots [a_{\genus}, b_{\genus}]=e_G \rangle.
\]
Let $\Ng=[\Gg,\Gg]$. Set $A=\left\{[a_1, b_1^m] \cdots [a_{\genus}, b_{\genus}^m]\,\middle|\, m\in \ZZ\right\}$ and
\[
\PPs\colon \ZZ\to A;\ m\mapsto [a_1, b_1^m] \cdots [a_{\genus}, b_{\genus}^m].
\]
Then, $A\subseteq [\Gg,\Ng]$ represents the coarse kernel of $\iota_{\Gg,\Ng}$, and it is isomorphic to $(\ZZ,|\cdot|)$  by the coarse group isomorphism $\PPs$.
  \item \textup{(}free-by-cyclic group\textup{)} Let $n\in \NN_{\geq 2}$ and let $F_n$ denote a free group of rank $n$ with free basis $\{a_1,\ldots,a_n\}$. Let $\chi\in \Aut(F_n)$ be an atoroidal automorphism. Assume that $\chi$ belongs to the group $\mathcal{A}_n(2)$ appearing in the Andreadakis--Johnson filtration \eqref{eq=Andreadakis}. Let $\Gg$ be the semi-direct product $F_n\rtimes_{\chi}\ZZ$ associated with the action $\ZZ\curvearrowright F_n$ by application of powers of $\chi$. Let $\Ng=[\Gg,\Gg]$.

Set $A^{\flat}=\left\{\chi(a_1)^{m_1}a_1^{-m_1} \cdots \chi(a_n)^{m_n}a_n^{-m_n}\,\middle|\, (m_1,\ldots ,m_n)\in \ZZ^n\right\}$ and
\[
\PPs^{\flat}\colon \ZZ^n\to A^{\flat};\ (m_1,\ldots,m_n)\mapsto \chi(a_1)^{m_1}a_1^{-m_1} \cdots \chi(a_n)^{m_n}a_n^{-m_n}.
\]
Then, $A^{\flat}\subseteq [\Gg,\Ng]$ represents the coarse kernel of $\iota_{\Gg,\Ng}$, and it is isomorphic to $(\ZZ^n,\|\cdot\|_1)$  by the coarse group isomorphism $\PPs^{\flat}$.
\end{enumerate}
\end{prop}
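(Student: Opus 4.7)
The plan is to invoke Theorem~\ref{thm=main1precise} (the pair version of Theorem~\ref{mthm=main}) with $\Ng=[\Gg,\Gg]$, and to exhibit the explicit $\PPs$ (respectively $\PPs^{\flat}$) as a concrete realization of the maps whose existence that theorem asserts. The theorem, under the hypothesis $\Rdim\WW(\Gg,\Ng)=\ell$, provides coarse inverse maps $\PPh$ and $\PPs_{\mathrm{thm}}$ with $\PPs_{\mathrm{thm}}(\ZZ^{\ell})$ representing the coarse kernel; it thus suffices to show that the formulas of the proposition produce maps enjoying the same conclusions.

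For the dimension count, in (1) we have $\ell=1$ by Theorem~\ref{thm=WW}(2), while in (2) we expect $\ell=n$, obtained from the cohomological identification $\WW(\Gg,\gamma_2(\Gg))\hookrightarrow \HHH^2(\Gg;\RR)$ of Theorem~\ref{thm=KKMMMmain2} (applicable since $\Gg/\Ng=\Gg^{\mathrm{ab}}\cong \ZZ^{n+1}$ is amenable, hence boundedly $3$-acyclic) combined with a Lyndon--Hochschild--Serre computation yielding $\Rdim\HHH^2(F_n\rtimes_\chi \ZZ;\RR)=n$ using the IA hypothesis on $\chi$. The $d_{\scl_\Gg}$-boundedness of $A$ and $A^{\flat}$ is immediate from the given formulas: in (1), $\PPs(m)$ is displayed as a product of $g$ simple $\Gg$-commutators, so $\cl_\Gg(\PPs(m))\leq g$ uniformly; in (2), the identity $\chi(a_i)^{m_i}a_i^{-m_i}=[t,a_i^{m_i}]$ (with $t$ the canonical generator of the $\ZZ$-factor) displays $\PPs^{\flat}(\vm)$ as a product of $n$ simple $\Gg$-commutators. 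Containment $A,A^{\flat}\subseteq [\Gg,\Ng]=\gamma_3(\Gg)$ is checked modulo $\gamma_3(\Gg)$, using the surface relation $[a_1,b_1]\cdots[a_g,b_g]=e_\Gg$ in (1), and the IA condition on $\chi$ in (2).

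The main technical step is that $\PPs$ (resp.\ $\PPs^{\flat}$) is both a pre-coarse homomorphism and a quasi-isometric embedding into $([\Gg,\Ng],d_{\scl_{\Gg,\Ng}})$. The upper bound $\scl_{\Gg,\Ng}(\PPs(\vm))\leq C_1\|\vm\|_1$ comes from an explicit decomposition of $\PPs(\vm)$ into $O(\|\vm\|_1)$ simple $(\Gg,\Ng)$-commutators, based on the identities $[a,b^m]=\prod_{k=0}^{m-1}b^k[a,b]b^{-k}$ and $b^k[a,b]b^{-k}=[b^k,[a,b]]\cdot[a,b]$ (noting $[a,b]\in \Ng$), together with a reorganization that cancels the cumulative $[a_i,b_i]$-factors via the defining relations of $\Gg$. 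The matching lower bound comes from Bavard duality~\eqref{eq=KKMMBavard}: in (1), pair $\PPs(m)$ with the $\Gg$-invariant quasimorphism $\nug\in\QQQ(\Ng)^{\Gg}$ constructed from a surface group action on the circle in \cite{MMM}, which satisfies $\nug(\PPs(m))=C_2 m+O(1)$ for some $C_2\ne 0$; in (2), pair $\PPs^{\flat}(\vm)$ with representatives $\nug_1,\dots,\nug_n$ of a basis of $\WW(\Gg,\Ng)$ coming from the cohomological computation, yielding $\nug_j(\PPs^{\flat}(\vm))=L_j(\vm)+O(1)$ for $\RR$-linearly independent functionals $L_1,\dots,L_n$ on $\RR^n$.

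The chief obstacle, and the concluding step, is to promote $\PPs(\ZZ^{\ell})$ from \emph{a} coarse subgroup contained in the coarse kernel to \emph{the} coarse kernel itself, since \emph{a priori} the explicit $\PPs$ could be quasi-isometric onto a proper coarse subgroup of $\PPs_{\mathrm{thm}}(\ZZ^{\ell})$. For this, compare with the map $\PPh$ from Theorem~\ref{mthm=main}: the composite $\PPh\circ\PPs\colon \ZZ^{\ell}\to \ZZ^{\ell}$ is a coarse homomorphism and thus admits a unique $\RR$-linear representative $L\colon\RR^{\ell}\to\RR^{\ell}$ by Lemma~\ref{lem=crushing}. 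The coarse duality formula (Theorem~\ref{thm=duality}) identifies $L$, up to the chosen bases, with the matrix of leading linear coefficients of $\bigl(\nug_j(\PPs(\vm))\bigr)_j$ as $\vm$ ranges over a fixed basis of $\ZZ^{\ell}$; the previous step shows this matrix is nonsingular. Hence $L$ is an $\RR$-linear isomorphism, $\PPh|_{\PPs(\ZZ^{\ell})}$ is a quasi-isometric embedding with cobounded image in $\ZZ^{\ell}$, and the coarse inverse property of $(\PPh,\PPs_{\mathrm{thm}})$ forces $\PPs(\ZZ^{\ell})\asymp_{d_{\scl_{\Gg,\Ng}}}\PPs_{\mathrm{thm}}(\ZZ^{\ell})$. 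Since $\PPs_{\mathrm{thm}}(\ZZ^{\ell})$ represents the coarse kernel, so does $\PPs(\ZZ^{\ell})=A$ (respectively $A^{\flat}$), and $\PPs$ (resp.\ $\PPs^{\flat}$) realizes a coarse group isomorphism $(\ZZ^{\ell},\|\cdot\|_1)\cong A$ (resp.\ $A^{\flat}$).
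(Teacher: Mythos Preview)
Your approach diverges from the paper's and has a genuine gap in case~(2). The paper derives the proposition as an instance of Proposition~\ref{prop=explicitck} (via Example~\ref{exa=explicitker}), built on the core extractor machinery: since $R$ is normally generated by exactly $\ell=\Rdim\WW(\Gg,\Ng)$ elements $r_1,\dots,r_\ell\in\gamma_2(F)$ (with $\ell$ supplied by Theorem~\ref{thm=WW}(2),(4)), Theorem~\ref{thm=explicitkernel} produces $\nug_1,\dots,\nug_\ell\in\QQQ(\Ng)^{\Gg}$ with $\CEt_{\nug_j}(r_i)=\delta_{i,j}$, and the limit formula (Theorem~\ref{thm=valueofcore}) then forces the leading linear coefficient matrix of $(\nug_j(\PPs^{\flat}(\vm)))_j$ to be the identity. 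Combined with Proposition~\ref{prop=boundt}, this directly yields that the explicit $\PPs^{\flat}(\ZZ^\ell)$ is the coarse kernel, with no external quasimorphism needed.

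Your step ``yielding $\nug_j(\PPs^{\flat}(\vm))=L_j(\vm)+O(1)$ for $\RR$-linearly independent functionals $L_1,\dots,L_n$'' in case~(2) is the crux, and it is asserted without proof. For an \emph{arbitrary} set of representatives of a basis of $\WW(\Gg,\Ng)$, the cohomological dimension count alone does not establish that the leading functionals on your specific set $A^{\flat}$ are independent; this is exactly the content of Theorem~\ref{thm=explicitkernel}, which you bypass. Any attempt to deduce it by contraposition (boundedness of $\sum c_j\nug_j$ on $A^{\flat}$ forces $\sum c_j[\nug_j]=0$) would invoke Theorem~\ref{thm=extkernel}, which already presupposes that $A^{\flat}$ represents the coarse kernel --- circular. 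Your appeal to Theorem~\ref{thm=duality} is also misplaced: that result concerns the maps induced between two groups by a homomorphism $\varphi\colon\Gg\to H$, not the internal composite $\PPh\circ\PPs$ on a single group. In case~(1) your route via the explicit quasimorphism of \cite{MMM} can in principle be completed (modulo matching their sequence \eqref{eq=surfaceelem} to your $A$), but this is precisely the case-by-case reliance on concrete invariant quasimorphisms that the core extractor framework is designed to eliminate.
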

In the setting of Proposition~\ref{prop=explicitexample}~(2), we will also have another representative $A'{}^{\flat}$ for the coarse kernel of $\iota_{\Gg,\Ng}$ in Remark~\ref{rem=appli}. Under a weaker assumption that the atoroidal automorphism $\chi$ belongs to the IA-automorphism group $\IAA_n$, we may have an explicit expression of a representative of the coarse kernel, but the expression will be more complicated. We will see this in Example~\ref{exa=explicitker}~(3). We will also exhibit a representative of the coarse kernel in the setting of a certain mapping torus in Example~\ref{exa=explicitker}~(4). 

Proposition~\ref{prop=explicitexample} has the following applications.  Recall from \eqref{eq=W-space} (the definition of $\WW(\Gg,\Ng)$) that  $\nug\in \QQQ(\Ng)^{\Gg}$ represents the zero element in $\WW(\Gg,\Ng)$ if and only if there exists $\kg\in \HHH^1(\Ng)^{\Gg}$ such that the invariant homogeneous quasimorphism $\nug-\kg$ is extendable to $\Gg$.

\begin{prop}[characterization of extendability up to invariant homomorphisms for surface groups and free-by-cyclic groups]\label{prop=extexample}
The following hold true.
\begin{enumerate}[label=\textup{(\arabic*)}]
  \item Let $(\Gg,\Ng)$ and $A$ be as in Proposition~$\ref{prop=explicitexample}$~\textup{(1)}. Let $\nug\in \QQQ(\Ng)^{\Gg}$. Then, $\nug$ represents the zero element in $\WW(\Gg,\Ng)$ if and only if $\nug$ is bounded on $A$.
  \item Let $(\Gg,\Ng)$, $a_1,\ldots,a_n$, $\chi$ be as in Proposition~$\ref{prop=explicitexample}$~\textup{(2)}. Let $\nug\in \QQQ(\Ng)^{\Gg}$. Then, $\nug$ represents the zero element in $\WW(\Gg,\Ng)$ if and only if for every $i\in \{1,\ldots,n\}$, $\nug$ is bounded on the set $\left\{\chi(a_i)^ma_i^{-m} \,\middle| \,m\in \ZZ\right\}$.
\end{enumerate}
\end{prop}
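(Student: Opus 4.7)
My plan is to derive both parts from a single principle: for any pair $(\Gg,\Ng)$ to which Theorem~\ref{thm=main1precise} applies and any subset $A\subseteq[\Gg,\Ng]$ representing the coarse kernel of $\iota_{\Gg,\Ng}$, an element $\nug\in\QQQ(\Ng)^{\Gg}$ represents the zero class in $\WW(\Gg,\Ng)$ if and only if $\nug|_A$ is bounded. The forward direction is direct: if $\nug=\kg+\psg|_{\Ng}$ with $\kg\in\HHH^1(\Ng)^{\Gg}$ and $\psg\in\QQQ(\Gg)$, then the $\Gg$-invariance of $\kg$ gives $\kg([\gG,\xg])=\kg(\gG\xg\gG^{-1})-\kg(\xg)=0$ for every simple $(\Gg,\Ng)$-commutator, so $\kg$ vanishes on $[\Gg,\Ng]$ and $\nug=\psg|_{[\Gg,\Ng]}$ there. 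Since $A$ is $d_{\scl_{\Gg}}$-bounded, Bavard duality for $\scl_{\Gg}$ combined with the defect inequality yields, for any $\yg_0\in A$ and every $\yg\in A$,
\[
|\psg(\yg)-\psg(\yg_0)|\leq |\psg(\yg\yg_0^{-1})|+\DD(\psg)\leq 2\DD(\psg)\,\scl_{\Gg}(\yg\yg_0^{-1})+\DD(\psg),
\]
which is uniformly bounded.

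For the converse, I would use the quasi-isometric coarse isomorphism $\PPs\colon(\ZZ^{\ell},\|\cdot\|_1)\to A$ provided by Theorem~\ref{thm=main1precise}, where $\ell=\Rdim\WW(\Gg,\Ng)$. A short computation using the defect inequality together with the bi-invariance of $d_{\scl_{\Gg,\Ng}}$ shows that $\nug\circ\PPs\colon\ZZ^{\ell}\to\RR$ is a genuine quasimorphism on $\ZZ^{\ell}$, so it differs from a unique $\RR$-linear functional $\overline{\nug}\colon\RR^{\ell}\to\RR$ by a uniformly bounded error. The assignment $[\nug]\mapsto\overline{\nug}$ is then a well-defined $\RR$-linear map $\WW(\Gg,\Ng)\to(\RR^{\ell})^{\ast}$, and the Bavard duality \eqref{eq=KKMMBavard} combined with the quasi-isometric embedding property $\scl_{\Gg,\Ng}(\PPs(\vm))\asymp\|\vm\|_{1}$ forces this map to be injective; by dimension count it is then an isomorphism. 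Hence $\nug|_A$ bounded is equivalent to $[\nug]=0$. I expect this injectivity step to be the main obstacle: it requires arranging the supremum realizing the growth of $\scl_{\Gg,\Ng}$ on $A$ to be detected within a single, fixed finite-dimensional family of representatives, which is where the finite dimensionality of $\WW(\Gg,\Ng)$ and the coarse duality formula of Theorem~\ref{thm=duality} enter essentially.

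With the principle in hand, assertion~(1) is immediate from Proposition~\ref{prop=explicitexample}(1), since the set $A$ there is an explicit representative of the coarse kernel. For~(2), applying the principle to $A^{\flat}$ reduces the claim to the equivalence between boundedness of $\nug$ on $A^{\flat}$ and boundedness on each single-parameter subset $B_i=\{\chi(a_i)^{m}a_i^{-m}\mid m\in\ZZ\}$. The inclusion $B_i\subseteq A^{\flat}$ obtained by setting $m_j=0$ for $j\neq i$ handles one direction, and the converse follows from $n-1$ applications of the defect inequality:
\[
\left|\nug\bigl(\chi(a_1)^{m_1}a_1^{-m_1}\cdots\chi(a_n)^{m_n}a_n^{-m_n}\bigr)-\sum_{i=1}^{n}\nug\bigl(\chi(a_i)^{m_i}a_i^{-m_i}\bigr)\right|\leq(n-1)\DD(\nug).
\]
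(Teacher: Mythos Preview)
Your overall strategy matches the paper's: both derive the proposition from the general equivalence ``$[\nug]=0$ in $\WW(\Gg,\Ng)$ iff $\nug$ is bounded on a representative of the coarse kernel,'' then specialize using Proposition~\ref{prop=explicitexample}. The paper packages this equivalence as Theorem~\ref{thm=extkernel}; the equivalence (1)$\Leftrightarrow$(5) there is exactly what is needed, with the coordinate axes $\PPs(\ZZ\vec{e}_i)$ being $A$ itself in part~(1) and the sets $\{\chi(a_i)^m a_i^{-m}\}$ in part~(2). Your forward direction and your reduction for part~(2) via the defect inequality are both correct and agree with the paper's arguments for (1)$\Rightarrow$(2) and (5)$\Rightarrow$(3) in Theorem~\ref{thm=extkernel}.

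The gap is exactly where you flag it: the injectivity of $[\nug]\mapsto\overline{\nug}$. Bavard duality plus the quasi-isometric estimate $\scl_{\Gg,\Ng}(\PPs(\vm))\asymp\|\vm\|_1$ does \emph{not} by itself force this map to be injective, because the Bavard supremum runs over all of $\QQQ(\Ng)^\Gg/\HHH^1(\Ng)^\Gg$, not over $\WW(\Gg,\Ng)$; a~priori the growth of $\scl_{\Gg,\Ng}$ along $A$ could be witnessed by quasimorphisms lying outside any fixed finite-dimensional transversal. Your parenthetical (``detected within a single, fixed finite-dimensional family of representatives'') names the missing ingredient correctly, but Theorem~\ref{thm=duality} is not the right citation: that result concerns a homomorphism $\varphi\colon\Gg\to H$ between two groups and says nothing new when $\varphi=\mathrm{id}$. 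The paper closes the gap differently: in the proof of (4)$\Rightarrow$(1) of Theorem~\ref{thm=extkernel} it uses the \emph{specific} $\PPs$ built via core extractors (Theorem~\ref{thm=PPs}) together with the limit formula (Theorem~\ref{thm=valueofcore}), obtaining directly $a_i=\lim_{m\to\infty}\nug(\PPs(m\vec{e}_i))/m$ after writing $\nug=\kg+i^{\ast}\psg+\sum_j a_j\nug_j$. An alternative that fits your dimension-count framework: invoke Theorem~\ref{thm=evmap} (which rests on the comparison theorem of defects, Theorem~\ref{thm=comparisonDD}) to see that $\PPh^{\RR}|_A=(\nug_1,\ldots,\nug_\ell)|_A$ is a quasi-isometric embedding for any basis; then $\PPh^{\RR}\circ\PPs$ is a QI embedding $\ZZ^\ell\to\RR^\ell$ which is also a pre-coarse homomorphism, so its linearization is invertible, and hence your map $T$ carries the basis $[\nug_1],\ldots,[\nug_\ell]$ to a linearly independent set.
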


\subsection{Outline of the proofs}\label{subsec=org}

In this subsection, we will present an outline of the proof of Theorem~\ref{mthm=main}. Theorem~\ref{mthm=main} is the comparative version, and its counterpart in the absolute version is Proposition~\ref{prop=presclkika}; in Section~\ref{sec=absolute}, we state the precise version of Proposition~\ref{prop=presclkika} as Proposition~\ref{prop=sclkika}. The proof of Proposition~\ref{prop=presclkika} may be seen as a `prototype' of that of Theorem~\ref{mthm=main}. Hence, before proceeding to the outlined proof of Theorem~\ref{mthm=main}, we  first present that of Proposition~\ref{prop=presclkika}. We summarize the correspondence between absolute and comparative versions in Table~\ref{table=abcom}.

\begin{table}[h]
\caption{correspondence between absolute and comparative versions}
\label{table=abcom}
\begin{tabular}{c|c|c}
 \hline  & absolute version & comparative version\\
 \hline \hline
\begin{tabular}{c}
 theorem \\
 (coarse isomorphisms) 
\end{tabular} &
\begin{tabular}{c}
 Proposition~\ref{prop=sclkika} \\
 (Proposition~\ref{prop=presclkika}) 
\end{tabular} &
\begin{tabular}{c}
 Theorem~\ref{mthm=main},  \\
 Theorem~\ref{thm=maingeneral} (general form),\\
 Theorems~\ref{thm=main1precise} and  \ref{thm=main1} ($\Lg=\Gg$)
\end{tabular} \\
 \hline
 \begin{tabular}{c}
  theorem \\
 (dimension)
\end{tabular} &
\begin{tabular}{c}
  Proposition~\ref{prop=metricdim}  \\
 (Proposition~\ref{prop=prescldim}) 
\end{tabular} &
\begin{tabular}{c}
 Theorem~\ref{mthm=dim}, \\
 Theorem~\ref{thm=dimgeneral} (general form), \\
  Theorem~\ref{thm=main2} ($\Lg=\Gg$), \\
  (Theorem~\ref{mthm=dimfin}: one direction)
\end{tabular} \\
 \hline \hline
$\scl$ side & $([\Gg,\Ng],d_{\scl_{\Gg,\Ng}})$ & 
\begin{tabular}{c}
 the coarse kernel of $\iota_{(\Gg,\Lg,\Ng)}$\\
 $([\Gg,\Ng],d_{\scl_{\Gg,\Ng}})\to ([\Gg,\Ng],d_{\scl_{\Gg,\Lg}})$ \\
\end{tabular} \\
\hline
quasimorphism side & $\QQQ(\Ng)^{\Gg}/\HHH^1(\Ng)^{\Gg}$ & 
\begin{tabular}{c}
$\Wcal(\Gg,\Lg,\Ng)$:\\
 the cokernel of the map \\
 $\QQQ(\Lg)^{\Gg}/\HHH^1(\Lg)^{\Gg}\to \QQQ(\Ng)^{\Gg}/\HHH^1(\Ng)^{\Gg}$
\end{tabular}\\
\hline \hline
\begin{tabular}{c}
 evaluation map to $\RR^{\ell}$ \\
 (Step~$1'$/Step~1)
\end{tabular} & 
Proposition~\ref{prop=evmap} ($\Psig^{\RR}$) & 
\begin{tabular}{c}
Theorem~\ref{thm=evmap} ($\PPh^{\RR}$)\\
 for $(\Gg,\Lg,\Ng)$ with $\Rdim \Wcal(\Gg,\Lg,\Ng)=\ell$\\
 (using Theorem~\ref{thm=comparisonDD})
\end{tabular}  \\
  \hline
\begin{tabular}{c}
 map from $\ZZ^{\ell}$ \\
 (Step~$2'$/Step~2)
\end{tabular} & 
Proposition~\ref{prop=dimQ} ($\Ptau$) & 
\begin{tabular}{c}
Theorem~\ref{thm=PPs} ($\PPs$)\\
 for the `abelian case' (\eqref{eq=abelian})\\
 (using Corollary~\ref{cor=InjCE}) \medskip
 \end{tabular}  \\ 
 form of the map & 
 \begin{tabular}{c}
 $(m_1,\ldots,m_{\ell})\mapsto$ \\
 $\yg_1^{m_1}\cdots \yg_{\ell}^{m_{\ell}}$
\end{tabular} &
\begin{tabular}{c} 
$(m_1,\ldots,m_{\ell})\mapsto$\\
$ [\gG_1^{(1)},(\gG'_1{}^{(1)})^{m_1}]\cdots [\gG_{t_{\ell}}^{(\ell)},(\gG'_{t_{\ell}}{}^{(\ell)})^{m_\ell}]$ (\eqref{eq=defnPPs2})
\end{tabular} \\
\hline
\begin{tabular}{c}
 compositions \\
 of two maps \\
 (Step~$3'$/Step~3)
\end{tabular} &
\begin{tabular}{c}
 Proposition~\ref{prop=itteQ} \\
 ($\Ptau\circ\Psig$ and $\Psig\circ\Ptau$)
\end{tabular} &
\begin{tabular}{c}
Theorem~\ref{thm=itte} \\
 ($\PPs\circ\PPh|_A$ for $d_{\scl_{\Gg,\Lg}}$-bounded $A$,  \\
 and $\PPh\circ\PPs$)\end{tabular} \\
\hline
\end{tabular}
\end{table}

To prove Proposition~\ref{prop=presclkika}, it suffices to show  the existence of the maps $\Psig$ and $\Ptau$ with conditions (1)--(3) above the presentation of Proposition~\ref{prop=prescldim} (recall Subsection~\ref{subsec=absolute}). We take the following three steps.

\begin{enumerate}
  \item [\underline{Step~$1'$}.] construct $\Psig^{\RR}\colon [\Gg,\Ng]\to \RR^{\ell}$;
  \item [\underline{Step~$2'$}.] construct $\Ptau\colon \ZZ^{\ell}\to [\Gg,\Ng]$;
  \item [\underline{Step~$3'$}.] take an appropriate $\Psig\colon [\Gg,\Ng]\to \ZZ^{\ell}$ out of $\Psig^{\RR}$, and study the compositions $\Ptau\circ\Psig$ and $\Psig\circ \Ptau$.
\end{enumerate}

In Step~$1'$, $\Psig^{\RR}$ can be taken as the \emph{evaluation map}:
\[
\Psig^{\RR}=\Psig^{\RR}_{(\nug_1,\ldots,\nug_{\ell})}\colon [\Gg,\Ng]\ni \yg\mapsto (\nug_1(\yg),\ldots,\nug_{\ell}(\yg))\in \RR^{\ell}.
\]
Here we can take an arbitrary tuple $(\nug_1,\ldots,\nug_{\ell})$ such that $\{[\nug_1],\ldots,[\nug_{\ell}]\}$ forms a basis of $\QQQ(\Ng)^{\Gg}/\HHH^1(\Ng)^{\Gg}$, where $[\cdot]$ means the equivalence class in  $\QQQ(\Ng)^{\Gg}/\HHH^1(\Ng)^{\Gg}$. To show that this $\Psig^{\RR}\colon ([\Gg,\Ng],d_{\scl_{\Gg,\Ng}})\to (\RR^{\ell},\|\cdot\|_1)$ is a quasi-isometric embedding, we employ the Bavard duality theorem for $\scl_{\Gg,\Ng}$ (Theorem~\ref{thm=Bavard}) and the finite dimensionality of $\QQQ(\Ng)^{\Gg}/\HHH^1(\Ng)^{\Gg}$. This step is done in Subsection~\ref{subsec=Psig}. In Step~$2'$, we construct the map $\Ptau$ of the form
\[
\Ptau\colon \ZZ^{\ell}\ni (m_1,\ldots,m_{\ell})\mapsto \yg_1^{m_1}\cdots \yg_{\ell}^{m_{\ell}}\in [\Gg,\Ng].
\]
To construct such $\Ptau$, we need to choose $\yg_1,\ldots,\yg_{\ell}\in [\Gg,\Ng]$ in an appropriate manner; such a choice of the tuple $(\yg_1,\ldots,\yg_{\ell})$ is provided by a lemma on function spaces, Lemma~\ref{lem=dual}. This lemma at the same time supplies the tuple $(\nug_1,\ldots,\nug_{\ell})$, where $\{[\nug_1],\ldots,[\nug_{\ell}]\}$ are linearly independent in  $\QQQ(\Ng)^{\Gg}/\HHH^1(\Ng)^{\Gg}$. More precisely, $(\yg_i)_{i}$ and $(\nug_j)_j$ satisfy that
\[
\textrm{for every }i,j\in \{1,\ldots,\ell\},\quad \nug_j(\yg_i)=\delta_{i,j}.
\]
Here $\delta_{\cdot,\cdot}$ means the Kronecker delta.
This step is treated in Subsection~\ref{subsec=Ptau}. In Step~$3'$, we first construct the map $\Psig$ as follows: take $\Psig^{\RR}=\Psig^{\RR}_{(\nug_1,\ldots,\nug_{\ell})}$ as in Step~$1'$ associated with the tuple $(\nug_1,\ldots,\nug_{\ell})$ supplied in Step~$2'$. Since a coarse inverse $\rho\colon \RR^{\ell}\to \ZZ^{\ell}$ to the inclusion $\ZZ^{\ell}\to \RR^{\ell}$ exists, we can take the composition $\Psig=\rho\circ \Psig^{\RR}$.
Finally, we prove that $\Ptau\circ \Psig\approx \mathrm{id}_{([\Gg,\Ng],d_{\scl_{\Gg,\Ng}})}$ and $\Psig\circ \Ptau \approx \mathrm{id}_{(\ZZ^{\ell},\|\cdot\|_1)}$. This step is pursued in Subsection~\ref{subsec=sclkika}.

We note that Step~$2'$ works under the weaker  assumption $\ell\leq \Rdim \left(\QQQ(\Ng)^{\Gg}/\HHH^1(\Ng)^{\Gg}\right)$ than the original assumption $\ell= \Rdim \left(\QQQ(\Ng)^{\Gg}/\HHH^1(\Ng)^{\Gg}\right)$ (contrastingly, Step~$1'$ works only in the original setting). This is the reason why we can remove the assumption of the finite dimensionality of $\QQQ(\Ng)^{\Gg}/\HHH^1(\Ng)^{\Gg}$ in Proposition~\ref{prop=metricdim}, which is a generalization of Proposition~\ref{prop=prescldim}.

Now we proceed to the outlined proof of Theorem~\ref{mthm=main}: it consists of the following three steps.

\begin{enumerate}
  \item [\underline{Step~1}.]  construct $\PPh^{\RR}\colon [\Gg,\Ng]\to \RR^{\ell}$;
  \item  [\underline{Step~2}.] construct $\PPs\colon \ZZ^{\ell}\to [\Gg,\Ng]$;
  \item  [\underline{Step~3}.] take an appropriate $\PPh\colon [\Gg,\Ng]\to \ZZ^{\ell}$ out of $\PPh^{\RR}$, and study $\PPs\circ\PPh$ and $\PPh\circ \PPs$.
\end{enumerate}

These three steps look similar to those for the proof of Proposition~\ref{prop=presclkika}. However, we need several modifications from the constructions of $\Psig^{\RR}$ and $\Ptau$ in the proof of Proposition~\ref{prop=presclkika}. First, in Step~1, we can only use the finite dimensionality of $\Wcal(\Gg,\Lg,\Ng)$; however, as \eqref{eq=KKMMBavard} indicates, $d_{\scl_{\Gg,\Ng}}$ itself is closely related to the space $\QQQ(\Ng)^{\Gg}/\HHH^1(\Ng)^{\Gg}$, which is a much larger space than $\Wcal(\Gg,\Lg,\Ng)$. (Recall that our emphasis in the introduction is that there exists several examples of $(\Gg,\Ng)$ with $\Rdim(\QQQ(\Ng)^{\Gg}/\HHH^1(\Ng)^{\Gg})=\infty$ but $\Rdim \WW(\Gg,\Ng)<\infty$.) In particular, the latter implies that what we need here is the property that $\PPh^{\RR}|_A$ is a quasi-isometric embedding \emph{for every $d_{\scl_{\Gg,\Lg}}$-bounded set $A\subseteq [\Gg,\Ng]$}, not necessarily on the whole $[\Gg,\Ng]$. For this purpose, we can define the evaluation map
\[
\PPh^{\RR}=\PPh^{\RR}_{(\nug_1,\ldots,\nug_{\ell})}\colon [\Gg,\Ng]\ni \yg\mapsto (\nug_1(\yg),\ldots,\nug_{\ell}(\yg))\in \RR^{\ell}
\]
associated with an arbitrary tuple $(\nug_1,\ldots,\nug_{\ell})$ such that $\{[\nug_1],\ldots,[\nug_{\ell}]\}$ forms a basis of $\Wcal(\Gg,\Lg,\Ng)$. Then, we can show that this map $\PPh^{\RR}$ works. However, now the proof  is not a direct application of Theorem~\ref{thm=Bavard}. For this part, we prove the \emph{comparison theorem of defects} (Theorem~\ref{thm=comparisonDD}) prior to Step~1, which may be of independent interest. We note that Step~1 works under the general assumption $\Rdim \Wcal(\Gg,\Lg,\Ng)<\infty$; condition (a${}_q$) or (b${}_q$) for any $q\in \NN_{\geq 2}$ in Theorem~\ref{mthm=main} is not needed here. Hence, Step~1 yields Theorem~\ref{mthm=dimfin} as well.

Step~2 is the \emph{key step} to the proof. In this step, we need a \emph{complete modification} of the construction from the map $\Ptau$ in the proof of Proposition~\ref{prop=presclkika}; see the discussion at the beginning of Section~\ref{sec=CE} for details. In the setting of Theorem~\ref{mthm=main} (`abelian case': $\Ng\geqslant [\Gg,\Lg]$), we can take $\PPs$ of the form
\begin{align*}
\PPs\colon \ZZ^{\ell}\ni &(m_1,\ldots,m_{\ell})\\
\mapsto & [\gG_{1}^{(1)},(\gG'_{1}{}^{(1)})^{m_1}]\cdots [\gG_{t_1}^{(1)},(\gG'_{t_1}{}^{(1)})^{m_1}]\cdots [\gG_{1}^{(\ell)},(\gG'_{1}{}^{(\ell)})^{m_{\ell}}]\cdots [\gG_{t_{\ell}}^{(\ell)},(\gG'_{t_{\ell}}{}^{(\ell)})^{m_{\ell}}] \in [\Gg,\Ng].
\end{align*}
Here, $t_i\in \NN$ for every $i\in \{1,\ldots,\ell\}$, and $\gG_s^{(i)}\in \Gg$ and $\gG'_s{}^{(i)}\in \Lg$ for every $s\in \{1,\ldots,t_i\}$. To find appropriate $(t_i)_{i}$, $(\gG_s^{(i)})_{i,s}$, $(\gG'_s{}^{(i)})_{i,s}$, we initiate the theory of \emph{core extractors}. This theory morally provides such tuples by consideration of  a lift $(\Ff,\Kf,\Mf)$ of the given triple $(\Gg,\Lg,\Ng)$ that behaves `better' in terms of the $\Wcal$-space: the upshot of the theory of core extractors is that we can define an \emph{injective homomorphism} $\CE$, which we call the \emph{core extractor}, from $\Wcal(\Gg,\Lg,\Ng)$ to a certain space of \emph{invariant homomorphisms} defined on the lift, provided certain three conditions, written as  (i), (ii) and (D) in the present paper, are fulfilled (Definition~\ref{defn=CoreE} and Theorem~\ref{thm=InjCE}). This part is one of the most novel points in the present work; as we mentioned in Subsection~\ref{subsec=comparative}, this theory supplies a general machinery for comparison problems of mixed $\scl$'s. This, together with the map $\alpha_{\bff}$ defined in Definition~\ref{defn=Pspower} (which works in the abelian case), enables us to obtain a map $\PPs$ of the form above in an appropriate manner. The construction of $\PPs$ will be done in Theorem~\ref{thm=PPs}; Corollary~\ref{cor=InjCE}, which is the upshot of the theory of core extractors in the abelian case (introduced and developed in Sections~\ref{sec=CE} and \ref{sec=CEabel}), is the key to that construction. Points here are that the image $\CE_{[\nug]}$ of an element $[\nug]$ in $\Wcal(\Gg,\Lg,\Ng)$ under the core extractor $\CE$ is a \emph{genuine} homomorphism, and that we can compute the values of $\CE_{[\nug]}$ at several elements. The assumption in Theorem~\ref{mthm=main} of the existence of $q\in \NN_{\geq 2}$ fulfilling either  (a${}_q$) or (b${}_q$) is employed in this step.

Once Step~2 is accomplished, the construction of $\PPh$ out of $\PPh^{\RR}$ in Step~3 is similar to that of $\Psig$ out of $\Psig^{\RR}$ in the proof of Proposition~\ref{prop=presclkika}. In the current setting, in order to close up Step~3, it suffices to verify that $\PPh\circ\PPs\approx \mathrm{id}_{(\ZZ^{\ell},\|\cdot\|_1)}$ and that for every $d_{\scl_{\Gg,\Lg}}$-bounded set $A\subseteq [\Gg,\Lg]$,
\[
\sup_{\yg\in A}d_{\scl_{\Gg,\Ng}}(\yg,(\PPs\circ \PPh)(\yg))<\infty .
\]
For the proof of this inequality, we employ Theorem~\ref{thm=comparisonDD}  (again from Step~1). Finally, to verify the `furthermore' part, we slightly modify these constructions; we discuss this in the setting of Proposition~\ref{prop=presclkika} (Lemma~\ref{lem=sigmatau}) in Subsection~\ref{subsec=remark}. This ends the outline of our proof of Theorem~\ref{mthm=main}.

\

\noindent
\textbf{Organization of the present paper:} In Section~\ref{sec=pre}, we collect preliminary facts, including basics of the theory of coarse groups and coarse kernels in \cite{LV}. In Section~\ref{sec=absolute}, we present complete proofs of Propositions~\ref{prop=sclkika}, \ref{prop=presclkika} and \ref{prop=prescldim}. In Section~\ref{sec=defect}, we prove the comparison theorem of defects (Theorem~\ref{thm=comparisonDD}), which will be employed in Sections~\ref{sec=Phi} and \ref{sec=proofmain}. Section~\ref{sec=Phi} is devoted to the construction of the map $\PPh^{\RR}$ (Theorem~\ref{thm=evmap}), which corresponds to Step~1 in the outlined proof above. There, we also prove Theorem~\ref{mthm=dimfin}. In Sections~\ref{sec=CE} (general theory) and \ref{sec=CEabel} (abelian case), we introduce the theory of core extractors; Corollary~\ref{cor=InjCE} plays a key role in the proofs of Theorem~\ref{mthm=main} and Theorem~\ref{mthm=dim}. Section~\ref{sec=Psi} is devoted to the construction of the map $\PPs$ (Theorem~\ref{thm=PPs}); as we described in the outlined proof above, this is the key step to the proof of Theorem~\ref{mthm=main}. In Section~\ref{sec=proofmain}, we prove Theorem~\ref{thm=itte}, which corresponds to Step~3 in the outlined proof of Theorem~\ref{mthm=main}. We establish Theorem~\ref{thm=maingeneral} and  Theorem~\ref{thm=dimgeneral}, which are general forms of Theorem~\ref{mthm=main} and Theorem~\ref{mthm=dim}, respectively. In Section~\ref{sec=examplemain}, we exhibit several examples to which Theorem~\ref{mthm=main} applies: some of them provide explicit coarse kernels, including Proposition~\ref{prop=explicitexample}. Finally, in Section~\ref{sec=remarks}, we present applications of the coarse kernels obtained in Theorem~\ref{mthm=main}; Theorem~\ref{thm=crushingq2} and Proposition~\ref{prop=extexample} are verified there.

\

\noindent
\textbf{Notation and conventions:}
Throughout the present paper,  as mentioned in Remark~\ref{rem=bf}, we write coarse notions in bold symbol (such as a coarse map $\bfalpha$ and a coarse subspace $\mathbf{A}$) to distinguish them from set theoretical notions in non-bold symbol (such as a set map $\alpha$ and a subset $A$). For a group $H$, the symbol  $e_H$ denotes the group unit of $H$. The symbol $\NN$ means the set $\{1,2,3,\ldots\}$ of positive integers; in particular, $0\not\in\NN$ in this paper. The symbol $\delta_{\cdot,\cdot}$ denotes the Kronecker delta function. For $\genus\in \NN$, let $\Sigma_{\genus}$ be the closed connected orientable surface of genus $\genus$. If a real vector space $\mathcal{K}$ is infinite dimensional, then we set the real dimension $\Rdim \mathcal{K}$ of $\mathcal{K}$ to be $\infty$.  In the setting of Definition~\ref{defn=thespaceV}, for $\nug\in \QQQ(\Ng)^{\Gg}$, let $[\nug]$ mean the equivalence class represented by $\nug$ in $\Wcal(\Gg,\Lg,\Ng)$.

\section{Preliminaries}\label{sec=pre}
Here we collect several preliminary facts needed in the present paper. The reader who is familiar with the topic of a subsection can skip that subsection. As we mentioned in Section~\ref{sec=intro2}, our main theorems (Theorem~\ref{mthm=main}, Theorem~\ref{mthm=dim} and Theorem~\ref{mthm=dimfin}) treat a group triple $(\Gg,\Lg,\Ng)$. However, to define several preliminary concepts, we decompose  this triple as two pairs $(\Gg,\Lg)$ and $(\Gg,\Ng)$. For this reason, we frequently use the following setting in this section.

\begin{setting}\label{setting=itsumono}
Let $\Gg$ be a group and $\Ng$ a normal subgroup of $\Gg$. Let $i=i_{\Ng,\Gg}\colon \Ng\hookrightarrow \Gg$ be the inclusion map.
\end{setting}
We abbreviate $i_{\Ng,\Gg}$ as $i$ in the present paper unless we specially hope to clarify $\Ng$ or $\Gg$.

\subsection{Fundamental properties of invariant quasimorphisms}\label{subsec=qm}
Here we collect basic properties of invariant quasimorphisms and stable mixed commutator lengths;  \cite{KKMMMsurvey} is  a survey on these topics, where we describe more backgrounds and history for invariant quasimorphisms. We also refer the reader to \cite{Calegari} for a treatise on (ordinary) quasimorphisms and stable commutator lengths.

\begin{defn}\label{defn=qm}
Assume Setting~$\ref{setting=itsumono}$.
\begin{enumerate}[label=\textup{(}\arabic*\textup{)}]
  \item A function $\psg\colon \Gg\to \RR$ is said to be \emph{homogeneous} if for every $g\in \Gg$ and for every $n\in \ZZ$, $\psg(g^n)=n\psg(g)$ holds.
A function $\nug\colon \Ng\to \RR$ is said to be \emph{$\Gg$-invariant} if for every $x\in \Ng$ and for every $g\in \Gg$, $\nug(gxg^{-1})=\nug(x)$
holds.
  \item A function $\psg\colon \Gg\to \RR$ is called a  \emph{quasimorphism} on $\Gg$ if the \emph{defect} $\DD(\psg)$ of $\psg$, defined as \eqref{eq=defect}, is finite.  The $\RR$-vector space $\QQQ(\Gg)$ is defined as the space of homogeneous quasimorphisms on $\Gg$.
  \item The $\RR$-vector space $\QQQ(\Ng)^{\Gg}$ is defined as the space of homogeneous quasimorphisms on $\Ng$ that are $\Gg$-invariant.
  \item The $\RR$-vector space $\HHH^1(\Ng)^{\Gg}$ is defined as the space of \textup{(}genuine\textup{)} homomorphisms $\Ng\to \RR$ that are $\Gg$-invariant.
\end{enumerate}
\end{defn}

\begin{lem}\label{lem=Ginv}
Let $\Gg$ be a group. Then, $\QQQ(\Gg)=\QQQ(\Gg)^{\Gg}$.
\end{lem}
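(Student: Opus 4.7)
The plan is to show that every homogeneous quasimorphism $\psg\colon \Gg \to \RR$ is automatically invariant under conjugation; the inclusion $\QQQ(\Gg)^{\Gg} \subseteq \QQQ(\Gg)$ is trivial from the definitions. So fix $\psg\in \QQQ(\Gg)$ and $\gG, \hf \in \Gg$, and aim to prove $\psg(\gG \hf \gG^{-1}) = \psg(\hf)$.

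The key algebraic identity I will use is $(\gG \hf \gG^{-1})^n = \gG \hf^n \gG^{-1}$ for every $n \in \NN$. Applying the definition of the defect \eqref{eq=defect} twice to the product $\gG \cdot \hf^n \cdot \gG^{-1}$ yields
\[
\bigl|\psg(\gG \hf^n \gG^{-1}) - \psg(\gG) - \psg(\hf^n) - \psg(\gG^{-1})\bigr| \leq 2\DD(\psg).
\]
Since $\psg$ is homogeneous, $\psg(\gG^{-1}) = -\psg(\gG)$, so the two $\psg(\gG)$ terms cancel and I obtain $|\psg(\gG \hf^n \gG^{-1}) - \psg(\hf^n)| \leq 2\DD(\psg)$.

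Now I invoke homogeneity on both sides: $\psg(\gG \hf^n \gG^{-1}) = \psg((\gG \hf \gG^{-1})^n) = n\psg(\gG \hf \gG^{-1})$ and $\psg(\hf^n) = n\psg(\hf)$. This gives the uniform bound
\[
\bigl|\psg(\gG \hf \gG^{-1}) - \psg(\hf)\bigr| \leq \frac{2\DD(\psg)}{n}
\]
for every $n \in \NN$. Letting $n \to \infty$ forces $\psg(\gG \hf \gG^{-1}) = \psg(\hf)$, which is exactly the $\Gg$-invariance of $\psg$. No step here is an obstacle: the entire argument rests on the homogeneity trick of pushing the conjugation inside the power and dividing by $n$, which is standard in the quasimorphism literature.
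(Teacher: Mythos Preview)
Your proof is correct and follows essentially the same approach as the paper: both use the identity $(\gG \hf \gG^{-1})^n = \gG \hf^n \gG^{-1}$, bound $|\psg(\gG \hf^n \gG^{-1}) - \psg(\hf^n)|$ by $2\DD(\psg)$ via the defect, and then divide by $n$ using homogeneity. The paper's version is simply terser, jumping directly to $|n||\psg(\lambda g \lambda^{-1})-\psg(g)|\leq 2\DD(\psg)$ without spelling out the cancellation of $\psg(\gG)+\psg(\gG^{-1})$.
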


\begin{proof}
Let $\psg\in \QQQ(\Gg)$. Let $g,\lambda\in \Gg$ and $n\in \ZZ$. Then we have $
(\lambda g \lambda^{-1})^n=\lambda g^n \lambda^{-1}$ and hence $|n||\psg(\lambda g \lambda^{-1})-\psg(g)|\leq 2\DD(\psg)$. This shows that $\psg\in \QQQ(\Gg)^{\Gg}$.
\end{proof}

\begin{prop}[{\cite[Lemma~3.6]{Bavard} (see also \cite[Lemma~2.24]{Calegari})}]\label{prop=Bavard}
Let $\Gg$ be a group and $\psg\in \QQQ(\Gg)$. Then, $\DD(\psg)= \sup\{|\psg([\gG_1,\gG_2])|\,|\, \gG_1,\gG_2\in \Gg\}$.
\end{prop}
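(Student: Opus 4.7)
Set $M := \sup\{|\psg([\gG_1, \gG_2])| : \gG_1, \gG_2 \in \Gg\}$. I approach the proposition as two inequalities of very different difficulty.

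The inequality $M \leq \DD(\psg)$ is the easy direction. The plan is to write $[\gG_1, \gG_2] = (\gG_1 \gG_2 \gG_1^{-1}) \cdot \gG_2^{-1}$ and apply the defect inequality \eqref{eq=defect} to this two-factor product:
\[
\bigl|\psg([\gG_1, \gG_2]) - \psg(\gG_1 \gG_2 \gG_1^{-1}) - \psg(\gG_2^{-1})\bigr| \leq \DD(\psg).
\]
By Lemma~\ref{lem=Ginv} (giving $\Gg$-invariance) $\psg(\gG_1 \gG_2 \gG_1^{-1}) = \psg(\gG_2)$, and by homogeneity $\psg(\gG_2^{-1}) = -\psg(\gG_2)$. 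These two terms cancel, yielding $|\psg([\gG_1, \gG_2])| \leq \DD(\psg)$ and hence $M \leq \DD(\psg)$.

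For the reverse $\DD(\psg) \leq M$, I would fix arbitrary $\gG, \hf \in \Gg$ and set $\delta := \psg(\gG\hf) - \psg(\gG) - \psg(\hf)$; the target is $|\delta| \leq M$. Following Bavard, the strategy is to amplify via $n$-th powers and commutator decompositions. By homogeneity,
\[
n\delta \;=\; \psg((\gG\hf)^n) - \psg(\gG^n) - \psg(\hf^n),
\]
and the element $c_n := (\gG\hf)^n \hf^{-n} \gG^{-n}$ lies in $[\Gg, \Gg]$ and factors as a product of at most $n-1$ commutators; the base case $n=2$ gives directly $c_2 = \gG[\hf, \gG]\gG^{-1}$ by expansion, and the general case follows inductively. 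One would then estimate $\psg(c_n)$ using the bound $|\psg([\gG_1,\gG_2])| \leq M$, apply the defect inequality to the factorizations $(\gG\hf)^n = c_n \cdot \gG^n\hf^n$ and $\gG^n \cdot \hf^n$, divide through by $n$, and let $n \to \infty$ to extract the bound on $|\delta|$.

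The principal obstacle lies in controlling the accumulation of defect errors in the commutator product. A naive iterated application of the defect to a product of $k$ commutators gives only $|\psg([\gG_1, \hf_1] \cdots [\gG_k, \hf_k])| \leq kM + (k-1)\DD(\psg)$, so the $(k-1)\DD(\psg)$ contamination survives division by $n$ when $k \sim n$, and one only recovers $|\delta| \leq M + \DD(\psg)$ rather than the sharp $|\delta| \leq M$. Resolving this requires a more careful organization of the commutator decomposition of $c_n$ exploiting $\Gg$-invariance of $\psg$, so that the individual defect errors absorb into conjugation rather than accumulating linearly in $n$. This delicate accounting is the substance of Bavard's original argument in \cite{Bavard}, cleanly presented in \cite[Lemma~2.24]{Calegari}, and for the present paper it would be most natural to invoke that reference rather than reproduce the details.
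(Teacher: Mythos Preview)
The paper does not prove this proposition at all; it is stated with citations to \cite{Bavard} and \cite{Calegari} as a known result, and the paper only uses it as a black box (immediately in Corollaries~\ref{cor=uptohom} and~\ref{cor=BavardD}). So there is no proof in the paper to compare against.

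Your easy direction $M \leq \DD(\psg)$ is clean and correct; note only that you are invoking Lemma~\ref{lem=Ginv}, which in the paper is proved \emph{before} Proposition~\ref{prop=Bavard} and independently of it, so there is no circularity. For the hard direction you correctly identify the genuine obstacle---the linear accumulation of defect errors in a naive commutator decomposition---and your decision to defer to the cited references is exactly what the paper itself does. If you wanted to close the gap yourself, the standard route (as in \cite[Lemma~2.24]{Calegari}) is to write $(\gG\hf)^n$ as $\gG^n\hf^n$ times a product of conjugates of single commutators, and to exploit $\Gg$-invariance (Lemma~\ref{lem=Ginv}) so that each conjugate contributes at most $M$ rather than $M + \DD(\psg)$; the total error is then $O(n)$ in $M$ alone plus $O(1)$ in $\DD(\psg)$, and division by $n$ gives the sharp bound.
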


Proposition~\ref{prop=Bavard} has the following two corollaries.

\begin{cor}\label{cor=uptohom}
Assume Setting~$\ref{setting=itsumono}$. Let $\nug_1,\nug_2\in \QQQ(\Ng)^{\Gg}$. If $\nug_1$ and $\nug_2$ coincide on $[\Ng,\Ng]$, then
\[
\nug_1-\nug_2\in \HHH^1(\Ng)^{\Gg}.
\]
\end{cor}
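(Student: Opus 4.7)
The plan is to set $\nug = \nug_1 - \nug_2$ and show directly that $\nug$ has zero defect, which forces it to be a homomorphism on $\Ng$. Since the space of homogeneous quasimorphisms is closed under subtraction and under the $\Gg$-action (both coming from pointwise operations), $\nug$ lies in $\QQQ(\Ng)^{\Gg}$. The hypothesis that $\nug_1$ and $\nug_2$ coincide on $[\Ng,\Ng]$ translates into the statement that $\nug$ vanishes on every simple commutator $[x_1, x_2]$ with $x_1, x_2 \in \Ng$.

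Next I would invoke Proposition~\ref{prop=Bavard} with the group taken to be $\Ng$ (the proposition is stated for an arbitrary group, and $\nug\in \QQQ(\Ng)$ since $\QQQ(\Ng)^{\Gg}\subseteq \QQQ(\Ng)$). This yields
\[
\DD(\nug) \;=\; \sup\bigl\{|\nug([x_1,x_2])|\,\bigm|\, x_1, x_2 \in \Ng\bigr\} \;=\; 0,
\]
since every such commutator lies in $[\Ng,\Ng]$, where $\nug$ vanishes by construction. A homogeneous quasimorphism with zero defect is a genuine homomorphism: indeed, $\DD(\nug)=0$ means $\nug(x_1 x_2) = \nug(x_1) + \nug(x_2)$ for all $x_1,x_2\in \Ng$.

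Finally, $\Gg$-invariance of $\nug$ is inherited from that of $\nug_1$ and $\nug_2$, so $\nug \in \HHH^1(\Ng)^{\Gg}$, as desired. No step here is an obstacle; the argument is a short application of Proposition~\ref{prop=Bavard} combined with the elementary observation that zero defect upgrades a quasimorphism to a homomorphism.
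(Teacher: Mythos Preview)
Your proof is correct and follows essentially the same approach as the paper: set $\nug=\nug_1-\nug_2\in\QQQ(\Ng)^{\Gg}$, apply Proposition~\ref{prop=Bavard} to $\nug$ viewed in $\QQQ(\Ng)$ to obtain $\DD(\nug)=\sup\{|\nug([x_1,x_2])|:x_1,x_2\in\Ng\}=0$, and conclude $\nug\in\HHH^1(\Ng)^{\Gg}$. The paper's version is slightly terser but the argument is identical.
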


\begin{proof}
Let $\nug\in \QQQ(\Ng)^{\Gg}$. Note that in particular $\nug\in \QQQ(\Ng)$. By applying Proposition~\ref{prop=Bavard}, we obtain that\begin{equation}\label{eq=defectNN}
\DD(\nug)= \sup\{|\nu([\xg_1,\xg_2])|\,|\, \xg_1,\xg_2\in \Ng\}.
\end{equation}
By setting $\nug=\nug_1-\nug_2\in \QQQ(\Ng)^{\Gg}$,  we conclude that $\DD(\nug)=0$. Hence, $\nug\in \HHH^1(\Ng)^{\Gg}$.
\end{proof}

\begin{cor}\label{cor=BavardD}
Assume Setting~$\ref{setting=itsumono}$. Let $\nug\in \QQQ(\Ng)^{\Gg}$. Then $\DD(\nug)= \sup\{|\nu([\gG,\xg])|\,|\, \gG\in \Gg,\,\xg\in \Ng\}$.
\end{cor}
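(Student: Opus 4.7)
The plan is to prove the equality by two inequalities, combining the $G$-invariance of $\nug$ with Proposition~\ref{prop=Bavard} applied to $\nug$ viewed as an ordinary quasimorphism on $\Ng$.

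First I would establish the upper bound
\[
\sup\{|\nug([\gG,\xg])| \,:\, \gG\in\Gg,\,\xg\in\Ng\} \leq \DD(\nug).
\]
Fix $\gG\in\Gg$ and $\xg\in\Ng$. Since $\Ng$ is normal in $\Gg$, the element $\gG\xg\gG^{-1}$ lies in $\Ng$, so $[\gG,\xg]=(\gG\xg\gG^{-1})\cdot \xg^{-1}$ is a product of two elements of $\Ng$. The quasimorphism inequality for $\nug$ on $\Ng$ together with homogeneity ($\nug(\xg^{-1})=-\nug(\xg)$) and $\Gg$-invariance ($\nug(\gG\xg\gG^{-1})=\nug(\xg)$) yields
\[
\bigl|\nug([\gG,\xg])\bigr| = \bigl|\nug(\gG\xg\gG^{-1}\cdot\xg^{-1}) - \nug(\gG\xg\gG^{-1}) - \nug(\xg^{-1})\bigr| \leq \DD(\nug).
\]

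For the reverse inequality, I would invoke Proposition~\ref{prop=Bavard} applied to $\nug \in \QQQ(\Ng)$ (regarded as an ordinary homogeneous quasimorphism on the group $\Ng$); this is exactly the computation \eqref{eq=defectNN} in the proof of Corollary~\ref{cor=uptohom}, and gives
\[
\DD(\nug) = \sup\{|\nug([\xg_1,\xg_2])| \,:\, \xg_1,\xg_2\in\Ng\}.
\]
Every commutator $[\xg_1,\xg_2]$ with $\xg_1,\xg_2\in\Ng$ is already of the form $[\gG,\xg]$ with $\gG=\xg_1\in\Gg$ (via the inclusion $\Ng\subseteq\Gg$) and $\xg=\xg_2\in\Ng$, so the supremum over $\Ng\times\Ng$ is bounded above by the supremum over $\Gg\times\Ng$. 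Combining the two directions gives the claimed equality.

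There is no real obstacle here: the only subtlety is the interplay between the ambient group $\Gg$ and the normal subgroup $\Ng$, which is cleanly handled by normality (to ensure $\gG\xg\gG^{-1}\in\Ng$) and $\Gg$-invariance of $\nug$. Both ingredients are available at this point in the paper.
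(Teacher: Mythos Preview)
Your proof is correct and follows essentially the same approach as the paper: both establish the two inequalities, with the key lower bound coming from Proposition~\ref{prop=Bavard} applied to $\nug$ on $\Ng$ (i.e., \eqref{eq=defectNN}) together with the inclusion $\Ng\times\Ng\subseteq\Gg\times\Ng$. The only difference is that you spell out the ``straightforward'' upper bound via $\Gg$-invariance and homogeneity, whereas the paper leaves this to the reader.
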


\begin{proof}
It is straightforward to show that $\DD(\nug)\geq  \sup\{|\nu([\gG,\xg])|\,|\, \gG\in \Gg,\,\xg\in \Ng\}$. Conversely, \eqref{eq=defectNN} implies that
\[
\sup\{|\nu([\gG,\xg])|\,|\, \gG\in \Gg,\,\xg\in \Ng\}
\geq  \sup\{|\nu([\xg_1,\xg_2])|\,|\, \xg_1,\xg_2\in \Ng\}
=\DD(\nug).\qedhere
\]
\end{proof}

\begin{defn}[$\cl_{\Gg,\Ng}$ and $\scl_{\Gg,\Ng}$]\label{defn=sclGN}
Assume Setting~\ref{setting=itsumono}.
\begin{enumerate}[label=(\arabic*)]
  \item A \emph{simple $(\Gg,\Ng)$-commutator} means an element in $\Gg$ of the form $[\gG,\xg]$, where $\gG\in \Gg$ and $\xg\in \Ng$. The \emph{mixed commutator subgroup} $[\Gg,\Ng]$ is a subgroup of $\Gg$ generated by the set of simple $(\Gg,\Ng)$-commutators.
  \item The \emph{mixed commutator length} $\cl_{\Gg,\Ng}$ is defined as the word length on $[\Gg,\Ng]$ with respect to the set of all simple $(\Gg,\Ng)$-commutators. Namely, for $\yg\in [\Gg,\Ng]$, $\cl_{\Gg,\Ng}(\yg)$ is defined to be the minimal number of $n\in\ZZ_{\geq 0}$ such that $\yg$ can be written as the product of $n$ simple $(\Gg,\Ng)$-commutators. In particular, we set $\cl_{\Gg,\Ng}(e_{\Gg})=0$.
    \item The \emph{stable mixed commutator length} $\scl_{\Gg,\Ng}$ of $\yg\in [\Gg,\Ng]$ is defined as $\scl_{\Gg,\Ng}(\yg)=\lim\limits_{n\to \infty} \frac{\cl_{\Gg,\Ng}(\yg^n)}{n}$.
\end{enumerate}
\end{defn}

For future purposes in relation to the generalized mixed Bavard duality theorem \cite{KKMMMgmB}, it is natural to extend the domain of $\scl_{\Gg,\Ng}$ in the following manner. This extension will not show up after this preliminary section, as the main subject  of the present paper is the coarse group structure of $([\Gg,\Ng],\scl_{\Gg,\Ng})$, not that of $(\Gg,\scl_{\Gg,\Ng})$; we employ this extension only to formulate Corollary~\ref{cor=+1/2} in Subsection~\ref{subsec=scl-distance} in full generality.

\begin{defn}\label{defn=torsionscl}
Assume Setting~\ref{setting=itsumono}. Let $(\Ng/[\Gg,\Ng])_{\tor}$ be the subgroup of torsion elements of $\Ng/[\Gg,\Ng]$, and let $\mathrm{proj}_{(\Gg,\Ng)}\colon \Ng\twoheadrightarrow \Ng/[\Gg,\Ng]$ be the natural group quotient map. Then $\scl_{\Gg,\Ng}$ is defined as a map from $\mathrm{proj}_{(\Gg,\Ng)}^{-1}\Big((\Ng/[\Gg,\Ng])_{\tor}\Big)$ to $\RR_{\geq 0}$ in the following manner. Let $\xg\in \mathrm{proj}_{(\Gg,\Ng)}^{-1}\Big((\Ng/[\Gg,\Ng])_{\tor}\Big)$. Then, there exists $n\in \NN$ such that $\xg^n\in [\Gg,\Ng]$. By using this $n\in \NN$, we define
\begin{equation}\label{eq=torsionscl}
\scl_{\Gg,\Ng}(\xg)=\frac{\scl_{\Gg,\Ng}(\xg^n)}{n}.
\end{equation}
\end{defn}
By semi-homogeneity of $\scl_{\Gg,\Ng}\colon [\Gg,\Ng]\to \RR_{\geq 0}$ (recall \eqref{eq=semihom}), the definition in \eqref{eq=torsionscl} does not depend on the choice of $n$. We also note that 
\[[\Gg,\Ng]\subseteq \mathrm{proj}_{(\Gg,\Ng)}^{-1}\Big((\Ng/[\Gg,\Ng])_{\tor}\Big)\subseteq \Ng.\]
If $\Ng=\Gg$, then $\cl_{\Gg,\Gg}$ and $\scl_{\Gg,\Gg}$ equal the commutator length $\cl_{\Gg}$ and the stable commutator length $\scl_{\Gg}$, respectively. In this case, the extension of the domain of $\scl_{\Gg}$ explained in Definition~\ref{defn=torsionscl} is standard.

We will frequently use the following lemma without mentioning. This immediately follows from Corollary~\ref{cor=BavardD}.

\begin{lem}\label{lem=2D-1}
Assume Setting~$\ref{setting=itsumono}$. Let $\nug\in \QQQ(\Ng)^{\Gg}$. Then, for every $\yg\in [\Gg,\Ng]\setminus \{e_{\Gg}\}$, we have
\[
|\nug(\yg)|\leq (2\cl_{\Gg,\Ng}(\yg)-1)\DD(\nug).
\]
\end{lem}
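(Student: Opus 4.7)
The plan is to combine the minimal commutator factorization of $\yg$ with Corollary~\ref{cor=BavardD}. Set $n = \cl_{\Gg,\Ng}(\yg)$, which is at least $1$ since $\yg \ne e_{\Gg}$, and fix a decomposition
$$\yg = [\gG_1,\xg_1][\gG_2,\xg_2]\cdots[\gG_n,\xg_n]$$
with $\gG_i \in \Gg$ and $\xg_i \in \Ng$ realizing this length.

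Next, I would apply the defect inequality from \eqref{eq=defect} a total of $n-1$ times, successively splitting $\nug$ across the $n$-fold product to obtain
$$\left| \nug(\yg) - \sum_{i=1}^n \nug([\gG_i,\xg_i]) \right| \leq (n-1)\DD(\nug).$$
Corollary~\ref{cor=BavardD} supplies the per-commutator bound $|\nug([\gG_i,\xg_i])| \leq \DD(\nug)$ for each $i$. A single triangle inequality then assembles these two estimates into
$$|\nug(\yg)| \leq \sum_{i=1}^n |\nug([\gG_i,\xg_i])| + (n-1)\DD(\nug) \leq n\DD(\nug) + (n-1)\DD(\nug) = (2n-1)\DD(\nug),$$
which is the asserted bound.

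There is no genuine obstacle here, as the statement is essentially a direct corollary of Corollary~\ref{cor=BavardD}. The one bookkeeping point worth flagging is the count $n-1$ of defect applications needed to split an $n$-fold product (rather than $n$), which is exactly what accounts for the sharper constant $2n-1$ in place of the cruder $2n$; the slack is concentrated in the single ``missing'' split at one end of the product. Note that homogeneity of $\nug$ plays no explicit role beyond ensuring $\nug \in \QQQ(\Ng)$, and $\Gg$-invariance enters only through its prior use in deriving Corollary~\ref{cor=BavardD}.
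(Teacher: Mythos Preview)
Your proof is correct and is exactly the argument the paper has in mind; the paper itself omits the proof entirely, stating only that the lemma ``immediately follows from Corollary~\ref{cor=BavardD}.'' Your write-up simply makes explicit the $(n-1)$-fold defect splitting and the per-commutator bound from that corollary, which together yield the constant $2n-1$.
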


As mentioned in Subsection~\ref{subsec=digest}, the following \emph{Bavard duality theorem for mixed commutator length} has been shown in \cite[Theorem~1.2]{KKMM1}.

\begin{thm}[Bavard duality theorem for mixed $\scl$, \cite{KKMM1}]\label{thm=Bavard}
Assume Setting~$\ref{setting=itsumono}$. Then for every $\yg\in [\Gg,\Ng]$,
\[
\scl_{\Gg,\Ng}(\yg)=\sup_{\nug\in \QQQ(\Ng)^{\Gg}\setminus\HHH^1(\Ng)^{\Gg}}\frac{|\nug(\yg)|}{2\DD(\nug)}.
\]
Here, if $\QQQ(\Ng)^{\Gg}=\HHH^1(\Ng)^{\Gg}$, then $\scl_{\Gg,\Ng}\equiv 0$ on $[\Gg,\Ng]$.
\end{thm}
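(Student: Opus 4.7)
The inequality $\scl_{\Gg,\Ng}(\yg)\geq \sup_{\nug}|\nug(\yg)|/(2\DD(\nug))$ is the easy direction and is an immediate consequence of Lemma~\ref{lem=2D-1}. Indeed, for $\nug \in \QQQ(\Ng)^{\Gg}$ with $\DD(\nug)>0$ and $\yg \in [\Gg,\Ng]\setminus\{e_{\Gg}\}$, applying Lemma~\ref{lem=2D-1} to $\yg^n$ and using homogeneity gives
\[
n|\nug(\yg)| = |\nug(\yg^n)| \leq (2\cl_{\Gg,\Ng}(\yg^n)-1)\DD(\nug).
\]
Dividing by $2n\DD(\nug)$ and letting $n \to \infty$ yields $|\nug(\yg)|/(2\DD(\nug))\leq \scl_{\Gg,\Ng}(\yg)$.

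For the reverse inequality I would follow the classical Hahn--Banach strategy of Bavard, adapted to the $\Gg$-equivariant setting. The plan is to introduce a real vector space $V$ freely generated by the set $\mathcal{S}_{\Gg,\Ng}$ of simple $(\Gg,\Ng)$-commutators, impose the relations $[\gG,\xg] \sim [\lambda\gG\lambda^{-1},\lambda\xg\lambda^{-1}]$ for all $\lambda\in\Gg$ (encoding $\Gg$-invariance) and $[\gG,\xg] + [\gG^{-1},\gG\xg^{-1}\gG^{-1}] \sim 0$ (encoding antisymmetry needed for homogeneity), and equip the quotient with the $\ell^{1}$-seminorm. A tautological ``evaluation'' map $\partial \colon V \to \RR\otimes_{\ZZ}[\Gg,\Ng]^{\mathrm{ab}}$ sends each generator to the corresponding element, and by the very definition of $\cl_{\Gg,\Ng}$ and $\scl_{\Gg,\Ng}$, the quantity $\scl_{\Gg,\Ng}(\yg)$ equals the quotient-seminorm of the class of $\yg$ (viewed via the stabilized chain $\frac{1}{n}\yg^n$).

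Given $\yg\in[\Gg,\Ng]$ with $\scl_{\Gg,\Ng}(\yg)=s>0$, Hahn--Banach then produces a linear functional $\phi$ on $V$ of dual seminorm at most $1$ with $\phi(\yg)=s$. The main obstacle, and the step where the mixed setting diverges from Calegari's exposition \cite{Calegari} of the classical case, is to extract from $\phi$ a genuine $\nug \in \QQQ(\Ng)^{\Gg}$ with $|\nug(\yg)|/(2\DD(\nug))\geq s$. Concretely I would define $\nug\colon \Ng\to\RR$ first as an inhomogeneous function measuring the ``$\phi$-defect'' of products, then homogenize by $\nug(\xg) := \lim_{n\to\infty}\nug(\xg^n)/n$ (existence by subadditivity). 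The bound $\DD(\nug)\leq 1/2$ up to normalization should follow from the fact that $\phi$ annihilates the chain relations (which are precisely the cocycle identities governing quasimorphism defect on commutators, via Corollary~\ref{cor=BavardD}), and $\Gg$-invariance of $\nug$ is inherited from the conjugation relations built into $V$. The degenerate case $\QQQ(\Ng)^{\Gg}=\HHH^1(\Ng)^{\Gg}$ is handled by contraposition: a hypothetical $\yg$ with $\scl_{\Gg,\Ng}(\yg)>0$ would, by the above construction, produce an invariant quasimorphism with nonzero defect, contradicting the hypothesis. The delicate point I expect to be hardest is verifying that the normalization on the seminorm side corresponds exactly to the factor $2\DD(\nug)$ on the quasimorphism side, which is why the defect version of Corollary~\ref{cor=BavardD} (expressing $\DD(\nug)$ purely in terms of simple $(\Gg,\Ng)$-commutators) is essential.
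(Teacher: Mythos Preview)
The paper does not give its own proof of this theorem: it is quoted verbatim from \cite{KKMM1} (Theorem~1.2 there) and used as a black box throughout, so there is no in-paper argument to compare against.

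That said, your easy direction is correct and self-contained within this paper's framework (Lemma~\ref{lem=2D-1} is derived from Corollary~\ref{cor=BavardD}, which in turn rests only on Proposition~\ref{prop=Bavard}, so there is no circularity). Your outline for the hard direction follows the right template --- the Hahn--Banach/duality strategy of Bavard adapted to produce a $\Gg$-invariant quasimorphism --- and is broadly how the result is proved in \cite{KKMM1}. One point to flag: building $\Gg$-invariance into the chain space via conjugation relations is workable, but the step where you ``extract $\nug$ from $\phi$'' is more subtle than in the classical case. In Calegari's treatment the functional on $B_1(G)$ is integrated against a chain homotopy to yield a quasimorphism on $G$ itself; here you need the resulting $\nug$ to live on $\Ng$ (not on $[\Gg,\Ng]$) and to be $\Gg$-invariant, and your sketch does not explain how the inhomogeneous $\nug$ is initially defined on all of $\Ng$ from a functional that only sees simple $(\Gg,\Ng)$-commutators. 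The actual argument in \cite{KKMM1} handles this by working with a $\Gg$-equivariant version of the bounded cohomology exact sequence (or equivalently by showing the relevant functional extends to an invariant quasi-cocycle on $\Ng$), which is where the factor $2\DD(\nug)$ and Corollary~\ref{cor=BavardD} enter exactly as you anticipate.
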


By homogeneity of elements in $\QQQ(\Ng)^{\Gg}$, we have the following immediate extension of Theorem~\ref{thm=Bavard} in the situation of Definition~\ref{defn=torsionscl}: for every $\xg\in \mathrm{proj}_{(\Gg,\Ng)}^{-1}\Big((\Ng/[\Gg,\Ng])_{\tor}\Big)$, we have
\begin{equation}\label{eq=torsionBavard}
\scl_{\Gg,\Ng}(\xg)=\sup_{\nug\in \QQQ(\Ng)^{\Gg}\setminus\HHH^1(\Ng)^{\Gg}}\frac{|\nug(\xg)|}{2\DD(\nug)}.
\end{equation}
Here, we note that for every $\kg\in \HHH^1(\Ng)^{\Gg}$, $\kg$ vanishes on $\mathrm{proj}_{(\Gg,\Ng)}^{-1}\Big((\Ng/[\Gg,\Ng])_{\tor}\Big)$.

\subsection{The (almost-)metrics $d_{\cl}$ and $d_{\scl}$}\label{subsec=scl-distance}

We will define a generalized metric $d_{\cl_{\Gg,\Ng}}$ \emph{on} $\Gg$ and an almost generalized metric $d_{\scl_{\Gg,\Ng}}$ \emph{on} $\Gg$. Before presenting their definitions, we first clarify what generalized metrics and  almost (generalized) metrics mean in the present paper.

\begin{defn}\label{defn=generalizedmet}
Let $X$ be a set and $d\colon X\times X\to\RR_{\geq 0}\cup \{\infty\}$ a map.
\begin{enumerate}[label=(\arabic*)]
\item (generalized metric) The pair $(X,d)$ is called a \emph{generalized metric space} if the following four conditions are fulfilled.
\begin{enumerate}
  \item[($1_{1}$)] For every $x\in X$, $d(x,x)=0$.
  \item[($1_{2}$)] For every $x_1,x_2\in X$, if $d(x_1,x_2)=0$, then $x_1=x_2$.
  \item[($1_{3}$)] For every $x_1,x_2\in X$, $d(x_1,x_2)=d(x_2,x_1)$.
  \item[($1_{4}$)] For every $x_1,x_2,x_3\in X$, $d(x_1,x_3)\leq d(x_1,x_2)+d(x_2,x_3)$.
\end{enumerate}
This $d$ is called a \emph{generalized metric} on $X$.
\item (almost generalized metric/almost metric) The pair $(X,d)$ is called an \emph{almost generalized metric space} if  conditions ($1_{1}$) and ($1_3$)  hold and if there exists $C\in \mathbb{R}_{\geq 0}$ such that the following condition ($1_4^{+,C}$) holds true.
\begin{enumerate}
  \item[($1_{4}^{+,C}$)] For every $x_1,x_2,x_3\in X$, $d(x_1,x_3)\leq d(x_1,x_2)+d(x_2,x_3)+C$.
\end{enumerate}
This $d$ is called an \emph{almost generalized metric} on $X$.

The pair $(X,d)$ is called an \emph{almost metric space} if it is an almost generalized metric space and if $d(X\times X)\subseteq \mathbb{R}_{\geq 0}$. This $d$ is called an \emph{almost metric} on $X$.
\end{enumerate}
\end{defn}

The notion of almost metric spaces with constant $C$ coincides with that of $(1,C)$-metric spaces in \cite{ThompsonHemmila}. The following lemma is obvious.

\begin{lem}\label{lem=+C}
Let $(X,d)$ be an almost generalized metric space. Let $C\in \mathbb{R}_{> 0}$ be a constant that satisfies \textup{(}$1_4^{+,C}$\textup{)} in Definition~$\ref{defn=generalizedmet}$. Define $d^+=d^{+,C}\colon X\times X\to \mathbb{R}_{\geq 0}\cup \{\infty\}$ by
\[
d^+(x_1,x_2)=
\begin{cases}
d(x_1,x_2)+C, &\textrm{if $x_1\ne x_2$}, \\
0, &\textrm{if $x_1=x_2$}
\end{cases}
\]
for every $x_1,x_2\in X$. Then $(X,d^+)$ is a generalized metric space.
\end{lem}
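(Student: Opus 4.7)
The plan is a direct case-by-case verification of the four axioms $(1_1)$--$(1_4)$ of a generalized metric for the modified function $d^+$. Axioms $(1_1)$ (reflexivity) and $(1_3)$ (symmetry) are immediate: the first holds by the very definition of $d^+$ on the diagonal, and the second is inherited from the symmetry of $d$ together with the fact that $d^+$ is defined by a rule symmetric in $x_1, x_2$. Axiom $(1_2)$ (separation) is the key gain from adding the constant $C$: if $x_1 \ne x_2$ then $d^+(x_1,x_2) \geq C > 0$, so $d^+(x_1,x_2) = 0$ forces $x_1 = x_2$. This is exactly why we replace $d$ by $d^+$; the original $d$ need not satisfy separation because the triangle inequality only holds up to an additive error.

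The only nontrivial point is the triangle inequality $(1_4)$: for arbitrary $x_1, x_2, x_3 \in X$, one must show
\[
d^+(x_1, x_3) \leq d^+(x_1, x_2) + d^+(x_2, x_3).
\]
I would dispose of this by splitting on how many of the three points coincide. If $x_1 = x_3$, the left-hand side is $0$ and there is nothing to prove. If $x_1 \ne x_3$ but $x_2$ equals one of $x_1$ or $x_3$, then the term $d^+$ involving that coincidence is $0$, while the other $d^+$ term equals $d(x_1,x_3) + C = d^+(x_1,x_3)$, so equality holds. The genuinely substantive case is when $x_1, x_2, x_3$ are pairwise distinct; here
\[
d^+(x_1,x_2) + d^+(x_2,x_3) = d(x_1,x_2) + d(x_2,x_3) + 2C,
\]
while by the almost-triangle inequality $(1_4^{+,C})$ for $d$,
\[
d^+(x_1,x_3) = d(x_1,x_3) + C \leq d(x_1,x_2) + d(x_2,x_3) + 2C,
\]
which is precisely what is needed.

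I expect no serious obstacle: the whole content of the lemma is that paying a toll of $C$ on every nondegenerate pair simultaneously absorbs the additive defect in the triangle inequality and enforces separation. The only minor bookkeeping issue is making sure the case analysis above is exhaustive (it is, once one observes that $x_1 \ne x_3$ excludes $x_1 = x_2 = x_3$). The statement and proof also extend verbatim to the case where $d$ takes values in $\RR_{\geq 0} \cup \{\infty\}$, since the arithmetic $\infty + C = \infty$ causes no trouble in any of the cases above.
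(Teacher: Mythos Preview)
Your proof is correct and is exactly the routine verification the paper has in mind; the paper does not even write out a proof, treating the lemma as obvious. Your case analysis for the triangle inequality is complete and the handling of each case is sound.
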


 In item (2) of the following definition, recall Definition~\ref{defn=torsionscl}.
\begin{defn}\label{defn=scldistance}
Assume Setting~\ref{setting=itsumono}.
\begin{enumerate}[label=\textup{(}\arabic*\textup{)}]
  \item \textup{(}$\cl$-(generalized-)metric\textup{)} For $g_1,g_2\in \Gg$, set
\[
d_{\cl_{\Gg,\Ng}}(g_1,g_2)=\left\{
\begin{array}{cl}
\cl_{\Gg,\Ng}(g_1^{-1}g_2), &\textrm{if $g_1^{-1}g_2\in [\Gg,\Ng]$}, \\
\infty, &\textrm{otherwise}.
\end{array}\right.
\]
  \item \textup{(}$\scl$-almost-(generalized-)metric\textup{)} For $g_1,g_2\in \Gg$, set
\[
d_{\scl_{\Gg,\Ng}}(g_1,g_2)=\left\{
\begin{array}{cl}
\scl_{\Gg,\Ng}(g_1^{-1}g_2), &\textrm{if $g_1^{-1}g_2\in  \mathrm{proj}_{(\Gg,\Ng)}^{-1}\Big((\Ng/[\Gg,\Ng])_{\tor}\Big)$}, \\
\infty, &\textrm{otherwise}.
\end{array}\right.
\]
\end{enumerate}
We also set $d_{\cl_{\Gg}}=d_{\cl_{\Gg,\Gg}}$ and $d_{\scl_{\Gg}}=d_{\scl_{\Gg,\Gg}}$.
\end{defn}

For every $g_1,g_2\in \Gg$,
\begin{equation}\label{eq=hikaku}
d_{\scl_{\Gg,\Ng}}(g_1,g_2)\leq d_{\cl_{\Gg,\Ng}}(g_1,g_2)
\end{equation}
holds.
The following example shows that $d_{\scl_{\Gg,\Ng}}$ is \emph{not} a generalized metric in general.

\begin{exa}\label{exa=nottriangle}
Let $A$, $B$ be  non-trivial finite groups. Set $\Gg=\Ng=A\star B$, the free product of $A$ and $B$. Let $a\in  A\setminus \{e_A\}$ and $b\in  B\setminus \{e_B\}$, and set $m_a$ and $m_b$ as the orders of $a$ and $b$, respectively. Since $m_a<\infty$ and $m_b<\infty$, we have $d_{\scl_{\Gg}}(a,e_G)=0$ and $d_{\scl_{\Gg}}(e_G,b)=0$ (recall Definition~\ref{defn=torsionscl}). However, \cite[Theorem~2.93]{Calegari} implies that
\[
d_{\scl_{\Gg}}(a,b)=\frac{1}{2}\left(1-\frac{1}{m_a}-\frac{1}{m_b}\right),
\]
which is non-zero if $(m_a,m_b)\ne (2,2)$. Hence, ($1_4$) in Definition~\ref{defn=generalizedmet} fails for $([\Gg,\Ng],d_{\scl_{\Gg,\Ng}})$ in general.
\end{exa}

The following proposition may be proved by direct commutator calculus; see \cite[Proposition~1]{Kotschick}. For the reader's convenience, we include another  proof using Theorem~\ref{thm=Bavard}  (strictly speaking, using \eqref{eq=torsionBavard}).
\begin{prop}\label{prop=scladditive}
Assume Setting~$\ref{setting=itsumono}$. Then for every  $\xg_1,\xg_2\in \mathrm{proj}_{(\Gg,\Ng)}^{-1}\Big((\Ng/[\Gg,\Ng])_{\tor}\Big)$,
\begin{equation}\label{eq=scl1/2}
\scl_{\Gg,\Ng}(\xg_1\xg_2)\leq \scl_{\Gg,\Ng}(\xg_1)+\scl_{\Gg,\Ng}(\xg_2)+\frac{1}{2}
\end{equation}
holds.
For every $g_1,g_2,g_3\in \Gg$, we have
\begin{equation}\label{eq=dscl1/2}
d_{\scl_{\Gg,\Ng}}(g_1,g_3)\leq d_{\scl_{\Gg,\Ng}}(g_1,g_2)+d_{\scl_{\Gg,\Ng}}(g_2,g_3)+\frac{1}{2}.
\end{equation}
\end{prop}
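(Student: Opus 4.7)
The plan is to derive both inequalities from the Bavard duality theorem for $\scl_{\Gg,\Ng}$ (Theorem~\ref{thm=Bavard}). First I would reduce to the non-trivial case $\QQQ(\Ng)^{\Gg}\setminus \HHH^1(\Ng)^{\Gg}\ne \emptyset$, since otherwise $\scl_{\Gg,\Ng}\equiv 0$ on $[\Gg,\Ng]$ and both inequalities hold trivially. For the first inequality, I would fix $\yg_1,\yg_2\in [\Gg,\Ng]$ and, for each $\nug\in \QQQ(\Ng)^{\Gg}\setminus \HHH^1(\Ng)^{\Gg}$, use the defining defect estimate $|\nug(\yg_1\yg_2)-\nug(\yg_1)-\nug(\yg_2)|\leq \DD(\nug)$ to obtain
\[
\frac{|\nug(\yg_1\yg_2)|}{2\DD(\nug)}\leq \frac{|\nug(\yg_1)|}{2\DD(\nug)}+\frac{|\nug(\yg_2)|}{2\DD(\nug)}+\frac{1}{2}\leq \scl_{\Gg,\Ng}(\yg_1)+\scl_{\Gg,\Ng}(\yg_2)+\frac{1}{2},
\]
where the last step uses Theorem~\ref{thm=Bavard} applied to $\yg_1$ and $\yg_2$ separately. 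Taking the supremum of the leftmost term over all such $\nug$ and invoking Theorem~\ref{thm=Bavard} once more (this time for $\yg_1\yg_2$) yields \eqref{eq=scl1/2}.

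For the second inequality \eqref{eq=dscl1/2}, I would split into cases according to whether each $g_i^{-1}g_j$ lies in $[\Gg,\Ng]$. If $g_1^{-1}g_3\notin [\Gg,\Ng]$, then at least one of $g_1^{-1}g_2$ or $g_2^{-1}g_3$ must lie outside $[\Gg,\Ng]$ (otherwise their product $g_1^{-1}g_3$ would belong to $[\Gg,\Ng]$, a contradiction), so the right-hand side is $\infty$ and the inequality is automatic. If $g_1^{-1}g_3\in [\Gg,\Ng]$ but one of $g_1^{-1}g_2$ or $g_2^{-1}g_3$ is not in $[\Gg,\Ng]$, then the right-hand side is again $\infty$. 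In the remaining case where all three differences belong to $[\Gg,\Ng]$, set $\yg_1=g_1^{-1}g_2$ and $\yg_2=g_2^{-1}g_3$, so that $\yg_1\yg_2=g_1^{-1}g_3$, and apply \eqref{eq=scl1/2}.

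I do not foresee any serious obstacle: the first inequality is essentially a dualized form of the quasimorphism defect inequality, and the second inequality reduces cleanly to the first by a short case analysis based on the convention $d_{\scl_{\Gg,\Ng}}\equiv \infty$ outside $[\Gg,\Ng]\times [\Gg,\Ng]$. The mild subtlety, and the only point that deserves attention in the write-up, is keeping the quantifier order correct when passing from the fixed-$\nug$ bound to the supremum — one must bound each of the two right-hand terms by the corresponding $\scl_{\Gg,\Ng}(\yg_i)$ \emph{before} taking the supremum on the left, rather than trying to distribute the supremum across the sum.
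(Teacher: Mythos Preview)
Your proof is correct and follows essentially the same approach as the paper: both arguments derive \eqref{eq=scl1/2} from the Bavard duality (Theorem~\ref{thm=Bavard}) together with the defect inequality $|\nug(\yg_1\yg_2)-\nug(\yg_1)-\nug(\yg_2)|\le \DD(\nug)$, and then obtain \eqref{eq=dscl1/2} by a short case analysis. The only cosmetic difference is that the paper reaches the supremum via an $\varepsilon$-approximating $\nug_\varepsilon$, whereas you bound uniformly in $\nug$ and then take the supremum; your handling of the degenerate cases (when $\QQQ(\Ng)^{\Gg}=\HHH^1(\Ng)^{\Gg}$, or when some $g_i^{-1}g_j\notin[\Gg,\Ng]$) is in fact slightly more explicit than the paper's.
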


\begin{proof}
For \eqref{eq=scl1/2},  first observe that $\xg_1,\xg_2\in \mathrm{proj}_{(\Gg,\Ng)}^{-1}\Big((\Ng/[\Gg,\Ng])_{\tor}\Big)$ implies $\xg_1\xg_2\in \mathrm{proj}_{(\Gg,\Ng)}^{-1}\Big((\Ng/[\Gg,\Ng])_{\tor}\Big)$. Indeed, since $\Ng/[\Gg,\Ng]$ is an abelian group, the set $(\Ng/[\Gg,\Ng])_{\tor}$ forms a subgroup of $\Ng/[\Gg,\Ng]$. Now, take an arbitrary $\varepsilon \in \RR_{>0}$. Then by  \eqref{eq=torsionBavard}, there exists $\nug_{\varepsilon}\in \QQQ(\Ng)^{\Gg}$ such that
\[
|\nug_{\varepsilon}(\xg_1\xg_2)|\geq 2(1-\varepsilon)\scl_{\Gg,\Ng}(\xg_1\xg_2)\cdot \DD(\nug_{\varepsilon}).
\]
For this $\nug_{\varepsilon}$, we also have $|\nug_{\varepsilon}(\xg_1\xg_2)|\leq |\nug_{\varepsilon}(\xg_1)|+|\nug_{\varepsilon}(\xg_2)|+\DD(\nug_{\varepsilon})$. Again by  \eqref{eq=torsionBavard},
\[
(1-\varepsilon)\scl_{\Gg,\Ng}(\xg_1\xg_2)\leq \scl_{\Gg,\Ng}(\xg_1)+\scl_{\Gg,\Ng}(\xg_2)+\frac{1}{2}
\]
holds. By letting $\varepsilon \searrow 0$, we obtain \eqref{eq=scl1/2}. For \eqref{eq=dscl1/2},  it trivially holds if either $g_1^{-1}g_2$ or $g_2^{-1}g_3$ is outside of $\mathrm{proj}_{(\Gg,\Ng)}^{-1}\Big((\Ng/[\Gg,\Ng])_{\tor}\Big)$. If $g_1^{-1}g_2$ and $g_2^{-1}g_3$ both belong to $\mathrm{proj}_{(\Gg,\Ng)}^{-1}\Big((\Ng/[\Gg,\Ng])_{\tor}\Big)$, then \eqref{eq=scl1/2} implies \eqref{eq=dscl1/2}.
\end{proof}

By \eqref{eq=dscl1/2}, Lemma~\ref{lem=+C} yields the following corollary.
\begin{cor}\label{cor=+1/2}
Assume Setting~$\ref{setting=itsumono}$. We define $d^+_{\scl_{\Gg,\Ng}}\colon G\times G\to \mathbb{R}_{\geq 0}\cup \{\infty\}$  by
\begin{equation}\label{eq=+1/2}
d^+_{\scl_{\Gg,\Ng}}(\gG_1,\gG_2)=
\begin{cases}
\scl_{\Gg,\Ng}(\gG_1^{-1}\gG_2)+\frac{1}{2}, &\textrm{if $\gG_1\ne \gG_2$ and $\gG_1^{-1}\gG_2\in  \mathrm{proj}_{(\Gg,\Ng)}^{-1}\Big((\Ng/[\Gg,\Ng])_{\tor}\Big)$}, \\
0, &\textrm{if $\gG_1=\gG_2$},\\
\infty, &\textrm{if $\gG_1^{-1}\gG_2\in \Gg\setminus  \mathrm{proj}_{(\Gg,\Ng)}^{-1}\Big((\Ng/[\Gg,\Ng])_{\tor}\Big)$}
\end{cases}
\end{equation}
for every $\gG_1,\gG_2\in \Gg$. Then, $d^+_{\scl_{\Gg,\Ng}}$ is a generalized metric on $\Gg$.
\end{cor}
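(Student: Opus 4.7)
The plan is to obtain Corollary~\ref{cor=+1/2} as an immediate application of Proposition~\ref{prop=scladditive} together with Lemma~\ref{lem=+C}. Concretely, I will first verify that $(\Gg, d_{\scl_{\Gg,\Ng}})$ is an almost generalized metric space in the sense of Definition~\ref{defn=generalizedmet} with additive constant $C = 1/2$, and then invoke Lemma~\ref{lem=+C} with that constant.

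For the verification, condition $(1_1)$ is immediate since $\scl_{\Gg,\Ng}(e_{\Gg}) = 0$ and hence $d_{\scl_{\Gg,\Ng}}(\gG,\gG) = 0$ for every $\gG \in \Gg$. For the symmetry condition $(1_3)$, the $n = -1$ instance of semi-homogeneity \eqref{eq=semihom} gives $\scl_{\Gg,\Ng}(\yg^{-1}) = \scl_{\Gg,\Ng}(\yg)$ for every $\yg \in [\Gg,\Ng]$, which yields $d_{\scl_{\Gg,\Ng}}(\gG_1,\gG_2) = d_{\scl_{\Gg,\Ng}}(\gG_2,\gG_1)$ whenever $\gG_1^{-1}\gG_2 \in [\Gg,\Ng]$; when $\gG_1^{-1}\gG_2 \notin [\Gg,\Ng]$, its inverse $\gG_2^{-1}\gG_1$ is also outside $[\Gg,\Ng]$ (the mixed commutator subgroup being closed under inversion), so both sides equal $\infty$. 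For the almost-triangle inequality $(1_4^{+,1/2})$, given $\gG_1,\gG_2,\gG_3 \in \Gg$, the inequality is trivial if any of the two terms on the right-hand side is $\infty$; otherwise both $\gG_1^{-1}\gG_2$ and $\gG_2^{-1}\gG_3$ lie in $[\Gg,\Ng]$, so does their product $\gG_1^{-1}\gG_3$, and the estimate \eqref{eq=dscl1/2} of Proposition~\ref{prop=scladditive} is precisely the inequality required.

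With the almost generalized metric structure in hand, I will apply Lemma~\ref{lem=+C} directly, taking $C = 1/2$. A quick comparison of the three branches shows that the resulting $d^+ = d^{+,1/2}$ agrees with the function $d^+_{\scl_{\Gg,\Ng}}$ defined in \eqref{eq=+1/2}; in particular, the infinite branch of \eqref{eq=+1/2} matches $d_{\scl_{\Gg,\Ng}}(\gG_1,\gG_2) + 1/2 = \infty$. Lemma~\ref{lem=+C} then supplies all four conditions $(1_1)$--$(1_4)$ for $d^+_{\scl_{\Gg,\Ng}}$, proving that it is a generalized metric on $\Gg$. There is essentially no obstacle in this argument; the only step that demands even momentary care is the handling of infinite values, which is settled by the elementary observation that $[\Gg,\Ng]$ is closed under taking inverses and products.
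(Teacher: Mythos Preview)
Your proof is correct and follows exactly the route the paper takes: the paper's own argument is simply the one-line remark that \eqref{eq=dscl1/2} together with Lemma~\ref{lem=+C} (applied with $C=1/2$) yields the corollary. You have merely spelled out the verification of $(1_1)$ and $(1_3)$ and the handling of infinite values that the paper leaves implicit.
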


\begin{lem}\label{lem=Ginvmetric}
Assume Setting~$\ref{setting=itsumono}$.
\begin{enumerate}[label=\textup{(}$\arabic*$\textup{)}]
 \item Let $y$ be a simple $(\Gg,\Ng)$-commutator. Then for every $\lambda\in \Gg$, $\lambda y\lambda^{-1}$ is a simple $(\Gg,\Ng)$-commutator. For every $\xg\in \Ng$ and $\gG\in \Gg$, $[\xg,\gG]$ is a simple $(\Gg,\Ng)$-commutator.
 \item The generalized metric $d_{\cl_{\Gg,\Ng}}$ is bi-$G$-invariant.
 \item The almost generalized metric $d_{\scl_{\Gg,\Ng}}$ and the generalized metric $d^+_{\scl_{\Gg,\Ng}}$ defined by \eqref{eq=+1/2} are both bi-$\Gg$-invariant.
\end{enumerate}
\end{lem}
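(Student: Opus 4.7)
The plan is to establish part (1) via explicit commutator identities, and then deduce parts (2) and (3) essentially formally from (1).

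For the first assertion of (1), since conjugation is a group automorphism, $\lambda[\gG,\xg]\lambda^{-1} = [\lambda\gG\lambda^{-1},\,\lambda\xg\lambda^{-1}]$; the first slot lies in $\Gg$ trivially, and the second lies in $\Ng$ by normality of $\Ng$ in $\Gg$. For the second assertion, that $[\xg,\gG]$ with $\xg\in\Ng$ and $\gG\in\Gg$ is itself a simple $(\Gg,\Ng)$-commutator, I would invoke the identity
\[
[\xg,\gG] \;=\; \xg\,[\gG,\xg^{-1}]\,\xg^{-1},
\]
which one checks by a one-line expansion (the inner $\xg^{-1}\xg$ cancels). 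Since $\xg^{-1}\in \Ng$, the element $[\gG,\xg^{-1}]$ is a simple $(\Gg,\Ng)$-commutator, and conjugation by $\xg\in \Gg$ preserves this property by the first assertion just proved.

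For part (2), left invariance of $d_{\cl_{\Gg,\Ng}}$ is immediate, since $(g_0g_1)^{-1}(g_0g_2) = g_1^{-1}g_2$. Right invariance reduces to showing that conjugation by any $\lambda\in\Gg$ preserves $\cl_{\Gg,\Ng}$ on $[\Gg,\Ng]$: writing $z=c_1\cdots c_n$ with each $c_i$ a simple $(\Gg,\Ng)$-commutator, one has $\lambda^{-1}z\lambda = (\lambda^{-1}c_1\lambda)\cdots(\lambda^{-1}c_n\lambda)$, again a product of $n$ simple commutators by (1); the reverse inequality follows by swapping $\lambda$ and $\lambda^{-1}$. When $g_1^{-1}g_2\notin [\Gg,\Ng]$, both sides equal $\infty$ because $[\Gg,\Ng]$ is normal in $\Gg$, so conjugation cannot move an element into or out of $[\Gg,\Ng]$.

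For part (3), the bi-$\Gg$-invariance of $d_{\scl_{\Gg,\Ng}}$ is obtained by applying the conjugation-invariance of $\cl_{\Gg,\Ng}$ to each power $y^n$ and passing to the limit $n\to\infty$ in the definition of $\scl_{\Gg,\Ng}$. The bi-$\Gg$-invariance of $d^+_{\scl_{\Gg,\Ng}}$ then follows immediately from that of $d_{\scl_{\Gg,\Ng}}$, together with the observation that the case distinction $\gG_1=\gG_2$ versus $\gG_1\ne\gG_2$ in (\ref{eq=+1/2}) is preserved by left or right multiplication by any element of $\Gg$. The only non-formal input in the entire lemma is the commutator identity used in part (1), and even that is a routine one-line verification, so no genuine obstacle is anticipated.
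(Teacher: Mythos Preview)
Your proof is correct and follows essentially the same approach as the paper's: part~(1) via an explicit commutator identity plus normality of $\Ng$, part~(2) by reducing right-invariance to conjugation-invariance of $\cl_{\Gg,\Ng}$ via~(1), and part~(3) as a formal consequence of~(2). The only cosmetic difference is the identity used for the second assertion of~(1) --- you write $[\xg,\gG]=\xg[\gG,\xg^{-1}]\xg^{-1}$, whereas the paper writes $[\xg,\gG]=\gG[\gG^{-1},\xg]\gG^{-1}$ --- but both arguments conclude identically by invoking the first assertion.
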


\begin{proof}
For $(1)$, write $y=[g,x]$ with $g\in \Gg$ and $x\in \Ng$. Then, $\lambda y \lambda^{-1}=[\lambda g \lambda^{-1}, \lambda x \lambda^{-1}]$ is a simple $(\Gg,\Ng)$-commutator. Also, observe that for every $g\in \Gg$ and every $\xg\in \Ng$, $[\xg,g]=$$g[g^{-1},\xg]g^{-1}$$=[g,g\xg g^{-1}]$.
For (2), the non-trivial part, the right-$\Gg$-invariance, follows from (1). Finally, (3) is implied by (2).
\end{proof}

\begin{rem}\label{rem=notGtorsion}
In Definition~\ref{defn=torsionscl}, we define $\scl_{\Gg,\Ng}(\xg)$ for $\xg\in \Ng$ satisfying the condition that  there exists $n\in \NN$ such that $\xg^n\in [\Gg,\Ng]$. Some reader might wonder why we do not define $\scl_{\Gg,\Ng}(\gG)$ for $\gG\in  \Gg\setminus \Ng$ satisfying the same condition. The reason is for validity of Proposition~\ref{prop=scladditive} and Corollary~\ref{cor=+1/2}. More precisely, the point here is that, unlike $(\Ng/[\Gg,\Ng])_{\tor}$, the set of torsion elements in $\Gg/[\Gg,\Ng]$ may \emph{not} form a subgroup of $\Gg/[\Gg,\Ng]$. 

For instance, let $\Gg$ be a group that admits two torsion elements $a,b\in G$ such that $a^{-1}b$ is of infinite order, such as the infinite dihedral group $D_{\infty}$. Let $\Ng=\{e_{\Gg}\}$. If we extend the formulation of Definition~\ref{defn=torsionscl} even to the case of $\gG\in \Gg\setminus \Ng$ whose positive power can lie in $[\Gg,\Ng]$, then under this formulation we will have $d_{\scl_{\Gg,\Ng}}(a,e_{\Gg})=0$ and $d_{\scl_{\Gg,\Ng}}(e_{\Gg},b)=0$, but $d_{\scl_{\Gg,\Ng}}(a,b)=\infty$; hence, \eqref{eq=dscl1/2} will \emph{fail}. 

For this reason, we set $d_{\scl_{\Gg,\Ng}}(g_1,g_2)=\infty$ if $g_1^{-1}g_2\in \Gg\setminus \Ng$, even when some positive power of $g_1^{-1}g_2$ belongs to $[\Gg,\Ng]$. We note that this subtlety only shows up when we study $\scl_{\Gg,\Ng}$ (rather than $\scl_{\Gg}$ alone), because $\Gg\setminus \Ng=\emptyset$ if $\Ng=\Gg$.
\end{rem}


\begin{defn}\label{defn=choron}
Assume Setting~$\ref{setting=itsumono}$. Let $g_1,g_2\in \Gg$ and $C\in \RR_{\geq 0}$. We write $g_1\mathrel{\overset{(\Gg,\Ng)}{\underset{C}{\eqsim}}}g_2$ to mean that $d_{\cl_{\Gg,\Ng}}(g_1,g_2)\leq C$.
If $(\Gg,\Ng)$ is clear in the context, then we abbreviate as ${\underset{C}{\eqsim}}$.
\end{defn}

\begin{lem}\label{lem=choron}
Assume Setting~$\ref{setting=itsumono}$. We abbreviate $\overset{(\Gg,\Ng)}{\underset{C}{\eqsim}}$ as ${\underset{C}{\eqsim}}$.
\begin{enumerate}[label=\textup{(}$\arabic*$\textup{)}]
  \item For every $g_1,g_2,g_3\in \Gg$ and every $C_1,C_2\in \RR_{\geq 0}$, if $g_1\mathrel{\underset{C_1}{\eqsim}}g_2$ and $g_2\mathrel{\underset{C_2}{\eqsim}}g_3$, then $g_1\mathrel{\underset{C_1+C_2}{\eqsim}}g_3$.
 \item For every $g_1,g_2,\lambda_1,\lambda_2\in \Gg$ and every $C\in \RR_{\geq 0}$, if $g_1\mathrel{\underset{C}{\eqsim}}g_2$, then $\lambda_1g_1\lambda_2\mathrel{\underset{C}{\eqsim}}\lambda_1g_2\lambda_2$.
  \item For every $g\in\Gg$ and $x\in \Ng$, $gx\mathrel{\underset{1}{\eqsim}}xg$.
\end{enumerate}
\end{lem}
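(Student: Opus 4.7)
The plan is to reduce each of the three assertions to the elementary observations already recorded in Lemma~\ref{lem=Ginvmetric} and Definition~\ref{defn=scldistance}, since the relation $g_1\mathrel{\underset{C}{\eqsim}}g_2$ is by definition just the statement $d_{\cl_{\Gg,\Ng}}(g_1,g_2)\leq C$. None of the three parts should present a real obstacle; the work is entirely bookkeeping once the right identities are isolated.

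For part (1), the idea is simply the triangle inequality for the generalized metric $d_{\cl_{\Gg,\Ng}}$. Writing $g_1^{-1}g_2$ as a product of at most $C_1$ simple $(\Gg,\Ng)$-commutators and $g_2^{-1}g_3$ as a product of at most $C_2$ such commutators, the factorization $g_1^{-1}g_3=(g_1^{-1}g_2)(g_2^{-1}g_3)$ exhibits $g_1^{-1}g_3$ as a product of at most $C_1+C_2$ simple $(\Gg,\Ng)$-commutators, so $g_1^{-1}g_3\in [\Gg,\Ng]$ and $\cl_{\Gg,\Ng}(g_1^{-1}g_3)\leq C_1+C_2$.

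For part (2), the plan is to compute $(\lambda_1g_1\lambda_2)^{-1}(\lambda_1g_2\lambda_2)=\lambda_2^{-1}(g_1^{-1}g_2)\lambda_2$ and to invoke Lemma~\ref{lem=Ginvmetric}~(1), which states that a conjugate of any simple $(\Gg,\Ng)$-commutator is again a simple $(\Gg,\Ng)$-commutator. Thus conjugating each factor in a minimal decomposition of $g_1^{-1}g_2$ by $\lambda_2$ preserves the number of factors, so $\cl_{\Gg,\Ng}\big((\lambda_1g_1\lambda_2)^{-1}(\lambda_1g_2\lambda_2)\big)\leq \cl_{\Gg,\Ng}(g_1^{-1}g_2)\leq C$. (Alternatively, this is bi-$\Gg$-invariance of $d_{\cl_{\Gg,\Ng}}$, Lemma~\ref{lem=Ginvmetric}~(2).)

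For part (3), a direct calculation gives $(gx)^{-1}(xg)=x^{-1}g^{-1}xg=[x^{-1},g^{-1}]$. Since $x^{-1}\in \Ng$ and $g^{-1}\in \Gg$, the second assertion of Lemma~\ref{lem=Ginvmetric}~(1), stating that $[\xg,\gG]$ is a simple $(\Gg,\Ng)$-commutator for every $\xg\in \Ng$ and $\gG\in \Gg$, shows that $(gx)^{-1}(xg)$ is itself a single simple $(\Gg,\Ng)$-commutator. Hence $gx\mathrel{\underset{1}{\eqsim}}xg$, completing the proof.
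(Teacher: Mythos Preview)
Your proof is correct and follows essentially the same approach as the paper's own proof, which is extremely terse: the paper simply says ``Items (1) and (3) hold by definition. Item (2) follows from Lemma~\ref{lem=Ginvmetric}.'' Your argument unpacks these pointers exactly as intended, invoking the triangle inequality for $d_{\cl_{\Gg,\Ng}}$ in (1), bi-$\Gg$-invariance (Lemma~\ref{lem=Ginvmetric}) in (2), and the observation that $(gx)^{-1}(xg)=[x^{-1},g^{-1}]$ is a single simple $(\Gg,\Ng)$-commutator in (3).
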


\begin{proof}
Items (1) and (3) hold by definition. Item (2) follows from Lemma~\ref{lem=Ginvmetric}.
\end{proof}

We state the following commutator calculus, which will be employed in Section~\ref{sec=CEabel}. This follows from a direct computation, so we omit the proof.

\begin{lem}\label{lem=commutatoreq}
Let $\Gg$ be a group. Then for every $g_1,g_2,g_3\in \Gg$, the following hold.
\begin{enumerate}[label=\textup{(\arabic*)}]
 \item $[g_1,g_2g_3]=[g_1,g_2][g_1,g_3][[g_1,g_3]^{-1},g_2]$.
 \item $[g_1,g_2g_3]=[g_1,g_2]g_2[g_1,g_3]g_2^{-1}$.
\end{enumerate}
\end{lem}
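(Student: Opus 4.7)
The plan is to verify both identities by direct expansion in the group, taking (2) first and then deducing (1) from it.

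For (2), I would expand the right-hand side using the definition $[a,b] = aba^{-1}b^{-1}$:
\[
[g_1,g_2]\,g_2\,[g_1,g_3]\,g_2^{-1} = \bigl(g_1g_2g_1^{-1}g_2^{-1}\bigr)\,g_2\,\bigl(g_1g_3g_1^{-1}g_3^{-1}\bigr)\,g_2^{-1}.
\]
The factor $g_2^{-1} g_2$ in the middle cancels, leaving $g_1 g_2 g_1^{-1} g_1 g_3 g_1^{-1} g_3^{-1} g_2^{-1} = g_1 g_2 g_3 g_1^{-1} g_3^{-1} g_2^{-1}$ after cancelling $g_1^{-1} g_1$, and this last expression is exactly $[g_1, g_2 g_3]$. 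So (2) falls out of two cancellations.

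For (1), the cleanest route is to show that the right-hand side of (1) equals the right-hand side of (2); then (1) follows from (2). Setting $c = [g_1, g_3]$ and using $[c^{-1}, g_2] = c^{-1} g_2 c g_2^{-1}$, one computes
\[
[g_1,g_2]\,c\,[c^{-1},g_2] = [g_1,g_2]\,c\,c^{-1} g_2 c g_2^{-1} = [g_1,g_2]\,g_2\,[g_1,g_3]\,g_2^{-1},
\]
which matches (2). Alternatively, one can just expand the right-hand side of (1) directly into a word in $g_1, g_2, g_3$ and simplify to $g_1 g_2 g_3 g_1^{-1} g_3^{-1} g_2^{-1}$.

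There is no real obstacle here; both assertions are purely formal commutator identities, and the only thing to be careful about is tracking signs and the order of factors when expanding $[c^{-1}, g_2]$. Since the authors themselves declare the proof to be omitted as a direct computation, a proof proposal amounts to recording the two cancellations above; no auxiliary lemma from the preliminaries is required.
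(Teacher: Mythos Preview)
Your proposal is correct and matches the paper's approach exactly: the authors state that the lemma ``follows from a direct computation'' and omit the proof, and your two cancellations for (2) together with the reduction of (1) to (2) via $c[c^{-1},g_2]=g_2cg_2^{-1}$ are precisely such a computation.
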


\subsection{Cohomological results on $\WW(\Gg,\Ng)$ and $\Wcal(\Gg,\Lg,\Ng)$}\label{subsec=cohom}

For a group $\Gg$, it is well known that the space $\QQQ(\Gg)/\HHH^1(\Gg)$ is isomorphic to a certain subspace of the second \emph{bounded cohomology} $\HHH^2_b(\Gg)$ with trivial real coefficient (Lemma~\ref{lem=comparison}). In Setting~\ref{setting=itsumono}, in \cite{KKMMM} the authors relate $\WW(\Gg,\Ng)$ with \emph{ordinary group cohomology} under the assumption that the quotient group is \emph{boundedly $3$-acyclic}. In this subsection, we recall related definitions and results. We refer the reader to the books \cite{Monodbook} and \cite{Frigerio} for more details on bounded cohomology.

\begin{setting}\label{setting=QG}
Under Setting~$\ref{setting=itsumono}$, \emph{i.e.,} for a group $\Gg$ and a normal subgroup $\Ng$  of $\Gg$, set $\QG=\Gg/\Ng$. Let $\pp\colon \Gg \twoheadrightarrow \QG$ be the natural group quotient map.
\end{setting}

In Settings~\ref{setting=itsumono} and \ref{setting=QG}, we consider the short exact sequence
\begin{equation}\label{eq=shortex}
1 \longrightarrow \Ng \stackrel{i}{\longrightarrow} \Gg \stackrel{\pp}{\longrightarrow} \QG \longrightarrow 1
\end{equation}
of groups.

We first briefly recall the definition of ordinary and bounded cohomology of groups. Let $\Gg$ be a group. Let $n\in \mathbb{Z}$. Let $C^n(\Gg) = C^n(\Gg; \RR)$ be the space of real-valued functions on
$\Gg^{n}$
if $n\geq 0$, and set $C^n(\Gg) = 0$ if $n < 0$. Here, we set $\Gg^0$ as the one-point set.
Define $\delta \colon C^n(\Gg) \to C^{n+1}(\Gg)$ by
\[ \delta c(g_0, \cdots, g_n) = c(g_1, \cdots, g_n) + \sum_{i=1}^n (-1)^i c(g_0, \cdots, g_{i-1} g_i, \cdots, g_n) + (-1)^{n+1} c(g_0, \cdots, g_{n-1}).\]
The \emph{$n$-th group cohomology} of $\Gg$ (with trivial real coefficient) is the $n$-th cohomology group of the cochain complex $(C^{\ast}(\Gg), \delta)$. Let $C^n_b(\Gg)$ be the space of bounded real-valued functions on
$\Gg^{n}$
if $n\geq 0$; $C^n_b(\Gg)=0$ if $n<0$. Then $C^{\ast}_b(\Gg)$ is a subcomplex of $C^{\ast}(G)$. Define $\HHH^{\ast}_b(G)$ as the cohomology of $C^{\ast}_b(G)$: this is \emph{bounded cohomology} of $\Gg$ (with trivial real coefficient). The map $c_{\Gg}^n \colon \HHH^{n}_b(\Gg) \to \HHH^{n}(\Gg)$ induced by the inclusion $C^{n}_b(\Gg) \hookrightarrow C^{n}(\Gg)$ is called the \emph{$n$-th comparison map}. We abbreviate $c_{\Gg}^n$ as $c_{\Gg}$ if $n$ is clear.

\begin{lem}[see for instance \cite{Calegari}]\label{lem=comparison}
Let $\Gg$ be a group. Then the quotient space $\QQQ(\Gg)/\HHH^1(\Gg)$ is isomorphic to the kernel of $c_{\Gg}^2\colon \HHH^{2}_b(\Gg) \to \HHH^{2}(\Gg)$.
\end{lem}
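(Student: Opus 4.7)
The plan is to construct an explicit $\RR$-linear map $\Phi\colon\QQQ(\Gg)\to\HHH^2_b(\Gg)$ and then check that it factors through $\QQQ(\Gg)/\HHH^1(\Gg)$ and induces an isomorphism onto $\ker c_\Gg^2$. First, I would send a homogeneous quasimorphism $\psg\in\QQQ(\Gg)$ to the class $[\delta\psg]$, where $\delta\psg(g_1,g_2)=\psg(g_1)-\psg(g_1g_2)+\psg(g_2)$. By \eqref{eq=defect}, $\|\delta\psg\|_\infty=\DD(\psg)<\infty$, so $\delta\psg$ is a bounded $2$-cocycle and $\Phi$ is plainly linear. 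Because $\delta\psg$ is literally the coboundary of the (possibly unbounded) $1$-cochain $\psg$, the comparison map $c_\Gg^2$ sends $[\delta\psg]$ to zero, so the image of $\Phi$ lies in $\ker c_\Gg^2$.

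Next I would identify the kernel of $\Phi$. If $\psg\in\HHH^1(\Gg)$, then $\delta\psg=0$, so $\HHH^1(\Gg)\subseteq\ker\Phi$. Conversely, suppose $\psg\in\QQQ(\Gg)$ satisfies $\Phi(\psg)=0$; then $\delta\psg=\delta\phf$ for some bounded $\phf\in C^1_b(\Gg)$, so $\psg-\phf$ is a genuine homomorphism on $\Gg$. The standard observation that a homogeneous function differing from a homomorphism by a bounded function must equal that homomorphism (since $\psg(g)=\lim_{n\to\infty}\psg(g^n)/n$ while the bounded piece contributes zero in the limit) forces $\psg=\psg-\phf\in\HHH^1(\Gg)$. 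Hence the induced map $\bar\Phi\colon\QQQ(\Gg)/\HHH^1(\Gg)\to\ker c_\Gg^2$ is well-defined and injective.

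Finally, I would verify surjectivity via homogenization. Given $\alpha\in\ker c_\Gg^2$ represented by a bounded $2$-cocycle $\beta$, the condition $c_\Gg^2[\beta]=0$ yields some $\phf\in C^1(\Gg)$ with $\delta\phf=\beta$; the bound $\|\delta\phf\|_\infty=\|\beta\|_\infty<\infty$ shows that $\phf$ is a (not necessarily homogeneous) quasimorphism with $\DD(\phf)\leq\|\beta\|_\infty$. Define its homogenization $\bar\phf(g)=\lim_{n\to\infty}\phf(g^n)/n$; a routine subadditivity estimate gives convergence, homogeneity, and $\|\bar\phf-\phf\|_\infty\leq\DD(\phf)$, so $\bar\phf\in\QQQ(\Gg)$ and $\delta\bar\phf-\delta\phf$ is a bounded coboundary. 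Therefore $\bar\Phi([\bar\phf])=[\delta\bar\phf]=[\delta\phf]=[\beta]=\alpha$. The main obstacle is the \emph{homogenization lemma} underlying this last step: one must verify both that the pointwise limit converges for every quasimorphism and that the resulting $\bar\phf$ is again a quasimorphism uniformly close to $\phf$. Once this classical fact is available, the remaining work is routine bookkeeping between the bounded and ordinary cochain complexes.
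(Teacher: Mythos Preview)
Your argument is correct and is precisely the standard proof of this well-known fact. The paper does not supply its own proof of Lemma~\ref{lem=comparison}; it merely records the statement with a reference to \cite{Calegari}, so there is nothing substantive to compare against beyond noting that your write-up reproduces the classical argument (coboundary of a homogeneous quasimorphism, kernel identification via homogeneity, surjectivity via homogenization) that one finds in that reference.
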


Let $C^{\ast}_{/b}(\Gg)$ denote the quotient complex $C^{\ast}(\Gg)/ C^{\ast}_b(\Gg)$. Then we define $\HHH^*_{/b}(\Gg)$ to be the cohomology of the cochain complex $C^{\ast}_{/b}(\Gg)$. In relation to Lemma~\ref{lem=comparison}, it is straightforward to see that the natural map $\QQQ(\Gg) \to \HHH^1_{/b}(\Gg)$ provides an isomorphism
\begin{equation}\label{eq=/bisom}
\QQQ(\Gg)\cong \HHH^1_{/b}(\Gg).
\end{equation}
See \cite[proof of Theorem~2.50]{Calegari} for more details.

We next recall the definition of  \emph{bounded $n$-acyclicity} of groups from \cite{Ivanov12} and \cite{MR21}. In particular, bounded $3$-acyclicity is an important assumption of the main results of \cite{KKMMM}.

\begin{defn}[bounded $n$-acyclicity]\label{defn=bdd_acyc}
Let $n\in \NN$. A group $\Gg$ is said to be \emph{boundedly $n$-acyclic} if $\HHH^i_b(\Gg) = 0$ holds for every $i\in \NN$ with $i \leq n$. We say that $\Gg$ is \emph{boundedly acyclic} if for every $n\in \NN$, $\Gg$ is boundedly $n$-acyclic.
\end{defn}

We collect known facts on the study of bounded acyclicity by various researchers; these results except (1) and (2) in Theorem~\ref{thm=bdd_acyc} are not used in the present paper. The class of amenable groups contains that of solvable groups; in particular, every nilpotent group is amenable.  We refer the reader to \cite{MR}, \cite{FLM1} and \cite{FLM2} for more details on the study of bounded acyclicity.

\begin{thm}[known results for boundedly $n$-acyclic groups]\label{thm=bdd_acyc}
The following hold.
\begin{enumerate}[label=\textup{(}$\arabic*$\textup{)}]
\item \textup{(}\cite{Gr}\textup{)} Every amenable group is boundedly acyclic.
\item \textup{(}see \cite{MR21}\textup{)} Let $n\in \NN$. Let $1\to N \to G \to \Gamma \to 1$ be a short exact sequence of groups. Assume that $N$ is boundedly $n$-acyclic. Then $G$ is boundedly $n$-acyclic if and only if $\Gamma$ is.
\item \textup{(}\cite{MatsumotoMorita}\textup{)}  Let $m\in \NN$. Then, the group $\Homeo_{c}(\RR^m)$ of homeomorphisms on $\RR^m$ with compact support is boundedly acyclic.
\item \textup{(}combination of \cite{Mon04} and \cite{MS04}\textup{)}
For $m \in \NN_{\geq 3}$, every lattice in $\SL(m,\RR)$ is boundedly $3$-acyclic.
\item \textup{(}\cite{BucherMonod}\textup{)} Burger--Mozes groups \textup{(}\cite{BurgerMozes}\textup{)} are boundedly $3$-acyclic.
\item \textup{(}\cite{Monod2021}\textup{)} Richard Thompson's group $F$ is boundedly acyclic.
\item \textup{(}\cite{Monod2021}\textup{)} Let $L$ be an arbitrary group. Let $\Gamma$ be an infinite amenable group. Then the wreath product $L\wr \Gamma =\left( \bigoplus_{\Gamma}L\right)\rtimes \Gamma$ is boundedly acyclic.
\item \textup{(}\cite{MN21}\textup{)} For $m\in \NN_{\geq 2}$, the identity component $\Homeo_0(S^m)$ of the group of orientation-preserving homeomorphisms of $S^n$ is boundedly $3$-acyclic. The group $\Homeo_0(S^3)$ is boundedly $4$-acyclic.
\end{enumerate}
\end{thm}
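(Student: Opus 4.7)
The plan is not to reprove these results from scratch—each item is stated with a citation to its original source, so the role of the theorem is to assemble a usable toolbox rather than to establish anything new. Nevertheless it is worth outlining the distinct techniques that enter, since they come from genuinely different corners of the subject.

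For \textup{(1)} I would follow Gromov's original strategy: an amenable group $\Gg$ carries a left-invariant mean $m$ on $\ell^{\infty}(\Gg)$, and the standard averaging trick (or, equivalently, the fact that amenable groups act trivially on every dual Banach $\Gg$-module in bounded cohomology) shows that any bounded cocycle of positive degree becomes a bounded coboundary after averaging against $m$. For \textup{(2)} the natural tool is a Hochschild--Serre-type spectral sequence adapted to bounded cohomology: one has $E_2^{p,q}=\HHH^p_b(\QG;\HHH^q_b(\Ng))$ abutting to $\HHH^{p+q}_b(\Gg)$, so if $\HHH^q_b(\Ng)=0$ for $1\leq q\leq n$ the edge morphism induces an isomorphism $\HHH^i_b(\Gg)\cong \HHH^i_b(\QG)$ in degrees $i\leq n$, from which the equivalence follows.

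For items \textup{(3)}, \textup{(6)} and \textup{(7)} the common engine is the \emph{commuting-conjugates} or \emph{displacement} technique: in each of these groups one can, after conjugation, realise any finite configuration as infinitely many commuting disjoint copies, and this self-similarity forces every bounded cocycle to cobound. For \textup{(4)} I would invoke Monod--Shalom boundary theory: bounded cohomology of a higher-rank lattice is computed by $L^{\infty}$ cochains on a Poisson boundary, and double-ergodicity together with the Mautner phenomenon kills degrees $\leq 3$. Item \textup{(5)} is handled analogously via the Burger--Mozes boundary action on a product of trees. Finally for \textup{(8)} one would use the Monod--Nariman fragmentation scheme, bootstrapping the compactly supported case \textup{(3)} through local charts on $S^m$.

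The main obstacle to a uniform proof is precisely that no single framework covers all eight items: amenable averaging, bounded Hochschild--Serre, commuting conjugates, boundary theory, and fragmentation are each genuinely needed for some entry on the list. For the purposes of this paper what matters is only that \textup{(1)} and \textup{(2)} together supply abundantly many boundedly $3$-acyclic quotients $\QG=\Gg/\Ng$ (in particular all amenable ones, and hence all nilpotent ones), which is exactly the input required for the cohomological identification of $\Wcal(\Gg,\Lg,\Ng)$ referred to in Subsection~\ref{subsec=cohom}.
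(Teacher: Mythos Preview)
Your reading is correct: in the paper this theorem is stated purely as a compendium of cited results and carries no proof whatsoever—the surrounding text even remarks that only items (1) and (2) are actually used later. Your proposal agrees with this (you explicitly say the role is ``to assemble a usable toolbox rather than to establish anything new''), and your additional sketches of the underlying techniques for each item are accurate bonus commentary that the paper itself does not attempt.
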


In Settings~\ref{setting=itsumono} and \ref{setting=QG}, consider short exact sequence \eqref{eq=shortex}. Then, the following exact sequence
\[
0 \to \HHH^1(\QG) \to \HHH^1(\Gg) \to \HHH^1(\Ng)^{\Gg} \to \HHH^2(\QG) \to \HHH^2(\Gg),
\]
called the \emph{five-term exact sequence} of group cohomology, is well known. The main result in \cite{KKMMM} is the counterpart of this five-term exact sequence for $\HHH_{/b}$ (recall \eqref{eq=/bisom}).

\begin{thm}[{\cite[Theorem~1.5]{KKMMM}}]\label{thm=KKMMMmain1}
Assume Settings~$\ref{setting=itsumono}$ and $\ref{setting=QG}$. Then there exists an $\RR$-linear map $\tau_{/b} \colon \QQQ(\Ng)^\Gg \to \HHH^2_{/b}(\QG)$ which satisfies the following.
\begin{enumerate}[label=\textup{(}$\arabic*$\textup{)}]
\item The following sequence is exact:
\begin{equation}
0 \to \QQQ(\QG) \to \QQQ(\Gg) \to \QQQ(\Ng)^{\Gg} \xrightarrow{\tau_{/b}} \HHH^2_{/b}(\QG) \to \HHH^2_{/b}(\Gg).
\end{equation}
\item The following diagram is commutative:
\begin{equation}\label{eq=diagram}
\xymatrix{
0 \ar[r] & \HHH^1(\QG) \ar[r] \ar[d] & \HHH^1(\Gg) \ar[r] \ar[d] & \HHH^1(\Ng)^{\Gg} \ar[r] \ar[d] & \HHH^2(\QG) \ar[r] \ar[d] & \HHH^2(\Gg) \ar[d] \\
0 \ar[r] & \QQQ(\QG) \ar[r] & \QQQ(\Gg) \ar[r] & \QQQ(\Ng)^{\Gg} \ar[r]^{\tau_{/b}} & \HHH^2_{/b}(\QG) \ar[r] & \HHH^2_{/b}(\Gg).
}
\end{equation}
\end{enumerate}
\end{thm}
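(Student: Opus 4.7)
Fix a set-theoretic section $s \colon \QG \to \Gg$ with $s(e_{\QG}) = e_{\Gg}$, let $f(\gamma_1,\gamma_2) = s(\gamma_1)s(\gamma_2)s(\gamma_1\gamma_2)^{-1} \in \Ng$ be the associated extension cocycle, and define $\tau_{/b}$ by $\tau_{/b}(\nug) = [c_\nug]$ where $c_\nug(\gamma_1,\gamma_2) = \nug(f(\gamma_1,\gamma_2))$. The twisted associativity $f(\gamma_1,\gamma_2)f(\gamma_1\gamma_2,\gamma_3) = \bigl(s(\gamma_1)f(\gamma_2,\gamma_3)s(\gamma_1)^{-1}\bigr) f(\gamma_1,\gamma_2\gamma_3)$ in $\Ng$, together with $\DD(\nug) < \infty$ and the $\Gg$-invariance of $\nug$, forces $\delta c_\nug$ bounded; a change of section modifies $c_\nug$ only by a coboundary plus a bounded cochain, so $[c_\nug] \in \HHH^2_{/b}(\QG)$ is well defined and $\RR$-linear in $\nug$. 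Putting $\tilde\nug(g) = \nug(g \cdot s(p(g))^{-1})$, the same ingredients yield the \emph{key identity} $\delta \tilde\nug + p^* c_\nug \in C^2_b(\Gg)$, which drives the remainder of the argument.

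\textbf{Exactness.} Injectivity at $\QQQ(\QG)$ is immediate from surjectivity of $p$. At $\QQQ(\Gg)$: if $\psg \in \QQQ(\Gg)$ vanishes on $\Ng$, writing $(\gG\xg)^n = \gG^n y_n$ with $y_n$ a product of $\Gg$-conjugates of $\xg \in \Ng$ and invoking homogeneity together with $\Gg$-invariance (Lemma~\ref{lem=Ginv}) gives $\psg(\gG\xg) = \psg(\gG)$, so $\psg$ descends through $p$. At $\QQQ(\Ng)^{\Gg}$: if $\nug = \psg|_\Ng$ then $c_\nug - \delta(\psg \circ s)$ is bounded, so $\tau_{/b}(\nug) = 0$; conversely, if $c_\nug = \delta b + \beta$ with $\beta \in C^2_b(\QG)$, the key identity makes $\tilde\nug + b\circ p$ a quasimorphism on $\Gg$ whose homogenization restricts to $\nug$ on $\Ng$. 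That $p^* \tau_{/b}(\nug) = 0$ is immediate from the key identity, since $p^* c_\nug$ equals $-\delta \tilde\nug$ plus a bounded cochain.

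\textbf{Main obstacle: reverse inclusion at $\HHH^2_{/b}(\QG)$.} Given $[c] \in \HHH^2_{/b}(\QG)$ with $p^*[c] = 0$, choose $b \in C^1(\Gg)$ and $\beta \in C^2_b(\Gg)$ with $p^* c = \delta b + \beta$. The candidate is the homogenization $\nug$ of $b|_\Ng$; it is a quasimorphism on $\Ng$ because $p^* c$ is constant on $\Ng \times \Ng$. The delicate point is verifying $\Gg$-invariance of $\nug$: for $\gG \in \Gg$ and $\xg \in \Ng$, the difference $b(\gG \xg^n \gG^{-1}) - b(\xg^n)$ must remain bounded in $n$, which reduces via $\delta b = p^* c - \beta$ to the observation that $p^* c(\xg^n, \gG^{-1})$ and $p^* c(\gG, \xg^n \gG^{-1})$ are independent of $n$ (because $p(\xg^n) = e_{\QG}$). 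A final application of the key identity then yields $\tau_{/b}(\nug) = [c]$, completing exactness.

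\textbf{Commutativity of the diagram.} The three leftmost squares commute trivially: horizontal maps are inclusions, pullbacks, and restrictions of genuine homomorphisms, and the vertical maps $\HHH^1 \hookrightarrow \QQQ$ are the natural inclusions. The square involving $\tau_{/b}$ commutes because the ordinary transgression $\HHH^1(\Ng)^{\Gg} \to \HHH^2(\QG)$ in the Lyndon--Hochschild--Serre spectral sequence is given by the same formula $\kg \mapsto \kg \circ f$, which produces a genuine $2$-cocycle on $\QG$ when $\kg$ is a homomorphism. The rightmost square commutes by naturality of $C^*(-) \to C^*_{/b}(-)$ under pullback along $p$.
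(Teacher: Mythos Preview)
The paper does not give a proof of this statement; it is quoted from \cite[Theorem~1.5]{KKMMM} and used as a black box throughout (for instance in deriving Theorem~\ref{thm=KKMMMmain2}, Corollary~\ref{cor=intersection}, and Proposition~\ref{prop=findimWW}). So there is no in-paper proof to compare against.

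For what it is worth, your argument is the expected one and is correct: construct $\tau_{/b}$ from a set-theoretic section via the extension $2$-cocycle, use $\DD(\nug)<\infty$ together with $\Gg$-invariance to force $\delta c_\nug$ bounded, and drive exactness through the key identity $\delta\tilde\nug + p^{*}c_\nug \in C^2_b(\Gg)$. The reverse inclusion at $\HHH^2_{/b}(\QG)$ is indeed the delicate step, and your verification that the homogenization of $b|_{\Ng}$ is $\Gg$-invariant --- via the fact that $p^{*}c(\,\cdot\,,\,\cdot\,)$ is constant along $\Ng$ in each slot --- is the right mechanism. This is almost certainly the same construction carried out in \cite{KKMMM}, which the present paper simply cites.
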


From Theorem~\ref{thm=KKMMMmain1}, diagram chasing yields the following result; see \cite[Subsection~4.1]{KKMMM} for details.

\begin{thm}[{\cite[Theorem~1.9]{KKMMM}}]\label{thm=KKMMMmain2}
Assume Settings~$\ref{setting=itsumono}$ and $\ref{setting=QG}$. Assume that $\QG$ is boundedly $3$-acyclic. Then there exists an isomorphism
\[
\WW(\Gg,\Ng) \cong \Im (c_{\Gg}^2 \colon \HHH^2_b(\Gg) \to \HHH^2(\Gg)) \cap \Im (\pp^{\ast} \colon \HHH^2(\QG) \to \HHH^2(\Gg)).
\]
\end{thm}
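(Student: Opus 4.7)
The plan is to derive the claim by diagram-chasing in the commutative diagram \eqref{eq=diagram} of Theorem~\ref{thm=KKMMMmain1}, using bounded $3$-acyclicity of $\QG$ to identify ordinary and ``modulo-bounded'' cohomology in low degree.

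First, I would extract the long exact sequence attached to the short exact sequence $0\to C^{\ast}_b(\QG)\to C^{\ast}(\QG)\to C^{\ast}_{/b}(\QG)\to 0$ of cochain complexes. Since $\HHH^2_b(\QG)=\HHH^3_b(\QG)=0$ by assumption, this yields an isomorphism $\HHH^2(\QG)\xrightarrow{\cong}\HHH^2_{/b}(\QG)$ via the natural comparison map. In parallel, for $\Gg$ the same long exact sequence identifies $\Im(c_\Gg^2\colon \HHH^2_b(\Gg)\to \HHH^2(\Gg))$ with $\Ker(\HHH^2(\Gg)\to \HHH^2_{/b}(\Gg))$.

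Next, I would exploit the exactness of the bottom row of \eqref{eq=diagram}: the map $\tau_{/b}$ descends to an \emph{injection}
\[
\bar{\tau}_{/b}\colon \QQQ(\Ng)^{\Gg}/i^{\ast}\QQQ(\Gg)\hookrightarrow \HHH^2_{/b}(\QG)
\]
whose image equals $\Ker(\HHH^2_{/b}(\QG)\to \HHH^2_{/b}(\Gg))$. Transporting this via the isomorphism $\HHH^2(\QG)\cong \HHH^2_{/b}(\QG)$ of the previous step and using commutativity of \eqref{eq=diagram}, the image identifies with $\{\alpha\in \HHH^2(\QG) : \pp^{\ast}\alpha\in \Ker(\HHH^2(\Gg)\to\HHH^2_{/b}(\Gg))\}=\{\alpha\in \HHH^2(\QG) : \pp^{\ast}\alpha\in \Im(c_\Gg^2)\}$. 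Thus
\[
\QQQ(\Ng)^{\Gg}/i^{\ast}\QQQ(\Gg)\;\cong\;\{\alpha\in \HHH^2(\QG): \pp^{\ast}\alpha\in \Im(c_\Gg^2)\}.
\]

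Then I would quotient further by the image of $\HHH^1(\Ng)^{\Gg}$ to obtain $\WW(\Gg,\Ng)$. Commutativity of \eqref{eq=diagram} shows that the composition $\HHH^1(\Ng)^{\Gg}\to \QQQ(\Ng)^{\Gg}\xrightarrow{\tau_{/b}}\HHH^2_{/b}(\QG)$ agrees with $\HHH^1(\Ng)^{\Gg}\xrightarrow{d}\HHH^2(\QG)\to \HHH^2_{/b}(\QG)$, where $d$ is the connecting homomorphism of the top five-term sequence; exactness of that sequence gives $\Im(d)=\Ker(\pp^{\ast})$. Since $\Ker(\pp^{\ast})$ is automatically contained in $\{\alpha : \pp^{\ast}\alpha\in \Im(c_\Gg^2)\}$, the previous identification descends to
\[
\WW(\Gg,\Ng)\;\cong\;\{\alpha\in \HHH^2(\QG): \pp^{\ast}\alpha\in \Im(c_\Gg^2)\}\,/\,\Ker(\pp^{\ast}),
\]
and the map induced by $\pp^{\ast}$ provides the required isomorphism onto $\Im(c_\Gg^2)\cap \Im(\pp^{\ast})$.

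The steps themselves are formal; the main obstacle is bookkeeping. Specifically, one must check that the compatible identification of $\QQQ(\Ng)^{\Gg}/i^{\ast}\QQQ(\Gg)$ with a subspace of $\HHH^2(\QG)$ carries the distinguished subspace (image of $\HHH^1(\Ng)^{\Gg}$) exactly to $\Ker(\pp^{\ast})$ \emph{with no extra contribution}, and that the quotient $\QQQ(\Ng)^{\Gg}/(i^{\ast}\QQQ(\Gg)+\HHH^1(\Ng)^{\Gg})$ truly equals the corresponding iterated quotient. Both rely on the fact that the isomorphism $\HHH^2(\QG)\cong\HHH^2_{/b}(\QG)$ is induced by the natural comparison map, so the two vertical maps in \eqref{eq=diagram} involving $\HHH^2(\QG)$ and $\HHH^2_{/b}(\QG)$ become mutually inverse after restriction to the relevant subspaces.
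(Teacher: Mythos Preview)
Your proposal is correct and is precisely the diagram chase the paper has in mind: the paper does not spell out a proof here but states that ``from Theorem~\ref{thm=KKMMMmain1}, diagram chasing yields the following result'' and refers to \cite[Subsection~4.1]{KKMMM} for the details you have supplied. Your bookkeeping concerns are already resolved by your own argument, since commutativity of \eqref{eq=diagram} and exactness of the top five-term sequence give exactly $\tau_{/b}(\HHH^1(\Ng)^{\Gg})=\Im(d)\to\Ker(\pp^{\ast})$ under the identification $\HHH^2(\QG)\cong\HHH^2_{/b}(\QG)$, with surjectivity onto $\Ker(\pp^{\ast})$ following from exactness of the top row.
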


In the setting of Theorem~\ref{thm=KKMMMmain2}, determining $\Rdim \WW(\Gg,\Ng)$ reduces to the study of $c_{\Gg}^2$ and $\pp^{\ast}$. The map $\pp^{\ast}$ is between ordinary group cohomology and relatively tractable. Despite the fact that studying the image of  $c_{\Gg}^2$ is difficult in general, the following theorem helps our study for Gromov-hyperbolic groups.

\begin{thm}[\cite{Gr}, \cite{Mineyev}]\label{thm=Mineyev}
Let $\Gg$ be a non-elementary Gromov-hyperbolic group. Then, for every $n\in \NN_{\geq 2}$, the $n$-th comparison map $c_{\Gg}^n\colon \HHH^n_b(\Gg)\to \HHH^n(\Gg)$ is surjective.
\end{thm}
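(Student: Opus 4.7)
The plan is to reduce the statement to the construction of a $\Gg$-equivariant bounded contracting homotopy on a resolution computing $\HHH^{\ast}(\Gg)$, following the strategy of Gromov and its combinatorial refinement by Mineyev. Concretely, I would fix a finite generating set of $\Gg$ and work with the Cayley graph $X = \mathrm{Cay}(\Gg)$ (or its Rips complex $P_d(\Gg)$ for $d$ large enough to be contractible), which is a proper, geodesic, $\delta$-hyperbolic space on which $\Gg$ acts properly cocompactly by isometries.

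First, I would construct a $\Gg$-equivariant \emph{homological bicombing} $q \colon X^{(0)} \times X^{(0)} \to C_1(X;\RR)$ sending a pair $(x,y)$ to a $1$-chain with boundary $y-x$, supported near a geodesic from $x$ to $y$, with $\ell^1$-norm comparable to $d(x,y)$; the key extra property afforded by $\delta$-hyperbolicity is that the $1$-cycle $q(x,y) + q(y,z) + q(z,x)$ is the boundary of a $2$-chain whose $\ell^1$-norm is \emph{bounded independently of $x,y,z$}, a reflection of the thin-triangles condition. Next, I would iterate this into higher dimensions: by induction on $n$, produce a $\Gg$-equivariant family $q(x_0,\dots,x_n) \in C_n(X;\RR)$ with $\partial q(x_0,\dots,x_n) = \sum_i (-1)^i q(x_0,\dots,\widehat{x_i},\dots,x_n)$ and $\|q(x_0,\dots,x_n)\|_1 \leq K_n$ for a constant $K_n$ independent of the tuple. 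This is where the linear higher-dimensional isoperimetric inequality of hyperbolic groups (Gromov--Mineyev) is invoked at each step to fill the previously constructed sphere by a chain of uniformly bounded norm.

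With this data in hand, fix a basepoint $x_0 \in X^{(0)}$ and define, for any cocycle $c \in C^n(\Gg;\RR)$, the straightened cochain
\[
\widetilde{c}(g_0,\dots,g_n) \;=\; \langle\, c,\, q(g_0 x_0,\dots,g_n x_0)\,\rangle,
\]
after identifying $n$-cochains on $\Gg$ with $\Gg$-equivariant $n$-cochains on $X^{(0)}$ via the bar construction. Because $\|q(g_0 x_0,\dots,g_n x_0)\|_1 \leq K_n$, the cochain $\widetilde{c}$ is bounded, and the compatibility $\partial q = \sum (-1)^i q(\dots\widehat{\phantom{x}}\dots)$ together with $\delta c = 0$ yields $\delta \widetilde{c} = 0$. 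A standard chain-homotopy argument, using the $q$-family as a $\Gg$-equivariant contracting structure on the augmented complex, shows $[\widetilde{c}] = [c]$ in $\HHH^n(\Gg)$. This exhibits every class in $\HHH^n(\Gg)$ as the image of a bounded class under $c_{\Gg}^n$, proving surjectivity.

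The main obstacle is the inductive construction of the higher-dimensional bounded fillings in the second step. For $n=2$ the $\delta$-thinness of triangles gives the bound directly, but for $n \geq 3$ one must ensure uniform $\ell^1$-control \emph{and} simultaneous $\Gg$-equivariance of the fillings. Mineyev's solution is to work with a carefully weighted, equivariantly averaged bicombing obtained by a star-like construction at the Gromov boundary, for which both properties can be verified at once; non-elementarity is used to guarantee there is enough room in the boundary to support the averaging procedure. Everything else (the straightening map and the cohomological consequence) is essentially formal once these bounded higher fillings are available.
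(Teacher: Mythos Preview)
The paper does not prove this statement; Theorem~\ref{thm=Mineyev} is simply quoted from the literature with the attributions \cite{Gr} and \cite{Mineyev}, and the paper uses it as a black box (for example in Corollary~\ref{cor=cup} and in the proof of Theorem~\ref{thm=one_relator}). So there is no proof in the paper to compare against.

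Your outline is a faithful sketch of the Gromov--Mineyev argument that those citations point to: build an equivariant homological bicombing with uniformly bounded area, promote it inductively to bounded higher fillings via the linear isoperimetric behavior of hyperbolic spaces, and use the resulting straightening to replace an arbitrary cocycle by a bounded one in the same class. One small inaccuracy: the role you assign to non-elementarity (``enough room in the boundary to support the averaging'') is not really how Mineyev's construction goes---his bicombing is built combinatorially in the Cayley graph/Rips complex rather than by averaging over boundary points, and the surjectivity statement is vacuously true for elementary hyperbolic groups anyway (they are virtually cyclic, hence amenable with $\HHH^n(\Gg)=0$ for $n\geq 2$). Apart from that, your plan matches the cited proof.
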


With the aid of Theorems~\ref{thm=KKMMMmain1} and \ref{thm=KKMMMmain2}, we have a useful sufficient condition on pairs $(\Gg,\Ng)$ for having finite dimensional $\WW(\Gg,\Ng)$.

\begin{cor}[{\cite[Theorems~1.9 and 1.10]{KKMMM}}]\label{cor=fdW}
Assume Settings~$\ref{setting=itsumono}$ and $\ref{setting=QG}$. Assume that $\QG$ is boundedly $3$-acyclic. If either $\Gg$ or $\QG$ is finitely presented, then $\Rdim \WW(\Gg,\Ng)<\infty$.
\end{cor}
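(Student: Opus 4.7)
The plan is to deduce the corollary directly from Theorem~\ref{thm=KKMMMmain2}. Since $\QG$ is boundedly $3$-acyclic by hypothesis, that theorem gives
\[
\WW(\Gg,\Ng)\ \cong\ \Im\bigl(c_{\Gg}^2\colon \HHH^2_b(\Gg)\to \HHH^2(\Gg)\bigr)\ \cap\ \Im\bigl(\pp^{\ast}\colon \HHH^2(\QG)\to \HHH^2(\Gg)\bigr).
\]
In particular $\WW(\Gg,\Ng)$ embeds as an $\RR$-subspace of $\HHH^2(\Gg;\RR)$, and simultaneously, via pullback along $\pp^{\ast}$, as an $\RR$-subspace of the quotient $\HHH^2(\QG;\RR)/\ker(\pp^{\ast})$ (which is precisely $\Im(\pp^{\ast})$). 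Hence it will suffice to show that $\HHH^2(H;\RR)$ is finite-dimensional for the finitely presented group $H$, taking $H=\Gg$ in the first case and $H=\QG$ in the second; in either case $\WW(\Gg,\Ng)$ sits inside a finite-dimensional $\RR$-vector space and the conclusion follows.

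The finite-dimensionality of $\HHH^2(H;\RR)$ for finitely presented $H$ is classical. A finite presentation of $H$ produces a presentation $2$-complex with finitely many cells in dimensions $0,1,2$; attaching cells of dimension $\geq 3$ to kill higher homotopy yields a $K(H,1)$ whose $2$-skeleton is still finite. Computing $\HHH^{\ast}(H;\RR)$ from the cellular cochain complex of this $K(H,1)$, the space $C^2$ is finite-dimensional, so $\HHH^2(H;\RR) = \ker(\delta^2)/\Im(\delta^1)$ is a subquotient of a finite-dimensional space and therefore finite-dimensional.

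I do not foresee any real obstacle: the entire argument amounts to combining the structural identification in Theorem~\ref{thm=KKMMMmain2} with the standard homological fact above, together with the elementary observation that a subspace of the intersection $\Im(c_{\Gg}^2)\cap \Im(\pp^{\ast})$ is controlled by either factor. The only mild care needed is to note that the embedding into $\Im(\pp^{\ast})$, rather than into $\HHH^2(\QG;\RR)$ itself, is what allows the hypothesis on $\QG$ to be used symmetrically with the hypothesis on $\Gg$.
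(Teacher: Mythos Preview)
Your argument is correct. The paper does not supply its own proof of this corollary; it merely cites \cite[Theorems~1.9 and~1.10]{KKMMM}, and your derivation from Theorem~\ref{thm=KKMMMmain2} together with the classical finiteness of $\HHH^2(H;\RR)$ for finitely presented $H$ is exactly the intended deduction.

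One small stylistic remark: in the $\QG$-case you don't need the slightly roundabout phrasing about embedding into the quotient $\HHH^2(\QG)/\ker(\pp^{\ast})$. It is cleaner to say directly that $\WW(\Gg,\Ng)\subseteq \Im(\pp^{\ast})$ and $\Rdim\Im(\pp^{\ast})\le \Rdim\HHH^2(\QG)<\infty$. Your final paragraph already gestures at this, but the body of the argument could be streamlined accordingly.
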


The following corollary is important in applications of Theorem~\ref{mthm=main}, including Subsection~\ref{subsec=induced}.
\begin{cor}\label{cor=nilp}
Let $\Gg$ be a finitely generated group and $q\in \NN_{\geq 2}$. Let $N$ be a normal subgroup of $\Gg$ satisfying $N\geqslant \gamma_q(\Gg)$. Then the spaces $\WW(\Gg,\Ng)$ is finite dimensional.
\end{cor}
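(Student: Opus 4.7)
The plan is to reduce the statement directly to Corollary~\ref{cor=fdW} by checking its two hypotheses for the quotient $\QG = \Gg/\Ng$. Since $\Ng \geqslant \gamma_q(\Gg)$, the group $\QG$ is a quotient of $\Gg/\gamma_q(\Gg)$, and the latter is nilpotent of class at most $q-1$. Being a quotient of a nilpotent group, $\QG$ is itself nilpotent.

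Next, I would verify the two hypotheses of Corollary~\ref{cor=fdW} separately. For bounded $3$-acyclicity: every nilpotent group is solvable, hence amenable, hence boundedly acyclic by Theorem~\ref{thm=bdd_acyc}~(1); in particular $\QG$ is boundedly $3$-acyclic. For finite presentability: since $\Gg$ is finitely generated, so is its quotient $\QG$, which is therefore a finitely generated nilpotent group. I would then invoke the classical theorem that finitely generated nilpotent groups are finitely presented (this is a standard consequence of their being polycyclic).

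Having checked that $\QG$ is both boundedly $3$-acyclic and finitely presented, Corollary~\ref{cor=fdW} applies and yields $\Rdim \WW(\Gg,\Ng) < \infty$, which is the desired conclusion. There is no serious obstacle here; the entire content of the proof lies in observing that the hypothesis $\Ng \geqslant \gamma_q(\Gg)$ forces $\QG$ to be nilpotent, and then citing the two standard facts (amenability of nilpotent groups, finite presentability of finitely generated nilpotent groups) so as to apply Corollary~\ref{cor=fdW}.
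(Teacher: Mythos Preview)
Your proof is correct and follows exactly the same route as the paper: observe that $\QG=\Gg/\Ng$ is a finitely generated nilpotent group (hence amenable and boundedly $3$-acyclic, and finitely presented), then apply Corollary~\ref{cor=fdW}. The paper's proof is terser, leaving the bounded $3$-acyclicity implicit, but the argument is identical.
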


\begin{proof}
Since $\QG=\Gg/\Ng$ is a finitely generated nilpotent group, it is finitely presented (see for instance \cite[2.3 and 2.4]{Mannbook}). Now  Corollary~\ref{cor=fdW} applies to this case.
\end{proof}

The following example shows a huge difference between $\WW(\Gg,\Ng)=\QQQ(\Ng)^{\Gg}/(\HHH^1(\Ng)^{\Gg}+i^{\ast}\QQQ(\Gg))$ and $\QQQ(\Ng)^{\Gg}/\HHH^1(\Ng)^{\Gg}$.

\begin{exa}[examples with $\Rdim \left(\QQQ(\Ng)^{\Gg} / \HHH^1(\Ng)^{\Gg}\right)=\infty$]\label{exa=AH}
Let $\Gg$ be an acylindrically hyperbolic group and $\Ng$ an infinite normal subgroup of $G$; see \cite{OsinAH} for this concept. For instance, the following groups are acylindrically hyperbolic: non-elementary Gromov-hyperbolic groups, the mapping class group $\Mod(\Sigma_{\genus})$ of $\Sigma_{\genus}$ with $\genus\geq 2$, groups defined by $s$ generators and $r$ relations with $s-r\geq 2$ \cite{Osin15}, and $\Aut(H)$ with $H$ non-elementary Gromov-hyperbolic \cite{GenevoisHorbez}.
In this setting, the space $i^{\ast}\QQQ(\Gg)$ (and thus $\QQQ(N)^G$) is known to be infinite-dimensional (\cite{BF}, \cite[Corollary 4.3]{FW}).
Moreover, we can show that $\Rdim\left(\QQQ(\Ng)^{\Gg}/\HHH^1(\Ng)^{\Gg}\right)=\infty$ as follows: by \cite[Corollary~1.5]{OsinAH}, $\Ng$ is also acylindrically hyperbolic. Hence, so is $[\Ng,\Ng]$. Then the argument of \cite[Corollary 4.3]{FW} shows that the image of $i_{[\Ng,\Ng],\Gg}^{\ast}\colon \QQQ(\Gg)\to \QQQ([\Ng,\Ng])^{\Gg}$ has an infinite dimensional image. Finally, note that $i_{[\Ng,\Ng],\Ng}^{\ast}\HHH^1(N)^G=0$.

Contrastingly, Corollary~\ref{cor=fdW} states that if $\QG$ is boundedly $3$-acyclic and if either $\Gg$ or $\QG$ is finitely presented, then $\Rdim \WW(\Gg,\Ng)<\infty$; see also Corollary~\ref{cor=nilp}.
\end{exa}

Together with Theorem~\ref{thm=Mineyev}, we have several examples of pairs $(\Gg,\Ng)$  for which $\Rdim \WW(\Gg,\Ng)$ is computed; these examples include ones where $\WW(\Gg,\Ng)$ is non-zero finite dimensional (for instance as in Theorem~\ref{thm=WW}).

For a triple $(\Gg,\Lg,\Ng)$ of a group $\Gg$ and two normal subgroups $\Lg,\Ng$ with $\Lg\geqslant \Ng$, the computation of $\Rdim \Wcal(\Gg,\Lg,\Ng)$ is rather difficult. Nevertheless, we might reduce the computation of $\Rdim\Wcal(\Gg,\Lg,\Ng)$ to that of $\Rdim \WW(\Gg,\Ng)$ and $\Rdim \WW(\Gg,\Lg)$ in a certain manner; see Theorem~\ref{thm=dimWcal} in Subsection~\ref{subsec=Wcal}.


We have the following corollary to Theorem~\ref{thm=KKMMMmain1}.

\begin{cor}\label{cor=intersection}
Assume Settings~$\ref{setting=itsumono}$ and $\ref{setting=QG}$. Then, there exists an isomorphism
\[
(\HHH^1(\Ng)^{\Gg} \cap i^*\QQQ(\Gg)) / i^* \HHH^1(\Gg) \cong \Im(\HHH^1(\Ng)^{\Gg} \to \HHH^2(\QG)) \cap \Im (\HHH^2_b(\QG) \to \HHH^2(\QG)).
\]
In particular, if $\QG$ is boundedly $2$-acyclic, then we have
\[
\HHH^1(\Ng)^{\Gg} \cap i^* \QQQ(\Gg) = i^*\HHH^1(\Gg).
\]
\end{cor}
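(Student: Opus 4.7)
The plan is to deduce the isomorphism by a diagram chase in the commutative diagram \eqref{eq=diagram} of Theorem~\ref{thm=KKMMMmain1}. Let $\delta \colon \HHH^1(\Ng)^{\Gg} \to \HHH^2(\QG)$ denote the connecting map appearing in the top (ordinary) five-term sequence, and write $\pi_\QG \colon \HHH^2(\QG) \to \HHH^2_{/b}(\QG)$ for the natural map induced by the quotient $C^*(\QG) \twoheadrightarrow C^*_{/b}(\QG)$. The key cohomological identity I would use is
\[
\ker(\pi_\QG) \;=\; \Im\bigl(c_\QG^2 \colon \HHH^2_b(\QG) \to \HHH^2(\QG)\bigr),
\]
which follows from the long exact sequence associated with the short exact sequence $0 \to C^*_b(\QG) \to C^*(\QG) \to C^*_{/b}(\QG) \to 0$ of cochain complexes.

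Next, I would exploit commutativity of \eqref{eq=diagram} to link the top and bottom rows. Viewing $\nug \in \HHH^1(\Ng)^{\Gg}$ as an element of $\QQQ(\Ng)^{\Gg}$ via the vertical inclusion, the commutativity of the square
\[
\xymatrix{
\HHH^1(\Ng)^{\Gg} \ar[r]^{\delta} \ar[d] & \HHH^2(\QG) \ar[d] \\
\QQQ(\Ng)^{\Gg} \ar[r]^{\tau_{/b}} & \HHH^2_{/b}(\QG)
}
\]
yields $\tau_{/b}(\nug) = \pi_\QG(\delta(\nug))$. Combined with the exactness of the bottom row at $\QQQ(\Ng)^{\Gg}$, which says $\ker \tau_{/b} = i^*\QQQ(\Gg)$, this gives the equivalence
\[
\nug \in \HHH^1(\Ng)^{\Gg} \cap i^*\QQQ(\Gg) \iff \delta(\nug) \in \Im(c_\QG^2).
\]

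From here I would restrict $\delta$ to the subspace $\HHH^1(\Ng)^{\Gg} \cap i^*\QQQ(\Gg)$ and argue that this yields a surjection onto $\Im(\delta) \cap \Im(c_\QG^2)$: any element of the target has the form $\delta(\nug)$ for some $\nug \in \HHH^1(\Ng)^{\Gg}$, and the equivalence above automatically places such $\nug$ in $i^*\QQQ(\Gg)$. The kernel of this restricted map equals $\ker \delta = i^*\HHH^1(\Gg)$ by exactness of the top row at $\HHH^1(\Ng)^{\Gg}$. The first isomorphism theorem then delivers the claimed isomorphism. For the ``in particular'' statement, bounded $2$-acyclicity of $\QG$ forces $\HHH^2_b(\QG) = 0$, hence $\Im(c_\QG^2) = 0$, so the right-hand side is trivial, and thus $\HHH^1(\Ng)^{\Gg} \cap i^*\QQQ(\Gg) = i^*\HHH^1(\Gg)$. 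No step here is an obstacle: all the heavy lifting has been done in Theorem~\ref{thm=KKMMMmain1}, and the proof is essentially formal diagram chasing together with the identification of $\ker(\pi_\QG)$ above.
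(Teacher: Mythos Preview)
Your proposal is correct and follows essentially the same approach as the paper: both restrict the connecting map $\HHH^1(\Ng)^{\Gg} \to \HHH^2(\QG)$ to the subspace $\HHH^1(\Ng)^{\Gg} \cap i^*\QQQ(\Gg)$, identify its image as $\Im(\delta) \cap \Im(c_\QG^2)$ using the commutativity of \eqref{eq=diagram} together with the exactness of $\HHH^2_b(\QG) \to \HHH^2(\QG) \to \HHH^2_{/b}(\QG)$, and identify its kernel as $i^*\HHH^1(\Gg)$ via exactness of the top row. Your write-up simply spells out in more detail what the paper compresses into two sentences.
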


\begin{proof}
The image of the map $\HHH^1(\Ng)^{\Gg} \to \HHH^2(\QG)$ restricted to $\HHH^1(\Ng)^{\Gg} \cap i^*\QQQ(\Gg)$ is equal to
$\Im(\HHH^1(\Ng)^{\Gg} \to \HHH^2(\QG)) \cap \Im (\HHH^2_b(\QG) \to \HHH^2(\QG))$
by Theorem~\ref{thm=KKMMMmain1} and the exactness of $\HHH_b^2(\QG) \to \HHH^2(\QG) \to \HHH_{/b}^2(\QG)$.
Moreover, the kernel of the restricted map coincides with $i^* \HHH^1(\Gg)$ by the exactness of $\HHH^1(\Gg) \to \HHH^1(\Ng)^{\Gg} \to \HHH^2(\QG)$.
\end{proof}

In Subsection~\ref{subsec=criterion}, we will prove a variant (Proposition~\ref{prop=intersectiontriple}) of Corollary~\ref{cor=intersection} for a triple $(\Gg,\Lg,\Ng)$; that result will be employed in Section~\ref{sec=CE}.

The following result is proved in \cite[Proposition~1.6]{KKMM1}. Despite the fact that the proof does not use Theorem~\ref{thm=KKMMMmain1}, this result suggests that studying $\WW(\Gg,\Ng)$ is related to studying short exact sequence \eqref{eq=shortex} itself. We will employ Proposition~\ref{prop=vsplit} in the proof of Theorem~\ref{thm=orerei}.

\begin{prop}[\cite{KKMM1}]\label{prop=vsplit}
Assume Settings~$\ref{setting=itsumono}$ and $\ref{setting=QG}$. Assume that \eqref{eq=shortex} \emph{virtually splits}, meaning that there exist $\QG_1\leqslant \QG$ with $[\QG:\QG_1]<\infty$ and a group homomorphism $s\colon \QG_1\to \Gg$ such that $\pp \circ s=\mathrm{id}_{\QG_1}$. Then $\QQQ(\Ng)^{\Gg}=i^{\ast}\QQQ(\Gg)$ and in particular, $\WW(\Gg,\Ng)=0$.
\end{prop}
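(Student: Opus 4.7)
The trivial direction $i^{\ast}\QQQ(\Gg)\subseteq\QQQ(\Ng)^{\Gg}$ holds because restrictions of homogeneous quasimorphisms are homogeneous quasimorphisms, and any homogeneous quasimorphism on $\Gg$ is automatically $\Gg$-conjugation invariant by Lemma~\ref{lem=Ginv}. The content of the proposition is the reverse inclusion, so fix $\nug\in\QQQ(\Ng)^{\Gg}$ and plan to produce $\psg\in\QQQ(\Gg)$ with $\psg|_{\Ng}=\nug$. Once this is done, $\WW(\Gg,\Ng)=0$ is immediate from its definition. The plan splits into two stages: first treat the genuine split case, then bootstrap to the virtual split case via a normal-core construction together with an averaging-and-power extension.

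\textbf{Stage 1: genuine split case.} Suppose first that $\QG_1=\QG$, so that $\Gg\cong \Ng\rtimes \QG$ via the section $s$, and every element of $\Gg$ writes uniquely as $\xg\cdot s(\gamma)$ with $\xg\in\Ng$ and $\gamma\in\QG$. Define a (set-theoretic) map
\[
\psg_0\colon \Gg\to \RR,\qquad \psg_0(\xg\cdot s(\gamma))=\nug(\xg).
\]
For two elements $g_i=\xg_i s(\gamma_i)$ ($i=1,2$), we have $g_1g_2=\xg_1\cdot s(\gamma_1)\xg_2 s(\gamma_1)^{-1}\cdot s(\gamma_1\gamma_2)$, so using $\nug\in\QQQ(\Ng)$ and the $\Gg$-invariance $\nug(s(\gamma_1)\xg_2 s(\gamma_1)^{-1})=\nug(\xg_2)$, a two-step triangle estimate gives $\DD(\psg_0)\le \DD(\nug)<\infty$. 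Hence $\psg_0$ is a quasimorphism, and its homogenization $\psg:=\lim_{n\to\infty}\psg_0((\cdot)^n)/n$ lies in $\QQQ(\Gg)$. Since $\psg_0|_{\Ng}=\nug$ is already homogeneous, $\psg|_{\Ng}=\nug$, completing the split case.

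\textbf{Stage 2: reduction from the virtual case.} In general let $\Gg_1=\pp^{-1}(\QG_1)$, which has finite index in $\Gg$ and sits in a split short exact sequence $1\to \Ng\to \Gg_1\to \QG_1\to 1$. To get a \emph{normal} finite-index subgroup inheriting a splitting, set $\QG_0=\bigcap_{\gG\in\Gg}\gG\cdot \QG_1\cdot \gG^{-1}$ and $\Gg_0=\pp^{-1}(\QG_0)$; both are normal of finite index, $\Ng\leqslant \Gg_0\leqslant \Gg_1$, and since $\QG_0\leqslant \QG_1$ the restriction $s|_{\QG_0}\colon \QG_0\to \Gg_0$ is still a section of $\pp|_{\Gg_0}$. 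Applying Stage~1 to $(\Gg_0,\Ng)$ produces $\psg_0\in\QQQ(\Gg_0)$ with $\psg_0|_{\Ng}=\nug$. Let $d=[\Gg:\Gg_0]$ and define
\[
\bar{\psg}_0(\hf)=\frac{1}{d}\sum_{[\gG]\in \Gg/\Gg_0}\psg_0(\gG \hf\gG^{-1})\qquad(\hf\in \Gg_0),
\]
which is a $\Gg$-invariant homogeneous quasimorphism on $\Gg_0$ still extending $\nug$ (using $\Gg$-invariance of $\nug$ on $\Ng$). Finally, since $\gG^d\in \Gg_0$ for every $\gG\in\Gg$, set
\[
\psg(\gG):=\frac{1}{d}\bar{\psg}_0(\gG^d).
\]
Homogeneity and the identity $\psg|_{\Ng}=\nug$ (using $\nug$ homogeneous) are immediate.

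\textbf{The main obstacle} is the defect estimate for $\psg$ at the last step. One has to bound
\[
\bigl|\bar{\psg}_0((\gG_1\gG_2)^d)-\bar{\psg}_0(\gG_1^d)-\bar{\psg}_0(\gG_2^d)\bigr|
\]
uniformly in $\gG_1,\gG_2\in\Gg$. The plan is to combine two facts: (i) by the $\Gg$-invariance of $\bar{\psg}_0$ on $\Gg_0$, conjugate pairs in $\Gg_0$ have the same value, which together with the usual QM inequality gives $|\bar{\psg}_0(\gG_1^d\gG_2^d)-\bar{\psg}_0(\gG_1^d)-\bar{\psg}_0(\gG_2^d)|\le \DD(\bar{\psg}_0)$; and (ii) $(\gG_1\gG_2)^d\cdot \gG_2^{-d}\gG_1^{-d}$ is a product of at most $\binom{d}{2}$ commutators of elements of $\Gg$ (by the standard rearrangement $\yg\xg=\xg\yg[\xg^{-1},\yg^{-1}]$ iterated $O(d^2)$ times), and this element lies in $\Gg_0$. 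After conjugating each such commutator into $\Gg_0$ using a suitable $d$-th power and reapplying $\Gg$-invariance, the bound $|\bar{\psg}_0([u,v])|\le \DD(\bar{\psg}_0)$ from Proposition~\ref{prop=Bavard} applied inside $\Gg_0$ yields an overall bound depending only on $d$ and $\DD(\bar{\psg}_0)$. Dividing by $d$ gives $\DD(\psg)<\infty$, so $\psg\in\QQQ(\Gg)$ extends $\nug$, and $\WW(\Gg,\Ng)=0$ follows.
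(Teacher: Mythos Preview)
The paper does not prove this proposition; it simply cites \cite[Proposition~1.6]{KKMM1}. So there is no in-paper proof to compare against, and I evaluate your proposal on its own merits.

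Your Stage~1 (genuine split case) is correct and is the standard argument. In Stage~2, passing to the normal core $\Gg_0$, applying Stage~1 to obtain $\psg_0\in\QQQ(\Gg_0)$, and averaging to get the $\Gg$-invariant $\bar\psg_0\in\QQQ(\Gg_0)^{\Gg}$ are all fine. The gap is exactly where you flag it: the defect estimate for $\psg(\gG)=\frac{1}{d}\bar\psg_0(\gG^d)$.

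Your plan (ii) does not work. Writing $z=(\gG_1\gG_2)^d\gG_2^{-d}\gG_1^{-d}$ as a product of commutators $[u_i,v_i]$ with $u_i,v_i\in\Gg$ gives commutators that need not lie in $\Gg_0$ when $\Gg/\Gg_0$ is non-abelian (and $\Gg/\Gg_0$, being the image of $\QG$ in the permutation group on $\QG/\QG_1$, has no reason to be abelian). Conjugation cannot move elements into the normal subgroup $\Gg_0$, so ``conjugating each such commutator into $\Gg_0$ using a suitable $d$-th power'' is not a valid operation. Even when $\Gg/\Gg_0$ is abelian so that each $[u_i,v_i]\in\Gg_0$, Proposition~\ref{prop=Bavard} and Corollary~\ref{cor=BavardD} only bound $|\bar\psg_0([u,v])|$ by $\DD(\bar\psg_0)$ when at least one of $u,v$ lies in $\Gg_0$; for general $u,v\in\Gg$ there is no such bound in terms of $\DD(\bar\psg_0)$ alone.

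The clean fix is to reuse Stage~1 verbatim for the extension $\Gg_0\hookrightarrow\Gg$, but with a \emph{set-theoretic} section in place of a group section. Fix coset representatives $t_1=e_{\Gg},\ldots,t_d$ for $\Gg_0$ in $\Gg$ and set $\tilde\psg(t_i\hf)=\bar\psg_0(\hf)$ for $\hf\in\Gg_0$. For $\gG_j=t_{i_j}\hf_j$ one computes $\gG_1\gG_2=t_{i_3}\,h_3\,\hf_1'\,\hf_2$ with $\hf_1'=t_{i_2}^{-1}\hf_1 t_{i_2}\in\Gg_0$ and $h_3=t_{i_3}^{-1}t_{i_1}t_{i_2}\in\Gg_0$ depending only on $(i_1,i_2)$. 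By $\Gg$-invariance of $\bar\psg_0$,
\[
\bigl|\tilde\psg(\gG_1\gG_2)-\tilde\psg(\gG_1)-\tilde\psg(\gG_2)\bigr|\le 2\DD(\bar\psg_0)+\max_{i,j}\bigl|\bar\psg_0(t_{\sigma(i,j)}^{-1}t_it_j)\bigr|<\infty,
\]
so $\tilde\psg$ is a quasimorphism; its homogenization lies in $\QQQ(\Gg)$ and restricts to $\bar\psg_0$ on $\Gg_0$, hence to $\nug$ on $\Ng$. (Alternatively, one can avoid computation entirely: since $\Gg/\Gg_0$ is finite, $\HHH^2(\Gg/\Gg_0;\RR)=0$, so Theorem~\ref{thm=KKMMMmain2} gives $\WW(\Gg,\Gg_0)=0$, and the five-term exact sequence gives $\HHH^1(\Gg_0)^{\Gg}=i^\ast\HHH^1(\Gg)$; together these yield $\QQQ(\Gg_0)^{\Gg}=i^\ast\QQQ(\Gg)$.) Note that the defect bound depends on $\bar\psg_0$ through the finite maximum above, not only on $d$ and $\DD(\bar\psg_0)$ as you claimed, but this is harmless for the proposition.
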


In the rest of this subsection, we exhibit examples for which $\Rdim \WW(\Gg,[\Gg,\Gg])$ is computed from \cite{KKMMM}. We briefly recall some terminology appearing in Theorem~\ref{thm=WW}. For $\genus \in \NN_{\geq 2}$, the abelianization map $\pi_1(\Sigma_{\genus})\twoheadrightarrow \ZZ^{2\genus}$ induces a group homomorphism  from the automorphism group $\Aut(\pi_1(\Sigma_{\genus}))$ to $\GL({2\genus},\ZZ)$. The inverse image of $\SL({2\genus},\ZZ)$ under this map is written as $\Aut_+(\pi_1(\Sigma_{\genus}))$. The Dehn--Nielsen--Baer theorem states that $\Out_+(\pi_1(\Sigma_{\genus}))=\Aut_+(\pi_1(\Sigma_{\genus}))/\Inn(\pi_1(\Sigma_{\genus}))$ is isomorphic to the mapping class group $\Mod(\Sigma_{\genus})$ of $\Sigma_{\genus}$. The map on $\Aut_+(\pi_1(\Sigma_{\genus}))$ induces
\[
\mathrm{sr}\colon \Aut_+(\pi_1(\Sigma_{\genus}))\twoheadrightarrow \Out_+(\pi_1(\Sigma_{\genus}))\cong \Mod(\Sigma_{\genus})\stackrel{\overline{\mathrm{sr}}}{\twoheadrightarrow} \Sp({2\genus},\ZZ);
\]
the map $\overline{\mathrm{sr}}$ is called the \emph{symplectic representation} of $\Mod(\Sigma_{\genus})$. The \emph{Torelli group} $\mathcal{I}(\Sigma_{\genus})\leqslant \Mod(\Sigma_{\genus})$ is the kernel of $\overline{\mathrm{sr}}$. An element of $\Mod(\Sigma_{\genus})$ is \emph{pseudo-Anosov} if and only if for every (equivalently, some) lift $\chi\in \Aut_+(\pi_1(\Sigma_{\genus}))$, the semi-direct product $\pi_1(\Sigma_{\genus})\rtimes_{\chi}\ZZ$ is Gromov-hyperbolic (\cite{Thu1986}, see also \cite{MR1402300}).

For $n\in \NN_{\geq 2}$ and the free group $F_n$ of rank $n$, we can define the group homomorphism from $\Aut(F_n)$ to $\GL(n,\ZZ)$ via the abelianization of $F_n$. The \emph{IA-automorphism group} $\mathrm{IA}_n\leqslant \Aut(F_n)$ is the kernel of this map. We say that $\chi\in \Aut(F_n)$ is \emph{atoroidal} if no non-zero power of $\chi$ fixes the conjugacy class of any element of $F_n$; $\chi$ is atoroidal if and only if $F_n\rtimes_{\chi}\ZZ$ is Gromov-hyperbolic (\cite{BF1992}).

\begin{thm}[{See \cite[Theorems~4.5, 1.1, 1.2 and 4.11]{KKMMM}}]\label{thm=WW}
The following hold true.
\begin{enumerate}[label=\textup{(\arabic*)}]
  \item Let $F$ be a free group. Then, $\WW(F,\gamma_2(F))=0$.
  \item Let $\genus \in \NN_{\geq 2}$. Then,
\[
\Rdim \WW(\pi_1(\Sigma_{\genus}),\gamma_2(\pi_1(\Sigma_{\genus})))=1.
\]
  \item Let $\genus \in \NN_{\geq 2}$. Assume that $\chi\in \Aut_+(\pi_1(\Sigma_{\genus}))$ represents a pseudo-Anosov element in the Torelli group $\mathcal{I}(\Sigma_{\genus})$ of $\Mod(\Sigma_{\genus})(\cong \Out_+(\Sigma_{\genus}))$. Then for the semi-direct product $\pi_1(\Sigma_{\genus})\rtimes_{\chi}\ZZ$, we have
\[
\Rdim \WW(\pi_1(\Sigma_{\genus})\rtimes_{\chi}\ZZ,\gamma_2(\pi_1(\Sigma_{\genus})\rtimes_{\chi}\ZZ))=2\genus +1.
\]
  \item Let $F_n$ be a free group of rank $n\in\NN_{\geq 2}$. Assume that $\chi\in \Aut(F_n)$ lies in the IA-automorphism group $\mathrm{IA}_n$ and that $\chi$ is atoroidal. Then for the semi-direct product $F_n\rtimes_{\chi}\ZZ$, we have
\[
\Rdim \WW(F_n\rtimes_{\chi}\ZZ,\gamma_2(F_n\rtimes_{\chi}\ZZ))=n.
\]
\end{enumerate}
\end{thm}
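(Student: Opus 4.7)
The plan is to invoke Theorem~\ref{thm=KKMMMmain2} in each of the four cases. Since $\Ng = \gamma_2(\Gg) = [\Gg,\Gg]$, the quotient $\QG = \Gg^{\mathrm{ab}}$ is abelian, hence amenable, hence boundedly $3$-acyclic by Theorem~\ref{thm=bdd_acyc}~(1). Writing $\pp \colon \Gg \twoheadrightarrow \Gg^{\mathrm{ab}}$ for the abelianization, Theorem~\ref{thm=KKMMMmain2} identifies
\[
\WW(\Gg,\gamma_2(\Gg)) \;\cong\; \Im\!\left(c_\Gg^2\right) \cap \Im\!\left(\pp^{\ast}\right) \;\subseteq\; \HHH^2(\Gg;\RR).
\]
Thus in each case I need to compute $\HHH^2(\Gg;\RR)$ together with these two images.

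For (1), a free group has cohomological dimension $1$, so $\HHH^2(F;\RR) = 0$ and the intersection is trivially zero. For (2), (3) and (4), the group $\Gg$ is non-elementary Gromov-hyperbolic: surface groups of genus $\geq 2$ are hyperbolic by definition; the mapping tori in (3) and (4) are hyperbolic by Thurston's and Bestvina--Feighn's theorems quoted in the excerpt. Hence by Theorem~\ref{thm=Mineyev} the comparison map $c_\Gg^2$ is surjective, and the intersection reduces to $\Im(\pp^{\ast})$.

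For (2), $\HHH^2(\pi_1(\Sigma_{\genus});\RR) \cong \RR$ is generated by the fundamental class, and the pullback along $\pp^{\ast}$ of the standard symplectic class $\sum_{i=1}^{\genus} a_i^{\ast} \wedge b_i^{\ast} \in \HHH^2(\ZZ^{2\genus};\RR)$ is precisely this fundamental class (this is forced by the surface relation $[a_1,b_1]\cdots [a_{\genus},b_{\genus}] = e$), so $\Rdim \WW = 1$. For (3) and (4) I would use the Hochschild--Serre spectral sequence for $1 \to H \to \Gg \to \ZZ \to 1$ with $H = \pi_1(\Sigma_{\genus})$ or $H = F_n$. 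The Torelli (resp.\ IA) hypothesis forces $\chi$ to act trivially on $\HHH^{\ast}(H;\RR)$, so the coefficient system is trivial, the spectral sequence degenerates at $E_2$, and
\[
\HHH^2(\Gg;\RR) \;\cong\; \HHH^2(H;\RR) \oplus \bigl(\HHH^1(H;\RR) \otimes \HHH^1(\ZZ;\RR)\bigr).
\]
Writing $a_1^{\ast},\dots$ and $t^{\ast}$ for the dual generators of $\Gg^{\mathrm{ab}}$, the classes $a_i^{\ast} \cup t^{\ast}$ span the second summand, while the wedge products $a_i^{\ast} \cup a_j^{\ast}$ hit the first summand (via the symplectic form in case (3); vacuously since $\HHH^2(F_n) = 0$ in case (4)). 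Hence $\pp^{\ast}$ is surjective, yielding $\Rdim \WW = 2\genus+1$ and $n$ respectively.

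The principal obstacle lies in verifying surjectivity of $\pp^{\ast}$ in cases (3) and (4): one must identify the edge homomorphism of the Hochschild--Serre spectral sequence and establish that the cup-product pairing $\HHH^1(\Gg) \otimes \HHH^1(\Gg) \to \HHH^2(\Gg)$ is non-degenerate on the span of the $a_i^{\ast} \cup t^{\ast}$. The Torelli/IA assumption is essential here, since twisted coefficients in $\HHH^{\ast}(H;\RR)$ would shrink the $E_2$ page and destroy the neat splitting above.
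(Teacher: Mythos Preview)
The paper does not prove Theorem~\ref{thm=WW}; it is quoted from \cite{KKMMM} and stated without proof. Your sketch is correct and in fact reconstructs the method behind the cited results: apply Theorem~\ref{thm=KKMMMmain2} with $\QG=\Gg^{\mathrm{ab}}$ abelian (hence boundedly $3$-acyclic), then use Theorem~\ref{thm=Mineyev} to make $c_\Gg^2$ surjective in the hyperbolic cases, reducing $\WW(\Gg,\gamma_2(\Gg))$ to $\Im(\pp^{\ast}\colon \HHH^2(\Gg^{\mathrm{ab}})\to\HHH^2(\Gg))$. This is exactly the mechanism the present paper packages later as Theorem~\ref{thm=cup} and Lemma~\ref{lem=cup}.

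The spectral-sequence step you flag as the ``principal obstacle'' is routine once set up properly. For (3) and (4) the base $\ZZ$ has cohomological dimension~$1$, so every $d_r$ with $r\geq 2$ lands in a column $p\geq 2$ and vanishes; degeneration at $E_2$ is automatic and does not require the Torelli/IA hypothesis. That hypothesis is used only to make the coefficient system trivial, so that $E_2^{1,1}=\HHH^1(\ZZ)\otimes\HHH^1(H)$ has the expected dimension. For surjectivity of $\pp^{\ast}$, use the multiplicativity of the Hochschild--Serre filtration: $t^{\ast}\in F^1\HHH^1(\Gg)$ and each $a_i^{\ast},b_i^{\ast}\in F^0\HHH^1(\Gg)$, so the products $t^{\ast}\smile a_i^{\ast}$, $t^{\ast}\smile b_i^{\ast}$ lie in $F^1\HHH^2(\Gg)$ and their images in $E_\infty^{1,1}$ are the elementary tensors, hence linearly independent and spanning $F^1$. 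In case~(3) the extra class $a_1^{\ast}\smile b_1^{\ast}\in F^0\HHH^2(\Gg)$ has nonzero image in $E_\infty^{0,2}=\HHH^2(\Sigma_\genus)$ (the symplectic pairing), so together with $F^1$ it spans all of $\HHH^2(\Gg)$. One minor imprecision: you wrote ``$a_i^{\ast}\smile a_j^{\ast}$ hit the first summand via the symplectic form,'' but those particular products vanish in $\HHH^2(\Sigma_\genus)$; it is $a_i^{\ast}\smile b_i^{\ast}$ that is nonzero.
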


The group $\pi_1(\Sigma_{\genus})\rtimes_{\chi}\ZZ$ in (3) above is isomorphic to the fundamental group of the mapping torus of a homeomorphism on $\Sigma_{\genus}$ that induces the automorphism  $\chi\in \Aut_+(\pi_1(\Sigma_{\genus}))$.

\subsection{Some functional analytic results}\label{subsec=FA}
In this subsection we collect some functional analytic results, which will be used in Section~\ref{sec=defect}. The first result is a corollary to the inverse mapping theorem for Banach spaces. For two real Banach spaces $(X,\|\cdot\|_X)$ and $(Y,\|\cdot\|_Y)$ and for a linear operator $T\colon (X,\|\cdot\|_X)\to(Y,\|\cdot\|_Y)$, the \emph{operator norm} $\|T\|_{\mathrm{op}}$ is defined by $\|T\|_{\mathrm{op}}=\sup\left\{\frac{\|T\xi\|_Y}{\|\xi\|_X}\,\middle| \,\xi\in X\setminus \{0\}\right\}$.

\begin{prop}\label{prop=inverse}
Let $(X,\|\cdot\|_X)$ and $(Y,\|\cdot\|_Y)$ be two real Banach spaces. Let $T\colon X\to Y$ be an injective linear operator with $\|T\|_{\mathrm{op}}<\infty$. Assume that $\ell=\Rdim (Y/T(X))$ is finite. Take an arbitrary basis of $Y/T(X)$. Take an arbitrary set $\{\eta_1,\ldots,\eta_{\ell}\}\subseteq Y$  of representatives of this basis. Then, there exist $C_1, C_2\in \mathbb{R}_{>0}$ such that for every $\xi\in X$ and for every $(a_1,\ldots a_{\ell})\in \RR^{\ell}$,
\[
\left\|T\xi+\sum_{j\in \{1,\ldots,\ell\}} a_j\eta_j \right\|_Y\geq C_1^{-1}\cdot \left(\|\xi\|_X+C_2^{-1}\cdot\sum_{j\in \{1,\ldots,\ell\}} |a_j|\right)
\]
holds.
\end{prop}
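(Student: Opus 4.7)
The plan is to reduce the inequality to Banach's inverse mapping theorem (open mapping theorem) applied to a single explicit linear bijection. Let $Z = X \oplus \RR^{\ell}$ equipped with the norm
\[
\|(\xi,(a_j))\|_Z = \|\xi\|_X + \sum_{j=1}^{\ell}|a_j|.
\]
Since $X$ is Banach and $\RR^{\ell}$ is finite dimensional, $Z$ is a Banach space. I then define the $\RR$-linear map $S\colon Z \to Y$ by
\[
S(\xi,(a_j)) = T\xi + \sum_{j=1}^{\ell}a_j\eta_j.
\]

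The first step is to verify that $S$ is a bounded bijection. Boundedness is immediate from the hypothesis $\|T\|_{\mathrm{op}}<\infty$ and the finiteness of each $\|\eta_j\|_Y$: we have $\|S(\xi,a)\|_Y \leq M\|(\xi,a)\|_Z$ for $M=\max(\|T\|_{\mathrm{op}},\max_j\|\eta_j\|_Y)$. Surjectivity uses that $\{[\eta_1],\dots,[\eta_{\ell}]\}$ spans $Y/T(X)$: for any $y\in Y$, write $[y]=\sum a_j[\eta_j]$ in $Y/T(X)$, so $y-\sum a_j\eta_j \in T(X)$, hence $y=T\xi+\sum a_j\eta_j$ for some $\xi\in X$. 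Injectivity uses that $\{[\eta_j]\}$ is linearly independent in $Y/T(X)$ together with the injectivity of $T$: if $S(\xi,a)=0$, then $\sum a_j[\eta_j]=0$ in $Y/T(X)$ forces every $a_j=0$, and then $T\xi=0$ forces $\xi=0$.

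The second step is to invoke the inverse mapping theorem: since $S\colon Z\to Y$ is a continuous linear bijection between Banach spaces, $S^{-1}$ is continuous, so there exists $C\in\RR_{>0}$ with
\[
\|\xi\|_X + \sum_{j=1}^{\ell}|a_j| = \|(\xi,a)\|_Z \leq C\cdot \|S(\xi,a)\|_Y \quad\text{for every }(\xi,a)\in Z.
\]
Unfolding the definition of $S$ and setting $C_1=C$ and $C_2=1$ (any positive constant works, since dropping $C_2^{-1}\leq 1$ only weakens the bound), one obtains the claimed inequality.

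The main potential obstacle is ensuring that $S$ is genuinely surjective rather than merely having dense image; this is a purely algebraic point, resolved by the fact that $\{[\eta_j]\}$ is a basis (spanning set) of $Y/T(X)$, not just a topological basis. Once bijectivity is established, everything reduces to the standard Banach isomorphism theorem and no delicate estimate is required.
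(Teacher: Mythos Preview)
Your proof is correct and follows essentially the same approach as the paper: both reduce to the inverse mapping theorem applied to a bounded linear bijection from $X\oplus(\text{an }\ell\text{-dimensional space})$ onto $Y$. The only cosmetic difference is that the paper uses $X\oplus Z_0$ with $Z_0=\mathrm{span}(\eta_1,\dots,\eta_\ell)\subseteq Y$ carrying the induced $Y$-norm, then separately compares that norm to the $\ell^1$-norm on coefficients (yielding a nontrivial $C_2$), whereas you work directly with $X\oplus\RR^\ell$ under the $\ell^1$-norm and obtain the inequality in one step with $C_2=1$.
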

Here, if $\ell=0$, then we regard $\{\eta_1,\ldots,\eta_{\ell}\}=\emptyset$ and $\RR^{\ell}=0$ so that we can take $C_2=1$.

\begin{proof}[Proof of Proposition~$\ref{prop=inverse}$]
Let $Z_0$ be the $\RR$-span of $\eta_1,\ldots,\eta_{\ell}$. Then the linear map
\[
S\colon \RR^{\ell}\stackrel{\cong}{\longrightarrow} Z_0; \ \RR^{\ell}\ni (a_1,\ldots ,a_{\ell})\mapsto \sum_{j\in \{1,\ldots,\ell\}} a_j\eta_j \in Z_0
\]
is an isomorphism. Equip $Z_0$ and $\RR^{\ell}$ with the induced norm from $\|\cdot\|_Y$ and the $\ell^1$-norm $\|\cdot\|_1$, respectively. Define  $C_2$ as the operator norm of $S^{-1}\colon (Z_0,\|\cdot\|_Y)\to (\RR^{\ell},\|\cdot\|_1)$. Thus we have for every $(a_1,\ldots,a_{\ell})\in \RR^{\ell}$,
\begin{equation}\label{eq=normell1}
\left\|\sum_{j\in \{1,\ldots,\ell\}} a_j\eta_j \right\|_Y\geq C_2^{-1}\cdot\sum_{j\in \{1,\ldots,\ell\}} |a_j|.
\end{equation}

Define $Z=X\oplus Z_0$ with the norm $\|\cdot\|_Z$ defined for every $\xi\in X$ and $\zeta\in Z_0$ by $\|(\xi,\zeta)\|_{Z}=\|\xi\|_X+\|\zeta\|_Y$. Then $(Z,\|\cdot\|_Z)$ is a real Banach space. Define a map $\tilde{T}\colon (Z,\|\cdot\|_Z)\to (Y,\|\cdot\|_Y)$ for every $\xi\in X$ and $\zeta\in Z_0$ by $\tilde{T}(\xi,\zeta)=T\xi+\zeta$. Then, it is straightforward to show that $\tilde{T}$ is a bijective linear operator with $\|\tilde{T}\|_{\mathrm{op}}\leq \|T\|_{\mathrm{op}}+1$. Therefore, the inverse mapping theorem applies to $\tilde{T}$ and we conclude that $\|\tilde{T}^{-1}\|_{\mathrm{op}}<\infty$.
Set $C_1=\|\tilde{T}^{-1}\|_{\mathrm{op}}$. Then we have for every $\xi\in X$ and $\zeta\in Z_0$,
\begin{equation}\label{eq=normsum}
\|T\xi+\zeta\|_Y\geq C_1^{-1}(\|\xi\|_X+\|\zeta\|_Y).
\end{equation}
By combining \eqref{eq=normell1} and \eqref{eq=normsum}, we complete the proof.
\end{proof}

\begin{rem}\label{rem=const}
The constant $C_1$ is given by the inverse mapping theorem (coming from the Baire category theorem), and it is ineffective. We also remark that $C_1$ (as well as $C_2$) in the proof does depend on the choice of $\eta_1,\ldots,\eta_{\ell}$. Indeed, the linear operator $\tilde{T}$ depends on the choice.
\end{rem}

The next result is proved in \cite[Theorem~7.4]{KKMMM}. In Setting~\ref{setting=itsumono}, we can see the defect as a map $\DD\colon\QQQ(\Ng)^{\Gg} \to \RR_{\geq 0}$.
In fact, $\DD$ descends to the map $\hat{\DD}\colon \QQQ(\Ng)^{\Gg}/\HHH^1(\Ng)^{\Gg}\to \mathbb{R}_{\geq 0}$. It is straightforward to see that $(\QQQ(\Ng)^{\Gg}/\HHH^1(\Ng)^{\Gg},\hat{\DD})$ is a real normed space: this $\hat{\DD}$ is called the \emph{defect norm} on $\QQQ(\Ng)^{\Gg}/\HHH^1(\Ng)^{\Gg}$.

\begin{prop}[\cite{KKMMM}]\label{prop=defectBanach}
Assume Setting~$\ref{setting=itsumono}$. Then $(\QQQ(\Ng)^{\Gg}/\HHH^1(\Ng)^{\Gg},\hat{\DD})$ is a Banach space.
\end{prop}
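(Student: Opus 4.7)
My plan is to prove completeness directly by constructing pointwise limits. Given a Cauchy sequence $([\nug_n])_{n\in\NN}$ in the defect norm, I first pass to a subsequence with $\|[\nug_{n+1}]-[\nug_n]\|_{\hat{\DD}}<2^{-n-1}$, and then inductively choose representatives $\tilde{\nug}_n\in\QQQ(\Ng)^{\Gg}$ of $[\nug_n]$ satisfying $\DD(\tilde{\nug}_{n+1}-\tilde{\nug}_n)<2^{-n}$; this is possible because at each step the defect norm is realized as the infimum of $\DD$ over all $\HHH^1(\Ng)^{\Gg}$-translates. The problem then reduces to producing a pointwise limit of the $\tilde{\nug}_n$ which lies in $\QQQ(\Ng)^{\Gg}$.

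On the subgroup $[\Gg,\Ng]$ pointwise convergence is forced by Lemma~\ref{lem=2D-1}, which applied to $\phi_n:=\tilde{\nug}_{n+1}-\tilde{\nug}_n$ gives $|\phi_n(y)|\leq(2\cl_{\Gg,\Ng}(y)-1)\cdot 2^{-n}$ for every $y\in[\Gg,\Ng]$, summable in $n$. The hard part, and the main obstacle, will be the behaviour outside $[\Gg,\Ng]$, where genuine homomorphisms in $\HHH^1(\Ng)^{\Gg}$ can contribute unboundedly along the sequence. To overcome this I will renormalize the representatives as follows. Setting $V:=\Ng/[\Gg,\Ng]$ and using $\HHH^1(\Ng)^{\Gg}=\mathrm{Hom}_{\QQ}(V\otimes_{\ZZ}\QQ,\,\RR)$, I will fix a $\QQ$-basis $\{v_\alpha\}_{\alpha\in I}$ of $V\otimes_{\ZZ}\QQ$ lying in the image of the torsion-free quotient $V/V_{\mathrm{tor}}$, lift each $v_\alpha$ to an element $x_\alpha\in\Ng$, and define $k_n\in\HHH^1(\Ng)^{\Gg}$ by $k_n(v_\alpha):=\tilde{\nug}_n(x_\alpha)$ extended $\QQ$-linearly. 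Replacing $\tilde{\nug}_n$ by $\tilde{\nug}'_n:=\tilde{\nug}_n-k_n$ preserves both the class $[\nug_n]$ and the defect of every difference $\tilde{\nug}'_{n+1}-\tilde{\nug}'_n$, while enforcing $\tilde{\nug}'_n(x_\alpha)=0$ for every $n$ and every $\alpha$.

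The payoff of this normalization is pointwise convergence everywhere. For any $x\in\Ng$, expanding the class of $x$ in $V\otimes_{\ZZ}\QQ$ in the basis $\{v_\alpha\}$ and clearing denominators will yield integers $L,M\geq 1$, indices $\alpha_1,\ldots,\alpha_s$ and integer exponents $n_1,\ldots,n_s$ such that
\[
w\ :=\ x^{LM}\,x_{\alpha_1}^{n_1}\cdots x_{\alpha_s}^{n_s}\ \in\ [\Gg,\Ng].
\]
Applying the quasimorphism inequality $s$ times together with homogeneity to $\phi'_n:=\tilde{\nug}'_{n+1}-\tilde{\nug}'_n$, and using $\phi'_n(x_{\alpha_i})=0$, will give $|LM\,\phi'_n(x)-\phi'_n(w)|\leq s\,\DD(\phi'_n)$; combining this with Lemma~\ref{lem=2D-1} applied to $\phi'_n(w)$ bounds $|\phi'_n(x)|\leq C_x\cdot 2^{-n}$ for a constant $C_x$ depending only on $x$. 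Thus $\tilde{\nug}'_n$ is pointwise Cauchy on all of $\Ng$; the pointwise limit $\nug(x):=\lim_n\tilde{\nug}'_n(x)$ is manifestly $\Gg$-invariant and homogeneous, and has defect bounded by $\sup_n\DD(\tilde{\nug}'_n)<\infty$, uniformly bounded by telescoping. Finally, letting $m\to\infty$ in $\DD(\tilde{\nug}'_n-\tilde{\nug}'_m)\leq 2^{1-n}$ yields $\DD(\tilde{\nug}'_n-\nug)\leq 2^{1-n}$, so $[\nug_n]\to[\nug]$ in the defect norm, and the original Cauchy sequence converges because it has a convergent subsequence.
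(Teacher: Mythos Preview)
The paper does not supply its own proof of this proposition; it simply cites \cite[Theorem~7.4]{KKMMM}. So there is no in-paper argument to compare your proposal against.

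Your direct argument is correct. Two minor remarks. First, your phrasing ``the defect norm is realized as the infimum of $\DD$ over all $\HHH^1(\Ng)^{\Gg}$-translates'' undersells the situation: since adding an element of $\HHH^1(\Ng)^{\Gg}$ does not change $\DD$, the defect of \emph{any} representative equals $\hat{\DD}$ of the class, so no inductive care is needed in choosing the $\tilde{\nug}_n$. Second, and this is the real content of your proof, the renormalization step is exactly what is needed: on $[\Gg,\Ng]$ the differences $\tilde{\nug}_{n+1}-\tilde{\nug}_n$ are automatically small by Lemma~\ref{lem=2D-1}, but off $[\Gg,\Ng]$ the $\HHH^1(\Ng)^{\Gg}$-ambiguity can make the representatives diverge; killing the values at a $\QQ$-basis of $(\Ng/[\Gg,\Ng])\otimes_{\ZZ}\QQ$ pins this ambiguity down and forces pointwise Cauchyness everywhere via the relation $x^{LM}x_{\alpha_1}^{n_1}\cdots x_{\alpha_s}^{n_s}\in[\Gg,\Ng]$. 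The passage to the limit in $\DD(\tilde{\nug}'_n-\tilde{\nug}'_m)\leq 2^{1-n}$ is valid because $\DD$ is defined pointwise by a supremum, and the final reduction to a subsequence is standard for Cauchy sequences.
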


Combination of  Proposition~\ref{prop=defectBanach}, Proposition~\ref{prop=inverse} for the case of $\ell=0$ (this corresponds to the inverse mapping theorem for Banach spaces) and Theorem~\ref{thm=Bavard} yields the following theorem, which was obtained in \cite{KKMMM} for the case of $\Lg=\Gg$.

\begin{thm}\label{thm=biLip}
Let $\Gg$ be a group, and let $\Lg$ and $\Ng$ be two normal subgroups of $\Gg$ satisfying $\Lg\geqslant \Ng$. Assume that $\Wcal(\Gg,\Lg,\Ng)=0$. Then, $\scl_{\Gg,\Lg}$ and $\scl_{\Gg,\Ng}$ are bi-Lipschitzly equivalent on $[\Gg,\Ng]$.
\end{thm}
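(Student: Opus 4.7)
The easy direction, $\scl_{\Gg,\Lg}(\yg)\leq \scl_{\Gg,\Ng}(\yg)$ for $\yg\in [\Gg,\Ng]$, is immediate from the inclusion of the set of simple $(\Gg,\Ng)$-commutators into that of simple $(\Gg,\Lg)$-commutators (using $\Ng\leqslant \Lg$), which gives $\cl_{\Gg,\Lg}(\yg)\leq \cl_{\Gg,\Ng}(\yg)$ and hence the stable inequality. The plan for the harder direction is to combine the Bavard duality theorem (Theorem~\ref{thm=Bavard}) with the inverse mapping theorem for Banach spaces, supplied by Proposition~\ref{prop=inverse} in the case $\ell=0$.

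The first step is to set up the Banach space framework. By Proposition~\ref{prop=defectBanach}, both $\QQQ(\Lg)^{\Gg}/\HHH^1(\Lg)^{\Gg}$ and $\QQQ(\Ng)^{\Gg}/\HHH^1(\Ng)^{\Gg}$ are Banach spaces with the defect norm. The restriction map $i^{\ast}\colon \QQQ(\Lg)^{\Gg}\to \QQQ(\Ng)^{\Gg}$, $\psg\mapsto \psg|_{\Ng}$, preserves $\Gg$-invariance because $\Ng$ is normal in $\Gg$, and satisfies $\DD(\psg|_{\Ng})\leq \DD(\psg)$; hence it descends to a bounded linear map $\overline{i^{\ast}}$ between these two Banach spaces. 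Let $K$ denote its kernel, which is closed by continuity of $\overline{i^{\ast}}$. Passing to the quotient yields an injective bounded linear map
\[
\tilde{i}^{\ast}\colon \bigl(\QQQ(\Lg)^{\Gg}/\HHH^1(\Lg)^{\Gg}\bigr)/K\longrightarrow \QQQ(\Ng)^{\Gg}/\HHH^1(\Ng)^{\Gg},
\]
whose surjectivity is precisely the assumption $\Wcal(\Gg,\Lg,\Ng)=0$. Proposition~\ref{prop=inverse} applied with $\ell=0$ (so that the $C_2$-contribution disappears) then provides a constant $C\geq 1$ such that, for every $\nug\in \QQQ(\Ng)^{\Gg}$ and every $\varepsilon>0$, there exists $\psg=\psg_{\nug,\varepsilon}\in \QQQ(\Lg)^{\Gg}$ with $\psg|_{\Ng}-\nug\in \HHH^1(\Ng)^{\Gg}$ and $\DD(\psg)\leq C\cdot \DD(\nug)+\varepsilon$.

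The second step is to pair this with Bavard duality. Write $\nug=\psg|_{\Ng}+\hf$ with $\hf\in \HHH^1(\Ng)^{\Gg}$. For every simple $(\Gg,\Ng)$-commutator $[\gG,\xg]$, the $\Gg$-invariance of the homomorphism $\hf$ gives
\[
\hf([\gG,\xg])=\hf(\gG\xg\gG^{-1})-\hf(\xg)=\hf(\xg)-\hf(\xg)=0,
\]
so $\hf$ vanishes on $[\Gg,\Ng]$; consequently $\nug(\yg)=\psg(\yg)$ for every $\yg\in [\Gg,\Ng]$. Applying Theorem~\ref{thm=Bavard} to $\scl_{\Gg,\Lg}$ with test quasimorphism $\psg\in \QQQ(\Lg)^{\Gg}$ yields
\[
|\psg(\yg)|\leq 2\DD(\psg)\cdot \scl_{\Gg,\Lg}(\yg)\leq 2\bigl(C\cdot \DD(\nug)+\varepsilon\bigr)\cdot \scl_{\Gg,\Lg}(\yg).
\]
Since the case $\DD(\nug)=0$ is trivial (then $\nug\in \HHH^1(\Ng)^{\Gg}$ vanishes on $[\Gg,\Ng]$), we may assume $\DD(\nug)>0$, divide by $2\DD(\nug)$, and let $\varepsilon\searrow 0$ to obtain $\frac{|\nug(\yg)|}{2\DD(\nug)}\leq C\cdot \scl_{\Gg,\Lg}(\yg)$. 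Taking the supremum over $\nug\in \QQQ(\Ng)^{\Gg}\setminus \HHH^1(\Ng)^{\Gg}$ and invoking Theorem~\ref{thm=Bavard} for $\scl_{\Gg,\Ng}$ gives $\scl_{\Gg,\Ng}(\yg)\leq C\cdot \scl_{\Gg,\Lg}(\yg)$, completing the proof.

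The main conceptual point is the interplay between two defect-norm Banach structures and the algebraic vanishing $\hf(\yg)=0$ for $\hf\in \HHH^1(\Ng)^{\Gg}$ and $\yg\in [\Gg,\Ng]$; the principal technical delicacy is passing from the quotient-norm estimate supplied by Proposition~\ref{prop=inverse} to a defect bound on a concrete lift $\psg$, which is handled cleanly by taking $\varepsilon\searrow 0$ at the end. Beyond this, the argument is a direct synthesis of Proposition~\ref{prop=defectBanach}, Proposition~\ref{prop=inverse}, and Theorem~\ref{thm=Bavard}.
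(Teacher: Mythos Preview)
Your proof is correct and follows essentially the same approach as the paper: the paper's proof refers to Remark~\ref{rem=l=0}, which explains that the $\ell=0$ case of Theorem~\ref{thm=comparisonDD} (whose proof in Subsection~\ref{subsec=proof_comparison} is precisely the Banach-space-plus-inverse-mapping-theorem argument you wrote out) combined with Theorem~\ref{thm=Bavard} yields the result. Your handling of the $\varepsilon$ coming from the quotient norm mirrors the paper's choice of $\mathscr{C}_{1,\mathrm{ctd}}>\mathscr{C}'_{1,\mathrm{ctd}}$ in that proof.
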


\begin{proof}
If $\Lg=\Gg$, then the assertion was proved in \cite[Theorem~2.1~(1)]{KKMMM} (as we mentioned in Remark~\ref{rem=zerodim}). The proof there can be extended to the general case; see also Remark~\ref{rem=l=0}.
\end{proof}

\subsection{A lemma on function spaces}
The following lemma will be employed in the constructions of maps $\Ptau$ (in the proof of Theorem~\ref{thm=main1precise}) and $\PPs$ (in the proof of Theorem~\ref{mthm=main}).

\begin{lem}\label{lem=dual}
Let $W$ be a set. Set $\RR^W$ as the real vector space of all real-valued functions on $W$. Let $\ell\in \NN$. Let $\Xi$ be a real subspace of $\RR^W$ with $\Rdim \Xi=\ell$. Then, there exist $\wf_1,\ldots ,\wf_{\ell}\in W$ and $\xi_1,\ldots ,\xi_{\ell}\in \Xi$ such that for every $i,j\in \{1,\ldots ,\ell\}$,
\begin{equation}\label{eq=delta}
\xi_j(\wf_i)=\delta_{i,j}.
\end{equation}
\end{lem}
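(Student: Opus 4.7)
The plan is to use the finite-dimensionality of $\Xi$ and a duality argument via the evaluation map $\ev\colon W \to \Xi^{\ast}$ defined by $\ev(w)(\xi) = \xi(w)$ for $w\in W$ and $\xi \in \Xi$. First I would observe that the linear span of $\ev(W)$ equals the whole dual space $\Xi^{\ast}$: indeed, if $\ev(W)$ were contained in a proper subspace of $\Xi^{\ast}$, then by the standard duality for finite-dimensional spaces there would exist a non-zero $\xi \in \Xi$ annihilated by every $\ev(w)$, which means $\xi(w)=0$ for all $w\in W$, contradicting $\xi \neq 0$ as an element of $\RR^{W}$.

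Since $\Xi^{\ast}$ has real dimension $\ell$ and is spanned by $\ev(W)$, I can choose $\wf_1, \ldots, \wf_{\ell} \in W$ such that $\ev(\wf_1), \ldots, \ev(\wf_{\ell})$ form a basis of $\Xi^{\ast}$. Taking the dual basis $\xi_1, \ldots, \xi_{\ell}$ of $\Xi$ with respect to this basis of $\Xi^{\ast}$ (using the canonical isomorphism $\Xi \cong \Xi^{\ast \ast}$, which is valid since $\Xi$ is finite-dimensional) yields elements of $\Xi$ satisfying $\ev(\wf_i)(\xi_j) = \delta_{i,j}$, which unwinds to $\xi_j(\wf_i) = \delta_{i,j}$ as required.

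There is essentially no obstacle here beyond routine finite-dimensional linear algebra; the only point one must be a little careful about is that the spanning argument for $\ev(W)$ really uses that elements of $\Xi$ are \emph{functions} on $W$, so that $\xi = 0$ in $\RR^W$ is equivalent to $\xi(w)=0$ for every $w\in W$. Alternatively, one can give an entirely elementary inductive proof: pick a basis $\eta_1, \ldots, \eta_{\ell}$ of $\Xi$; since $\eta_1 \neq 0$, choose $\wf_1$ with $\eta_1(\wf_1) \neq 0$, normalize and subtract to eliminate $\wf_1$ from $\eta_2, \ldots, \eta_{\ell}$; iterate to produce $\wf_i$ and adjusted basis vectors satisfying \eqref{eq=delta}. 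Either approach gives the lemma in a few lines.
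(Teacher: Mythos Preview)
Your proof is correct and follows essentially the same duality idea as the paper's ``conceptual'' proof: the paper routes the argument through the free $\RR$-module $A$ on $W$ and shows the evaluation map $A\to\RR^{\ell}$ (with respect to a chosen basis of $\Xi$) is surjective, which is exactly your statement that $\ev(W)$ spans $\Xi^{\ast}$, and then takes a dual basis just as you do. The paper also explicitly mentions the inductive alternative you sketch, so both of your approaches are anticipated there.
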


Lemma~\ref{lem=dual} has a direct proof by induction on $\ell$. Here we present another conceptual proof.

\begin{proof}[Proof of Lemma~$\ref{lem=dual}$]
Set $A$ be the free $\RR$-module with basis $W$. Then $\RR^W$ is naturally isomorphic to $\Hom (A,\RR)$. Let $e\colon \Hom (A,\RR)\stackrel{\cong}{\to}\RR^W$ be the isomorphism. Take $\eta_1,\ldots,\eta_{\ell}$ a basis of $\Xi$. Define $\eta\colon A\to \RR^{\ell}$ by
\[
\eta\colon A\to \RR^{\ell};\ A\ni a\mapsto (\eta_1(a),\ldots,\eta_{\ell}(a))\in\RR^{\ell}.
\]
In what follows, we prove that $\eta$ is surjective. Set $B=\Ker(\eta)$ and $A_1=A/B$. Note that $\Rdim A_1\leq \Rdim \RR^{\ell}=\ell$. Define $\pp\colon A\twoheadrightarrow A_1$ as the natural projection; $\pp$ induces a homomorphism $\pp^{\ast}\colon \Hom(A_1,\RR)\to \Hom(A,\RR)$. Hence we have a homomorphism
\[
e\circ \pp^{\ast}\colon \Hom(A_1,\RR)\to \RR^W.
\]
We claim that $(e\circ \pp^{\ast})(\Hom(A_1,\RR))=\Xi$. Indeed, every $\xi \in \Xi$ is a linear combination of $\eta_1,\ldots,\eta_{\ell}$ and hence $(e^{-1}(\xi))(B)=0$ holds. Thus $\xi$ induces an element in $\Hom(A_1,\RR)$: this means that $\Xi\subseteq (e\circ \pp^{\ast})(\Hom(A_1,\RR))$. Now we have
\[
\ell=\Rdim \Xi\leq \Rdim \left((e\circ \pp^{\ast})(\Hom(A_1,\RR))\right)\leq \Rdim \Hom(A_1,\RR)= \Rdim A_1 \leq \ell.
\]
Therefore, we conclude that $(e\circ \pp^{\ast})(\Hom(A_1,\RR))=\Xi$. The argument above moreover shows that $\Rdim A_1 =\ell$. It then follows that $\eta$ is surjective. Also, the image of the map $e\circ \eta^{\ast}\colon \Hom(\RR^{\ell},\RR) \to \RR^W$ equals $\Xi$.

Since $A$ is generated by $W$ and $\eta$ is surjective, there exist $\wf_1,\ldots,\wf_{\ell}\in W$ such that $\eta(\wf_1),\ldots,\eta(\wf_{\ell})$ form a basis of $\RR^{\ell}$. Let $\wf_1',\ldots,\wf_{\ell}'$ be the dual basis to $\wf_1,\ldots,\wf_{\ell}$. Set $\xi_1,\ldots ,\xi_{\ell}$ by $\xi_j=(e\circ \eta^{\ast})(\wf_j')$ for every $j\in \{1,\ldots,\ell\}$. Then, by construction, we have \eqref{eq=delta}.
\end{proof}

\subsection{Basic concepts in coarse geometry}\label{subsec=coarsegeom}
In this subsection, we briefly recall basic concepts in coarse geometry. We  refer the reader to \cite{Roe} for a comprehensive treatise on this field. In this subsection and the next subsection, recall from our notation and conventions at the end of Section~\ref{sec=intro2} (and Remark~\ref{rem=bf}) that we use bold symbols for coarse notions, such as a coarse map $\bfalpha$ and a coarse subspace $\bA$; for their representatives we use non-bold symbols, such as $\alpha$ and $A$.

\begin{defn}[\cite{Roe}]\label{defn=coarsestr}
Let $X$ be a set. A \emph{coarse structure} on $X$ is a subset $\Ecal$ of the power set $\mathcal{P}(X\times X)$ of $X\times X$ that satisfies the following conditions.
\begin{enumerate}[label=\textup{(}\arabic*\textup{)}]
  \item For every $A,B\in \mathcal{P}(X\times X)$ with $A\subseteq B$, if $B\in \Ecal$, then $A\in \Ecal$.
  \item If $A,B\in \Ecal$, then $A\cup B\in \Ecal$.
  \item $\Delta_X=\{(x,x)\,|\,x\in X\}\in \Ecal$.
  \item If $E\in \Ecal$, then $E^{\mathrm{op}}=\{(y,x)\in X\times X\,|\, (x,y)\in E\}$ is an element of $\Ecal$.
  \item If $E_1,E_2\in \Ecal$, then
\[
E_1\circ E_2=\{(x,z)\in X\times X\,|\,\textrm{there exists $y\in X$ such that $(x,y)\in E_1$ and $(y,z)\in E_2$}\}
\]
is an element of $\Ecal$.
\end{enumerate}
A \emph{coarse space} $(X,\Ecal)$ means  a set $X$ equipped with a coarse structure $\Ecal$.
\end{defn}

Recall from Definition~\ref{defn=generalizedmet} the definition of almost generalized metric spaces.

\begin{defn}\label{defn=coarsestrd}
For an almost generalized metric space $(X,d)$, we define the \emph{coarse structure} $\Ecal_d\subseteq \mathcal{P}(X\times X)$ of $(X,d)$ as
\[
\Ecal_d=\left\{E\subseteq X\times X\,\middle|\, \sup_{(x_1,x_2)\in E}d(x_1,x_2)<\infty\right\}.
\]
\end{defn}


The following lemma is straightforward.
\begin{lem}\label{lem=coarsestr+C}
Let $(X,d)$ be an almost generalized metric space.
\begin{enumerate}[label=\textup{(}$\arabic*$\textup{)}]
  \item Let $C\in \mathbb{R}_{> 0}$ be a constant that satisfies \textup{(}$1_4^{+,C}$\textup{)} in Definition~$\ref{defn=generalizedmet}$. Let $d^{+}=d^{+,C}$ be the generalized metric defined in Lemma~$\ref{lem=+C}$. Then we have $\Ecal_{d^+}=\Ecal_d$.
  \item The family $\Ecal_d$ is a coarse structure.
\end{enumerate}
\end{lem}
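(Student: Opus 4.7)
The plan is to observe that both parts are routine verifications once one unpacks the definitions, and to record explicitly the elementary inequalities between $d$ and $d^+$ (for part (1)) and the five closure properties of Definition~\ref{defn=coarsestr} (for part (2)).

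For part (1), the plan is to establish the two pointwise comparisons $d \le d^+$ and $d^+ \le d + C$ on $X \times X$. The first holds trivially: if $x_1 = x_2$ both sides are $0$, and if $x_1 \ne x_2$ then $d^+(x_1,x_2) = d(x_1,x_2) + C \ge d(x_1,x_2)$. The second holds because when $x_1 = x_2$ we have $d^+(x_1,x_2) = 0 \le C = d(x_1,x_2)+C$ (using $(1_1)$), and when $x_1 \ne x_2$ we have equality up to the constant. From $d \le d^+ \le d + C$ it follows immediately that for any $E \subseteq X \times X$, $\sup_{(x_1,x_2)\in E} d(x_1,x_2) < \infty$ if and only if $\sup_{(x_1,x_2)\in E} d^+(x_1,x_2) < \infty$, which is exactly the equality $\Ecal_{d^+} = \Ecal_d$.

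For part (2), I plan to verify the five axioms (1)--(5) of Definition~\ref{defn=coarsestr} for $\Ecal_d$ in order. Axioms (1) (closure under subsets) and (2) (closure under finite unions) are immediate from the definition of $\sup$. Axiom (3) follows from $(1_1)$, since $\sup_{(x,x)\in \Delta_X} d(x,x) = 0 < \infty$. Axiom (4) follows from symmetry $(1_3)$, since $\sup_{(y,x)\in E^{\mathrm{op}}} d(y,x) = \sup_{(x,y)\in E} d(x,y)$. The only point that uses the hypothesis that $d$ is almost (rather than genuine) is axiom (5): given $E_1, E_2 \in \Ecal_d$ with $M_i = \sup_{E_i} d < \infty$ and a constant $C$ witnessing $(1_4^{+,C})$, any $(x,z) \in E_1 \circ E_2$ admits $y$ with $(x,y) \in E_1$ and $(y,z) \in E_2$, so
\[
d(x,z) \le d(x,y) + d(y,z) + C \le M_1 + M_2 + C,
\]
which bounds $\sup_{E_1 \circ E_2} d$ and gives $E_1 \circ E_2 \in \Ecal_d$.

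There is essentially no obstacle here; the only subtlety worth flagging is that axiom (5) would fail without a uniform additive constant in the triangle inequality, so the hypothesis that $d$ is an almost generalized metric (as opposed to a merely quasi-triangle condition with a multiplicative constant) is genuinely used. Everything else is bookkeeping from Definitions~\ref{defn=generalizedmet} and \ref{defn=coarsestrd}.
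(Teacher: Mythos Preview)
Your proof is correct. The paper does not give a proof of this lemma at all, merely declaring it ``straightforward'' before the statement; your explicit verification of the pointwise bounds $d \le d^{+} \le d + C$ for part~(1) and of the five axioms of Definition~\ref{defn=coarsestr} for part~(2) is exactly the routine check that is being omitted.
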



Lemma~\ref{lem=coarsestr+C} justifies studying coarse geometry of the almost generalized metric space $([\Gg,\Ng],d_{\scl_{\Gg,\Ng}})$ in Setting~\ref{setting=itsumono}, as well as identifying $([\Gg,\Ng],d_{\scl_{\Gg,\Ng}})$ with $([\Gg,\Ng],d^{+}_{\scl_{\Gg,\Ng}})$ defined in \eqref{eq=+1/2} as coarse spaces.

\begin{defn}\label{defn=bounded}
Let $(X, \Ecal)$ be a coarse space.
\begin{enumerate}[label=\textup{(}$\arabic*$\textup{)}]
\item  A subset $A$ of $X$ is said to be \emph{bounded} if $A \times A \in \Ecal$. For an almost generalized metric space $(X,d)$, a subset $A \subseteq X$ is said to be \emph{$d$-bounded} if the \emph{diameter} of $A$ with respect to $d$, defined as
\[
\diam_d(A)=\sup\{d(x_1,x_2)\,|\, x_1,x_2\in A\}
\]
is finite.
\item Let $Y$ be a set. A map $\alpha\colon Y\to X$ is said to be \emph{bounded} if  $\alpha(Y)$ is bounded. For an almost generalized metric space $(X,d)$, a map $\alpha\colon Y\to X$ is said to be \emph{$d$-bounded} if $\alpha(Y)$ is $d$-bounded.
\end{enumerate}
\end{defn}

For an almost generalized metric space $(X,d)$ and $A\subseteq X$, $A$ is $d$-bounded if and only if $A$ is bounded with respect to the coarse structure $\Ecal_d$.




Let $X$ be a set and $E$ a subset of $X \times X$. For $x \in X$, set
\[ E_x = \{ y \in X \, | \, (y, x) \in E \}, \quad _x E = \{ y \in X \, | \, (x,y) \in E \}.\]
For $A \subseteq X$, set
\[ E[A] = \{ x \in X \, | \, \textrm{there exists $a \in A$ such that $(x,a) \in E$}\} = \bigcup_{a \in A} E_a. \]

\begin{defn}\label{defn=close}
Let $X$ be a set and $(Y, \Ecal')$ a coarse space. Two maps $\alpha,\beta \colon X \to Y$ are said to be \emph{close} if $(\alpha \times \beta)(\Delta_X) \in \Ecal'$. We write $\alpha \approx \beta$ to mean that $\alpha$ and $\beta$ are close. For a map $\alpha \colon X \to Y$, we write $\bfalpha=[\alpha]$ to indicate the equivalence class with respect to $\approx$ to which $\alpha$ belongs.
\end{defn}


\begin{defn}\label{defn=several_maps}
Let $(X, \Ecal)$ and  $(Y, \Ecal')$ be coarse spaces. Let $\alpha \colon X \to Y$ be a map.
\begin{enumerate}[label=\textup{(}$\arabic*$\textup{)}]
\item The map $\alpha$ is said to be \emph{controlled} if $E \in \Ecal$ implies $(\alpha \times \alpha)(E) \in \Ecal'$.

\item The map $\alpha$ is said to be \emph{coarsely proper} if $E' \in \Ecal'$ implies $(\alpha \times \alpha)^{-1}(E') \in \Ecal$. If $\alpha$ is moreover controlled, $\alpha$ is called a \emph{coarse embedding}.



\item Suppose that $\alpha$ is a controlled map. A \emph{coarse inverse of $\alpha$} is a controlled map $\beta \colon Y \to X$ such that $\beta \circ \alpha \approx {\rm id}_X$ and $\alpha \circ \beta \approx {\rm id}_Y$.

\item The map $\alpha$ is called a \emph{coarse equivalence} if $\alpha$ is controlled and has a coarse inverse.
\end{enumerate}
\end{defn}


\begin{defn}\label{defn=coarse_map}
Let $(X, \Ecal)$ and $(Y, \Ecal')$ be coarse spaces. A \emph{coarse map from $(X, \Ecal)$ to $(Y, \Ecal')$} is an equivalence class of controlled maps from $(X, \Ecal)$ to $(Y, \Ecal')$ with respect to the closeness $\approx$.
\end{defn}

Let $\Coarse$ denote the category whose objects are coarse spaces and whose morphisms are coarse maps.

We present what these concepts mean in the setting of almost metric spaces.

\begin{exa}\label{exa=several_maps}
Let $(X,d_X)$ and $(Y ,d_Y)$ be almost metric spaces. Let $\alpha \colon X \to Y$ be a map.
\begin{enumerate}[label=\textup{(}$\arabic*$\textup{)}]
\item Let $\beta\colon X\to Y$ be a map. Then $\alpha\approx \beta$ if and only if $\sup\limits_{x\in X}d_Y(\alpha(x),\beta(x))<\infty$ holds.
\item The map $\alpha$ is controlled if and only if for every $S \in \RR_{>0}$ there exists $T\in \RR_{>0}$ such that for every $x_1,x_2\in X$, $d_X(x_1,x_2) \le S$ implies $d_Y(\alpha(x_1), \alpha(x_2)) \le T$.


\item The map $\alpha$ is coarsely proper if and only if for every $S \in \RR_{>0}$ there exists $T\in \RR_{>0}$ such that for every $x_1,x_2\in X$, $d_X(x_1,x_2) > T$ implies $d_Y(\alpha(x_1),\alpha(x_2)) > S$.

\end{enumerate}
\end{exa}

The concept of quasi-isometries is defined for a map between almost metric spaces.

\begin{defn}[quasi-isometric embedding, quasi-isometry]
Let $(X,d_X)$ and $(Y,d_Y)$ be almost metric spaces. A map $\alpha$ is said to be \emph{large scale Lipschitz} if there exist $C \in \RR_{>0}$ and $D \in \RR_{\geq 0}$ such that for every $x_1,x_2\in X$, $d_Y(\alpha(x_1), \alpha(x_2)) \le C \cdot d_X(x_1,x_2) + D$ holds. The map $\alpha \colon (X,d_X) \to (Y,d_Y)$ is called a \emph{quasi-isometric embedding} (\emph{QI-embedding}) if there exist $C_1,C_2 \in \RR_{>0}$ and $D_1,D_2 \in \RR_{\geq 0}$ such that for every $x_1,x_2\in X$,
\[
C_1\cdot d_X(x_1,x_2) - D_1 \le d_Y(\alpha(x_1), \alpha(x_2)) \le C_2 \cdot d_X(x_1, y_1) + D_2
\]
holds. If $\alpha$ furthermore admits a coarse inverse that is a quasi-isometric embedding, then $\alpha$ is called a \emph{quasi-isometry}. The spaces $X$ and $Y$ are said to be \emph{quasi-isometric} if there exists a quasi-isometry from $X$ to $Y$.
\end{defn}


\begin{rem}\label{rem=QG}
A metric space $(X,d)$ is called a \emph{quasi-geodesic space} if there exist $a,b \in \mathbb{R}_{>0}$ such that for every $x,x' \in X$ there exist $x_0, \cdots, x_m$, where $m = \left\lceil \frac{d(x,x')}{b} \right\rceil$, satisfying
\[ x = x_0, \quad x' = x_m, \quad \textrm{and} \quad d(x_i, x_{i-1}) \leq  a \textrm{ for every }i\in \{1,\ldots, m\}.\]
Here $\lceil \cdot \rceil$ is the ceiling function.

If $(X,d_X)$ is a quasi-geodesic space and $(Y,d_Y)$ is an almost metric space, then every controlled map $\alpha\colon X\to Y$ is large scale Lipschitz. Indeed, for every $x,x'\in X$, take $x_0,\ldots,x_m$ as above. Then,
\[
d_Y(\alpha(x),\alpha(x'))\leq \sum_{i\in \{1,\ldots,m\}}d_Y(\alpha(x_{i-1}),\alpha(x_{i})).
\]
Since $\alpha$ is controlled, $\alpha$ must be large scale Lipschitz. In particular, for quasi-geodesic spaces $X$ and $Y$ if they are coarsely equivalent, then $X$ and $Y$ are in fact quasi-isometric.

Let $\Gg$ be a group and $\Ng$ a normal subgroup of $\Gg$. Then by definition, $([\Gg,\Ng],d_{\cl_{\Gg,\Ng}})$ is a quasi-geodesic space. In contrast, there is no reason to expect that so is $([\Gg,\Ng],d^+_{\scl_{\Gg,\Ng}})$.
\end{rem}


\subsection{A brief introduction to coarse groups and coarse kernels}\label{subsec=CK}
In this subsection, we present definitions in the theory of coarse groups, developed by Leitner and Vigolo \cite{LV}. As we described in the introduction and Section~\ref{sec=intro2}, the concept of the \emph{coarse kernel} (Definition~\ref{defn=coarsekernel}) of a coarse homomorphism is the main object in the comparative version, \emph{i.e.,} Theorem~\ref{mthm=main}. In the present paper, we only employ the notions in the setting of groups equipped with bi-invariant almost (generalized) metrics. Hence, the reader who is not familiar with this topic may consult Examples~\ref{exa=coarse group}, \ref{exa=precoarse}, \ref{exa=CKgroup} and \ref{exa=CKiota} only.

Let $(X, \Ecal)$ and $(Y, \Ecal')$ be coarse spaces. Set
\[ \Ecal_{X \times Y} = \{ E \subseteq (X \times Y)^2 \, | \, \textrm{$\pi_{13}(E) \in \Ecal$ and $\pi_{24}(E) \in \Ecal'$}\}.\]
Here, $\pi_{13}$ and $\pi_{24}$ are the maps defined by
\[ \pi_{13} \colon (X \times Y) \times (X \times Y) \to X \times X, ((x,y),(x', y')) \mapsto (x, x'),\]
\[ \pi_{24} \colon (X \times Y) \times (X \times Y) \to Y \times Y, ((x,y),(x', y')) \mapsto (y, y').\]
Then, $(X \times Y, \Ecal_{X \times Y})$ is a coarse space and is a product object of $(X, \Ecal)$ and $(Y, \Ecal')$ in $\Coarse$.

\begin{defn}[coarse group]\label{defn=coarse_group}
A \emph{coarse group} is a group object in $\Coarse$. Namely, a coarse group is a $4$-tuple $(G, \om, e, c)$ consisting of the following data:
\begin{enumerate}[label=\textup{(}$\arabic*$\textup{)}]
\item $G = (G, \Ecal_G)$ is a coarse space.

\item $\om \colon G \times G \to G$ is a coarse map.

\item $e \colon T \to G$ is a coarse map. Here $T$ is the coarse space consisting of one point.

\item $c \colon G \to G$ is a coarse map.
\end{enumerate}
These data commute the following diagrams:
\begin{align*}
\begin{gathered}
\xymatrix{
G \times G \times G \ar[r]^-{{\rm id}_G \times \om} \ar[d]_{\om \times {\rm id}_G} & G \times G \ar[d]^{\om} \\
G \times G \ar[r]^-{\om} & G,} \\
\text{(associativity)}
\end{gathered}
\quad
\begin{gathered}
\xymatrix{
G \ar[r]^-{({\rm id}_G, e)} \ar[rd]^-{{\rm id}_G} \ar[d]_{(e, {\rm id}_G)} & G \times G \ar[d]^\om \\
G \times G \ar[r]^-{\om} & G,} \\
\text{(identity)}
\end{gathered}
\quad
\begin{gathered}
\xymatrix{
G \ar[r]^-{({\rm id}_G, c)} \ar[rd]^-{e} \ar[d]_{(c, {\rm id}_G)} & G \times G \ar[d]^\om \\
G \times G \ar[r]^-{\om} & G.} \\
\text{(inverse)}
\end{gathered}
\end{align*}
Here $e \colon G \to G$ denotes the composition of $G \to T \xrightarrow{e} G$.
\end{defn}

\begin{exa} \label{exa=coarse group}
Let $G$ be a group and $d$ a bi-invariant almost generalized metric of $d$. Let $\Ecal_d$ be the coarse structure associated by $d$. Then,
$(G,\Ecal_d)$ is a coarse group.
\end{exa}

For the convenience of descriptions in the present paper, we introduce the notion of \emph{pre-coarse homomorphisms} in the following manner.

\begin{defn}[pre-coarse homomorphism]\label{defn=precoarse}
Let $G = (G, [\om_G], e, c)$ and $H = (H, [\om_H], e', c')$ be coarse groups.
 A map $\alpha \colon G \to H$ is called a \emph{pre-coarse homomorphism} if the following diagram commutes up to closeness.
\[ \xymatrix{
G \times G \ar[r]^{\alpha \times \alpha} \ar[d]_{\om_G} & H \times H \ar[d]^{\om_H} \\
G \ar[r]^{\alpha} & H.
}\]

\end{defn}

\begin{defn}[coarse homomorphism]\label{defn=coarsehom}
Let $G = (G, [\om_G], e, c)$ and $H = (H, [\om_H], e', c')$ be coarse groups.
A \emph{coarse homomorphism} is a coarse map from $G$ to $H$ that is represented by a controlled pre-coarse homomorphism.
\end{defn}
As we mentioned in Remark~\ref{rem=bf}, we also call a  representative of a coarse map (controlled pre-coarse homomorphism) itself a coarse homomorphism.

\begin{exa} \label{exa=precoarse}
Let $G$ and $H$ be groups, and let $d_G$ and $d_H$ be bi-invariant almost generalized metrics of $G$ and $H$, respectively. Then a map $\alpha \colon (G,d_G) \to (H,d_H)$ is a pre-coarse homomorphism if and only if $\sup\limits_{g_1,g_2\in \Gg}d_H(\alpha(g_1g_2), \alpha(g_1) \alpha(g_2)) <\infty$ holds. The map $\alpha$ is a coarse homomorphism if and only if it is a controlled pre-coarse homomorphism.

In this setting, the condition of $\alpha$ being a pre-coarse homomorphism is independent of the choice of the bi-invariant metric $d_G$ on $G$. Hence, by abuse of notation, we say that $\alpha \colon G \to (H,d_H)$ is a pre-coarse homomorphism without mentioning $d_G$.
\end{exa}


\begin{defn}\label{defn=asymp}
Let $(X, \Ecal)$ be a coarse space, and let $A$ and $B$ be subsets of $X$.
\begin{enumerate}[label=\textup{(}$\arabic*$\textup{)}]
\item We say that $A$ is \emph{coarsely contained in $B$} if there exists $E \in \Ecal$ such that $A \subseteq E[B]$. In this case, we write $A \preccurlyeq B$.

\item We say that $A$ and $B$ are \emph{asymptotic} if $A \preccurlyeq B$ and $B \preccurlyeq A$. In this case, we write $A \asymp B$. Then $\asymp$ is an equivalence relation of the power set of $X$. An equivalence class of $\asymp$ is called a \emph{coarse subspace of $X$}.

\item Let $\bA$ and $\bB$ be coarse subspaces of $(X, \Ecal)$. Then $\bA \subseteq \bB$ if for some  $A \in \bA$ and for some  $B \in \bB$ (equivalently, for every $A \in \bA$ and for every $B \in \bB$), $A\preccurlyeq B$ holds.
\end{enumerate}
\end{defn}
We note that $\bA = \bB$ if and only if $\bA \subseteq \bB$ and $\bB \subseteq \bA$. The following lemma is straightforward.

\begin{lem}\label{lem=image}
Let $(X, \Ecal)$ and $(Y, \Ecal')$ be coarse spaces. The following hold.
\begin{enumerate}[label=\textup{(}$\arabic*$\textup{)}]
\item Let $A$ be a subset of $X$, and let $\alpha, \beta \colon X \to Y$ be controlled maps such that $\alpha$ and $\beta$ are close. Then $\alpha(A) \asymp \beta(A)$ holds.

\item Let $A$ and $B$ be subsets of $X$, and let $\alpha \colon X \to Y$ be a controlled map. Then $A \preccurlyeq B$ implies $\alpha(A) \preccurlyeq \alpha(B)$. In particular, $A \asymp B$ implies $\alpha(A) \asymp \alpha(B)$.
\end{enumerate}
\end{lem}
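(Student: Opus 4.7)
The plan is to unwind the definitions: closeness and controlledness are precisely the conditions needed to keep the relations $\preccurlyeq$ and $\asymp$ in place, so both items should follow by exhibiting the required controlled entourages explicitly.

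For (1), I would start from the hypothesis that $\alpha \approx \beta$, which by Definition~\ref{defn=close} means $E' := (\alpha \times \beta)(\Delta_X) \in \Ecal'$. For every $a \in A$ we have $(\alpha(a), \beta(a)) \in E'$, so $\alpha(a) \in E'_{\beta(a)} \subseteq E'[\beta(A)]$; therefore $\alpha(A) \subseteq E'[\beta(A)]$, which gives $\alpha(A) \preccurlyeq \beta(A)$. Conversely, by axiom (4) of Definition~\ref{defn=coarsestr}, $(E')^{\mathrm{op}} \in \Ecal'$, and since $(\beta(a), \alpha(a)) \in (E')^{\mathrm{op}}$ for every $a \in A$, we obtain $\beta(A) \subseteq (E')^{\mathrm{op}}[\alpha(A)]$, so $\beta(A) \preccurlyeq \alpha(A)$. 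Combining these yields $\alpha(A) \asymp \beta(A)$.

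For (2), suppose $A \preccurlyeq B$, so by Definition~\ref{defn=asymp} there exists $E \in \Ecal$ with $A \subseteq E[B]$. Since $\alpha$ is controlled, Definition~\ref{defn=several_maps} gives $F := (\alpha \times \alpha)(E) \in \Ecal'$. For any $a \in A$ there is some $b \in B$ with $(a,b) \in E$, whence $(\alpha(a), \alpha(b)) \in F$ and therefore $\alpha(a) \in F[\alpha(B)]$. Hence $\alpha(A) \subseteq F[\alpha(B)]$ and $\alpha(A) \preccurlyeq \alpha(B)$. Applying this to both inclusions in $A \asymp B$ gives the second assertion of (2).

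Both parts reduce to a direct check against the axioms in Definition~\ref{defn=coarsestr} together with Definitions~\ref{defn=close} and \ref{defn=several_maps}, so there is no real obstacle; the only mild subtlety is to remember to invoke the symmetry axiom $(E')^{\mathrm{op}} \in \Ecal'$ when establishing the reverse coarse containment in (1).
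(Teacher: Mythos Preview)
Your proof is correct and is precisely the kind of direct definition-unwinding the paper has in mind; the paper itself omits the proof entirely, declaring the lemma ``straightforward.'' One minor observation: in part (1) you never actually use that $\alpha$ and $\beta$ are controlled, only that they are close, which is indeed all that is needed.
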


\begin{defn}\label{defn=coarse_image}
Let $\bfalpha \colon X \to Y$ be a coarse map and $\mathbf{A}$ a coarse subspace of $(X, \Ecal)$. Let $\alpha$ be a controlled map representing $\bfalpha$ and $A$ a subset of $X$ representing $\mathbf{A}$. Then the \emph{coarse image $\bfalpha(\mathbf{A})$} is defined to be the coarse subspace represented by $\alpha(A)$.
\end{defn}

Lemma~\ref{lem=image} guarantees the well-definedness of $\bfalpha(\bA)$.

\begin{defn}\label{defn=coarse_pre-image}
Let $(X, \Ecal)$ and $(Y, \Ecal')$ be coarse spaces, and let $\bfalpha$ be a coarse map from $(X,\Ecal)$ to $(Y, \Ecal')$, and $\bB$ a coarse subspace of $(Y, \Ecal')$. A coarse subspace $\bA$ of $(X, \Ecal)$ is called the \emph{coarse preimage} if the following two conditions hold:
\begin{enumerate}[label=\textup{(}$\arabic*$\textup{)}]
\item $\bfalpha(\bA) \subseteq \bB$; and

\item for every coarse subspace $\bA'$ of $(X, \Ecal)$ satisfying $\bfalpha(\bA') \subseteq \bB$, $\bA' \subseteq \bA$ holds.
\end{enumerate}
\end{defn}
The coarse pre-image is unique if exists, but does not exist in general. If it exists, then we write $\bfalpha^{-1}(\bB)$ to indicate the coarse pre-image. Now we are ready to state the definition of coarse kernel. As is the case of coarse pre-image, the coarse kernel does not exist in general.

\begin{defn}[coarse kernel]\label{defn=coarsekernel}
Let $G=(G,[\om_G],e,c)$ and $H=(H,[\om_H],e',c')$ be coarse groups, and let $\bfalpha \colon G \to H$ be a coarse homomorphism. Then the \emph{coarse kernel of $\bfalpha$} is defined to be the coarse pre-image $\bfalpha^{-1}(e')$, where we identify $e'$ with $e'(T)$.
\end{defn}

\begin{exa}\label{exa=CKgroup}
Let $\Gg$ and $H$ be groups, and let $d_{\Gg}$ and $d_H$ be bi-invariant almost metrics on $\Gg$ and $H$, respectively. Let $\alpha\colon (\Gg,d_{\Gg})\to (H,d_H)$ be  a coarse homomorphism. Then, for $A\subseteq \Gg$, the coarse subspace $\mathbf{A}$ represented by $A$ is the coarse kernel of $\alpha$ if and only if the following two conditions are satisfied:
\begin{enumerate}[label=\textup{(\arabic*)}]
  \item the set $\alpha(A)$ is $d_{H}$-bounded; and
  \item for every $B\subseteq \Gg$ such that $\alpha(B)$ is $d_H$-bounded, $B\preccurlyeq A$ holds.
\end{enumerate}
\end{exa}

Let $\bfalpha \colon G \to H$ be a coarse homomorphism, and assume that $\bfalpha$ has the coarse kernel $\mathbf{K}$. Then $\mathbf{K}$ is coarsely normal, which implies that $g \mathbf{K} g^{-1} = \mathbf{K}$ for every $g \in G$. Then, we can define the quotient coarse group $G/\mathbf{K}$. Then, this quotient coarse group $G/\mathbf{K}$ is isomorphic to $\bfalpha(G)$; for details of this theory, see \cite[Chapter~7]{LV}.

\begin{exa}\label{exa=CKiota}
In this example, we recall our motivation from Subsections~\ref{subsec=digest} and \ref{subsec=comparative}. Let $\Gg$ be a group, and $\Lg$ and $\Ng$ normal subgroups of $\Gg$ with $\Lg\geqslant \Ng$. The map
\[
\iota_{(\Gg,\Lg,\Ng)} \colon ([\Gg, \Ng], d_{\scl_{\Gg, \Ng}}) \to ([\Gg, \Ng], d_{\scl_{\Gg, \Lg}});\ \yg\mapsto \yg
\]
(appearing in Definition~\ref{defn=iota}) is \emph{not} necessarily a monomorphism in the category of coarse groups: $\iota_{(\Gg,\Lg,\Ng)}$ may not be a coarse embedding. By Example~\ref{exa=CKgroup}, for $A\subseteq [\Gg,\Ng]$, the coarse subspace $\mathbf{A}$ represented by $A$ is the coarse kernel of $\iota_{(\Gg,\Lg,\Ng)}$ if and only if the following two conditions are satisfied:
\begin{enumerate}[label=\textup{(\arabic*)}]
  \item the set $A$ is $d_{\scl_{\Gg,\Lg}}$-bounded; and
  \item for every $d_{\scl_{\Gg,\Lg}}$-bounded set $B\subseteq [\Gg,\Ng]$, $B$ is coarsely contained in $A$ in $d_{\scl_{\Gg,\Ng}}$.
\end{enumerate}
If the coarse kernel $\mathbf{A}$ exists for $\iota_{(\Gg,\Lg,\Ng)}$, then we have an isomorphism
\[ ([\Gg, \Ng], d_{\scl_{\Gg, \Lg}}) \cong ([\Gg, \Ng], d_{\scl_{\Gg, \Ng}}) / \mathbf{A}\]
as coarse groups. This provides a motivation on Theorem~\ref{mthm=main} (and Theorem~\ref{thm=main1precise}).
\end{exa}

The following lemma is a special case of \cite[Proposition 12.2.1]{LV}. Since Lemma~\ref{lem=crushing} will be employed in Subsection~\ref{subsec=induced}, we include the proof for the reader's convenience. Recall from our notation that boldface such as $\mathbf{S}$ is used for a coarse notion in the present paper.

\begin{lem} \label{lem=crushing}
Let $\ell,\ell'\in \ZZ_{\geq 0}$, and let $\mathbf{S} \colon (\RR^{\ell}, \| \cdot \|_1) \to (\RR^{\ell'}, \| \cdot \|_1)$ be a coarse homomorphism. Then there exists a unique $\RR$-linear map $S \colon \RR^{\ell} \to \RR^{\ell'}$ that represents the coarse map  $\mathbf{S}$.
\end{lem}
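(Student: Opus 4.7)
The plan is to upgrade a representative of $\mathbf{S}$ to an $\RR$-linear map by a homogenization argument parallel to the usual construction of homogeneous quasimorphisms. Take a representative $s \colon \RR^{\ell} \to \RR^{\ell'}$ of $\mathbf{S}$; by Definitions~\ref{defn=precoarse} and \ref{defn=coarsehom} (and Example~\ref{exa=precoarse}), $s$ is a controlled pre-coarse homomorphism, so its defect
\[
D := \sup_{x, y \in \RR^{\ell}} \|s(x+y) - s(x) - s(y)\|_1
\]
is finite. Iterating the defect bound yields $\|s(nx) - n s(x)\|_1 \leq (n-1)D$ for every $n \in \NN$; applying this with $nx$ (resp.\ $mx$) in place of $x$ gives $\|m s(nx) - n s(mx)\|_1 \leq (n+m-2)D$, so the sequence $(s(nx)/n)_{n\in \NN}$ is Cauchy in $(\RR^{\ell'}, \|\cdot\|_1)$. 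Define
\[
S(x) := \lim_{n \to \infty} \frac{s(nx)}{n}.
\]
The same estimate gives $\|s(nx)/n - s(x)\|_1 \leq (n-1)D/n$, so $\|S(x) - s(x)\|_1 \leq D$ for every $x$. Thus $S \approx s$ (Example~\ref{exa=several_maps}~(1)), and $S$ represents $\mathbf{S}$.

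Next I would verify additivity: dividing $\|s(n(x+y)) - s(nx) - s(ny)\|_1 \leq D$ by $n$ and letting $n \to \infty$ gives $S(x+y) = S(x) + S(y)$, so $S$ is $\QQ$-linear. The heart of the argument is upgrading $\QQ$-linearity to $\RR$-linearity, and this is where the \emph{controlled} hypothesis enters. By Example~\ref{exa=several_maps}~(2) applied to the pair $(0, x)$, $s$ is bounded on every $\|\cdot\|_1$-ball of $\RR^{\ell}$; combined with $\|S - s\|_1 \leq D$, the same is true of $S$, so there exists $M \in \RR_{>0}$ such that $\|S(x)\|_1 \leq M$ whenever $\|x\|_1 \leq 1$. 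Additivity forces $S(x/n) = S(x)/n$ for every $n \in \NN$, hence $\|y\|_1 \leq 1/n$ implies $\|S(y)\|_1 \leq M/n$. Thus $S$ is continuous at $0$, hence continuous everywhere, and an additive continuous map between finite-dimensional real normed spaces is $\RR$-linear.

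For uniqueness, if $S$ and $S'$ are both $\RR$-linear representatives of $\mathbf{S}$, then $S - S'$ is $\RR$-linear with $\sup_{x \in \RR^{\ell}}\|S(x) - S'(x)\|_1 < \infty$; a non-zero $\RR$-linear map $\RR^{\ell} \to \RR^{\ell'}$ is unbounded, so $S = S'$.

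The only real obstacle is the $\QQ$-to-$\RR$ step: without the controlled assumption, additive pathologies built from a Hamel basis of $\RR$ over $\QQ$ would leak into $S$, so the argument must use the controlled hypothesis in an essential way, and it is precisely local boundedness of $s$ (hence of $S$) that rules such pathologies out.
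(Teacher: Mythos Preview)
Your proof is correct and complete. It takes a genuinely different route from the paper's argument, so a brief comparison is worthwhile.

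The paper does not homogenize on $\RR^{\ell}$ directly. Instead it restricts a representative $S_0$ to the lattice $\ZZ^{\ell}$, composes with each coordinate projection $p_i\colon \RR^{\ell'}\to\RR$ to obtain quasimorphisms $p_i\circ S_0\circ i_{\ell}$ on $\ZZ^{\ell}$, homogenizes these, and uses the fact that a homogeneous quasimorphism on an abelian group is a genuine homomorphism. This yields a $\ZZ$-linear map $\bar S_0\colon\ZZ^{\ell}\to\RR^{\ell'}$, which is then extended $\RR$-linearly to $S\colon\RR^{\ell}\to\RR^{\ell'}$. The controlled hypothesis is used at the end, to pass from $S\circ i_{\ell}\approx S_0\circ i_{\ell}$ (closeness on $\ZZ^{\ell}$) to $S\approx S_0$ (closeness on $\RR^{\ell}$), via the coarse density of $\ZZ^{\ell}$ in $\RR^{\ell}$.

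Your argument homogenizes on $\RR^{\ell}$ itself, obtains additivity directly, and then uses the controlled hypothesis to force continuity and hence $\RR$-linearity. This is more self-contained and makes the role of the controlled hypothesis very transparent: it is precisely what excludes Hamel-basis pathologies. The paper's route, by contrast, sidesteps the continuity issue entirely by working over $\ZZ^{\ell}$ where additivity already determines a unique $\RR$-linear extension, at the cost of a slightly less obvious final step. Both are clean; yours is arguably the more elementary of the two.
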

\begin{proof}
The assertion trivially holds if $\ell=0$ or $\ell'=0$. Hence, in the rest of this proof, we assume that $\ell,\ell'\in \NN$.
Take an arbitrary representative $S_0\colon (\RR^{\ell}, \| \cdot \|_1) \to (\RR^{\ell'}, \| \cdot \|_1)$ of $\mathbf{S}$. Let $i_{\ell} \colon (\ZZ^{\ell}, \| \cdot \|_1) \hookrightarrow (\RR^{\ell}, \| \cdot \|_1)$ be the inclusion. For $i \in \{ 1, \cdots,\ell' \}$, let $p_i \colon \RR^{\ell'} \to \RR$ be the $i$-th projection. Then, $p_i \circ S_0 \circ i_{\ell}$ is a coarse homomorphism from $(\ZZ^{\ell}, \| \cdot\|_1)$ to $(\RR, | \cdot |)$, and hence is a quasimorphism on $\ZZ^{\ell}$. Let $\bar{S}_{0,i} \colon \ZZ^{\ell} \to (\RR, |\cdot|)$ be the homogenization of $p_i \circ S_0 \circ i_{\ell}$. Since $\ZZ^{\ell}$ is abelian, $\bar{S}_{0,i}$ is a homomorphism, and we have $\| \bar{S}_{0,i} - p_i \circ S_0 \|_{\infty} \le \DD(p_i \circ S_0)$ (see  \cite[Lemma~2.21]{Calegari} for details). Here $\|\cdot\|_{\infty}$ denotes the $\ell^{\infty}$-norm. Define the homomorphism $\bar{S}_0 \colon \ZZ^{\ell} \to \RR^{\ell'}$ by
\[
\bar{S}_0 = (\bar{S}_{0,1}, \cdots, \bar{S}_{0,\ell'}) \colon (\ZZ^{\ell}, \| \cdot\|_1) \to (\RR^{\ell'}, \| \cdot \|_1), \quad \vm \mapsto (\bar{S}_{0,1}(\vm), \cdots, \bar{S}_{0,\ell'} (\vm)).
\]
Then $\bar{S}_0$ and $S_0 \circ i_{\ell}$ are close since $\| \bar{S}_0 - S_0 \circ i_{\ell}\|_{\infty} \le\sum\limits_{i\in \{1,\ldots,\ell'\}} \DD(p_i \circ i_{\ell})$. Let $S \colon \RR^{\ell} \to \RR^{\ell'}$ be the $\RR$-linear map extending $\bar{S}_0 \colon \ZZ^{\ell} \to \RR^{\ell'}$. Since $\bar{S}_0 = S \circ i_{\ell}\approx S_0 \circ i_{\ell}$, we conclude $S\approx S_0$, as desired. The uniqueness of such $S$ follows from the fact that distinct two $\RR$-linear maps are not close.
\end{proof}

\subsection{Asymptotic dimension}\label{subsec=asdim}
Here we review the definition and the basic facts of asymptotic dimension, following \cite{BD} and \cite{Roe}.

Let $X$ be a set and $\mathcal{U}$ a family of subsets of $X$. Then the \emph{multiplicity of $\mathcal{U}$} is defined as
\[ \sup \left\{ \# S \, \middle| \, \textrm{$S \subseteq \mathcal{U}$ and $\bigcap_{A \in S} A \ne \emptyset$}\right\} \in \ZZ_{\geq 0} \cup \{ \infty \}.\]

Let $X$ be a metric space and $\mathcal{U}$ a cover of $X$. Then $\mathcal{U}$ is \emph{uniformly bounded} if there exists $S\in\RR_{\ge 0}$ such that $U \in \mathcal{U}$ implies $\diam(U) \le S$. We call a cover $\mathcal{V}$ of $X$ a \emph{refinement of a cover $\mathcal{U}$ of $X$} if for every $V \in \mathcal{V}$ there exists $U \in \mathcal{U}$ such that $V \subseteq U$.

\begin{defn}[asymptotic dimension of metric spaces]
Let $X$ be a metric space and $n\in \ZZ_{\geq 0}$. We write $\asydim X \le n$ if for every uniformly bounded open cover $\mathcal{V}$ of $X$, there exists a uniformly bounded open cover $\mathcal{U}$ of $X$ such that $\mathcal{V}$ is a refinement of $\mathcal{U}$ and the multiplicity of $\mathcal{U}$ is at most $n+ 1$. The \emph{asymptotic dimension} is defined as $\asdim X=\inf \{ n \, | \, \asydim X \le n\} \in \ZZ_{\geq 0} \cup \{ \infty\}$.
\end{defn}


Let $r\in \RR_{>0}$ and $\mathcal{U}$ a family of subsets in a metric space $X$. Then $\mathcal{U}$ is said to be \emph{$r$-disjoint} if $U, U' \in \mathcal{U}$ and $U \ne U'$ imply $d(U, U') > r$. Here we set $d(U, U') = \inf \{ d(x,y) \, | \, \textrm{$x \in U$ and $y \in U'$}\}$.

\begin{thm}[{\cite[Theorem~19]{BD}}]
Let $X$ be a metric space. The following conditions are equivalent.
\begin{enumerate}[label=\textup{(}$\arabic*$\textup{)}]
\item $\asydim X \le n$;
\item For every $r < \infty$ there exist $r$-disjoint families $\mathcal{U}^0, \cdots, \mathcal{U}^n$ of uniformly bounded subsets of $X$ such that $\bigcup\limits_{i\in \{0,\ldots,n\}} \mathcal{U}^i$ is a cover of $X$.
\end{enumerate}
\end{thm}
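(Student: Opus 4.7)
The plan is to prove the two implications separately; each direction trades a multiplicity bound against an $r$-disjointness bound by enlarging or shrinking the members of the cover by a controlled amount.

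First I would handle $(2) \Rightarrow (1)$, the easier direction. Given a uniformly bounded open cover $\mathcal{V}$ with $\sup_{V \in \mathcal{V}} \diam(V) \le D$, I pick $r = 2D+1$ and invoke $(2)$ to obtain $r$-disjoint, uniformly bounded families $\mathcal{U}^0, \ldots, \mathcal{U}^n$ whose union covers $X$. Setting $\mathcal{U} = \{\mathcal{N}_D(U) : U \in \bigcup_i \mathcal{U}^i\}$, I claim $\mathcal{U}$ is a uniformly bounded open cover refined by $\mathcal{V}$ with multiplicity at most $n+1$. Refinement is immediate: any $V \in \mathcal{V}$ meets some $U \in \bigcup_i \mathcal{U}^i$ since $\bigcup_i \mathcal{U}^i$ covers $X$, and then $V \subseteq \mathcal{N}_D(U)$ because $\diam(V) \le D$. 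The multiplicity bound comes from the observation that if $p \in \mathcal{N}_D(U) \cap \mathcal{N}_D(U')$ with $U \ne U'$ in the same color class $\mathcal{U}^i$, then $d(U,U') \le 2D < r$, contradicting $r$-disjointness of $\mathcal{U}^i$; hence at most one element from each $\mathcal{U}^i$ contains $p$, giving multiplicity $\le n+1$.

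For $(1) \Rightarrow (2)$, given $r > 0$, I would apply $(1)$ to the uniformly bounded open cover of $X$ by balls $B(x, r)$ to obtain a uniformly bounded open refinement $\mathcal{U}$ with multiplicity $\le n+1$ and Lebesgue number at least $r$. The remaining task is to decompose $\mathcal{U}$ (or a bounded shrinkage thereof) as $\mathcal{U}^0 \sqcup \cdots \sqcup \mathcal{U}^n$ so that members of the same $\mathcal{U}^i$ are pairwise at distance $\ge r$. The natural device is to shrink each $U \in \mathcal{U}$ to $\tilde U = \{x \in U : d(x, X \setminus U) \ge r\}$, so that the $\tilde U$ still cover $X$ thanks to the Lebesgue bound, and $d(\tilde U, \tilde V) < r$ forces $U \cap V \ne \emptyset$ by the triangle inequality; the problem then reduces to properly coloring the intersection graph of $\mathcal{U}$ with $n+1$ colors.

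The main obstacle is precisely this coloring step, because the multiplicity bound $\le n+1$ does not by itself bound the chromatic number of the intersection graph of $\mathcal{U}$: one can arrange many sets to pairwise intersect without sharing a common point (an odd cycle of intervals already shows this in dimension one). The standard workaround, due to Bell--Dranishnikov \cite{BD}, is to apply $(1)$ at a much larger scale $R \gg r$ and iterate: using Zorn's lemma extract a maximal $r$-disjoint subfamily, use the large Lebesgue number together with the multiplicity bound to show the uncovered residue can be covered by a family of strictly smaller local complexity, and recurse $n$ more times. Tracking the uniform diameter bounds through the $n+1$ iterations and verifying that the resulting subfamilies jointly cover $X$ is the technical heart of the argument; everything else is bookkeeping with neighborhoods and the triangle inequality.
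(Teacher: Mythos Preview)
The paper does not prove this theorem at all: it is quoted as \cite[Theorem~19]{BD} and used as a black box to motivate the coarse-space definition of asymptotic dimension that follows. There is therefore no in-paper argument against which to compare your proposal.

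On the proposal itself: your $(2)\Rightarrow(1)$ is essentially correct (minor slip: $\mathcal{U}$ is a \emph{coarsening} of $\mathcal{V}$, not a refinement; and you should take open $D$-neighbourhoods to ensure $\mathcal{U}$ is an open cover). For $(1)\Rightarrow(2)$ you have correctly identified the obstruction---multiplicity $\le n+1$ does not bound the chromatic number of the intersection graph---but the iterative workaround you sketch is not the argument in \cite{BD}, and as written it has a gap: after extracting a maximal $r$-disjoint subfamily $\mathcal{U}^0$, a point $x\notin\bigcup\mathcal{U}^0$ lies in some $U\notin\mathcal{U}^0$, and maximality only gives $U$ close to some $U_0\in\mathcal{U}^0$, not $x\in U_0$; so there is no evident drop in multiplicity on the residue. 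The route actually taken in \cite{BD} (and the standard one) passes through the nerve: a cover of multiplicity $\le n+1$ has nerve of dimension $\le n$; in the barycentric subdivision the vertices are canonically $(n+1)$-coloured by the dimension of the simplex they are the barycentre of, and vertices of the same colour are never adjacent. Pulling back the open stars along the partition-of-unity map $X\to|N(\mathcal{U})|$ yields the required $r$-disjoint families, with the Lebesgue number controlling the disjointness constant.
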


Using this theorem, the asymptotic dimension of the coarse space is defined as follows.

\begin{defn}
Let $(X, \Ecal)$ be a coarse space, and let $E \in \Ecal$. A family $\mathcal{U}$ of subsets of $X$ is said to be \emph{$E$-disjoint} if $U, U' \in \mathcal{U}$ and $U \ne U'$ imply $ (U \times U') \cap E = \emptyset$.
\end{defn}

Let $(X, \Ecal)$ be a coarse space. A family $\mathcal{U}$ of subsets of $X$ is  said to be \emph{uniformly bounded} if $\bigcup\limits_{U \in \mathcal{U}} (U \times U)$ belongs to $\Ecal$.

\begin{defn}[asymptotic dimension of coarse spaces]
Let $(X, \Ecal)$ be a coarse space, and $n\in \ZZ_{\geq 0}$. We write $\asydim (X, \Ecal) \le n$ if for every $E \in \Ecal$ there exist $E$-disjoint uniformly bounded families $\mathcal{U}^0, \cdots, \mathcal{U}^n$ of $X$ such that $\mathcal{U}^0 \cup \cdots \cup \mathcal{U}^n$ is a cover of $X$. The \emph{asymptotic dimension of $(X,\Ecal)$} is defined as $\asdim(X,\Ecal)=\inf \{ n \, | \, \asydim (X, \Ecal) \le n\} \in \ZZ_{\geq 0} \cup \{ \infty\}$.
\end{defn}

It immediately follows from the definition that the asymptotic dimension is invariant under coarse equivalences (see \cite[Section~9.1]{Roe} for details). This means that the asymptotic dimension of an  almost (generalized) metric space is well-defined. Finally, we state the following two results.

\begin{prop}[see {\cite[Proposition~9.10]{Roe}}]\label{prop=asdimCE}
Let $(X, \Ecal)$ and $(Y,\Ecal')$ be coarse spaces. Assume that there exists a coarse embedding $\alpha \colon (X, \Ecal) \to (Y, \Ecal')$. Then, $\asydim(X, \Ecal) \le \asydim(Y, \Ecal')$.
\end{prop}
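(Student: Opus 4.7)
The plan is to directly pull back a witnessing decomposition from $Y$ to $X$ via the coarse embedding $\alpha$. Fix $n \in \ZZ_{\geq 0}$ with $n \geq \asydim(Y,\Ecal')$ (the case $\asydim(Y,\Ecal') = \infty$ is vacuous), so it suffices to show $\asydim(X,\Ecal) \leq n$. Given an arbitrary $E \in \Ecal$, I need to produce $E$-disjoint uniformly bounded families $\mathcal{U}^0,\ldots,\mathcal{U}^n$ of subsets of $X$ whose union covers $X$.

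First, since $\alpha$ is controlled, the set $F := (\alpha \times \alpha)(E)$ lies in $\Ecal'$. By the assumption $\asydim(Y,\Ecal') \leq n$, choose $F$-disjoint uniformly bounded families $\mathcal{V}^0,\ldots,\mathcal{V}^n$ of subsets of $Y$ such that $\bigcup_{i} \mathcal{V}^i$ covers $Y$. I will then set
\[
\mathcal{U}^i := \{\alpha^{-1}(V) \,|\, V \in \mathcal{V}^i\}, \qquad i \in \{0,\ldots,n\}.
\]
Since $\mathcal{V}^0 \cup \cdots \cup \mathcal{V}^n$ covers $Y$, its preimage under $\alpha$ covers $X$, so $\bigcup_i \mathcal{U}^i$ covers $X$.

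The next step is to verify uniform boundedness of each $\mathcal{U}^i$. Observe
\[
\bigcup_{V \in \mathcal{V}^i}\bigl(\alpha^{-1}(V) \times \alpha^{-1}(V)\bigr) \subseteq (\alpha \times \alpha)^{-1}\!\left(\bigcup_{V \in \mathcal{V}^i}(V \times V)\right).
\]
The right-hand set inside the preimage lies in $\Ecal'$ by uniform boundedness of $\mathcal{V}^i$, and because $\alpha$ is a coarse embedding (in particular coarsely proper), its preimage under $\alpha \times \alpha$ lies in $\Ecal$. Hence $\mathcal{U}^i$ is uniformly bounded.

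Finally, I will check $E$-disjointness. Suppose $\alpha^{-1}(V)$ and $\alpha^{-1}(V')$ are distinct elements of $\mathcal{U}^i$; then $V \ne V'$, so $(V \times V') \cap F = \emptyset$. If $(x,x') \in (\alpha^{-1}(V) \times \alpha^{-1}(V')) \cap E$, then $(\alpha(x),\alpha(x')) \in V \times V'$, while $(\alpha(x),\alpha(x')) \in F$ by definition of $F$, a contradiction. Thus $\mathcal{U}^i$ is $E$-disjoint, completing the proof. I do not anticipate a serious obstacle: the only subtle point is that \emph{coarse properness} of $\alpha$ (which is built into the definition of a coarse embedding) is essential for pulling uniform boundedness back from $Y$ to $X$, while controlledness is only needed to guarantee $F \in \Ecal'$.
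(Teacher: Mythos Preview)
Your proof is correct. The paper does not give its own proof of this proposition---it simply cites Roe's book---and your pullback argument is precisely the standard one found there: push $E$ forward via controlledness to get $F\in\Ecal'$, pull the $F$-disjoint uniformly bounded cover back along $\alpha$, and use coarse properness to recover uniform boundedness on $X$.
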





\begin{thm}[see {\cite[Section~9.2]{Roe}}]\label{thm=asdimZ}
Let $\ell\in \ZZ_{\geq 0}$. Then, $\asdim (\ZZ^{\ell},\|\cdot\|_1)=\ell$.
\end{thm}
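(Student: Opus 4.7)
The plan is to prove the two inequalities $\asdim(\ZZ^\ell,\|\cdot\|_1)\leq \ell$ and $\asdim(\ZZ^\ell,\|\cdot\|_1)\geq \ell$ separately. As a preliminary step I would first observe that the inclusion $(\ZZ^\ell,\|\cdot\|_1)\hookrightarrow (\RR^\ell,\|\cdot\|_1)$ is a quasi-isometry (every point of $\RR^\ell$ lies within $\ell/2$ of $\ZZ^\ell$), and in particular a coarse equivalence, so by Proposition~\ref{prop=asdimCE} it suffices to work with $(\RR^\ell,\|\cdot\|_1)$. This switch is convenient because it lets us use honest Euclidean translations in the construction below.

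For the upper bound, I would give an explicit ``staircase'' (or shifted brick pattern) cover. Fix $r>0$, choose $R\gg r$, and for each $i\in\{0,1,\ldots,\ell\}$ set
\[
\mathcal{U}^i=\left\{\prod_{j=1}^{\ell}\bigl[(R+r)n_j+\tfrac{iR}{\ell+1},\ (R+r)n_j+\tfrac{iR}{\ell+1}+R\bigr)+v_i\,\middle|\,(n_1,\ldots,n_\ell)\in\ZZ^\ell\right\},
\]
where the vectors $v_0,\ldots,v_\ell\in\RR^\ell$ are the standard ``diagonal shifts'' chosen so that the $(\ell+1)$ families $\mathcal{U}^0,\ldots,\mathcal{U}^\ell$ between them meet every point of $\RR^\ell$. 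By construction each $\mathcal{U}^i$ consists of pairwise $r$-separated cubes of $\ell^1$-diameter $\ell R$, so each is $r$-disjoint and uniformly bounded, and their union covers $\RR^\ell$. This yields $\asdim(\RR^\ell,\|\cdot\|_1)\leq\ell$, hence $\asdim(\ZZ^\ell,\|\cdot\|_1)\leq\ell$.

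For the lower bound, I would argue by induction on $\ell$. The case $\ell=0$ is trivial, and $\asdim(\ZZ,|\cdot|)\geq 1$ follows from the fact that $\ZZ$ is unbounded: a $0$-dimensional cover would force $\ZZ$ to decompose into a disjoint union of uniformly bounded sets at every scale, which fails as soon as the scale exceeds the uniform bound. For the inductive step $\ell\to\ell+1$, I would combine a Hurewicz-type inequality for the projection $\pi\colon\ZZ^{\ell+1}\to\ZZ^\ell$ onto the first $\ell$ coordinates with the observation that the fibres of $\pi$ are isometric copies of $\ZZ$; together with the induction hypothesis and an explicit computation that the identity map $(\ZZ^{\ell+1},\|\cdot\|_1)\to(\ZZ^{\ell+1},\|\cdot\|_1)$ cannot be ``factored'' through a space of strictly smaller $\asdim$, one obtains $\asdim(\ZZ^{\ell+1})\geq\ell+1$. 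An alternative I would keep in reserve is to invoke the topological fact (via the anti-\v{C}ech approach) that any coarse equivalence class containing $\RR^\ell$ must have $\asdim$ at least the Lebesgue covering dimension $\ell$, which rules out $\asdim(\ZZ^\ell)<\ell$ by Brouwer-type obstructions applied at large scale.

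The routine part is the upper bound, since it amounts to tiling $\RR^\ell$ by shifted cubes. The main obstacle is the lower bound: while $\asdim(\ZZ)\geq 1$ is elementary, the inductive passage requires a genuine non-trivial ingredient (the Hurewicz-type theorem for asymptotic dimension, or an invariance-of-domain style obstruction). In a self-contained exposition I would isolate this step as a separate lemma; in the present paper, however, the result is only cited as preliminary material (Theorem~\ref{thm=asdimZ}), so a reference to \cite[Section~9.2]{Roe} suffices.
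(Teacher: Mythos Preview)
The paper does not prove this theorem; it is stated with a bare citation to \cite[Section~9.2]{Roe}, as you yourself acknowledge in your final paragraph. So there is no proof in the paper to compare your sketch against.

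Your upper bound is standard and essentially correct, modulo the fact that the displayed tiles do not literally cover $\RR^\ell$ as written (the unspecified shifts $v_i$ are doing all the work, and the coordinate-wise translate $iR/(\ell+1)$ already appearing in the formula makes the role of the $v_i$ unclear). This is indeed routine once the construction is written out properly.

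The genuine gap is in your inductive lower bound. Hurewicz-type theorems for asymptotic dimension give \emph{upper} bounds: for the projection $\pi\colon\ZZ^{\ell+1}\to\ZZ^\ell$ with fibres isometric to $\ZZ$, the Bell--Dranishnikov inequality yields $\asdim\ZZ^{\ell+1}\leq\asdim\ZZ^\ell+1$, which is the upper bound again, not the lower bound you need. There is no reverse Hurewicz inequality that would drive an induction in the direction you want, and the clause ``the identity map cannot be factored through a space of strictly smaller $\asdim$'' is not a consequence of Hurewicz but simply a restatement of the claim to be proved. Your alternative route via the anti-\v{C}ech system and a Brouwer-type obstruction is the correct one: one passes to nerves of large-scale covers and uses that a map from a large cube in $\RR^\ell$ to an $(\ell-1)$-dimensional complex cannot be close to the identity on the boundary. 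That argument (or its cohomological packaging) is what underlies the reference, and it is what you should flesh out if you want a self-contained proof; the Hurewicz paragraph should be dropped.
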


\section{Proofs of Propositions~$\ref{prop=presclkika}$ and $\ref{prop=prescldim}$}\label{sec=absolute}
In this section, we provide the proofs of Propositions~\ref{prop=presclkika} and \ref{prop=prescldim}. In fact, we prove the refined statements, Propositions~\ref{prop=sclkika} and \ref{prop=metricdim}. Here we exhibit Proposition~\ref{prop=sclkika}.

\begin{prop}[{precise version of Proposition~$\ref{prop=presclkika}$}]\label{prop=sclkika}
Assume Setting~$\ref{setting=itsumono}$. Assume that $\QQQ(\Ng)^{\Gg}/\HHH^1(\Ng)^{\Gg}$ is finite dimensional. Set $\ell=\Rdim \left(\QQQ(\Ng)^{\Gg}/\HHH^1(\Ng)^{\Gg}\right)$. Then there exist two maps $\Psig\colon [\Gg,\Ng]\to \ZZ^{\ell}$ and $\Ptau\colon \ZZ^{\ell}\to [\Gg,\Ng]$ such that the following hold true.
\begin{enumerate}[label=\textup{(}$\arabic*$\textup{)}]
  \item The map $\Psig\colon [\Gg,\Ng]\to (\ZZ^{\ell},\|\cdot\|_1)$ and $\Ptau\colon \ZZ^{\ell}\to ([\Gg,\Ng],d_{\cl_{\Gg,\Ng}})$ are pre-coarse homomorphisms.
  \item The map $\Psig\colon ([\Gg,\Ng],d_{\scl_{\Gg,\Ng}})\to (\ZZ^{\ell},\|\cdot\|_1)$ is a quasi-isometric embedding.
  \item There exist $C,C'\in \RR_{>0}$ and $D,D'\in \RR_{\geq 0}$ such that
for every $\vm,\vn\in \ZZ^{\ell}$, the inequalities
\begin{align*}
C\cdot \|\vm-\vn\|_1-D&\leq d_{\scl_{\Gg,\Ng}}(\Ptau(\vm),\Ptau(\vn))\\
&\leq d_{\cl_{\Gg,\Ng}}(\Ptau(\vm),\Ptau(\vn))\leq  C'\cdot \|\vm-\vn\|_1+D'
\end{align*}
hold.
  \item The maps $\Psig\colon ([\Gg,\Ng],d_{\scl_{\Gg,\Ng}})\to (\ZZ^{\ell},\|\cdot\|_1)$ and $\Ptau\colon (\ZZ^{\ell},\|\cdot\|_1)\to ([\Gg,\Ng],d_{\scl_{\Gg,\Ng}})$ are coarse inverse to each other.
\end{enumerate}

In particular, the pair of quasi-isometries $(\Psig,\Ptau)$ provides the isomorphism $([\Gg,\Ng],d_{\scl_{\Gg,\Ng}})\cong (\ZZ^{\ell},\|\cdot\|_1)$ as coarse groups.
\end{prop}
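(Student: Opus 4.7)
The plan is to follow the three-step strategy outlined in Subsection~\ref{subsec=org}: construct $\Psig^{\RR}$ first, then $\Ptau$, and finally obtain $\Psig$ from $\Psig^{\RR}$ and verify that $(\Psig,\Ptau)$ are mutually coarse inverse. The bridge between the scl-side and the quasimorphism-side will be a basis of $\QQQ(\Ng)^{\Gg}/\HHH^{1}(\Ng)^{\Gg}$ together with a dual family $\{\yg_{i}\}\subseteq[\Gg,\Ng]$ supplied by Lemma~\ref{lem=dual}.

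\emph{Step~$1'$.} Apply Lemma~\ref{lem=dual} to $W=[\Gg,\Ng]$ and to $\Xi$ the image of the well-defined evaluation $\QQQ(\Ng)^{\Gg}/\HHH^{1}(\Ng)^{\Gg}\hookrightarrow\RR^{[\Gg,\Ng]}$; well-definedness holds because any $h\in\HHH^{1}(\Ng)^{\Gg}$ vanishes on every simple $(\Gg,\Ng)$-commutator $[\gG,\xg]$ by $\Gg$-invariance, hence on all of $[\Gg,\Ng]$. This produces $\yg_{1},\ldots,\yg_{\ell}\in[\Gg,\Ng]$ and $\nug_{1},\ldots,\nug_{\ell}\in\QQQ(\Ng)^{\Gg}$ whose classes form a basis of the quotient and satisfy $\nug_{j}(\yg_{i})=\delta_{i,j}$. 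Define the evaluation map $\Psig^{\RR}(\yg)=(\nug_{1}(\yg),\ldots,\nug_{\ell}(\yg))$. Lemma~\ref{lem=2D-1} applied componentwise gives the upper Lipschitz bound for $\Psig^{\RR}$ with respect to $d_{\scl_{\Gg,\Ng}}$. For the lower bound, Theorem~\ref{thm=Bavard} identifies $\scl_{\Gg,\Ng}$ with a supremum over $\QQQ(\Ng)^{\Gg}/\HHH^{1}(\Ng)^{\Gg}$ equipped with the defect norm $\hat{\DD}$, and Proposition~\ref{prop=defectBanach} makes this a Banach space; applying Proposition~\ref{prop=inverse} in its $\ell=0$ case (i.e.\ the inverse mapping theorem) to the linear bijection $[\nug]\mapsto(\nug(\yg_{i}))_{i}$ yields a uniform estimate on the coordinates of $[\nug]$ in terms of $\DD(\nug)$, which transforms the Bavard supremum into a bound by $\|\Psig^{\RR}(\yg)\|_{1}$ and produces the lower QI bound.

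\emph{Step~$2'$.} Define $\Ptau(m_{1},\ldots,m_{\ell})=\yg_{1}^{m_{1}}\cdots\yg_{\ell}^{m_{\ell}}$. To see that $\Ptau$ is a pre-coarse homomorphism into $([\Gg,\Ng],d_{\cl_{\Gg,\Ng}})$, note that each elementary swap $\yg_{j}^{b}\yg_{i}^{a}=[\yg_{j}^{b},\yg_{i}^{a}]\yg_{i}^{a}\yg_{j}^{b}$ with $i<j$ costs one simple $(\Gg,\Ng)$-commutator (both $\yg_{i},\yg_{j}\in\Ng$), and rearranging $\Ptau(\vm)\Ptau(\vn)$ into $\Ptau(\vm+\vn)$ needs at most $\binom{\ell}{2}$ such swaps, a count independent of $\vm,\vn$. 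The same rearrangement-plus-cancellation analysis applied to $\Ptau(\vm)^{-1}\Ptau(\vn)$ gives the upper Lipschitz bound $d_{\cl_{\Gg,\Ng}}(\Ptau(\vm),\Ptau(\vn))\le\binom{\ell}{2}+\sum_{i}|m_{i}-n_{i}|\,\cl_{\Gg,\Ng}(\yg_{i})$. For the lower bound, semihomogeneity of $\nug_{j}$ together with the defect estimate yields $|\nug_{j}(\Ptau(\vm))-m_{j}|\le(\ell-1)\DD(\nug_{j})$, so $\|\Psig^{\RR}(\Ptau(\vm))-\vm\|_{\infty}$ is uniformly bounded; composing with the QI-embedding $\Psig^{\RR}$ from Step~$1'$ gives the desired lower QI bound for $\Ptau$.

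\emph{Step~$3'$ and main obstacle.} Set $\Psig=\rho\circ\Psig^{\RR}$ for a fixed coordinatewise rounding coarse inverse $\rho\colon\RR^{\ell}\to\ZZ^{\ell}$ of the inclusion; then $\Psig$ inherits from $\Psig^{\RR}$ the properties of being a pre-coarse homomorphism and a QI-embedding. The closeness $\Psig\circ\Ptau\approx\mathrm{id}_{\ZZ^{\ell}}$ follows from the bound on $\|\Psig^{\RR}(\Ptau(\vm))-\vm\|_{\infty}$ already established in Step~$2'$, while $\Ptau\circ\Psig\approx\mathrm{id}_{[\Gg,\Ng]}$ holds because $\Psig^{\RR}(\yg)$ and $\Psig^{\RR}((\Ptau\circ\Psig)(\yg))$ lie within uniformly bounded $\ell^{1}$-distance and the QI-embedding property of $\Psig^{\RR}$ (Step~$1'$) transfers this back to $d_{\scl_{\Gg,\Ng}}$. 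I expect the lower bound in Step~$1'$ to be the main obstacle, because it must convert the pointwise Bavard supremum into a uniform finite-dimensional comparison; this is precisely where the hypothesis $\Rdim(\QQQ(\Ng)^{\Gg}/\HHH^{1}(\Ng)^{\Gg})<\infty$ enters essentially, through the inverse mapping theorem applied to the Banach space structure of Proposition~\ref{prop=defectBanach}.
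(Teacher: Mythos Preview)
Your proposal is correct and follows essentially the same three-step approach as the paper (Propositions~\ref{prop=evmap}, \ref{prop=dimQ}, \ref{prop=itteQ}): the evaluation map $\Psig^{\RR}$ built from a basis dual to elements $\yg_i$ supplied by Lemma~\ref{lem=dual}, the product map $\Ptau(\vm)=\yg_1^{m_1}\cdots\yg_\ell^{m_\ell}$, and the composition analysis via the Bavard duality and the finite-dimensional Banach structure of Proposition~\ref{prop=defectBanach}. The only minor deviations are cosmetic---your commutator-swap count $\binom{\ell}{2}$ is looser than the paper's $\ell-1$, and you obtain the lower QI bound for $\Ptau$ and the closeness $\Ptau\circ\Psig\approx\mathrm{id}$ by transferring through the already-established QI-embedding $\Psig^{\RR}$ rather than by repeating the direct Bavard estimate as the paper does---but these do not change the substance of the argument.
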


In Proposition~\ref{prop=sclkika}, we may take $\Psig$ and $\Ptau$ such that they furthermore satisfy $\Psig\circ \Ptau=\mathrm{id}_{\ZZ^{\ell}}$; see Subsection~\ref{subsec=remark}.

Recall the three steps in the outline of the proof of Proposition~\ref{prop=presclkika} from Subsection~\ref{subsec=org}:
\begin{enumerate}
  \item [\underline{Step~$1'$}.] construct $\Psig^{\RR}\colon [\Gg,\Ng]\to \RR^{\ell}$;
  \item [\underline{Step~$2'$}.] construct $\Ptau\colon \ZZ^{\ell}\to [\Gg,\Ng]$;
  \item [\underline{Step~$3'$}.] take an appropriate $\Psig\colon [\Gg,\Ng]\to \ZZ^{\ell}$ out of $\Psig^{\RR}$, and study the compositions $\Ptau\circ\Psig$ and $\Psig\circ \Ptau$.
\end{enumerate}
In this section, we will take these three steps in the proof of Proposition~\ref{prop=sclkika}.

We introduce the following formulations for the convenience of  stating results in quasi-isometric geometry.

\begin{defn}[QI-type estimates from below/above]\label{defn=QI}
Let $X$ and $Y$ be sets, and let  $\alpha\colon X\to Y$ be a map. Let $A\subseteq X$. Let $d_X$ and $d_Y$ be almost metrics on $X$ and $Y$, respectively.
\begin{enumerate}[label=(\arabic*)]
  \item A \emph{QI-type estimate from below on $A$} means an inequality of  the following form for every $x_1,x_2\in A$:
\[
C_1\cdot d_X(x_1,x_2)-D_1\leq d_Y(\alpha(x_1),\alpha(x_2)),
\]
where $C_1\in \RR_{>0}$ and $D_1\in \RR_{\geq 0}$ are constants not depending on $x_1,x_2\in A$.
 \item   A \emph{QI-type estimate from above on $A$} means an inequality of  the following form for every $x_1,x_2\in A$:
\[
d_Y(\alpha(x_1),\alpha(x_2))\leq C_2\cdot d_X(x_1,x_2)+D_2,
\]
where $C_2\in \RR_{>0}$ and $D_2\in \RR_{\geq 0}$ are constants not depending on $x_1,x_2\in A$.
\end{enumerate}
\end{defn}

\subsection{The construction of $\Psig^{\RR}$}\label{subsec=Psig}
In this subsection, we take Step~$1'$ in the outlined proof.

\begin{prop}\label{prop=evmap}
Assume Setting~$\ref{setting=itsumono}$. Assume that $\QQQ(\Ng)^{\Gg}/\HHH^1(\Ng)^{\Gg}$ is non-zero finite dimensional, and set $\ell=\Rdim \left(\QQQ(\Ng)^{\Gg}/\HHH^1(\Ng)^{\Gg}\right)$.
Take an arbitrary basis of $\QQQ(\Ng)^{\Gg}/\HHH^1(\Ng)^{\Gg}$. Take an arbitrary  set $\{\nu_1,\ldots ,\nu_{\ell}\}\subseteq  \QQQ(\Ng)^{\Gg}$ of representatives of this basis. Define $\Psig^{\RR}=\Psig^{\RR}_{(\nug_1,\ldots,\nug_{\ell})}\colon ([\Gg,\Ng],d_{{\scl}_{\Gg,\Ng}})\to (\mathbb{R}^{\ell},\|\cdot\|_1)$  by
\[
\Psig^{\RR}(\yg)=(\nug_1(\yg),\nug_2(\yg),\ldots ,\nug_{\ell}(\yg))
\]
for every $\yg\in [\Gg,\Ng]$. Then the following hold true.
\begin{enumerate}[label=\textup{(}$\arabic*$\textup{)}]
  \item The map $\Psig^{\RR}$ is a pre-coarse homomorphism.
  \item We have the following QI-type  estimate  from above on $[\Gg,\Ng]$:
\begin{equation}\label{eq=psipsi}
\|\Psig^{\RR}(\yg_1)-\Psig^{\RR}(\yg_2)\|_1\leq 2\left(\sum_{j\in \{1,\ldots,\ell\}}\DD(\nug_j)\right)\cdot d_{\scl_{\Gg,\Ng}}(\yg_1,\yg_2) + \sum_{j\in \{1,\ldots,\ell\}}\DD(\nug_j).
\end{equation}
  \item There exists $C\in \RR_{>0}$ such that we have the following QI-type estimate from below on $[\Gg,\Ng]$:
\begin{equation}\label{eq=psipsiC}
\|\Psig^{\RR}(\yg_1)-\Psig^{\RR}(\yg_2)\|_1\geq \frac{2}{C}\cdot d_{\scl_{\Gg,\Ng}}(\yg_1,\yg_2)-\frac{1}{C}.
\end{equation}
\end{enumerate}
In particular, $\Psig^{\RR}\colon ([\Gg,\Ng],d_{{\scl}_{\Gg,\Ng}})\to (\mathbb{R}^{\ell},\|\cdot\|_1)$ is a coarse homomorphism and a quasi-isometric embedding.
\end{prop}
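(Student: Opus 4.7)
My plan is to verify the three items in sequence, then deduce the in-particular clause. Item (1) is essentially bookkeeping: each $\nug_j$ is a quasimorphism, so
\[
|\nug_j(\yg_1\yg_2) - \nug_j(\yg_1) - \nug_j(\yg_2)| \leq \DD(\nug_j),
\]
and summing these inequalities over $j \in \{1,\ldots,\ell\}$ bounds the $\ell^1$-defect of $\Psig^{\RR}$ by $\sum_j \DD(\nug_j)$, which gives the pre-coarse homomorphism property. For item (2), I would write $\yg_2 = \yg_1 \cdot (\yg_1^{-1}\yg_2)$ and use the quasimorphism inequality to get $|\nug_j(\yg_2) - \nug_j(\yg_1) - \nug_j(\yg_1^{-1}\yg_2)| \leq \DD(\nug_j)$. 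The Bavard duality theorem for $\scl_{\Gg,\Ng}$ (Theorem~\ref{thm=Bavard}) then yields $|\nug_j(\yg_1^{-1}\yg_2)| \leq 2\DD(\nug_j)\cdot d_{\scl_{\Gg,\Ng}}(\yg_1, \yg_2)$. Combining these two bounds and summing over $j$ produces \eqref{eq=psipsi}.

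The substantive step is (3), and the plan is to invoke Bavard duality in the reverse direction:
\[
d_{\scl_{\Gg,\Ng}}(\yg_1, \yg_2) \;=\; \sup_{\nug \in \QQQ(\Ng)^{\Gg}\setminus \HHH^1(\Ng)^{\Gg}} \frac{|\nug(\yg_1^{-1}\yg_2)|}{2\DD(\nug)}.
\]
Since $\{[\nug_1],\ldots,[\nug_{\ell}]\}$ is a basis of $\QQQ(\Ng)^{\Gg}/\HHH^1(\Ng)^{\Gg}$, any such $\nug$ decomposes as $\nug = \sum_j a_j \nug_j + h$ with $(a_j) \in \RR^{\ell}$ and $h \in \HHH^1(\Ng)^{\Gg}$. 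A key observation is that $h$ vanishes on $[\Gg,\Ng]$: for every simple $(\Gg,\Ng)$-commutator $[\gG,\xg]$, $\Gg$-invariance and additivity of $h$ give
\[
h([\gG,\xg]) \;=\; h(\gG\xg\gG^{-1}) - h(\xg) \;=\; h(\xg) - h(\xg) \;=\; 0.
\]
Hence $\nug(\yg_1^{-1}\yg_2) = \sum_j a_j \nug_j(\yg_1^{-1}\yg_2)$, and combining this with the estimate from the proof of (2) gives
\[
|\nug(\yg_1^{-1}\yg_2)| \;\leq\; \|(a_j)\|_{\infty}\cdot \|\Psig^{\RR}(\yg_1) - \Psig^{\RR}(\yg_2)\|_1 \;+\; \bigl(\max_j \DD(\nug_j)\bigr)\cdot \|(a_j)\|_1.
\]

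The main obstacle at this point is bounding $\|(a_j)\|_1$ uniformly in terms of $\DD(\nug)$. I would handle this by appealing to Proposition~\ref{prop=defectBanach}: the quotient $\QQQ(\Ng)^{\Gg}/\HHH^1(\Ng)^{\Gg}$ equipped with the defect norm $\hat{\DD}$ is an $\ell$-dimensional Banach space, and the coordinate map $(a_j) \mapsto [\sum_j a_j \nug_j]$ is a continuous linear bijection $(\RR^{\ell}, \|\cdot\|_1) \to (\QQQ(\Ng)^{\Gg}/\HHH^1(\Ng)^{\Gg}, \hat{\DD})$. By the inverse mapping theorem (the $\ell = 0$ case of Proposition~\ref{prop=inverse}) its inverse is bounded, so there exists $C_0 > 0$ with $\|(a_j)\|_1 \leq C_0 \hat{\DD}([\nug]) \leq C_0 \DD(\nug)$. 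Substituting, dividing by $2\DD(\nug)$, and taking the supremum over $\nug$ yields
\[
d_{\scl_{\Gg,\Ng}}(\yg_1, \yg_2) \;\leq\; \frac{C_0}{2}\,\|\Psig^{\RR}(\yg_1) - \Psig^{\RR}(\yg_2)\|_1 \;+\; \frac{C_0}{2}\,\max_j \DD(\nug_j).
\]
Rearranging and absorbing the additive term by enlarging the constant produces \eqref{eq=psipsiC} with a suitable $C$. The final assertion that $\Psig^{\RR}$ is a coarse homomorphism and a quasi-isometric embedding then follows at once from (1)--(3) together with Example~\ref{exa=several_maps}. The only nontrivial input beyond the Bavard duality machinery is thus the Banach-space finite-dimensionality of the defect norm, which controls all the constants in the lower estimate.
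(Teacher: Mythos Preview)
Your proof follows essentially the same route as the paper's: bound the $\ell^1$-defect by $\sum_j \DD(\nug_j)$ for (1), combine the quasimorphism inequality with Bavard duality for (2), and for (3) decompose $\nug = h + \sum_j a_j \nug_j$, note that $h$ vanishes on $[\Gg,\Ng]$, and use the Banach-space structure of $(\QQQ(\Ng)^{\Gg}/\HHH^1(\Ng)^{\Gg},\hat{\DD})$ to bound $\|(a_j)\|_1 \leq C_0\,\DD(\nug)$.

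There is one small glitch in your constants. You apply the quasimorphism inequality to each $\nug_j$ separately, which produces an additive error $\|(a_j)\|_1 \max_j \DD(\nug_j) \leq C_0\,\DD(\nug)\cdot \max_j \DD(\nug_j)$; after dividing by $2\DD(\nug)$ this leaves the additive constant $\tfrac{C_0}{2}\max_j \DD(\nug_j)$. Since applying your own bound to $(a_j)=e_j$ gives $\DD(\nug_j)\geq C_0^{-1}$, this constant is $\geq \tfrac12$, and in general you \emph{cannot} ``absorb'' it into the multiplicative constant to reach the specific form $\|\Psig^{\RR}(\yg_1)-\Psig^{\RR}(\yg_2)\|_1 \geq \tfrac{2}{C} d_{\scl_{\Gg,\Ng}}(\yg_1,\yg_2) - \tfrac{1}{C}$ (at $\yg_1=\yg_2$ this would force the additive term to be $\leq \tfrac12$). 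The paper avoids this by applying the quasimorphism inequality once to $\nug$ itself, obtaining
\[
|\nug(\yg_1^{-1}\yg_2)| \leq |\nug(\yg_1)-\nug(\yg_2)| + \DD(\nug) = \Bigl|\sum_j a_j\bigl(\nug_j(\yg_1)-\nug_j(\yg_2)\bigr)\Bigr| + \DD(\nug),
\]
so the additive error is exactly $\DD(\nug)$ and one lands precisely on \eqref{eq=psipsiC} with the constant $C=C_0$. Your argument is otherwise correct and suffices for the quasi-isometric embedding conclusion.
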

The constant $C$ in (3) can be determined by \eqref{eq=CCC}.

\begin{proof}[Proof of Proposition~$\ref{prop=evmap}$]
Let $\yg_1,\yg_2\in [\Gg,\Ng]$. For (1), we have
\begin{equation}\label{eq=coarsehomPsig}
\|\Psig^{\RR}(\yg_1\yg_2)-\Psig^{\RR}(\yg_1)-\Psig^{\RR}(\yg_2)\|_1\leq \sum_{j\in \{1,\ldots,\ell\}}\DD(\nug_j).
\end{equation}
For (2), by \eqref{eq=coarsehomPsig} we have
\[
\|\Psig^{\RR}(\yg_1)-\Psig^{\RR}(\yg_2)\|_1\leq \|\Psig^{\RR}(\yg_1^{-1}\yg_2)\|_1+\sum_{j\in \{1,\ldots,\ell\}}\DD(\nug_j)=\sum_{j\in \{1,\ldots,\ell\}}|\nu_j(\yg_1^{-1}\yg_2)|+\sum_{j\in \{1,\ldots,\ell\}}\DD(\nug_j).
\]
By Theorem~\ref{thm=Bavard}, we have for every $j\in \{1,\ldots,\ell\}$,
\[
|\nu_j(\yg_1^{-1}\yg_2)|\leq 2\DD(\nug_j)\cdot d_{\scl_{\Gg,\Ng}}(\yg_1,\yg_2).
\]
Therefore, we obtain \eqref{eq=psipsi}.

In what follows, we prove (3). By Proposition~\ref{prop=defectBanach}, the following linear map
\[
(\RR^{\ell},\|\cdot\|_1)\to (\QQQ(\Ng)^{\Gg}/\HHH^1(\Ng)^{\Gg},\DD);\quad (\util_1,\ldots,\util_{\ell})\mapsto \util_1[\nug_1]+\util_2[\nug_2]+\cdots+\util_{\ell}[\nug_{\ell}]
\]
is an isomorphism of Banach spaces. Here, $[\cdot]$ means the equivalence class in $\QQQ(\Ng)^{\Gg}/\HHH^1(\Ng)^{\Gg}$. In particular, there exists $C\in \RR_{>0}$ such that for every $(\util_1,\ldots,\util_{\ell})\in \RR^{\ell}$,
\begin{equation}\label{eq=CCC}
\DD\left(\sum_{j\in \{1,\ldots,\ell\}}\util_j\nug_j\right)\geq C^{-1}\cdot \sum_{j\in \{1,\ldots,\ell\}}|\util_j|
\end{equation}
holds.

Let $\nug\in \QQQ(\Ng)^{\Gg}$. Then, there exist $\kg\in \HHH^1(\Ng)^{\Gg}$ and $(a_1,\ldots ,a_{\ell})\in \RR^{\ell}$ such that $\nug=\kg+\sum\limits_{j\in \{1,\ldots,\ell\}}a_j\nug_j$. Note that $\kg(\yg_1)=\kg(\yg_2)=0$ since $\yg_1,\yg_2\in [\Gg,\Ng]$. Hence, by \eqref{eq=CCC} we have
\[
\DD(\nu)=\DD\left(\kg+\sum_{j\in \{1,\ldots,\ell\}}a_j\nug_j\right)=\DD\left(\sum_{j\in \{1,\ldots,\ell\}}a_j\nug_j\right)\geq C^{-1}\cdot \sum_{j\in \{1,\ldots,\ell\}}|a_j|.
\]
This in particular implies that $\max\limits_{j\in \{1,\ldots,\ell\}}|a_j|\leq C\cdot\DD(\nug)$.
Therefore, we obtain that
\begin{align*}
|\nug(\yg_1^{-1}\yg_2)|&\leq |\nug(\yg_1)-\nug(\yg_2)|+\DD(\nug) =\left|\sum_{j\in \{1,\ldots,\ell\}}a_j(\nug_j(\yg_1)-\nug_j(\yg_2))\right|+\DD(\nug)\\
&\leq \left(\max_{j\in \{1,\ldots,\ell\}}|a_j|\right) \cdot \sum_{j\in \{1,\ldots,\ell\}}|\nug_j(\yg_1)-\nug_j(\yg_2)|+\DD(\nug)\\
&\leq C \DD(\nug)\cdot \|\Psig^{\RR}(\yg_1)-\Psig^{\RR}(\yg_2)\|_1+\DD(\nug).
\end{align*}
Then it follows from the Bavard duality theorem for $\scl_{\Gg,\Ng}$ (Theorem~\ref{thm=Bavard}) that
\[
d_{\scl_{\Gg,\Ng}}(\yg_1,\yg_2)\leq \frac{C}{2}\cdot  \|\Psig^{\RR}(\yg_1)-\Psig^{\RR}(\yg_2)\|_1+\frac{1}{2};
\]
equivalently, we obtain \eqref{eq=psipsiC}. This completes our proof.
\end{proof}

\subsection{The construction of $\Ptau$}\label{subsec=Ptau}
In this subsection, we take Step~$2'$ in the outlined proof. We note that in Proposition~\ref{prop=dimQ}, we allow the case of $\Rdim \left(\QQQ(\Ng)^{\Gg}/\HHH^1(\Ng)^{\Gg}\right)=\infty.$

\begin{prop}\label{prop=dimQ}
Assume Setting~$\ref{setting=itsumono}$. Assume that $\QQQ(\Ng)^{\Gg}/\HHH^1(\Ng)^{\Gg}\ne 0$. Let $\ell$ be an element in $\NN$ such that $\ell\leq \Rdim \left(\QQQ(\Ng)^{\Gg}/\HHH^1(\Ng)^{\Gg}\right)$. Assume that $\nug_1,\ldots,\nug_{\ell}\in \QQQ(\Ng)^{\Gg}$ and $\yg_1,\ldots ,\yg_{\ell}\in [\Gg,\Ng]$ satisfy for every $i,j\in \{1,\ldots ,\ell\}$,
\begin{equation}\label{eq=Kdelta}
\nu_j(y_i)=\delta_{i,j}.
\end{equation}
Define a map $\Ptau\colon \ZZ^{\ell}\to [\Gg,\Ng]$  by
\[
\Ptau((m_1,\ldots ,m_{\ell}))=\yg_1^{m_1}\cdots \yg_{\ell}^{m_{\ell}}
\]
for every $(m_1,\ldots ,m_{\ell})\in \ZZ^{\ell}$. Then this map $\Ptau$ satisfies the following two conditions.
\begin{enumerate}[label=\textup{(\arabic*)}]
  \item The map $\Ptau\colon \ZZ^{\ell}\to ([\Gg,\Ng],d_{\cl_{\Gg,\Ng}})$ is a pre-coarse homomorphism; in particular, $\Ptau\colon \ZZ^{\ell}\to ([\Gg,\Ng],d_{\scl_{\Gg,\Ng}})$ is a pre-coarse homomorphism as well.
  \item We have the following QI-type estimate from above on $\ZZ^{\ell}$:
\begin{equation}\label{eq=aboveQcl}
d_{\cl_{\Gg,\Ng}}(\Ptau(\vm),\Ptau(\vn))\leq \left(\max_{i\in \{1,\ldots ,\ell\}}\cl_{\Gg,\Ng}(\yg_i)\right)\cdot \|\vm-\vn\|_1+\ell-1.
\end{equation}
  \item We have the following QI-type estimate from below on $\ZZ^{\ell}$:
\begin{equation}\label{eq=belowQ}
d_{\scl_{\Gg,\Ng}}(\Ptau(\vm),\Ptau(\vn))\geq \frac{1}{2\ell \left(\max\limits_{j\in \{1,\ldots ,\ell\}}\DD(\nug_j)\right)}\cdot \|\vm-\vn\|_1- \frac{2\ell-1}{2}.
\end{equation}
\end{enumerate}
In particular, $\Ptau$ is a coarse homomorphism and a quasi-isometric embedding if we regard $\Ptau$ as a map $(\ZZ^{\ell},\|\cdot\|_1)\to ([\Gg,\Ng],d_{\scl_{\Gg,\Ng}})$. The same conclusion holds if we regard $\Ptau$ as a map $(\ZZ^{\ell},\|\cdot\|_1)\to ([\Gg,\Ng],d_{\cl_{\Gg,\Ng}})$.
\end{prop}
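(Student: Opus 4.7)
The plan is to prove (1), (2), (3) separately. For (1) and (2), I will use commutator gymnastics based on Lemmas~\ref{lem=Ginvmetric} and \ref{lem=choron}; for (3), which is the heart of the argument, I will invoke the Bavard duality (Theorem~\ref{thm=Bavard}) together with the orthogonality hypothesis $\nug_j(\yg_i)=\delta_{i,j}$. For (1), I would expand $\Ptau(\vm)\Ptau(\vn)=\yg_1^{m_1}\cdots\yg_\ell^{m_\ell}\yg_1^{n_1}\cdots\yg_\ell^{n_\ell}$ and compare with $\Ptau(\vm+\vn)=\yg_1^{m_1+n_1}\cdots\yg_\ell^{m_\ell+n_\ell}$. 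Using Lemma~\ref{lem=choron}~(3) (namely $gx\underset{1}{\eqsim}xg$ for $g\in\Gg$, $x\in\Ng$) and the bi-invariance in Lemma~\ref{lem=choron}~(2), each swap of an $\yg_i^{n_i}$ past an $\yg_j^{m_j}$ with $i<j$ costs $1$ in $d_{\cl_{\Gg,\Ng}}$. Moving every $\yg_i^{n_i}$ leftward to join $\yg_i^{m_i}$ requires $(\ell-1)+(\ell-2)+\cdots+1=\binom{\ell}{2}$ swaps, so $d_{\cl_{\Gg,\Ng}}(\Ptau(\vm+\vn),\Ptau(\vm)\Ptau(\vn))\leq\binom{\ell}{2}$ uniformly in $\vm,\vn$; combined with $d_{\scl_{\Gg,\Ng}}\leq d_{\cl_{\Gg,\Ng}}$ from~\eqref{eq=hikaku}, this yields both pre-coarse homomorphism assertions in (1). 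For (2) I would telescope: put $\vec{k}=\vn-\vm$, and for $s=0,\ldots,\ell$ set $\Ptau_s=\yg_1^{n_1}\cdots\yg_s^{n_s}\yg_{s+1}^{m_{s+1}}\cdots\yg_\ell^{m_\ell}$. A direct cancellation shows $\Ptau_s^{-1}\Ptau_{s+1}=w\yg_{s+1}^{k_{s+1}}w^{-1}$ with $w=\yg_\ell^{-m_\ell}\cdots\yg_{s+2}^{-m_{s+2}}\in\Gg$, and Lemma~\ref{lem=Ginvmetric}~(1) implies that $\cl_{\Gg,\Ng}$ is invariant under $\Gg$-conjugation, so $d_{\cl_{\Gg,\Ng}}(\Ptau_s,\Ptau_{s+1})\leq|k_{s+1}|\cl_{\Gg,\Ng}(\yg_{s+1})$. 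The genuine triangle inequality for $d_{\cl_{\Gg,\Ng}}$ then telescopes to $d_{\cl_{\Gg,\Ng}}(\Ptau(\vm),\Ptau(\vn))\leq\sum_i|k_i|\cl_{\Gg,\Ng}(\yg_i)\leq(\max_i\cl_{\Gg,\Ng}(\yg_i))\|\vm-\vn\|_1$, which is stronger than~\eqref{eq=aboveQcl}.

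For (3), the strategy is to test Bavard duality against $\nug=\nug_j$. Because $\nug_j$ is a homogeneous quasimorphism with defect $\DD(\nug_j)$, iterating the defect inequality across the $\ell$-fold product gives $|\nug_j(\Ptau(\vm))-\sum_i m_i\nug_j(\yg_i)|\leq(\ell-1)\DD(\nug_j)$, and the Kronecker-delta hypothesis collapses $\sum_i m_i\nug_j(\yg_i)$ to $m_j$; the same holds for $\vn$. Combining with homogeneity (so $\nug_j(\Ptau(\vm)^{-1})=-\nug_j(\Ptau(\vm))$) and one further defect for the product, we obtain $|\nug_j(\Ptau(\vm)^{-1}\Ptau(\vn))-k_j|\leq(2\ell-1)\DD(\nug_j)$. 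Theorem~\ref{thm=Bavard} then gives $d_{\scl_{\Gg,\Ng}}(\Ptau(\vm),\Ptau(\vn))\geq\frac{|k_j|}{2\DD(\nug_j)}-\frac{2\ell-1}{2}$; choosing the index $j$ with $|k_j|=\max_i|k_i|$ and using $\max_i|k_i|\geq\|\vec{k}\|_1/\ell$ produces exactly~\eqref{eq=belowQ}.

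The main obstacle is the careful bookkeeping of defect errors in (3) to match the precise constants of~\eqref{eq=belowQ}; three sources of defect must be tracked simultaneously, namely the $\ell$-fold expansion of $\Ptau(\vm)$, that of $\Ptau(\vn)$, and the product $\Ptau(\vm)^{-1}\Ptau(\vn)$. Once (1)--(3) are in place, the ``in particular'' statements are immediate: $d_{\scl_{\Gg,\Ng}}\leq d_{\cl_{\Gg,\Ng}}$ together with (1), (2) and (3) makes $\Ptau$ a coarse homomorphism and a quasi-isometric embedding both with respect to $d_{\scl_{\Gg,\Ng}}$ and to $d_{\cl_{\Gg,\Ng}}$.
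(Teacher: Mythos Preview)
Your proof is correct and follows essentially the same approach as the paper; the differences are minor bookkeeping choices. In part~(1) the paper moves each $\yg_i^{n_i-m_i}$ past an entire \emph{block} in one application of Lemma~\ref{lem=choron}~(3), obtaining the sharper constant $\ell-1$ (equation~\eqref{eq=prehomQ}) instead of your $\binom{\ell}{2}$; conversely, in part~(2) your direct telescoping via the intermediate points $\Ptau_s$ is slightly cleaner than the paper's route (which reuses \eqref{eq=prehomQ}) and even eliminates the additive $\ell-1$. Part~(3) is identical to the paper's argument.
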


The idea of the construction above of the map $\Ptau$  already appears in symplectic geometry (for examples, see \cite{BK13} and \cite{KO19}).
In their settings, they were able to take pairwise commuting elements $\yg_1,\ldots,\yg_{\ell}$ so that they obtained a genuine group homomorphism $\Ptau$.

We note that by \eqref{eq=hikaku}, the inequality \eqref{eq=aboveQcl} in particular implies that
\[
d_{\scl_{\Gg,\Ng}}(\Ptau(\vm),\Ptau(\vn))\leq \left(\max_{i\in \{1,\ldots ,\ell\}}\cl_{\Gg,\Ng}(\yg_i)\right)\cdot \|\vm-\vn\|_1+\ell-1.
\]
The existences of such $\nug_1,\ldots,\nug_{\ell}\in \QQQ(\Ng)^{\Gg}$ and such $\yg_1,\ldots ,\yg_{\ell}$ in Proposition~\ref{prop=dimQ} are ensured by the following lemma.

\begin{lem}\label{lem=lemPtau}
Assume Setting~$\ref{setting=itsumono}$. Assume that $\QQQ(\Ng)^{\Gg}/\HHH^1(\Ng)^{\Gg}\ne 0$. Let $\ell\in \NN$ such that $\ell\leq \Rdim \left(\QQQ(\Ng)^{\Gg}/\HHH^1(\Ng)^{\Gg}\right)$. Then, there exist $\nug_1,\ldots,\nug_{\ell}\in \QQQ(\Ng)^{\Gg}$ and $\yg_1,\ldots ,\yg_{\ell}$ satisfying \eqref{eq=Kdelta} for every $i,j\in \{1,\ldots ,\ell\}$.
\end{lem}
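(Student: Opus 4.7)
The plan is to apply Lemma~\ref{lem=dual} with $W=[\Gg,\Ng]$ and $\Xi$ an $\ell$-dimensional subspace of $\RR^{[\Gg,\Ng]}$ obtained by restricting $\Gg$-invariant homogeneous quasimorphisms on $\Ng$ to $[\Gg,\Ng]$. Once we have the dual pair $(\wf_i,\xi_j)$ with $\xi_j(\wf_i)=\delta_{i,j}$ supplied by Lemma~\ref{lem=dual}, we take $\yg_i=\wf_i$ and lift each $\xi_j$ to an element $\nug_j\in \QQQ(\Ng)^{\Gg}$ by taking the same linear combination of the original quasimorphisms. The crux therefore is to verify that the restriction map $r\colon \QQQ(\Ng)^{\Gg}\to \RR^{[\Gg,\Ng]}$, $\nug\mapsto \nug|_{[\Gg,\Ng]}$, descends to an \emph{injection} $\bar r\colon \QQQ(\Ng)^{\Gg}/\HHH^1(\Ng)^{\Gg}\hookrightarrow \RR^{[\Gg,\Ng]}$.

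For the descent, let $\kg\in \HHH^1(\Ng)^{\Gg}$. Every element of $[\Gg,\Ng]$ is a product of simple $(\Gg,\Ng)$-commutators $[\gG,\xg]$ with $\gG\in \Gg$ and $\xg\in \Ng$, and the homomorphism $\kg$ is $\Gg$-invariant, so
\[
\kg([\gG,\xg])=\kg(\gG\xg\gG^{-1})-\kg(\xg)=\kg(\xg)-\kg(\xg)=0,
\]
which shows that $\kg|_{[\Gg,\Ng]}\equiv 0$ and hence $\HHH^1(\Ng)^{\Gg}\subseteq \Ker(r)$. Conversely, if $\nug\in \QQQ(\Ng)^{\Gg}$ satisfies $\nug|_{[\Gg,\Ng]}\equiv 0$, then in particular $\nug|_{[\Ng,\Ng]}\equiv 0$, because $[\Ng,\Ng]\leqslant [\Gg,\Ng]$ (any commutator of elements of $\Ng$ is a simple $(\Gg,\Ng)$-commutator). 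Applying Corollary~\ref{cor=uptohom} to $\nug$ and the zero quasimorphism yields $\nug\in \HHH^1(\Ng)^{\Gg}$. Hence $\Ker(r)=\HHH^1(\Ng)^{\Gg}$, so $\bar r$ is injective.

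With $\bar r$ injective, choose $\mug_1,\ldots,\mug_{\ell}\in \QQQ(\Ng)^{\Gg}$ whose equivalence classes are linearly independent in $\QQQ(\Ng)^{\Gg}/\HHH^1(\Ng)^{\Gg}$ (this is possible since $\ell\leq \Rdim(\QQQ(\Ng)^{\Gg}/\HHH^1(\Ng)^{\Gg})$). Let $\Xi\subseteq \RR^{[\Gg,\Ng]}$ be the $\RR$-span of $\mug_1|_{[\Gg,\Ng]},\ldots,\mug_{\ell}|_{[\Gg,\Ng]}$; by injectivity of $\bar r$ these restrictions are linearly independent, so $\Rdim \Xi=\ell$. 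Lemma~\ref{lem=dual} applied to $W=[\Gg,\Ng]$ and this $\Xi$ yields $\yg_1,\ldots,\yg_{\ell}\in [\Gg,\Ng]$ and $\xi_1,\ldots,\xi_{\ell}\in \Xi$ such that $\xi_j(\yg_i)=\delta_{i,j}$. Writing $\xi_j=\sum_{k\in\{1,\ldots,\ell\}}c_{j,k}\cdot \mug_k|_{[\Gg,\Ng]}$ for some $c_{j,k}\in\RR$ and setting $\nug_j=\sum_{k\in\{1,\ldots,\ell\}}c_{j,k}\mug_k\in \QQQ(\Ng)^{\Gg}$, we obtain $\nug_j(\yg_i)=\xi_j(\yg_i)=\delta_{i,j}$ for all $i,j\in\{1,\ldots,\ell\}$, as required. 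There is no substantial obstacle here beyond confirming the vanishing of invariant homomorphisms on $[\Gg,\Ng]$; everything else is bookkeeping combining Corollary~\ref{cor=uptohom} with Lemma~\ref{lem=dual}.
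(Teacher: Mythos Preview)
Your proof is correct and follows essentially the same approach as the paper's: both restrict a chosen $\ell$-dimensional family of $\Gg$-invariant homogeneous quasimorphisms to $[\Gg,\Ng]$, use Corollary~\ref{cor=uptohom} to check that the restrictions remain linearly independent, and then invoke Lemma~\ref{lem=dual} with $W=[\Gg,\Ng]$. The paper's version is terser (it does not spell out the lifting of the $\xi_j$ back to $\QQQ(\Ng)^{\Gg}$ or the computation that $\HHH^1(\Ng)^{\Gg}$ vanishes on $[\Gg,\Ng]$), but the substance is identical.
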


\begin{proof}
Take an arbitrary $\ell$-dimensional subspace $X$ of $\QQQ(\Ng)^{\Gg}/\HHH^1(\Ng)^{\Gg}$. Take an arbitrary basis of $X$;  take an arbitrary  set of representatives in $\QQQ(\Ng)^{\Gg}$ of this basis. Let $Y$ be the $\ell$-dimensional subspace of $\QQQ(\Ng)^{\Gg}$ spanned by this set of representatives. Set $Z$ be the subspace of $\RR^{[\Gg,\Ng]}$ obtained by taking the restriction of elements in $Y$ to $[\Gg,\Ng]$; by Corollary~\ref{cor=uptohom}, $\Rdim Z=\ell$. Finally, apply Lemma~$\ref{lem=dual}$ to the case where $(W,\Xi)=([\Gg,\Ng],Z)$.
\end{proof}

The following proof of Proposition~\ref{prop=dimQ} uses the symbol `${\underset{C}{\eqsim}}$', more precisely `$\overset{(\Gg,\Ng)}{\underset{C}{\eqsim}}$' in Definition~\ref{defn=choron}.

\begin{proof}[Proof of Proposition~$\ref{prop=dimQ}$]
Let $\vm=(m_1,\ldots ,m_{\ell})$ and $\vn=(n_1,\ldots ,n_{\ell})$ be elements in $\ZZ^{\ell}$. First, we prove (1).  Then by Lemma~\ref{lem=choron}, we have
\begin{align*}
\Ptau(\vm)^{-1}\Ptau(\vn)&=\yg_{\ell}^{-m_{\ell}}\cdots \yg_1^{-m_1}\yg_1^{n_1}\cdots \yg_{\ell}^{n_{\ell}}=\yg_{\ell}^{-m_{\ell}}\cdots \yg_2^{-m_2}\yg_1^{n_1-m_1}\yg_2^{n_2}\cdots \yg_{\ell}^{n_{\ell}}\\
&\mathrel{\underset{1}{\eqsim}}\yg_1^{n_1-m_1}\yg_{\ell}^{-m_{\ell}}\cdots \yg_2^{-m_2}\yg_2^{n_2}\cdots \yg_{\ell}^{n_{\ell}}=\yg_1^{n_1-m_1}\yg_{\ell}^{-m_{\ell}}\cdots \yg_2^{n_2-m_2}\cdots \yg_{\ell}^{n_{\ell}}.
\end{align*}
Here, note that $\yg_1\in [\Gg,\Ng]\leqslant \Ng$. By continuing this process, we obtain that
\begin{equation}\label{eq=prehomQ}
\Ptau(\vm)^{-1}\Ptau(\vn)\mathrel{\underset{\ell-1}{\eqsim}}\Ptau(\vn-\vm).
\end{equation}
By replacing $\vn$ with $\vm+\vn$, we have for every $\vm,\vn\in \ZZ^{\ell}$, $\Ptau(\vm+\vn)\mathrel{\underset{\ell-1}{\eqsim}}\Ptau(\vm)\Ptau(\vn)$. This confirms (1).

Next we verify (2). By construction, we have
\begin{align*}
\cl_{\Gg,\Ng}(\Ptau(\vm))&\leq \sum_{i\in \{1,\ldots,\ell\}}\cl_{\Gg,\Ng}(\yg_i^{m_i})\leq \left(\max_{i\in \{1,\ldots ,\ell\}}\cl_{\Gg,\Ng}(\yg_i)\right) \sum_{i\in \{1,\ldots,\ell\}}|m_i|
=\left(\max_{i\in \{1,\ldots ,\ell\}}\cl_{\Gg,\Ng}(\yg_i)\right)\cdot \|\vm\|_1.
\end{align*}
By combining the inequalities above and \eqref{eq=prehomQ}, we have \eqref{eq=aboveQcl}; hence, we obtain (2).

Finally, we prove (3). Given $\vm$ and $\vn$, we can take $j_{\vm,\vn}\in \{1,\ldots ,\ell\}$ such that
\[
|m_{j_{\vm,\vn}}-n_{j_{\vm,\vn}}|\geq \frac{1}{\ell}\cdot\|\vm-\vn\|_1
\]
holds. Then, we have
\begin{align*}
|\nug_{j_{\vm,\vn}}(\Ptau(\vm)^{-1}\Ptau(\vn))|&\geq |\nug_{j_{\vm,\vn}}(\Ptau(\vn))-\nug_{j_{\vm,\vn}}(\Ptau(\vm))|-\DD(\nug_{j_{\vm,\vn}})\\
&\geq \left|\sum_{i\in \{1,\ldots,\ell\}}(n_i-m_i)\nug_{j_{\vm,\vn}}(\yg_i)\right|-(2\ell-1)\DD(\nug_{j_{\vm,\vn}})\\
&=|m_{j_{\vm,\vn}}-n_{j_{\vm,\vn}}|-(2\ell-1)\DD(\nug_{j_{\vm,\vn}})\\
&\geq \frac{1}{\ell}\cdot \|\vm-\vn\|_1-(2\ell-1)\DD(\nug_{j_{\vm,\vn}}).\end{align*}
By Theorem~\ref{thm=Bavard}, we obtain \eqref{eq=belowQ}, as desired.
\end{proof}

\subsection{Proofs of Propositions~\ref{prop=sclkika} and \ref{prop=presclkika}}\label{subsec=sclkika}
In this subsection, we take Step~$3'$ in the outlined proof. Recall the definition of the closeness ($\approx$) from Definition~\ref{defn=close} (and Example~\ref{exa=several_maps}).

\begin{prop}\label{prop=itteQ}
Assume Setting~$\ref{setting=itsumono}$. Assume that $\QQQ(\Ng)^{\Gg}/\HHH^1(\Ng)^{\Gg}$ is non-zero and finite dimensional, and  set $\ell=\Rdim \left(\QQQ(\Ng)^{\Gg}/\HHH^1(\Ng)^{\Gg}\right)$. Take $(\nug_{j})_{j\in \{1,\ldots,\ell\}}$ and $(\yg_i)_{i\in \{1,\ldots,\ell\}}$ as in Proposition~$\ref{prop=dimQ}$ \textup{(}by Lemma~$\ref{lem=lemPtau}$\textup{)}. Set $\Psig^{\RR}=\Psig^{\RR}_{(\nug_1,\ldots,\nug_{\ell})}\colon ([\Gg,\Ng],d_{\scl_{\Gg,\Ng}})\to (\RR^{\ell},\|\cdot\|_1)$ associated with the tuple $(\nug_1,\ldots,\nug_{\ell})$ above. Take an arbitrary map $\rho\colon (\RR^{\ell},\|\cdot\|_1)\to (\ZZ^{\ell},\|\cdot\|_1)$ satisfying that $\sup\limits_{\vu\in \RR^{\ell}}\|\vu-\rho(\vu)\|_1< \infty$. Set $\Psig=\rho\circ \Psig^{\RR}$. Then, the following hold true.
\begin{enumerate}[label=\textup{(\arabic*)}]
 \item $\Ptau\circ \Psig \approx \mathrm{id}_{([\Gg,\Ng],d_{\scl_{\Gg,\Ng}})}$.
  \item $\Psig\circ \Ptau \approx \mathrm{id}_{(\ZZ^{\ell},\|\cdot\|_1)}$.
\end{enumerate}
\end{prop}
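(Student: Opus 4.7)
The plan is to handle (2) first since it is essentially a direct computation, and then reduce (1) to Bavard duality combined with the Banach-space argument already used in the proof of Proposition~\ref{prop=evmap}.

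For (2), fix $\vm=(m_1,\ldots,m_{\ell})\in \ZZ^{\ell}$. The key observation is that each $\nug_j$ is a quasimorphism of defect $\DD(\nug_j)$, so evaluating $\nug_j$ on the product $\Ptau(\vm)=\yg_1^{m_1}\cdots \yg_{\ell}^{m_{\ell}}$ differs from $\sum_i \nug_j(\yg_i^{m_i})$ by at most $(\ell-1)\DD(\nug_j)$; by homogeneity and the duality relation $\nug_j(\yg_i)=\delta_{i,j}$, the latter sum equals $m_j$. Thus $\|\Psig^{\RR}(\Ptau(\vm))-\vm\|_1\leq (\ell-1)\sum_{j\in\{1,\ldots,\ell\}}\DD(\nug_j)$, uniformly in $\vm$. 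Composing with $\rho$ adds at most $\sup_{\vu}\|\vu-\rho(\vu)\|_1$, yielding $\Psig\circ \Ptau\approx \mathrm{id}_{(\ZZ^{\ell},\|\cdot\|_1)}$.

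For (1), fix $\yg\in [\Gg,\Ng]$ and set $\vm=\Psig(\yg)\in \ZZ^{\ell}$. By Theorem~\ref{thm=Bavard}, it suffices to show that
\[
\sup_{\nug \in \QQQ(\Ng)^{\Gg}\setminus \HHH^1(\Ng)^{\Gg}}\frac{|\nug(\yg^{-1}\Ptau(\vm))|}{\DD(\nug)}
\]
is bounded by a constant independent of $\yg$. Given such a $\nug$, decompose it, using the chosen basis, as $\nug=\kg+\sum_{j}a_j\nug_j$ with $\kg\in \HHH^1(\Ng)^{\Gg}$. As in the proof of Proposition~\ref{prop=evmap}, Proposition~\ref{prop=defectBanach} together with the inverse mapping theorem provides a constant $C\in\RR_{>0}$, depending only on the chosen basis, such that $|a_j|\leq C\cdot \DD(\nug)$ for every $j$. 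Since $\yg,\Ptau(\vm)\in [\Gg,\Ng]$, the homomorphism part $\kg$ vanishes on both, so up to an error of $\DD(\nug)$ we have $\nug(\yg^{-1}\Ptau(\vm))=\sum_j a_j(\nug_j(\Ptau(\vm))-\nug_j(\yg))$. Using the estimate from the proof of (2), $|\nug_j(\Ptau(\vm))-m_j|\leq (\ell-1)\DD(\nug_j)$; combined with $|m_j-\nug_j(\yg)|\leq \sup_\vu\|\vu-\rho(\vu)\|_1$ from the definition of $\Psig$, each factor $|\nug_j(\Ptau(\vm))-\nug_j(\yg)|$ is uniformly bounded by a constant $K$ depending only on $\rho$ and the $\DD(\nug_j)$'s. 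Therefore $|\nug(\yg^{-1}\Ptau(\vm))|\leq \ell CK\cdot \DD(\nug)+O(\DD(\nug))$, which yields a uniform bound on $\scl_{\Gg,\Ng}(\yg^{-1}\Ptau(\vm))$ via Bavard duality, establishing $\Ptau\circ \Psig \approx \mathrm{id}$.

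The main obstacle is not conceptual but bookkeeping: one must carefully separate the contribution of the homomorphism part $\kg$ (which vanishes on commutator-like elements) from the quasimorphism part, and use the Banach-space isomorphism between $(\RR^{\ell},\|\cdot\|_1)$ and $(\QQQ(\Ng)^{\Gg}/\HHH^1(\Ng)^{\Gg},\hat{\DD})$ to convert bounds on $\DD(\nug)$ into bounds on the coefficients $a_j$. Once this coefficient control is in hand, the two estimates combine cleanly through Bavard duality.
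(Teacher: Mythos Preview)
Your proposal is correct and follows essentially the same route as the paper: both arguments prove (2) first via the defect bound $|\nug_j(\Ptau(\vm))-m_j|\leq (\ell-1)\DD(\nug_j)$ coming from the Kronecker-delta relation, and then prove (1) by decomposing an arbitrary $\nug=\kg+\sum_j a_j\nug_j$, using the Banach-space estimate $|a_j|\leq C\cdot\DD(\nug)$ from Proposition~\ref{prop=defectBanach}, the vanishing of $\kg$ on $[\Gg,\Ng]$, and Bavard duality. The only difference is cosmetic: the paper tracks explicit constants throughout, while you summarize them as $K$ and $O(\DD(\nug))$.
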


We note that the map $\rho\colon (\RR^{\ell},\|\cdot\|_1)\to (\ZZ^{\ell},\|\cdot\|_1)$ above is automatically a coarse homomorphism. One such example of $\rho$ is the coordinatewise floor map: $\vu=(u_1,\ldots,u_{\ell})\mapsto (\lfloor u_1\rfloor,\ldots,\lfloor u_{\ell}\rfloor)$.

\begin{proof}[Proof of Proposition~$\ref{prop=itteQ}$]
Set $\kappa=\sup\limits_{\vu\in \RR^{\ell}}\|\vu-\rho(\vu)\|_1$. First, we prove (2). Let $\vm=(m_1,\ldots,m_{\ell})\in \ZZ^{\ell}$. Then,
\[
(\Psig\circ \Ptau)(\vm)=\rho \big((\Psig^{\RR}\circ \Ptau)(\vm)\big)=\rho \big((\nug_1(\Ptau(\vm)),\ldots,\nug_{\ell}(\Ptau(\vm)))\big).
\]
For every $j\in \{1,\ldots,\ell\}$,
\[
(\ell-1)\DD(\nug_j)\geq \left|\nug_j(\yg_1^{m_1}\cdots \yg_{\ell}^{m_{\ell}})-\sum_{i\in \{1,\ldots,\ell\}} \nug_j(\yg_i^{m_i})\right|=\left|\nug_j(\yg_1^{m_1}\cdots \yg_{\ell}^{m_{\ell}})-\sum_{i\in \{1,\ldots,\ell\}} m_i\nug_j(\yg_i)\right|
\]
and by \eqref{eq=Kdelta},
\begin{equation}\label{eq=KdeltaD}
\left|\nug_j(\yg_1^{m_1}\cdots \yg_{\ell}^{m_{\ell}})-m_j\right|\leq (\ell-1)\DD(\nug_j).
\end{equation}
By \eqref{eq=KdeltaD}, we have $\|\vm-(\Psig^{\RR}\circ \Ptau)(\vm)\|_1\leq (\ell-1)\sum\limits_{j\in \{1,\ldots,\ell\}}\DD(\nug_j)$.
By the definition of $\kappa$, we obtain
\begin{equation}\label{eq=sigmatau}
\left\|\vm-(\Psig\circ \Ptau)(\vm)\right\|_1\leq \kappa+(\ell-1)\sum_{j\in \{1,\ldots,\ell\}}\DD(\nug_j).
\end{equation}

Secondly, we verify (1). Let $\yg\in [\Gg,\Ng]$. Set $\Psig(\yg)=(m_1,\ldots,m_{\ell})\in \ZZ^{\ell}$. Then, $|m_j-\nug_j(\yg)|\leq \kappa$ for every $j\in \{1,\ldots,\ell\}$. Let $\nug\in \QQQ(\Ng)^{\Gg}$. Write $\nug=\kg+\sum\limits_{j\in \{1,\ldots,\ell\}}a_j\nug_j$ with $\kg\in \HHH^1(\Ng)^{\Gg}$ and $(a_1,\ldots,a_{\ell})\in \RR^{\ell}$.
Let $C\in \RR_{>0}$ be a constant such that for every $(\util_1,\ldots,\util_{\ell})\in \RR^{\ell}$, \eqref{eq=CCC} holds. Then by \eqref{eq=CCC} and \eqref{eq=KdeltaD}, we have
\begin{align*}
\left|\nug\left(\yg^{-1}(\Ptau\circ \Psig)(\yg)\right)\right|&\leq \DD(\nug)+ |\nug(\yg)-\nug(\yg_1^{m_1}\cdots \yg_{\ell}^{m_{\ell}})| \\
&\leq \DD(\nug)+ \sum_{\in \{1,\ldots,\ell\}}|a_j|\left|\nug_j(\yg)-\nug_j(\yg_1^{m_1}\cdots \yg_{\ell}^{m_{\ell}})\right|\\
&\leq \DD(\nug)+  \sum_{j\in \{1,\ldots,\ell\}} |a_j|\left(|\nug_j(\yg)-m_j|+(\ell-1)\DD(\nug_j)\right)\\
&\leq \DD(\nug)+\sum_{j\in \{1,\ldots,\ell\}} |a_j| \left(\kappa+(\ell-1)\DD(\nug_j)\right)\\
&\leq \DD(\nug)+\left\{\kappa+(\ell-1)\max_{j\in \{1,\ldots,\ell\}}\DD(\nug_j)\right\} \cdot \sum_{j\in \{1,\ldots,\ell\}}|a_j|\\
&\leq \DD(\nug)+C\left\{\kappa+(\ell-1)\max_{j\in \{1,\ldots,\ell\}}\DD(\nug_j)\right\}\cdot \DD(\nug).
\end{align*}
Therefore, Theorem~\ref{thm=Bavard} yields
\begin{equation}\label{eq=tausigma}
d_{\scl_{\Gg,\Ng}}(\yg, (\Ptau\circ \Psig)(\yg))\leq \frac{C}{2} \cdot \left\{\kappa+(\ell-1)\max_{j\in \{1,\ldots,\ell\}}\DD(\nug_j)\right\}+\frac{1}{2}.
\end{equation}
Now \eqref{eq=sigmatau} and \eqref{eq=tausigma}  end our proof.
\end{proof}

We are now ready to deduce Propositions~\ref{prop=sclkika} and \ref{prop=presclkika} from Propositions~\ref{prop=evmap}, \ref{prop=dimQ} and \ref{prop=itteQ}.

\begin{proof}[Proofs of Propositions~$\ref{prop=sclkika}$ and $\ref{prop=presclkika}$]
If $\ell=0$, then $\ZZ^0=0$ and set $\Psig$ as the zero map and $\Ptau$ as the trivial map. Then since $\scl_{\Gg,\Ng}\equiv 0$ in this case (by Theorem~\ref{thm=Bavard}), we have the conclusions. Hence, we may assume that $\ell\in \NN$. Let $(\Psig,\Ptau)$ be the pair of maps constructed in Proposition~\ref{prop=itteQ}. In what follows, we prove that this pair fulfills all assertions of Proposition~\ref{prop=sclkika}. First, assertion~(1) is from  Proposition~\ref{prop=evmap}~(1) and Proposition~\ref{prop=dimQ}~(1); note that $\rho$ is a quasi-isometric coarse homomorphism. Here note that a composition of pre-coarse homomorphisms is a pre-coarse homomorphism. Assertion~(2) follows from Proposition~\ref{prop=evmap}~(2) and (3). Assertion~(3) is deduced from Proposition~\ref{prop=dimQ}~(2) and (3). Assertion~(4) follows from Proposition~\ref{prop=itteQ}. We now complete the proof of Proposition~\ref{prop=sclkika} and hence of its weaker version Proposition~\ref{prop=presclkika}.
\end{proof}

\subsection{Coarse group theoretic characterizations of $\Rdim \left(\QQQ(\Ng)^{\Gg}/\HHH^1(\Ng)^{\Gg}\right)$}

We state the following result, which characterizes $\Rdim \left(\QQQ(\Ng)^{\Gg}/\HHH^1(\Ng)^{\Gg}\right)$ in the language of coarse groups. This proposition may be seen as a refinement of Proposition~\ref{prop=prescldim}.

\begin{prop}\label{prop=metricdim}
Assume Setting~$\ref{setting=itsumono}$. Then, the following hold.
\begin{enumerate}[label=\textup{(}$\arabic*$\textup{)}]
\item \textup{(}coarse group theoretic characterization of $\Rdim \left(\QQQ(\Ng)^{\Gg}/\HHH^1(\Ng)^{\Gg}\right)$\textup{)}
\begin{align*}
&\Rdim \left(\QQQ(\Ng)^{\Gg}/\HHH^1(\Ng)^{\Gg}\right)\\
={} &\sup\left\{\ell\in \ZZ_{\geq 0}\,\middle|\,\exists \mathrm{\ coarsely\ proper\ coarse\ homomorphism}\ (\ZZ^{\ell},\|\cdot\|_1)\to ([\Gg,\Ng],d_{\scl_{\Gg,\Ng}})\right\} \\
={} &\inf\left\{\ell\in \ZZ_{\geq 0}\,\middle|\,\exists \mathrm{\ coarsely\ proper\ coarse\ homomorphism}\ ([\Gg,\Ng],d_{\scl_{\Gg,\Ng}})\to (\ZZ^{\ell},\|\cdot\|_1)\right\}
\end{align*}
and
\begin{align*}
&\Rdim \left(\QQQ(\Ng)^{\Gg}/\HHH^1(\Ng)^{\Gg}\right)\\
\leq{} &\sup\left\{\ell\in \ZZ_{\geq 0}\,\middle|\, \exists \mathrm{\ coarsely\ proper\ coarse\ homomorphism}\ (\ZZ^{\ell},\|\cdot\|_1)\to ([\Gg,\Ng],d_{\cl_{\Gg,\Ng}})\right\}.
\end{align*}
\item \textup{(}asymptotic dimensional characterization of $\Rdim \left(\QQQ(\Ng)^{\Gg}/\HHH^1(\Ng)^{\Gg}\right)$\textup{)}
\[
\Rdim \left(\QQQ(\Ng)^{\Gg}/\HHH^1(\Ng)^{\Gg}\right)=\asdim ([\Gg,\Ng],d_{\scl_{\Gg,\Ng}})
\]
and
\[
\Rdim \left(\QQQ(\Ng)^{\Gg}/\HHH^1(\Ng)^{\Gg}\right)\leq \asdim ([\Gg,\Ng],d_{\cl_{\Gg,\Ng}}).
\]
\end{enumerate}
\end{prop}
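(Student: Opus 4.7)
Set $\ell^{\ast}=\Rdim(\QQQ(\Ng)^{\Gg}/\HHH^1(\Ng)^{\Gg})$. The plan is to deduce both parts from Propositions~\ref{prop=sclkika} and \ref{prop=dimQ} combined with Lemma~\ref{lem=crushing} and a short lattice-approximation argument. The key point is that Proposition~\ref{prop=sclkika} (which applies when $\ell^{\ast}<\infty$) already supplies a pair $(\Psig,\Ptau)$ of pre-coarse homomorphisms that are mutually coarse-inverse quasi-isometries between $([\Gg,\Ng], d_{\scl_{\Gg,\Ng}})$ and $(\ZZ^{\ell^{\ast}}, \|\cdot\|_1)$, while Proposition~\ref{prop=dimQ} supplies a QI embedding $\Ptau_{\ell}\colon (\ZZ^{\ell}, \|\cdot\|_1)\to ([\Gg,\Ng], d_{\scl_{\Gg,\Ng}})$ (and into $d_{\cl_{\Gg,\Ng}}$) for every $\ell\in\NN$ with $\ell\leq \ell^{\ast}$.

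Assume first $\ell^{\ast}<\infty$. Then $\Ptau$ realizes $\ell^{\ast}$ in the sup characterization, and $\Psig$ realizes it in the inf characterization. For extremality, let $\alpha\colon (\ZZ^{\ell},\|\cdot\|_1)\to ([\Gg,\Ng], d_{\scl_{\Gg,\Ng}})$ be a coarsely proper coarse homomorphism. Then $\Psig\circ\alpha$ is a coarse homomorphism $\ZZ^{\ell}\to \ZZ^{\ell^{\ast}}$, and it is coarsely proper because $\Psig$ is a QI embedding (Proposition~\ref{prop=sclkika}(2)), which reflects divergence from $d_{\scl_{\Gg,\Ng}}$ to $\|\cdot\|_1$. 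By the homogenization argument used to prove Lemma~\ref{lem=crushing} (applied componentwise and extended $\RR$-linearly), $\Psig\circ\alpha$ is close to an $\RR$-linear map $S\colon \RR^{\ell}\to \RR^{\ell^{\ast}}$. If $\ker S\neq 0$, pick $v\in \ker S$ with $\|v\|_1=1$ and let $x_n\in \ZZ^{\ell}$ be a coordinatewise rounding of $nv$; then $\|x_n\|_1\to \infty$ while $\|Sx_n\|_1=\|S(x_n-nv)\|_1\leq (\ell/2)\|S\|_{\mathrm{op}}$ stays bounded, contradicting coarse properness. Hence $S$ is injective, so $\ell\leq \ell^{\ast}$. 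Symmetrically, for a coarsely proper coarse homomorphism $\beta\colon([\Gg,\Ng],d_{\scl_{\Gg,\Ng}})\to (\ZZ^{\ell},\|\cdot\|_1)$, the composite $\beta\circ\Ptau$ is a coarsely proper coarse homomorphism $\ZZ^{\ell^{\ast}}\to \ZZ^{\ell}$ (again using that $\Ptau$ is a QI embedding), and the same lattice-injectivity step forces $\ell^{\ast}\leq \ell$. The $d_{\cl_{\Gg,\Ng}}$ inequality is witnessed by the same $\Ptau$, which is simultaneously a QI embedding into $d_{\cl_{\Gg,\Ng}}$ by Proposition~\ref{prop=sclkika}(3).

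When $\ell^{\ast}=\infty$, Proposition~\ref{prop=dimQ} gives a coarsely proper coarse homomorphism $(\ZZ^{\ell}, \|\cdot\|_1)\to ([\Gg,\Ng], d_{\scl_{\Gg,\Ng}})$, and also into $d_{\cl_{\Gg,\Ng}}$, for every $\ell\in \NN$; therefore both sups are $\infty$. If the inf were some finite $\ell$, witnessed by $\beta$, then composing $\beta$ with the QI embedding $\Ptau_{\ell+1}$ from Proposition~\ref{prop=dimQ} would yield a coarsely proper coarse homomorphism $\ZZ^{\ell+1}\to \ZZ^{\ell}$, and the lattice argument of the previous paragraph would force $\ell+1\leq \ell$, a contradiction. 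Hence the inf is $\infty$ as well.

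Part~(2) follows at once. Since asymptotic dimension is monotone under coarse embeddings (Proposition~\ref{prop=asdimCE}) and $\asdim(\ZZ^{\ell},\|\cdot\|_1)=\ell$ (Theorem~\ref{thm=asdimZ}), the pair of quasi-isometries from Proposition~\ref{prop=sclkika} yields $\asdim([\Gg,\Ng], d_{\scl_{\Gg,\Ng}})=\ell^{\ast}$ in the finite case, while the QI embeddings from Proposition~\ref{prop=dimQ} force $\asdim=\infty$ in the infinite case. The $d_{\cl_{\Gg,\Ng}}$ inequality follows since each $\Ptau_{\ell}$ is a QI embedding into $d_{\cl_{\Gg,\Ng}}$ as well. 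The only technical point in the whole proof is the lattice-rounding step that converts the existence of a nontrivial kernel of an $\RR$-linear map into a failure of coarse properness of its restriction to $\ZZ^{\ell}$; this is the main obstacle, and it is handled by a standard Dirichlet-type approximation. Everything else is bookkeeping over Propositions~\ref{prop=sclkika}, \ref{prop=dimQ}, and Lemma~\ref{lem=crushing}.
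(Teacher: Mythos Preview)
Your proof is correct and takes a genuinely different route from the paper's. The paper's argument is terse: it cites Theorem~\ref{thm=asdimZ} ($\asdim \ZZ^n = n$) and Proposition~\ref{prop=asdimCE} (monotonicity of $\asdim$ under coarse embeddings) at the outset, then notes that Proposition~\ref{prop=sclkika} (finite case) and Proposition~\ref{prop=dimQ} (infinite case) reduce the sup and inf in part~(1) to the corresponding questions for $\ZZ^{\ell^{\ast}}$, where the asymptotic-dimension facts settle them immediately; part~(2) then follows from part~(1). You instead bypass asymptotic dimension entirely for part~(1): you use the homogenization idea from the proof of Lemma~\ref{lem=crushing} to show that any coarse homomorphism $\ZZ^{\ell}\to\ZZ^{\ell'}$ is close to an $\RR$-linear map $S$, and then deduce $\ell\leq\ell'$ from coarse properness by the rounding trick (which, incidentally, needs no Dirichlet approximation --- the plain coordinatewise rounding you wrote down already gives $\|x_n-nv\|_1\leq \ell/2$, which is all you use). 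The paper's approach is shorter and delegates the extremality step to the established theory of asymptotic dimension; your approach is more elementary and self-contained, exploiting the abelian structure directly and avoiding any appeal to $\asdim$ until part~(2), where it is the statement itself rather than a tool.
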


In particular, we have $\asdim ([\Gg,\Ng],d_{\cl_{\Gg,\Ng}})=\infty$ if $\QQQ(\Ng)^{\Gg}/\HHH^1(\Ng)^{\Gg}$ is infinite dimensional.

\begin{proof}[Proofs of Propositions~$\ref{prop=metricdim}$ and $\ref{prop=prescldim}$]
Recall from Theorem~\ref{thm=asdimZ} that for every $n\in \ZZ_{\geq 0}$, $\asdim(\ZZ^{n},\|\cdot\|_1)=n$. Recall also Proposition~\ref{prop=asdimCE}. First, we prove Proposition~\ref{prop=metricdim}.  If $\QQQ(\Ng)^{\Gg}/\HHH^1(\Ng)^{\Gg}$ is infinite dimensional, then (1) follows from Propositions~\ref{prop=dimQ}. If $\QQQ(\Ng)^{\Gg}/\HHH^1(\Ng)^{\Gg}$ is finite dimensional, then Propositions~\ref{prop=sclkika} implies (1). This ends the proof of (1). Now, (2) immediately follows from (1). This completes the proof of Proposition~\ref{prop=metricdim}.

Finally, the assertion of Proposition~\ref{prop=prescldim} is exactly the same as that of  Proposition~\ref{prop=metricdim}~(2). Hence, Proposition~\ref{prop=prescldim} has been proved as well.
\end{proof}

\subsection{A remark on $\Psig\circ \Ptau$ in Proposition~\ref{prop=sclkika}}\label{subsec=remark}
The following lemma might be of independent interest; recall Remark~\ref{rem=retract} in the comparative version.

\begin{lem}\label{lem=sigmatau}
In the statement of Proposition~$\ref{prop=sclkika}$, we can  take $\Psig\colon  [\Gg,\Ng]\to \ZZ^{\ell}$ and $\Ptau\colon \ZZ^{\ell}\to [\Gg,\Ng]$ such that moreover $\Psig\circ \Ptau=\mathrm{id}_{\ZZ^{\ell}}$ holds.
\end{lem}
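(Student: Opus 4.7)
\textbf{Plan for the proof of Lemma~\ref{lem=sigmatau}.}
The idea is to revisit the construction in Propositions~\ref{prop=dimQ} and \ref{prop=itteQ}, and scale the building blocks by a sufficiently large integer $N$ so that the error bounds $(\ell-1)\DD(\nug_j)$ appearing in \eqref{eq=KdeltaD} become negligible compared to the spacing of $N\ZZ^{\ell}$. The case $\ell=0$ is trivial, so assume $\ell\in \NN$. Fix $\nug_1,\ldots,\nug_{\ell}\in \QQQ(\Ng)^{\Gg}$ and $\yg_1,\ldots,\yg_{\ell}\in [\Gg,\Ng]$ as supplied by Lemma~\ref{lem=lemPtau}, and set $K_0=(\ell-1)\max_{j}\DD(\nug_j)$. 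Choose an integer $N\in \NN$ with $N>2K_0+1$ (any such $N$ works; if $\ell=1$, then $K_0=0$ and $N=1$ suffices).

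Define the modified maps
\[
\Ptau\colon \ZZ^{\ell}\to [\Gg,\Ng];\quad (m_1,\ldots,m_{\ell})\mapsto \yg_1^{Nm_1}\yg_2^{Nm_2}\cdots \yg_{\ell}^{Nm_{\ell}},
\]
\[
\Psig\colon [\Gg,\Ng]\to \ZZ^{\ell};\quad \yg\mapsto \bigl(\mathrm{round}(\nug_1(\yg)/N),\ldots ,\mathrm{round}(\nug_{\ell}(\yg)/N)\bigr),
\]
where $\mathrm{round}\colon \RR\to \ZZ$ is rounding to the nearest integer (breaking ties any fixed way). Then $\Psig\circ\Ptau=\mathrm{id}_{\ZZ^{\ell}}$ holds \emph{exactly}: using $\nug_j(\yg_i)=\delta_{i,j}$ and homogeneity, one has $\nug_j(\yg_i^{Nm_i})=Nm_i\delta_{i,j}$, so the same telescoping computation as in \eqref{eq=KdeltaD} yields $|\nug_j(\Ptau(\vm))-Nm_j|\leq (\ell-1)\DD(\nug_j)\leq K_0<N/2$, hence $\mathrm{round}(\nug_j(\Ptau(\vm))/N)=m_j$.

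The remaining assertions of Proposition~\ref{prop=sclkika} are verified by repeating the arguments of Subsections~\ref{subsec=Psig}--\ref{subsec=sclkika} with trivial rescaling. For $\Ptau$, the upper QI-estimate becomes $d_{\cl_{\Gg,\Ng}}(\Ptau(\vm),\Ptau(\vn))\leq N(\max_{i}\cl_{\Gg,\Ng}(\yg_i))\|\vm-\vn\|_1+(\ell-1)$, using $\cl_{\Gg,\Ng}(\yg_i^{k})\leq |k|\cl_{\Gg,\Ng}(\yg_i)$, while the lower QI-estimate from Bavard duality inherits an extra factor of $N$ in the numerator, giving an honest linear lower bound on $d_{\scl_{\Gg,\Ng}}(\Ptau(\vm),\Ptau(\vn))$. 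The pre-coarse homomorphism property of $\Ptau$ follows from Lemma~\ref{lem=choron} exactly as in the proof of Proposition~\ref{prop=dimQ}~(1). For $\Psig$, writing $\Psig=\rho_N\circ\Psig^{\RR}$ with $\rho_N(\vu)=(\mathrm{round}(u_1/N),\ldots,\mathrm{round}(u_{\ell}/N))$ and noting that $\rho_N\colon(\RR^{\ell},\|\cdot\|_1)\to(\ZZ^{\ell},\|\cdot\|_1)$ is a controlled pre-coarse homomorphism (since $|\mathrm{round}(a+b)-\mathrm{round}(a)-\mathrm{round}(b)|\leq 1$), the proof of Proposition~\ref{prop=evmap} carries over to show that $\Psig$ is a pre-coarse homomorphism and a quasi-isometric embedding.

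Finally, for $\Ptau\circ\Psig\approx\mathrm{id}_{([\Gg,\Ng],d_{\scl_{\Gg,\Ng}})}$, observe that if $\Psig(\yg)=\vm$, then by definition $|\nug_j(\yg)-Nm_j|\leq N/2$ for each $j$; combining with the bound $|\nug_j(\Ptau(\vm))-Nm_j|\leq K_0$, we get $|\nug_j(\yg)-\nug_j(\Ptau(\vm))|\leq N/2+K_0$, and the argument of \eqref{eq=tausigma} (using \eqref{eq=CCC} together with Theorem~\ref{thm=Bavard}) bounds $d_{\scl_{\Gg,\Ng}}(\yg,(\Ptau\circ\Psig)(\yg))$ by a uniform constant. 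The only nontrivial step is checking that the uniform constants produced by the rescaling do not depend on $\yg$ or $\vm$; this is automatic since $N$, $K_0$, and the Banach-space constant $C$ in \eqref{eq=CCC} are all fixed once $(\nug_j)$ and $(\yg_i)$ are chosen. The main conceptual observation is thus that the obstruction to $\Psig_0\circ\Ptau_0=\mathrm{id}$ in Proposition~\ref{prop=itteQ} is a bounded rounding error, and scaling the generators $\yg_i$ by $N$ pushes the data deep enough into the interior of the rounding cells to eliminate that error completely.
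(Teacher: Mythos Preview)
Your proof is correct and essentially identical to the paper's argument: both scale by a large integer $N$ (the paper uses $\tilde{D}=\lfloor 2(\ell-1)\max_j\DD(\nug_j)\rfloor+1$) so that the error bound $(\ell-1)\DD(\nug_j)$ from \eqref{eq=KdeltaD} falls strictly below $N/2$, making the nearest-integer rounding exact. Your $\rho_N$ is precisely the paper's $\lambda_{\tilde D}^{-1}\circ\rho_{\tilde D}$, and your $\Ptau$ is the paper's $\Ptau\circ\lambda_{\tilde D}$; the paper leaves the verification of the remaining properties of Proposition~\ref{prop=sclkika} implicit, whereas you spell them out, but there is no substantive difference.
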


\begin{proof}
We only treat the non-trivial case: $\ell\in\NN$. Take $\Ptau$ as in Proposition~\ref{prop=itteQ} associated with $(\nug_j)_{j\in \{1,\ldots,\ell\}}$ and $(\yg_i)_{i\in \{1,\ldots,\ell\}}$ satisfying \eqref{eq=Kdelta}. Set
\[
\tilde{D}=\left\lfloor 2(\ell-1)\max_{j\in \{1,\ldots, \ell\}}\DD(\nu_j)\right\rfloor +1\]
and $\rho'_{\tilde{D}}\colon \RR\to \tilde{D}\ZZ$ be `the' nearest point projection. Here, each element in $\tilde{D}\ZZ+\frac{\tilde{D}}{2}$ has exactly two nearest points on $\tilde{D}\ZZ$, and we may choose either one to define $\rho'_{\tilde{D}}$. Define $\rho_{\tilde{D}}\colon \mathbb{R}^{\ell}\to (\tilde{D}\ZZ)^{\ell}$ by $\rho_{\tilde{D}}=\rho'_{\tilde{D}}\times \cdots \times \rho'_{\tilde{D}}$. Set the group isomorphism $\lambda_{\tilde{D}}\colon \ZZ^{\ell}\stackrel{\cong}{\to}(\tilde{D}\ZZ)^{\ell}$ given by  coordinatewise multiplication of $\tilde{D}$. Finally, set
\[
\tilde{\Psig}=\lambda_{\tilde{D}}^{-1}\circ \rho_{\tilde{D}}\circ \Psig^{\RR}_{(\nug_1,\ldots ,\nug_{\ell})}\quad \textrm{and} \quad \tilde{\Ptau}=\Ptau \circ \lambda_{\tilde{D}}.
\]
Then these $\tilde{\Psig}$ and $\tilde{\Ptau}$, respectively, serve in the role of $\Psig$ and $\Ptau$  in Proposition~\ref{prop=sclkika}. Furthermore, by \eqref{eq=KdeltaD}, for every $(m_1,\ldots ,m_{\ell})\in \mathbb{Z}^{\ell}$ and for every $j\in \{1,\ldots,\ell\}$ we have
\[
(\rho'_{\tilde{D}}\circ \nug_j)\left((m_1\tilde{D},\ldots ,m_{\ell}\tilde{D})\right)=m_j\tilde{D};
\]
this implies that $\tilde{\Psig}\circ \tilde{\Ptau}=\mathrm{id}_{\ZZ^{\ell}}$.
Finally replace $(\Psig,\Ptau)$ with $(\tilde{\Psig},\tilde{\Ptau})$.
\end{proof}

\section{Comparison theorem of defects}\label{sec=defect}
In this section, we use the following setting.
\begin{setting}\label{setting=GLN}
Let $\Gg$ be a group, and  let $\Lg$ and $\Ng$ be two normal subgroups of $\Gg$ with $\Lg\geqslant \Ng$. Set $i\colon \Ng\hookrightarrow \Lg$ as the inclusion map.
\end{setting}

We prove the \emph{comparison theorem of defects}, Theorem~\ref{thm=comparisonDD}. As we describe in Subsection~\ref{subsec=org}, we will employ this theorem for the proofs of Theorem~\ref{mthm=main} and Theorem~\ref{mthm=dim}. In this section, we treat the defect $\DD(\nug)$ of a map $\nug\in \QQQ(\Ng)^{\Gg}$ as well as the defect $\DD(\psg)$ of a map $\psg\in \QQQ(\Lg)^{\Gg}$. To clarify the difference of the domains of these two maps, we use the symbol $\DDN(\nug)$ for the former and the symbol $\DDL(\psg)$ for the latter. We use these two symbols only in the current section.

\subsection{The statement of the comparison theorem of defects}\label{subsec=comparison}
In Setting~\ref{setting=GLN}, we study defects of elements $\nug\in \QQQ(\Ng)^{\Gg}$. We assume that $\Wcal(\Gg,\Lg,\Ng)$ is finite dimensional, and set $\ell=\Rdim \Wcal(\Gg,\Lg,\Ng)$. Take an arbitrary basis of $\Wcal(\Gg,\Lg,\Ng)$. Take  an arbitrary set $\{\nu_1,\ldots ,\nu_{\ell}\}\subseteq  \QQQ(\Ng)^{\Gg}$ of representatives of this basis. Then, every $\nug\in \QQQ(\Ng)^{\Gg}$ can be expressed as $\nug=\kg+i^{\ast}\psg+\sum\limits_{j\in \{1,\ldots,\ell\}}a_j\nug_j$,
where $\kg\in \HHH^1(\Ng)^{\Gg}$, $\psg\in \QQQ(\Lg)^{\Gg}$ and $(a_1,\ldots,a_{\ell})\in \RR^{\ell}$ (we note that $(a_1,\ldots,a_{\ell})$ is unique, but the pair $(\kg,\psg)$ is not necessarily unique). Then, the triangle inequality shows that
\begin{equation}\label{eq=defectabove}
\DDN(\nug)\leq \DDN(i^{\ast}\psg)+\sum_{j\in \{1,\ldots,\ell\}}|a_j|\DDN(\nug_j)\leq \DDL(\psg)+\sum_{j\in \{1,\ldots,\ell\}}|a_j|\DDN(\nug_j).
\end{equation}
The following \emph{comparison theorem of defects} provides an estimate in the converse direction to \eqref{eq=defectabove}.

\begin{thm}[Comparison theorem of defects]\label{thm=comparisonDD}
Assume Setting~$\ref{setting=GLN}$. Assume that $\Wcal(\Gg,\Lg,\Ng)$ is finite dimensional, and set $\ell=\Rdim \Wcal(\Gg,\Lg,\Ng)$.
Take an arbitrary basis of $\Wcal(\Gg,\Lg,\Ng)$. Take an arbitrary  set $\{\nu_1,\ldots ,\nu_{\ell}\}\subseteq  \QQQ(\Ng)^{\Gg}$ of representatives of this basis.
Then, there exist $\mathscr{C}_{1,\mathrm{ctd}},\mathscr{C}_{2,\mathrm{ctd}}\in \RR_{>0}$, both depending on the choice of $\nu_1,\ldots ,\nu_{\ell}$, such that the following statement holds true: for every $\nu\in \QQQ(\Ng)^{\Gg}$, there exist $\kg\in \HHH^1(\Ng)^{\Gg}$ and $\psg\in \QQQ(\Lg)^{\Gg}$ such that $\nug=\kg+i^{\ast}\psg+\sum\limits_{j\in \{1,\ldots,\ell\}}a_j\nug_j$ and
\begin{equation}\label{eq=defectbelow}
\DDN(\nug)\geq \mathscr{C}_{1,\mathrm{ctd}}{}^{-1}\left(\DDL(\psg)+\mathscr{C}_{2,\mathrm{ctd}}{}^{-1}\cdot\sum_{j\in \{1,\ldots,\ell\}}|a_j|\right).
\end{equation}
Here $(a_1,\ldots,a_{\ell})\in \RR^{\ell}$ is the uniquely determined tuple such that
\begin{equation}\label{eq=decmpWcal}
[\nug]=\sum_{j\in \{1,\ldots,\ell\}}a_j[\nug_j] \quad \in \Wcal(\Gg,\Lg,\Ng).
\end{equation}
\end{thm}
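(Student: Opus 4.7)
The plan is to deduce Theorem~\ref{thm=comparisonDD} from Proposition~\ref{prop=inverse} by casting the desired decomposition as the factorization of an element of a Banach space as the image of an injective bounded operator plus a linear combination of $\ell$ fixed cokernel representatives.

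First I would set $X_0 = \QQQ(\Lg)^{\Gg}/\HHH^1(\Lg)^{\Gg}$ and $Y = \QQQ(\Ng)^{\Gg}/\HHH^1(\Ng)^{\Gg}$, both Banach spaces under their defect norms by Proposition~\ref{prop=defectBanach}. Since $\DDN(i^*\psg) \leq \DDL(\psg)$ for every $\psg \in \QQQ(\Lg)^{\Gg}$, restriction descends to a bounded operator $\hat{i}^* \colon X_0 \to Y$ of operator norm at most $1$. Its kernel $\hat K$ is closed, so the induced map $T \colon X := X_0/\hat K \to Y$ is an injective bounded operator between Banach spaces. By the very definition \eqref{eq=W-space} of $\Wcal(\Gg,\Lg,\Ng)$,
\[
Y/T(X) = \QQQ(\Ng)^{\Gg}/\big(\HHH^1(\Ng)^{\Gg} + i^*\QQQ(\Lg)^{\Gg}\big) = \Wcal(\Gg,\Lg,\Ng),
\]
which has dimension $\ell$. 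Letting $\eta_j \in Y$ denote the class of $\nug_j$, Proposition~\ref{prop=inverse} supplies constants $C_1,C_2 \in \RR_{>0}$, depending on the choice of $\eta_1,\ldots,\eta_\ell$ (cf.\ Remark~\ref{rem=const}), such that
\[
\left\| T\xi + \sum_{j \in \{1,\ldots,\ell\}} a_j \eta_j \right\|_Y \geq C_1^{-1}\left(\|\xi\|_X + C_2^{-1}\sum_{j \in \{1,\ldots,\ell\}} |a_j|\right)
\]
for every $\xi \in X$ and $(a_1,\ldots,a_\ell) \in \RR^{\ell}$.

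Given $\nug \in \QQQ(\Ng)^{\Gg}$, I would read off the unique coefficients $(a_j)$ from the expansion \eqref{eq=decmpWcal}, so that the class of $\nug - \sum_j a_j \nug_j$ in $Y$ lies in $T(X)$. Picking any $\psg_0 \in \QQQ(\Lg)^{\Gg}$ whose image in $Y$ represents this class, the element $\kg := \nug - i^*\psg_0 - \sum_j a_j \nug_j$ automatically lies in $\HHH^1(\Ng)^{\Gg}$, producing the desired decomposition for $\psg_0$. I am free to replace $\psg_0$ by $\psg_0 + \tilde k$ for any $\tilde k \in K := \{\psg \in \QQQ(\Lg)^{\Gg} : i^*\psg \in \HHH^1(\Ng)^{\Gg}\}$, updating $\kg$ accordingly, and this freedom allows me to choose $\psg$ with $\DDL(\psg) \leq 2\|[\psg_0]\|_X$ by the definition of the quotient norm on $X = X_0/\hat K$ (taking $\psg = 0$ in the degenerate case $\|[\psg_0]\|_X = 0$, which is permitted because $\hat K$ is closed in $X_0$). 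Combining $\DDN(\nug) \geq \|[\nug]\|_Y$ with the displayed inequality then yields
\[
\DDN(\nug) \geq C_1^{-1}\left(\tfrac12 \DDL(\psg) + C_2^{-1}\sum_{j \in \{1,\ldots,\ell\}} |a_j|\right),
\]
which is of the claimed form with $\mathscr{C}_{1,\mathrm{ctd}} = 2C_1$ and $\mathscr{C}_{2,\mathrm{ctd}} = C_2/2$.

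The only real subtlety is that $\hat{i}^*$ is not injective in general, which forces the passage to the quotient $X = X_0/\hat K$ before Proposition~\ref{prop=inverse} can be applied, and that the quotient norm on $X$ is defined as an infimum, so one must approximate $\|[\psg_0]\|_X$ by an actual defect $\DDL(\psg)$; this approximation step only costs a multiplicative factor of $2$ in the final constants. All other ingredients (the Banach structure, boundedness of $\hat{i}^*$, and identification of the cokernel with $\Wcal(\Gg,\Lg,\Ng)$) are essentially tautological given the preceding material.
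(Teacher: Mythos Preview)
Your proof is correct and follows essentially the same route as the paper's: both set up the defect-normed Banach spaces $\QQQ(\Lg)^{\Gg}/\HHH^1(\Lg)^{\Gg}$ and $\QQQ(\Ng)^{\Gg}/\HHH^1(\Ng)^{\Gg}$, pass to the quotient by the kernel of the restriction map to obtain an injective operator, apply Proposition~\ref{prop=inverse}, and then approximate the quotient norm by an actual defect $\DDL(\psg)$. The only cosmetic difference is that the paper takes an arbitrary constant $\mathscr{C}_{1,\mathrm{ctd}}>\mathscr{C}'_{1,\mathrm{ctd}}$ and an $\varepsilon$ with $(1-\varepsilon)\mathscr{C}_{1,\mathrm{ctd}}\geq \mathscr{C}'_{1,\mathrm{ctd}}$ for the approximation step, whereas you use a fixed factor of~$2$; this affects only the explicit constants, not the argument.
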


\begin{rem}\label{rem=l=0}
If $\ell=0$, then Theorem~\ref{thm=comparisonDD} asserts that there exists a constant $C\in \RR_{>0}$ such that the following holds: for every $\nu\in \QQQ(\Ng)^{\Gg}$, there exist $\kg\in \HHH^1(\Ng)^{\Gg}$ and $\psg\in \QQQ(\Lg)^{\Gg}$ such that $\nug=\kg+i^{\ast}\psg$ and $\DDL(\psg)\leq C\cdot \DDN(\nug)$. This assertion, together with Theorem~\ref{thm=Bavard}, proves Theorem~\ref{thm=biLip}. This is how we proved the Theorem~\ref{thm=biLip} when $\Lg=\Gg$ in \cite[Theorem~2.1~(1)]{KKMMM}.
\end{rem}

\begin{rem}\label{rem=a_j}
In the setting of Theorem~$\ref{thm=comparisonDD}$, inequality~\eqref{eq=defectbelow} implies that
\[
\sum_{j\in \{1,\ldots,\ell\}}|a_j|\leq \mathscr{C}_{1,\mathrm{ctd}}\mathscr{C}_{2,\mathrm{ctd}}\cdot \DDN(\nug).
\]
Indeed, we have $\DDN(\nu)-\mathscr{C}_{1,\mathrm{ctd}}{}^{-1}\mathscr{C}_{2,\mathrm{ctd}}{}^{-1}\cdot\sum\limits_{j\in \{1,\ldots,\ell\}}|a_j|\geq \mathscr{C}_{1,\mathrm{ctd}}{}^{-1} \DDL(\psg)\geq 0$. We will employ this observation in Sections~\ref{sec=Phi} and \ref{sec=proofmain}.
\end{rem}




\begin{exa}\label{exa=rot}
In Theorem~\ref{thm=comparisonDD}, the factor $\kg\in \HHH^1(\Ng)^{\Gg}$ is \emph{unavoidable} in general. In what follows, we examine one such example. Consider
\[
1\longrightarrow \ZZ \stackrel{i}{\longrightarrow}\widetilde{\Homeo}_+(S^1)\longrightarrow \Homeo_+(S^1)\longrightarrow 1.
\]
Here $\ZZ$ is identified with the subgroup of $\ZZ$-shift in $\Homeo_+(\RR)$; $\widetilde{\Homeo}_+(S^1)$ equals the subgroup of $\Homeo_+(\RR)$ of elements that commute with the $\ZZ$-shifts. Set $\Gg=\Lg=\widetilde{\Homeo}_+(S^1)$ and $\Ng=\ZZ$. Consider $\nug\colon \Ng\to \RR$ sending $n\in \Ng$ to $n\in \RR$. Then $\nug\in \HHH^1(\Ng)^{\Gg}\subseteq \QQQ(\Ng)^{\Gg}$. Consider the \emph{translation number} (the lift of the \emph{Poincar\'{e} rotation number})
\[
\widetilde{\rot}\colon \Gg\to \RR;\ \Gg\ni g\mapsto \lim_{n\to\infty}\frac{g^n(0)}{n}.
\]
Then it is well known that $\widetilde{\rot}\in \QQQ(\Gg)$, $i^{\ast}\widetilde{\rot}=\nug$ and $\WW(\Gg,\Ng)=0$. However, $\DDN(\nug)=0$ but $\DD_{\Gg}(\widetilde{\rot})=1$. Hence, \eqref{eq=defectbelow} \emph{fails} for the pair $(\kg,\psg)=(0,\widetilde{\rot})$ (see Remark~\ref{rem=l=0}). In this example, it is \emph{impossible} to find $\psg\in \QQQ(\Gg)$ with $i^{\ast}\psg=\nug$ for which \eqref{eq=defectbelow} holds. Indeed, since $\DDN(\nug)=0$, \eqref{eq=defectbelow} forces $\psg$ to be an element of $\HHH^1(\Gg)$. However, then $\psg$ must be the zero map because $\Gg=\widetilde{\Homeo}_+(S^1)$ is perfect, a contradiction. Nevertheless, if we take a `good' pair $(\kg,\psg)=(\nug,0)$, then \eqref{eq=defectbelow} holds.
\end{exa}




\subsection{Proof of Theorem~\ref{thm=comparisonDD}}\label{subsec=proof_comparison}
The key tools to the proof are Propositions~\ref{prop=inverse} and \ref{prop=defectBanach}. Recall from  Subsection~\ref{subsec=FA} that $\QQQ(\Lg)^{\Gg}/\HHH^1(\Lg)^{\Gg}$ and $\QQQ(\Ng)^{\Gg}/\HHH^1(\Ng)^{\Gg}$ are equipped with the defect norms. Following the convention of this section, we write these defect norms as $\ODDL$ and $\ODDN$, respectively, in the current section. More precisely, for every $\psg\in \QQQ(\Lg)^{\Gg}$ and for every $\nug\in \QQQ(\Ng)^{\Gg}$, we define $\ODDL([\psg]_{\Lg})=\DDL(\psg)$ and $\ODDN([\nug]_{\Ng})=\DDN(\nug)$. Here, $[\cdot]_{\Lg}$ and $[\cdot]_{\Ng}$ mean the equivalence classes modulo $\HHH^1(\Lg)^{\Gg}$ and $\HHH^1(\Ng)^{\Gg}$, respectively.

\begin{proof}[Proof of Theorem~$\ref{thm=comparisonDD}$]
Set $X=\QQQ(\Lg)^{\Gg}/\HHH^1(\Lg)^{\Gg}$ and $Y=\QQQ(\Ng)^{\Gg}/\HHH^1(\Ng)^{\Gg}$.
By Proposition~\ref{prop=defectBanach}, the spaces $(X,\ODDL)$ and $(Y,\ODDN)$ are Banach spaces. The inclusion map $i\colon \Ng \hookrightarrow \Lg$ induces a linear operator
\[
T\colon (X,\ODDL)\to (Y,\ODDN);\quad X\ni [\psg]_{\Lg} \mapsto [i^{\ast}\psg]_{\Ng}\in Y.
\]
Since  $\DDN(i^{\ast}\psg)\leq \DDL(\psg)$ for every $\psg\in \QQQ(\Lg)^{\Gg}$, we have $\|T\|_{\mathrm{op}}\leq 1$. By assumption, $\Rdim (Y/T(X))=\ell$ and the set $\{[\nug_1]_{\Ng},\ldots,[\nug_{\ell}]_{\Ng}\}$ forms a set of representatives of a basis of $Y/T(X)$.

Note that $T\colon X\to Y$ need not injective. To deal with this issue, set $X_0=\Ker (T)$ and consider the quotient Banach space $X'=X/X_0$. Here, recall that the quotient norm $\|\cdot\|_{X'}$ is defined for every $\xi\in X$ as
\begin{equation}\label{eq=quotnorm}
\|[\xi]_{X_0}\|_{X'}=\inf_{\xi_0\in X_0}\ODDL(\xi+\xi_0).
\end{equation}
Here $[\cdot]_{X_0}$ means the equivalence class modulo $X_0$. Then $T$ induces an injective linear operator $\overline{T}\colon (X',\|\cdot\|_{X'})\to (Y,\ODDN)$ with $\|\overline{T}\|_{\mathrm{op}}\leq 1$.

Now we apply Proposition~\ref{prop=inverse} to $\overline{T}$ and $[\nug_1]_{\Ng},\ldots,[\nug_{\ell}]_{\Ng}$. Then we obtain two constants $\mathscr{C}'_{1,\mathrm{ctd}},\mathscr{C}_{2,\mathrm{ctd}}\in \RR_{>0}$, both depending on $[\nug_1]_{\Ng},\ldots,[\nug_{\ell}]_{\Ng}$ (see Remark~\ref{rem=const}), such that the following holds: for every $\psi\in \QQQ(\Lg)^{\Gg}$ and every $(a_1,\ldots,a_{\ell})\in \RR^l$, we have
\begin{equation}\label{eq=normcomparison}
\DDN\Bigl(i^{\ast}\psi+\sum_{j\in \{1,\ldots,\ell\}}a_j\nug_j\Bigr) \geq \mathscr{C}'_{1,\mathrm{ctd}}{}^{-1}\left(\|[[\psi]_{\Lg}]_{X_0}\|_{X'}+\mathscr{C}_{2,\mathrm{ctd}}{}^{-1}\cdot \sum_{j\in \{1,\ldots,\ell\}}|a_j|\right).
\end{equation}

Take an arbitrary constant $\mathscr{C}_{1,\mathrm{ctd}}$ with $\mathscr{C}_{1,\mathrm{ctd}}>\mathscr{C}'_{1,\mathrm{ctd}}$ and fix it. In what follows, we prove that these $\mathscr{C}_{1,\mathrm{ctd}}$ and $\mathscr{C}_{2,\mathrm{ctd}}$ work. Let $\nug\in \QQQ(\Ng)^{\Gg}$. Take the unique tuple $(a_1,\ldots ,a_{\ell})\in \RR^{\ell}$ such that \eqref{eq=decmpWcal} holds. Then, there exists $\psi'\in i^{\ast}\QQQ(\Lg)$ such that
\begin{equation}\label{eq=decomp2}
\nug=i^{\ast}\psi'+\sum_{j\in \{1,\ldots,\ell\}}a_j\nug_j.
\end{equation}
By applying \eqref{eq=normcomparison}, we obtain that
\begin{equation}\label{eq=normcomparison2}
\DDN(\nug)\geq \mathscr{C}'_{1,\mathrm{ctd}}{}^{-1}\left(\|[[\psi']_{\Lg}]_{X_0}\|_{X'}+\mathscr{C}_{2,\mathrm{ctd}}{}^{-1}\cdot \sum_{j\in \{1,\ldots,\ell\}}|a_j|\right).
\end{equation}
Take $\varepsilon \in \RR_{>0}$ with $(1-\varepsilon)\mathscr{C}_{1,\mathrm{ctd}}\geq \mathscr{C}'_{1,\mathrm{ctd}}$. Then by \eqref{eq=quotnorm}, there exists $\psi''\in \QQQ(\Lg)^{\Gg}$ such that $[\psi'']_{\Lg}\in X_0$ and
\begin{equation}\label{eq=defectquot}
(1-\varepsilon)\DDL(\psi'-\psi'')\leq \|[[\psi']_{\Lg}]_{X_0}\|_{X'}.
\end{equation}
Set $\kg=i^{\ast}\psi''$. Since $[\psi'']_{\Lg}\in X_0=\Ker (T)$, we have $\kg\in \HHH^1(\Ng)^{\Gg}$. Finally, set $\psi=\psi'-\psi''$. Then by \eqref{eq=decomp2}, we have
\[
\nug=i^{\ast}\psi+i^{\ast}\psi''+\sum_{j\in \{1,\ldots,\ell\}}a_j\nug_j\\=\kg+i^{\ast}\psi+\sum_{j\in \{1,\ldots,\ell\}}a_j\nug_j.
\]
By \eqref{eq=normcomparison2} and \eqref{eq=defectquot}, we have
\begin{align*}
\DDL(\nug)&\geq \mathscr{C}'_{1,\mathrm{ctd}}{}^{-1}\left((1-\varepsilon)\DDL(\psi)+\mathscr{C}_{2,\mathrm{ctd}}{}^{-1}\cdot \sum_{j\in \{1,\ldots,\ell\}}|a_j|\right)\\
&\geq \mathscr{C}_{1,\mathrm{ctd}}{}^{-1}\left(\DDL(\psi)+\mathscr{C}_{2,\mathrm{ctd}}{}^{-1}\cdot \sum_{j\in \{1,\ldots,\ell\}}|a_j|\right).
\end{align*}
Therefore, \eqref{eq=defectbelow} holds. This completes the proof.
\end{proof}

\subsection{Criterion for finite dimensionality of $\Wcal(\Gg,\Lg,\Ng)$}\label{subsec=criterion}

In this subsection, we exhibit a sufficient condition on the triple $(\Gg,\Lg,\Ng)$ in Setting~\ref{setting=GLN} such that $\Wcal(\Gg,\Lg,\Ng)$ is finite dimensional.

\begin{prop}\label{prop=findimWW}
Assume Settings~$\ref{setting=itsumono}$ and $\ref{setting=QG}$. Let $d_2$ be the map from $\HHH^1(\Ng)^{\Gg}$ to $\HHH^2(\QG)$ in the diagram \eqref{eq=diagram} of Theorem~$\ref{thm=KKMMMmain1}$. Assume that $\Ker(c_{\QG}^3\colon \HHH^3_b(\QG)\to \HHH^3(\QG))$ and $\HHH^2(\QG)/d_2(\HHH^1(\Ng)^{\Gg})$ are both finite dimensional. Then, for every normal subgroup $\Lg$ of $\Gg$ with $\Lg\geqslant \Ng$,
\[
\Rdim \Wcal(\Gg,\Lg,\Ng)<\infty.
\]
In particular, $\HHH^3_b(\QG)$ and $\HHH^2(\QG)$ are both finite dimensional, then for every normal subgroup $\Lg$ of $\Gg$ with $\Lg\geqslant \Ng$, we have $\Rdim \Wcal(\Gg,\Lg,\Ng)<\infty$.
\end{prop}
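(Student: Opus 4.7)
\medskip

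\noindent\textbf{Proof proposal.} The plan is to reduce everything to bounding $\Rdim \WW(\Gg,\Ng)$ and then to exploit the exact sequence from Theorem~\ref{thm=KKMMMmain1} together with the long exact sequence relating ordinary and bounded cohomology. First, for any normal subgroup $\Lg$ of $\Gg$ with $\Lg\geqslant \Ng$, the restriction from $\Gg$ to $\Lg$ sends $\QQQ(\Gg)$ into $\QQQ(\Lg)^{\Gg}$, so that $i^{\ast}\QQQ(\Gg)\subseteq i^{\ast}\QQQ(\Lg)^{\Gg}$ inside $\QQQ(\Ng)^{\Gg}$. Hence there is a natural surjection $\WW(\Gg,\Ng)\twoheadrightarrow \Wcal(\Gg,\Lg,\Ng)$ and it suffices to prove $\Rdim \WW(\Gg,\Ng)<\infty$ from the hypotheses.

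For the key step, I would use Theorem~\ref{thm=KKMMMmain1}. Its first assertion gives an exact sequence $\QQQ(\Gg)\to \QQQ(\Ng)^{\Gg}\xrightarrow{\tau_{/b}}\HHH^2_{/b}(\QG)$, so $\tau_{/b}$ induces an injection
\[
\QQQ(\Ng)^{\Gg}/i^{\ast}\QQQ(\Gg)\hookrightarrow \HHH^2_{/b}(\QG).
\]
Its second assertion (the commutative diagram \eqref{eq=diagram}) tells us that on $\HHH^1(\Ng)^{\Gg}$ the map $\tau_{/b}$ factors as $\HHH^1(\Ng)^{\Gg}\xrightarrow{d_2}\HHH^2(\QG)\to\HHH^2_{/b}(\QG)$. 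Therefore, passing to the further quotient by $\HHH^1(\Ng)^{\Gg}$, the map $\tau_{/b}$ descends to an injection
\[
\WW(\Gg,\Ng)=\QQQ(\Ng)^{\Gg}/(\HHH^1(\Ng)^{\Gg}+i^{\ast}\QQQ(\Gg))\hookrightarrow \HHH^2_{/b}(\QG)/\bar{X},
\]
where $\bar{X}$ is the image of $d_2(\HHH^1(\Ng)^{\Gg})\subseteq \HHH^2(\QG)$ under the comparison-type map $\HHH^2(\QG)\to \HHH^2_{/b}(\QG)$.

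It then suffices to bound $\Rdim\bigl(\HHH^2_{/b}(\QG)/\bar X\bigr)$. The long exact sequence coming from $0\to C^{\ast}_b(\QG)\to C^{\ast}(\QG)\to C^{\ast}_{/b}(\QG)\to 0$ produces
\[
\HHH^2(\QG)\xrightarrow{\;j\;}\HHH^2_{/b}(\QG)\twoheadrightarrow \Ker\bigl(c_{\QG}^3\colon \HHH^3_b(\QG)\to \HHH^3(\QG)\bigr),
\]
whose kernel on the right is $j(\HHH^2(\QG))$. Since $\bar X\subseteq j(\HHH^2(\QG))$, quotienting by $\bar X$ yields a short exact sequence
\[
0\longrightarrow j(\HHH^2(\QG))/\bar{X}\longrightarrow \HHH^2_{/b}(\QG)/\bar{X}\longrightarrow \Ker c_{\QG}^3\longrightarrow 0.
\]
The right-hand term is finite dimensional by hypothesis, and the left-hand term is a quotient of $\HHH^2(\QG)/d_2(\HHH^1(\Ng)^{\Gg})$, which is also finite dimensional by hypothesis. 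Therefore $\Rdim\bigl(\HHH^2_{/b}(\QG)/\bar X\bigr)<\infty$, hence $\Rdim\WW(\Gg,\Ng)<\infty$, and consequently $\Rdim\Wcal(\Gg,\Lg,\Ng)<\infty$ for every admissible $\Lg$. The ``in particular'' statement is immediate: if $\HHH^3_b(\QG)$ is finite dimensional, so is its subspace $\Ker c_{\QG}^3$, and if $\HHH^2(\QG)$ is finite dimensional, so is its quotient $\HHH^2(\QG)/d_2(\HHH^1(\Ng)^{\Gg})$.

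The only point requiring care is the verification that the two displayed natural maps are well-defined and commute with the relevant identifications (in particular that $\tau_{/b}$ sends $\HHH^1(\Ng)^{\Gg}$ exactly to the image of $d_2$ in $\HHH^2_{/b}(\QG)$); this is however a direct diagram chase in \eqref{eq=diagram} rather than a substantive obstacle.
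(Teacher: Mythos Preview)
Your proof is correct and follows essentially the same route as the paper: both inject $\WW(\Gg,\Ng)$ into $\HHH^2_{/b}(\QG)/\bar X$ via Theorem~\ref{thm=KKMMMmain1} (the paper writes $\bar X=\xi_4 d_2(\HHH^1(\Ng)^{\Gg})$), use the long exact sequence for $\HHH^{\ast}_b\to\HHH^{\ast}\to\HHH^{\ast}_{/b}$ to bound this quotient by $\Ker c_{\QG}^3$ and a quotient of $\HHH^2(\QG)/d_2(\HHH^1(\Ng)^{\Gg})$, and then surject onto $\Wcal(\Gg,\Lg,\Ng)$. Your presentation is slightly more detailed in unpacking the short exact sequence, but there is no substantive difference.
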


\begin{proof}
First, we treat the case where $\Ker(c_{\QG}^3)$ and $\HHH^2(\QG)/d_2(\HHH^1(\Ng)^{\Gg})$ are both finite dimensional. Let $\xi_4$ be the map from $\HHH^2(\QG)$ to $\HHH_{/b}^2(\QG)$ in \eqref{eq=diagram}.
Since the sequence
\[
\HHH^2(\QG)/d_2(\HHH^1(\Ng)^{\Gg}) \xrightarrow{\xi_4} \HHH_{/b}^2(\QG)/\xi_4d_2(\HHH^1(\Ng)^{\Gg}) \to \HHH_b^3(\QG) \to \HHH^3(\QG)
\]
is exact, the assumptions imply that $\HHH_{/b}^2(\QG)/\xi_4d_2(\HHH^1(\Ng)^{\Gg})$ is finite dimensional.
Since the space $\WW(\Gg,\Ng)$ injects to $\HHH_{/b}^2(\QG)/\xi_4d_2(\HHH^1(\Ng)^{\Gg})$ by Theorem \ref{thm=KKMMMmain1}, we conclude that $\WW(\Gg,\Ng)$ is finite dimensional. For every normal subgroup $\Lg$ of $\Gg$ with $\Lg\geqslant \Ng$, the space $\WW(\Gg,\Ng)$ surjects onto $\Wcal(\Gg,\Lg,\Ng)$. Hence, $\Wcal(\Gg,\Lg,\Ng)$ is also finite dimensional. Therefore, we obtain the conclusions in this case. Now, the final assertion immediately follows from the first case.
\end{proof}

We also have the following dimension estimate.

\begin{thm}\label{thm=dimWcal}
Assume Settings~$\ref{setting=GLN}$.
Denote by $i_{\Ng}^{\ast}$ the map $i_{\Ng}^{\ast}\colon \WW(\Gg,\Lg)\to \WW(\Gg,\Ng)$ induced by $i_{\Ng}=i\colon \Ng\hookrightarrow \Lg$. Assume that $\Rdim\WW(\Gg,\Ng)<\infty$. Then we have
\[
\Rdim \Wcal(\Gg,\Lg,\Ng)=\Rdim \WW(\Gg,\Ng)-\Rdim i_{\Ng}^{\ast}\WW(\Gg,\Lg).
\]
\end{thm}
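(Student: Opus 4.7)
The plan is to realize the claimed dimension identity as a consequence of a short exact sequence of $\RR$-vector spaces
\[
0 \longrightarrow i_{\Ng}^{\ast}\WW(\Gg,\Lg) \longrightarrow \WW(\Gg,\Ng) \xrightarrow{\;\pi\;} \Wcal(\Gg,\Lg,\Ng) \longrightarrow 0,
\]
after which additivity of dimension on short exact sequences finishes the job: all three spaces are automatically finite dimensional because they are subquotients of $\WW(\Gg,\Ng)$, which is finite dimensional by assumption.

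To set this sequence up, the first observation is functoriality of restriction: writing $i_{\Ng,\Lg}\colon\Ng\hookrightarrow\Lg$ and $i_{\Lg,\Gg}\colon\Lg\hookrightarrow\Gg$, the pullbacks satisfy $i_{\Ng,\Gg}^{\ast}=i_{\Ng,\Lg}^{\ast}\circ i_{\Lg,\Gg}^{\ast}$. Consequently $i_{\Ng,\Gg}^{\ast}\QQQ(\Gg)\subseteq i_{\Ng,\Lg}^{\ast}\QQQ(\Lg)^{\Gg}$ inside $\QQQ(\Ng)^{\Gg}$, so the quotient map $\QQQ(\Ng)^{\Gg}\twoheadrightarrow \Wcal(\Gg,\Lg,\Ng)$ factors through $\WW(\Gg,\Ng)$, producing the natural surjection $\pi$. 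By definition, a class $[\nug]\in\WW(\Gg,\Ng)$ lies in $\Ker\pi$ precisely when $\nug\in \HHH^1(\Ng)^{\Gg}+i_{\Ng,\Lg}^{\ast}\QQQ(\Lg)^{\Gg}$, so $\Ker\pi$ coincides with the image in $\WW(\Gg,\Ng)$ of the subspace $i_{\Ng,\Lg}^{\ast}\QQQ(\Lg)^{\Gg}$.

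It then remains to identify this image with $i_{\Ng}^{\ast}\WW(\Gg,\Lg)$. The map $i_{\Ng,\Lg}^{\ast}\colon \QQQ(\Lg)^{\Gg}\to \QQQ(\Ng)^{\Gg}$ carries $\HHH^1(\Lg)^{\Gg}$ into $\HHH^1(\Ng)^{\Gg}$ and, by the functoriality above, carries $i_{\Lg,\Gg}^{\ast}\QQQ(\Gg)$ into $i_{\Ng,\Gg}^{\ast}\QQQ(\Gg)$. Hence it descends to a well-defined $\RR$-linear map $\WW(\Gg,\Lg)\to \WW(\Gg,\Ng)$; this is precisely the map called $i_{\Ng}^{\ast}$ in the statement. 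Its image is, tautologically, the image of $i_{\Ng,\Lg}^{\ast}\QQQ(\Lg)^{\Gg}$ in $\WW(\Gg,\Ng)$, which was just shown to equal $\Ker\pi$.

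There is no real obstacle: the whole argument is a diagram chase involving two composable restriction maps and the three subspaces $\HHH^1$, $i^{\ast}\QQQ(\Gg)$, $i^{\ast}\QQQ(\Lg)^{\Gg}$. The only care needed is to keep the three instances of the symbol $i^{\ast}$ (one for each of $\WW(\Gg,\Ng)$, $\WW(\Gg,\Lg)$, $\Wcal(\Gg,\Lg,\Ng)$) notationally distinct; I would introduce the $i_{\Ng,\Lg}$ and $i_{\Lg,\Gg}$ notation explicitly in the proof to avoid any ambiguity.
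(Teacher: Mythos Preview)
Your proof is correct and takes essentially the same approach as the paper: both arguments set up the natural surjection $\WW(\Gg,\Ng)\twoheadrightarrow\Wcal(\Gg,\Lg,\Ng)$ and identify its kernel with the image of $i_{\Ng}^{\ast}\colon\WW(\Gg,\Lg)\to\WW(\Gg,\Ng)$. The paper's proof is somewhat terser (it checks $\Ker\pi=\Im i_{\Ng}^{\ast}$ by a direct two-line inclusion argument), but your more explicit treatment of the well-definedness of the three restriction maps and the care taken to distinguish the various $i^{\ast}$'s is a welcome addition.
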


\begin{proof}
Let $\mathcal{K}$ be the kernel of the surjection $\WW(\Gg, \Ng) \to \Wcal(\Gg,\Lg,\Ng)$.
Then it suffices to show that the map $i_{\Ng}^{\ast}$ surjects onto $\mathcal{K}$.
It is clear that the image of $i_{\Ng}^{\ast}$ is contained in $\mathcal{K}$.
Let $\nu \in \QQQ(\Ng)^{\Gg}$ satisfying $[\nu] \in \mathcal{K}$, that is, there exist $k \in \HHH^1(\Ng)^{\Gg}$ and $\psi \in \QQQ(\Lg)^{\Gg}$ such that $\nu = k + i_{\Ng}^{\ast} \psi$.
Then, $[\psi] \in \WW(\Gg, \Lg)$, and we have $i_{\Ng}^{\ast}[\psi] = [\nu - k] = [\nu] \in \mathcal{K}$. This ends our proof.
\end{proof}

In addition, we state the following variant of Corollary~\ref{cor=intersection} for a group triple $(\Gg,\Lg,\Ng)$.

\begin{prop}\label{prop=intersectiontriple}
Assume Setting~$\ref{setting=GLN}$. Assume that $\HHH^1(\Lg/\Ng) = \QQQ(\Lg/\Ng)$ and that $\HHH_b^2(\Lg/\Ng)^{\Gg} = 0$. Then we have $\HHH^1(\Ng)^{\Gg}\cap i^{\ast}\QQQ(\Lg)^{\Gg}=i^{\ast}\HHH^1(\Lg)^{\Gg}$.

In particular, under Setting~$\ref{setting=GLN}$, we have $\HHH^1(\Ng)^{\Gg}\cap i^{\ast}\QQQ(\Lg)^{\Gg}=i^{\ast}\HHH^1(\Lg)^{\Gg}$ if $\Lg/\Ng$ is boundedly $2$-acyclic.
\end{prop}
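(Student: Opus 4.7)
The plan is to prove the non-trivial inclusion $\HHH^1(\Ng)^{\Gg} \cap i^{\ast}\QQQ(\Lg)^{\Gg} \subseteq i^{\ast}\HHH^1(\Lg)^{\Gg}$ (the reverse is immediate) by applying Theorem~\ref{thm=KKMMMmain1} to the short exact sequence $1 \to \Ng \to \Lg \to \Lg/\Ng \to 1$ and tracking the natural $\Gg$-equivariance available because both $\Ng$ and $\Lg$ are normal in $\Gg$, so conjugation by any $\gamma \in \Gg$ is an automorphism of this whole extension. Fix $k \in \HHH^1(\Ng)^{\Gg}$ and $\psi \in \QQQ(\Lg)^{\Gg}$ with $i^{\ast}\psi = k$; I will prove the stronger statement $\psi \in \HHH^1(\Lg)^{\Gg}$, which immediately gives the desired $k = i^{\ast}\psi \in i^{\ast}\HHH^1(\Lg)^{\Gg}$.

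The central step is to show that the class $d_2(k) \in \HHH^2(\Lg/\Ng)$, i.e., the image of $k$ under the connecting map of the ordinary five-term exact sequence, vanishes. Applying the analogue of diagram~\eqref{eq=diagram} to the pair $(\Lg,\Ng)$: since $k = i^{\ast}\psi$, exactness of the bottom row at $\QQQ(\Ng)^{\Lg}$ gives $\tau_{/b}(k) = 0$, and commutativity then yields $\xi_4(d_2(k)) = 0$, where $\xi_4\colon \HHH^2(\Lg/\Ng) \to \HHH^2_{/b}(\Lg/\Ng)$ is the natural map. The long exact sequence of $0 \to C^{\ast}_b \to C^{\ast} \to C^{\ast}_{/b} \to 0$ identifies $\Ker\xi_4$ with the image of the comparison map $c^2_{\Lg/\Ng}\colon \HHH^2_b(\Lg/\Ng) \to \HHH^2(\Lg/\Ng)$, so $d_2(k)$ admits a lift $\beta \in \HHH^2_b(\Lg/\Ng)$. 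Here the hypothesis $\HHH^1(\Lg/\Ng) = \QQQ(\Lg/\Ng)$ is pivotal: Lemma~\ref{lem=comparison} forces $c^2_{\Lg/\Ng}$ to be injective, so $\beta$ is unique. Because every map in sight is functorial under conjugation by $\Gg$, the class $d_2(k)$ is $\Gg$-invariant; by uniqueness of $\beta$ (if $\gamma \cdot \beta$ is another lift of $\gamma \cdot d_2(k) = d_2(k)$, it must equal $\beta$), $\beta$ is $\Gg$-invariant as well. The assumption $\HHH^2_b(\Lg/\Ng)^{\Gg} = 0$ then forces $\beta = 0$ and hence $d_2(k) = 0$.

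Invoking the ordinary five-term exact sequence produces $k' \in \HHH^1(\Lg)$ with $i^{\ast}k' = k$; then $\psi - k' \in \QQQ(\Lg)$ has vanishing restriction to $\Ng$, so by exactness of the bottom row of~\eqref{eq=diagram} (for $(\Lg,\Ng)$) at $\QQQ(\Lg)$, $\psi - k' = \bar\phi \circ \pi$ for some $\bar\phi \in \QQQ(\Lg/\Ng) = \HHH^1(\Lg/\Ng)$, which is itself a homomorphism. Thus $\psi$ is a sum of two homomorphisms on $\Lg$, so $\psi \in \HHH^1(\Lg)$, and $\Gg$-invariance of $\psi$ upgrades this to $\psi \in \HHH^1(\Lg)^{\Gg}$. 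The in-particular clause follows because bounded $2$-acyclicity of $\Lg/\Ng$ gives $\HHH^2_b(\Lg/\Ng) = 0$, trivially killing $\HHH^2_b(\Lg/\Ng)^{\Gg}$ and, via Lemma~\ref{lem=comparison}, forcing $\QQQ(\Lg/\Ng) = \HHH^1(\Lg/\Ng)$. The step I expect to require the most care is the bookkeeping of $\Gg$-equivariance through Theorem~\ref{thm=KKMMMmain1} and diagram~\eqref{eq=diagram}, which were not stated with an outer action; the crucial point that sidesteps any equivariant Hochschild--Serre argument is that uniqueness of the bounded lift $\beta$ makes its $\Gg$-invariance automatic, allowing direct use of $\HHH^2_b(\Lg/\Ng)^{\Gg} = 0$.
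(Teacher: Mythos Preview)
Your proof is correct and follows essentially the same route as the paper: apply the diagram of Theorem~\ref{thm=KKMMMmain1} to $1\to\Ng\to\Lg\to\Lambda\to 1$, show $d_2(k)=0$ via a $\Gg$-invariant bounded lift that must vanish, then decompose $\psi$ as a sum of two genuine homomorphisms. The only technical difference lies in how the bounded lift and its $\Gg$-invariance are obtained. The paper works at the \emph{cocycle} level: since $\psi|_{\Ng}=k$ is a homomorphism, $\delta\psi$ descends along the surjection $q$ to a unique bounded cocycle $c_b$ on $\Lambda$ (uniqueness from injectivity of $q^{\ast}$ on cochains), and $\Gg$-invariance of $c_b$ then follows directly from $\Gg$-invariance of $\psi$. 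You instead work at the \emph{cohomology} level, lifting $d_2(k)$ to a class $\beta\in\HHH^2_b(\Lambda)$ and using injectivity of $c^2_{\Lambda}$ (from the hypothesis $\HHH^1(\Lambda)=\QQQ(\Lambda)$) for uniqueness, whence $\Gg$-invariance of $\beta$ follows from that of $d_2(k)$. Both arguments are valid; the paper's cocycle approach is slightly more direct in that the $\Gg$-invariance is inherited from $\psi$ without needing to check equivariance of the connecting map $d_2$, whereas your cohomological approach makes more transparent where the hypothesis $\HHH^1(\Lambda)=\QQQ(\Lambda)$ enters.
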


Recall from  Lemma~\ref{lem=comparison} that if $\HHH^1(\Lg/\Ng) = \QQQ(\Lg/\Ng)$ if and only if the comparison map $c_{\Lg/\Ng}^2$ is injective.

\begin{proof}[Proof of Proposition~$\ref{prop=intersectiontriple}$]
Set $\Lambda=\Lg/\Ng$. First, we  verify the former assertion. It is clear that $\HHH^1(\Ng)^{\Gg}\cap i^{\ast}\QQQ(\Lg)^{\Gg} \supseteq i^{\ast}\HHH^1(\Lg)^{\Gg}$ holds.
We show that $\HHH^1(\Ng)^{\Gg}\cap i^{\ast}\QQQ(\Lg)^{\Gg} \subseteq i^{\ast}\HHH^1(\Lg)^{\Gg}$ also holds.
Let us consider the following diagram, which is the diagram \eqref{eq=diagram} applied to the exact sequence $1 \to \Ng \to \Lg \xrightarrow{q} \Lambda \to 1$:
\begin{align*}
  \xymatrix{
  0 \ar[r] & \HHH^1(\Lambda) \ar[r]^-{q^{\ast}} \ar[d] & \HHH^1(\Lg) \ar[r]^-{i^{\ast}} \ar[d] & \HHH^1(\Ng)^{\Lg} \ar[r]^-{d_2} \ar[d] & \HHH^2(\Lambda) \ar[r] \ar[d] & \HHH^2(\Lg) \ar[d] \\
  0 \ar[r] & \QQQ(\Lambda) \ar[r]^-{q^{\ast}} & \QQQ(\Lg) \ar[r] & \QQQ(\Ng)^{\Lg} \ar[r] & \HHH^2_{/b}(\Lambda) \ar[r] & \HHH^2_{/b}(\Lg).
  }
\end{align*}
Let $\kg$ be an element in $\HHH^1(\Ng)^{\Gg}\cap i^{\ast}\QQQ(\Lg)^{\Gg}$.
Take an element $\psg$ in $\QQQ(\Lg)^{\Gg}$ satisfying $\kg = i^{\ast}\psg = \psg|_{\Ng}$.
It suffices to show that $\psg$ is contained in $\HHH^1(\Lg)$.
By the equality $\kg = \psg|_{\Ng}$, there uniquely exists a bounded cocycle $c_b \in C_b^2(\Lambda)$ such that $q^{\ast} c_b = \delta \psg$, and the equality $d_2 (\kg) = c_{\Lambda}^2 (c_b)$ holds (\cite[Lemma 4.6 and Lemma 5.3]{KM}).
Since $\psg$ is $\Gg$-invariant, so is $c_b$.
By the assumption that $\HHH_b^2(\Lambda)^{\Gg} = 0$, the bounded cohomology class $[c_b]\in \HHH_b^2(\Lambda)$ equals $0$.
In particular, the class $d_2(\kg) = c_{\Lambda}^2([c_b])$ is equal to zero.
From the exactness of the first low in the diagram, we may take an element $\kg_1$ in $\HHH^1(\Lg)$ such that $i^{\ast}\kg_1 = \kg$.
By the commutativity of the diagram and the exactness of the second low in the diagram, there exists an element $\kg_2$ in $\QQQ(\Lambda)=\HHH^1(\Lambda)$ such that $q^{\ast} \kg_2 = \psg - \kg_1 \in \QQQ(L)$.
Then we have $\psg = \kg_1 + q^{\ast} \kg_2 \in \HHH^1(\Lg)$, as desired.

To show the latter assertion, bounded $2$-acyclicity of $\Lambda$ in particular implies $\QQQ(\Lambda)/\HHH^1(\Lambda)=0$ (by Lemma~\ref{lem=comparison}) and $\HHH_b^2(\Lambda)^{\Gg} = 0$. Therefore, the former assertion applies to the current case.
\end{proof}

Proposition~\ref{prop=intersectiontriple} yields the following corollary to Theorem~\ref{thm=comparisonDD}; recall also Example~\ref{exa=rot}.

\begin{cor}\label{cor=DD2}
Assume Settings~$\ref{setting=GLN}$. Assume that $\HHH^1(\Lg/\Ng) = \QQQ(\Lg/\Ng)$ and that $\HHH_b^2(\Lg/\Ng)^{\Gg} = 0$. Assume that $\Wcal(\Gg,\Lg,\Ng)$ is finite dimensional. Then there exists $C\in \RR_{>0}$ such that the following holds true: for every $\nug\in i^{\ast}\QQQ(\Lg)^{\Gg}$, there exists $\psg\in \QQQ(\Lg)^{\Gg}$ such that $\nug =i^{\ast}\psg$ and $\DDL(\psg)\leq C\cdot \DDN(\nug)$ holds.
\end{cor}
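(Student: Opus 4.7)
The plan is to combine Theorem~\ref{thm=comparisonDD} with Proposition~\ref{prop=intersectiontriple} to absorb the stray homomorphism term $\kg$ into an extension on $\Lg$. First, fix once and for all a basis of $\Wcal(\Gg,\Lg,\Ng)$ (finite by hypothesis) and choose representatives $\nug_1,\ldots,\nug_\ell \in \QQQ(\Ng)^{\Gg}$. Apply Theorem~\ref{thm=comparisonDD} to this choice to obtain constants $\mathscr{C}_{1,\mathrm{ctd}},\mathscr{C}_{2,\mathrm{ctd}}\in \RR_{>0}$, and set $C=\mathscr{C}_{1,\mathrm{ctd}}$.

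Now let $\nug \in i^{\ast}\QQQ(\Lg)^{\Gg}$ be arbitrary. By Theorem~\ref{thm=comparisonDD}, there exist $\kg \in \HHH^1(\Ng)^{\Gg}$, $\psi \in \QQQ(\Lg)^{\Gg}$, and the (unique) tuple $(a_1,\ldots,a_\ell)\in \RR^\ell$ with $[\nug]=\sum_j a_j[\nug_j]$ in $\Wcal(\Gg,\Lg,\Ng)$ such that $\nug = \kg + i^{\ast}\psi + \sum_j a_j \nug_j$ and inequality \eqref{eq=defectbelow} holds. The hypothesis $\nug \in i^{\ast}\QQQ(\Lg)^{\Gg}$ forces $[\nug]=0$ in $\Wcal(\Gg,\Lg,\Ng)$, so every $a_j=0$. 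Hence $\nug = \kg + i^{\ast}\psi$ and inequality \eqref{eq=defectbelow} collapses to $\DDL(\psi) \leq C \cdot \DDN(\nug)$.

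The key step, which is also the main (and only nontrivial) obstacle, is removing the residual $\kg$ term. Observe that $\kg = \nug - i^{\ast}\psi$ lies in $i^{\ast}\QQQ(\Lg)^{\Gg}$ (both $\nug$ and $i^{\ast}\psi$ do), while by construction $\kg \in \HHH^1(\Ng)^{\Gg}$. Thus $\kg \in \HHH^1(\Ng)^{\Gg} \cap i^{\ast}\QQQ(\Lg)^{\Gg}$. This is exactly where the hypotheses $\HHH^1(\Lg/\Ng)=\QQQ(\Lg/\Ng)$ and $\HHH_b^2(\Lg/\Ng)^{\Gg}=0$ enter: Proposition~\ref{prop=intersectiontriple} gives $\HHH^1(\Ng)^{\Gg} \cap i^{\ast}\QQQ(\Lg)^{\Gg} = i^{\ast}\HHH^1(\Lg)^{\Gg}$, so we may pick $\kg_1 \in \HHH^1(\Lg)^{\Gg}$ with $i^{\ast}\kg_1 = \kg$.

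To conclude, set $\psg = \kg_1 + \psi \in \QQQ(\Lg)^{\Gg}$. Then $i^{\ast}\psg = \kg + i^{\ast}\psi = \nug$. Since $\kg_1$ is a genuine homomorphism, $\DDL(\kg_1)=0$, and therefore $\DDL(\psg) = \DDL(\psi) \leq C\cdot \DDN(\nug)$, as required. Note that the chosen constant $C$ depends only on the fixed basis representatives $\nug_1,\ldots,\nug_\ell$ and not on $\nug$, completing the proof.
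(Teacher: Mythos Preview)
Your proof is correct and follows essentially the same approach as the paper's: apply Theorem~\ref{thm=comparisonDD} (noting that $[\nug]=0$ forces all $a_j=0$), observe that the residual $\kg$ lies in $\HHH^1(\Ng)^{\Gg}\cap i^{\ast}\QQQ(\Lg)^{\Gg}$, invoke Proposition~\ref{prop=intersectiontriple} to lift $\kg$ to some $\kg_1\in\HHH^1(\Lg)^{\Gg}$, and absorb it into $\psg=\kg_1+\psi$ without changing the defect.
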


\begin{proof}
Let $\ell=\Rdim \Wcal(\Gg,\Lg,\Ng)$. Fix a set of representative $\{\nug_1,\ldots,\nug_{\ell}\}$ of a basis of $\Wcal(\Gg,\Lg,\Ng)$. Take the constant $\mathscr{C}_{1,\mathrm{ctd}}\in \RR_{>0}$ as in Theorem~\ref{thm=comparisonDD}, and  set $C=\mathscr{C}_{1,\mathrm{ctd}}$. Then Theorem~\ref{thm=comparisonDD} applies to this $\nug$ with $(a_1,\ldots,a_{\ell})=(0,\ldots,0)$. Hence, we obtain $\kg\in \HHH^1(\Ng)^{\Gg}\cap i^{\ast}\QQQ(\Lg)^{\Gg}$ and $\psg'\in \QQQ(\Lg)^{\Gg}$ such that $\nug=\kg+i^{\ast}\psg'$ and $\DDL(\psg')\leq C\cdot \DDN(\nug)$. Note that $\kg=\nug-i^{\ast}\psg'$ belongs to $\HHH^1(\Ng)^{\Gg}\cap i^{\ast}\QQQ(\Lg)^{\Gg}$. By Proposition~\ref{prop=intersectiontriple}, $\kg\in i^{\ast}\HHH^1(\Lg)^{\Gg}$ holds. Write $\kg=i^{\ast}\tilde{\kg}$ with $\tilde{\kg}\in \HHH^1(\Lg)^{\Gg}$, and set $\psg=\psg'+\tilde{\kg}$. Then, $\nug=i^{\ast}\psg$ and $ \DDL(\psg)=\DDL(\psg')\leq C\cdot \DDN(\nug)$.
\end{proof}

\section{Construction of $\PPh^{\RR}$}\label{sec=Phi}
Recall the three steps in the outlined proof of Theorem~\ref{mthm=main} from Subsection~\ref{subsec=org}:

\begin{enumerate}
  \item [\underline{Step~1}.]  construct $\PPh^{\RR}\colon [\Gg,\Ng]\to \RR^{\ell}$;
  \item  [\underline{Step~2}.] construct $\PPs\colon \ZZ^{\ell}\to [\Gg,\Ng]$;
  \item  [\underline{Step~3}.] take an appropriate $\PPh\colon [\Gg,\Ng]\to \ZZ^{\ell}$ out of $\PPh^{\RR}$, and study $\PPs\circ\PPh$ and $\PPh\circ \PPs$.
\end{enumerate}

In this section, we take Step~1. In fact, we may construct the map $\PPh^{\RR}$ in a much broader situation than that of Theorem~\ref{mthm=main}. More precisely, the construction (Theorem~\ref{thm=evmap}) works as long as $\Wcal(\Gg,\Lg,\Ng)$ is finite dimensional.  Similar to Section~\ref{sec=absolute}, we focus on the case where $\Rdim\Wcal(\Gg,\Lg,\Ng)\in \NN$ (otherwise, we can take the zero map).
\subsection{The construction of $\PPh^{\RR}$}\label{subsec=Phi}
The following theorem is the precise form of Step~1. Recall our definition of QI-type estimates from below/above from Definition~\ref{defn=QI}.

\begin{thm}[construction of $\PPh^{\RR}$]\label{thm=evmap}
Assume Setting~$\ref{setting=GLN}$. Assume that $\Wcal(\Gg,\Lg,\Ng)$ is non-zero finite dimensional, and set $\ell=\Rdim \Wcal(\Gg,\Lg,\Ng)<\infty$.
Take an arbitrary basis of $\Wcal(\Gg,\Lg,\Ng)$. Take an arbitrary   set $\{\nu_1,\ldots ,\nu_{\ell}\}\subseteq  \QQQ(\Ng)^{\Gg}$ of representatives of this basis. Define $\PPh^{\RR}=\PPh^{\RR}_{(\nug_1,\ldots,\nug_{\ell})}\colon ([\Gg,\Ng],d_{{\scl}_{\Gg,\Ng}})\to (\mathbb{R}^{\ell},\|\cdot\|_1)$ by
\[
\PPh^{\RR}(\yg)=(\nug_1(\yg),\nug_2(\yg),\ldots ,\nug_{\ell}(\yg))
\]
for every $\yg\in [\Gg,\Ng]$.
Then the following hold true.
\begin{enumerate}[label=\textup{(}$\arabic*$\textup{)}]
  \item The map $\PPh^{\RR}$ is a pre-coarse homomorphism.
  \item We have the following QI-type estimate from above on $[\Gg,\Ng]$:
\[
\|\PPh^{\RR}(\yg_1)-\PPh^{\RR}(\yg_2)\|_1\leq 2\left(\sum_{j\in \{1,\ldots,\ell\}}\DD(\nug_j)\right)\cdot d_{\scl_{\Gg,\Ng}}(\yg_1,\yg_2) + \sum_{j\in \{1,\ldots,\ell\}}\DD(\nug_j).
\]
  \item Let $A\subseteq [\Gg,\Ng]$ be a $d_{\scl_{\Gg,\Lg}}$-bounded set, and set $D_A=\diam_{d_{\scl_{\Gg,\Lg}}}(A)$. Then we have the following QI-type estimate from below on  $A$:
\begin{equation}\label{eq=PPhPPh}
\|\PPh^{\RR}(\yg_1)-\PPh^{\RR}(\yg_2)\|_1\geq \frac{2}{\mathscr{C}_{1,\mathrm{ctd}}\mathscr{C}_{2,\mathrm{ctd}}}\cdot d_{\scl_{\Gg,\Ng}}(\yg_1,\yg_2)-\frac{(2D_A+1)\mathscr{C}_{1,\mathrm{ctd}}+1}{\mathscr{C}_{1,\mathrm{ctd}}\mathscr{C}_{2,\mathrm{ctd}}}.
\end{equation}
Here $\mathscr{C}_{1,\mathrm{ctd}}$ and $\mathscr{C}_{2,\mathrm{ctd}}$ are the constants associated with $(\nug_1,\ldots,\nug_{\ell})$ appearing in Theorem~$\ref{thm=comparisonDD}$.
\end{enumerate}
In particular, $\PPh^{\RR}\colon ([\Gg,\Ng],d_{{\scl}_{\Gg,\Ng}})\to (\mathbb{R}^{\ell},\|\cdot\|_1)$ is a coarse homomorphism and its restriction to every $d_{\scl_{\Gg,\Lg}}$-bounded set is a quasi-isometric embedding.
\end{thm}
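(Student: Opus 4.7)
The plan is to follow the same template as Proposition~\ref{prop=evmap} (the absolute version in Section~\ref{sec=absolute}), but replace the use of Proposition~\ref{prop=defectBanach} with the comparison theorem of defects (Theorem~\ref{thm=comparisonDD}) whenever we need a QI-type lower estimate. Parts (1) and (2) are formal consequences of the quasimorphism axiom coordinatewise: for (1) the inequality $\|\PPh^{\RR}(\yg_1\yg_2) - \PPh^{\RR}(\yg_1) - \PPh^{\RR}(\yg_2)\|_1 \leq \sum_j \DD(\nug_j)$ is immediate from the defect of each $\nug_j$, and (2) follows by combining this pre-homomorphism estimate with Theorem~\ref{thm=Bavard} applied to each $\nug_j$ individually (noting that each $\nug_j$ lies in $\QQQ(\Ng)^{\Gg}$, regardless of whether it represents a non-trivial class in $\Wcal$). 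These steps are essentially identical to the arguments for Proposition~\ref{prop=evmap} (1) and (2), so no new ideas are needed.

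The substantive step is (3). The expectation is that the $d_{\scl_{\Gg,\Lg}}$-boundedness of $A$ lets us control the contribution of the ``extendable'' part of an arbitrary $\nug \in \QQQ(\Ng)^{\Gg}$ at points of $A$, while Theorem~\ref{thm=comparisonDD} lets us separate that part from the $\Wcal$-part. Concretely, for $\yg_1,\yg_2 \in A$ and $\nug \in \QQQ(\Ng)^{\Gg}$ with $\DD(\nug) > 0$, I would apply Theorem~\ref{thm=comparisonDD} to write $\nug = \kg + i^{\ast}\psg + \sum_{j} a_j \nug_j$ with $\kg \in \HHH^1(\Ng)^{\Gg}$, $\psg \in \QQQ(\Lg)^{\Gg}$ and the inequality \eqref{eq=defectbelow} in force. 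Since $\yg_1^{-1}\yg_2 \in [\Gg,\Ng]$, the homomorphism $\kg$ vanishes on it, so
\[
|\nug(\yg_1^{-1}\yg_2)| \;\leq\; |\psg(\yg_1^{-1}\yg_2)| + \sum_{j} |a_j|\cdot|\nug_j(\yg_1^{-1}\yg_2)|.
\]
The first summand is bounded by $2\DD(\psg)\cdot d_{\scl_{\Gg,\Lg}}(\yg_1,\yg_2) \leq 2 D_A \DD(\psg)$ by Theorem~\ref{thm=Bavard} applied to $\psg \in \QQQ(\Lg)^{\Gg}$ (this is where the $d_{\scl_{\Gg,\Lg}}$-boundedness of $A$ enters). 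For the second summand, the telescoping identity $|\nug_j(\yg_1^{-1}\yg_2) - (\nug_j(\yg_2) - \nug_j(\yg_1))| \leq \DD(\nug_j)$ bounds it by $(\max_j |a_j|) \|\PPh^{\RR}(\yg_1)-\PPh^{\RR}(\yg_2)\|_1 + \sum_j |a_j| \DD(\nug_j)$.

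Now I would invoke Theorem~\ref{thm=comparisonDD} to control $\DD(\psg)$ and $\sum_j |a_j|$ in terms of $\DD(\nug)$: namely $\DD(\psg) \leq \mathscr{C}_{1,\mathrm{ctd}} \DD(\nug)$ and $\sum_j|a_j| \leq \mathscr{C}_{1,\mathrm{ctd}}\mathscr{C}_{2,\mathrm{ctd}}\DD(\nug)$ (the latter is the observation recorded in Remark~\ref{rem=a_j}). Plugging these in, dividing by $2\DD(\nug)$, and taking the supremum over $\nug$, Theorem~\ref{thm=Bavard} applied to $\scl_{\Gg,\Ng}$ converts the left-hand side into $d_{\scl_{\Gg,\Ng}}(\yg_1,\yg_2)$, and rearranging produces an inequality of the form
\[
d_{\scl_{\Gg,\Ng}}(\yg_1,\yg_2) \;\leq\; \tfrac{\mathscr{C}_{1,\mathrm{ctd}}\mathscr{C}_{2,\mathrm{ctd}}}{2}\|\PPh^{\RR}(\yg_1)-\PPh^{\RR}(\yg_2)\|_1 + (\text{constants depending only on $D_A$, $\mathscr{C}_{1,\mathrm{ctd}}$, $\mathscr{C}_{2,\mathrm{ctd}}$}),
\]
which is the desired estimate \eqref{eq=PPhPPh} after bookkeeping of constants (the $+\tfrac{1}{2}$ that emerges from the application of Bavard duality is what produces the extra summand $+1$ inside the numerator of the additive error).

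The main obstacle I anticipate is purely bookkeeping: making the constants come out exactly as stated in \eqref{eq=PPhPPh} rather than in some looser form. The conceptual content is entirely in Theorem~\ref{thm=comparisonDD}; once one has the decomposition with the compatible control on $\DD(\psg)$ and $\sum_j|a_j|$, the only thing remaining is to extract from $\max_j|a_j|$ the cleanest possible dependence (here $\max_j|a_j| \leq \sum_j |a_j|$ suffices) and to absorb the constant $\max_j \DD(\nug_j)$-type terms into the additive error. No further structural results are needed beyond Theorems~\ref{thm=Bavard} and \ref{thm=comparisonDD}.
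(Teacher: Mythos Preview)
Your proposal is correct and follows essentially the same route as the paper: parts (1) and (2) are dismissed as parallel to Proposition~\ref{prop=evmap}, and part (3) is obtained by decomposing an arbitrary $\nug$ via Theorem~\ref{thm=comparisonDD}, bounding the $\psg$-contribution using the $d_{\scl_{\Gg,\Lg}}$-boundedness of $A$, bounding the $\nug_j$-contribution by $\|\PPh^{\RR}(\yg_1)-\PPh^{\RR}(\yg_2)\|_1$ via Remark~\ref{rem=a_j}, and concluding with Theorem~\ref{thm=Bavard}. The only cosmetic difference is that the paper works with $\nug(\yg_1)-\nug(\yg_2)$ (using $\kg(\yg_1)=\kg(\yg_2)=0$ separately) and only at the very end adds a single $\DD(\nug)$ to pass to $\nug(\yg_1^{-1}\yg_2)$; this avoids your extra $\sum_j|a_j|\DD(\nug_j)$ term and is exactly the bookkeeping tweak that produces the stated constants in \eqref{eq=PPhPPh}.
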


\begin{proof}
The proofs of (1) and (2) are parallel to those of Proposition~\ref{prop=evmap}~(1) and (2). In what follows, we prove (3). Let $\yg_1,\yg_2\in A$. By Theorem~\ref{thm=Bavard}, for every $\psg\in \QQQ(\Lg)^{\Gg}$, we have
\begin{equation}\label{eq=gosapsg}
|\psg(\yg_1)-\psg(\yg_2)|\leq (2D_A+1)\DD(\psg).
\end{equation}
Let $\nug\in \QQQ(\Ng)^{\Gg}$. Then, by Theorem~\ref{thm=comparisonDD}, there exist $\kg\in \HHH^1(\Ng)^{\Gg}$, $\psg\in \QQQ(\Lg)^{\Gg}$ and $(a_1,\ldots ,a_{\ell})\in \RR^{\ell}$ such that $\nug=\kg+i^{\ast}\psg+\sum\limits_{j\in \{1,\ldots,\ell\}}a_j\nug_j$
and
\begin{equation}\label{eq=defectDDnug}
\DD(\nug)\geq \mathscr{C}_{1,\mathrm{ctd}}{}^{-1}\left(\DD(\psg)+\mathscr{C}_{2,\mathrm{ctd}}{}^{-1}\cdot \sum_{j\in \{1,\ldots,\ell\}}|a_j|\right).
\end{equation}
By Remark~\ref{rem=a_j}, the inequality \eqref{eq=defectDDnug} in particular implies that
\begin{equation}\label{eq=DDbelowa_j}
\max_{j\in \{1,\ldots,\ell\}}|a_j|\left(\leq \sum_{j\in \{1,\ldots,\ell\}}|a_j|\right)\leq \mathscr{C}_{1,\mathrm{ctd}}\mathscr{C}_{2,\mathrm{ctd}}\cdot\DD(\nug).
\end{equation}
Note that $\kg(\yg_1)=\kg(\yg_2)=0$ since $\yg_1,\yg_2\in [\Gg,\Ng]$. Together with \eqref{eq=gosapsg}, we have
\[
\left|\nug(\yg_1)-\nug(\yg_2)- \sum_{j\in \{1,\ldots,\ell\}}a_j(\nug_j(\yg_1)-\nug_j(\yg_2))\right|\leq |\psg(\yg_1)-\psg(\yg_2)| \leq (2D_A+1)\DD(\psg).
\]
By \eqref{eq=defectDDnug}, we obtain that
\begin{equation}\label{eq=mainnug}
\left|\nug(\yg_1)-\nug(\yg_2)- \sum_{j\in \{1,\ldots,\ell\}}a_j(\nug_j(\yg_1)-\nug_j(\yg_2))\right|\leq (2D_A+1)\mathscr{C}_{1,\mathrm{ctd}}\DD(\nug).
\end{equation}
By \eqref{eq=DDbelowa_j}, we have
\begin{align*}
\left|\sum_{j\in \{1,\ldots,\ell\}}a_j(\nug_j(\yg_1)-\nug_j(\yg_2))\right|&\leq \left(\max_{j\in \{1,\ldots,\ell\}}|a_j|\right) \cdot \sum_{j\in \{1,\ldots,\ell\}}|\nug_j(\yg_1)-\nug_j(\yg_2)|\\
&\leq \mathscr{C}_{1,\mathrm{ctd}}\mathscr{C}_{2,\mathrm{ctd}} \DD(\nug)\cdot \|\PPh^{\RR}(\yg_1)-\PPh^{\RR}(\yg_2)\|_1.
\end{align*}
Therefore, by \eqref{eq=mainnug} we obtain that
\[
|\nug(\yg_1^{-1}\yg_2)|\leq \left(\mathscr{C}_{1,\mathrm{ctd}}\mathscr{C}_{2,\mathrm{ctd}}\cdot  \|\PPh^{\RR}(\yg_1)-\PPh^{\RR}(\yg_2)\|_1+(2D_A+1)\mathscr{C}_{1,\mathrm{ctd}}+1\right)\cdot\DD(\nug).
\]
Then it follows from Theorem~\ref{thm=Bavard} that
\[
d_{\scl_{\Gg,\Ng}}(\yg_1,\yg_2)\leq \frac{\mathscr{C}_{1,\mathrm{ctd}}\mathscr{C}_{2,\mathrm{ctd}}}{2}\cdot  \|\PPh^{\RR}(\yg_1)-\PPh^{\RR}(\yg_2)\|_1+\frac{(2D_A+1)\mathscr{C}_{1,\mathrm{ctd}}+1}{2};
\]
equivalently, we obtain \eqref{eq=PPhPPh}.
\end{proof}

\subsection{Proof of Theorem~\ref{mthm=dimfin}}\label{subsec=dimfin}
Here, we deduce Theorem~\ref{mthm=dimfin} from Theorem~\ref{thm=evmap}.

\begin{proof}[Proof of Theorem~\textup{\ref{mthm=dimfin}}]
If $\Wcal(\Gg,\Lg,\Ng)=0$, then Theorem~\ref{thm=biLip} shows that every $d_{\scl_{\Gg,\Lg}}$-bounded set $A\subseteq [\Gg,\Ng]$ is $d_{\scl_{\Gg,\Ng}}$-bounded. Hence the assertions of Theorem~\ref{mthm=dimfin} hold. If $\Wcal(\Gg,\Lg,\Ng)$ is infinite dimensional, then there is nothing to prove. Finally, if $\Wcal(\Gg,\Lg,\Ng)$ is non-zero finite dimensional, then Theorem~\ref{thm=evmap} implies the conclusions. This completes our proof.
\end{proof}

\section{Core extractor}\label{sec=CE}
In Section~\ref{sec=Psi}, we will take Step~2 (constructing the map $\PPs$) in the outlined proof of Theorem~\ref{mthm=main}.  Sections~\ref{sec=CE} and \ref{sec=CEabel} are devoted to the introduction to the theory behind the construction of $\PPs$: it is  the theory of \emph{core extractors}.

The map $\Ptau$ in Proposition~\ref{prop=dimQ} is of the form
\[
\ZZ^{\ell}\ni (m_1,\ldots,m_{\ell})\mapsto \yg_1^{m_1}\cdots \yg_{\ell}^{m_{\ell}},
\]
for appropriate elements $\yg_1,\ldots,\yg_{\ell}\in [\Gg,\Ng]$. However, in this construction, $\Ptau(\ZZ^{\ell})$ is $d_{\scl_{\Gg,\Lg}}$-bounded if and only if the equalities
\[
\scl_{\Gg,\Lg}(\yg_1)=\cdots =\scl_{\Gg,\Lg}(\yg_{\ell})=0
\]
hold; recall the semi-homogeneity \eqref{eq=semihom}. These equalities are too strong conditions for general cases. For instance, if $\Gg$ is a non-elementary Gromov-hyperbolic group, then \cite{CaFu10} in particular implies the following (see also \cite[{\S}6.C2]{Gromov1993} and \cite{EpsteinFujiwara}): if $\gG \in [\Gg,\Gg]$ satisfies $\scl_{\Gg}(\gG)=0$, then $\gG$ admits a non-zero power that is conjugate to its inverse (in $\Gg$). We note that if $\yg\in \Gg$ lies in the group of the form $[\Gg,\Lg]$ and if $\scl_{\Gg,\Lg}(\yg)=0$, then $\scl_{\Gg}(\yg)=0$ (because $\scl_{\Gg}(\yg)\leq \scl_{\Gg,\Lg}(\yg)$). Hence, if $\Gg$ is a non-elementary Gromov-hyperbolic group, then for such $\yg$ there exist $n\in \NN$ and $\gamma\in \Gg$ such that $\gamma \yg^n\gamma^{-1}=\yg^{-n}$; if such $\yg$ lies in the group of the form $[\Gg,\Ng]$, then $\scl_{\Gg,\Ng}(\yg)=0$. Indeed, for $n$ and $\gamma$ above, we have for every $m\in \NN$,
\[
2mn\cdot \scl_{\Gg,\Ng}(\yg)=\scl_{\Gg,\Ng}(\yg^{mn} \yg^{mn})=\scl_{\Gg,\Ng}(\gamma \yg^{-mn}\gamma^{-1} \yg^{mn}) =\scl_{\Gg,\Ng}([\gamma, \yg^{-mn}])\leq 1;
\]
hence $\scl_{\Gg,\Ng}(\yg)=0$.
Therefore, in order to construct a map $\PPs\colon [\Gg,\Ng]\to \ZZ^{\ell}$ as in Theorem~\ref{mthm=main}, we need to modify the whole construction of the map $\Ptau$ in Proposition~\ref{prop=sclkika} in general.

A model case was found in \cite{MMM}:  as we mentioned in Subsection~\ref{subsec=comparative}, some of the authors showed there that for $\Gg=\pi_1(\Sigma_{\genus})$ with $\genus\in \NN_{\geq 2}$ and $\Ng=[\Gg,\Gg]$, $\scl_{\Gg}$ and $\scl_{\Gg,\Ng}$ are \emph{not} bi-Lipschitzly equivalent on $[\Gg,\Ng]$. In fact, they constructed the following sequence
\begin{equation}\label{eq=surfaceelem}
(\yg_m)_{m\in \NN}=(c_1[b_1^{-1},a_1^{m}]c_1^{-1} c_2[b_2^{-1},a_2^{m}]c_2^{-1} \cdots c_{\genus}[b_{\genus}^{-1},a_{\genus}^{m}]c_{\genus}^{-1})_{m\in \NN},
\end{equation}
where $a_1,\ldots,a_{\genus},b_1,\ldots,b_{\genus}$ are the standard generators of $G$ and $c_1,\ldots,c_{\genus}$ are certain elements in $\Gg$ (see \cite[Section~6]{MMM} for more details). Then, they showed that
\[
(\sup_{m\in \NN}\scl_{\Gg}(\yg_m)\leq \genus <\infty, \quad \textrm{but}) \quad \sup_{m\in \NN}\scl_{\Gg,\Ng}(\yg_m)=\infty,
\]
thus proving the non-(bi-Lipschitz-)equivalence above.

This example suggests that we should try to construct a map $\PPs\colon \ZZ^{\ell}\to [\Gg,\Ng]$ of the form
\[
(m_1,\ldots,m_{\ell})\mapsto \beta_1(m_1)\cdots \beta_{\ell}(m_{\ell}),
\]
where the maps $\beta_1,\ldots,\beta_{\ell}\colon \ZZ\to [\Gg,\Ng]$ are of the form $m\mapsto [\gG_1,\gG_1'{}^{m}]\cdots [\gG_{t},\gG_t'{}^{m}]$ for some $t\in \NN$, some $\gG_1,\ldots,\gG_{t}\in \Gg$ and some $\gG'_1,\ldots,\gG'_{t}\in \Lg$. In this manner, we can have a $d_{\cl_{\Gg,\Lg}}$-bounded map $\PPs\colon \ZZ^{\ell}\to [\Gg,\Ng]$.
However, it is a \emph{challenge} to construct such maps $\beta_1,\ldots,\beta_{\ell}$ in such a way that the resulting map $\PPs$ works. In the case of $(\Gg,\Lg,\Ng)=(\pi_1(\Sigma_{\genus}),\pi_1(\Sigma_{\genus}),\gamma_2(\pi_1(\Sigma_{\genus})))$, we have $\Rdim \WW(\Gg,\Ng)=1$ (Theorem~\ref{thm=WW}~(2)). As we briefly mentioned in Subsections~\ref{subsec=digest} and \ref{subsec=comparative}, in \cite{MMM} some of the authors constructed a representative $\nug_1\in \QQQ(\Ng)^{\Gg}$ of a generator of $\WW(\Gg,\Ng)$  ($q'{}^{\ast}\mu_{\rho_{\ell}}$ in \cite[Section~6]{MMM}). The construction \eqref{eq=surfaceelem} is based on the fact that we can estimate the values $(\nug_1(\yg_m))_{m\in\NN}$. Contrastingly, to show Theorem~\ref{mthm=main}, we need to treat \emph{all} triples of the form $(\Gg,\Lg,\Ng)=(\Gg,\gamma_{q-1}(\Gg),\gamma_q(\Gg))$, $q\in \NN_{\geq 2}$ such that $\Wcal(\Gg,\Lg,\Ng)$ is non-zero finite dimensional. In this generality, we can \emph{hardly} hope for fully concrete constructions of a set of representatives (in $\QQQ(\Ng)^{\Gg}$) of a basis of $\Wcal(\Gg,\Lg,\Ng)$. Therefore, the challenge above is to construct  maps $\beta_1,\ldots,\beta_{\ell}\colon \ZZ\to [\Gg,\Ng]$ of the form above appropriately \emph{without knowing concrete forms of non-extendable invariant quasimorphisms}. Surprisingly, this can be done; we introduce the theory of core extractors for this purpose. Despite the fact that we only use the theory in the `abelian case' (see Section~\ref{sec=CEabel}) in the present paper, this theory itself works under  weaker assumptions. Hence, we introduce the theory in a general setting in this section, and state the specialized theory to the `abelian case' in Section~\ref{sec=CEabel}.

\subsection{Conditions on the tuple $(\Ff,\Kf,\Mf,\Rel,\ppi)$}
One of the main ideas in the theory of core extractors is to \emph{lift} the triple $(\Gg,\Lg,\Ng)$ to $(\Ff,\Kf,\Mf)$ that behaves good in terms of the extandability of invariant quasimorphisms. First, we discuss our conditions on the new triple $(\Ff,\Kf,\Mf)$ to develop the theory of core extractors. More precisely, we introduce the following notion.

\begin{defn}[tuple associated with $(\Gg,\Lg,\Ng)$]\label{defn=FKM}
Let $(\Gg,\Lg,\Ng)$ be a triple in Setting~$\ref{setting=GLN}$. We say a  tuple $(\Ff,\Kf, \Mf,\Rel,\ppi)$ is a \emph{tuple associated with $(\Gg,\Lg,\Ng)$} if it satisfies the following:
\begin{enumerate}[label=\textup{(}$\arabic*$\textup{)}]
  \item $\Ff$ is a group, and $\Kf$ and $\Mf$ are two normal subgroups of $\Ff$ with $\Kf\geqslant \Mf$;
  \item$\ppi\colon \Ff\twoheadrightarrow \Gg$ is a surjective group homomorphism, and $\Rel=\Ker(\ppi)$; and
  \item $\ppi(\Kf)=\Lg$ and $\ppi(\Mf)=\Ng$.
\end{enumerate}
\end{defn}

For a tuple $(\Ff,\Kf, \Mf,\Rel,\ppi)$ associated with $(\Gg,\Lg,\Ng)$, we define the following two conditions. Recall our convention that $i_{J,H}\colon J\hookrightarrow H$ denotes the inclusion map for $J\leqslant H$ (we have abbreviated $i_{J,H}$ as $i$, but in this section we use the symbol $i_{J,H}$ to make the pair $(J,H)$ explicit).

\begin{defn}[two conditions for the theory of core extractors]\label{defn=FKMcondition}
Assume Setting~\ref{setting=GLN}. Let $(\Ff,\Kf, \Mf,\Rel,\ppi)$ be a tuple associated with $(\Gg,\Lg,\Ng)$. We define the following conditions (i) and (ii)  on the tuple $(\Ff,\Kf, \Mf,\Rel,\ppi)$.
\begin{enumerate}[label=(\roman*)]
 \item $\Wcal(\Ff,\Kf,\Mf)=0$ and $\HHH^1(\Mf)^{\Ff}\cap i^{\ast}\QQQ(\Kf)^{\Ff}=i^{\ast}\HHH^1(\Kf)^{\Ff}$. Here $i$ is the inclusion map $i_{\Mf,\Kf}\colon \Mf\hookrightarrow \Kf$.
 \item $(M\cap R)/(M\cap R\cap [\Ff,\Kf])$ is a torsion group.
\end{enumerate}
\end{defn}

In Setting~\ref{setting=GLN}, given a tuple $(\Ff,\Kf,\Mf,\Rel,\ppi)$ associated with $(\Gg,\Lg,\Ng)$, the natural group quotient map $\pi\colon \Ff\to \Gg$ induces a homomorphism $\pi^{\ast}\colon \QQQ(\Ng)^{\Gg}\to \QQQ(\Mf)^{\Ff}$ (we should write $(\pi|_{\Mf})^{\ast}$; but we abuse notation). For $\nug\in \QQQ(\Ng)^{\Gg}$, we can consider $\muf=\ppi^{\ast}\nug(=\nug\circ\ppi)\in \QQQ(\Mf)^{\Ff}$. Under condition (i) in Definition~\ref{defn=FKMcondition}, this $\muf$ can be decomposed as $\muf=\hf+i^{\ast}_{\Mf,\Kf}\phf$, where $\hf\in \HHH^1(\Mf)^{\Ff}$ and $\phf\in \QQQ(\Kf)^{\Ff}$; this is a key to  our theory. This argument is illustrated by the following diagrams.

\begin{equation}\label{diagram=core}
 \xymatrix@C=36pt{
\Ff \ar@{->>}[d]_{\ppi} \ar@{}[r]|*{\geqslant}  & \Kf \ar@{}[r]|*{\geqslant} \ar@{->>}[d]_{\ppi}  & \Mf \ar@{->>}[d]_{\ppi}  \\
\Gg\cong \Ff/\Rel \ar@{}[r]|*{\geqslant}  & \Lg \ar@{}[r]|*{\geqslant}  & \Ng,
} \qquad \qquad
\xymatrix{
\QQQ(\Mf)^{\Ff}\ar@{}[r]|*{\ni} &  \muf=\ppi^{\ast}\nug \ar@{}[r]|*{=}& \hf+i^{\ast}_{\Mf,\Kf}\phf \\
\QQQ(\Ng)^{\Gg}\ar[u]_{\ppi^{\ast}}\ar@{}[r]|*{\ni}& \nug. \ar@{|->}[u]. &}
\end{equation}

\begin{rem}\label{rem=torsion}
Condition~(ii) in Definition~\ref{defn=FKMcondition} holds if $(M\cap R)/(M\cap R\cap [\Ff,\Kf])$ is trivial, in other words,
\begin{enumerate}
\item[(ii$'$)] $M\cap R\leqslant [\Ff,\Kf]$
\end{enumerate}
holds. Examples discussed in the present paper in fact satisfy (ii$'$). Nevertheless, we formulate with a weaker assumption (ii) than (ii$'$) for further applications of our theory in a forthcoming work.
\end{rem}

We furthermore formulate the following `\emph{descending condition}' (D).

\begin{defn}[descending condition (D)]\label{defn=descending}
Assume Setting~\ref{setting=GLN}. Let $(\Ff,\Kf, \Mf,\Rel,\ppi)$ be a tuple associated with $(\Gg,\Lg,\Ng)$. We say that $(\Ff,\Kf, \Mf,\Rel,\ppi)$ satisfies \emph{condition \textup{(D)}}, or the \emph{descending condition}, if the following holds: for every $\nu\in \QQQ(\Ng)^{\Gg}$, if $\pi^{\ast}\nu\in i_{\Mf,\Kf}^{\ast}\QQQ(\Kf)^{\Ff}$, then $\nu\in i_{\Ng,\Lg}^{\ast}\QQQ(\Lg)^{\Gg}$.
\end{defn}

Condition (D) morally states that for every $\nu\in \QQQ(\Ng)^{\Gg}$, if the `lift' $\pi^{\ast}\nu$ of $\nu$ is extendable to $\Kf$, then there exists an extension of $\pi^{\ast}\nu$ on $\Kf$ that \emph{descends to} a quasimorphism on the group quotient $\Lg$. In the theory of core extractors, these three conditions (i), (ii) in Definition~\ref{defn=FKMcondition} and (D) in Definition~\ref{defn=descending} are important. In Example~\ref{exa=surface}, we will exhibit examples of $(\Gg,\Ng)$ (which corresponds to the triple $(\Gg,\Gg,\Ng)$) for which there exists a tuple $(\Ff,\Kf,\Mf,\Rel,\ppi)$ satisfying conditions (i), (ii) and (D). In Lemma~\ref{lem=maintogeneral} (together with Theorem~\ref{thm=abelD}), we will show that if a  group triple $(\Gg,\Lg,\Ng)$ satisfies the assumptions of Theorem~\ref{mthm=main}, then there exists a tuple $(\Ff,\Kf,\Mf,\Rel,\ppi)$ satisfying conditions (i), (ii) and (D). Under these three conditions, we can define the core extractor as an \emph{injective} $\RR$-linear map; we will argue in Definition~\ref{defn=CoreE} and Theorem~\ref{thm=InjCE}. Theorem~\ref{thm=InjCE} is the upshot of the theory of core extractors.

In what follows, we provide the following criteria (Propositions~\ref{prop=onetwo} and \ref{prop=D}) of a tuple $(\Ff,\Kf,\Mf,\Rel,\ppi)$ for condition (i) in Definition~\ref{defn=FKMcondition} and the descending condition (D).

\begin{prop}[criterion for condition~\textup{(i)}]\label{prop=onetwo}
Let $\Ff$ be a group, and let $\Kf$ and $\Mf$ be two  normal subgroups of $\Ff$ satisfying $\Kf\geqslant \Mf$. Assume that the following three conditions are fulfilled:
\begin{enumerate}[label=\textup{(}$\arabic*$\textup{)}]
  \item $\HHH^2(\Ff)=0$;
  \item $\Ff/\Mf$ is boundedly $3$-acyclic; and
  \item $\Kf/\Mf$ is boundedly $2$-acyclic.
\end{enumerate}
Then,  we have $\Wcal(\Ff,\Kf,\Mf)=0$ and $\HHH^1(\Mf)^{\Ff}\cap i^{\ast}_{\Mf,\Kf}\QQQ(\Kf)^{\Ff}=i_{\Mf,\Kf}^{\ast}\HHH^1(\Kf)^{\Ff}$.
\end{prop}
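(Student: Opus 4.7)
The plan is to deduce the two conclusions from already-established machinery by applying Theorem~\ref{thm=KKMMMmain2} and Proposition~\ref{prop=intersectiontriple} to the triple $(\Gg,\Lg,\Ng)=(\Ff,\Kf,\Mf)$. The key observation is that both conclusions follow essentially for free once one recognizes which group quotient plays the role of the ``$\QG$'' in each cited result: for the vanishing of $\Wcal(\Ff,\Kf,\Mf)$ the relevant quotient is $\Ff/\Mf$ (where we use assumption~(2)), and for the intersection identity it is $\Kf/\Mf$ (where we use assumption~(3)); assumption~(1) kills the ambient target $\HHH^2(\Ff)$ in the first application.

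First I would handle the vanishing $\Wcal(\Ff,\Kf,\Mf)=0$. By Theorem~\ref{thm=KKMMMmain2} applied to the pair $(\Ff,\Mf)$ with quotient $\Ff/\Mf$ (boundedly $3$-acyclic by~(2)), one gets
\[
\WW(\Ff,\Mf)\cong \Im(c^2_{\Ff}\colon \HHH^2_b(\Ff)\to \HHH^2(\Ff))\cap \Im(\pp^{\ast}\colon \HHH^2(\Ff/\Mf)\to \HHH^2(\Ff)).
\]
By assumption~(1), $\HHH^2(\Ff)=0$, so the right-hand side is trivial, giving $\WW(\Ff,\Mf)=0$. Since for any $\psg\in \QQQ(\Ff)$ the restriction $i^{\ast}_{\Mf,\Ff}\psg=i^{\ast}_{\Mf,\Kf}(i^{\ast}_{\Kf,\Ff}\psg)$ lies in $i^{\ast}_{\Mf,\Kf}\QQQ(\Kf)^{\Ff}$ (because $i^{\ast}_{\Kf,\Ff}\psg$ is automatically $\Ff$-invariant by Lemma~\ref{lem=Ginv}), there is a natural surjection $\WW(\Ff,\Mf)\twoheadrightarrow \Wcal(\Ff,\Kf,\Mf)$, from which $\Wcal(\Ff,\Kf,\Mf)=0$.

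For the second conclusion I would invoke Proposition~\ref{prop=intersectiontriple} with $(\Gg,\Lg,\Ng)=(\Ff,\Kf,\Mf)$; its hypotheses are $\HHH^1(\Kf/\Mf)=\QQQ(\Kf/\Mf)$ and $\HHH^2_b(\Kf/\Mf)^{\Ff}=0$. Both are immediate from assumption~(3): bounded $2$-acyclicity of $\Kf/\Mf$ yields $\HHH^2_b(\Kf/\Mf)=0$ (so also the $\Ff$-invariant part vanishes), and via Lemma~\ref{lem=comparison} the identification $\QQQ(\Kf/\Mf)/\HHH^1(\Kf/\Mf)\cong \Ker(c^2_{\Kf/\Mf})$ combined with $\HHH^2_b(\Kf/\Mf)=0$ forces $\QQQ(\Kf/\Mf)=\HHH^1(\Kf/\Mf)$. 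Hence Proposition~\ref{prop=intersectiontriple} applies and gives $\HHH^1(\Mf)^{\Ff}\cap i^{\ast}_{\Mf,\Kf}\QQQ(\Kf)^{\Ff}=i^{\ast}_{\Mf,\Kf}\HHH^1(\Kf)^{\Ff}$.

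Since both conclusions are obtained by direct invocations, I do not foresee a substantive obstacle; the only points requiring care are the bookkeeping of which quotient to use and the elementary verification that $\WW(\Ff,\Mf)$ surjects onto $\Wcal(\Ff,\Kf,\Mf)$ (so that the vanishing of the former transports to the latter). No new computation is needed beyond these compatibility checks.
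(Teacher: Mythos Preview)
Your proposal is correct and follows essentially the same approach as the paper's proof: invoke Theorem~\ref{thm=KKMMMmain2} with assumptions~(1) and~(2) to get $\WW(\Ff,\Mf)=0$ and hence $\Wcal(\Ff,\Kf,\Mf)=0$, then invoke Proposition~\ref{prop=intersectiontriple} with assumption~(3) for the intersection identity. You have simply spelled out in more detail the steps the paper leaves implicit (the surjection $\WW(\Ff,\Mf)\twoheadrightarrow\Wcal(\Ff,\Kf,\Mf)$ and the verification that bounded $2$-acyclicity of $\Kf/\Mf$ yields the hypotheses of Proposition~\ref{prop=intersectiontriple}).
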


\begin{proof}
By Theorem~\ref{thm=KKMMMmain2}, assumptions (1) and (2) imply that $\WW(\Ff,\Mf)=0$. Then, we obtain that $\Wcal(\Ff,\Kf,\Mf)=0$. By Proposition~\ref{prop=intersectiontriple}, assumption (3) implies that $\HHH^1(\Mf)^{\Ff}\cap i^{\ast}_{\Mf,\Kf}\QQQ(\Kf)^{\Ff}=i_{\Mf,\Kf}^{\ast}\HHH^1(\Kf)^{\Ff}$.
\end{proof}

The following special case of Proposition~\ref{prop=onetwo} is important in our theory; this indicates a \emph{general way} to take a lift $(\Ff,\Kf,\Mf)$ of $(\Gg,\Lg,\Ng)$ satisfying condition~(i) in Definition~\ref{defn=FKMcondition}, provided that $\QG=\Gg/\Ng$ is boundedly $3$-acyclic and that $\Lg/\Ng$ is boundedly $2$-acyclic.




\begin{cor}[lifting to a free group]\label{cor=free}
Assume Setting~$\ref{setting=GLN}$. Assume that $\QG=\Gg/\Ng$ is boundedly $3$-acyclic and that $\Lg/\Ng$ is boundedly $2$-acyclic. Take an arbitrary \emph{free group} $\Ff$ such that there exists a surjective group homomorphism $\ppi\colon \Ff\twoheadrightarrow \Gg$, and set $\Rel=\Ker(\ppi)$. Set $\Kf=\ppi^{-1}(\Lg)$ and  $\Mf=\ppi^{-1}(\Ng)$. Then, the tuple $(\Ff,\Kf,\Mf,\Rel,\ppi)$ satisfies condition \textup{(i)} in Definition~$\ref{defn=FKMcondition}$.
\end{cor}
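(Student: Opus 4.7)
The plan is to reduce Corollary~\ref{cor=free} directly to Proposition~\ref{prop=onetwo} by verifying its three hypotheses (1)--(3) for the tuple $(\Ff,\Kf,\Mf,\Rel,\ppi)$ constructed from the free cover. Since $\Ff$ is a free group, its classifying space is (homotopy equivalent to) a graph, hence has cohomological dimension at most $1$; this immediately yields $\HHH^2(\Ff)=0$, which is hypothesis~(1). To handle hypotheses~(2) and (3) I would simply identify the quotient groups $\Ff/\Mf$ and $\Kf/\Mf$ with $\QG$ and $\Lg/\Ng$, respectively, and then invoke the assumed bounded acyclicity of these quotients.

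More precisely, the surjection $\ppi\colon \Ff\twoheadrightarrow \Gg$ has kernel $\Rel$, and $\Rel\leqslant \Mf$ since $\Mf=\ppi^{-1}(\Ng)$. The first isomorphism theorem therefore gives $\Ff/\Mf\cong \Gg/\Ng=\QG$, so hypothesis~(2) holds by the bounded $3$-acyclicity assumption on $\QG$. For hypothesis~(3), I would observe that $\Mf=\ppi^{-1}(\Ng)\leqslant \ppi^{-1}(\Lg)=\Kf$ because $\Ng\leqslant \Lg$, and the restriction $\ppi|_{\Kf}\colon \Kf\twoheadrightarrow \Lg$ is surjective with $\Mf$ equal to its preimage of $\Ng$; a short diagram chase (or the third isomorphism theorem) then gives $\Kf/\Mf\cong \Lg/\Ng$, which is boundedly $2$-acyclic by assumption.

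With the three hypotheses verified, Proposition~\ref{prop=onetwo} immediately yields $\Wcal(\Ff,\Kf,\Mf)=0$ together with $\HHH^1(\Mf)^{\Ff}\cap i^{\ast}_{\Mf,\Kf}\QQQ(\Kf)^{\Ff}=i_{\Mf,\Kf}^{\ast}\HHH^1(\Kf)^{\Ff}$, which is exactly condition~(i) of Definition~\ref{defn=FKMcondition}. There is no substantive obstacle here: the only points worth stating cleanly are the vanishing $\HHH^2(\Ff)=0$ for free groups and the isomorphisms $\Ff/\Mf\cong \QG$ and $\Kf/\Mf\cong \Lg/\Ng$; the rest is an application of the already-established criterion.
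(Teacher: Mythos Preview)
Your proposal is correct and follows essentially the same approach as the paper's proof: identify $\Ff/\Mf\cong\QG$ and $\Kf/\Mf\cong\Lg/\Ng$, note $\HHH^2(\Ff)=0$ for free $\Ff$, and apply Proposition~\ref{prop=onetwo}. The paper's proof is just a terser version of what you wrote.
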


\begin{proof}
In the construction of the tuple $(\Ff,\Kf,\Mf,\Rel,\ppi)$, we have isomorphisms $\Ff/\Mf\cong \QG$ and $\Kf/\Mf \cong \Lg/\Ng$ of groups. Since $\Ff$ is a free group, $\HHH^2(\Ff)=0$. Hence, Proposition~\ref{prop=onetwo} applies to this setting.
\end{proof}

Corollary~\ref{cor=free} is the background of our use of the symbols $F$ and $R$ in the tuple $(\Ff,\Kf,\Mf,\Rel,\ppi)$: $F$ comes from a \emph{free} group, and $R$ comes from the \emph{relations}.

\begin{prop}[criterion for condition~\textup{(D)}]\label{prop=D}
Assume Setting~$\ref{setting=GLN}$. Let $(\Ff,\Kf, \Mf,\Rel,\ppi)$ be a tuple associated with $(\Gg,\Lg,\Ng)$. Assume the following condition:
\begin{enumerate}
 \item[\textup{(iii)}] $(K\cap R)/(M\cap R)$ is a torsion group.
\end{enumerate}
Then, the tuple $(\Ff,\Kf,\Mf,\Rel,\ppi)$ satisfies condition \textup{(D)}.
\end{prop}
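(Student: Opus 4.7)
Let $\nu\in\QQQ(\Ng)^{\Gg}$ and suppose $\mu:=\ppi^{\ast}\nu=i_{\Mf,\Kf}^{\ast}\phf$ for some $\phf\in\QQQ(\Kf)^{\Ff}$; we want to construct $\psg\in\QQQ(\Lg)^{\Gg}$ with $i_{\Ng,\Lg}^{\ast}\psg=\nug$. The plan is to descend $\phf$ along $\ppi|_{\Kf}\colon\Kf\twoheadrightarrow\Lg$: for $\ell\in\Lg$, pick any $\kg\in\Kf$ with $\ppi(\kg)=\ell$ and set $\psg(\ell):=\phf(\kg)$. Once this is well defined, the standard properties of $\psg$ follow readily, because $\ppi|_{\Kf}$ is a surjective homomorphism: quasimorphism property and homogeneity descend with $\DD(\psg)\le\DD(\phf)$, $\Gg$-invariance descends from $\Ff$-invariance of $\phf$ via any lift of elements of $\Gg$ to $\Ff$, and $\psg|_{\Ng}=\nug$ is immediate from the definition $\mu=\phf|_\Mf$.

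The crux of the argument, which I expect to be the only real obstacle, is the well-definedness of $\psg$, i.e., that $\phf$ is constant on every fibre of $\ppi|_{\Kf}$. The preparatory step is to observe that $\phf$ vanishes identically on $\Kf\cap\Rel$. Indeed, for $r\in\Mf\cap\Rel$ we have $\phf(r)=\mu(r)=\nug(\ppi(r))=\nug(e_\Gg)=0$; and for a general $r\in\Kf\cap\Rel$, condition (iii) yields $n\in\NN$ with $r^n\in\Mf\cap\Rel$, whence homogeneity forces $\phf(r)=\phf(r^n)/n=0$. So $\phf|_{\Kf\cap\Rel}\equiv 0$.

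Now let $\kg_1,\kg_2\in\Kf$ with $\ppi(\kg_1)=\ppi(\kg_2)$ and write $\kg_2=\kg_1 r$ with $r\in\Kf\cap\Rel$. Using that $\Kf\cap\Rel$ is normal in $\Ff$ (hence in $\Kf$), telescope the power
\[
(\kg_1 r)^n=\kg_1^n\cdot(\kg_1^{-(n-1)}r\kg_1^{n-1})\cdots(\kg_1^{-1}r\kg_1)\cdot r=\kg_1^n\cdot r',
\]
where $r'\in\Kf\cap\Rel$. Since $\phf(r')=0$, the defect inequality gives $|\phf((\kg_1 r)^n)-\phf(\kg_1^n)|\le\DD(\phf)$, which by homogeneity rewrites as $n\,|\phf(\kg_2)-\phf(\kg_1)|\le\DD(\phf)$ for every $n\in\NN$; letting $n\to\infty$ yields $\phf(\kg_1)=\phf(\kg_2)$. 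This establishes well-definedness, after which the routine verifications sketched in the first paragraph complete the proof.
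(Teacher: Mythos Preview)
Your proof is correct and follows essentially the same route as the paper: both show $\phf|_{\Kf\cap\Rel}\equiv 0$ by combining $\phf|_{\Mf\cap\Rel}\equiv 0$ with condition~(iii) and homogeneity, then descend $\phf$ to $\Lg$. The paper isolates the descent step as a separate lemma (Lemma~\ref{lem=descending}), whose proof is exactly your telescoping/limit argument for well-definedness.
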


For the proof of Proposition~\ref{prop=D}, we employ the following lemma; we include the proof of it for the reader's convenience.

\begin{lem}\label{lem=descending}
Let $\Ff$ be a group and $\Kf$  a normal subgroup of $\Ff$. Let $\Gg$ be a group quotient of $\Ff$, and $\ppi\colon \Ff\twoheadrightarrow \Gg$ be the natural group quotient map. Let $\Rel=\Ker(\ppi)$ and $\Lg=\ppi(\Kf)$. Let $\phf\in \QQQ(\Kf)^{\Ff}$. Then $\phf$ descends to a $\Gg$-invariant quasimorphism on $\Lg$ if and only if $\phf$ vanishes on $\Kf\cap \Rel$.
\end{lem}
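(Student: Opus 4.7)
The statement is an ``if and only if'', and the forward direction is the easy one: if $\phf$ descends to some $\bar{\phi}\in \QQQ(\Lg)^{\Gg}$, meaning $\phf=\bar{\phi}\circ \ppi|_{\Kf}$, then for every $r\in \Kf\cap \Rel$ we have $\ppi(r)=e_\Gg$, so $\phf(r)=\bar{\phi}(e_\Gg)=0$ because every homogeneous quasimorphism vanishes at the identity. I would dispatch this in a single line. The remaining content of the lemma is the converse.

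For the converse, assume $\phf$ vanishes on $\Kf\cap \Rel$. The natural candidate for the descent is $\bar{\phi}\colon \Lg\to\RR$ defined by $\bar{\phi}(\ell)=\phf(\kg)$ for any $\kg\in \Kf$ with $\ppi(\kg)=\ell$. The \emph{main obstacle}, and really the only nontrivial point, is the well-definedness of $\bar{\phi}$: if $\ppi(\kg)=\ppi(\kg')$, then $\kg^{-1}\kg'\in \Kf\cap\Rel$, and we have to promote the hypothesis ``$\phf$ vanishes on $\Kf\cap\Rel$'' to ``$\phf(\kg r)=\phf(\kg)$ for every $r\in \Kf\cap \Rel$''.

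To handle this, the plan is to use three ingredients that are all available: (a) the $\Ff$-invariance of $\phf$, (b) the homogeneity of $\phf$, and (c) the normality of $\Kf\cap\Rel$ in $\Ff$ (which follows from normality of each factor). Writing $\kg'=\kg r$, expand
\[
(\kg r)^n=\kg^n\cdot \left(\kg^{-(n-1)}r\kg^{n-1}\right)\cdots (\kg^{-1}r\kg)\cdot r.
\]
By $\Ff$-invariance of $\phf$ (used only on $\Kf\cap\Rel$) together with our hypothesis, every conjugate $\kg^{-i}r\kg^{i}$ lies in $\Kf\cap\Rel$ and hence the entire product $s_n$ of these conjugates lies in $\Kf\cap\Rel$ as well. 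Applying $\phf$ to the decomposition $(\kg r)^n=\kg^n s_n$ and using the defect inequality,
\[
|\phf((\kg r)^n)-\phf(\kg^n)-\phf(s_n)|\leq \DD(\phf),
\]
combined with $\phf(s_n)=0$ and the homogeneity $\phf((\kg r)^n)=n\phf(\kg r)$, $\phf(\kg^n)=n\phf(\kg)$, gives $|n\phf(\kg r)-n\phf(\kg)|\leq \DD(\phf)$ for every $n\in\NN$. Dividing by $n$ and letting $n\to\infty$ yields $\phf(\kg r)=\phf(\kg)$, so $\bar{\phi}$ is well-defined.

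Once well-definedness is settled, the remaining verifications should be routine: $\bar{\phi}$ is a quasimorphism with $\DD(\bar{\phi})\leq \DD(\phf)$ by pulling the defect inequality on $\Lg$ back through $\ppi$; $\bar{\phi}$ is homogeneous because $\phf$ is; and $\bar{\phi}$ is $\Gg$-invariant since for any $g\in\Gg$ we may choose a lift $f\in \Ff$ with $\ppi(f)=g$ and invoke the $\Ff$-invariance $\phf(fkf^{-1})=\phf(\kg)$. Combining these, $\bar{\phi}\in\QQQ(\Lg)^{\Gg}$ and by construction $\phf=\bar{\phi}\circ \ppi|_{\Kf}$, completing the proof.
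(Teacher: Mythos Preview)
Your proof is correct and follows essentially the same approach as the paper's. The paper's version is more compressed: it simply writes $|\phf(\vf_1)-\phf(\vf_2)|=\frac{|\phf(\vf_1^m)-\phf(\vf_2^m)|}{m}\le \frac{\DD(\phf)}{m}$ and lets $m\to\infty$, leaving implicit the same decomposition $(\kg r)^n=\kg^n s_n$ with $s_n\in \Kf\cap\Rel$ that you spell out. One small wording quibble: the fact that each conjugate $\kg^{-i}r\kg^{i}$ lies in $\Kf\cap\Rel$ is a consequence of normality alone, not of the $\Ff$-invariance of $\phf$; the $\Ff$-invariance is only needed later, exactly where you use it, to verify $\Gg$-invariance of the descent.
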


\begin{proof}
The `only if' part is clear. In what follows, we prove the `if' part. We claim that for every $\vf_1,\vf_2\in \Kf$ with $\vf_1^{-1}\vf_2\in \Kf\cap \Rel$, $\phf(\vf_1)=\phf(\vf_2)$ holds. Indeed, for every $m\in \NN$, we have
\[
|\phf(\vf_1)-\phf(\vf_2)|=\frac{|\phf(\vf_1^m)-\phf(\vf_2^m)|}{m}\leq \frac{\DD(\phf)}{m}
\]
since $\phf|_{\Kf\cap \Rel}\equiv 0$. Therefore, the map $\phf\colon \Kf\to \RR$ (set-theoretically) descends to a map $\psg\colon \Lg=\Kf/(\Kf\cap \Rel)\to \RR$, \emph{i.e.}, $\ppi^{\ast}\psg=\phf$ holds. It is clear that the resulting map $\psg$ lies in $\QQQ(\Lg)^{\Gg}$.
\end{proof}

\begin{proof}[Proof of Proposition~$\ref{prop=D}$]
Let $\nug\in \QQQ(\Ng)^{\Gg}$ with $\ppi^{\ast}\nug\in i_{\Mf,\Kf}^{\ast}\QQQ(\Kf)^{\Ff}$. Take $\phf\in \QQQ(\Kf)^{\Ff}$ such that $\pi^{\ast}\nug=i_{\Mf,\Kf}^{\ast}\phf$. We claim that $\phf$ vanishes on $\Kf\cap \Rel$. To see this, first we observe that $\phf$ vanishes on $\Mf\cap \Rel$: note that $\ppi(\Mf\cap \Rel)=\{e_{\Gg}\}$ and that $\phf|_M\equiv \pi^{\ast}\nug$. By condition (iii) and homogeneity of $\phf$, this observation yields that $\phf|_{\Kf\cap \Rel}\equiv 0$. By Lemma~\ref{lem=descending}, there exists $\psg\in \QQQ(\Lg)^{\Gg}$ such that $\ppi^{\ast}\psg=\phf$. Hence, we have
\[
\ppi^{\ast}\nug=(i_{\Mf,\Kf}^{\ast}\circ \ppi^{\ast})\psg
=(\ppi^{\ast}\circ i_{\Ng,\Lg}^{\ast})\psg=\ppi^{\ast}(i_{\Ng,\Lg}^{\ast}\psg).\]
This implies that $\nug=i_{\Ng,\Lg}^{\ast}\psg \in i_{\Ng,\Lg}^{\ast}\QQQ(\Lg)^{\Gg}$. Therefore, the tuple $(\Ff,\Kf,\Mf,\Rel,\ppi)$ satisfies (D).
\end{proof}


\subsection{Core and core-surviving elements}

In the setting of diagram~\eqref{diagram=core}, we plan to focus on `the' invariant homomorphism $\hf$ in $\HHH^1(\Mf)^{\Ff}$ and to call it as `the' \emph{core} of $\nug\in \QQQ(\Ng)^{\Gg}$. However, `the' decomposition $\ppi^{\ast}\nug=\hf+i^{\ast}_{\Mf,\Kf}\phf$ is \emph{not} unique in general. To make this plan meaningful, we start from the rigorous definition of \emph{a} core of $\nug\in \QQQ(\Ng)^{\Gg}$ in a general setting, even without assuming condition (i) in Definition~\ref{defn=FKMcondition}.

\begin{defn}[core]\label{defn=core}
Assume Setting~$\ref{setting=GLN}$. Fix a tuple $(\Ff,\Kf,\Mf,\Rel,\ppi)$ associated with $(\Gg,\Lg,\Ng)$. Let $\nug\in\QQQ(\Ng)^{\Gg}$. An invariant \emph{homomorphism} $\hf\in \HHH^1(\Mf)^{\Ff}$ is called a \emph{core} of $\nug$ \textup{(}with respect to the tuple $(\Ff,\Kf,\Mf,\Rel,\ppi)$\textup{)} if
\[
\ppi^{\ast}\nug-\hf\in i^{\ast}_{\Mf,\Kf}\QQQ(\Kf)^{\Ff}.
\]
\end{defn}

\begin{rem}
In general, a core of $\nug \in \QQQ(\Ng)^{\Gg}$ may not exist. Even if it exists, the uniqueness of cores \emph{may fail} (this is why we call it \emph{a} core, not the core). Also, this notion of cores \emph{does} depend on the choices of a tuple $(\Ff,\Kf,\Mf,\Rel,\ppi)$ for $(\Gg,\Lg,\Ng)$.
\end{rem}

The following lemma indicates the role of condition (i) in Definition~\ref{defn=FKMcondition}.

\begin{lem}\label{lem=(i)}
Assume Setting~$\ref{setting=GLN}$. Let $(\Ff,\Kf,\Mf,\Rel,\ppi)$ be a tuple associated with $(\Gg,\Lg,\Ng)$. Assume that $(\Ff,\Kf,\Mf,\Rel,\ppi)$ satisfies  condition \textup{(i)} in Definition~$\ref{defn=FKMcondition}$. Then for every $\nug\in  \QQQ(\Ng)^{\Gg}$, the following hold true.
\begin{enumerate}[label=\textup{(}$\arabic*$\textup{)}]
 \item  There exists a core of $\nug$ \textup{(}with respect to the tuple $(\Ff,\Kf,\Mf,\Rel,\ppi)$\textup{)}.
 \item Let $\hf_1,\hf_2\in \HHH^1(\Mf)^{\Ff}$ be two cores of $\nug$. Then, $\hf_1-\hf_2\in i^{\ast}_{\Mf,\Kf}\HHH^1(\Kf)^{\Ff}$. In particular, for every $\wf\in \Mf\cap [\Ff,\Kf]$, $\hf_1(\wf)=\hf_2(\wf)$ holds.
\end{enumerate}
\end{lem}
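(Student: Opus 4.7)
The plan is to observe that both assertions follow almost formally by unpacking the two pieces of condition~(i) applied to the element $\muf = \ppi^{\ast}\nug \in \QQQ(\Mf)^{\Ff}$, together with the elementary fact that a $\Ff$-invariant homomorphism on $\Kf$ must vanish on the mixed commutator subgroup $[\Ff,\Kf]$. No quasimorphism-theoretic input beyond the definitions is required, so I do not expect any genuine obstacle; the whole work is essentially a diagram chase in the three-term short exact sequence defining $\Wcal(\Ff,\Kf,\Mf)$.

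For part~(1), I would start by noting that since $\Ng = \ppi(\Mf)$ and $\ppi$ is $\Ff$-equivariant in the obvious sense, pullback by $\ppi$ sends $\QQQ(\Ng)^{\Gg}$ into $\QQQ(\Mf)^{\Ff}$. Apply this to $\nug$ to obtain $\muf = \ppi^{\ast}\nug \in \QQQ(\Mf)^{\Ff}$. By the first part of condition~(i), $\Wcal(\Ff,\Kf,\Mf) = \QQQ(\Mf)^{\Ff}/(\HHH^1(\Mf)^{\Ff} + i^{\ast}_{\Mf,\Kf}\QQQ(\Kf)^{\Ff}) = 0$, so the class of $\muf$ vanishes in the quotient. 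Consequently there exist $\hf \in \HHH^1(\Mf)^{\Ff}$ and $\phf \in \QQQ(\Kf)^{\Ff}$ with $\ppi^{\ast}\nug = \hf + i^{\ast}_{\Mf,\Kf}\phf$, and this $\hf$ is by definition a core of $\nug$.

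For part~(2), suppose $\hf_1,\hf_2 \in \HHH^1(\Mf)^{\Ff}$ are two cores, so $\ppi^{\ast}\nug - \hf_s = i^{\ast}_{\Mf,\Kf}\phf_s$ for $s=1,2$ with $\phf_s \in \QQQ(\Kf)^{\Ff}$. Subtracting, $\hf_1 - \hf_2 = i^{\ast}_{\Mf,\Kf}(\phf_2 - \phf_1)$, and since the left-hand side is an invariant homomorphism on $\Mf$, this difference lies in $\HHH^1(\Mf)^{\Ff} \cap i^{\ast}_{\Mf,\Kf}\QQQ(\Kf)^{\Ff}$. The second part of condition~(i) then gives $\hf_1 - \hf_2 = i^{\ast}_{\Mf,\Kf}\kg$ for some $\kg \in \HHH^1(\Kf)^{\Ff}$, which is the first claimed identity.

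For the ``in particular'' statement, let $\wf \in \Mf \cap [\Ff,\Kf]$. It suffices to show that any $\kg \in \HHH^1(\Kf)^{\Ff}$ vanishes on $[\Ff,\Kf]$. For a simple $(\Ff,\Kf)$-commutator $[\gG,\vf]$ with $\gG \in \Ff$ and $\vf \in \Kf$, the element $\gG\vf\gG^{-1}$ again lies in $\Kf$ (by normality of $\Kf$), and $\Ff$-invariance of $\kg$ gives
\[
\kg([\gG,\vf]) = \kg(\gG\vf\gG^{-1}) - \kg(\vf) = 0.
\]
Since $\kg$ is a homomorphism and $[\Ff,\Kf]$ is generated by such simple commutators, $\kg$ vanishes on all of $[\Ff,\Kf]$. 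Therefore $(\hf_1 - \hf_2)(\wf) = \kg(\wf) = 0$, finishing the proof.
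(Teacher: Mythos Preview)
Your proof is correct and follows essentially the same approach as the paper's own proof, just with more detail spelled out. The paper's argument is a terse version of exactly what you wrote: part~(1) is immediate from $\Wcal(\Ff,\Kf,\Mf)=0$, and part~(2) follows from observing $\hf_1-\hf_2\in \HHH^1(\Mf)^{\Ff}\cap i_{\Mf,\Kf}^{\ast}\QQQ(\Kf)^{\Ff}$ together with the second half of condition~(i) and the remark that any $\tilde{\hf}\in \HHH^1(\Kf)^{\Ff}$ vanishes on $[\Ff,\Kf]$.
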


Lemma~\ref{lem=(i)}~(2) states that if $(\Ff,\Kf,\Mf,\Rel,\ppi)$ satisfies  condition (i), then the value of an element in $\Mf\cap [\Ff,\Kf]$ does \emph{not} depend on the choices of a core of $\nug$. Under the assumption $\Mf\geqslant [\Ff,\Kf]$, we will present in  Theorem~\ref{thm=valueofcore} a \emph{limit formula} for the value $\hf(\wf)$ of a core $\hf$ of $\nug\in \QQQ(\Ng)^{\Gg}$ at $\wf\in [\Ff,\Kf]$.

\begin{proof}[Proof of Lemma~$\ref{lem=(i)}$]
Item~(1) holds since the first half of condition~(i) states that $\Wcal(\Ff,\Kf,\Mf)=0$. For (2), note that $\hf_1-\hf_2\in \HHH^1(\Mf)^{\Ff}\cap i_{\Mf,\Kf}^{\ast}\QQQ(\Kf)^{\Ff}$. Now the latter half of condition (i) ends the proof (observe also that for every $\tilde{\hf}\in \HHH^1(\Kf)^{\Ff}$, $\tilde{\hf}$ vanishes on $[\Ff,\Kf]$).
\end{proof}

The following corollary to Lemma~\ref{lem=(i)} is important in our theory of core extractors.

\begin{cor}\label{cor=torsion}
Assume Setting~$\ref{setting=GLN}$. Fix a tuple $(\Ff,\Kf,\Mf,\Rel,\ppi)$ associated with $(\Gg,\Lg,\Ng)$. Assume that the tuple $(\Ff,\Kf,\Mf,\Rel,\ppi)$ satisfies condition \textup{(i)} in Definition~$\ref{defn=FKMcondition}$. Let $\nug\in \QQQ(\Ng)^{\Gg}$. Assume that $\wf\in \Mf$ represents a torsion element in the quotient group $\Mf/(\Mf\cap [\Ff,\Kf])$. Then the value $\hf(\wf)$ of a core $\hf$ of $\nug$ is independent of the choices of $\hf$.
\end{cor}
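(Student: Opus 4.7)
\medskip

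The plan is to reduce Corollary~\ref{cor=torsion} directly to Lemma~\ref{lem=(i)}~(2) by exploiting the fact that a core, unlike a general homogeneous quasimorphism, is a \emph{genuine} group homomorphism and hence respects taking powers exactly.

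First, I would take two cores $\hf_1, \hf_2 \in \HHH^1(\Mf)^{\Ff}$ of the given $\nug \in \QQQ(\Ng)^{\Gg}$. By Lemma~\ref{lem=(i)}~(2), which uses precisely condition~(i), the difference $\hf_1 - \hf_2$ lies in $i_{\Mf,\Kf}^{\ast}\HHH^1(\Kf)^{\Ff}$, and in particular $\hf_1$ and $\hf_2$ agree on every element of $\Mf \cap [\Ff,\Kf]$.

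Next, I would use the torsion hypothesis: since $\wf$ represents a torsion element in $\Mf/(\Mf \cap [\Ff,\Kf])$, there exists $n \in \NN$ with $\wf^n \in \Mf \cap [\Ff,\Kf]$. Applying the previous step to $\wf^n$ gives $\hf_1(\wf^n) = \hf_2(\wf^n)$. Because $\hf_1$ and $\hf_2$ are genuine homomorphisms from $\Mf$ to $\RR$ (they are elements of $\HHH^1(\Mf)^{\Ff}$, \emph{not} merely homogeneous quasimorphisms), this equality reads $n \hf_1(\wf) = n \hf_2(\wf)$ in $\RR$, and dividing by the nonzero integer $n$ yields $\hf_1(\wf) = \hf_2(\wf)$, as desired.

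There is no real obstacle here; the whole content has already been absorbed into Lemma~\ref{lem=(i)}~(2). The only point worth emphasizing in the write-up is the distinction between `core' and `quasimorphism': were $\hf_1, \hf_2$ merely homogeneous quasimorphisms, the identity $\hf_i(\wf^n) = n \hf_i(\wf)$ would still hold by homogeneity, so the argument also goes through verbatim; but the fact that cores are \emph{homomorphisms} is what makes condition~(i) in Definition~\ref{defn=FKMcondition} the natural hypothesis, because it is exactly what forces $\hf_1 - \hf_2$ to vanish on $[\Ff,\Kf] \cap \Mf$ rather than only on $[\Mf,\Mf]$.
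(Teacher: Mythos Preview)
Your proof is correct and is essentially identical to the paper's own proof: both invoke Lemma~\ref{lem=(i)}~(2) to see that any two cores agree on $\Mf\cap[\Ff,\Kf]$, then use the torsion hypothesis to pick $n$ with $\wf^n\in\Mf\cap[\Ff,\Kf]$ and divide through by $n$. The only cosmetic difference is that the paper fixes one core $\hf_1$ (explicitly citing Lemma~\ref{lem=(i)}~(1) for existence) and compares an arbitrary core to it, whereas you start with two arbitrary cores.
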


\begin{proof}
First by Lemma~\ref{lem=(i)}~(1), there exists a core $\hf_1$ of $\nug$. Take an arbitrary core $\hf$ of $\nug$. Secondly, by assumption  we can take $m\in \NN$ such that $\wf^m\in \Mf \cap [\Ff,\Kf]$. Then by Lemma~\ref{lem=(i)}~(2),
\[
\hf(\wf)=\frac{1}{m}\hf(\wf^m)=\frac{1}{m}\hf_1(\wf^m)=\hf_1(\wf).\qedhere
\]
\end{proof}

\begin{defn}[core-surviving element]\label{defn=surviving}
Assume Setting~$\ref{setting=GLN}$. Fix a tuple $(\Ff,\Kf,\Mf,\Rel,\ppi)$ associated with $(\Gg,\Lg,\Ng)$. Assume that the tuple $(\Ff,\Kf,\Mf,\Rel,\ppi)$ satisfies condition (i) in Definition~\ref{defn=FKMcondition}. Let $\nug\in \QQQ(\Ng)^{\Gg}$. We say that $\wf\in \Mf$ is a \emph{$\nug$-core-surviving element} \textup{(}with respect to the tuple $(\Ff,\Kf,\Mf,\Rel,\ppi)$\textup{)} if the following two conditions are satisfied:
\begin{enumerate}[label=\textup{(}$\arabic*$\textup{)}]
  \item the element $\wf$ represents a torsion element in the quotient group $\Mf/(\Mf\cap [\Ff,\Kf])$; and
  \item $\hf(\wf)\ne 0$ for some (equivalently, for every) core $\hf$ of $\nug$.
\end{enumerate}
\end{defn}

Here, the equivalence mentioned in Definition~\ref{defn=surviving}~(2) follows from by Corollary~\ref{cor=torsion}.
The following lemma describes the role of condition (ii) in Definition~\ref{defn=FKMcondition}.

\begin{lem}\label{lem=(ii)}
Assume Setting~$\ref{setting=GLN}$. Fix a tuple $(\Ff,\Kf,\Mf,\Rel,\ppi)$ associated with $(\Gg,\Lg,\Ng)$. Assume that the tuple $(\Ff,\Kf,\Mf,\Rel,\ppi)$ satisfies conditions \textup{(i)} and \textup{(ii)} in Definition~$\ref{defn=FKMcondition}$. Let $\nug\in \QQQ(\Ng)^{\Gg}$. Then, the following are all equivalent.
\begin{enumerate}[label=\textup{(\arabic*)}]
 \item There does not exist any $\nug$-core-surviving element in $\Mf\cap \Rel$. \item For every core $\hf$ of $\nug$, $\hf|_{\Mf\cap \Rel}\equiv 0$.
 \item There exists a core $\hf$ of $\nug$ such that $\hf|_{\Mf\cap \Rel}\equiv 0$.
\end{enumerate}
\end{lem}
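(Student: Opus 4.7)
\medskip
\noindent
\textbf{Proof proposal for Lemma~\ref{lem=(ii)}.}
The plan is to establish the cycle of implications $(2)\Rightarrow (3)\Rightarrow (2)\Rightarrow (1)\Rightarrow (2)$, with the only nontrivial one being $(3)\Rightarrow (2)$. The implication $(2)\Rightarrow (3)$ is immediate from Lemma~\ref{lem=(i)}~(1), which guarantees the existence of at least one core of $\nug$ under condition~(i). The implication $(2)\Rightarrow (1)$ is equally direct: if some $\wf\in \Mf\cap \Rel$ were $\nug$-core-surviving, then by Definition~\ref{defn=surviving}~(2) and Corollary~\ref{cor=torsion} every core $\hf$ would satisfy $\hf(\wf)\ne 0$, contradicting $(2)$. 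For $(1)\Rightarrow (2)$, let $\hf$ be any core and take $\wf\in \Mf\cap \Rel$; condition~(ii) says that the image of $\wf$ in $(\Mf\cap \Rel)/(\Mf\cap \Rel\cap [\Ff,\Kf])$ is torsion, and therefore so is the image of $\wf$ in $\Mf/(\Mf\cap [\Ff,\Kf])$, so $\wf$ meets requirement~(1) of Definition~\ref{defn=surviving}; the hypothesis~(1) of the lemma then forces $\hf(\wf)=0$.

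The main step is $(3)\Rightarrow (2)$. The key preliminary observation I will prove first is that \emph{every} element of $i_{\Mf,\Kf}^{\ast}\HHH^1(\Kf)^{\Ff}$ vanishes on $\Mf\cap \Rel$. Indeed, if $\tilde{\hf}\in \HHH^1(\Kf)^{\Ff}$, then $\Ff$-invariance gives $\tilde{\hf}(f k f^{-1})=\tilde{\hf}(k)$ for every $f\in \Ff$ and $k\in \Kf$, hence $\tilde{\hf}([f,k])=0$; since $[\Ff,\Kf]\subseteq \Kf$ and $\tilde{\hf}$ is a homomorphism, $\tilde{\hf}$ vanishes on $[\Ff,\Kf]$. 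Now, for $\wf\in \Mf\cap \Rel$, condition~(ii) supplies $m\in \NN$ with $\wf^m\in \Mf\cap \Rel\cap [\Ff,\Kf]\subseteq [\Ff,\Kf]$, whence $m\cdot \tilde{\hf}(\wf)=\tilde{\hf}(\wf^m)=0$ in $\RR$, and so $\tilde{\hf}(\wf)=0$.

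Given this, $(3)\Rightarrow (2)$ follows quickly. Let $\hf$ be a core with $\hf|_{\Mf\cap \Rel}\equiv 0$, and let $\hf'$ be any other core. By Lemma~\ref{lem=(i)}~(2), $\hf-\hf'\in i_{\Mf,\Kf}^{\ast}\HHH^1(\Kf)^{\Ff}$, and by the observation above, $(\hf-\hf')|_{\Mf\cap \Rel}\equiv 0$. Hence $\hf'|_{\Mf\cap \Rel}\equiv 0$, proving~(2).

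The only point requiring any care is the preliminary observation in the third paragraph: it is the place where condition~(ii) is genuinely used, and where the $\Ff$-invariance of elements of $\HHH^1(\Kf)^{\Ff}$ is leveraged via the torsion-freeness of $\RR$. All other implications are essentially formal consequences of Definition~\ref{defn=surviving} and Lemma~\ref{lem=(i)}.
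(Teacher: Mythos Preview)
Your proof is correct and follows essentially the same approach as the paper. The paper's proof is more compressed: it observes that condition~(ii) makes every $\wf\in \Mf\cap \Rel$ torsion in $\Mf/(\Mf\cap[\Ff,\Kf])$, then invokes Corollary~\ref{cor=torsion} directly to conclude that the value $\hf(\wf)$ is core-independent, from which all three equivalences follow at once. Your ``preliminary observation'' that $i_{\Mf,\Kf}^{\ast}\HHH^1(\Kf)^{\Ff}$ vanishes on $\Mf\cap \Rel$ is precisely the content of Corollary~\ref{cor=torsion} unpacked for this setting (via Lemma~\ref{lem=(i)}~(2)), so the logical core is identical.
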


\begin{proof}
By (ii), $\wf\in \Mf\cap \Rel$ is a $\nug$-core-surviving element if and only if $\hf(\wf)\ne 0$ for some core $\hf$ of $\nug$; by Corollary~\ref{cor=torsion}, it is also equivalent to $\hf(\wf)\ne 0$ for every core $\hf$ of $\nug$. Now the equivalences of (1), (2) and (3) immediately follow.
\end{proof}

\begin{lem}\label{lem=CoreE}
Assume Setting~$\ref{setting=GLN}$. Fix a tuple $(\Ff,\Kf,\Mf,\Rel,\ppi)$ associated with $(\Gg,\Lg,\Ng)$. Assume that the tuple $(\Ff,\Kf,\Mf,\Rel,\ppi)$ satisfies condition \textup{(i)} in Definition~$\ref{defn=FKMcondition}$. Then, the following hold.
\begin{enumerate}[label=\textup{(}$\arabic*$\textup{)}]
  \item If there exists a $\nug$-core-surviving element $\wf$ in $\Mf$, then $\nug$ represents a non-zero element in the space $\QQQ(\Ng)^{\Gg}/i^{\ast}\QQQ(\Lg)^{\Gg}$.
  \item If there exists a $\nug$-core-surviving element $\wf$ in $\Mf\cap \Rel$, then $\nug$ represents a non-zero element in the space $\Wcal(\Gg,\Lg,\Ng)$.
\end{enumerate}
\end{lem}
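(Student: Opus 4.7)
The plan is to prove the contrapositive of each statement by exhibiting an explicit core that vanishes on $\wf$, contradicting the core-surviving hypothesis.

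For part~(1), I would assume that $\nug$ represents the zero element in $\QQQ(\Ng)^{\Gg}/i^{\ast}_{\Ng,\Lg}\QQQ(\Lg)^{\Gg}$, so that $\nug=i^{\ast}_{\Ng,\Lg}\psg$ for some $\psg\in\QQQ(\Lg)^{\Gg}$. The key observation is the commutativity $\pi|_{\Mf}^{\ast}\circ i^{\ast}_{\Ng,\Lg}=i^{\ast}_{\Mf,\Kf}\circ \pi|_{\Kf}^{\ast}$, which follows from $i_{\Ng,\Lg}\circ \pi|_{\Mf}=\pi|_{\Kf}\circ i_{\Mf,\Kf}$ at the level of groups. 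Applying this gives $\ppi^{\ast}\nug=i^{\ast}_{\Mf,\Kf}(\ppi^{\ast}\psg)$, so $\hf_0=0\in\HHH^1(\Mf)^{\Ff}$ is a core of $\nug$ in the sense of Definition~\ref{defn=core}. But then, by Corollary~\ref{cor=torsion}, for the core-surviving element $\wf$ the value $\hf(\wf)$ is independent of the choice of core, so $\hf(\wf)=\hf_0(\wf)=0$ for every core $\hf$. This contradicts condition~(2) in Definition~\ref{defn=surviving}.

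For part~(2), I would assume that $[\nug]=0$ in $\Wcal(\Gg,\Lg,\Ng)$, so that $\nug=\kg+i^{\ast}_{\Ng,\Lg}\psg$ for some $\kg\in\HHH^1(\Ng)^{\Gg}$ and $\psg\in\QQQ(\Lg)^{\Gg}$. Applying $\ppi^{\ast}$ and using the same commutativity as above yields
\[
\ppi^{\ast}\nug=\ppi^{\ast}\kg+i^{\ast}_{\Mf,\Kf}(\ppi^{\ast}\psg).
\]
Since $\kg$ is a genuine $\Gg$-invariant homomorphism, so is $\ppi^{\ast}\kg\in\HHH^1(\Mf)^{\Ff}$, which shows that $\hf_1=\ppi^{\ast}\kg$ is a core of $\nug$. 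Now for the core-surviving element $\wf\in \Mf\cap\Rel$, we have $\ppi(\wf)=e_{\Gg}$, and so $\hf_1(\wf)=\kg(\ppi(\wf))=\kg(e_{\Gg})=0$, contradicting condition~(2) in Definition~\ref{defn=surviving} for this particular core.

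There is essentially no obstacle; the proof is a direct unraveling of definitions once the commutativity $\pi|_{\Mf}^{\ast}\circ i^{\ast}_{\Ng,\Lg}=i^{\ast}_{\Mf,\Kf}\circ \pi|_{\Kf}^{\ast}$ is recorded. Note that condition~(i) is used implicitly to make the notion of core meaningful and to invoke Corollary~\ref{cor=torsion} in part~(1), while in part~(2) we do not even need to appeal to torsion, because $\wf\in \Rel$ lies in the kernel of $\ppi$ and hence automatically kills the homomorphism $\ppi^{\ast}\kg$. This uniform mechanism, building cores from data witnessing vanishing in the relevant quotient spaces, is precisely what allows both parts to be treated in parallel.
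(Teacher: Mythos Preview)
Your proof is correct and follows essentially the same approach as the paper: prove the contrapositive by pulling back the witnessing decomposition of $\nug$ through $\ppi^{\ast}$ to produce an explicit core (namely $0$ in part~(1) and $\ppi^{\ast}\kg$ in part~(2)) that vanishes at $\wf$. The paper's proof is slightly more terse (it writes out part~(2) and declares part~(1) similar), but the mechanism is identical; your explicit mention of the commutativity $\pi|_{\Mf}^{\ast}\circ i^{\ast}_{\Ng,\Lg}=i^{\ast}_{\Mf,\Kf}\circ \pi|_{\Kf}^{\ast}$ and your remark that part~(2) does not need the torsion hypothesis are accurate elaborations of what the paper leaves implicit.
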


\begin{proof}
We prove the contrapositives of these assertions. For (2), assume that $\nug$ represents the zero element in $\Wcal(\Gg,\Lg,\Ng)$. Then we can take $\kg\in \HHH^1(\Ng)^{\Gg}$ and $\psg\in \QQQ(\Lg)^{\Gg}$ such that $\nug=\kg+i_{\Ng,\Lg}^{\ast}\psg$. Then we have $\pi^{\ast}\nug=\pi^{\ast}\kg+ i_{\Mf,\Kf}^{\ast}(\pi^{\ast}\psg)$. Hence $\pi^{\ast}\kg$ is a core of $\nug$. Since $\pi^{\ast}\kg|_{\Mf\cap \Rel}\equiv 0$, we complete the proof of (2). We can prove (1) by an  argument similar to one above.
\end{proof}

Under the descending condition (D) (recall Definition~\ref{defn=descending}), we have the following equivalence.

\begin{prop}[characterization of non-vanishing in $\Wcal(\Gg,\Lg,\Ng)$ in terms of core-surviving elements]\label{prop=CoreE}
Assume Setting~$\ref{setting=GLN}$. Fix a tuple $(\Ff,\Kf,\Mf,\Rel,\ppi)$ associated with $(\Gg,\Lg,\Ng)$. Assume that the tuple $(\Ff,\Kf,\Mf,\Rel,\ppi)$ satisfies conditions \textup{(i)} and \textup{(ii)} in Definition~$\ref{defn=FKMcondition}$ and the descending condition \textup{(D)}. Let $\nug\in \QQQ(\Ng)^{\Gg}$. Then, $\nug$ represents a non-zero element in $\Wcal(\Gg,\Lg,\Ng)$ if and only if there exists a $\nug$-core-surviving element $\wf$ in $\Mf\cap \Rel$.
\end{prop}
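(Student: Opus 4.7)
The ``if'' direction is exactly the content of Lemma~\ref{lem=CoreE}(2), so no further argument is required there. I will establish the ``only if'' direction by proving its contrapositive: assuming there is no $\nug$-core-surviving element in $\Mf\cap \Rel$, I will show that $\nug$ represents the zero element of $\Wcal(\Gg,\Lg,\Ng)$.

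Under this assumption, Lemma~\ref{lem=(ii)} (whose hypotheses (i) and (ii) are in force) yields a core $\hf \in \HHH^1(\Mf)^{\Ff}$ of $\nug$ such that $\hf|_{\Mf\cap \Rel}\equiv 0$. Since $\pi|_{\Mf}\colon \Mf \twoheadrightarrow \Ng$ has kernel $\Mf\cap\Rel$ and $\hf$ vanishes on this kernel, $\hf$ descends set-theoretically to a well-defined map $\tilde{\kg}\colon \Ng \to \RR$ with $\pi^{\ast}\tilde{\kg}=\hf$; because $\hf$ is a homomorphism and $\pi|_{\Mf}$ is surjective, $\tilde{\kg}$ is itself a homomorphism, and its $\Gg$-invariance follows from the $\Ff$-invariance of $\hf$ together with the surjectivity of $\pi$. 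Hence $\tilde{\kg}\in \HHH^1(\Ng)^{\Gg}$.

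Now consider $\nug-\tilde{\kg}\in\QQQ(\Ng)^{\Gg}$. By the definition of a core, $\pi^{\ast}\nug-\hf\in i_{\Mf,\Kf}^{\ast}\QQQ(\Kf)^{\Ff}$; thus
\[
\pi^{\ast}(\nug-\tilde{\kg})=\pi^{\ast}\nug-\hf \in i_{\Mf,\Kf}^{\ast}\QQQ(\Kf)^{\Ff}.
\]
The descending condition (D) then implies that $\nug-\tilde{\kg}\in i_{\Ng,\Lg}^{\ast}\QQQ(\Lg)^{\Gg}$, so $\nug\in \HHH^1(\Ng)^{\Gg}+i_{\Ng,\Lg}^{\ast}\QQQ(\Lg)^{\Gg}$; equivalently, $[\nug]=0$ in $\Wcal(\Gg,\Lg,\Ng)$, as required. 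I do not anticipate any genuine obstacle: the substantive work has already been packaged into Lemmas~\ref{lem=(i)}, \ref{lem=(ii)} and \ref{lem=CoreE}(2), and condition (D) supplies precisely the bridge needed to pass from ``a core descends to a homomorphism on $\Ng$'' to ``$\nug$ itself is extendable modulo $\HHH^1(\Ng)^{\Gg}$ to a quasimorphism on $\Lg$.''
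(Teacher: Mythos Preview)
Your proof is correct and follows essentially the same route as the paper: invoke Lemma~\ref{lem=CoreE}(2) for the ``if'' direction, and for the contrapositive of the ``only if'' direction use Lemma~\ref{lem=(ii)} to obtain a core vanishing on $\Mf\cap\Rel$, descend it to an element of $\HHH^1(\Ng)^{\Gg}$, and then apply condition~(D) to conclude $[\nug]=0$. The only cosmetic difference is that the paper picks an arbitrary core and appeals to the implication (1)$\Rightarrow$(2) of Lemma~\ref{lem=(ii)}, whereas you use (1)$\Rightarrow$(3); this has no effect on the argument.
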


\begin{proof}
The `if' part is proved in Lemma~\ref{lem=CoreE}. In what follows, we verify the `only if' part by showing the contrapositive. Assume that there does not exist any $\nug$-core-surviving element $\wf$ in $\Mf\cap \Rel$. Take a core $\hf$ of $\nug$ and $\phf\in \QQQ(\Kf)^{\Ff}$ such that $\pi^{\ast}\nug=\hf+i_{\Mf,\Kf}^{\ast}\phf$. Then by Lemma~\ref{lem=(ii)}, $\hf|_{\Mf\cap \Rel}\equiv 0$. Hence, $\hf$ descends to a invariant homomorphism $\kg\in \HHH^1(\Ng)^{\Gg}$ (namely, $\pi^{\ast}\kg=\hf$). Then we have
\[
\pi^{\ast}(\nug-\kg)=i_{\Mf,\Kf}^{\ast}\phf\in i_{\Mf,\Kf}^{\ast}\QQQ(\Kf)^{\Ff}.
\]
By applying the descending condition (D) to $\nug-\kg$, we have $\nug-\kg\in i_{\Ng,\Lg}^{\ast}\QQQ(\Lg)^{\Gg}$. Therefore, $\nug\in \HHH^1(\Ng)^{\Gg}+i_{\Ng,\Lg}^{\ast}\QQQ(\Lg)^{\Gg}$. This completes our proof of the contrapositive.
\end{proof}

In the `abelian case,' we have the equivalence in Lemma~\ref{lem=CoreE}~(1); see Theorem~\ref{thm=abeliancore} for details.

\subsection{Core extractor}

By Corollary~\ref{cor=torsion} and Lemma~\ref{lem=CoreE}, we can define the following $\RR$-linear map, which we call the \emph{core extractor}.

\begin{defn}[core extractor]\label{defn=CoreE}
Assume Setting~\ref{setting=GLN}. Fix a tuple $(\Ff,\Kf,\Mf,\Rel,\ppi)$ associated with $(\Gg,\Lg,\Ng)$. Assume that the tuple $(\Ff,\Kf,\Mf,\Rel,\ppi)$ satisfies conditions (i) and (ii) in Definition~\ref{defn=FKMcondition}.
\begin{enumerate}[label=\textup{(}$\arabic*$\textup{)}]
 \item Define a well-defined $\RR$-linear map
\[
\CEt\colon \QQQ(\Ng)^{\Gg}\to i_{\Mf\cap \Rel,\Mf}^{\ast}\HHH^1(\Mf)^{\Ff}
\]
in the following manner: for every $\nug \in \QQQ(\Ng)^{\Gg}$, take a core $\hf$ of $\nug$ and set $\CEt(\nug)=\hf|_{\Mf\cap \Rel}$.
 \item The map $\CEt\colon \QQQ(\Ng)^{\Gg}\to i_{\Mf\cap \Rel,\Mf}^{\ast}\HHH^1(\Mf)^{\Ff}$ induces an $\RR$-linear map
\[
\CE\colon \Wcal(\Gg,\Lg,\Ng)\to i_{\Mf\cap \Rel,\Mf}^{\ast}\HHH^1(\Mf)^{\Ff};\quad  [\nug]\mapsto \hf|_{\Mf\cap \Rel},
\]
where  $\hf$ is a core of $\nug$. We call this map $\CE$ the \emph{core extractor} with respect to the tuple $(\Ff,\Kf,\Mf,\Rel,\ppi)$.
\end{enumerate}
\end{defn}

Indeed, by Corollary~\ref{cor=torsion}, $\hf|_{\Mf\cap \Rel}$ is independent of the choices of a core $\hf$ of $\nug\in \QQQ(\Ng)^{\Gg}$. By Lemma~\ref{lem=CoreE}~(2), we have
\[
\Ker(\CEt)\supseteq \HHH^1(\Ng)^{\Gg}+i_{\Ng,\Lg}^{\ast}\QQQ(\Lg)^{\Gg}
\]
so that $\CEt$ induces the core extractor $\CE$.

The following theorem explains the importance of the descending condition (D) in the theory of core extractors.

\begin{thm}[injectivity of the core extractor]\label{thm=InjCE}
Assume Setting~$\ref{setting=GLN}$. Fix a tuple $(\Ff,\Kf,\Mf,\Rel,\ppi)$ associated with $(\Gg,\Lg,\Ng)$. Assume that the tuple $(\Ff,\Kf,\Mf,\Rel,\ppi)$ satisfies conditions \textup{(i)} and \textup{(ii)} in Definition~$\ref{defn=FKMcondition}$. Assume furthermore that the tuple $(\Ff,\Kf,\Mf,\Rel,\ppi)$ satisfies the descending condition \textup{(D)}. Then the core extractor $\CE\colon \Wcal(\Gg,\Lg,\Ng)\to i_{\Mf\cap \Rel,\Mf}^{\ast}\HHH^1(\Mf)^{\Ff}$ is \emph{injective}.
\end{thm}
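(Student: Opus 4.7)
The plan is to observe that Theorem~\ref{thm=InjCE} is essentially a formal synthesis of Proposition~\ref{prop=CoreE} and Lemma~\ref{lem=(ii)}; no further construction of invariant quasimorphisms is required, since the descending condition (D) has already been absorbed into Proposition~\ref{prop=CoreE}. Because $\CE$ is $\RR$-linear, it suffices to verify that $\Ker(\CE) = 0$.

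First, I would fix $[\nug] \in \Wcal(\Gg,\Lg,\Ng)$ with $\CE([\nug]) = 0$ and unpack what this means via the definition of $\CE$ (Definition~\ref{defn=CoreE}). By the well-definedness of the core extractor—which rests on Corollary~\ref{cor=torsion} under conditions (i) and (ii)—the equality $\CE([\nug]) = 0$ is equivalent to saying that some (hence every) core $\hf$ of $\nug$ with respect to $(\Ff,\Kf,\Mf,\Rel,\ppi)$ satisfies $\hf|_{\Mf \cap \Rel} \equiv 0$.

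Second, I would feed this vanishing into Lemma~\ref{lem=(ii)}. That lemma asserts, under conditions (i) and (ii), the equivalence of (a) no $\nug$-core-surviving element existing in $\Mf \cap \Rel$, (b) $\hf|_{\Mf \cap \Rel} \equiv 0$ for every core $\hf$, and (c) $\hf|_{\Mf \cap \Rel} \equiv 0$ for some core $\hf$. The previous step establishes (b), so (a) holds: no element of $\Mf \cap \Rel$ is $\nug$-core-surviving.

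Finally, I would invoke Proposition~\ref{prop=CoreE}, which under conditions (i), (ii), and the descending condition (D) characterizes non-vanishing in $\Wcal(\Gg,\Lg,\Ng)$ by the existence of a $\nug$-core-surviving element in $\Mf \cap \Rel$. Taking the contrapositive of that biconditional and combining it with the previous step forces $[\nug] = 0$ in $\Wcal(\Gg,\Lg,\Ng)$, which yields injectivity. There is no genuine obstacle here: the substantive work—descending a core $\hf$ with $\hf|_{\Mf \cap \Rel} \equiv 0$ to an invariant homomorphism $\kg \in \HHH^1(\Ng)^{\Gg}$, and then using (D) to descend the quasimorphism $\phf \in \QQQ(\Kf)^{\Ff}$ complementary to $\hf$ to a quasimorphism on $\Lg$—was already carried out inside the proof of Proposition~\ref{prop=CoreE}. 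Theorem~\ref{thm=InjCE} merely repackages that equivalence as the injectivity of $\CE$.
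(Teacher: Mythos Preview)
Your proposal is correct and follows the same approach as the paper's proof, which is a one-line appeal to Proposition~\ref{prop=CoreE}: under (D), that proposition gives $\Ker(\CEt)= \HHH^1(\Ng)^{\Gg}+i_{\Ng,\Lg}^{\ast}\QQQ(\Lg)^{\Gg}$, which is exactly the statement that $\CE$ is injective. Your version simply unpacks the intermediate step through Lemma~\ref{lem=(ii)} more explicitly.
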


\begin{proof}
Under (D), Proposition~\ref{prop=CoreE} states that $\Ker(\CEt)= \HHH^1(\Ng)^{\Gg}+i_{\Ng,\Lg}^{\ast}\QQQ(\Lg)^{\Gg}$.
\end{proof}

With the aid of Lemma~\ref{lem=dual}, the core extractor indicates a way of taking `good' tuples $(\wf_1,\ldots,\wf_{\ell})$ and $(\nug_1,\ldots,\nug_{\ell})$ to construct the map $\PPs$ in Theorem~\ref{mthm=main}.

\begin{thm}[outcome of the theory of core extractors]\label{thm=Goodtuple}
Assume Setting~$\ref{setting=GLN}$. Assume that $\Wcal(\Gg,\Lg,\Ng)\ne 0$ and let $\ell\in \NN$ satisfy $\ell\leq \Rdim \Wcal(\Gg,\Lg,\Ng)$. Fix a tuple $(\Ff,\Kf,\Mf,\Rel,\ppi)$ associated with $(\Gg,\Lg,\Ng)$. Assume that the tuple $(\Ff,\Kf,\Mf,\Rel,\ppi)$ satisfies conditions \textup{(i)} and \textup{(ii)} in Definition~$\ref{defn=FKMcondition}$ and the descending condition \textup{(D)}. Then there exist $\wf_1,\ldots \wf_{\ell}\in \Mf\cap \Rel\cap [\Ff,\Kf]$ and $\nug_1,\ldots,\nug_{\ell}\in \QQQ(\Ng)^{\Gg}$ such that for every $i,j\in \{1,\ldots,\ell\}$,
\[
\CEt_{\nug_{j}}(\wf_i)=\delta_{i,j}.
\]
Here, for $\nug\in \QQQ(\Ng)^{\Gg}$ we write $\CEt_{\nug}$ for $\CEt(\nug)$.
\end{thm}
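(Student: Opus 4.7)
The plan is to combine the injectivity of the core extractor (Theorem~\ref{thm=InjCE}) with the function-space lemma (Lemma~\ref{lem=dual}). The only subtlety is that Lemma~\ref{lem=dual} would naturally supply elements lying in $\Mf\cap \Rel$, whereas we need them in the smaller subset $\Mf\cap \Rel\cap [\Ff,\Kf]$; passing between the two will be handled by condition~(ii).

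First, since $\ell\in\NN$ and $\ell\leq \Rdim \Wcal(\Gg,\Lg,\Ng)$, choose an $\ell$-dimensional subspace $V\subseteq \Wcal(\Gg,\Lg,\Ng)$. Conditions~(i), (ii), and (D) are in force by assumption, so Theorem~\ref{thm=InjCE} provides the injective core extractor
\[
\CE\colon \Wcal(\Gg,\Lg,\Ng)\to i_{\Mf\cap \Rel,\Mf}^{\ast}\HHH^1(\Mf)^{\Ff},
\]
and therefore $\Xi_0:=\CE(V)$ is an $\ell$-dimensional subspace of $\Ff$-invariant homomorphisms $\Mf\cap \Rel\to \RR$.

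Next, set $W:=\Mf\cap \Rel\cap [\Ff,\Kf]$ and let $\rho\colon \Xi_0\to \RR^W$ be the restriction map. I claim $\rho$ is injective: if $\xi\in \Xi_0$ vanishes on $W$, then the homomorphism $\xi$ descends to a homomorphism $(\Mf\cap \Rel)/W\to \RR$; by condition~(ii) the quotient $(\Mf\cap \Rel)/W$ is a torsion group, while $\RR$ is torsion-free, so $\xi\equiv 0$. Hence $\Xi:=\rho(\Xi_0)$ is an $\ell$-dimensional subspace of $\RR^W$. Applying Lemma~\ref{lem=dual} to $(W,\Xi)$ yields $\wf_1,\ldots,\wf_\ell\in W$ and $\xi_1',\ldots,\xi_\ell'\in \Xi$ with $\xi_j'(\wf_i)=\delta_{i,j}$. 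Lift each $\xi_j'$ through the injection $\rho$ to some $\xi_j\in \Xi_0$, and choose $[\nug_j]\in V$ with $\CE([\nug_j])=\xi_j$; any representative $\nug_j\in \QQQ(\Ng)^{\Gg}$ then satisfies $\CEt(\nug_j)=\xi_j$. Since $\wf_i\in W\subseteq \Mf\cap \Rel$, evaluation yields
\[
\CEt_{\nug_j}(\wf_i)=\xi_j(\wf_i)=\xi_j'(\wf_i)=\delta_{i,j},
\]
as required. The only delicate point is the injectivity of $\rho$, which is exactly where condition~(ii) is consumed; without it, the test elements $\wf_i$ could only be extracted inside $\Mf\cap \Rel$ and might fail to lie in $[\Ff,\Kf]$.
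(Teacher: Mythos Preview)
Your proof is correct and follows essentially the same approach as the paper's: both combine Theorem~\ref{thm=InjCE} with Lemma~\ref{lem=dual} and invoke condition~(ii) to land the test elements in $[\Ff,\Kf]$. The only cosmetic difference is that the paper first applies Lemma~\ref{lem=dual} on $\Mf\cap\Rel$ and then takes powers (rescaling the $\nug_j$ by the lcm of the torsion exponents) to push the resulting $\wf'_i$ into $[\Ff,\Kf]$, whereas you restrict to $W=\Mf\cap\Rel\cap[\Ff,\Kf]$ first and use condition~(ii) to show this restriction is injective before invoking Lemma~\ref{lem=dual}.
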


\begin{proof}
By Theorem~\ref{thm=InjCE}, the image $\CE(\Wcal(\Gg,\Lg,\Ng))$ of $\Wcal(\Gg,\Lg,\Ng)$ under the core extractor $\CE$ is a real vector space of dimension at least $\ell$. By applying Lemma~\ref{lem=dual}, we obtain $\wf'_1,\ldots \wf'_{\ell}\in \Mf\cap \Rel$ and $\nug'_1,\ldots,\nug'_{\ell}\in \QQQ(\Ng)^{\Gg}$ such that for every $i,j\in \{1,\ldots,\ell\}$, $\CEt_{\nug'_{j}}(\wf'_i)=\delta_{i,j}$ holds.
By condition (ii), there exist $m_1,\ldots,m_{\ell}\in \NN$ such that for every $i\in \{1,\ldots,\ell\}$, $w_i'^{m_i}\in \Mf\cap \Rel\cap [\Ff,\Kf]$ holds. Let $m$ be the least common multiple of $m_1,\ldots,m_{\ell}$. Finally, for every $i,j\in \{1,\ldots,\ell\}$, set $w_i=w_i'^{m}$ and $\nug_j=(1/m)\nug'_j$. Then these $(w_1,\ldots,w_{\ell})$ and $(\nug_1,\ldots,\nug_{\ell})$ satisfy the desired equalities.
\end{proof}

\subsection{Explicit elements in Theorem~\ref{thm=Goodtuple}}\label{subsec=explicit}

Under a certain condition, we can take an \emph{explicit} tuple  $(\wf_1,\ldots,\wf_{\ell})$ in Theorem~\ref{thm=Goodtuple}.

\begin{thm}[explicit elements in Theorem~\ref{thm=Goodtuple}]\label{thm=explicitkernel}
Assume Setting~$\ref{setting=GLN}$. Let $\ell\in \NN$. Assume that $\Rdim \Wcal(\Gg,\Lg,\Ng)\geq \ell$. Let $(\Ff,\Kf,\Mf,\Rel,\ppi)$ be a tuple associated with $(\Gg,\Lg,\Ng)$. Assume that $(\Ff,\Kf,\Mf,\Rel,\ppi)$ satisfies conditions \textup{(i)} and \textup{(ii)} in Definition~$\ref{defn=FKMcondition}$ and the descending condition \textup{(D)}. Assume moreover that there exist $\ell$ elements $r_1,\ldots,r_{\ell}$ in $\Mf\cap \Rel$ such that $r_1,\ldots ,r_{\ell}$ normally generate $\Mf\cap \Rel$ in $\Ff$. Then, $\Rdim \Wcal(\Gg,\Lg,\Ng)= \ell$, and there exist $\nug_1,\ldots,\nug_{\ell}\in \QQQ(\Ng)^{\Gg}$ such that for every $i,j\in \{1,\ldots,\ell\}$,
\begin{equation}\label{eq=deltar}
\CEt_{\nug_{j}}(r_i)=\delta_{i,j}
\end{equation}
holds.
\end{thm}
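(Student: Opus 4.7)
\medskip

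\noindent\textbf{Proof proposal for Theorem~\ref{thm=explicitkernel}.}

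The plan is to use the injectivity of the core extractor together with a pure computation bounding the dimension of $i_{\Mf\cap \Rel,\Mf}^{\ast}\HHH^1(\Mf)^{\Ff}$ from above by $\ell$, using the normal generation hypothesis. First I would verify the following key observation: if $\hf\in \HHH^1(\Mf)^{\Ff}$, then the restriction $\hf|_{\Mf\cap\Rel}$ is determined by the $\ell$ numbers $\hf(r_1),\ldots,\hf(r_\ell)$. Indeed, $\Mf\cap\Rel$ is normal in $\Ff$ and normally generated in $\Ff$ by $r_1,\ldots,r_\ell$, so every $\wf\in \Mf\cap\Rel$ admits an expression $\wf=\prod_{k} f_k r_{j_k}^{\epsilon_k} f_k^{-1}$ with $f_k\in \Ff$, $\epsilon_k\in\{\pm 1\}$ and $j_k\in\{1,\ldots,\ell\}$. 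Since $\hf$ is a homomorphism that is $\Ff$-invariant by conjugation, we obtain
\[
\hf(\wf)=\sum_k \hf(f_k r_{j_k}^{\epsilon_k}f_k^{-1})=\sum_k \epsilon_k \hf(r_{j_k}).
\]

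Consequently the $\RR$-linear evaluation map
\[
\ev\colon i_{\Mf\cap\Rel,\Mf}^{\ast}\HHH^1(\Mf)^{\Ff}\longrightarrow \RR^{\ell};\qquad \hf|_{\Mf\cap\Rel}\longmapsto \bigl(\hf(r_1),\ldots,\hf(r_\ell)\bigr)
\]
is well-defined (its value depends only on $\hf|_{\Mf\cap\Rel}$) and injective, so $\Rdim\bigl(i_{\Mf\cap\Rel,\Mf}^{\ast}\HHH^1(\Mf)^{\Ff}\bigr)\leq \ell$. Now I would apply Theorem~\ref{thm=InjCE} (which uses conditions (i), (ii) and (D)) to conclude that the core extractor $\CE\colon \Wcal(\Gg,\Lg,\Ng)\hookrightarrow i_{\Mf\cap\Rel,\Mf}^{\ast}\HHH^1(\Mf)^{\Ff}$ is injective. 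Composition yields the injective $\RR$-linear map
\[
\ev\circ \CE\colon \Wcal(\Gg,\Lg,\Ng)\longrightarrow \RR^{\ell};\qquad [\nug]\longmapsto \bigl(\CEt_{\nug}(r_1),\ldots,\CEt_{\nug}(r_\ell)\bigr).
\]

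Combining the injectivity of $\ev\circ \CE$ with the standing hypothesis $\Rdim \Wcal(\Gg,\Lg,\Ng)\geq \ell$ forces $\Rdim \Wcal(\Gg,\Lg,\Ng)=\ell$ and forces $\ev\circ \CE$ to be an isomorphism onto $\RR^{\ell}$. The second conclusion follows: for each $j\in\{1,\ldots,\ell\}$, take $[\nug_j]\in\Wcal(\Gg,\Lg,\Ng)$ to be the preimage of the $j$-th standard basis vector $e_j\in\RR^{\ell}$ under $\ev\circ\CE$, and choose any representative $\nug_j\in\QQQ(\Ng)^{\Gg}$; then $\CEt_{\nug_j}(r_i)=\delta_{i,j}$ for all $i,j$, as required.

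There is no genuine obstacle to this argument: the theorem reduces to the normal-generation observation that $\Ff$-invariance plus the homomorphism property rigidifies $\hf|_{\Mf\cap\Rel}$ to $\ell$ real parameters, after which Theorem~\ref{thm=InjCE} does all the heavy lifting. The one point requiring mild care is to ensure that the expression of $\wf\in \Mf\cap\Rel$ as a product of $\Ff$-conjugates of $r_j^{\pm 1}$ actually exists (so that normal generation is in $\Ff$, not merely in $\Mf\cap \Rel$); this is precisely what the hypothesis provides, and is also why the evaluation map is even defined on restrictions rather than on all of $\HHH^1(\Mf)^{\Ff}$.
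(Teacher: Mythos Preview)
Your proof is correct and, in fact, slightly cleaner than the paper's. Both arguments rest on the same observation---that an $\Ff$-invariant homomorphism restricted to $\Mf\cap\Rel$ is determined by its values on the normal generators $r_1,\ldots,r_\ell$---but they package it differently. The paper first invokes Theorem~\ref{thm=Goodtuple} to obtain auxiliary elements $\wf_1,\ldots,\wf_\ell\in\Mf\cap\Rel$ and $\nug'_1,\ldots,\nug'_\ell$ with $\CEt_{\nug'_j}(\wf_i)=\delta_{i,j}$, then expresses each $\wf_i$ as a product of $\Ff$-conjugates of $r_j^{\pm1}$ to get an integer matrix $[m_{i,j}]$ satisfying $[m_{i,j}][\CEt_{\nug'_j}(r_i)]=I_\ell$; invertibility of this matrix yields the $\nug_j$ as explicit $\ZZ$-linear combinations of the $\nug'_j$, and the same matrix argument rules out $\Rdim\Wcal(\Gg,\Lg,\Ng)>\ell$ by a rank obstruction. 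You instead bypass Theorem~\ref{thm=Goodtuple} entirely: you use the observation to bound $\Rdim\bigl(i_{\Mf\cap\Rel,\Mf}^{\ast}\HHH^1(\Mf)^{\Ff}\bigr)\le\ell$ directly, then invoke only the injectivity of $\CE$ (Theorem~\ref{thm=InjCE}) to force $\ev\circ\CE$ to be an isomorphism onto $\RR^\ell$. Your route is more conceptual and shorter; the paper's route is more constructive, producing the $\nug_j$ as integer combinations of previously obtained quasimorphisms.
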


\begin{proof}
By Theorem~\ref{thm=Goodtuple}, we obtain $\wf_1,\ldots,\wf_{\ell}\in \Mf\cap \Rel$ and $\nug'_1,\ldots,\nug'_{\ell}\in \QQQ(\Ng)^{\Gg}$ such that for every $i,j\in \{1,\ldots,\ell\}$, $\CEt_{\nug'_{j}}(\wf_i)=\delta_{i,j}$ holds. Let $i\in \{1,\ldots,\ell\}$. Since $\wf_i\in \Mf \cap \Rel$, we can fix an expression of $\wf_i$ as the product of some conjugates in $F$ of $r_1^{\pm},\ldots,r_{\ell}^{\pm}$. For every $j\in \{1,\ldots ,\ell\}$, set $m_{i,j}\in \ZZ$ as the difference between the number of conjugates of $r_j$ and that of conjugates of $r_j^{-1}$ in the fixed expression of $\wf_i$. Then since $\CEt_{\nug'_{1}},\ldots,\CEt_{\nug'_{\ell}}\in \HHH^1(\Mf \cap \Rel)^{\Ff}$, we have the following matrix equality:
\[
[m_{i,j}]_{i,j} [\CEt_{\nug'_{j}}(r_i)]_{i,j}=I_{\ell},
\]
where $I_{\ell}$ denotes the identity matrix of degree $\ell$. Hence, $[\CEt_{\nug'_{j}}(r_i)]_{i,j}=([m_{i,j}]_{i,j})^{-1}$ so that  $[\CEt_{\nug'_{j}}(r_i)]_{i,j}[m_{i,j}]_{i,j} =I_{\ell}$. For every $j\in \{1,\ldots,\ell\}$, set
\[
\nug_{j}=\sum_{s\in\{1,\ldots,\ell\}}m_{j,s}\nug'_{s}.
\]
Then, we have \eqref{eq=deltar}. This argument will also draw a contradiction if we assume that $\Rdim \Wcal(\Gg,\Lg,\Ng) > \ell$.
\end{proof}

\begin{cor}\label{cor=explicitkernel}
Assume Setting~$\ref{setting=GLN}$. Let $\ell\in \NN$ and $\Rdim \Wcal(\Gg,\Lg,\Ng)\geq \ell$. Assume that $\QG=\Gg/\Ng$ is boundedly $3$-acyclic and that $\Lg/\Ng$ is boundedly $2$-acyclic. Assume furthermore that $\Gg$ admits a presentation by $\ell$ relators: more precisely, assume that there exist a free group $F$, a surjective group homomorphism $\ppi\colon F\twoheadrightarrow \Gg$ and $r_1,\ldots,r_{\ell}\in F$ such that $R=\Ker(\ppi)$ is normally generated by $r_1,\ldots,r_{\ell}$ in $F$. Set $\Kf=\ppi^{-1}(\Lg)$ and $\Mf=\ppi^{-1}(\Ng)$. Assume that $R/(R\cap [F,\Kf])$ is a torsion group. Then $\Rdim \Wcal(\Gg,\Lg,\Ng)= \ell$, and there exist $\nug_1,\ldots,\nug_{\ell}\in \QQQ(\Ng)^{\Gg}$ such that for every $i,j\in \{1,\ldots,\ell\}$, $\eqref{eq=deltar}$ holds.
\end{cor}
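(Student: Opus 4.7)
The plan is to deduce the corollary directly from Theorem~\ref{thm=explicitkernel} by verifying that the tuple $(\Ff,\Kf,\Mf,\Rel,\ppi)$ constructed from the given presentation satisfies conditions (i), (ii) and (D), and that the elements $r_1,\ldots,r_{\ell}$ normally generate $\Mf\cap\Rel$ in $\Ff$. Once these four items are in place, Theorem~\ref{thm=explicitkernel} delivers both the dimension equality $\Rdim \Wcal(\Gg,\Lg,\Ng)=\ell$ and quasimorphisms $\nug_1,\ldots,\nug_{\ell}\in \QQQ(\Ng)^{\Gg}$ satisfying \eqref{eq=deltar}.

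First I would observe the inclusion structure of the tuple. Since $\Rel=\ppi^{-1}(e_{\Gg})\leqslant \ppi^{-1}(\Ng)=\Mf\leqslant \ppi^{-1}(\Lg)=\Kf$, we have $\Mf\cap \Rel=\Rel$ and $\Kf\cap \Rel=\Rel$. In particular the hypothesis that $r_1,\ldots,r_{\ell}$ normally generate $\Rel$ in $\Ff$ gives exactly the required normal generation of $\Mf\cap \Rel$ by $r_1,\ldots,r_{\ell}\in \Mf\cap \Rel$.

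Next I would verify condition~(i). Since $\Ff$ is free, we have natural isomorphisms $\Ff/\Mf\cong \QG$ and $\Kf/\Mf\cong \Lg/\Ng$, so the boundedly $3$-acyclicity of $\QG$ and the boundedly $2$-acyclicity of $\Lg/\Ng$ are exactly the hypotheses needed to apply Corollary~\ref{cor=free} (equivalently, Proposition~\ref{prop=onetwo} using $\HHH^2(\Ff)=0$). This yields $\Wcal(\Ff,\Kf,\Mf)=0$ together with $\HHH^1(\Mf)^{\Ff}\cap i_{\Mf,\Kf}^{\ast}\QQQ(\Kf)^{\Ff}=i_{\Mf,\Kf}^{\ast}\HHH^1(\Kf)^{\Ff}$. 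For condition~(ii), note that $(\Mf\cap\Rel)/(\Mf\cap\Rel\cap [\Ff,\Kf])=\Rel/(\Rel\cap[\Ff,\Kf])=\Rel/(\Rel\cap[F,\Kf])$, which is a torsion group by assumption. For the descending condition~(D), I would apply Proposition~\ref{prop=D}: the quotient $(\Kf\cap\Rel)/(\Mf\cap\Rel)=\Rel/\Rel$ is trivial, hence torsion, so condition~(iii) of that proposition is automatic and (D) follows.

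With (i), (ii) and (D) established and the explicit normal generators $r_1,\ldots,r_{\ell}\in \Mf\cap\Rel$ supplied by the presentation, the hypotheses of Theorem~\ref{thm=explicitkernel} are met, and the conclusion follows directly. There is no genuine obstacle here; the only thing that requires a moment of care is the trivial identification $\Mf\cap\Rel=\Rel$, which is what makes the hypothesis on normal generators and the torsion hypothesis $R/(R\cap[F,\Kf])$ match the shape required by Theorem~\ref{thm=explicitkernel}.
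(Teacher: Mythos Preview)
Your proposal is correct and follows essentially the same approach as the paper: verify conditions (i), (ii), and (D) for the tuple $(\Ff,\Kf,\Mf,\Rel,\ppi)$ via Corollary~\ref{cor=free} and Proposition~\ref{prop=D}, use the observation $\Mf\cap\Rel=\Rel$ to match the normal-generation and torsion hypotheses, and then invoke Theorem~\ref{thm=explicitkernel}. The paper's proof is more terse but identical in substance.
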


\begin{proof}
By Corollary~\ref{cor=free}, the tuple $(F,\Kf,\Mf,R,\ppi)$ satisfies condition (i) in Definition~\ref{defn=FKMcondition}. Since $R/(R\cap [F,\Kf])$ is a torsion group, this tuple satisfies condition (ii) in Definition~\ref{defn=FKMcondition}. Since $R\leqslant \Mf$ by construction, condition (iii) in Proposition~\ref{prop=D} is fulfilled for this tuple, and  hence (D) holds. Therefore, Theorem~\ref{thm=explicitkernel} applies to this situation.
\end{proof}

\begin{exa}[examples in surface groups]\label{exa=surface}
Let $\genus\in \NN_{\geq 2}$. Then, the surface group $\pi_1(\Sigma_{\genus})$ admits the following presentation $(F\,|\,R)$ (recall Definition~\ref{defn=pres} for our formulation of group representations): let $F=F(\tilde{a}_1,\ldots,\tilde{a}_{\genus},\tilde{b}_1,\ldots,\tilde{b}_{\genus})$ be the free group with free basis $(\tilde{a}_1,\ldots,\tilde{a}_{\genus},\tilde{b}_1,\ldots,\tilde{b}_{\genus})$ and $R$ the normal closure in $F$ of $r=[\tilde{a}_1,\tilde{b}_1]\cdots [\tilde{a}_{\genus},\tilde{b}_{\genus}] (\in [F,F])$. Let $\Ng_{\genus}$ be a normal subgroup of $\pi_1(\Sigma_{\genus})$ with $\Ng_{\genus}\geqslant [\pi_1(\Sigma_{\genus}),\pi_1(\Sigma_{\genus})]$ and $\WW(\pi_1(\Sigma_{\genus}),\Ng_{\genus})\ne 0$: we can take for instance $\Ng_{\genus}=\gamma_2(\pi_1(\Sigma_{\genus}))$ by Theorem~\ref{thm=WW}~(2). Then, by Corollary~\ref{cor=explicitkernel} (with $\Lg=\Gg$) there exists $\nug\in \QQQ(\Ng_{\genus})^{\pi_1(\Sigma_{\genus})}$ such that $\CEt_{\nug}(r)=1$. In Example~\ref{exa=surfacecup}~(2), we will exhibit an example of such $\Ng_{\genus}$ other than $\gamma_2(\pi_1(\Sigma_{\genus}))$.

As we mentioned at the beginning of this section, in \cite[Section~6]{MMM} a concrete representative of a generator $\nug'$ of $\WW(\pi_1(\Sigma_{\genus}),\gamma_2(\pi_1(\Sigma_{\genus})))$ was provided; that generator satisfies $\CEt_{\nug'}(r)=1$. The construction of $\nug'$ stems from an elaborate study of surface group actions on the circle (see \cite[Section~3]{MMM} for details). Our emphasis here is that the theory of core extractors may ensure the existence of certain `interesting' invariant quasimorphisms $\nug\in \QQQ(\Ng)^{\Gg}$ in Setting~\ref{setting=GLN}  \emph{merely by algebraic conditions on $(\Gg,\Lg,\Ng)$, a group presentation of $\Gg$ and dimension computation of $\Wcal(\Gg,\Lg,\Ng)$}.
\end{exa}

\section{Core extractor in the abelian case}\label{sec=CEabel}

As we discuss in Section~\ref{sec=CE}, our theory of core extractors works well for a triple $(\Gg,\Lg,\Ng)$ in Setting~\ref{setting=GLN} if there exists a tuple $(\Ff,\Kf,\Mf,\Rel,\ppi)$ associated with $(\Gg,\Lg,\Ng)$ that satisfies conditions (i) and (ii) in Definition~\ref{defn=FKMcondition} and the descending condition (D) (recall Definition~\ref{defn=descending}). Proposition~\ref{prop=D} provides a criterion (iii) for (D). In this section, we treat the case where $\Ng\geqslant [\Gg,\Lg]$, and discuss (D) and core extractors in this setting. We call this case the `\emph{abelian case}' because it exactly corresponds to the case where $\QG=\Gg/\Ng$ is abelian if $\Lg=\Gg$. (For a general $\Lg$, the terminology of `central case' might be more appropriate. However, we will study for instance `nilpotent case' in our forthcoming work, and the terminology of `abelian case' fits better in this context.) We will prove the following theorem, which state that the descending condition (D) is automatically fulfilled in the `abelian case.'

\begin{thm}[condition (D) in the abelian case]\label{thm=abelD}
Assume Setting~$\ref{setting=GLN}$. Fix a tuple $(\Ff,\Kf,\Mf,\Rel,\ppi)$ associated with $(\Gg,\Lg,\Ng)$. Assume furthermore that
\begin{equation}\label{eq=abelian}
\Mf\geqslant [\Ff,\Kf].
\end{equation}
Then, the tuple $(\Ff,\Kf,\Mf,\Rel,\ppi)$ satisfies the descending condition \textup{(D)}.
\end{thm}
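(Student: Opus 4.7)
The plan is to verify condition (D) directly, by-passing the torsion criterion (iii) of Proposition~\ref{prop=D}. Suppose $\nug\in\QQQ(\Ng)^{\Gg}$ satisfies $\ppi^{\ast}\nug=i_{\Mf,\Kf}^{\ast}\phf$ for some $\phf\in\QQQ(\Kf)^{\Ff}$. By Lemma~\ref{lem=descending}, to produce the desired $\psg\in\QQQ(\Lg)^{\Gg}$ with $i_{\Ng,\Lg}^{\ast}\psg=\nug$, it would suffice to replace $\phf$ by some $\phf'\in\QQQ(\Kf)^{\Ff}$ satisfying both $i_{\Mf,\Kf}^{\ast}\phf'=\ppi^{\ast}\nug$ and $\phf'|_{\Kf\cap\Rel}\equiv 0$. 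The strategy is therefore to write $\phf'=\phf-\xi$ for a suitable $\xi\in\HHH^1(\Kf)^{\Ff}$ which vanishes on $\Mf$ and agrees with $\phf|_{\Kf\cap\Rel}$.

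Before constructing $\xi$, I will record two consequences of the hypothesis $\Mf\geqslant[\Ff,\Kf]$: (a) $\Kf/\Mf$ is abelian, since $[\Kf,\Kf]\subseteq[\Ff,\Kf]\subseteq\Mf$; and (b) $\Kf/\Mf$ is central in $\Ff/\Mf$, so $\Ff$ acts trivially on $\Kf/\Mf$ by conjugation. The next step is to observe that $\phf|_{\Kf\cap\Rel}$ is already a homomorphism: for $r_1,r_2\in\Kf\cap\Rel$, the commutator $[r_1,r_2]$ lies in $[\Kf,\Kf]\cap\Rel\subseteq\Mf\cap\Rel$, and since $\ppi(\Mf\cap\Rel)=\{e_{\Gg}\}$ we have $\phf|_{\Mf\cap\Rel}=(\ppi^{\ast}\nug)|_{\Mf\cap\Rel}\equiv 0$; Proposition~\ref{prop=Bavard} applied to $\phf|_{\Kf\cap\Rel}$ then forces $\DD(\phf|_{\Kf\cap\Rel})=0$. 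Hence $\phf|_{\Kf\cap\Rel}\in\HHH^1(\Kf\cap\Rel)^{\Ff}$ and, vanishing on $\Mf\cap\Rel$, it factors through a homomorphism from the subgroup $(\Kf\cap\Rel)\Mf/\Mf$ of $\Kf/\Mf$ into $\RR$.

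The construction of $\xi$ will then proceed by extension. Because $\RR$ is divisible, hence an injective object in the category of abelian groups, the homomorphism just produced extends to a homomorphism $\bar{\xi}\colon\Kf/\Mf\to\RR$. Pulling $\bar{\xi}$ back along the projection $\Kf\twoheadrightarrow\Kf/\Mf$ yields $\xi\in\HHH^1(\Kf)$ with $\xi|_{\Mf}\equiv 0$ and $\xi|_{\Kf\cap\Rel}=\phf|_{\Kf\cap\Rel}$; by observation~(b), $\Ff$ acts trivially on $\Kf/\Mf$, so $\bar{\xi}$ is automatically $\Ff$-invariant and therefore $\xi\in\HHH^1(\Kf)^{\Ff}$. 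Setting $\phf'=\phf-\xi\in\QQQ(\Kf)^{\Ff}$, one has $i_{\Mf,\Kf}^{\ast}\phf'=\ppi^{\ast}\nug$ and $\phf'|_{\Kf\cap\Rel}\equiv 0$, so Lemma~\ref{lem=descending} produces $\psg\in\QQQ(\Lg)^{\Gg}$ with $\ppi^{\ast}\psg=\phf'$; surjectivity of $\ppi|_{\Mf}\colon\Mf\twoheadrightarrow\Ng$ descends the equality $i_{\Mf,\Kf}^{\ast}\phf'=\ppi^{\ast}\nug$ to $\psg|_{\Ng}=\nug$, establishing (D).

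The main obstacle I anticipate is the temptation to deduce (D) from the torsion criterion (iii) of Proposition~\ref{prop=D}: the kernel $(\Kf\cap\Rel)\Mf/\Mf$ of the surjection $\Kf/\Mf\twoheadrightarrow\Lg/\Ng$ can easily be a non-torsion (even free) abelian group in the abelian case, so (iii) does not apply and must be replaced. The decisive ingredient that bypasses (iii) is that abelianness of $\Kf/\Mf$ plays two roles simultaneously---via Proposition~\ref{prop=Bavard} it upgrades $\phf|_{\Kf\cap\Rel}$ to a genuine homomorphism (so the word ``extension'' makes sense), and via centrality it makes every $\RR$-valued extension automatically $\Ff$-invariant (so the correction lives in $\HHH^1(\Kf)^{\Ff}$)---after which injectivity of $\RR$ in abelian groups finishes the job.
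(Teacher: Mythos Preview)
Your proof is correct, and it takes a genuinely different route from the paper's.

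The paper first establishes Theorem~\ref{thm=orerei}, which characterizes extendability of $\nug\in\QQQ(\Ng)^{\Gg}$ to $\Lg$ in the abelian case by the finiteness of the single quantity $\DD^1_{\Gg,\Lg}(\nug)=\sup\{|\nug([g,g'])|:g\in\Gg,\,g'\in\Lg\}$; the proof of that theorem proceeds by a Zorn's-lemma maximal-extension argument together with Proposition~\ref{prop=vsplit}. The present theorem is then a two-line corollary: if $\pi^{\ast}\nug$ extends to $\Kf$, Theorem~\ref{thm=orerei} upstairs gives $\DD^1_{\Ff,\Kf}(\pi^{\ast}\nug)<\infty$; surjectivity of $\pi$ yields $\DD^1_{\Ff,\Kf}(\pi^{\ast}\nug)=\DD^1_{\Gg,\Lg}(\nug)$; and Theorem~\ref{thm=orerei} downstairs then gives $\nug\in i_{\Ng,\Lg}^{\ast}\QQQ(\Lg)^{\Gg}$.

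Your argument bypasses Theorem~\ref{thm=orerei} entirely. The point of departure is that you do not try to \emph{find} an extension of $\nug$ from scratch; instead you \emph{correct} the given lift $\phf$ so that it descends, using only Proposition~\ref{prop=Bavard}, Lemma~\ref{lem=descending}, and the injectivity of $\RR$ among abelian groups. The crucial observation---that centrality of $\Kf/\Mf$ in $\Ff/\Mf$ forces any homomorphism $\Kf/\Mf\to\RR$ to be $\Ff$-invariant---is exactly what makes the injective-extension step land in $\HHH^1(\Kf)^{\Ff}$ rather than merely $\HHH^1(\Kf)$. This yields a shorter, more constructive, and Zorn-free proof of the theorem in isolation. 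The trade-off is that the paper needs Theorem~\ref{thm=orerei} elsewhere anyway (for Theorem~\ref{thm=abeliancore} and Lemma~\ref{lem=eqdefect}), so its route amortizes the cost; your route would be preferable if Theorem~\ref{thm=abelD} were the only goal.
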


We note that under \eqref{eq=abelian}, $\Mf\cap [\Ff,\Kf]=[\Ff,\Kf]$ holds.

We will prove Theorem~\ref{thm=abelD} in Subsection~\ref{subsec=keycor}. There, we  deduce Corollary~\ref{cor=InjCE} from Theorem~\ref{thm=abelD}. Corollary~\ref{cor=InjCE} is the \emph{upshot} of the theory that we have been introducing and developing in Sections~\ref{sec=CE} and \ref{sec=CEabel}. This corollary plays a key role in the proof of Theorem~\ref{mthm=main}; as we mentioned in Subsection~\ref{subsec=org}, we employ it in the construction of the map $\PPs\colon (\ZZ^{\ell},\|\cdot\|_1)\to ([\Gg,\Ng],d_{\scl_{\Gg,\Ng}})$ (see Theorem~\ref{thm=PPs} for the concrete construction of $\PPs$).

\begin{rem}\label{rem=normal}
Let $\Gg$ be a group and $\Ng$ a normal subgroup of $\Gg$. If a subgroup $\Lg\leqslant \Gg$ satisfies that $\Lg\geqslant \Ng$ and $\Ng\geqslant [\Gg,\Lg]$, then $\Lg$ is normal in $\Gg$.
\end{rem}

\subsection{The map $\alpha_{\bff}$}

To show Theorem~\ref{thm=abelD}, we study the value $\hf(\wf)$ where $\wf\in [\Ff,\Kf]$ and $\hf$ is a core of $\nug\in \QQQ(\Ng)^{\Gg}$ in the abelian case (namely, under \eqref{eq=abelian}). We employ the following definitions.

\begin{defn}[{$[\Ff,\Kf]$-expression}]\label{defn=tuple}
Let $\Ff$ be a group, and let $\Kf$ and $\Mf$ be two normal subgroups of $\Ff$ with $\Kf\geqslant \Mf$. Assume that $\Mf\geqslant [\Ff,\Kf]$. For $\wf\in [\Ff,\Kf](\leqslant \Mf)$, we say that $\bff=(\ff_1,\ldots ,\ff_t;\ff'_1,\ldots,\ff'_t)$ is an \emph{$[\Ff,\Kf]$-expression} for $\wf$ if the following conditions are satisfied:
\begin{enumerate}[label=\textup{(}$\arabic*$\textup{)}]
 \item $t\in \NN$;
 \item for every $i\in \{1,\ldots,t\}$, $\ff_i\in \Ff$ and $\ff'_i\in \Kf$; and
 \item $\wf=[\ff_1,\ff'_1]\cdots [\ff_t,\ff'_t]$.
\end{enumerate}
\end{defn}

For an $[\Ff,\Kf]$-expression $\bff$ for $\wf\in [\Ff,\Kf]$, we define a map $\alpha_{\bff}\colon \ZZ\to [\Ff,\Kf]$ in the following manner.

\begin{defn}[the map $\alpha_{\bff}$]\label{defn=Pspower}
Let $\Ff$ be a group, and let $\Kf$ and $\Mf$ be two normal subgroups of $\Ff$ with $\Kf\geqslant \Mf$. Assume that $\Mf\geqslant [\Ff,\Kf]$. Let $\wf\in [\Ff,\Kf]$ and $\bff=(\ff_1,\ldots ,\ff_t;\ff'_1,\ldots,\ff'_t)$ be an $[\Ff,\Kf]$-expression for $\wf$. Then, we define a map $\alpha_{\bff}\colon \ZZ\to [\Ff,\Kf]$ by
\[
\ZZ\ni m\mapsto \alpha_{\bff}(m)=[\ff_1,\ff'_1{}^m]\cdots [\ff_t,\ff'_t{}^m]\in [\Ff,\Kf].
\]
\end{defn}

The following proposition states key properties of this map $\alpha_{\bff}$.

\begin{prop}\label{prop=alpha}
Let $\Ff$ be a group, and $\Kf$ and $\Mf$ two normal subgroups of $\Ff$ with $\Kf\geqslant \Mf$. Assume that $\Mf\geqslant [\Ff,\Kf]$. Let $\wf\in [\Ff,\Kf]$ and $\bff=(\ff_1,\ldots ,\ff_t;\ff'_1,\ldots,\ff'_t)$ be an $[\Ff,\Kf]$-expression  for $\wf$. Then, the following hold true for the map $\alpha_{\bff}$.
\begin{enumerate}[label=\textup{(}$\arabic*$\textup{)}]
  \item The map $\alpha_{\bff}\colon \ZZ\to ([\Ff,\Kf],d_{\cl_{\Ff,\Mf}})$ is a  pre-coarse homomorphism. More precisely, for every $m,n\in \ZZ$,
\begin{equation}\label{eq=2t-1}
d_{\cl_{\Ff,\Mf}}(\alpha_{\bff}(m+n),\alpha_{\bff}(m)\alpha_{\bff}(n))\leq 2t-1.
\end{equation}
  \item Let $\mathrm{proj}_{[\Ff,\Mf]}^{\Mf}\colon \Mf\twoheadrightarrow M/[\Ff,\Mf]$ be the natural group quotient map. Then, $\mathrm{proj}_{[\Ff,\Mf]}^{\Mf}\circ \alpha_{\bff}\colon \ZZ\to M/[\Ff,\Mf]$ is a genuine group homomorphism.
\end{enumerate}
\end{prop}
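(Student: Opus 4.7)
The plan is to establish part~(2) as a warm-up and then to derive part~(1) by induction on $t$, using the commutator expansion of Lemma~\ref{lem=commutatoreq}~(1) as the main algebraic tool. The hypothesis $\Mf \geqslant [\Ff,\Kf]$ is what makes the argument work quantitatively: every commutator of the form $[\ff_i, \ff'_i{}^k]$ lies in $\Mf$, so the remainder terms produced by commutator expansions will be simple $(\Ff,\Mf)$-commutators (recall Lemma~\ref{lem=Ginvmetric}~(1)).

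For (2), I would apply Lemma~\ref{lem=commutatoreq}~(1) with $g_1=\ff_i$, $g_2=\ff'_i{}^m$, $g_3=\ff'_i{}^n$ to obtain
\[
[\ff_i,\ff'_i{}^{m+n}] \;=\; [\ff_i,\ff'_i{}^m]\,[\ff_i,\ff'_i{}^n]\,\bigl[[\ff_i,\ff'_i{}^n]^{-1},\ff'_i{}^m\bigr].
\]
Since $[\ff_i,\ff'_i{}^n]\in [\Ff,\Kf]\leqslant \Mf$, the rightmost factor belongs to $[\Mf,\Ff]=[\Ff,\Mf]$, so modulo $[\Ff,\Mf]$ we have $[\ff_i,\ff'_i{}^{m+n}]\equiv [\ff_i,\ff'_i{}^m][\ff_i,\ff'_i{}^n]$. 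Taking the product over $i$ and using that $\Mf/[\Ff,\Mf]$ is abelian to rearrange factors yields the congruence $\alpha_{\bff}(m+n)\equiv \alpha_{\bff}(m)\alpha_{\bff}(n) \pmod{[\Ff,\Mf]}$, which is precisely~(2).

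For (1), I would argue by induction on $t$. The base case $t=1$ is immediate from the displayed identity: the single correction term is one simple $(\Ff,\Mf)$-commutator, giving $d_{\cl_{\Ff,\Mf}}(\alpha_{\bff}(2),\alpha_{\bff}(m)\alpha_{\bff}(n))\leq 1=2\cdot 1 -1$. For the inductive step, write $\bff_{t}=(\ff_1,\ldots,\ff_t;\ff'_1,\ldots,\ff'_t)$ and factor $\alpha_{\bff_t}(m+n)=\alpha_{\bff_{t-1}}(m+n)\cdot [\ff_t,\ff'_t{}^{m+n}]$. The inductive hypothesis contributes cost $2(t-1)-1$ to the first factor, and expanding $[\ff_t,\ff'_t{}^{m+n}]$ by the identity above contributes cost $1$, so by Lemma~\ref{lem=choron}~(1)--(2),
\[
\alpha_{\bff_t}(m+n) \mathrel{\overset{(\Ff,\Mf)}{\underset{2t-2}{\eqsim}}} \alpha_{\bff_{t-1}}(m)\,\alpha_{\bff_{t-1}}(n)\,[\ff_t,\ff'_t{}^m]\,[\ff_t,\ff'_t{}^n].
\]
The right-hand side differs from $\alpha_{\bff_t}(m)\alpha_{\bff_t}(n)=\alpha_{\bff_{t-1}}(m)\,[\ff_t,\ff'_t{}^m]\,\alpha_{\bff_{t-1}}(n)\,[\ff_t,\ff'_t{}^n]$ only by the swap of $\alpha_{\bff_{t-1}}(n)\in \Mf$ with $[\ff_t,\ff'_t{}^m]\in \Ff$, which by Lemma~\ref{lem=choron}~(3) costs exactly one further simple $(\Ff,\Mf)$-commutator. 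Summing via Lemma~\ref{lem=choron}~(1) closes the induction with the bound $(2t-2)+1=2t-1$.

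The main subtlety I expect is this last swap: one must resist the temptation to move the $t-1$ simple commutators composing $\alpha_{\bff_{t-1}}(n)$ past $[\ff_t,\ff'_t{}^m]$ one at a time, which would cost $t-1$ swaps at stage $t$ and ultimately blow the total bound up to $\binom{t}{2}$. The whole point of working with $d_{\cl_{\Ff,\Mf}}$ (rather than $d_{\cl_{\Ff}}$) is that Lemma~\ref{lem=choron}~(3) lets one swap the entire element $\alpha_{\bff_{t-1}}(n)\in \Mf$ across an element of $\Ff$ at a cost of exactly one $(\Ff,\Mf)$-commutator, regardless of its internal structure; bookkeeping this uniform cost is what makes the linear bound $2t-1$ attainable.
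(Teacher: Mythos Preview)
Your proof is correct. The key algebraic ingredients---the commutator expansion from Lemma~\ref{lem=commutatoreq}~(1) and the single-commutator swap from Lemma~\ref{lem=choron}~(3)---are exactly those used in the paper, and your bookkeeping yields the same bound $2t-1$. (There is a small typo in your base case: $\alpha_{\bff}(2)$ should read $\alpha_{\bff}(m+n)$.)

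The organization differs from the paper in two ways. First, the paper proves~(1) and then reads off~(2) as an immediate consequence, whereas you do~(2) separately as a warm-up; either order is fine. Second, and more substantively, the paper avoids induction on $t$: it expands all $t$ commutators $[\ff_i,\ff_i'^{m+n}]$ simultaneously at a total cost of $t$, arriving at the interleaved product $[\ff_1,\ff_1'^m][\ff_1,\ff_1'^n]\cdots[\ff_t,\ff_t'^m][\ff_t,\ff_t'^n]$, and then performs $t-1$ swaps (each of a single element of $\Mf$ past another, via Lemma~\ref{lem=choron}~(3)) to reach $\alpha_{\bff}(m)\alpha_{\bff}(n)$. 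Your inductive scheme amounts to the same total count $t+(t-1)$ but threads the swaps through the recursion; the direct argument is shorter and makes the decomposition $t+(t-1)=2t-1$ transparent without an inductive hypothesis. Your closing remark about why one must swap the whole block $\alpha_{\bff_{t-1}}(n)\in\Mf$ at unit cost (rather than commutator-by-commutator) is exactly the point, and it is equally visible in the paper's direct count.
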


Recall from Subsection~\ref{subsec=scl-distance} that we set $d_{\cl_{\Ff,\Mf}}(\wf_1,\wf_2)=\infty$ if $\wf_1^{-1}\wf_2\not\in [\Ff,\Mf]$.

\begin{proof}[Proof of Proposition~$\ref{prop=alpha}$]
We use the symbol `${\underset{C}{\eqsim}}$' to mean `$\overset{(\Ff,\Mf)}{\underset{C}{\eqsim}}$;' recall Definition~\ref{defn=choron}. First we prove (1). Let $i\in \{1,\ldots,t\}$. Then by Lemma~\ref{lem=commutatoreq}~(1),
\[
[\ff_i,\ff'_i{}^{m+n}]=[\ff_i,\ff'_i{}^m \ff'_i{}^n ]=[\ff_i,\ff'_i{}^m][\ff_i,\ff'_i{}^n][[\ff_i,\ff'_i{}^n]^{-1},\ff'_i{}^m]\mathrel{\underset{1}{\eqsim}}[\ff_i,\ff'_i{}^m][\ff_i,\ff'_i{}^n].
\]
Here, recall Lemma~\ref{lem=Ginvmetric} and that $[\Ff,\Kf]\leqslant \Mf$. Hence, we have
\begin{equation}\label{eq=m+n}
\alpha_{\bff}(m+n)\mathrel{\underset{t}{\eqsim}}[\ff_1,\ff'_1{}^m][\ff_1,\ff'_1{}^n]\cdots [\ff_t,\ff'_t{}^m][\ff_t,\ff'_t{}^n].
\end{equation}
By Lemma~\ref{lem=choron}~(3), for every $\wf_1,\wf_2\in \Mf$, $\wf_1\wf_2\mathrel{\underset{1}{\eqsim}}\wf_2\wf_1$ holds. This implies that
\begin{equation}\label{eq=mton}
[\ff_1,\ff'_1{}^m][\ff_1,\ff'_1{}^n]\cdots [\ff_t,\ff'_t{}^m][\ff_t,\ff'_t{}^n]\mathrel{\underset{t-1}{\eqsim}}\alpha_{\bff}(m)\alpha_{\bff}(n).
\end{equation}
By \eqref{eq=m+n} and \eqref{eq=mton}, we conclude (1). Now (2) immediately follows from (1).
\end{proof}

The following theorem is the limit formula for the value $\hf(\wf)$ in the abelian case.

\begin{thm}[limit formula for the value of a core]\label{thm=valueofcore}
Assume Setting~$\ref{setting=GLN}$. Fix a tuple $(\Ff,\Kf,\Mf,\Rel,\ppi)$ associated with $(\Gg,\Lg,\Ng)$. Assume that the tuple $(\Ff,\Kf,\Mf,\Rel,\ppi)$ satisfies condition~\textup{(i)} in Definition~$\ref{defn=FKMcondition}$. Assume that $\Mf\geqslant [\Ff,\Kf]$. Let $\nug\in \QQQ(\Ng)^{\Gg}$ and $\hf$ be a core of $\nug$ \textup{(}with respect to $(\Ff,\Kf,\Mf,\Rel,\ppi)$\textup{)}. Then, for every $\wf\in [\Ff,\Kf]$ and for every $[\Ff,\Kf]$-expression $\bff=(\ff_1,\ldots,\ff_t;\ff'_1,\ldots,\ff'_t)$ for $\wf$, we have
\[
\hf(\wf)=\lim_{m\to\infty}\frac{\ppi^{\ast}\nug(\alpha_{\bff}(m))}{m}.
\]
\end{thm}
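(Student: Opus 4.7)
By the definition of a core there exists $\phf \in \QQQ(\Kf)^{\Ff}$ with $\ppi^{\ast}\nug = \hf + i^{\ast}_{\Mf,\Kf}\phf$ on $\Mf$; hence
\[
\frac{\ppi^{\ast}\nug(\alpha_{\bff}(m))}{m} = \frac{\hf(\alpha_{\bff}(m))}{m} + \frac{\phf(\alpha_{\bff}(m))}{m}.
\]
The plan is to show that the first term equals $\hf(\wf)$ \emph{exactly} for every $m \in \NN$, while the second term is $O(1/m)$; the limit formula follows at once.

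For the first term, the key observation is that $\hf$ vanishes on $[\Ff, \Mf]$. Indeed, for $\ff \in \Ff$ and $u \in \Mf$, normality of $\Mf$ places $\ff u \ff^{-1}$ in $\Mf$, and the homomorphism property of $\hf$ combined with its $\Ff$-invariance yields $\hf([\ff, u]) = \hf(\ff u \ff^{-1}) - \hf(u) = 0$. Now set $\beta_i(m) = [\ff_i, \ff'_i{}^m]$; each $\beta_i(m)$ lies in $[\Ff, \Kf] \leqslant \Mf$ (this is where the hypothesis $\Mf \geqslant [\Ff, \Kf]$ is invoked, so that $\hf$ can be legitimately evaluated). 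Expanding by Lemma~\ref{lem=commutatoreq}~(1),
\[
\beta_i(m+n) = \beta_i(m)\,\beta_i(n)\cdot \bigl[[\ff_i, \ff'_i{}^n]^{-1},\,\ff'_i{}^m\bigr],
\]
and the trailing commutator lies in $[\Mf, \Kf] \leqslant [\Ff, \Mf]$ since $[\ff_i, \ff'_i{}^n] \in [\Ff, \Kf] \leqslant \Mf$; hence it is killed by $\hf$. Consequently $m \mapsto \hf(\beta_i(m))$ is a genuine group homomorphism $\ZZ \to \RR$, forcing $\hf(\beta_i(m)) = m \cdot \hf([\ff_i, \ff'_i])$ for all $m \in \ZZ$. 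Summing over $i = 1, \ldots, t$ and using once more that $\hf$ is additive on $\Mf$ produces the exact identity $\hf(\alpha_{\bff}(m)) = m \cdot \hf(\wf)$.

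For the second term, $\phf \in \QQQ(\Kf)^{\Ff}$ is an $\Ff$-invariant homogeneous quasimorphism on $\Kf$, and $\alpha_{\bff}(m)$ is a product of $t$ simple $(\Ff, \Kf)$-commutators, so $\cl_{\Ff,\Kf}(\alpha_{\bff}(m)) \leq t$. Lemma~\ref{lem=2D-1} applied to the pair $(\Ff, \Kf)$ and $\phf$ then gives $|\phf(\alpha_{\bff}(m))| \leq (2t-1)\DD(\phf)$, a bound independent of $m$, so dividing by $m$ and letting $m \to \infty$ kills this contribution. The whole argument is, in essence, careful commutator calculus; the one point deserving attention is keeping every intermediate term inside $\Mf$ so that $\hf$ may be applied, and this is precisely what the assumption $\Mf \geqslant [\Ff, \Kf]$ secures.
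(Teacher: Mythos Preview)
Your proof is correct and follows essentially the same approach as the paper's: decompose $\ppi^{\ast}\nug(\alpha_{\bff}(m))$ into $\hf(\alpha_{\bff}(m))+\phf(\alpha_{\bff}(m))$, show the first summand equals $m\,\hf(\wf)$ exactly, and bound the second by $(2t-1)\DD(\phf)$ via $\cl_{\Ff,\Kf}(\alpha_{\bff}(m))\leq t$. The only difference is that the paper packages your ``$\hf$ vanishes on $[\Ff,\Mf]$, hence $m\mapsto \hf(\alpha_{\bff}(m))$ is a genuine homomorphism'' argument into a citation of Proposition~\ref{prop=alpha}~(2), whereas you unfold that computation inline.
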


\begin{proof}
Take $\phf\in \QQQ(\Kf)^{\Ff}$ such that $\pi^{\ast}\nug=\hf+i^{\ast}\phf$. Let $m\in \ZZ$. Then, we have
\[
\pi^{\ast}\nug(\alpha_{\bff}(m))=\hf(\alpha_{\bff}(m))+\phf(\alpha_{\bff}(m)).
\]
By Proposition~\ref{prop=alpha}~(2), we have $\hf(\alpha_{\bff}(m))=m\hf(\alpha_{\bff}(1))=m\hf(\wf)$. Since $\cl_{\Ff,\Kf}(\alpha_{\bff}(m))\leq t$ by construction, we have $|\phf(\alpha_{\bff}(m))|\leq (2t-1)\DD(\phf)$. Therefore, we obtain the desired limit formula.
\end{proof}

In particular, if $\wf$ is a $\nug$-core-surviving element in the setting of Theorem~\ref{thm=valueofcore}, then for every $[\Ff,\Kf]$-expression $\bff$ for $\wf$, we have
\[
\lim_{m\to\infty}\left|\nug\bigr(\ppi(\alpha_{\bff}(m))\bigl)\right|=\infty;
\]
a phenomenon of this type was called an \emph{overflowing} in \cite[Subsection~8.2]{MMM}. We summarize the argument above as the following corollary.

\begin{cor}[characterization of the $\nug$-surviving property in terms of  overflowing]\label{cor=abelianlimit}
Assume Setting~$\ref{setting=GLN}$. Fix a tuple $(\Ff,\Kf,\Mf,\Rel,\ppi)$ associated with $(\Gg,\Lg,\Ng)$. Assume that the tuple $(\Ff,\Kf,\Mf,\Rel,\ppi)$ satisfies condition~\textup{(i)} in Definition~$\ref{defn=FKMcondition}$. Assume that $\Mf\geqslant [\Ff,\Kf]$. Let $\nug\in \QQQ(\Ng)^{\Gg}$. Then for every $\wf\in [\Ff,\Kf]$, the following three conditions are all equivalent.
\begin{enumerate}[label=\textup{(}$\arabic*$\textup{)}]
 \item The element $\wf$ is a $\nug$-core-surviving element.
 \item For every $[\Ff,\Kf]$-expression $\bff$ for $\wf$, $\lim\limits_{m\to\infty}\left|\nug\bigr(\ppi(\alpha_{\bff}(m))\bigl)\right|=\infty$.
 \item There exists an $[\Ff,\Kf]$-expression $\bff$ for $\wf$ such that $\lim\limits_{m\to\infty}\left|\nug\bigr(\ppi(\alpha_{\bff}(m))\bigl)\right|=\infty$ holds.
\end{enumerate}
\end{cor}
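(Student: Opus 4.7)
The plan is to observe first that the torsion condition in the definition of $\nug$-core-survival is automatic under the standing assumption $\Mf\geqslant[\Ff,\Kf]$, and then to extract from (the proof of) Theorem~\ref{thm=valueofcore} a sharp asymptotic formula that yields all three implications at once. Concretely, for $\wf\in[\Ff,\Kf]$ we have $\wf\in \Mf\cap[\Ff,\Kf]=[\Ff,\Kf]$, so the image of $\wf$ in $\Mf/(\Mf\cap[\Ff,\Kf])$ is trivial, hence torsion. By Lemma~\ref{lem=(i)} a core $\hf$ of $\nug$ exists, and by Corollary~\ref{cor=torsion} the value $\hf(\wf)$ does not depend on the choice of $\hf$. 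Consequently, condition~(1) is equivalent to the single numerical assertion that $\hf(\wf)\ne 0$. It also follows from the definition of $[\Ff,\Kf]$ as a mixed commutator subgroup that at least one $[\Ff,\Kf]$-expression for $\wf$ exists, so (2) is a genuine statement about all of them and (2)$\Rightarrow$(3) is automatic.

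Next I would exhibit, for any fixed $[\Ff,\Kf]$-expression $\bff=(\ff_1,\ldots,\ff_t;\ff_1',\ldots,\ff_t')$ of $\wf$ and any core $\hf$ of $\nug$ with $\ppi^{\ast}\nug=\hf+i_{\Mf,\Kf}^{\ast}\phf$ ($\phf\in\QQQ(\Kf)^{\Ff}$), the exact decomposition
\[
\ppi^{\ast}\nug(\alpha_{\bff}(m))=m\hf(\wf)+\phf(\alpha_{\bff}(m)),\qquad m\in\ZZ.
\]
The first summand is $m\hf(\wf)$ by Proposition~\ref{prop=alpha}(2): the map $\mathrm{proj}_{[\Ff,\Mf]}^{\Mf}\circ\alpha_{\bff}$ is a genuine homomorphism, and the $\Ff$-invariance of $\hf$ forces it to vanish on $[\Ff,\Mf]$, so $\hf\circ\alpha_{\bff}\colon\ZZ\to\RR$ is additive and hence determined by its value at $m=1$, which is $\hf(\wf)$. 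For the second summand, each $\alpha_{\bff}(m)$ is a product of at most $t$ commutators with entries in $\Kf$, so $\cl_{\Ff,\Kf}(\alpha_{\bff}(m))\le t$ and Lemma~\ref{lem=2D-1} bounds $|\phf(\alpha_{\bff}(m))|\le(2t-1)\DD(\phf)$ uniformly in $m$.

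With this estimate in hand the three equivalences fall out immediately. If $\hf(\wf)\ne 0$ (i.e.\ (1)), then for every $[\Ff,\Kf]$-expression $\bff$ for $\wf$ the bounded error $\phf(\alpha_{\bff}(m))$ is dominated by $|m\hf(\wf)|\to\infty$, proving (2). The implication (2)$\Rightarrow$(3) is trivial since $[\Ff,\Kf]$-expressions for $\wf$ exist. Finally, if some $[\Ff,\Kf]$-expression $\bff$ satisfies $|\nug(\ppi(\alpha_{\bff}(m)))|\to\infty$, then the displayed identity forces $|m\hf(\wf)+O(1)|\to\infty$, which is possible only when $\hf(\wf)\ne 0$; by our initial reduction this gives (1). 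I do not expect any real obstacle: the work is essentially done by Theorem~\ref{thm=valueofcore} (and Proposition~\ref{prop=alpha}), and the only minor point to record carefully is that the bound on $\phf(\alpha_{\bff}(m))$ is uniform in $m$ so that the single nonzero-ness test on $\hf(\wf)$ controls both the divergence and the boundedness regimes.
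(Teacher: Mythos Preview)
Your argument is correct and follows essentially the same route as the paper: the decomposition $\ppi^{\ast}\nug(\alpha_{\bff}(m))=m\hf(\wf)+\phf(\alpha_{\bff}(m))$ with the uniform bound $|\phf(\alpha_{\bff}(m))|\le(2t-1)\DD(\phf)$ is exactly the content of (the proof of) Theorem~\ref{thm=valueofcore}, and the paper presents this corollary as an immediate consequence without a separate proof. Your additional remarks (automaticity of the torsion condition when $\Mf\geqslant[\Ff,\Kf]$, and existence of at least one $[\Ff,\Kf]$-expression) are the natural bookkeeping needed to make the deduction explicit.
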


\subsection{Characterizations of the extendability  in the abelian case}\label{subsec=orerei}

Here, we present characterization of the extendability of invariant quasimorphisms in the abelian case. First, we define the following quantity.

\begin{defn}\label{defn=D_t}
Assume Setting~\ref{setting=GLN}. Assume moreover that
\begin{equation}\label{eq=abelianG}
\Ng\geqslant [\Gg,\Lg].
\end{equation}
Let $\nug\in \QQQ(\Ng)^{\Gg}$. Then for every $t\in \NN$, we define
\[
\DD^t_{\Gg,\Lg}(\nug)=\sup\left\{\left|\nug([\gG_1,\gG'_1]\cdots [\gG_t,\gG'_t])\right|\,|\, \gG_1,\ldots ,\gG_t\in \Gg,\ \gG'_1,\ldots ,\gG'_t\in \Lg\right\} \in \RR_{\geq 0}\cup \{\infty\}.
\]
\end{defn}

We note that every element of the form $[\gG_1,\gG'_1]\cdots [\gG_t,\gG'_t]$ above lies in $\Ng$ by \eqref{eq=abelianG}.

\begin{thm}[characterizations of the extendability in the abelian case]\label{thm=orerei}
Assume Setting~$\ref{setting=GLN}$. Assume that $\Mf\geqslant [\Ff,\Kf]$. Then for every $\nug\in \QQQ(\Ng)^{\Gg}$, the following are all equivalent.
\begin{enumerate}[label=\textup{(}$\arabic*$\textup{)}]
  \item The $\Gg$-invariant homogeneous quasimorphism $\nug$ is extendable to $\Lg$, namely, $\nug\in i_{\Ng,\Lg}^{\ast}\QQQ(\Lg)^{\Gg}$ holds.
  \item For every $t\in \NN$, $\DD^t_{\Gg,\Lg}(\nug)<\infty$.
  \item There exists $t\in \NN$ such that $\DD^t_{\Gg,\Lg}(\nug)<\infty$.
  \item The equality $\DD^1_{\Gg,\Lg}(\nug)=\DD(\nug)$ holds.
  \item The inequality $\DD^1_{\Gg,\Lg}(\nug)<\infty$ holds.
\end{enumerate}
\end{thm}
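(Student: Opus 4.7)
My plan is to establish the equivalences via $(1)\Rightarrow(2)\Rightarrow(3)\Rightarrow(5)$, plus $(4)\Rightarrow(5)$, together with the central computational step $(5)\Rightarrow(4)$ and the main direction $(5)\Rightarrow(1)$. First, the easy implications: $(2)\Rightarrow(3)$ is trivial; $(3)\Rightarrow(5)$ follows from $S_1\subseteq S_t$ (pad any single commutator by trivial commutators $[e_{\Gg},e_{\Lg}]$), so $\DD^1_{\Gg,\Lg}(\nug)\leq \DD^t_{\Gg,\Lg}(\nug)$; $(4)\Rightarrow(5)$ holds since $\DD(\nug)<\infty$. For $(1)\Rightarrow(2)$, given an extension $\nug=i_{\Ng,\Lg}^{\ast}\psg$ with $\psg\in\QQQ(\Lg)^{\Gg}$, and any $w=[\gG_1,\gG'_1]\cdots[\gG_t,\gG'_t]\in[\Gg,\Lg]\subseteq\Ng$, applying Corollary~\ref{cor=BavardD} to $\psg$ gives $|\psg([\gG_i,\gG'_i])|\leq \DD(\psg)$; combining with the $(t-1)$-fold quasimorphism inequality yields $|\nug(w)|=|\psg(w)|\leq (2t-1)\DD(\psg)<\infty$.

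\paragraph{The identity behind $(5)\Rightarrow(4)$.} Iterated use of Lemma~\ref{lem=commutatoreq}~(2), or directly from $\gG\lg\gG^{-1}=\lg\cdot[\lg^{-1},\gG]$, gives $\gG\lg^n\gG^{-1}=\lg^n\cdot z_{n-1}z_{n-2}\cdots z_0$, where $z=[\lg^{-1},\gG]\in\Ng$ and $z_i=\lg^{-i}z\lg^{i}\in\Ng$. Hence $[\gG,\lg^n]=\lg^n Y_n\lg^{-n}$ with $Y_n=z_{n-1}\cdots z_0$. By $\Gg$-invariance of $\nug$, $\nug([\gG,\lg^n])=\nug(Y_n)$ and $\nug(z_i)=\nug(z)$ for every $i$; the $n$-fold quasimorphism estimate gives $|\nug(Y_n)-n\nug(z)|\leq (n-1)\DD(\nug)$. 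The commutator identities $[\lg^{-1},\gG]=[\gG,\lg^{-1}]^{-1}$ and $[\gG,\lg^{-1}]=\lg^{-1}[\gG,\lg]^{-1}\lg$, combined with $\Gg$-invariance and homogeneity of $\nug$, yield $\nug(z)=\nug([\gG,\lg])$. Thus $|\nug([\gG,\lg^n])-n\nug([\gG,\lg])|\leq (n-1)\DD(\nug)$. Under (5), $|\nug([\gG,\lg^n])|\leq \DD^1_{\Gg,\Lg}(\nug)$ is bounded in $n$, so dividing by $n$ and letting $n\to\infty$ yields $|\nug([\gG,\lg])|\leq\DD(\nug)$. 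Hence $\DD^1_{\Gg,\Lg}(\nug)\leq\DD(\nug)$; the reverse inequality follows from Corollary~\ref{cor=BavardD} and $\Ng\subseteq\Lg$, giving (4).

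\paragraph{The main direction $(5)\Rightarrow(1)$.} The hypothesis $\Ng\geq[\Gg,\Lg]$ forces $[\Lg,\Lg]\leq[\Gg,\Lg]\leq\Ng$, so $\Lambda=\Lg/\Ng$ is abelian; moreover the $\Gg$-conjugation action on $\Lambda$ is trivial. Apply Theorem~\ref{thm=KKMMMmain1} to $1\to\Ng\to\Lg\to\Lambda\to 1$ to obtain the exact sequence $\QQQ(\Lg)\to\QQQ(\Ng)^{\Lg}\xrightarrow{\tau_{/b}}\HHH^2_{/b}(\Lambda)$, noting $\QQQ(\Ng)^{\Gg}\subseteq\QQQ(\Ng)^{\Lg}$. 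Since $\Lambda$ is amenable, Theorem~\ref{thm=bdd_acyc}~(1) gives $\HHH^k_b(\Lambda)=0$ for $k\geq 1$, so $\HHH^2_{/b}(\Lambda)\cong\HHH^2(\Lambda;\RR)$. Choose a set-theoretic section $s\colon\Lambda\to\Lg$ with $s(e)=e$; then $\tau_{/b}(\nug)$ is represented by $\nug\circ c$ with cocycle $c(\lambda_1,\lambda_2)=s(\lambda_1)s(\lambda_2)s(\lambda_1\lambda_2)^{-1}\in\Ng$. Using commutativity of $\Lambda$, $c(\lambda_1,\lambda_2)c(\lambda_2,\lambda_1)^{-1}=[s(\lambda_1),s(\lambda_2)]$, so under (5) the antisymmetric part of $\nug\circ c$ is uniformly bounded by $\DD^1_{\Gg,\Lg}(\nug)+\DD(\nug)$. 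Since every symmetric $2$-cocycle on an abelian group with $\RR$-coefficients is a coboundary, $[\nug\circ c]$ is represented by its bounded antisymmetric part, which is zero in $\HHH^2(\Lambda;\RR)\cong\HHH^2_{/b}(\Lambda)$ by amenability. Hence $\tau_{/b}(\nug)=0$ and $\nug=i^{\ast}\psg$ for some $\psg\in\QQQ(\Lg)$. To upgrade $\psg$ to be $\Gg$-invariant, note that for each $\gG\in\Gg$, $\gG\psg-\psg\in\QQQ(\Lg)$ vanishes on $\Ng\supseteq[\Lg,\Lg]$, so by Corollary~\ref{cor=uptohom} it lies in $\HHH^1(\Lg)$ and factors through $\Lambda$; define $\eta(\gG)\in\Hom(\Lambda,\RR)$ by $\eta(\gG)(\lambda)=(\gG\psg-\psg)(\lg)$ for any lift $\lg$. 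A computation parallel to Paragraph~2 (using homogeneity of $\gG\psg-\psg$ via $\eta(\gG)(\lambda)=\lim_{n\to\infty}(\gG\psg-\psg)(\lg^n)/n$, together with the expansion $\gG^{-1}\lg^n\gG=\lg^n\cdot(z')_{n-1}\cdots(z')_0$ where $z'=[\lg^{-1},\gG^{-1}]$) yields $|\eta(\gG)(\lambda)+\nug([\gG,\lg])|\leq\DD(\nug)$, so $|\eta(\gG)(\lambda)|\leq\DD^1_{\Gg,\Lg}(\nug)+\DD(\nug)$, uniformly in $(\gG,\lambda)$. Finally, $(\gG,\lambda)\mapsto\eta(\gG)(\lambda)$ is $\ZZ$-bilinear, so the scaling $\eta(\gG^n)(\lambda)=n\eta(\gG)(\lambda)$ forces $\eta\equiv 0$. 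Thus $\gG\psg=\psg$ for all $\gG\in\Gg$, i.e., $\psg\in\QQQ(\Lg)^{\Gg}$, proving (1).

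\paragraph{Main obstacle.} The most delicate piece is Paragraph~3. Two points require careful bookkeeping: first, the abelian-group cohomological fact that a $2$-cocycle whose antisymmetric part is bounded represents zero in $\HHH^2(\Lambda;\RR)$ (combining amenability with the well-known splitting of symmetric $2$-cocycles over $\RR$); and second, the upgrade from an extension $\psg\in\QQQ(\Lg)$ to a $\Gg$-invariant one, which hinges on identifying $\eta$ as an \emph{exact} bilinear form on $\Gg\times\Lambda$ despite the fact that the relationship $\eta(\gG)(\lambda)\approx-\nug([\gG,\lg])$ only holds up to $\DD(\nug)$, so that the bilinearity must come from $\eta(\gG)\in\Hom(\Lambda,\RR)$ being exact via homogeneity, not from the approximate identity itself.
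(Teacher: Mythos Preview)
Your argument is correct. The easy implications and $(5)\Rightarrow(4)$ match the paper (the paper phrases $(5)\Rightarrow(4)$ as a contrapositive, but the commutator expansion is the same). The route for $(5)\Rightarrow(1)$, however, is genuinely different.

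The paper proceeds by a Zorn's lemma extension argument: it considers the poset of pairs $(\tilde{\Ng},\tilde{\nug})$ with $\Ng\leqslant\tilde{\Ng}\leqslant\Lg$ and $\tilde{\nug}\in\QQQ(\tilde{\Ng})$ extending $\nug$, first proves (directly from $(5)$, via the same commutator estimate you use for $(5)\Rightarrow(4)$) that every such $\tilde{\nug}$ is automatically $\Gg$-invariant, then takes a maximal pair and shows that a strict inclusion $\tilde{\Ng}^\sharp\lneq\Lg$ is impossible: adjoining one element $\gG'\in\Lg$ produces a cyclic-quotient extension which virtually splits, so Proposition~\ref{prop=vsplit} yields a further extension, contradicting maximality. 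Your approach instead invokes the five-term sequence (Theorem~\ref{thm=KKMMMmain1}) for $1\to\Ng\to\Lg\to\Lambda\to 1$ and kills the obstruction in $\HHH^2_{/b}(\Lambda)\cong\HHH^2(\Lambda)$ using the classical structure of $\HHH^2$ of an abelian group, followed by a separate invariance upgrade. What each approach buys: the paper's route is more self-contained (no cohomology of abelian groups, no explicit description of $\tau_{/b}$) and yields $\Gg$-invariance for free along the way, at the cost of Zorn's lemma and the black box of Proposition~\ref{prop=vsplit}; your route is more conceptual---the obstruction is visibly an alternating form killed by boundedness---but leans on machinery external to the paper. One small wrinkle worth tightening: $\nug\circ c$ is only a \emph{quasi}-cocycle, so ``represented by its bounded antisymmetric part'' should pass through a genuine cocycle representative $c'$ via $\HHH^2_{/b}(\Lambda)\cong\HHH^2(\Lambda)$; since coboundaries on abelian $\Lambda$ are symmetric, $c'_a$ differs from $(\nug\circ c)_a$ by a bounded function, hence $c'_a$ is a bounded bilinear form, hence zero.
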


\begin{proof}
First we prove that (5) implies (1); this is the main part of the proof. Set
\[
\mathcal{S}=\{(\tilde{\Ng},\tilde{\nug})\,|\, \Ng \leqslant \tilde{\Ng}\leqslant \Lg,\ \tilde{\nug}\in \QQQ(\tilde{\Ng}),\ \tilde{\nug}|_{\Ng}\equiv\nug\}.
\]
We first claim that for every $(\tilde{\Ng},\tilde{\nug})\in \mathcal{S}$, $\tilde{\nug}\in \QQQ(\tilde{\Ng})^{\Gg}$ holds (recall also Remark~\ref{rem=normal}). Indeed, if not, then there must exist $\gG\in \Gg$ and $\xg\in \tilde{\Ng}$ such that $\tilde{\nug}(\gG\xg\gG^{-1})-\tilde{\nug}(\xg)\ne 0$. Set $\kappa=\tilde{\nug}(\gG\xg\gG^{-1})-\tilde{\nug}(\xg) (\ne 0)$. Then we have
\begin{align*}
\DD^1_{\Gg,\Lg}(\nug)&\geq \sup\left\{\left|\tilde{\nug}([\gG,\xg^m])\right|
\,\middle|\,m\in \NN\right\}\\
&= \sup\left\{\left|\tilde{\nug}((\gG\xg\gG^{-1})^m \xg^{-m})\right|
\,\middle|\,m\in \NN\right\}\\
&\geq \sup \left\{  m|\kappa|-\DD(\tilde{\nug})  \,|\,m\in \NN\right\}=\infty,
\end{align*}
a contradiction. The points here are that $[\gG,\xg^m]\in [\Gg,\Lg]\leqslant \Ng$ for every $m\in\NN$ by $\Ng\geqslant [\Gg,\Lg]$; hence, $\tilde{\nug}([\gG,\xg^m])=\nug([\gG,\xg^m])$. We define a partial order $\leq$ on $\mathcal{S}$ in the following natural way: for $(\tilde{\Ng}_1,\tilde{\nug}_1),(\tilde{\Ng}_2,\tilde{\nug}_2)\in \mathcal{S}$, $(\tilde{\Ng}_1,\tilde{\nug}_1)\leq (\tilde{\Ng}_2,\tilde{\nug}_2)$ if $\tilde{\Ng}_1\leqslant \tilde{\Ng}_2$ and $\tilde{\nug}_2|_{\tilde{\Ng}_1}\equiv \tilde{\nug}_1$. Then, it is straightforward to see that this $(\mathcal{S},\leq)$ satisfies the condition of applying Zorn's lemma. By Zorn's lemma, there exists a maximal element $(\tilde{\Ng}^{\sharp},\tilde{\nug}^{\sharp})\in \mathcal{S}$. In what follows, we verify that $\tilde{\Ng}^{\sharp}=\Lg$ and that $\tilde{\nug}^{\sharp}\in \QQQ(\Lg)^{\Gg}$.

Suppose that $\tilde{\Ng}^{\sharp}$ is a proper subgroup of $\Lg$. Then, there must exist $\gG'\in \Lg\setminus \tilde{\Ng}^{\sharp}$. Let $\Lg_{\gG'}$ be the group generated by $\tilde{\Ng}^{\sharp}$ and $\gG'$. Then the group quotient $\Lg_{\gG'}/\tilde{\Ng}^{\sharp}$ is a non-trivial cyclic group. In particular, the short exact sequence
\[
1\longrightarrow \tilde{\Ng}^{\sharp}\longrightarrow \Lg_{\gG'} \longrightarrow \Lg_{\gG'}/\tilde{\Ng}^{\sharp}\longrightarrow 1
\]
virtually splits. Indeed, it splits if $\Lg_{\gG'}/\tilde{\Ng}^{\sharp}\cong \ZZ$; if $\Lg_{\gG'}/\tilde{\Ng}^{\sharp}$ is finite, then observe that the trivial subgroup is of finite index in $\Lg_{\gG'}/\tilde{\Ng}^{\sharp}$. By the claim above, we have $\tilde{\nug}^{\sharp}\in \QQQ(\tilde{\Ng}^{\sharp})^{\Gg}\subseteq \QQQ(\tilde{\Ng}^{\sharp})^{\Lg_{\gG'}}$. By Proposition~\ref{prop=vsplit}, we now conclude that $\tilde{\nug}^{\sharp}\in i_{\tilde{\Ng}^{\sharp},\Lg_{\gG'}}^{\ast}\QQQ(\Lg_{\gG'})$. Take $\psg_{\gG'}\in \QQQ(\Lg_{\gG'})$ such that $\psg_{\gG'}|_{\tilde{\Ng}^{\sharp}}\equiv \tilde{\nug}^{\sharp}$. Then, $(\Lg_{\gG'},\psg_{\gG'})$ is an element in $\mathcal{S}$ that is strictly greater than $(\tilde{\Ng}^{\sharp},\tilde{\nug}^{\sharp})$ with respect to the order $\leq$; this is a contradiction. Therefore, we have verified that $\tilde{\Ng}^{\sharp}=\Lg$ and $\tilde{\nug}^{\sharp}\in \QQQ(\Lg)^{\Gg}$, thus completing the proof of `(5)$\Rightarrow$(1).'

Now we will prove remaining implications; clearly, (1) implies (2), (2) implies (3), (3) implies (5), and (4) implies (5). Finally, we will verify that (5) implies (4) by showing the contrapositive. By Corollary~\ref{cor=BavardD}, we have $\DD^1_{\Gg,\Lg}(\nug)\geq \DD(\nug)$. Assume now that $\DD^1_{\Gg,\Lg}(\nug)>\DD(\nug)$. Then there exist $\gG\in \Gg$ and $\gG'\in \Lg$ such that $|\nug([\gG,\gG'])|>\DD(\nug)$. By Lemma~\ref{lem=commutatoreq}~(2), for every $m\in \NN$ there exist $\gG_1,\ldots ,\gG_{m-1}\in \Gg$ such that
\[
[\gG,\gG'^m]=[\gG,\gG'](\gG_1[\gG,\gG']\gG_1^{-1})\cdots(\gG_{m-1}[\gG,\gG']\gG_{m-1}^{-1})
\]
holds. This, together with $|\nug([\gG,\gG'])|>\DD(\nug)$, implies that
\[
|\nug([\gG,\gG'{}^m])|\geq |m|\bigl(|\nug([\gG,\gG'])|-\DD(\nug)\bigr)
\]
for every $m\in \ZZ$. Hence, (5) implies (4). This completes the proof.
\end{proof}

\subsection{Proof of Theorem~\ref{thm=abelD}, and the key corollary}\label{subsec=keycor}
Theorem~\ref{thm=orerei} is a powerful tool for the study of the abelian case. For instance, we are now ready to prove Theorem~\ref{thm=abelD}.

\begin{proof}[Proof of Theorem~$\ref{thm=abelD}$]
Let $\nug\in \QQQ(\Ng)^{\Gg}$ and consider $\pi^{\ast}\nug\in \QQQ(\Mf)^{\Ff}$. By Theorem~\ref{thm=orerei}, the fact that $\pi^{\ast}\nug \in i_{\Kf,\Ff}^{\ast}\QQQ(\Kf)^{\Ff}$ implies that $\DD^1_{\Mf,\Kf}(\pi^{\ast}\nug)<\infty$. Note that  we have $\DD^1_{\Ff,\Kf}(\pi^{\ast}\nug)=\DD^1_{\Gg,\Lg}(\nug)$ because $\ppi(\Ff)=\Gg$ and $\ppi(\Kf)=\Lg$. Hence, $\DD^1_{\Gg,\Lg}(\nug)<\infty$. Again by Theorem~\ref{thm=orerei}, we conclude that $\nug \in i_{\Ng,\Lg}^{\ast}\QQQ(\Lg)^{\Gg}$. This completes our proof.
\end{proof}

Theorem~\ref{thm=abelD}, together with Theorems~\ref{thm=InjCE} and \ref{thm=Goodtuple}, immediately provides the following corollary, which is one of the keys to the proofs of Theorem~\ref{mthm=main} and Theorem~\ref{mthm=dim}.

\begin{cor}[outcome of the theory of core extractors: the abelian case]\label{cor=InjCE}
Assume Setting~$\ref{setting=GLN}$. Fix a tuple $(\Ff,\Kf,\Mf,\Rel,\ppi)$ associated with $(\Gg,\Lg,\Ng)$. Assume that the tuple $(\Ff,\Kf,\Mf,\Rel,\ppi)$ satisfies conditions \textup{(i)} and \textup{(ii)} in Definition~$\ref{defn=FKMcondition}$. Assume furthermore that $\Mf\geqslant  [\Ff,\Kf]$. Then the following hold true.
\begin{enumerate}[label=\textup{(}$\arabic*$\textup{)}]
 \item The core extractor $\CE\colon \Wcal(\Gg,\Lg,\Ng)\to i_{\Mf\cap \Rel,\Mf}^{\ast}\HHH^1(\Mf)^{\Ff}$ is injective.
 \item Assume that $\Wcal(\Gg,\Lg,\Ng)\ne 0$, and let $\ell \in \NN$ satisfy  that $\ell\leq \Rdim \Wcal(\Gg,\Lg,\Ng)$.  Then there exist $\wf_1,\ldots \wf_{\ell}\in [\Ff,\Kf]\cap \Rel$ and $\nug_1,\ldots,\nug_{\ell}\in \QQQ(\Ng)^{\Gg}$ such that $\CEt_{\nug_{j}}(\wf_i)=\delta_{i,j}$ for every $i,j\in \{1,\ldots,\ell\}$.
\end{enumerate}
\end{cor}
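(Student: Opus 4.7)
The plan is to deduce the corollary as an immediate specialization of the previously established machinery, once we recognize that the abelian hypothesis $\Mf\geqslant [\Ff,\Kf]$ automatically supplies the descending condition (D). More precisely, I would first invoke Theorem~\ref{thm=abelD}: under $\Mf\geqslant [\Ff,\Kf]$, every tuple $(\Ff,\Kf,\Mf,\Rel,\ppi)$ associated with $(\Gg,\Lg,\Ng)$ satisfies (D), with no further assumption needed. Combined with the standing hypotheses (i) and (ii), this places us exactly in the setting of both Theorem~\ref{thm=InjCE} and Theorem~\ref{thm=Goodtuple}.

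For part~(1), Theorem~\ref{thm=InjCE} asserts verbatim that under (i), (ii), and (D), the core extractor $\CE\colon \Wcal(\Gg,\Lg,\Ng)\to i_{\Mf\cap \Rel,\Mf}^{\ast}\HHH^1(\Mf)^{\Ff}$ is injective; so the first conclusion is immediate.

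For part~(2), Theorem~\ref{thm=Goodtuple} (applicable for the same reason) produces elements $\wf_1,\ldots,\wf_{\ell}\in \Mf\cap \Rel\cap [\Ff,\Kf]$ together with $\nug_1,\ldots,\nug_{\ell}\in \QQQ(\Ng)^{\Gg}$ satisfying the duality $\CEt_{\nug_j}(\wf_i)=\delta_{i,j}$ for every $i,j\in\{1,\ldots,\ell\}$. The only residual bookkeeping is to observe that, under the abelian hypothesis $\Mf\geqslant [\Ff,\Kf]$, we have $\Mf\cap [\Ff,\Kf]=[\Ff,\Kf]$, whence
\[
\Mf\cap \Rel\cap [\Ff,\Kf]=[\Ff,\Kf]\cap \Rel,
\]
which is precisely the locus in which the $\wf_i$ are asserted to lie.

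Since every ingredient is already in place, I do not anticipate any substantive obstacle; the entire mathematical content of this corollary has been relocated to Theorem~\ref{thm=abelD}, whose role is exactly to trivialize the verification of (D) in the abelian case. The write-up will therefore be a short concatenation of references rather than a genuine argument.
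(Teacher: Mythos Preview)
Your proposal is correct and follows exactly the paper's own approach: the paper states that the corollary is an immediate consequence of Theorem~\ref{thm=abelD} together with Theorems~\ref{thm=InjCE} and \ref{thm=Goodtuple}, and your observation that $\Mf\cap\Rel\cap[\Ff,\Kf]=[\Ff,\Kf]\cap\Rel$ under the abelian hypothesis matches the paper's remark following the statement of Theorem~\ref{thm=abelD}.
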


\subsection{More results on core extractors in the abelian case}
We start this subsection by stating characterizations of the extendability in the abelian case in terms of core-surviving elements under the assumption that $\QG=\Gg/\Ng$ is boundedly $3$-acyclic.

\begin{thm}[characterizations of the extendability in the abelian case in terms of core-surviving elements]\label{thm=abeliancore}
Assume Setting~$\ref{setting=GLN}$.  Assume that $\Ng\geqslant  [\Gg,\Lg]$. Assume furthermore that $\QG=\Gg/\Ng$ is boundedly $3$-acyclic. Then for every $\nug\in \QQQ(\Ng)^{\Gg}$, conditions \textup{(1)--(5)} in Theorem~$\ref{thm=orerei}$ are also equivalent to the following two conditions.
\begin{enumerate}
  \item[\textup{(6)}] For every tuple $(\Ff,\Kf,\Mf,\Rel,\ppi)$ associated with $(\Gg,\Lg,\Ng)$ satisfying condition \textup{(i)} in Definition~$\ref{defn=FKMcondition}$ such that $\Mf\geqslant  [\Ff,\Kf]$ holds, no element $\wf$ in $[\Ff,\Kf]$ is $\nug$-core-surviving.
  \item[\textup{(7)}] There exists a tuple $(\Ff,\Kf,\Mf,\Rel,\ppi)$ associated with $(\Gg,\Lg,\Ng)$ satisfying condition \textup{(i)} in Definition~$\ref{defn=FKMcondition}$ such that $\Mf\geqslant  [\Ff,\Kf]$ holds and no element $\wf$ in $[\Ff,\Kf]$ is $\nug$-core-surviving.
\end{enumerate}
\end{thm}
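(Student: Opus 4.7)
I plan to prove the cycle $(1)\Rightarrow(6)\Rightarrow(7)\Rightarrow(1)$, where $(1)$ denotes the extendability condition $\nug\in i^{\ast}_{\Ng,\Lg}\QQQ(\Lg)^{\Gg}$ from Theorem~\ref{thm=orerei}. Since conditions $(1)$--$(5)$ are already equivalent by that theorem, this is enough.

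For $(1)\Rightarrow(6)$: Writing $\nug=i^{\ast}_{\Ng,\Lg}\psg$ with $\psg\in\QQQ(\Lg)^{\Gg}$, pushing down via $\ppi$ gives $\ppi^{\ast}\nug=i^{\ast}_{\Mf,\Kf}(\ppi^{\ast}\psg)$ for any associated tuple satisfying condition~(i) with $\Mf\geqslant[\Ff,\Kf]$, so the zero map is a core of $\nug$. By Lemma~\ref{lem=(i)}(2), any other core differs by an element of $i^{\ast}_{\Mf,\Kf}\HHH^1(\Kf)^{\Ff}$, which vanishes on $[\Ff,\Kf]=\Mf\cap[\Ff,\Kf]$. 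Hence every core of $\nug$ vanishes on $[\Ff,\Kf]$, so no $\wf\in[\Ff,\Kf]$ is $\nug$-core-surviving. For $(6)\Rightarrow(7)$, it suffices to produce one such tuple. The hypothesis $\Ng\geqslant[\Gg,\Lg]$ makes $\Lg/\Ng$ abelian, hence amenable, hence boundedly acyclic by Theorem~\ref{thm=bdd_acyc}(1); in particular $\Lg/\Ng$ is boundedly $2$-acyclic. Together with the assumption that $\QG=\Gg/\Ng$ is boundedly $3$-acyclic, Corollary~\ref{cor=free} applies to any free group $\Ff$ with a surjection $\ppi\colon\Ff\twoheadrightarrow\Gg$: setting $\Kf=\ppi^{-1}(\Lg)$ and $\Mf=\ppi^{-1}(\Ng)$ yields a tuple satisfying (i). Finally, $\ppi([\Ff,\Kf])=[\Gg,\Lg]\leqslant\Ng$ gives $[\Ff,\Kf]\leqslant\Mf$, as required.

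The main direction is $(7)\Rightarrow(1)$. Fix a witness tuple and choose a core $\hf$ of $\nug$ together with $\phf\in\QQQ(\Kf)^{\Ff}$ satisfying $\ppi^{\ast}\nug=\hf+i^{\ast}_{\Mf,\Kf}\phf$. For every $\ff\in\Ff$ and $\ff'\in\Kf$, the commutator $\wf=[\ff,\ff']$ lies in $[\Ff,\Kf]\subseteq\Mf\cap[\Ff,\Kf]$, so it is trivial (in particular torsion) in $\Mf/(\Mf\cap[\Ff,\Kf])$. Since $\wf$ is not $\nug$-core-surviving by assumption~(7), Corollary~\ref{cor=torsion} gives $\hf(\wf)=0$, independent of the choice of core. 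Consequently
\[
|\ppi^{\ast}\nug([\ff,\ff'])|=|\phf([\ff,\ff'])|\leq\DD(\phf),
\]
and since $\ppi(\Ff)=\Gg$ and $\ppi(\Kf)=\Lg$, this yields $\DD^{1}_{\Gg,\Lg}(\nug)\leq\DD(\phf)<\infty$. The implication $(5)\Rightarrow(1)$ in Theorem~\ref{thm=orerei} then promotes this uniform bound to the extendability condition $(1)$.

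The main obstacle is the last implication $(7)\Rightarrow(1)$: we must convert the pointwise vanishing of a core on $[\Ff,\Kf]$ into a uniform bound on $\nug$-values of commutators $[\gG,\gG']$ with $\gG\in\Gg$, $\gG'\in\Lg$. The abelian-case hypothesis $\Mf\geqslant[\Ff,\Kf]$ is used twice and both uses are essential: it automatically makes every simple commutator torsion in $\Mf/(\Mf\cap[\Ff,\Kf])$, so Corollary~\ref{cor=torsion} applies without invoking the full descending condition~(D); and it places us in the regime where Theorem~\ref{thm=orerei} converts the defect-type bound on $\DD^{1}_{\Gg,\Lg}$ into actual extendability. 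Everything else is a routine unpacking of the definitions of core and core-surviving element.
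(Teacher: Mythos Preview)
Your proof is correct. The implications $(1)\Rightarrow(6)$ and $(6)\Rightarrow(7)$ match the paper's argument (the paper invokes Lemma~\ref{lem=CoreE}(1) and Lemma~\ref{lem=exlift}, which amount to exactly what you wrote out). The only genuine difference is in $(7)\Rightarrow(1)$.

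The paper proves $(7)\Rightarrow(4)$ by contrapositive: assuming $|\nug([\gG,\gG'])|>\DD(\nug)$ for some $\gG\in\Gg$, $\gG'\in\Lg$, it lifts the commutator to $\wf=[\ff,\ff']\in[\Ff,\Kf]$ and uses the overflowing criterion (Corollary~\ref{cor=abelianlimit}) to show $\wf$ is $\nug$-core-surviving in every admissible tuple. You instead prove $(7)\Rightarrow(5)$ directly: from the decomposition $\ppi^{\ast}\nug=\hf+i^{\ast}_{\Mf,\Kf}\phf$ and the vanishing of $\hf$ on all simple $(\Ff,\Kf)$-commutators, you read off $|\nug([\gG,\gG'])|=|\phf([\ff,\ff'])|\leq\DD(\phf)$, hence $\DD^{1}_{\Gg,\Lg}(\nug)<\infty$. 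Your route is slightly more elementary, since it bypasses the limit formula and Corollary~\ref{cor=abelianlimit} entirely, relying only on Corollary~\ref{cor=BavardD}; the paper's route, on the other hand, exhibits an explicit core-surviving witness whenever extendability fails, which ties the theorem more visibly to the overflowing phenomenon emphasized elsewhere in the paper.
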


For the proof of Theorem~\ref{thm=abeliancore}, we employ the following lemma.

\begin{lem}\label{lem=exlift}
Assume Setting~$\ref{setting=GLN}$.  Assume that $\Ng\geqslant  [\Gg,\Lg]$. Assume that $\QG=\Gg/\Ng$ is boundedly $3$-acyclic. Then, there exists a tuple $(\Ff,\Kf,\Mf,\Rel,\ppi)$ associated with $(\Gg,\Lg,\Ng)$ satisfying condition \textup{(i)} in Definition~$\ref{defn=FKMcondition}$ such that $\Mf\geqslant  [\Ff,\Kf]$ holds.
\end{lem}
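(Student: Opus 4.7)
The plan is to realize the tuple $(\Ff,\Kf,\Mf,\Rel,\ppi)$ by the most natural construction: take $\Ff$ to be any free group surjecting onto $\Gg$, and pull back $\Lg$ and $\Ng$ along the quotient map. First I would choose a free group $\Ff$ together with a surjective homomorphism $\ppi\colon\Ff\twoheadrightarrow\Gg$ (which always exists), and set $\Rel=\Ker(\ppi)$, $\Kf=\ppi^{-1}(\Lg)$ and $\Mf=\ppi^{-1}(\Ng)$. By construction $(\Ff,\Kf,\Mf,\Rel,\ppi)$ is a tuple associated with $(\Gg,\Lg,\Ng)$ in the sense of Definition~\ref{defn=FKM}, and the inclusion $\Mf\geqslant[\Ff,\Kf]$ follows immediately: since $\ppi([\Ff,\Kf])=[\Gg,\Lg]$ and our standing hypothesis is $\Ng\geqslant[\Gg,\Lg]$, we get $[\Ff,\Kf]\leqslant\ppi^{-1}(\Ng)=\Mf$.

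The remaining task is to verify condition~(i) of Definition~\ref{defn=FKMcondition}, namely $\Wcal(\Ff,\Kf,\Mf)=0$ and $\HHH^1(\Mf)^{\Ff}\cap i_{\Mf,\Kf}^{\ast}\QQQ(\Kf)^{\Ff}=i_{\Mf,\Kf}^{\ast}\HHH^1(\Kf)^{\Ff}$. For this I would invoke Corollary~\ref{cor=free}, which requires $\QG=\Gg/\Ng$ to be boundedly $3$-acyclic (given by hypothesis) and $\Lg/\Ng$ to be boundedly $2$-acyclic. The key observation is that the hypothesis $\Ng\geqslant[\Gg,\Lg]$ implies in particular $\Ng\geqslant[\Lg,\Lg]$, so $\Lg/\Ng$ is abelian. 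Abelian groups are amenable, and by Theorem~\ref{thm=bdd_acyc}~(1) every amenable group is boundedly acyclic, so $\Lg/\Ng$ is certainly boundedly $2$-acyclic. Hence Corollary~\ref{cor=free} applies and yields condition~(i).

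I do not expect any significant obstacle here: the lemma is essentially a packaging statement. The only subtle point is recognizing that the abelian-case hypothesis $\Ng\geqslant[\Gg,\Lg]$ is precisely what is needed to ensure both (a) the commutator containment $[\Ff,\Kf]\leqslant\Mf$ automatically holds on the lifted side, and (b) the quotient $\Lg/\Ng$ is abelian and thus boundedly acyclic, which supplies the missing second bounded-acyclicity assumption in Corollary~\ref{cor=free}. Once these two observations are in place, the proof reduces to a one-line application of Corollary~\ref{cor=free}.
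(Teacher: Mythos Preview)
Your proposal is correct and essentially identical to the paper's own proof: both take a free group $\Ff$ surjecting onto $\Gg$, set $\Kf=\ppi^{-1}(\Lg)$ and $\Mf=\ppi^{-1}(\Ng)$, observe that $\Ng\geqslant[\Gg,\Lg]$ makes $\Lg/\Ng$ abelian (hence boundedly $2$-acyclic) and forces $\Mf\geqslant[\Ff,\Kf]$, and then invoke Corollary~\ref{cor=free}.
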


\begin{proof}
Take an arbitrary free group $\Ff$ such that there exists a surjective group homomorphism $\ppi\colon \Ff\twoheadrightarrow \Gg$, and let $\Rel=\Ker(\ppi)$. Set
\[
(\Ff,\Kf,\Mf,\Rel,\ppi)=(\Ff,\pi^{-1}(\Lg),\pi^{-1}(\Ng),\Rel,\ppi);
\]
this tuple is associated with $(\Gg,\Lg,\Ng)$.
Then, since $\QG$ is boundedly $3$-acyclic, Corollary~\ref{cor=free} shows that the tuple $(\Ff,\Kf,\Mf,\Rel,\ppi)$ satisfies condition (i) in Definition~\ref{defn=FKMcondition}. Indeed,  $\Lg/\Ng$ is abelian by $\Ng\geqslant  [\Gg,\Lg]$ and hence is boundedly $2$-acyclic. Moreover, $\Ng\geqslant  [\Gg,\Lg]$ implies that $\Mf\geqslant  [\Ff,\Kf]$.
\end{proof}

\begin{proof}[Proof of Theorem~$\ref{thm=abeliancore}$]
Lemma~\ref{lem=CoreE}~(1) shows that (1) in Theorem~\ref{thm=orerei} implies (6).  By Lemma~\ref{lem=exlift}, (6) immediately implies (7).

It only remains to show that  (7) implies (4) in Theorem~\ref{thm=orerei}. We prove this implication by proving the contrapositive. Suppose that there exist $\gG\in \Gg$ and $\gG'\in \Lg$ such that $|\nu([\gG,\gG'])|>\DD(\nug)$. Take an arbitrary tuple $(\Ff,\Kf,\Mf,\Rel,\ppi)$ associated with $(\Gg,\Lg,\Ng)$ satisfying (i) such that $\Mf\geqslant  [\Ff,\Kf]$ holds. Take $\ff\in \Ff$ and $\ff'\in \Kf$ such that $\ppi(\ff)=\gG$ and $\ppi(\ff')=\gG'$. Set $\wf=[\ff,\ff']\in [\Ff,\Kf]$. Then, $\bff=(\ff;\ff')$ is an $[\Ff,\Kf]$-expression for $\wf$. Observe that $\DD(\ppi^{\ast}\nug)=\DD(\nug)<|\ppi^{\ast}\nu([\ff,\ff'])|$. Hence, by an argument similar to the proof of `(5)$\Rightarrow$(4)' in Theorem~\ref{thm=orerei}, we conclude $\lim\limits_{m\to\infty}\left|\ppi^{\ast}\nug\bigl(\alpha_{\bff}(m)\bigr)\right|=\infty$. Therefore, by Corollary~\ref{cor=abelianlimit} the element $\wf$ is $\nug$-core-surviving. This completes our proof.
\end{proof}

The following lemma will be employed in Section~\ref{sec=Psi}.

\begin{lem}\label{lem=eqdefect}
Assume Setting~$\ref{setting=GLN}$.  Fix a tuple $(\Ff,\Kf,\Mf,\Rel,\ppi)$ associated with $(\Gg,\Lg,\Ng)$. Assume that $\Mf\geqslant  [\Ff,\Kf]$. Let $\nug\in \QQQ(\Ng)^{\Gg}$. Assume that there exists a core $\hf$ of $\nug$. Then, for every $\phf\in \QQQ(\Kf)^{\Ff}$ satisfying $\ppi^{\ast}\nug=\hf+i_{\Mf,\Kf}^{\ast}\phf$, we have $\DD(\phf)=\DD(\nug)$.
\end{lem}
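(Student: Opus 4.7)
The plan is to sandwich $\DD(\phf)$ between two copies of $\DD(\nug)$. Throughout the argument I would exploit three ingredients: the decomposition $\ppi^{\ast}\nug = \hf + i_{\Mf,\Kf}^{\ast}\phf$ on $\Mf$; the fact that $\hf$ is a genuine homomorphism (so $\DD(\hf)=0$); and the assumption $\Mf \geqslant [\Ff,\Kf]$, which lets us apply the abelian-case results of Section~\ref{sec=CEabel} with the triple $(\Ff,\Kf,\Mf)$ playing the role of $(\Gg,\Lg,\Ng)$.

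First I would dispose of the easy inequality $\DD(\nug) \leq \DD(\phf)$. Since $\ppi|_{\Mf}\colon \Mf \twoheadrightarrow \Ng$ is surjective and $\nug$ is homogeneous, a direct unpacking of definitions gives $\DD(\ppi^{\ast}\nug)=\DD(\nug)$. Combining this with the triangle inequality for defects and $\DD(\hf)=0$ yields
\[
\DD(\nug)=\DD(\ppi^{\ast}\nug)=\DD\bigl(\hf+i_{\Mf,\Kf}^{\ast}\phf\bigr)\leq \DD(i_{\Mf,\Kf}^{\ast}\phf)\leq \DD(\phf),
\]
the last step being just restriction.

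The reverse inequality $\DD(\phf)\leq \DD(\nug)$ is the substantive half. By Proposition~\ref{prop=Bavard} applied to $\phf\in \QQQ(\Kf)$, $\DD(\phf)=\sup\{|\phf([\ff_1,\ff_2])|:\ff_1,\ff_2\in \Kf\}$. Because $\Mf\geqslant [\Ff,\Kf]$, each commutator $[\ff_1,\ff_2]$ lies in $\Mf$, so $\phf([\ff_1,\ff_2])=(i_{\Mf,\Kf}^{\ast}\phf)([\ff_1,\ff_2])$; moreover, using $\Kf\leqslant \Ff$, this is a single simple commutator with first entry in $\Ff$ and second in $\Kf$. Hence $\DD(\phf)\leq \DD^{1}_{\Ff,\Kf}(i_{\Mf,\Kf}^{\ast}\phf)$, where $\DD^{1}_{\Ff,\Kf}$ is the direct analog of Definition~\ref{defn=D_t} with $(\Ff,\Kf)$ in the role of $(\Gg,\Lg)$.

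The key step is then to invoke Theorem~\ref{thm=orerei} for the element $i_{\Mf,\Kf}^{\ast}\phf \in \QQQ(\Mf)^{\Ff}$, with the triple $(\Ff,\Kf,\Mf)$ in place of $(\Gg,\Lg,\Ng)$ (this is legitimate because of the hypothesis $\Mf\geqslant [\Ff,\Kf]$). By construction $i_{\Mf,\Kf}^{\ast}\phf$ extends to $\phf \in \QQQ(\Kf)^{\Ff}$, so condition~(1) of that theorem holds, and condition~(4) therefore gives $\DD^{1}_{\Ff,\Kf}(i_{\Mf,\Kf}^{\ast}\phf)=\DD(i_{\Mf,\Kf}^{\ast}\phf)$. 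One more application of the triangle inequality and $\DD(\hf)=0$ yields $\DD(i_{\Mf,\Kf}^{\ast}\phf)=\DD(\ppi^{\ast}\nug-\hf)=\DD(\ppi^{\ast}\nug)=\DD(\nug)$. Chaining the estimates gives $\DD(\phf)\leq \DD(\nug)$, and the two inequalities together prove the lemma.

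There is no real obstacle here; the only conceptual point worth emphasising is that all the work is absorbed into Theorem~\ref{thm=orerei}, whose applicability to the \emph{lift} triple $(\Ff,\Kf,\Mf)$ is precisely what the assumption $\Mf\geqslant [\Ff,\Kf]$ was put in place to guarantee. The homomorphism $\hf$, although only defined on $\Mf$ and possibly nonzero on $[\Ff,\Kf]$, is invisible to defect computations because $\DD(\hf)=0$.
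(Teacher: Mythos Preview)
Your proof is correct and follows essentially the same approach as the paper: both rely on applying Theorem~\ref{thm=orerei} to the lift triple $(\Ff,\Kf,\Mf)$ and on the fact that $\hf$, being a genuine homomorphism, does not affect defects. The only cosmetic difference is that the paper uses Corollary~\ref{cor=BavardD} (the $\Ff$-invariant form of Bavard's defect formula) to obtain the \emph{equality} $\DD(\phf)=\DD^{1}_{\Ff,\Kf}(i_{\Mf,\Kf}^{\ast}\phf)$ directly, and then chains equalities $\DD(\phf)=\DD(i_{\Mf,\Kf}^{\ast}\phf)=\DD(\ppi^{\ast}\nug)=\DD(\nug)$ in one line; you instead use Proposition~\ref{prop=Bavard}, which only gives $\DD(\phf)\leq \DD^{1}_{\Ff,\Kf}(i_{\Mf,\Kf}^{\ast}\phf)$, and therefore split the argument into two separate inequalities.
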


\begin{proof}
Set $\muf=\ppi^{\ast}\nug-\hf$. Then, $\muf=i_{\Mf,\Kf}^{\ast}\phf\in i_{\Mf,\Kf}^{\ast}\QQQ(\Kf)^{\Ff}$. Hence by Theorem~\ref{thm=orerei}, we have $\DD^1_{\Ff,\Kf}(\muf)=\DD(\muf)$. By Corollary~\ref{cor=BavardD}, we also have $\DD^1_{\Ff,\Kf}(\muf)=\DD(\phf)$. Therefore,  $\DD(\phf)=\DD(\muf)=\DD(\muf+\hf)=\DD(\pi^{\ast}\nug)=\DD(\nug)$, as desired.
\end{proof}

Let $(\Gg,\Lg,\Ng)$ be a triple in Setting~\ref{setting=GLN}. Assume that $\Ng\geqslant  [\Gg,\Lg]$ and that $\QG=\Gg/\Ng$ is boundedly $3$-acyclic. Then by Lemma~\ref{lem=exlift}, there always exists a tuple $(\Ff,\Kf,\Mf,\Rel,\ppi)$ associated with $(\Gg,\Lg,\Ng)$ that satisfies condition (i) in Definition~\ref{defn=FKMcondition} and $\Mf\geqslant  [\Ff,\Kf]$. Hence, to apply Corollary~\ref{cor=InjCE}, the main issue is to find such a tuple $(\Ff,\Kf,\Mf,\Rel,\ppi)$ that furthermore meets condition (ii) in Definition~\ref{defn=FKMcondition}. The following lemma supplies such examples, and this explains the role of the assumptions in Theorem~\ref{mthm=main}.

\begin{lem}\label{lem=maintogeneral}
Assume Setting~$\ref{setting=GLN}$. Assume that there exists $q\in \NN_{\geq 2}$ such that either of the following two conditions is fulfilled:
\begin{enumerate}
  \item[\textup{(a$_q$)}]  $\Ng=\gamma_{q}(\Gg)$ and $\Lg=\gamma_{q-1}(\Gg)$; or
  \item[\textup{(b$_q$)}]  $\Ng\geqslant \gamma_{q}(\Gg)$, $\Lg=\gamma_{q-1}(\Gg)\Ng$, and $\Gg$ admits a group presentation $(\tilde{\Ff}\,|\,\tilde{\Rel})$ such that $\tilde{\Rel}\leqslant \gamma_{q}(\tilde{\Ff})$.
\end{enumerate}
Then, there exists a tuple $(\Ff,\Kf,\Mf,\Rel,\ppi)$ associated with $(\Gg,\Lg,\Ng)$ satisfying conditions \textup{(i)} and \textup{(ii)} in Definition~$\ref{defn=FKMcondition}$ such that  $\Mf\geqslant [\Ff,\Kf]$ holds.
\end{lem}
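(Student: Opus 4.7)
The plan is to treat the two cases separately, in each case exhibiting an explicit tuple $(F, K, M, R, \pi)$ and then verifying conditions (i), (ii) and the abelian condition $M \geqslant [F, K]$ via results already proved in the paper.

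For case (a$_q$), I would let $F$ be any free group that admits a surjection $\pi \colon F \twoheadrightarrow G$, set $R = \ker(\pi)$, and---rather than the naive $K = \pi^{-1}(L)$---choose $K = \gamma_{q-1}(F)$ and $M = \gamma_q(F)$. These are characteristic (hence normal) subgroups of $F$, and the standard inductive fact $\pi(\gamma_j(F)) = \gamma_j(G)$ (which uses surjectivity of $\pi$) gives $\pi(K) = \gamma_{q-1}(G) = L$ and $\pi(M) = \gamma_q(G) = N$, so $(F, K, M, R, \pi)$ is a tuple associated with $(G, L, N)$. The key identity $M = \gamma_q(F) = [F, \gamma_{q-1}(F)] = [F, K]$ simultaneously yields $M \geqslant [F, K]$ and forces $M \cap R \cap [F, K] = M \cap R$, so (ii) holds with trivial quotient. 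Condition (i) then follows from Proposition~\ref{prop=onetwo}: $H^2(F) = 0$ since $F$ is free; $F/M = F/\gamma_q(F)$ is nilpotent of class at most $q-1$, hence amenable and therefore boundedly $3$-acyclic by Theorem~\ref{thm=bdd_acyc}~(1); and $K/M = \gamma_{q-1}(F)/\gamma_q(F)$ is abelian, hence boundedly $2$-acyclic.

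For case (b$_q$), I would use the given presentation, setting $F = \tilde{F}$, $R = \tilde{R}$, $K = \pi^{-1}(L)$, $M = \pi^{-1}(N)$. A short commutator computation based on the elementary identity $[F, AB] = [F, A][F, B]$ for normal subgroups $A, B$ of $F$ (derived from $[g, ab] = [g, a] \cdot a[g, b]a^{-1}$ together with normality of $[F, B]$) yields $K = \gamma_{q-1}(F) M$ and $[F, K] = \gamma_q(F)[F, M] \leqslant M$, establishing $M \geqslant [F, K]$. The assumption $R \leqslant \gamma_q(F) = [F, \gamma_{q-1}(F)] \leqslant [F, K]$ combined with $R \leqslant M$ gives $M \cap R = R = M \cap R \cap [F, K]$, so (ii) holds trivially. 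Condition (i) again follows from Proposition~\ref{prop=onetwo}: $H^2(F) = 0$; $F/M = G/N$ is nilpotent of class at most $q-1$ (since $N \geqslant \gamma_q(G)$), hence amenable and boundedly $3$-acyclic; and $K/M = L/N$ is abelian, which is the one calculation needing genuine care, but it follows from $[L, L] \leqslant [G, L] = [G, \gamma_{q-1}(G) N] = \gamma_q(G)[G, N] \leqslant N$, where the last step uses both $\gamma_q(G) \leqslant N$ and normality of $N$ in $G$.

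The main obstacle I anticipate is the commutator bookkeeping in case (b$_q$), in particular confirming that $K/M = L/N$ is abelian so that Proposition~\ref{prop=onetwo} applies. The other point that requires thought is the \emph{choice} of tuple in case (a$_q$): picking $K = \gamma_{q-1}(F)$, $M = \gamma_q(F)$ rather than $K = \pi^{-1}(L)$, $M = \pi^{-1}(N)$ is what makes (ii) automatic, since otherwise one would need to analyse $R/(R \cap \gamma_q(F)[F, R])$, which is not obviously torsion for an arbitrary free presentation of $G$. With those two choices in place, the remaining verifications reduce to direct invocations of Proposition~\ref{prop=onetwo} and elementary properties of the lower central series.
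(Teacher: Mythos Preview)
Your proposal is correct and follows essentially the same approach as the paper: the tuples you choose in both cases coincide with the paper's (in case (b$_q$) your $K=\pi^{-1}(L)$ equals the paper's $\gamma_{q-1}(\tilde F)\,\pi^{-1}(N)$, as your own computation $K=\gamma_{q-1}(F)M$ confirms), and the verification of (i), (ii) and $M\geqslant[F,K]$ via Proposition~\ref{prop=onetwo} and elementary lower-central-series identities is exactly what the paper's terse proof leaves to the reader.
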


\begin{proof}
In case (a$_q$), fix a free group $\Ff$ admitting a surjective group homomorphism $\ppi\colon\Ff\twoheadrightarrow \Gg$. Then, we set
\[
(\Ff,\Kf,\Mf,\Rel,\ppi)=(\Ff,\gamma_{q-1}(\Ff),\gamma_q(\Ff),\Ker(\ppi),\ppi).
\]
In case (b$_q$), let $\tilde{\ppi}\colon \tilde{\Ff}\twoheadrightarrow \tilde{\Ff}/\tilde{\Rel}\cong \Gg$ be the natural group quotient map. Then, we set
\[
(\Ff,\Kf,\Mf,\Rel,\ppi)=(\tilde{\Ff}, \gamma_{q-1}(\tilde{\Ff})\ppi^{-1}(\Ng), \ppi^{-1}(\Ng),\tilde{\Rel},\tilde{\ppi}).
\]
In both cases, with the aid of Proposition~\ref{prop=onetwo}, we can verify that these tuples $(\Ff,\Kf,\Mf,\Rel,\ppi)$ satisfy all the indicated  conditions.
\end{proof}

\begin{exa}\label{exa=MGL}
Assume Setting~$\ref{setting=GLN}$. Assume that $\QG=\Gg/\Ng$ is boundedly 3-acyclic and that $\Ng=[\Gg,\Lg]$. Then, there exists a tuple $(\Ff,\Kf,\Mf,\Rel,\ppi)$ associated with $(\Gg,\Lg,\Ng)$ satisfying condition (i) in Definition~\ref{defn=FKMcondition} such that $\Mf=[\Ff,\Kf]$ holds. Indeed, fix a free group $\Ff$ admitting a surjective group homomorphism $\ppi\colon\Ff\twoheadrightarrow \Gg$ and set
\[
(\Ff,\Kf,\Mf,\Rel,\ppi)=(\Ff,\ppi^{-1}(\Lg),[\Ff,\ppi^{-1}(\Lg)],\Ker(\ppi),\ppi).
\]
To see that this tuple satisfies (i) in Definition~\ref{defn=FKMcondition}, first we observe that $\Lg/\Ng$ and  $\Kf/\Mf$ are abelian and hence boundedly acyclic. By Theorem~\ref{thm=bdd_acyc}~(2), $\Ff/\Kf\cong \Gg/\Lg$ is boundedly $3$-acyclic. Again by Theorem~\ref{thm=bdd_acyc}~(2), $\Ff/\Mf$ is boundedly $3$-acyclic. Therefore, by Proposition~\ref{prop=onetwo}, the tuple $(\Ff,\Kf,\Mf,\Rel,\ppi)$ satisfies (i). Since $\Mf=[\Ff,\Kf]$, this tuple moreover satisfies condition (ii) in Definition~\ref{defn=FKMcondition}.
\end{exa}

\section{Construction of $\PPs$}\label{sec=Psi}
In this section, we take Step~2 in the outlined proof of Theorem~\ref{mthm=main}. In fact, we will construct the map $\PPs$ in a more general situation. The key tools here are Corollary~\ref{cor=InjCE} and the map $\alpha_{\bff}\colon \ZZ\to [\Ff,\Kf]$ defined in Definition~\ref{defn=Pspower}. We use the following settings.

\begin{setting}\label{setting=main1}
Let $(\Gg,\Lg,\Ng)$ be a triple in  Setting~$\ref{setting=GLN}$, \emph{i.e.,} let $\Gg$ be a group, and  let $\Lg$ and $\Ng$ be two normal subgroups of $\Gg$ with $\Lg\geqslant \Ng$. Fix a tuple $(\Ff,\Kf,\Mf,\Rel,\ppi)$ associated with $(\Gg,\Lg,\Ng)$ \textup{(}recall Definition~$\ref{defn=FKM}$\textup{)}. Assume that $(\Ff,\Kf,\Mf,\Rel,\ppi)$ satisfies conditions \textup{(i)} and \textup{(ii)} in Definition~$\ref{defn=FKMcondition}$. Assume that $\Mf\geqslant [\Ff,\Kf]$.
\end{setting}

\begin{setting}\label{setting=main2}
Under Setting~$\ref{setting=main1}$, assume furthermore that $\Wcal(\Gg,\Lg,\Ng)\ne 0$. Let $\ell\in \NN$ satisfy that $\ell\leq \Rdim \Wcal(\Gg,\Lg,\Ng)$. Fix arbitrary $\wf_1,\ldots ,\wf_{\ell}\in [\Ff,\Kf] \cap \Rel$ and $\nug_1,\ldots,\nug_{\ell}\in \QQQ(\Ng)^{\Gg}$ such that for every $i,j\in \{1,\ldots,\ell\}$, $\CEt_{\nug_{j}}(\wf_i)=\delta_{i,j}$ holds; Corollary~$\ref{cor=InjCE}$ ensures the existence of such $\wf_1,\ldots ,\wf_{\ell}$ and $\nug_1,\ldots,\nug_{\ell}$. For every $i\in \{1,\ldots,\ell\}$, fix an arbitrary $[\Ff,\Kf]$-expression $\bffi=(\ff_1^{(i)},\ldots ,\ff_{t_i}^{(i)};\ff'_1{}^{(i)},\ldots ,\ff'_{t_i}{}^{(i)})$ for $\wf_i$ \textup{(}recall Definition~$\ref{defn=tuple}$\textup{)}. Set $\ttil=\sum\limits_{i\in \{1,\ldots,\ell\}}t_i$.
\end{setting}

We remark that in Setting~\ref{setting=main2}, we only assume that $\Wcal(\Gg,\Lg,\Ng)\ne 0$ in addition to Setting~\ref{setting=main1}. The rest of Setting~\ref{setting=main2} is only to fix our notation for Theorem~\ref{thm=PPs}.

\subsection{The construction  of $\PPs$}\label{subsec=PPs}
The following theorem corresponds to Step~2 in the outlined proof of Theorem~\ref{mthm=main}. Recall our definition of QI-type estimates from below/above from Definition~\ref{defn=QI}.

\begin{thm}[construction of $\PPs$]\label{thm=PPs}
Assume Settings~$\ref{setting=GLN}$, $\ref{setting=main1}$ and $\ref{setting=main2}$. Let $\alpha_{\bffone},\ldots ,\alpha_{\bffell}\colon \ZZ\to [\Ff,\Kf]$ be the maps defined in Definition~$\ref{defn=Pspower}$. For every $i\in \{1,\ldots,\ell\}$, set $\beta_{\bffi}=\ppi\circ \alpha_{\bffi}$. Set a map $\PPs$ as
\begin{equation}\label{eq=defnPPs}
\PPs(\vm)=\beta_{\bffone}(m_1)\cdots \beta_{\bffell}(m_{\ell})
\end{equation}
for every $\vm=(m_1,\ldots,m_{\ell})\in \ZZ^{\ell}$. Then, the following hold true.
\begin{enumerate}[label=\textup{(}$\arabic*$\textup{)}]
  \item For every $i\in \{1,\ldots,\ell\}$ and for every $m_i\in \ZZ$, we have $\beta_{\bffi}(m_i)\in [\Gg,\Ng]$. In particular, we regard $\PPs$ as the map from $\ZZ^{\ell}$ to $[\Gg,\Ng]$.
  \item The map $\PPs\colon \ZZ^{\ell}\to ([\Gg,\Ng],d_{\cl_{\Gg,\Ng}})$ is a pre-coarse homomorphism.
  \item The map $\PPs$ is $d_{\cl_{\Gg,\Lg}}$-bounded.
  \item We have the following QI-type estimate from above on  $\ZZ^{\ell}$:
\begin{equation}\label{eq=PPsabove}
d_{\cl_{\Gg,\Ng}}(\PPs(\vm),\PPs(\vn))\leq \left\{2\left(\max_{i\in \{1,\ldots ,\ell\}}t_i\right)-1\right\}\cdot \|\vm-\vn\|_1 +2\ttil -1.
\end{equation}
  \item We have the following QI-type estimate from below on $\ZZ^{\ell}$:
\begin{equation}\label{eq=PPsbelow}
d_{\scl_{\Gg,\Ng}}(\PPs(\vm),\PPs(\vn))\geq \frac{1}{2\ell \left(\max\limits_{j\in \{1,\ldots,\ell\}}\DD(\nug_j)\right)}\cdot \|\vm-\vn\|_1-2 \ttil +\frac{1}{2}.
\end{equation}
\end{enumerate}
\end{thm}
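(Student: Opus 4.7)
The plan is to establish (1)--(5) in order, combining the properties of $\alpha_{\bffi}\colon \ZZ \to [\Ff, \Kf]$ from Proposition~\ref{prop=alpha}, commutator calculus from Lemma~\ref{lem=choron}, the core-extractor identity $\CEt_{\nug_{j}}(\wf_i)=\delta_{i,j}$ fixed in Setting~\ref{setting=main2}, and, for the lower bound, the Bavard duality theorem (Theorem~\ref{thm=Bavard}) together with the defect-control Lemma~\ref{lem=eqdefect}. First, (1) and (3) are short: for (1), Proposition~\ref{prop=alpha}~(1) applied inductively shows $\alpha_{\bffi}(k) = \wf_i^k z_k$ with $z_k \in [\Ff, \Mf]$ and $\cl_{\Ff, \Mf}(z_k) \leq (|k|-1)(2t_i-1)$; since $\wf_i \in \Rel$, pushing forward by $\ppi$ yields $\beta_{\bffi}(k) \in \ppi([\Ff, \Mf]) = [\Gg, \Ng]$ together with the bound $\cl_{\Gg, \Ng}(\beta_{\bffi}(k)) \leq (|k|-1)(2t_i-1)$, which I will reuse in (4). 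For (3), each factor $[\ppi(\ff_j^{(i)}), \ppi(\ff'_j{}^{(i)})^k]$ is by construction a simple $(\Gg, \Lg)$-commutator since $\ppi(\ff'_j{}^{(i)}) \in \Lg$, so $\cl_{\Gg, \Lg}(\PPs(\vm)) \leq \ttil$ uniformly in $\vm$.

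For (2), the pre-coarse homomorphism estimate for $\alpha_{\bffi}$ descends to $\beta_{\bffi}$ through $\ppi$ because $\ppi$ carries simple $(\Ff, \Mf)$-commutators to simple $(\Gg, \Ng)$-commutators. I would then bound $d_{\cl_{\Gg, \Ng}}(\PPs(\vm + \vn), \PPs(\vm) \PPs(\vn))$ by rearranging the product $\prod_i \beta_{\bffi}(m_i) \cdot \prod_i \beta_{\bffi}(n_i)$ into $\prod_i \bigl(\beta_{\bffi}(m_i) \beta_{\bffi}(n_i)\bigr)$ via swap-commutations of unit $\cl_{\Gg,\Ng}$-cost each (Lemma~\ref{lem=choron}~(3), applicable because the involved elements lie in $\Ng$), then merging adjacent pairs $\beta_{\bffi}(m_i) \beta_{\bffi}(n_i)$ into $\beta_{\bffi}(m_i+n_i)$ at cost $2t_i - 1$ apiece. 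For (4), the bound $\cl_{\Gg, \Ng}(\PPs(\vn-\vm)) \leq (2\max_i t_i - 1) \|\vn-\vm\|_1$ coming from (1) combined with the pre-coarse homomorphism estimate from (2), which lets me replace $\PPs(\vm)^{-1}\PPs(\vn)$ by $\PPs(\vn-\vm)$ at bounded cost, gives \eqref{eq=PPsabove} after careful accounting of the constants.

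The heart of the argument is (5). By Theorem~\ref{thm=Bavard} it suffices to display, for each $\vm \neq \vn$, some $j=j_{\vm, \vn} \in \{1, \ldots, \ell\}$ for which $|\nug_j(\PPs(\vm)^{-1}\PPs(\vn))|$ is linear in $\|\vm-\vn\|_1$ with controlled error. Expanding $\nug_j(\PPs(\vn)) = \sum_i \ppi^{\ast}\nug_j(\alpha_{\bffi}(n_i)) + O\bigl((\ell-1)\DD(\nug_j)\bigr)$ via the defect of $\ppi^{\ast}\nug_j$, I would fix a core $\hf_j \in \HHH^1(\Mf)^{\Ff}$ of $\nug_j$ together with $\phf_j \in \QQQ(\Kf)^{\Ff}$ satisfying $\ppi^{\ast}\nug_j = \hf_j + i_{\Mf, \Kf}^{\ast}\phf_j$, which exists by condition~(i) in Definition~\ref{defn=FKMcondition}. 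Since $\hf_j$ is an $\Ff$-invariant homomorphism on $\Mf$ it vanishes on $[\Ff, \Mf]$, and Proposition~\ref{prop=alpha}~(2) therefore gives $\hf_j(\alpha_{\bffi}(n_i)) = n_i \hf_j(\wf_i) = n_i \CEt_{\nug_j}(\wf_i) = n_i \delta_{i,j}$. The residual $|\phf_j(\alpha_{\bffi}(n_i))| \leq (2t_i-1)\DD(\phf_j)$ is controlled via Lemma~\ref{lem=eqdefect}, which identifies $\DD(\phf_j) = \DD(\nug_j)$. Summing over $i$, subtracting the analogous estimate for $\PPs(\vm)$, and choosing $j$ with $|m_j - n_j| \geq \|\vm-\vn\|_1/\ell$ then yields \eqref{eq=PPsbelow}.

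The main obstacle is exactly step (5): the crucial point is that the linear coefficient $\hf_j(\wf_i) = \delta_{i,j}$ can be extracted cleanly from the non-unique decomposition $\ppi^{\ast}\nug_j = \hf_j + i_{\Mf, \Kf}^{\ast}\phf_j$, thanks to Corollary~\ref{cor=torsion} (which makes $\hf_j|_{[\Ff, \Kf]}$ independent of the choice of core) together with Lemma~\ref{lem=eqdefect} (which pins down $\DD(\phf_j)$ independently of that choice). This is exactly what the core-extractor formalism of Section~\ref{sec=CEabel} was set up to provide; without it, the ambiguity in the decomposition would obstruct a uniform linear lower bound in $\|\vm-\vn\|_1$.
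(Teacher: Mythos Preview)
Your proposal is correct and follows essentially the same route as the paper. The paper packages the key estimate $|\nug_j(\PPs(\vm))-m_j|\leq (2\ttil-1)\DD(\nug_j)$ into a separate lemma (Lemma~\ref{lem=nugalpha}), proved exactly as you describe via a core $\hf_j$, the identity $\hf_j(\alpha_{\bffi}(n_i))=n_i\delta_{i,j}$ from Proposition~\ref{prop=alpha}~(2), and Lemma~\ref{lem=eqdefect} for $\DD(\phf_j)=\DD(\nug_j)$; the remaining steps~(1)--(4) and the Bavard-duality endgame for~(5) match yours.
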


We emphasize the following point: the construction of $\PPs$ in Theorem~\ref{thm=PPs} is built on Setting~\ref{setting=main2}, and the existences of $\wf_1,\ldots ,\wf_{\ell}$ and $\nug_1,\ldots,\nug_{\ell}$ in Setting~\ref{setting=main2} are ensured by Corollary~\ref{cor=InjCE}. Thus, Corollary~\ref{cor=InjCE} is  the key to the construction above of $\PPs$.

We note that by \eqref{eq=hikaku}, the inequality \eqref{eq=PPsabove} in particular implies that
\[
d_{\scl_{\Gg,\Ng}}(\PPs(\vm),\PPs(\vn))\leq \left\{2\left(\max_{i\in \{1,\ldots ,\ell\}}t_i\right)-1\right\}\cdot \|\vm-\vn\|_1 +2\ttil -1.
\]

By setting $\gG_{s}^{(i)}=\pi(\ff_{s}^{(i)})$ and $\gG_{s}^{(i)}{}'=\pi(\ff_{s}^{(i)}{}')$ for every $i\in \{1,\ldots,\ell\}$ and for every $s\in \{1,\ldots ,t_i\}$, we rewrite the formula \eqref{eq=defnPPs} for every $\vm=(m_1,\ldots ,m_{\ell})\in\ZZ^{\ell}$ as
\begin{equation}\label{eq=defnPPs2}
\PPs(\vm)=[\gG_{1}^{(1)},(\gG'_{1}{}^{(1)})^{m_1}]\cdots [\gG_{t_1}^{(1)},(\gG'_{t_1}{}^{(1)})^{m_1}]\cdots [\gG_{1}^{(\ell)},(\gG'_{1}{}^{(\ell)})^{m_{\ell}}]\cdots [\gG_{t_{\ell}}^{(\ell)},(\gG'_{t_{\ell}}{}^{(\ell)})^{m_{\ell}}].
\end{equation}

We employ the following lemma for the proof of Theorem~\ref{thm=PPs}.
\begin{lem}\label{lem=nugalpha}
Assume Settings~$\ref{setting=GLN}$, $\ref{setting=main1}$ and $\ref{setting=main2}$. Let $\beta_{\bffone},\ldots ,\beta_{\bffell}\colon \ZZ\to [\Gg,\Ng]$ and $\PPs\colon \ZZ^{\ell}\to [\Gg,\Ng]$ be the maps defined in Theorem~$\ref{thm=PPs}$. Then for every $j\in \{1,\ldots ,\ell\}$, the following hold true.
\begin{enumerate}[label=\textup{(}$\arabic*$\textup{)}]
 \item For every $i\in \{1,\ldots,\ell\}$ and for every $m_i\in \ZZ$, $|\nug_j(\beta_{\bffi}(m_i))-m_i\delta_{i,j}|\leq (2t_i-1)\DD(\nug_j)$.
 \item For every $\vm=(m_1,\ldots ,m_{\ell})\in \ZZ^{\ell}$, $\left|\nug_j(\PPs(\vm))-m_j\right|\leq (2 \ttil-1) \DD(\nug_j)$.
\end{enumerate}
\end{lem}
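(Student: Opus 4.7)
\medskip
\noindent\textbf{Proof plan.} The plan is to exploit the limit-formula machinery of Theorem~\ref{thm=valueofcore} together with the key defect equality in Lemma~\ref{lem=eqdefect}, rather than to re-derive asymptotic information from scratch. For each $j\in\{1,\ldots,\ell\}$, I would begin by invoking condition~(i) of Setting~\ref{setting=main1} to choose a core $\hf_j\in\HHH^1(\Mf)^{\Ff}$ of $\nug_j$ together with $\phf_j\in\QQQ(\Kf)^{\Ff}$ satisfying $\ppi^{\ast}\nug_j=\hf_j+i_{\Mf,\Kf}^{\ast}\phf_j$. By the definition of the core extractor and by Setting~\ref{setting=main2}, we have $\hf_j(\wf_i)=\CEt_{\nug_j}(\wf_i)=\delta_{i,j}$. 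Crucially, since $\hf_j\in\HHH^1(\Mf)^{\Ff}$ is $\Ff$-invariant it vanishes on $[\Ff,\Mf]$, so it descends to a homomorphism on $\Mf/[\Ff,\Mf]$.

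For item~(1), the main step is the identity $\hf_j(\alpha_{\bffi}(m_i))=m_i\hf_j(\wf_i)=m_i\delta_{i,j}$. This follows by composing with the projection $\Mf\twoheadrightarrow\Mf/[\Ff,\Mf]$: Proposition~\ref{prop=alpha}~(2) ensures that $\mathrm{proj}^{\Mf}_{[\Ff,\Mf]}\circ\alpha_{\bffi}\colon\ZZ\to\Mf/[\Ff,\Mf]$ is a genuine homomorphism, and $\hf_j$ factors through this quotient. Applying $\ppi^{\ast}\nug_j=\hf_j+i^{\ast}\phf_j$ at $\alpha_{\bffi}(m_i)$ then gives
\[
\nug_j(\beta_{\bffi}(m_i))-m_i\delta_{i,j}
=\ppi^{\ast}\nug_j(\alpha_{\bffi}(m_i))-\hf_j(\alpha_{\bffi}(m_i))
=\phf_j(\alpha_{\bffi}(m_i)).
\]
Because $\alpha_{\bffi}(m_i)$ is by construction a product of $t_i$ simple $(\Ff,\Kf)$-commutators, Lemma~\ref{lem=2D-1} yields $|\phf_j(\alpha_{\bffi}(m_i))|\leq(2t_i-1)\DD(\phf_j)$, and Lemma~\ref{lem=eqdefect} (which is exactly tailored to the abelian case $\Mf\geqslant[\Ff,\Kf]$) upgrades this to $\DD(\phf_j)=\DD(\nug_j)$. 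Combining yields the desired estimate.

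Item~(2) is then a routine chain of quasimorphism triangle inequalities built on (1). Since $\nug_j$ is a homogeneous quasimorphism on $\Ng$ of defect $\DD(\nug_j)$, iterating the defect bound over the $\ell$ factors of $\PPs(\vm)=\beta_{\bffone}(m_1)\cdots\beta_{\bffell}(m_\ell)$ gives
\[
\Bigl|\nug_j(\PPs(\vm))-\sum_{i\in\{1,\ldots,\ell\}}\nug_j(\beta_{\bffi}(m_i))\Bigr|\leq(\ell-1)\DD(\nug_j).
\]
Applying (1) to each summand bounds $\bigl|\sum_i\nug_j(\beta_{\bffi}(m_i))-m_j\bigr|$ by $\sum_i(2t_i-1)\DD(\nug_j)=(2\ttil-\ell)\DD(\nug_j)$, and the two estimates add to $(2\ttil-1)\DD(\nug_j)$, as required.

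I expect the main obstacle to be \emph{not} the triangle-inequality bookkeeping in (2), but rather making sure that the abelian hypothesis $\Mf\geqslant[\Ff,\Kf]$ is used twice in the right places in (1): once to guarantee that $\hf_j$ descends to the abelianization $\Mf/[\Ff,\Mf]$ so that Proposition~\ref{prop=alpha}~(2) applies, and once through Lemma~\ref{lem=eqdefect} to replace $\DD(\phf_j)$ by $\DD(\nug_j)$, which is what delivers a constant depending only on $\nug_j$ rather than on the auxiliary choice of $\phf_j$.
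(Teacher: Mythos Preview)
Your proposal is correct and follows essentially the same approach as the paper's own proof: both select a core decomposition $\ppi^{\ast}\nug_j=\hf_j+i_{\Mf,\Kf}^{\ast}\phf_j$, use Proposition~\ref{prop=alpha}~(2) together with $\Ff$-invariance of $\hf_j$ to obtain $\hf_j(\alpha_{\bffi}(m_i))=m_i\delta_{i,j}$, bound $|\phf_j(\alpha_{\bffi}(m_i))|$ via $\cl_{\Ff,\Kf}(\alpha_{\bffi}(m_i))\le t_i$, invoke Lemma~\ref{lem=eqdefect} to replace $\DD(\phf_j)$ by $\DD(\nug_j)$, and then deduce (2) from (1) by the $(\ell-1)\DD(\nug_j)$ defect bound. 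Your explicit arithmetic $(2\ttil-\ell)+(\ell-1)=2\ttil-1$ in (2) and your observation about where the hypothesis $\Mf\geqslant[\Ff,\Kf]$ enters are helpful clarifications, but the argument itself is the same as the paper's.
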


For the following proof of Lemma~\ref{lem=nugalpha}, recall  Lemma~\ref{lem=2D-1}.

\begin{proof}[Proof of Lemma~$\ref{lem=nugalpha}$]
Take a core $\hf_j$ of $\nug_j$. Then, we can take $\phf_j\in \QQQ(\Kf)^{\Ff}$ satisfying $\ppi^{\ast}\nug_j=\hf_j+i_{\Mf,\Kf}^{\ast}\phf_j$. By definition, we have $i^{\ast}_{\Mf\cap \Rel,\Mf}\hf_j=\CEt_{\nug_j}$. Let $i\in \{1,\ldots ,\ell\}$. Then, since $\cl_{\Ff,\Kf}(\alpha_{\bffi}(m_i))\leq t_i$, we obtain
\begin{align*}
|\nug_j(\beta_{\bffi}(m_i))-\hf_j(\alpha_{\bffi}(m_i))|&=|\ppi^{\ast}\nug_j(\alpha_{\bffi}(m_i))-\hf_j(\alpha_{\bffi}(m_i))|\\
&=|\phf_j(\alpha_{\bffi}(m_i))|\\
& \leq (2t_i-1)\DD(\phf_j).
\end{align*}
By Proposition~\ref{prop=alpha}~(2), we have
\[
\hf_j(\alpha_{\bffi}(m_i))=\hf_j(\alpha_{\bffi}(1)^{m_i})=\hf_j(\wf_i^{m_i})=m_i\delta_{i,j}.
\]
Therefore, Lemma~\ref{lem=eqdefect} ends the proof of (1). Since
\[
\left|\nug_j(\PPs(\vm))-\sum_{i\in \{1,\ldots ,\ell\}}\nug_j(\beta_{\bffi}(m_i))\right|\leq (\ell-1)\DD(\nug_j),
\]
(2) follows from (1).
\end{proof}

\subsection{Proof of Theorem~\ref{thm=PPs}}\label{subsec=proofPPs}
\begin{proof}[Proof of Theorem~$\ref{thm=PPs}$]
Let $\vm=(m_1,\ldots ,m_{\ell})$ and $\vn=(n_1,\ldots ,n_{\ell})$ be elements in $\ZZ^{\ell}$. By Proposition~\ref{prop=alpha}~(2), we have for every $i\in \{1,\ldots,\ell\}$,
\[
\beta_{\bffi}(m_i)=\pi(\alpha_{\bffi}(m_i))\in \pi\left( \alpha_{\bff}(1)^{m_i} [\Ff,\Mf]\right) =\ppi(\alpha_{\bff}(1))^{m_i}[\Gg,\Ng]=[\Gg,\Ng].
\]
Here, note that $\ppi(\wf_i)=e_{\Gg}$ since $\wf_i\in \Rel$. This proves (1).
Secondly, we will show (2). By Proposition~\ref{prop=alpha}~(1) we have for every $i\in \{1,\ldots ,\ell\}$, $\alpha_{\bffi}(m_i+n_i)\mathrel{\overset{(\Ff,\Mf)}{\underset{2t_i-1}{\eqsim}}}\alpha_{\bffi}(m_i)\alpha_{\bffi}(n_i)$; this implies that
\begin{equation}\label{eq=t_i}
\beta_{\bffi}(m_i+n_i)\mathrel{\overset{(\Gg,\Ng)}{\underset{2t_i-1}{\eqsim}}}\beta_{\bffi}(m_i)\beta_{\bffi}(n_i).
\end{equation}
By (1), for every $i\in \{1,\ldots ,\ell\}$, the element $\beta_{\bffi}(m_i)$ in particular lies in $\Ng$. Hence, by Lemma~\ref{lem=choron}~(3), we have
\begin{equation}\label{eq=t}
\beta_{\bffone}(m_1)\beta_{\bffone}(n_1)\cdots \beta_{\bffell}(m_{\ell})\beta_{\bffell}(n_{\ell})\mathrel{\overset{(\Gg,\Ng)}{\underset{\ell-1}{\eqsim}}} \PPs(\vm)\PPs(\vn)
\end{equation}
By \eqref{eq=t_i} and \eqref{eq=t}, we have
\begin{equation}\label{eq=2sumt-1}
d_{\cl_{\Gg,\Ng}}(\PPs(\vm+\vn),\PPs(\vm)\PPs(\vn))\leq 2\ttil-1;
\end{equation}
this \eqref{eq=2sumt-1} proves (2).
By construction (recall \eqref{eq=defnPPs2}), we have
\begin{equation}\label{eq=tt}
\sup_{\vm\in \ZZ^{\ell}}\cl_{\Gg,\Lg}(\PPs(\vm))\leq \ttil;
\end{equation}
this \eqref{eq=tt} proves (3).

Next, we prove (4).  Let $i\in \{1,\ldots ,\ell\}$. Then by \eqref{eq=t_i}, we in particular have for every $m\in \NN$,
\[
\beta_{\bffi}(-m)\mathrel{\overset{(\Gg,\Ng)}{\underset{2t_i-1}{\eqsim}}}\beta_{\bffi}(m)^{-1}\qquad \mathrm{and}\qquad
\beta_{\bffi}(m)\mathrel{\overset{(\Gg,\Ng)}{\underset{(m-1)(2t_i-1)}{\eqsim}}}\beta_{\bffi}(1)^m=e_{\Gg}.
\]
In particular, we have $\beta_{\bffi}(m_i)\mathrel{\overset{(\Gg,\Ng)}{\underset{2t_i-1}{\eqsim}}} \beta_{\bffi}(|m_i|)^{\mathrm{sign}(m_i)}$, where $\mathrm{sign}(m_i)$ is defined to be $1$ if $m_i\geq 0$ and $-1$ if $m_i<0$. Hence we conclude that
\[
\beta_{\bffi}(m_i)\mathrel{\overset{(\Gg,\Ng)}{\underset{|m_i|(2t_i-1)}{\eqsim}}} e_{\Gg}.
\]
Therefore, we obtain that
\[
\cl_{\Gg,\Ng}(\PPs(\vm))\leq \sum_{i\in \{1,\ldots ,\ell\}}|m_i|(2t_i-1)\leq \left\{2\left(\max_{i\in \{1,\ldots ,\ell\}}t_i\right)-1\right\}\cdot \|\vm\|_1.
\]
By \eqref{eq=2sumt-1}, we have
\begin{align*}
d_{\cl_{\Gg,\Ng}}(\PPs(\vm),\PPs(\vn))&\leq  \cl_{\Gg,\Ng}\left(\PPs(\vm-\vn)\right)+2\ttil-1\\
&\leq \left\{2\left(\max_{i\in \{1,\ldots ,\ell\}}t_i\right)-1\right\}\cdot \|\vm-\vn\|_1 +2\ttil -1,
\end{align*}
thus obtaining \eqref{eq=PPsabove}.

Finally, we prove (5). Given $\vm$ and $\vn$, we can take $j_{\vm,\vn}\in \{1,\ldots ,\ell\}$ such that $|m_{j_{\vm,\vn}}-n_{j_{\vm,\vn}}|\geq \ell^{-1}\cdot \|\vm-\vn\|_1$. By Lemma~\ref{lem=nugalpha}~(2), we have
\begin{align*}
\left|\nug_{j_{\vm,\vn}}(\PPs(\vm)^{-1}\PPs(\vn))\right|&\geq \left|\nug_{j_{\vm,\vn}}(\PPs(\vn))-\nug_{j_{\vm,\vn}}(\PPs(\vm))\right|-\DD(\nug_{j_{\vm,\vn}})\\
&\geq  |n_{j_{\vm,\vn}}-m_{j_{\vm,\vn}}|-(4 \ttil -1)\cdot \DD(\nug_{j_{\vm,\vn}})\\
&\geq \frac{1}{\ell}\cdot \|\vm-\vn\|_1-(4 \ttil -1)\cdot \DD(\nug_{j_{\vm,\vn}}).
\end{align*}
Therefore, Theorem~\ref{thm=Bavard} implies \eqref{eq=PPsbelow}.
\end{proof}

We summarize a part of the arguments in this section for a future use as follows.

\begin{thm}\label{thm=nilphe}
Assume Setting~$\ref{setting=GLN}$. Assume that $\Ng=[\Gg,\Lg]$ and that $\QG=\Gg/\Ng$ is boundedly $3$-acyclic. Assume that $\Wcal(\Gg,\Lg,\Ng)$ is finite dimensional. Let $\ell=\Rdim \Wcal(\Gg,\Lg,\Ng)$. Then, there exist quasimorphisms $\nug_1,\ldots,\nug_{\ell}\in \QQQ(\Ng)^{\Gg}$ and maps $\beta_1,\ldots,\beta_{\ell}\colon \ZZ\to [\Gg,\Ng]$ such that the following conditions are all fulfilled.
\begin{enumerate}[label=\textup{(\arabic*)}]
  \item The equivalence classes $[\nug_1],\ldots,[\nug_{\ell}]$ form a basis of $\Wcal(\Gg,\Lg,\Ng)$.
  \item The maps $\beta_1,\ldots,\beta_{\ell}$ are all $d_{\cl_{\Gg,\Lg}}$-bounded.
  \item There exists $D\in \RR_{\geq 0}$ such that for every $i,j\in \{1,\ldots,\ell\}$ and for every $m\in \ZZ$,
\[
\left|\nug_j(\beta_i(m))-m \delta_{i,j}\right| \leq D.
\]
\end{enumerate}
\end{thm}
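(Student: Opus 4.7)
The plan is to assemble this as a componentwise version of Theorem~\ref{thm=PPs}, feeding on the output of Corollary~\ref{cor=InjCE}. First, I would produce a tuple $(\Ff,\Kf,\Mf,\Rel,\ppi)$ associated with $(\Gg,\Lg,\Ng)$ satisfying Setting~\ref{setting=main1} together with the equality $\Mf=[\Ff,\Kf]$: this is exactly what Example~\ref{exa=MGL} supplies from the hypotheses $\Ng=[\Gg,\Lg]$ and bounded $3$-acyclicity of $\QG$. Namely, pick a free group $\Ff$ with a surjection $\ppi\colon\Ff\twoheadrightarrow\Gg$ and set $\Kf=\ppi^{-1}(\Lg)$, $\Mf=[\Ff,\Kf]$, $\Rel=\Ker(\ppi)$. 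Condition (i) in Definition~\ref{defn=FKMcondition} is verified in Example~\ref{exa=MGL}, while condition (ii) is automatic since $\Mf\cap\Rel\cap[\Ff,\Kf]=\Mf\cap\Rel$, and $\Mf\geqslant[\Ff,\Kf]$ holds trivially.

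Next, I apply Corollary~\ref{cor=InjCE}(2) with $\ell=\Rdim\Wcal(\Gg,\Lg,\Ng)$ to obtain elements $\wf_1,\ldots,\wf_{\ell}\in[\Ff,\Kf]\cap\Rel$ and $\nug_1,\ldots,\nug_{\ell}\in\QQQ(\Ng)^{\Gg}$ with $\CEt_{\nug_j}(\wf_i)=\delta_{i,j}$. The duality relation forces the functionals $\CE_{[\nug_j]}\in i^{\ast}_{\Mf\cap\Rel,\Mf}\HHH^1(\Mf)^{\Ff}$ to be linearly independent, and injectivity of the core extractor $\CE$ (Corollary~\ref{cor=InjCE}(1)) then implies that $[\nug_1],\ldots,[\nug_{\ell}]$ are linearly independent in $\Wcal(\Gg,\Lg,\Ng)$; since there are $\ell$ of them and $\ell=\Rdim\Wcal(\Gg,\Lg,\Ng)$, they form a basis, settling conclusion (1).

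For conclusions (2) and (3), for each $i\in\{1,\ldots,\ell\}$ I fix an $[\Ff,\Kf]$-expression $\bffi=(\ff_1^{(i)},\ldots,\ff_{t_i}^{(i)};\ff'_1{}^{(i)},\ldots,\ff'_{t_i}{}^{(i)})$ for $\wf_i$ (such expressions exist since $\wf_i\in[\Ff,\Kf]$), let $\alpha_{\bffi}\colon\ZZ\to[\Ff,\Kf]$ be as in Definition~\ref{defn=Pspower}, and set $\beta_i=\ppi\circ\alpha_{\bffi}$. By Theorem~\ref{thm=PPs}(1) the image lands in $[\Gg,\Ng]$. For (2), note that
\[
\beta_i(m)=[\ppi(\ff_1^{(i)}),\ppi(\ff'_1{}^{(i)})^m]\cdots[\ppi(\ff_{t_i}^{(i)}),\ppi(\ff'_{t_i}{}^{(i)})^m],
\]
where each $\ppi(\ff_s^{(i)})\in\Gg$ and $\ppi(\ff'_s{}^{(i)})^m\in\Lg$, so it is a product of $t_i$ simple $(\Gg,\Lg)$-commutators. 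Hence $\cl_{\Gg,\Lg}(\beta_i(m))\leq t_i$ uniformly in $m$, giving $\diam_{d_{\cl_{\Gg,\Lg}}}(\beta_i(\ZZ))\leq 2t_i<\infty$. Finally, conclusion (3) is exactly Lemma~\ref{lem=nugalpha}(1), which yields $|\nug_j(\beta_i(m))-m\delta_{i,j}|\leq(2t_i-1)\DD(\nug_j)$, so $D=\max_{i,j\in\{1,\ldots,\ell\}}(2t_i-1)\DD(\nug_j)$ works.

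Since every step is a direct invocation of an already-established result, there is no serious obstacle in this proof; the statement is essentially the observation that the map $\PPs$ constructed in Theorem~\ref{thm=PPs} is, by design, a concatenation of per-coordinate maps $\beta_i$ that individually satisfy precisely the three properties being asserted, and the role of the hypothesis $\Ng=[\Gg,\Lg]$ is merely to make Example~\ref{exa=MGL} available as the source of the required tuple $(\Ff,\Kf,\Mf,\Rel,\ppi)$.
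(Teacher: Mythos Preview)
Your proposal is correct and follows essentially the same route as the paper: invoke Example~\ref{exa=MGL} to produce the tuple $(\Ff,\Kf,\Mf,\Rel,\ppi)$, then feed it into Corollary~\ref{cor=InjCE}, Lemma~\ref{lem=nugalpha}, and Theorem~\ref{thm=PPs}. Your explicit justification that $[\nug_1],\ldots,[\nug_{\ell}]$ form a basis (via injectivity of $\CE$) and your direct computation for conclusion~(2) are welcome elaborations of what the paper leaves implicit; the only omission is the trivial case $\ell=0$, which the paper dismisses separately.
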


Here, for the case of $\ell=0$ we regard Theorem~\ref{thm=nilphe} as a trivial statement.

\begin{proof}[Proof of Theorem~$\ref{thm=nilphe}$]
Apply Corollary~\ref{cor=InjCE} and Lemma~\ref{lem=nugalpha} (and Theorem~\ref{thm=PPs}) to Example~\ref{exa=MGL}.
\end{proof}

\section{Proofs of Theorem~\ref{mthm=main} and Theorem~\ref{mthm=dim}}\label{sec=proofmain}
In this section, we prove Theorem~\ref{mthm=main} and Theorem~\ref{mthm=dim}. For the proof of Theorem~\ref{mthm=main}, we in fact prove Theorem~\ref{thm=maingeneral} appearing in Subsection~\ref{subsec=generalform}, which is a general form of Theorem~\ref{mthm=main}. To state Theorem~\ref{thm=maingeneral}, we will use the following setting, which is stronger than Setting~\ref{setting=main2}. More precisely, we assume $\ell= \Rdim \Wcal(\Gg,\Lg,\Ng)$, rather than $\ell\leq  \Rdim \Wcal(\Gg,\Lg,\Ng)$ in Setting~\ref{setting=main3}.

\begin{setting}\label{setting=main3}
Under Setting~$\ref{setting=main1}$, assume furthermore that $\Wcal(\Gg,\Lg,\Ng)$ is non-zero finite dimensional, and set $\ell= \Rdim \Wcal(\Gg,\Lg,\Ng)$. Fix arbitrary $\wf_1,\ldots ,\wf_{\ell}\in [\Ff,\Kf] \cap \Rel$ and $\nug_1,\ldots,\nug_{\ell}\in \QQQ(\Ng)^{\Gg}$ such that for every $i,j\in \{1,\ldots,\ell\}$, $\CEt_{\nug_{j}}(\wf_i)=\delta_{i,j}$ holds. For every $i\in \{1,\ldots,\ell\}$, fix an arbitrary $[\Ff,\Kf]$-expression 
\[\bffi=(\ff_1^{(i)},\ldots ,\ff_{t_i}^{(i)};\ff'_1{}^{(i)},\ldots ,\ff'_{t_i}{}^{(i)})\]
 for $\wf_i$. Set $\ttil=\sum\limits_{i\in \{1,\ldots,\ell\}}t_i$.
\end{setting}

\subsection{Proof of Theorem~\ref{thm=itte}}\label{subsec=itte}
The following theorem corresponds to Step~3 in the outlined proof of Theorem~\ref{mthm=main}.

\begin{thm}\label{thm=itte}
Assume Settings~$\ref{setting=GLN}$, $\ref{setting=main1}$ and $\ref{setting=main3}$. Let $\PPh^{\RR}=\PPh^{\RR}_{(\nug_1,\ldots,\nug_{\ell})}\colon ([\Gg,\Ng],d_{\scl_{\Gg,\Ng}})\to (\RR^{\ell},\|\cdot\|_1)$ be the coarse homomorphism associated with $(\nug_1,\ldots ,\nug_{\ell})$ constructed in Theorem~$\ref{thm=evmap}$.
Take an arbitrary map $\rho\colon (\RR^{\ell},\|\cdot\|_1)\to (\ZZ^{\ell},\|\cdot\|_1)$ such that $\sup\limits_{\vu\in \RR^{\ell}}\|\vu-\rho(\vu)\|_1<\infty$ holds. Set $\PPh\colon ([\Gg,\Ng],d_{\scl_{\Gg,\Ng}})\to (\ZZ^{\ell},\|\cdot\|_1)$ as $\PPh=\rho\circ \PPh^{\RR}$. Let   $\PPs\colon (\ZZ^{\ell},\|\cdot\|_1)\to ([\Gg,\Ng],d_{\scl_{\Gg,\Ng}})$ be the coarse homomorphism constructed in Theorem~$\ref{thm=PPs}$. Then, the following hold true.
\begin{enumerate}[label=\textup{(}$\arabic*$\textup{)}]
  \item Let $A\subseteq [\Gg,\Ng]$ be a $d_{\scl_{\Gg,\Lg}}$-bounded set, and set $D_A'=\sup\{\scl_{\Gg,\Lg}(\yg)\,|\,\yg\in A\}$. Then, there exists $D\in \RR_{\geq 0}$, whose  dependence on $A$ only comes from  $D_A'$, such that for every $\yg\in A$, $ d_{\scl_{\Gg,\Ng}}(\yg,(\PPs\circ \PPh)(\yg))\leq D$ holds.
  \item We have $\PPh\circ \PPs\approx \mathrm{id}_{(\ZZ^{\ell},\|\cdot\|_1)}$.
\end{enumerate}
\end{thm}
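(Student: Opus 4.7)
The plan is to imitate the strategy of Proposition~\ref{prop=itteQ} (which handles the absolute version) but to replace the direct application of the Bavard duality by the comparison theorem of defects (Theorem~\ref{thm=comparisonDD}), because now $\QQQ(\Ng)^{\Gg}/\HHH^1(\Ng)^{\Gg}$ need not be finite dimensional and only $\Wcal(\Gg,\Lg,\Ng)$ is under control. I will dispose of (2) first since it is essentially the same argument as in the absolute case, and then tackle (1), which is the genuinely new part.

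For (2), I would simply chase Lemma~\ref{lem=nugalpha}~(2) through the definitions. If $\vm=(m_1,\ldots,m_{\ell})\in\ZZ^{\ell}$, then by that lemma, for every $j\in\{1,\ldots,\ell\}$ one has $|\nug_j(\PPs(\vm))-m_j|\leq(2\ttil-1)\DD(\nug_j)$, i.e.\ $\|\PPh^{\RR}(\PPs(\vm))-\vm\|_1\leq(2\ttil-1)\sum_j\DD(\nug_j)$. Composing with $\rho$ costs at most $\kappa:=\sup_{\vu\in\RR^{\ell}}\|\vu-\rho(\vu)\|_1<\infty$, so $\|\PPh(\PPs(\vm))-\vm\|_1$ is uniformly bounded, giving (2).

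For (1), fix $\yg\in A$ and set $\vm=\PPh(\yg)$. I want to apply Theorem~\ref{thm=Bavard} to $\scl_{\Gg,\Ng}(\yg^{-1}(\PPs\circ\PPh)(\yg))$, so I need a uniform estimate of $|\nug(\yg^{-1}(\PPs\circ\PPh)(\yg))|$ by a constant times $\DD(\nug)$, for every $\nug\in\QQQ(\Ng)^{\Gg}$. Apply Theorem~\ref{thm=comparisonDD} to the tuple $(\nug_1,\ldots,\nug_{\ell})$ chosen in Setting~\ref{setting=main3}: one decomposes $\nug=\kg+i^{\ast}\psg+\sum_j a_j\nug_j$ with $\DD(\psg)\leq\mathscr{C}_{1,\mathrm{ctd}}\DD(\nug)$ and $\sum_j|a_j|\leq\mathscr{C}_{1,\mathrm{ctd}}\mathscr{C}_{2,\mathrm{ctd}}\DD(\nug)$ (Remark~\ref{rem=a_j}). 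The homomorphism part $\kg$ vanishes on $\yg^{-1}(\PPs\circ\PPh)(\yg)\in[\Gg,\Ng]$ since $\Gg$-invariant homomorphisms on $\Ng$ annihilate $[\Gg,\Ng]$. For the $\psg$-part, I use that $\PPs(\vm)$ is $d_{\scl_{\Gg,\Lg}}$-bounded by \eqref{eq=tt}, so $\scl_{\Gg,\Lg}(\yg^{-1}(\PPs\circ\PPh)(\yg))\leq D_A'+\ttil+\tfrac12$ by Proposition~\ref{prop=scladditive}, and Theorem~\ref{thm=Bavard} (applied to $\psg\in\QQQ(\Lg)^{\Gg}$) yields
\[
|\psg(\yg^{-1}(\PPs\circ\PPh)(\yg))|\leq 2\DD(\psg)(D_A'+\ttil+\tfrac12).
\]
For the $\nug_j$-part, combine Lemma~\ref{lem=nugalpha}~(2) with $|m_j-\nug_j(\yg)|\leq\kappa$ (by construction of $\PPh=\rho\circ\PPh^{\RR}$) to get $|\nug_j(\yg^{-1}(\PPs\circ\PPh)(\yg))|\leq 2\ttil\,\DD(\nug_j)+\kappa$, and then bound $|\sum_j a_j\nug_j(\cdots)|$ by $(\sum_j|a_j|)(2\ttil\max_j\DD(\nug_j)+\kappa)$, which is dominated by $\DD(\nug)$ times a constant depending only on $(\nug_j)_j$ and $\kappa$.

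Summing the two contributions gives $|\nug(\yg^{-1}(\PPs\circ\PPh)(\yg))|\leq \DD(\nug)\cdot E(D_A')$ for a constant $E(D_A')$ depending only on $D_A'$, $\ttil$, $\kappa$, $\mathscr{C}_{1,\mathrm{ctd}}$, $\mathscr{C}_{2,\mathrm{ctd}}$ and $\max_j\DD(\nug_j)$. Theorem~\ref{thm=Bavard} for $\scl_{\Gg,\Ng}$ then produces $D:=E(D_A')/2$, completing (1). The main obstacle is exactly the point where one cannot bound $\psg$ directly in terms of $\DD(\nug)$: this is precisely what Theorem~\ref{thm=comparisonDD} was designed to overcome, and the care needed is in remembering that $\psg$ must be evaluated using $\scl_{\Gg,\Lg}$ (not $\scl_{\Gg,\Ng}$), which is why $d_{\scl_{\Gg,\Lg}}$-boundedness of $A$ and of $\PPs(\ZZ^{\ell})$ both enter decisively.
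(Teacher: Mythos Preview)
Your proof is correct and follows essentially the same approach as the paper: both dispose of (2) via Lemma~\ref{lem=nugalpha}~(2), and for (1) both apply Theorem~\ref{thm=comparisonDD} to decompose an arbitrary $\nug$, then handle the $\psg$-part via $\scl_{\Gg,\Lg}$ and the $\nug_j$-part via Lemma~\ref{lem=nugalpha}~(2), concluding with Theorem~\ref{thm=Bavard}. The only cosmetic difference is that you bound $|\psg(\yg^{-1}(\PPs\circ\PPh)(\yg))|$ in one stroke using Proposition~\ref{prop=scladditive}, whereas the paper bounds $|\psg(\yg)|$ and $|\psg((\PPs\circ\PPh)(\yg))|$ separately.
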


\begin{proof}
Set $\kappa=\sup\limits_{\vu\in \RR^{\ell}}\|\vu-\rho(\vu)\|_1$. First we will prove (2). By Lemma~\ref{lem=nugalpha}~(2), we have
\[
\sup_{\vm\in \ZZ^{\ell}}\|\vm-(\PPh^{\RR}\circ \PPs)(\vm)\|_1 \leq (2 \ttil -1) \cdot \sum_{j\in \{1,\ldots ,\ell\}} \DD(\nug_j).
\]
By the definition of $\kappa$,  this implies that
\begin{equation}\label{eq=error2}
 \sup_{\vm\in \ZZ^{\ell}}\|\vm- (\PPh\circ \PPs)(\vm)  \|_1\leq \kappa+(2 \ttil -1) \cdot \sum_{i\in \{1,\ldots ,\ell\}} \DD(\nug_i);
\end{equation}
this \eqref{eq=error2} yields (2).

In what follows, we prove (1). Let $\mathscr{C}_{1,\mathrm{ctd}}$ and $\mathscr{C}_{2,\mathrm{ctd}}$ be the constants associated with $(\nug_1,\ldots,\nug_{\ell})$ appearing in Theorem~$\ref{thm=comparisonDD}$. Let $\nug\in \QQQ(\Ng)^{\Gg}$. Then, by Theorem~\ref{thm=comparisonDD}, there exist $\kg\in \HHH^1(\Ng)^{\Gg}$, $\psg\in \QQQ(\Lg)^{\Gg}$ and $(a_1,\ldots ,a_{\ell})\in \RR^{\ell}$ such that $\nug=\kg+i^{\ast}\psg+\sum\limits_{j\in \{1,\ldots,\ell\}}a_j\nug_j$ and
\[
\DD(\nug)\geq \mathscr{C}_{1,\mathrm{ctd}}{}^{-1}\left(\DD(\psg)+\mathscr{C}_{2,\mathrm{ctd}}{}^{-1}\cdot\sum_{j\in \{1,\ldots,\ell\}}|a_j|\right).
\]
By recalling Remark~\ref{rem=a_j}, we note that the inequality above in particular implies that
\begin{equation}\label{eq=a_j10}
\DD(\psg)\leq \mathscr{C}_{1,\mathrm{ctd}}\cdot \DD(\nug)\quad \textrm{and} \quad \sum_{j\in \{1,\ldots,\ell\}}|a_j| \leq \mathscr{C}_{1,\mathrm{ctd}}\mathscr{C}_{2,\mathrm{ctd}}\cdot \DD(\nug).
\end{equation}
Set $\PPh^{\RR}(\yg)=(u_1,\ldots ,u_{\ell})$ and $\rho((u_1,\ldots ,u_{\ell}))=(m_1,\ldots ,m_{\ell})$. Then, we have for every $j\in \{1,\ldots ,\ell\}$, $|m_j-u_j|\leq \kappa$. Theorem~\ref{thm=Bavard}, together with the definition of $D_A'$ and \eqref{eq=a_j10}, implies that
\begin{equation}\label{eq=yg}
|\psg(\yg)|\leq 2D_A'\mathscr{C}_{1,\mathrm{ctd}}\cdot \DD(\nug).
\end{equation}
By \eqref{eq=tt} (Theorem~\ref{thm=PPs}~(3)) and \eqref{eq=a_j10}, we also have
\begin{equation}\label{eq=yg2}
|\psg((\PPs\circ \PPh)(\yg))|\leq (2\ttil -1)\mathscr{C}_{1,\mathrm{ctd}}\cdot \DD(\nug).
\end{equation}
By Lemma~\ref{lem=nugalpha}~(2), for every $j\in \{1,\ldots,\ell\}$, $|\nug_j((\PPs\circ \PPh)(\yg))-m_j|\leq (2\ttil -1)\DD(\nug_j)$; hence,
\begin{equation}\label{eq=u_i}
|\nug_j((\PPs\circ \PPh)(\yg))-u_j|\leq \kappa+(2\ttil -1)\DD(\nug_j).
\end{equation}
By combining \eqref{eq=yg}, \eqref{eq=yg2}, \eqref{eq=u_i} and \eqref{eq=a_j10}, we obtain that
\begin{align*}
&|\nug(\yg^{-1}(\PPs\circ \PPh)(\yg)) |\\
\leq{} &\DD(\nug)+\left|\nug((\PPs\circ \PPh)(\yg))-\nug(\yg)\right|\\
\leq{} &\DD(\nug)+|\psg(\yg)|+\left|\psg((\PPs\circ \PPh)(\yg))\right|+\sum_{j\in \{1,\ldots,\ell\}} |a_j| \left|\nug_j((\PPs\circ \PPh)(\yg))-\nug_j(\yg)\right|\\
\leq{} &\DD(\nug)+(2D_A'+2\ttil-1)\mathscr{C}_{1,\mathrm{ctd}}\cdot \DD(\nug)+\sum_{j\in \{1,\ldots,\ell\}}|a_j|\bigl(\kappa +(2\ttil -1)\DD(\nug_j)\bigr)\\
\leq{} &\DD(\nug)+(2D_A'+2\ttil-1)\mathscr{C}_{1,\mathrm{ctd}}\cdot \DD(\nug)+ \left(\kappa +(2\ttil -1)\max_{i\in \{1,\ldots ,\ell\}}\DD(\nug_i)\right)\cdot \sum_{j\in \{1,\ldots,\ell\}}|a_j|\\
\leq{} &\DD(\nug)+\left\{\left(\kappa +(2\ttil-1)\max_{j\in \{1,\ldots ,\ell\}}\DD(\nug_j)\right)\mathscr{C}_{2,\mathrm{ctd}}+2D_A'+2\ttil-1\right\}\mathscr{C}_{1,\mathrm{ctd}}\cdot \DD(\nug).
\end{align*}
Therefore, by Theorem~\ref{thm=Bavard}, we conclude that
\begin{equation}\label{eq=error1}
 \sup_{\yg\in A} d_{\scl_{\Gg,\Ng}}(\yg,(\PPs\circ \PPh)(\yg))  
 \leq \frac12 \left\{ \left(\kappa +(2\ttil -1)\max\limits_{j\in \{1,\ldots ,\ell\}}\DD(\nug_j)\right)\mathscr{C}_{2,\mathrm{ctd}}+2D_A'+2\ttil -1 \right\}\cdot \mathscr{C}_{1,\mathrm{ctd}}+\frac{1}{2};
\end{equation}
this \eqref{eq=error1} yields (1). This completes the proof.
\end{proof}

\subsection{Proof of Theorem~\ref{mthm=main}}\label{subsec=generalform}

We formulate Theorem~\ref{thm=maingeneral}, a general form of Theorem~\ref{mthm=main}.

\begin{thm}[general form of Theorem~\ref{mthm=main}: coarse kernel in the abelian case]\label{thm=maingeneral}
Assume Settings~$\ref{setting=GLN}$, $\ref{setting=main1}$ and $\ref{setting=main3}$. Then, the following hold.
\begin{enumerate}[label=\textup{(}$\arabic*$\textup{)}]
\item There exist maps $\PPh\colon [\Gg,\Ng]\to \ZZ^{\ell}$ and $\PPs\colon  \ZZ^{\ell}\to [\Gg,\Ng]$ that satisfy the following properties.
\begin{enumerate}
  \item[\textup{(1-1)}] Two maps $\PPh\colon ([\Gg,\Ng],d_{\scl_{\Gg,\Ng}})\to (\ZZ^{\ell},\|\cdot\|_1)$ and $\PPs\colon  (\ZZ^{\ell},\|\cdot\|_1)\to( [\Gg,\Ng],d_{\cl_{\Gg,\Ng}})$ are both pre-coarse homomorphisms.
  \item[\textup{(1-2)}] The map $\PPs$ is $d_{\cl_{\Gg,\Lg}}$-bounded.
  \item[\textup{(1-3)}] Let $A\subseteq [\Gg,\Ng]$ be a $d_{\scl_{\Gg,\Lg}}$-bounded set and set $D_A=\diam_{d_{\scl_{\Gg,\Lg}}}(A)$. Then, there exist $C_1,C_2\in \RR_{>0}$ and $D_1,D_2\in \RR_{\geq 0}$ such that for every $\yg_1,\yg_2\in A$,
\[
C_1 \cdot d_{\scl_{\Gg,\Ng}}(\yg_1,\yg_2)-D_1\leq \|\PPh(\yg_1)-\PPh(\yg_2)\|_1\leq C_2 \cdot d_{\scl_{\Gg,\Ng}}(\yg_1,\yg_2)+D_2.
\]
Here, $C_1$, $C_2$ and $D_2$ can be taken to be independent of $A$; the dependence of $D_1$ on $A$ only comes from $D_A$.
 \item[\textup{(1-4)}] There exist $C'_1,C'_2\in \RR_{>0}$ and $D'_1,D'_2\in \RR_{\geq 0}$ such that for every $\vm,\vn\in \ZZ^{\ell}$,
\begin{align*}
C'_1 \cdot \|\vm-\vn\|_1-D'_1&\leq d_{\scl_{\Gg,\Ng}}(\PPs(\vm),\PPs(\vn))\\
&\leq  d_{\cl_{\Gg,\Ng}}(\PPs(\vm),\PPs(\vn))\leq C'_2 \cdot \|\vm-\vn\|_1+D'_2.
\end{align*}
  \item[\textup{(1-5)}]Let $A\subseteq [\Gg,\Ng]$ be a $d_{\scl_{\Gg,\Lg}}$-bounded set and set $D'_A=\sup\{\scl_{\Gg,\Lg}(\yg)\,|\,\yg\in A\}$. Then there exist $D_3\in \RR_{\geq 0}$, whose dependence on $A$ only comes from $D'_A$, such that
\[
\sup\limits_{\yg\in A}d_{\scl_{\Gg,\Ng}}(\yg,(\PPs\circ \PPh)(\yg))\leq D_3.
\]
\item[\textup{(1-6)}] We have $\PPh\circ \PPs\approx \mathrm{id}_{(\ZZ^{\ell},\|\cdot\|_1)}$.
\end{enumerate}
\item Furthermore, in \textup{(1)} we may take $\PPh$ and $\PPs$ in such a way that $\PPh\circ \PPs=\mathrm{id}_{\ZZ^{\ell}}$ holds.
\item In \textup{(1)}, the coarse subspace represented by $\PPs(\ZZ^{\ell})$ is the coarse kernel of $\iota_{(\Gg,\Lg,\Ng)}$. The map $\PPs$ gives a coarse group isomorphism $(\ZZ^{\ell},\|\cdot\|_1)\cong (\PPs(\ZZ^{\ell}),d_{\scl_{\Gg,\Ng}})$ by a quasi-isometry.
\end{enumerate}
\end{thm}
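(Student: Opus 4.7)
The plan is to assemble the maps $\PPh$ and $\PPs$ directly from the two prior constructions (Theorems~\ref{thm=evmap} and~\ref{thm=PPs}) and then feed them into Theorem~\ref{thm=itte}. More precisely, I first define $\PPs\colon \ZZ^{\ell}\to [\Gg,\Ng]$ by the formula~\eqref{eq=defnPPs} built from the $[\Ff,\Kf]$-expressions $\bffi$ fixed in Setting~\ref{setting=main3}. Next I form $\PPh^{\RR}=\PPh^{\RR}_{(\nug_1,\ldots,\nug_\ell)}$ from Theorem~\ref{thm=evmap} (using the same tuple $(\nug_1,\ldots,\nug_\ell)$), choose a controlled rounding map $\rho\colon(\RR^{\ell},\|\cdot\|_1)\to(\ZZ^{\ell},\|\cdot\|_1)$ with $\sup_{\vu}\|\vu-\rho(\vu)\|_1<\infty$ (for instance, the coordinatewise floor map), and set $\PPh=\rho\circ\PPh^{\RR}$.

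The properties (1-1), (1-2), (1-4) of $\PPs$ are then immediate from Theorem~\ref{thm=PPs}~(2)(3)(4)(5). For $\PPh$, the pre-coarse homomorphism property and the QI-type estimate from above in (1-3) follow from Theorem~\ref{thm=evmap}~(1)(2) together with the fact that $\rho$ is a coarse homomorphism at bounded distance from $\mathrm{id}_{\RR^{\ell}}$; these estimates hold globally on $[\Gg,\Ng]$ and hence on every subset $A$. The QI-type estimate from below in (1-3) is exactly the content of Theorem~\ref{thm=evmap}~(3), whose constants $C_1,C_2,D_2$ are independent of the $d_{\scl_{\Gg,\Lg}}$-bounded set $A$, while the additive constant $D_1$ depends on $A$ only through its $d_{\scl_{\Gg,\Lg}}$-diameter $D_A$ (cf.\ the explicit form in~\eqref{eq=PPhPPh}). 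Properties (1-5) and (1-6) are then precisely the statements (1) and (2) of Theorem~\ref{thm=itte}.

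For assertion~(2) I follow the strategy of Lemma~\ref{lem=sigmatau}. Set $\tilde{D}=\lfloor 2(\ttil-1)\max_j\DD(\nug_j)\rfloor+1$ (using the estimate in Lemma~\ref{lem=nugalpha}~(2)) and replace $\rho$ with a coordinatewise nearest-point projection $\rho_{\tilde{D}}\colon \RR^{\ell}\to(\tilde{D}\ZZ)^{\ell}$ composed with the scaling isomorphism $\lambda_{\tilde{D}}^{-1}\colon(\tilde{D}\ZZ)^{\ell}\stackrel{\cong}{\to}\ZZ^{\ell}$; simultaneously precompose $\PPs$ with $\lambda_{\tilde{D}}$. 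The choice of $\tilde{D}$ together with Lemma~\ref{lem=nugalpha}~(2) forces the nearest integer in $\tilde{D}\ZZ$ to the $j$-th coordinate $\nug_j(\PPs(\lambda_{\tilde{D}}(\vm)))$ to be exactly $\tilde{D}m_j$, yielding $\PPh\circ\PPs=\mathrm{id}_{\ZZ^{\ell}}$. All bounds in (1-1)--(1-6) are preserved by this modification up to adjusting additive constants.

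Finally, for assertion~(3), observe that $\PPs(\ZZ^{\ell})$ is $d_{\scl_{\Gg,\Lg}}$-bounded by (1-2). Given any $d_{\scl_{\Gg,\Lg}}$-bounded $B\subseteq[\Gg,\Ng]$, property (1-5) applied to $A=B$ gives $\sup_{\yg\in B}d_{\scl_{\Gg,\Ng}}(\yg,(\PPs\circ\PPh)(\yg))<\infty$, so $B\preccurlyeq_{d_{\scl_{\Gg,\Ng}}}\PPs(\ZZ^{\ell})$; this exhibits $\PPs(\ZZ^{\ell})$ as the coarse kernel of $\iota_{(\Gg,\Lg,\Ng)}$ via Example~\ref{exa=CKiota}. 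Applying (1-5) to the specific $d_{\scl_{\Gg,\Lg}}$-bounded set $A=\PPs(\ZZ^{\ell})$ together with (1-6) shows that $\PPh|_{\PPs(\ZZ^{\ell})}$ and $\PPs$ are coarse inverses, and (1-4) upgrades $\PPs$ to a quasi-isometry onto its image. I expect the main obstacle to be the bookkeeping in assertion~(2): one must verify that after rescaling by $\tilde{D}$ and replacing $\rho$ by the nearest-point projection, all dependencies of the constants in (1-3) and (1-5) on the set $A$ (respectively on $D_A$ and $D'_A$ only) are preserved, since the rescaling enters linearly into the QI-type estimates. However, this is a bounded perturbation argument and should cause no essential difficulty.
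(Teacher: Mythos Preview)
Your proposal is correct and follows essentially the same route as the paper: assemble $\PPs$ from Theorem~\ref{thm=PPs}, $\PPh=\rho\circ\PPh^{\RR}$ from Theorem~\ref{thm=evmap}, read off (1-1)--(1-6) from those results together with Theorem~\ref{thm=itte}, handle (2) via the rescaling trick of Lemma~\ref{lem=sigmatau}, and deduce (3) from (1-2) and (1-5) exactly as you do. One small arithmetic slip: by Lemma~\ref{lem=nugalpha}~(2) the error bound is $(2\ttil-1)\DD(\nug_j)$, so you need $\tilde{D}>2(2\ttil-1)\max_j\DD(\nug_j)$ rather than $2(\ttil-1)\max_j\DD(\nug_j)$; the paper uses $\tilde{D}=\lfloor 2(2\ttil-1)\sum_j\DD(\nug_j)\rfloor+1$.
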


\begin{proof}
Item~(1) follows from Theorems~\ref{thm=evmap}, \ref{thm=PPs} and \ref{thm=itte}. For item~(2), set
\[
\tilde{D}=\left\lfloor 2(2 \ttil-1) \cdot \sum_{i\in \{1,\ldots ,\ell\}} \DD(\nug_i)\right\rfloor +1.
\]
Then, we can go along a line similar to the proof of Lemma~\ref{lem=sigmatau}. Finally, we will prove (3). Indeed, we will show that (1-1)--(1-6) imply (3). Item~(1-2) implies that $\PPs(\ZZ^{\ell})$ is $d_{\scl_{\Gg,\Lg}}$-bounded. Furthermore, let $A\subseteq [\Gg,\Ng]$ be a $d_{\scl_{\Gg,\Lg}}$-bounded set. Then, by (1-5), $A$ is coarsely contained in $(\PPs\circ \PPh)(A)$. Since $(\PPs\circ \PPh)(A)\subseteq \PPs(\ZZ^{\ell})$, it follows that $A$ is coarsely contained in $\PPs(\ZZ^{\ell})$. Therefore, the coarse subspace represented by  $\PPs(\ZZ^{\ell})$ is the coarse kernel of $\iota_{(\Gg,\Lg,\Ng)}$ (recall Example~\ref{exa=CKiota}). By (1-1), (1-3), (1-4), (1-5) and (1-6), $\PPs$ and $\PPh|_{\PPs(\ZZ^{\ell})}$ are coarse group isomorphisms between $(\ZZ^{\ell},\|\cdot\|_1)$ and $(\PPs(\ZZ^{\ell}),d_{\scl_{\Gg,\Ng}})$; hence (3) holds.
\end{proof}

\begin{proof}[Proof of Theorem~\textup{\ref{mthm=main}}]
By Lemma~\ref{lem=maintogeneral}, Theorem~\ref{mthm=main} for  $\ell\in \NN$ is a special case of Theorem~\ref{thm=maingeneral}. If $\ell=0$, then by Theorem~\ref{thm=biLip} the coarse kernel of $\iota_{(\Gg,\Lg,\Ng)}$ is trivial, and  Theorem~\ref{mthm=main} also holds.
\end{proof}

\subsection{Proof of Theorem~\ref{mthm=dim}}\label{subsec=dim}

The following theorem is a general form of Theorem~\ref{mthm=dim}.

\begin{thm}[general form of Theorem~\ref{mthm=dim}: coarse group theoretic characterization of $\Rdim\Wcal(\Gg,\Lg,\Ng)$ in the abelian case]\label{thm=dimgeneral}
Assume Settings~$\ref{setting=GLN}$ and $\ref{setting=main1}$. Then,
\begin{align*}
\Rdim \Wcal(\Gg,\Lg,\Ng)  ={} &\sup\left\{\ell\in \ZZ_{\geq 0}\,\middle|\,\begin{gathered}\exists \mathrm{\ coarsely\ proper\ }d_{\scl_{\Gg,\Lg}}\textrm{-}\mathrm{bounded\ coarse\ homomorphism}\\ (\ZZ^{\ell},\|\cdot\|_1)\to ([\Gg,\Ng],d_{\scl_{\Gg,\Ng}})
\end{gathered}\right\} \\
={} &\inf\left\{\ell\in \ZZ_{\geq 0}\,\middle|\,\begin{gathered}\forall A\subseteq [\Gg,\Ng]\ d_{\scl_{\Gg,\Lg}}\textrm{-}\mathrm{bounded;}\\
\exists \mathrm{\ coarsely\ proper\ coarse\ homomorphism}\ (A,d_{\scl_{\Gg,\Ng}})\to (\ZZ^{\ell},\|\cdot\|_1)
\end{gathered}\right\}
\end{align*}
and
\begin{equation*}
\Rdim \Wcal(\Gg,\Lg,\Ng)
\leq \sup\left\{\ell\in \ZZ_{\geq 0}\,\middle|\, \begin{gathered}\exists \mathrm{\ coarsely\ proper\ }d_{\cl_{\Gg,\Lg}}\textrm{-}\mathrm{bounded\ coarse\ homomorphism}\\ (\ZZ^{\ell},\|\cdot\|_1)\to ([\Gg,\Ng],d_{\cl_{\Gg,\Ng}})
\end{gathered}\right\}.
\end{equation*}
\end{thm}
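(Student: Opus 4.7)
The plan is to derive Theorem~\ref{thm=dimgeneral} from the two concrete maps built in the preceding sections---the map $\PPs$ of Theorem~\ref{thm=PPs}, which embeds $(\ZZ^\ell,\|\cdot\|_1)$ into $([\Gg,\Ng],d_{\scl_{\Gg,\Ng}})$ with image bounded in the $d_{\scl_{\Gg,\Lg}}$-sense, and the map $\PPh^{\RR}$ of Theorem~\ref{thm=evmap}, which is a QI-embedding on every $d_{\scl_{\Gg,\Lg}}$-bounded subset---combined with the monotonicity of asymptotic dimension under coarse embeddings (Proposition~\ref{prop=asdimCE}) and the equality $\asdim(\ZZ^\ell,\|\cdot\|_1)=\ell$ (Theorem~\ref{thm=asdimZ}). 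Write $\ell_0=\Rdim\Wcal(\Gg,\Lg,\Ng)\in\ZZ_{\geq 0}\cup\{\infty\}$; I would handle the two directions of all three (in)equalities in parallel, splitting $\ell_0=\infty$ versus $\ell_0<\infty$ only where strictly necessary.

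First I would establish the lower bounds (``$\ell_0\leq\sup$'' in both suprema and ``$\ell_0\geq\inf$''). For every $\ell\in\NN$ with $\ell\leq\ell_0$, Corollary~\ref{cor=InjCE} supplies $\wf_1,\ldots,\wf_\ell$ and $\nug_1,\ldots,\nug_\ell$ fitting Setting~\ref{setting=main2}, and Theorem~\ref{thm=PPs} produces a pre-coarse homomorphism $\PPs_\ell\colon(\ZZ^\ell,\|\cdot\|_1)\to([\Gg,\Ng],d_{\scl_{\Gg,\Ng}})$ that is $d_{\cl_{\Gg,\Lg}}$-bounded (hence $d_{\scl_{\Gg,\Lg}}$-bounded) and satisfies two-sided QI-type estimates; since $d_{\scl}\leq d_{\cl}$, the same map serves simultaneously as a coarsely proper $d_{\cl_{\Gg,\Lg}}$-bounded coarse homomorphism into $([\Gg,\Ng],d_{\cl_{\Gg,\Ng}})$, witnessing $\ell$ for the first and third suprema. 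Conversely, for any $\ell'<\ell_0$ the image $A:=\PPs_{\ell'+1}(\ZZ^{\ell'+1})$ is $d_{\scl_{\Gg,\Lg}}$-bounded and $\PPs_{\ell'+1}$ quasi-isometrically embeds $\ZZ^{\ell'+1}$ into $(A,d_{\scl_{\Gg,\Ng}})$, so any coarsely proper coarse homomorphism $(A,d_{\scl_{\Gg,\Ng}})\to(\ZZ^{\ell'},\|\cdot\|_1)$ would compose with $\PPs_{\ell'+1}$ to produce a coarse embedding $\ZZ^{\ell'+1}\to\ZZ^{\ell'}$, contradicting Proposition~\ref{prop=asdimCE} and Theorem~\ref{thm=asdimZ}; this forces $\inf\geq\ell_0$. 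The same lower-bound step applied with $\ell$ ranging over $\NN$ drives the suprema and the infimum to $\infty$ when $\ell_0=\infty$.

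For the reverse direction I would assume $\ell_0<\infty$ and invoke Theorem~\ref{thm=evmap}: the evaluation map $\PPh^{\RR}$ against a basis of $\Wcal(\Gg,\Lg,\Ng)$ is a coarse homomorphism whose restriction to every $d_{\scl_{\Gg,\Lg}}$-bounded set is a quasi-isometric embedding into $(\RR^{\ell_0},\|\cdot\|_1)$. Composing with a rounding $\rho\colon\RR^{\ell_0}\to\ZZ^{\ell_0}$ yields a coarse homomorphism $\PPh=\rho\circ\PPh^{\RR}$ with the analogous property into $(\ZZ^{\ell_0},\|\cdot\|_1)$, which at once provides $\ell_0$ as an admissible value for the infimum. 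For the first supremum, given a coarsely proper $d_{\scl_{\Gg,\Lg}}$-bounded coarse homomorphism $\alpha\colon(\ZZ^\ell,\|\cdot\|_1)\to([\Gg,\Ng],d_{\scl_{\Gg,\Ng}})$, the image $\alpha(\ZZ^\ell)$ is $d_{\scl_{\Gg,\Lg}}$-bounded, so $\PPh$ restricted to it is a coarse embedding; since the composition of two coarse embeddings is a coarse embedding, $\PPh\circ\alpha\colon\ZZ^\ell\to\ZZ^{\ell_0}$ is a coarse embedding and Proposition~\ref{prop=asdimCE} with Theorem~\ref{thm=asdimZ} yields $\ell\leq\ell_0$. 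The third supremum is handled identically, observing that a coarsely proper coarse homomorphism into $([\Gg,\Ng],d_{\cl_{\Gg,\Ng}})$ is, a fortiori, a coarsely proper coarse homomorphism into $([\Gg,\Ng],d_{\scl_{\Gg,\Ng}})$ with $d_{\scl_{\Gg,\Lg}}$-bounded image.

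The main obstacle I expect is the delicate bookkeeping around the composition $\PPh\circ\alpha$: $\PPh$ is not a quasi-isometric embedding globally---this is precisely the phenomenon highlighted in Example~\ref{exa=CG}---only on $d_{\scl_{\Gg,\Lg}}$-bounded subsets, whereas coarse properness of $\alpha$ is phrased with respect to the ambient metric $d_{\scl_{\Gg,\Ng}}$ on $[\Gg,\Ng]$, not on the image $\alpha(\ZZ^\ell)$. I would therefore check carefully via Example~\ref{exa=several_maps} that coarse properness of $\alpha$ as a map to $[\Gg,\Ng]$ is equivalent to coarse properness as a map onto its image with the induced almost metric, so that composition with $\PPh|_{\alpha(\ZZ^\ell)}$ preserves coarse properness. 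A subsidiary issue is that the QI-embedding constants in Theorem~\ref{thm=evmap}(3) depend on the diameter $D_A$, which must be tracked carefully but does not affect the asymptotic-dimensional conclusion.
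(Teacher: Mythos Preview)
Your approach is essentially the paper's: use $\PPs$ from Theorem~\ref{thm=PPs} for the lower bounds and $\PPh^{\RR}$ from Theorem~\ref{thm=evmap} for the upper bounds, together with Proposition~\ref{prop=asdimCE} and Theorem~\ref{thm=asdimZ}. The paper packages the finite-dimensional case by citing Theorem~\ref{thm=maingeneral} wholesale, while you unpack its ingredients directly; the logic is the same.

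One genuine slip: in your final sentence of the second paragraph you attempt to bound the \emph{third} supremum from above by $\ell_0$, claiming that a coarsely proper coarse homomorphism into $([\Gg,\Ng],d_{\cl_{\Gg,\Ng}})$ is ``a fortiori'' coarsely proper into $([\Gg,\Ng],d_{\scl_{\Gg,\Ng}})$. This is false: since $d_{\scl_{\Gg,\Ng}}\leq d_{\cl_{\Gg,\Ng}}$, coarse properness for the larger metric does not descend to the smaller one. Fortunately the theorem only asserts $\leq$ for the $\cl$-supremum, and you already established that direction in your first paragraph via $\PPs$ (which Theorem~\ref{thm=PPs} shows is simultaneously $d_{\cl_{\Gg,\Lg}}$-bounded and satisfies the QI-type lower bound with respect to $d_{\scl_{\Gg,\Ng}}\leq d_{\cl_{\Gg,\Ng}}$). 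Simply delete the offending sentence; no further argument is needed for the third relation.

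Your worry about the composition $\PPh\circ\alpha$ is well-placed but resolves exactly as you anticipate: coarse properness of $\alpha$ into $[\Gg,\Ng]$ and into $\alpha(\ZZ^\ell)$ with the induced almost metric are tautologically the same condition, and $\PPh$ is globally controlled by Theorem~\ref{thm=evmap}(2), so the composition is a coarse embedding.
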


\begin{proof}[Proofs of Theorem~$\ref{thm=dimgeneral}$ and Theorem~\textup{\ref{mthm=dim}}]
First we prove Theorem~\ref{thm=dimgeneral}. The proof goes along a line similar to that of Proposition~\ref{prop=metricdim}, which employs Theorem~\ref{thm=asdimZ} and Proposition~\ref{prop=asdimCE}. More precisely, if $\Wcal(\Gg,\Lg,\Ng)$ is infinite dimensional, then the assertions of Theorem~\ref{thm=dimgeneral} follow from Theorem~\ref{thm=PPs}. If $\Wcal(\Gg,\Lg,\Ng)$ is finite dimensional, then Theorem~\ref{thm=maingeneral} implies the conclusions. This ends our proof of Theorem~\ref{thm=dimgeneral}.

Now, Theorem~\ref{mthm=dim} immediately follows from Theorem~\ref{thm=dimgeneral} and Lemma~\ref{lem=maintogeneral}.
\end{proof}

We present the following immediate corollary to Theorem~\ref{thm=dimgeneral}.

\begin{cor}\label{cor=notbilip}
Assume Settings~$\ref{setting=GLN}$ and $\ref{setting=main1}$. Assume that $\Wcal(\Gg,\Lg,\Ng)\ne 0$. Then, $\scl_{\Gg,\Lg}$ and $\scl_{\Gg,\Ng}$ are \emph{not} bi-Lipschitzly equivalent on $[\Gg,\Ng]$.
\end{cor}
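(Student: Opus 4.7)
The plan is to deduce the corollary directly from Theorem~\ref{thm=dimgeneral} by a contradiction argument. First, I would unwind the meaning of bi-Lipschitz equivalence in the present context. Since $\Ng\leqslant \Lg$, every simple $(\Gg,\Ng)$-commutator is a simple $(\Gg,\Lg)$-commutator, so $\scl_{\Gg,\Lg}(\yg)\leq \scl_{\Gg,\Ng}(\yg)$ holds automatically for every $\yg\in [\Gg,\Ng]$. Hence bi-Lipschitz equivalence of $\scl_{\Gg,\Lg}$ and $\scl_{\Gg,\Ng}$ on $[\Gg,\Ng]$ is equivalent to the existence of a constant $C\in \RR_{\geq 1}$ such that
\[
\scl_{\Gg,\Ng}(\yg)\leq C\cdot \scl_{\Gg,\Lg}(\yg) \qquad \textrm{for every }\yg\in [\Gg,\Ng].
\]
By the bi-$\Gg$-invariance of $d_{\scl_{\Gg,\Ng}}$ and $d_{\scl_{\Gg,\Lg}}$ (Lemma~\ref{lem=Ginvmetric}), this is in turn equivalent to the pointwise inequality $d_{\scl_{\Gg,\Ng}}\leq C\cdot d_{\scl_{\Gg,\Lg}}$ on $[\Gg,\Ng]\times [\Gg,\Ng]$.

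Next, I would invoke Theorem~\ref{thm=dimgeneral}. Since $\Wcal(\Gg,\Lg,\Ng)\ne 0$, we have $\Rdim \Wcal(\Gg,\Lg,\Ng)\geq 1$; therefore the supremum in the coarse-geometric characterization forces the existence of a $d_{\scl_{\Gg,\Lg}}$-bounded subset $A\subseteq [\Gg,\Ng]$ which is \emph{not} $d_{\scl_{\Gg,\Ng}}$-bounded. Concretely, one can take $A$ to be the image of a coarsely proper, $d_{\scl_{\Gg,\Lg}}$-bounded coarse homomorphism $(\ZZ,|\cdot|)\to ([\Gg,\Ng],d_{\scl_{\Gg,\Ng}})$ whose existence is guaranteed by the theorem; coarse properness, combined with the unboundedness of $(\ZZ,|\cdot|)$, precludes $A$ from being $d_{\scl_{\Gg,\Ng}}$-bounded.

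To finish, I would derive the contradiction: if the bi-Lipschitz inequality $d_{\scl_{\Gg,\Ng}}\leq C\cdot d_{\scl_{\Gg,\Lg}}$ held, then
\[
\diam_{d_{\scl_{\Gg,\Ng}}}(A)\leq C\cdot \diam_{d_{\scl_{\Gg,\Lg}}}(A)<\infty,
\]
so $A$ would also be $d_{\scl_{\Gg,\Ng}}$-bounded, contradicting the previous paragraph. Therefore no such $C$ can exist and $\scl_{\Gg,\Lg}$ and $\scl_{\Gg,\Ng}$ are not bi-Lipschitzly equivalent on $[\Gg,\Ng]$. There is no real obstacle in this argument; the entire content is packaged inside Theorem~\ref{thm=dimgeneral}, and the only thing to verify is that bi-Lipschitz equivalence indeed transports $d_{\scl_{\Gg,\Lg}}$-boundedness to $d_{\scl_{\Gg,\Ng}}$-boundedness, which is immediate from the reduction in the first paragraph.
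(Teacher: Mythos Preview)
Your proof is correct and takes essentially the same approach as the paper, which simply declares the result an immediate corollary to Theorem~\ref{thm=dimgeneral}. You have spelled out the implicit argument---extracting a $d_{\scl_{\Gg,\Lg}}$-bounded but $d_{\scl_{\Gg,\Ng}}$-unbounded set from the coarsely proper coarse homomorphism guaranteed by the theorem and noting that bi-Lipschitz equivalence would force it to be $d_{\scl_{\Gg,\Ng}}$-bounded---and nothing more is needed.
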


\section{Examples of $(\Gg,\Lg,\Ng)$ for the main theorems}\label{sec=examplemain}

In this section, we collect examples of triples $(\Gg,\Lg,\Ng)$ to which Theorem~\ref{mthm=main} and Theorem~\ref{mthm=dim} apply. Before presenting such examples in Subsections~\ref{subsec=WW} and \ref{subsec=Wcal}, we discuss in Subsection~\ref{subsec=boundt} a variant $\PPs^{\flat}$ of the map $\PPs$ in the statement of Theorem~\ref{thm=maingeneral}, which is a modification of the map $\PPs$ constructed in Theorem~\ref{thm=PPs}. This map $\PPs^{\flat}$ enables us to obtain some examples with completely explicit coarse kernels in Subsection~\ref{subsec=explicitkernel}, including Proposition~\ref{prop=explicitexample}. In Subsection~\ref{subsec=application_cod}, we present an application of Theorem~\ref{thm=comparisonDD}, which supplies a subset on which $\scl_{\Gg,\Lg}$ and $\scl_{\Gg,\Ng}$ are bi-Lipschitzly equivalent.

\subsection{A variant $\PPs^{\flat}$ of the map $\PPs$}\label{subsec=boundt}
The map $\PPs$ in the statement of Theorem~\ref{thm=maingeneral} (and Theorem~\ref{mthm=main}) is constructed in Theorem~\ref{thm=PPs}. In this subsection, we discuss a slight modification of this construction; this provides a variant $\PPs^{\flat}$ of the map $\PPs$, which still serves in the role of $\PPs$ in Theorem~\ref{thm=maingeneral} (and Theorem~\ref{mthm=main}).

\begin{prop}\label{prop=boundt}
Assume Settings~$\ref{setting=GLN}$, $\ref{setting=main1}$ and $\ref{setting=main2}$. For every $i\in \{1,\ldots,\ell\}$, take $\wf_i^{\flat}\in [\Ff,\Kf]$ such that $\wf_i^{\flat}$ represents the same equivalence class as $\wf_i$ in $[\Ff,\Kf]/[\Ff,\Mf]$; take an $[\Ff,\Kf]$-expression $\bffi^{\flat}=(\ff_1^{(i) \flat},\ldots,\ff_{s_i}^{(i) \flat}; \ff'_1{}^{(i) \flat},\ldots,\ff'_{s_i}{}^{(i) \flat})$ for $\wf_i^{\flat}$. Define a map $\PPs^{\flat}\colon \ZZ^{\ell}\to [\Gg,\Ng]$  by
\[
\PPs^{\flat}(\vm)=\beta_{\bffone^{\flat}}(m_1)\cdots\beta_{\bffell^{\flat}}(m_{\ell})
\]
for every $\vm=(m_1,\ldots,m_{\ell})\in \ZZ^{\ell}$, where $\beta_{\bffi^{\flat}}=\ppi\circ \alpha_{\bffi^{\flat}}$   and $\alpha_{\bffi^{\flat}}$ is the map defined in Definition~$\ref{defn=Pspower}$ for every $i\in \{1,\ldots,\ell\}$.
Then, $\PPs^{\flat}(\ZZ^{\ell})\subseteq [\Gg,\Ng]$. In particular, we can regard $\PPs^{\flat}$ as a map from $\ZZ^{\ell}$ to $[\Gg,\Ng]$. Moreover, for the map $\PPh\colon [\Gg,\Ng]\to \ZZ^{\ell}$ constructed in Theorem~$\ref{thm=itte}$ for $(\nug_1,\ldots,\nug_{\ell})$, the pair $(\PPh,\PPs^{\flat})$ satisfies the assertions \textup{(1)}--\textup{(7)} in Theorem~{\textup{\ref{mthm=main}}} with $\PPs$ being replaced with $\PPs^{\flat}$.
\end{prop}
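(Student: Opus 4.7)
The plan is to show that $\PPs^{\flat}$ enjoys exactly the same properties used in the proofs of Theorem~\ref{thm=PPs} and Theorem~\ref{thm=itte}, with the integer $t_i$ replaced by $s_i$ throughout. The whole argument rests on the following key identity: for every $i,j\in\{1,\dots,\ell\}$ and every core $\hf_j$ of $\nug_j$,
\[
\hf_j(\wf_i^{\flat})=\delta_{i,j}.
\]
To see this, note first that invariant homomorphisms $\hf\in \HHH^1(\Mf)^{\Ff}$ vanish on $[\Ff,\Mf]$: for $\ff\in\Ff$ and $\xg\in\Mf$ one has $\hf([\ff,\xg])=\hf(\ff\xg\ff^{-1})-\hf(\xg)=0$ by $\Ff$-invariance. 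Since $\wf_i^{\flat}\wf_i^{-1}\in[\Ff,\Mf]$ by hypothesis, $\hf_j(\wf_i^{\flat})=\hf_j(\wf_i)$. Because $\wf_i\in\Mf\cap\Rel$, we have $\hf_j(\wf_i)=\CEt_{\nug_j}(\wf_i)=\delta_{i,j}$ from Setting~\ref{setting=main2}.

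Next I would verify that $\PPs^{\flat}(\ZZ^{\ell})\subseteq[\Gg,\Ng]$. Write $\wf_i^{\flat}=\wf_i z_i$ with $z_i\in[\Ff,\Mf]$. Since $\wf_i\in\Rel$ we have $\ppi(\wf_i)=e_{\Gg}$, whence $\ppi(\wf_i^{\flat})=\ppi(z_i)\in\ppi([\Ff,\Mf])=[\Gg,\Ng]$. By Proposition~\ref{prop=alpha}~(2), $\alpha_{\bffi^{\flat}}(m)\in(\wf_i^{\flat})^m[\Ff,\Mf]$, so
\[
\beta_{\bffi^{\flat}}(m)=\ppi(\alpha_{\bffi^{\flat}}(m))\in\ppi(\wf_i^{\flat})^m\cdot[\Gg,\Ng]\subseteq[\Gg,\Ng],
\]
and therefore $\PPs^{\flat}(\vm)\in[\Gg,\Ng]$ for every $\vm\in\ZZ^{\ell}$.

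Now, with the identity $\hf_j(\wf_i^{\flat})=\delta_{i,j}$ in hand, the proof of Lemma~\ref{lem=nugalpha} carries over word-for-word with $(\bffi,\wf_i,t_i)$ replaced by $(\bffi^{\flat},\wf_i^{\flat},s_i)$, yielding
\[
\bigl|\nug_j(\beta_{\bffi^{\flat}}(m))-m\delta_{i,j}\bigr|\leq(2s_i-1)\DD(\nug_j)
\qquad\text{and}\qquad
\bigl|\nug_j(\PPs^{\flat}(\vm))-m_j\bigr|\leq(2\tilde{s}-1)\DD(\nug_j),
\]
where $\tilde{s}=\sum_i s_i$. Feeding these estimates into the proofs of Theorem~\ref{thm=PPs} and Theorem~\ref{thm=itte} verbatim (they use only the outputs of Lemma~\ref{lem=nugalpha}, Proposition~\ref{prop=alpha}, Theorem~\ref{thm=comparisonDD}, and Theorem~\ref{thm=Bavard}), one obtains assertions (1)--(6) of Theorem~\ref{mthm=main} for $(\PPh,\PPs^{\flat})$ with the constants adjusted by replacing $t_i$ and $\ttil$ by $s_i$ and $\tilde{s}$. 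Here it is crucial that the map $\PPh=\rho\circ\PPh^{\RR}_{(\nug_1,\ldots,\nug_{\ell})}$ is built solely from $(\nug_1,\ldots,\nug_{\ell})$ and is therefore unchanged. Finally, assertion (7) is deduced from (1)--(6) exactly as in the proof of Theorem~\ref{thm=maingeneral}~(3): (1-2) gives $d_{\scl_{\Gg,\Lg}}$-boundedness of $\PPs^{\flat}(\ZZ^{\ell})$, and (1-5) shows that every $d_{\scl_{\Gg,\Lg}}$-bounded $A\subseteq[\Gg,\Ng]$ is coarsely contained in $(\PPs^{\flat}\circ\PPh)(A)\subseteq\PPs^{\flat}(\ZZ^{\ell})$ with respect to $d_{\scl_{\Gg,\Ng}}$.

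The only non-routine point is the key identity $\hf_j(\wf_i^{\flat})=\delta_{i,j}$, which is really the content of Lemma~\ref{lem=(i)}~(2) in the abelian setting ($\Mf\geqslant[\Ff,\Kf]$): the value of a core on $[\Ff,\Kf]$ depends only on the class modulo $[\Ff,\Mf]$. Everything else is a mechanical relabelling $t_i\rightsquigarrow s_i$ in the arguments of Sections~\ref{sec=Psi} and \ref{sec=proofmain}.
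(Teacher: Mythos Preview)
Your approach is correct and is exactly the one the paper takes: the key identity $\hf_j(\wf_i^{\flat})=\delta_{i,j}$ (the paper's \eqref{eq=deltasharp}) together with the verification $\PPs^{\flat}(\ZZ^{\ell})\subseteq[\Gg,\Ng]$ is precisely how the paper proceeds, and the deduction of assertion~(7) from (1)--(6) is identical to Theorem~\ref{thm=maingeneral}~(3).

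There is, however, one small oversight in your claim that the proof of Theorem~\ref{thm=PPs} carries over ``verbatim'' by the relabelling $t_i\rightsquigarrow s_i$. In the proof of the QI-type upper bound (Theorem~\ref{thm=PPs}~(4)) one uses
\[
\beta_{\bffi}(m)\mathrel{\overset{(\Gg,\Ng)}{\underset{(m-1)(2t_i-1)}{\eqsim}}}\beta_{\bffi}(1)^m=e_{\Gg},
\]
where the last equality relies on $\wf_i\in\Rel$. Since $\wf_i^{\flat}$ need not lie in $\Rel$, the element $\beta_{\bffi^{\flat}}(1)=\ppi(\wf_i^{\flat})$ is in general a nontrivial element of $[\Gg,\Ng]$, and this step does \emph{not} transfer verbatim. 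The paper flags exactly this and supplies the easy fix (which you already have the ingredients for): since $\ppi(\wf_i^{\flat})\in[\Gg,\Ng]$, one has
\[
\beta_{\bffi^{\flat}}(m)\mathrel{\overset{(\Gg,\Ng)}{\underset{(m-1)(2s_i-1)}{\eqsim}}}\beta_{\bffi^{\flat}}(1)^m\mathrel{\overset{(\Gg,\Ng)}{\underset{m\cdot\cl_{\Gg,\Ng}(\ppi(\wf_i^{\flat}))}{\eqsim}}}e_{\Gg},
\]
still linear in $|m|$, so the QI-type upper bound survives with an adjusted constant. This is the only step that is not a pure relabelling; everything else in your outline is correct.
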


\begin{proof}
Let $\vm=(m_1,\ldots,m_{\ell})\in \ZZ^{\ell}$. Then, for every $i\in \{1,\ldots,\ell\}$, we have by Proposition~\ref{prop=alpha}~(2)
\[
\alpha_{\bffi^{\flat}}(m_i)\in \alpha_{\bffi^{\flat}}(1)^{m_i}[\Ff,\Mf] =(\wf_i^{\flat})^{m_i}[\Ff,\Mf]=\wf_i^{m_i}[\Ff,\Mf].
\]
Since $\ppi(\wf_i)=e_{\Gg}$, this implies that $\beta_{\bffi^{\flat}}(m_i)\in [\Gg,\Ng]$. Hence, we have $\PPs^{\flat}(\ZZ^{\ell})\subseteq [\Gg,\Ng]$.

Now we proceed to the proof of the latter assertion. The key here is the following observation: for every $i,j\in \{1,\ldots,\ell\}$ and for every core $\hf_j$ of $\nug_j$,
\begin{equation}\label{eq=deltasharp}
\hf_j(\wf_i^{\flat})=\delta_{i,j}.
\end{equation}
Indeed, since $\hf_j\in \HHH^1(\Mf)^{\Ff}$, the value $\hf_j(\vf)$ for  $\vf\in \Mf$ only depends on $\vf[\Ff,\Mf]\in \Mf/[\Ff,\Mf]$. Hence, $\hf_j(\wf_i^{\flat})=\hf_j(\wf_i)=\CEt_{\nug_j}(\wf_i)=\delta_{i,j}$, as desired. By \eqref{eq=deltasharp}, our proof goes along arguments similar to ones of Theorems~\ref{thm=PPs} and \ref{thm=itte} except one of Theorem~\ref{thm=PPs}~(2): there, for every $i\in \{1,\ldots,\ell\}$ and for every $m\in \NN$ we argued
\[
\beta_{\bffi}(m)\mathrel{\overset{(\Gg,\Ng)}{\underset{(m-1)(2t_i-1)}{\eqsim}}}\beta_{\bffi}(1)^m=e_{\Gg}.
\]
We can modify this part by arguing that
\[
\beta_{\bffi^{\flat}}(m)\mathrel{\overset{(\Gg,\Ng)}{\underset{(m-1)(2s_i-1)}{\eqsim}}}\beta_{\bffi^{\flat}}(1)^m \mathrel{\overset{(\Gg,\Ng)}{\underset{m\cdot\cl_{\Gg,\Ng}(\wf_i^{\flat})}{\eqsim}}} e_{\Gg}.
\]
Then, we may obtain the corresponding statements of (1)--(7) in Theorem~\ref{mthm=main} to the pair $(\PPh,\PPs^{\flat})$.
\end{proof}

\begin{lem}\label{lem=sharp}
Let $n\in \NN_{\geq 2}$ and $F_n$ be the free group of rank $n$. Then for every $\vf\in \gamma_2(F_n)$, there exists $\vf^{\flat}\in \gamma_2(F_n)$ such that the following two conditions are satisfied:
\begin{enumerate}[label=\textup{(\arabic*)}]
  \item the element $\vf^{\flat}$ represents the same equivalence class as $\vf$ in $\gamma_2(F_n)/\gamma_3(F_n)$; and
  \item  $\cl_{F_n}(\vf^{\flat})\leq n-1$.
\end{enumerate}
\end{lem}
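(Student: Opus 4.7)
The plan is to exploit the familiar fact that $\gamma_2(F_n)/\gamma_3(F_n)$ is a \emph{free abelian} group with basis given by the classes of $[a_i,a_j]$ for $1\le i<j\le n$, where $\{a_1,\dots,a_n\}$ is a free basis of $F_n$. Granted this, any $\vf\in\gamma_2(F_n)$ can be written modulo $\gamma_3(F_n)$ as
\[
\vf\equiv \prod_{1\le i<j\le n}[a_i,a_j]^{m_{ij}}\pmod{\gamma_3(F_n)}
\]
for uniquely determined integers $m_{ij}\in\ZZ$. The task is then to package these $\binom{n}{2}$ basic commutators into at most $n-1$ honest commutators (not merely modulo $\gamma_3(F_n)$, but in $F_n$).

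The key tool is bilinearity of commutators modulo $\gamma_3(F_n)$. Using Lemma~\ref{lem=commutatoreq}(1) and the fact that $[[g_1,g_3]^{-1},g_2]\in\gamma_3(F_n)$ whenever $g_1,g_2,g_3\in F_n$, one gets
\[
[xy,z]\equiv[x,z][y,z],\qquad [x,yz]\equiv[x,y][x,z]\pmod{\gamma_3(F_n)},
\]
and hence inductively $[x^m,y]\equiv[x,y]^m\pmod{\gamma_3(F_n)}$ for $m\in\ZZ$. Given the $(m_{ij})$ above, I would define, for each $j\in\{2,\dots,n\}$,
\[
b_j=a_1^{m_{1j}}a_2^{m_{2j}}\cdots a_{j-1}^{m_{j-1,j}}\in F_n,
\]
and set
\[
\vf^{\flat}=[b_2,a_2][b_3,a_3]\cdots[b_n,a_n]\in\gamma_2(F_n).
\]
By construction $\cl_{F_n}(\vf^{\flat})\le n-1$, giving (2). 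For (1), the bilinearity yields, for each $j$,
\[
[b_j,a_j]\equiv\prod_{i=1}^{j-1}[a_i,a_j]^{m_{ij}}\pmod{\gamma_3(F_n)},
\]
and multiplying these congruences over $j=2,\dots,n$ recovers $\vf$ modulo $\gamma_3(F_n)$, so $\vf^{\flat}\equiv\vf\pmod{\gamma_3(F_n)}$.

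There is essentially no real obstacle here: the construction is explicit and the only ingredient beyond elementary commutator calculus (already recorded in Lemma~\ref{lem=commutatoreq}) is the statement that the classes of $[a_i,a_j]$ with $i<j$ form a $\ZZ$-basis of $\gamma_2(F_n)/\gamma_3(F_n)$. This is classical (it can be extracted, e.g., from the Magnus embedding of $F_n$ into the ring of non-commutative power series, or from the Hall basis theorem), so I would simply cite it rather than reprove it. The slight point to be careful about is that one only needs bilinearity modulo $\gamma_3(F_n)$, \emph{not} actual commutativity, since the grouping into $n-1$ commutators $[b_j,a_j]$ is then an identity in $F_n$ itself, producing the honest bound $\cl_{F_n}(\vf^{\flat})\le n-1$.
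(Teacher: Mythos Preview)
Your proposal is correct and follows essentially the same approach as the paper: both arguments use the free abelian structure of $\gamma_2(F_n)/\gamma_3(F_n)$ with basis $\{[a_i,a_j]:i<j\}$ and bilinearity of commutators modulo $\gamma_3(F_n)$ (from Lemma~\ref{lem=commutatoreq}) to collapse the $\binom{n}{2}$ basic commutators into $n-1$ commutators. The only cosmetic difference is that the paper groups by the first index (taking $\vf^\flat=[\tilde a_1,\tilde a_2^{m_{(1,2)}}\cdots\tilde a_n^{m_{(1,n)}}]\cdots[\tilde a_{n-1},\tilde a_n^{m_{(n-1,n)}}]$), whereas you group by the second index; both yield a product of exactly $n-1$ commutators.
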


\begin{proof}
Fix a free basis $\{\tilde{a}_1,\ldots ,\tilde{a}_n\}$ of $F_n$. Then by Lemma~\ref{lem=commutatoreq}, there exists $\overline{\vf}$ of the form
\[
\overline{\vf}=[\tilde{a}_{1},\tilde{a}_{2}^{m_{(1,2)}}]\cdots [\tilde{a}_{1},\tilde{a}_{n}^{m_{(1,n)}}][\tilde{a}_{2},\tilde{a}_{3}^{m_{(2,3)}}]\cdots [\tilde{a}_{2},\tilde{a}_{n}^{m_{(2,n)}}]\cdots [\tilde{a}_{n-1},\tilde{a}_{n}^{m_{(n-1,n)}}],
\]
where $m_{(i,j)}\in \ZZ$ for every $(i,j)\in \ZZ^2$ with $1\leq i<j\leq n$, such that $\overline{\vf}$ represents the same equivalence class as $\vf$ in $\gamma_2(F_n)/\gamma_3(F_n)$. Again by Lemma~\ref{lem=commutatoreq}, for such $\overline{\vf}$ we can take
\[
\vf^{\flat}=[\tilde{a}_{1},\tilde{a}_{2}^{m_{(1,2)}}\cdots \tilde{a}_{n}^{m_{(1,n)}}][\tilde{a}_{2},\tilde{a}_{3}^{m_{(2,3)}}\cdots \tilde{a}_{n}^{m_{(2,n)}}]\cdots [\tilde{a}_{n-1},\tilde{a}_{n}^{m_{(n-1,n)}}]
\]
such that $\vf^{\flat}$ represents the same equivalence class as $\overline{\vf}$ in $\gamma_2(F_n)/\gamma_3(F_n)$. This $\vf^{\flat}$ works.
\end{proof}

In the setting of Theorem~\ref{thm=main1precise}, we moreover assume that  $\Gg$ is finitely generated and that $\Ng=\gamma_2(\Gg)$. Set  $n\in \NN$  as the number of elements of a fixed generating set of $\Gg$; construct the map $\PPs^{\flat}$ in Proposition~\ref{prop=boundt} for carefully chosen $\bffone^{\flat},\ldots,\bffell^{\flat}$ from Lemma~\ref{lem=sharp}. Then, we can take this map $\PPs^{\flat}$ as the map $\PPs$ in the statement of Theorem~\ref{thm=main1precise}; this map in addition satisfies that
\[
\sup\limits_{\vm\in \ZZ^{\ell}}\cl_{\Gg}(\PPs^{\flat}(\vm))\leq \ell(n-1).
\]
In contrast, even in this case, in Settings~\ref{setting=GLN}, \ref{setting=main1} and \ref{setting=main3} we cannot hope for any bounds, depending only on $n$,  on $t_1,\ldots,t_{\ell}$.

\subsection{Examples with explicit coarse kernels}\label{subsec=explicitkernel}
In Theorem~\ref{thm=maingeneral}, the image $\PPs(\ZZ^{\ell})$ might not be fully explicit in general; when we take $\wf_1,\ldots ,\wf_{\ell}$ and $\nug_1,\ldots,\nug_{\ell}$ by applying Corollary~\ref{cor=InjCE}, the existence of them is ensured by the theory of core extractors, but they are not constructive in general. Nevertheless, under certain conditions, we may take explicit $\wf_1,\ldots ,\wf_{\ell}$; recall Corollary~\ref{cor=explicitkernel}. Together with Proposition~\ref{prop=boundt}, we can construct the variant $\PPs^{\flat}$ of $\PPs$ in an explicit manner as well.

\begin{prop}\label{prop=explicitck}
Assume Setting~$\ref{setting=GLN}$. Let $q\in \NN_{\geq 2}$. Let $\ell\in \NN$. Assume that $\Rdim \WW(\Gg,\gamma_{q-1}(\Gg),\gamma_q(\Gg))= \ell$.  Assume that $\Gg$ admits a presentation $(F\,|\,R)$ such that $R$ is normally generated in $F$ by $\ell$ elements. Let $\ppi\colon F\twoheadrightarrow F/R\cong \Gg$ be the natural group quotient map. Let $r_1,\ldots,r_{\ell}\in F$ be normal generators of $R$ in $F$. Assume that $R\leqslant \gamma_q(F)$. Let $r_1^{\flat},\ldots,r_{\ell}^{\flat}$ be elements in $\gamma_q(F)$ such that for every $i\in \{1,\ldots,\ell\}$, $r_i^{\flat}\gamma_{q+1}(F)=r_i\gamma_{q+1}(F)$ holds. Fix $[F,\gamma_{q-1}(F)]$-expressions $\bffone^{\flat}=(f_1^{(1) \flat},\ldots,f_{s_1}^{(1) \flat};f'_1{}^{(1) \flat},\ldots,f'_{s_1}{}^{(1) \flat}),$ $\ldots,$ $\bffell^{\flat}=(f_1^{(\ell) \flat},\ldots,f_{s_{\ell}}^{(\ell) \flat};f'_{1}{}^{(\ell) \flat},\ldots,f'_{s_{\ell}}{}^{(\ell) \flat})$ of $r_1^{\flat},\ldots,r_{\ell}^{\flat}$, respectively. Then, the coarse subspace represented by the set
\[
A^{\flat}=\{\beta_{\bffone^{\flat}}(m_1)\cdots \beta_{\bffell^{\flat}}(m_{\ell})\,|\,(m_1,\ldots,m_{\ell})\in \ZZ^{\ell}\}\subseteq \gamma_{q+1}(\Gg)
\]
is the coarse kernel of the map $\iota_{\Gg,\gamma_q(\Gg)}$, and the map
\[
\PPs^{\flat}\colon (\ZZ^{\ell},\|\cdot\|_1)\to (A^{\flat},d_{\scl_{\Gg,\gamma_q(\Gg)}});\quad (m_1,\ldots,m_{\ell})\mapsto \beta_{\bffone^{\flat}}(m_1)\cdots \beta_{\bffell^{\flat}}(m_{\ell})
\]
gives a coarse group isomorphism. Here, for every $i\in\{1,\ldots,\ell\}$, we set $\beta_{\bffi^{\flat}}=\pi\circ \alpha_{\bffi^{\flat}}$.
\end{prop}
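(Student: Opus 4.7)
The plan is to apply the general form of the main theorem, Theorem~\ref{thm=maingeneral}, in combination with the explicit-elements machinery of Corollary~\ref{cor=explicitkernel} and the modified map $\PPs^{\flat}$ furnished by Proposition~\ref{prop=boundt}. First I would choose the associated tuple $(\Ff,\Kf,\Mf,\Rel,\ppi)=(F,\ppi^{-1}(\gamma_{q-1}(\Gg)),\ppi^{-1}(\gamma_q(\Gg)),R,\ppi)$ coming from the presentation $(F\,|\,R)$ supplied in the hypothesis, and verify that it is associated with the triple $(\Gg,\gamma_{q-1}(\Gg),\gamma_q(\Gg))$ in the sense of Definition~\ref{defn=FKM} and fulfills Setting~\ref{setting=main1}. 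The relevant checks are: $\Gg/\gamma_q(\Gg)$ is finitely generated nilpotent hence amenable hence boundedly $3$-acyclic, and $\gamma_{q-1}(\Gg)/\gamma_q(\Gg)$ is abelian hence boundedly $2$-acyclic, so Proposition~\ref{prop=onetwo} yields condition~(i) of Definition~\ref{defn=FKMcondition}; and the chain $R\leqslant \gamma_q(F)=[F,\gamma_{q-1}(F)]\leqslant [\Ff,\Kf]\leqslant \Mf$ simultaneously delivers condition~(ii) in the strongest form $R\leqslant[\Ff,\Kf]$ (so $R/(R\cap[\Ff,\Kf])$ is trivial, hence torsion) and the abelian inclusion $\Mf\geqslant[\Ff,\Kf]$ required by Setting~\ref{setting=main1}.

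With these hypotheses in place, Corollary~\ref{cor=explicitkernel} applies verbatim to the $\ell$ normal generators $r_1,\ldots,r_\ell$ of $R$. It asserts that in fact $\Rdim\Wcal(\Gg,\gamma_{q-1}(\Gg),\gamma_q(\Gg))=\ell$, and produces invariant homogeneous quasimorphisms $\nug_1,\ldots,\nug_\ell\in\QQQ(\gamma_q(\Gg))^{\Gg}$ such that $\CEt_{\nug_j}(r_i)=\delta_{i,j}$ for every $i,j\in\{1,\ldots,\ell\}$. Choosing $\wf_i=r_i$ together with any $[\Ff,\Kf]$-expression $\bffi$ for $r_i$ (which exists because $r_i\in[\Ff,\Kf]$) places us exactly in Setting~\ref{setting=main3}; Theorem~\ref{thm=maingeneral} is then applicable with this data and supplies a pair $(\PPh,\PPs)$ realizing the coarse kernel of $\iota_{(\Gg,\gamma_{q-1}(\Gg),\gamma_q(\Gg))}$.

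Next I would pass from $\PPs$ to the variant $\PPs^{\flat}$ via Proposition~\ref{prop=boundt} by feeding in $\wf_i^{\flat}=r_i^{\flat}$ and the $[\Ff,\Kf]$-expressions $\bffi^{\flat}$ from the statement. The two hypotheses of Proposition~\ref{prop=boundt} to verify are $r_i^{\flat}\in[\Ff,\Kf]$ and that $r_i^{\flat}$ and $r_i$ determine the same class in $[\Ff,\Kf]/[\Ff,\Mf]$. The first is immediate from $r_i^{\flat}\in\gamma_q(F)=[F,\gamma_{q-1}(F)]\leqslant[\Ff,\Kf]$. The second is the key bookkeeping step: from the defining equality $r_i^{\flat}\gamma_{q+1}(F)=r_i\gamma_{q+1}(F)$ one deduces the desired equality modulo $[\Ff,\Mf]$ because $\gamma_{q+1}(F)=[F,\gamma_q(F)]\leqslant[F,\Mf]=[\Ff,\Mf]$, using the already established $\gamma_q(F)\leqslant\Mf$. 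Proposition~\ref{prop=boundt} then guarantees that the pair $(\PPh,\PPs^{\flat})$ still satisfies the full list of conclusions of Theorem~\ref{thm=maingeneral}.

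Finally, part~(3) of Theorem~\ref{thm=maingeneral}, applied to $(\PPh,\PPs^{\flat})$, identifies the coarse subspace represented by $\PPs^{\flat}(\ZZ^{\ell})$ with the coarse kernel of the relevant map and exhibits $\PPs^{\flat}$ as a coarse-group isomorphism realized by a quasi-isometry. Since by construction $\PPs^{\flat}(\ZZ^{\ell})=A^{\flat}$, this is exactly the content of the proposition (with the caveat that for $q\geq 3$ one reads $\iota_{\Gg,\gamma_q(\Gg)}$ as $\iota_{(\Gg,\gamma_{q-1}(\Gg),\gamma_q(\Gg))}$, which reduces to the stated map in the main case $q=2$ treated in Proposition~\ref{prop=explicitexample}). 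I do not anticipate a genuine obstacle: the bulk of the work has already been done in Sections~\ref{sec=CE}--\ref{sec=proofmain}, and the only real verification is the chain of lower-central-series inclusions identified above, culminating in $\gamma_{q+1}(F)\leqslant[\Ff,\Mf]$, which is what makes the passage from $r_i$ to $r_i^{\flat}$ legitimate.
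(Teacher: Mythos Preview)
Your proof is correct and follows essentially the same approach as the paper's own two-line proof, which simply says to combine Corollary~\ref{cor=explicitkernel} with Proposition~\ref{prop=boundt} and notes the key identification $\ppi^{-1}(\gamma_q(\Gg))=\gamma_q(F)$ (which follows from $R\leqslant\gamma_q(F)$); you spell out the same verifications in more detail, and your observation that the conclusion literally yields the coarse kernel of $\iota_{(\Gg,\gamma_{q-1}(\Gg),\gamma_q(\Gg))}$ rather than $\iota_{\Gg,\gamma_q(\Gg)}$ for $q\geq 3$ is a legitimate reading of what the machinery actually proves. One minor slip: you need not (and cannot) assume $\Gg/\gamma_q(\Gg)$ is finitely generated---nilpotent already implies amenable, which is all Proposition~\ref{prop=onetwo} requires.
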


\begin{proof}
Combine Corollary~\ref{cor=explicitkernel} and Proposition~\ref{prop=boundt}. Here, note that $\ppi^{-1}(\gamma_q(\Gg))=\gamma_q(\Ff)$ since $\Rel\leqslant \gamma_q(\Ff)$.
\end{proof}

\begin{exa}[examples with explicit coarse kernels]\label{exa=explicitker}
Here we exhibit examples to which Proposition~\ref{prop=explicitck} applies with $q=2$; these examples are based on Theorem~\ref{thm=WW} (recall terminology in Theorem~\ref{thm=WW} from discussions above Theorem~\ref{thm=WW}). Recall also our formulation of group presentations from Definition~\ref{defn=pres}. For a set $S$, let $F(S)$ denote the free group of free basis $S$. For a free group $F$ and $\mathcal{R}\subseteq F$, let $\llangle \mathcal{R}\rrangle=\llangle \mathcal{R}\rrangle_F$ denote the normal closure of $\mathcal{R}$ in $F$. 
\begin{enumerate}[label=\textup{(\arabic*)}]
  \item Let $\genus\in \NN_{\geq 2}$, and consider the surface group $\pi_1(\Sigma_{\genus})$ of genus $\genus$. Then, $\pi_1(\Sigma_{\genus})$ admits a group presentation
\[
\pi_1(\Sigma_{\genus})=(F\,|\,R)=(F(\tilde{a}_1,\ldots,\tilde{a}_{\genus},\tilde{b}_1,\ldots,\tilde{b}_{\genus})\,|\, \llangle r\rrangle),
\]
where $r=[\tilde{a}_1,\tilde{b}_1]\cdots [\tilde{a}_{\genus},\tilde{b}_{\genus}]$. Let $\ppi\colon F\twoheadrightarrow F/R\cong \pi_1(\Sigma_{\genus})$ be the natural group quotient map, and let $a_1,\ldots,b_{\genus}$ be the images of $\tilde{a}_1,\ldots,\tilde{b}_{\genus}$ by $\ppi$, respectively.
Since $r\in \gamma_2(F)$ and $\Rdim \WW(\Gg,\gamma_2(\Gg))=1$ (Theorem~\ref{thm=WW}~(2)), the coarse subspace represented by the set
\[
A=\{[a_1,b_1^m]\cdots [a_{\genus},b_{\genus}^m]\,|\, m\in \ZZ\}
\]
is the coarse kernel of $\iota_{\Gg,\gamma_2(\Gg)}$ for $\Gg=\pi_1(\Sigma_{\genus})$.
\item Let $F_n$ be a free group of rank $n\in \NN_{\geq 2}$. Recall the Andreadakis--Johnson filtration \eqref{eq=Andreadakis}. Let $\chi\in \Aut(F_n)$ be an atoroidal automorphism. Consider the semi-direct product $F_n\rtimes_{\chi}\ZZ$. Then, this group admits a group presentation
\[
F_n\rtimes_{\chi}\ZZ=(F\,|\,R)=(F(\tilde{a}_1,\ldots,\tilde{a}_{n},\tilde{c})\,|\, \llangle r_1,\ldots,r_{n}\rrangle);
\]
for $i\in \{1,\ldots,n\}$, set $r_i=\tilde{c}\tilde{a}_i\tilde{c}^{-1}\chi(\tilde{a}_i)^{-1}$. Here, $\chi$ can be seen as an automorphism on $F(\tilde{a}_1,\ldots,\tilde{a}_{n})\cong F(a_1,\ldots,a_n)=F_n$; for the natural group quotient map $\ppi\colon F\twoheadrightarrow F/R\cong F_n\rtimes_{\chi}\ZZ$, set $a_1,\ldots,a_{n},c$ as the images of $\tilde{a}_1,\ldots,\tilde{a}_{n},\tilde{c}$ under $\ppi$, respectively.  (The element $c$ corresponds to $1\in \ZZ$.)

Now, we assume that $\chi$ lies in $\mathcal{A}_n(2)$. Since $\chi\in \mathcal{A}_n(2)\leqslant \mathcal{A}_n(1)=\IAA_n$, for every $i\in \{1,\ldots, n\}$ we in particular have $\tilde{a}_i\gamma_2(F)=\chi(\tilde{a}_i)\gamma_2(F)$. Hence, we have $R\leqslant \gamma_2(F)$. Together with Theorem~\ref{thm=WW}~(4), we can apply Proposition~\ref{prop=explicitck} to this setting.

Since $\chi\in \mathcal{A}_n(2)$, for every $i\in \{1,\ldots, n\}$ we in fact have $\tilde{a}_i\gamma_3(F)=\chi(\tilde{a}_i)\gamma_3(F)$. Hence, in the setting of Proposition~\ref{prop=explicitck}, we can take $r_1^{\flat},\ldots,r_n^{\flat}$ as
\[
r_i^{\flat}=[\tilde{c},\tilde{a}_i]  \quad\textrm{for every $i\in \{1,\ldots,n\}$}.
\]
Therefore, the coarse subspace represented by the set
\begin{align*}
A^{\flat}&=\left\{[c, a_1^{m_1}]\cdots [c,a_n^{m_{n}}]\,\middle|\, (m_1,\ldots,m_{n})\in \ZZ^{n}\right\}\\
&=\left\{\chi(a_1)^{m_1}a_1^{-m_1}\cdots \chi(a_n)^{m_{n}}a_n^{-m_n}\,\middle|\, (m_1,\ldots,m_{n})\in \ZZ^{n}\right\}
\end{align*}
is the coarse kernel of $\iota_{\Gg,\gamma_2(\Gg)}$ for $\Gg=F_n\rtimes_{\chi}\ZZ$.
\item Here, we discuss what happens  in the setting of (2) if we only assume that the atoroidal automorphism $\chi\in \Aut(F_n)$ belongs to $\mathcal{A}_n(1)=\IAA_n$. In this case, for every $i\in \{1,\ldots,n\}$ we have 
\[
\tilde{a}_i\chi(\tilde{a}_i)^{-1}\in \gamma_2(F(\tilde{a}_1,\ldots,\tilde{a}_{n})).
\]
Let $i\in \{1,\ldots,n\}$. We fix an $[F(\tilde{a}_1,\ldots,\tilde{a}_{n}),F(\tilde{a}_1,\ldots,\tilde{a}_{n})]$-expression 
\[
(\ff_1^{(i)},\ldots ,\ff_{t_i}^{(i)};\ff'_1{}^{(i)},\ldots ,\ff'_{t_i}{}^{(i)})
\]
for $\tilde{a}_i\chi(\tilde{a}_i)^{-1}$. Then, we have an $[F,F]$-expression 
\[
\bffi=(\tilde{c},\ff_1^{(i)},\ldots ,\ff_{t_i}^{(i)};\tilde{a}_i,\ff'_1{}^{(i)},\ldots ,\ff'_{t_i}{}^{(i)})
\]
for $r_i$. We note that the map $\beta_{\bff_i}\colon \ZZ\to \gamma_3(G)$ equals the map
\[
m\mapsto \chi(a_i)^{m}a_i^{-m}[\pi(\ff_1^{(i)}),\pi(\ff'_1{}^{(i)})^m]\cdots [\pi(\ff_{t_i}^{(i)}),\pi(\ff'_{t_i}{}^{(i)})^m].
\]
By Proposition~\ref{prop=explicitck}, the coarse subspace represented by the set
\[
A=\left\{\beta_{\bffone}(m_1)\cdots \beta_{\bff_n}(m_{n})\,\middle|\, (m_1,\ldots,m_{n})\in \ZZ^{n}\right\}
\]
is the coarse kernel of $\iota_{\Gg,\gamma_2(\Gg)}$ for $\Gg=F_n\rtimes_{\chi}\ZZ$.
  \item We stick to the setting of (1). Similar to the Andreadakis--Johnson filtration of $\Aut(F_n)$, the mapping class group $\Mod(\Sigma_{\genus})\cong \mathrm{Out}_+(\pi_1(\Sigma_{\genus}))$ admits the following filtration. For $q\in \NN$, let $\Mod_{(q)}(\Sigma_{\genus})$ be the kernel of the natural group homomorphism $\mathrm{Out}(\pi_1(\Sigma_{\genus}))\to \mathrm{Out}(\pi_1(\Sigma_{\genus})/\gamma_{q+1}(\pi_1(\Sigma_{\genus})))$; this yields the filtration
\begin{equation}\label{eq=Johnson}
\Mod(\Sigma_{\genus}) \geqslant \Mod_{(1)}(\Sigma_{\genus})\geqslant \Mod_{(2)}(\Sigma_{\genus})\geqslant \cdots\geqslant \Mod_{(q)}(\Sigma_{\genus})\geqslant  \cdots.
\end{equation}
We note that $\Mod_{(1)}(\Sigma_{\genus})$ equals the Torelli group $\mathcal{I}(\Sigma_{\genus})$. Also, it follows from the work \cite{Johnson} of Johnson that $\Mod_{(2)}(\Sigma_{\genus})$ equals the Johnson kernel $\mathcal{K}(\Sigma_{\genus})$, provided that $\genus\geq 3$.

Now, we assume that the element $[\chi]$ in $\Mod(\Sigma_{\genus})$ represented by $\chi\in \Aut_+(\pi_1(\Sigma_{\genus}))$ is a pseudo-Anosov element. Moreover, we assume that this element $[\chi]$ belongs to the group $\Mod_{(2)}(\Sigma_{\genus})$ appearing in the filtration \eqref{eq=Johnson}. In particular, the latter assumption implies that $[\chi]\in \mathcal{I}(\Sigma_{\genus})$. Consider the semi-direct product $\pi_1(\Sigma_{\genus})\rtimes_{\chi}\ZZ$. 

Observe that  the following diagram commutes.
\[
\begin{tikzpicture}[auto]
\node (a) at (0, 2) {$\Aut(\pi_1(\Sigma_{\genus}))$}; 
\node (x) at (4, 2) {$\Aut\Big(\pi_1(\Sigma_{\genus})/\gamma_3(\pi_1(\Sigma_{\genus}))\Big)$};
\node (b) at (0, 0) {$\Out(\pi_1(\Sigma_{\genus}))$};   
\node (y) at (4, 0) {$\Out\Big(\pi_1(\Sigma_{\genus})/\gamma_3(\pi_1(\Sigma_{\genus}))\Big)$};
\node (z) at (2, 1) {$\circlearrowleft$};
\draw[->] (a) to node {$\rho$} (x);
\draw[->] (x) to (y);
\draw[->] (a) to (b);
\draw[->] (b) to (y);
\end{tikzpicture}
\]
Since $[\chi]$ lies in $\Mod_{(2)}(\Sigma_{\genus})$, we have $\rho(\chi)\in \Inn (\pi_1(\Sigma_{\genus})/\gamma_{3}(\pi_1(\Sigma_{\genus})))$. Take $\sigma\in \pi_1(\Sigma_{\genus})/\gamma_{3}(\pi_1(\Sigma_{\genus}))$ such that $\rho(\chi)$ equals the inner conjugation of $\sigma$. Take an arbitrary element $s\in \pi_1(\Sigma_{\genus})$ that is sent to $\sigma$ by the group quotient map $\pi_1(\Sigma_{\genus})\twoheadrightarrow\pi_1(\Sigma_{\genus})/\gamma_{3}(\pi_1(\Sigma_{\genus}))$. Then, for every $l\in \pi_1(\Sigma_{\genus})$, we have
\[
\chi(l)\gamma_3(\pi_1(\Sigma_{\genus}))=sls^{-1}\gamma_3(\pi_1(\Sigma_{\genus})).
\]
Define $\chi'\in \Aut_+(\pi_{1}(\Sigma_g))$ as the composition of the inner conjugation of $s^{-1}$ and the automorphism $\chi$. Then, $[\chi']=[\chi]$ in $\Mod(\Sigma_g)$, and for every $l\in \pi_1(\Sigma_{\genus})$, we have
\begin{equation}\label{eq=inngamma_3}
\chi'(l)\gamma_3(\pi_1(\Sigma_{\genus}))=l\gamma_3(\pi_1(\Sigma_{\genus})).
\end{equation}

Since $[\chi']=[\chi]$ in $\Mod(\Sigma_g)\cong \mathrm{Out}_+(\pi_1(\Sigma_{\genus}))$, the group $\pi_1(\Sigma_{\genus})\rtimes_{\chi'}\ZZ$ is isomorphic to $\pi_1(\Sigma_{\genus})\rtimes_{\chi}\ZZ$. Hence, we set $G=\pi_1(\Sigma_{\genus})\rtimes_{\chi'}\ZZ$, and hereafter we treat $G$ instead of $\pi_1(\Sigma_{\genus})\rtimes_{\chi}\ZZ$. Let $c$ be the element in $G$ corresponding to $1\in \ZZ$. Consider the free group $F=F(\tilde{a}_1,\ldots,\tilde{a}_{\genus},\tilde{b}_1,\ldots,\tilde{b}_{\genus},\tilde{c})$. Let $\pi\colon F\twoheadrightarrow G$ be the group quotient map sending $\tilde{a}_1,\ldots,\tilde{b}_{\genus},\tilde{c}$ to $a_1,\ldots,b_{\genus},c$, respectively. Then, by \eqref{eq=inngamma_3}, we can take 
\[
\widetilde{\chi'}_{\tilde{a}_1},\ldots,\widetilde{\chi'}_{\tilde{b}_{\genus}}\in F(\tilde{a}_1,\ldots,\tilde{a}_{\genus},\tilde{b}_1,\ldots,\tilde{b}_{\genus})
\]
in such a way that for each $i\in \{1,\ldots,\genus\}$,
\begin{equation}\label{eq=surfacegamma_3}
\widetilde{\chi'}_{\tilde{a}_i} \gamma_3(F)=\tilde{a}_i\gamma_3(F)\quad \textrm{and}\quad \widetilde{\chi'}_{\tilde{b}_i} \gamma_3(F)=\tilde{b}_i\gamma_3(F)
\end{equation}
as well as
\[
\pi (\widetilde{\chi'}_{\tilde{a}_i})=\chi'(a_i)\quad \textrm{and}\quad \pi (\widetilde{\chi'}_{\tilde{b}_i})=\chi'(b_i).
\]
Then, this group $G=\pi_1(\Sigma_{\genus})\rtimes_{\chi'}\ZZ$ admits a group presentation
\[
G=(F\,|\,R)=(F(\tilde{a}_1,\ldots,\tilde{a}_{\genus},\tilde{b}_1,\ldots,\tilde{b}_{\genus},\tilde{c})\,|\, \llangle r_1,r_2,\ldots,r_{2\genus},r_{2\genus+1}\rrangle),
\]
where $r_i=\tilde{c}\tilde{a}_i\tilde{c}^{-1}\widetilde{\chi'}_{\tilde{a}_i}^{-1}$ and $r_{\genus+i}=\tilde{c}\tilde{b}_i\tilde{c}^{-1}\widetilde{\chi'}_{\tilde{b}_i}^{-1}$ for $i\in \{1,\ldots,\genus\}$, and $r_{2\genus +1}=[\tilde{a}_1,\tilde{b}_1]\cdots [\tilde{a}_{\genus},\tilde{b}_{\genus}]$.

By Theorem~\ref{thm=WW}~(3), we can apply Proposition~\ref{prop=explicitck} to this setting. More precisely, by \eqref{eq=surfacegamma_3} we can take $r_1^{\flat},\ldots,r_{2\genus}^{\flat},r_{2\genus +1}^{\flat}$ as $r_{2\genus +1}^{\flat}=r_{2\genus +1}$ and 
\[
r_i^{\flat}=[\tilde{c},\tilde{a}_i]\quad \textrm{and}\quad r_{\genus+i}^{\flat}=[\tilde{c},\tilde{b}_i]
\]
for every $i\in \{1,\ldots,\genus\}$. Therefore, by Proposition~\ref{prop=explicitck} we conclude that the coarse subspace represented by the set
\[
A^{\flat}=\left\{\chi'(a_1)^{m_1}a_1^{-m_1}\cdots \chi'(b_{\genus})^{m_{2\genus}}b_{\genus}^{-m_{2\genus}}[a_1,b_1^{m_{2\genus+1}}]\cdots [a_{\genus},b_{\genus}^{m_{2\genus+1}}]\,\middle|\, (m_1,\ldots,m_{2\genus+1})\in\ZZ^{2\genus+1}\right\}
\]
is the coarse kernel of $\iota_{\Gg,\gamma_2(\Gg)}$ for $\Gg=\pi_1(\Sigma_{\genus})\rtimes_{\chi'}\ZZ$.
\end{enumerate}
\end{exa}

\begin{proof}[Proof of Proposition~$\ref{prop=explicitexample}$]
Immediate from Example~\ref{exa=explicitker}~(1) and (2).
\end{proof}

\begin{rem}\label{rem=appli}
In Example~\ref{exa=explicitker}~(2), we can replace the system of relators $(r_1,\ldots,r_n)$ with another system $(r'_1,\ldots,r'_n)$, where for every $i\in \{1,\ldots,n\}$, we set $r'_i=\chi(\tilde{a}_i)^{-1}\tilde{c}\tilde{a}_i\tilde{c}^{-1}$. In this case, for every $i\in \{1,\ldots,n\}$ we can take $r'_i{}^{\flat}$  as $r'_i{}^{\flat}=[\tilde{a}_i^{-1},\tilde{c}]$. From these $r'_1{}^{\flat},\ldots,r'_n{}^{\flat}$, we obtain another representative of the coarse kernel of  $\iota_{\Gg,\gamma_2(\Gg)}$ for $\Gg=F_n\rtimes_{\chi}\ZZ$:
\[
A'{}^{\flat}=\left\{a_1^{-1}\chi^{m_1}(a_1)\cdots a_n^{-1}\chi^{m_n}(a_n)\,\middle|\, (m_1,\ldots,m_{n})\in \ZZ^{n}\right\}.
\]
Similarly, in Example~\ref{exa=explicitker}~(4) we obtain another representative $A'{}^{\flat}$ of the coarse kernel of  $\iota_{\Gg,\gamma_2(\Gg)}$ for $\Gg=\ppi_1(\Sigma_{\genus})\rtimes_{\chi'}\ZZ$ as follows:
\[
A'{}^{\flat}=\left\{a_1^{-1}\chi'{}^{m_1}(a_1)\cdots b_{\genus}^{-1}\chi'{}^{m_{2\genus}}(b_{\genus})[a_1,b_1^{m_{2\genus+1}}]\cdots [a_{\genus},b_{\genus}^{m_{2\genus+1}}]\,\middle|\, (m_1,\ldots,m_{2\genus+1})\in \ZZ^{2\genus+1}\right\}.
\]
\end{rem}

\subsection{More examples of $(\Gg,\Ng)$ to which Theorem~\ref{mthm=main} applies with $q=2$}\label{subsec=WW}

In this subsection, we provide a family of examples of pairs $(\Gg,\Ng)$ such that $\Ng\geqslant [\Gg,\Gg]$ and $\WW(\Gg,\Ng)$ is non-zero finite dimensional, other than in Example~\ref{exa=explicitker}; for such $(\Gg,\Ng)$, we can apply Theorem~\ref{mthm=main} and Theorem~\ref{mthm=dim} for the triple $(\Gg,\Gg,\Ng)$ with $q=2$. In particular, we provide such $(\Gg,\Ng)$ in which $\Ng$ is a strictly larger group than $[\Gg,\Gg]$. We start from the following theorem. In this subsection, for the natural group quotient map $p\colon \Gg\twoheadrightarrow \QG$, we use the symbols $\HHH^1(p)$ and $\HHH^2(p)$ for the induced maps $\HHH^1(\QG) \to \HHH^1(\Gg)$ and $\HHH^2(\QG) \to \HHH^2(\Gg)$ by $p$, respectively. (We have been using the symbol $p^{\ast}$ for $\HHH^2(p)$ in the present paper. However, in this subsection  $\HHH^1(p)$ also appears so that we need to make distinctions between $\HHH^2(p)$ and $\HHH^1(p)$.)

\begin{thm} \label{thm=cup}
Let $\Gg$ be a group and $\Ng$ a normal subgroup of $\Gg$. Assume that $\QG=\Gg/\Ng$ is a finitely generated abelian group. Assume that the comparison map  $c_{\Gg}^2\colon \HHH^2_b(\Gg)\to \HHH^2(\Gg)$ of degree $2$ is surjective. Let $\mathcal{H}$ be the image of $\HHH^1(p)$, where $p\colon \Gg\twoheadrightarrow \QG$ is the natural group quotient map. Then,
\[ \Rdim \WW(\Gg,\Ng) = \Rdim \Im (\smile \colon \mathcal{H} \otimes \mathcal{H} \to \HHH^2(\Gg)),\]where $\smile$ denotes the cup product on the cohomology ring $\HHH^{\ast}(\Gg)$.
\end{thm}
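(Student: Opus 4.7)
The plan is to deduce Theorem~\ref{thm=cup} from Theorem~\ref{thm=KKMMMmain2} together with the standard cohomology of finitely generated abelian groups. Since $\QG$ is finitely generated abelian, it is amenable, hence boundedly acyclic by Theorem~\ref{thm=bdd_acyc}~(1); in particular, it is boundedly $3$-acyclic. Thus Theorem~\ref{thm=KKMMMmain2} applies to give an isomorphism
\[
\WW(\Gg,\Ng) \cong \Im(c_{\Gg}^2) \cap \Im(\HHH^2(p) \colon \HHH^2(\QG) \to \HHH^2(\Gg)).
\]
By the assumption that $c_{\Gg}^2$ is surjective, $\Im(c_{\Gg}^2) = \HHH^2(\Gg)$, so the intersection reduces to $\Im(\HHH^2(p))$. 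Therefore $\Rdim \WW(\Gg,\Ng) = \Rdim \Im(\HHH^2(p))$, and the task reduces to identifying this image with the image of the cup product on $\mathcal{H}$.

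First I will check the inclusion $\Im(\smile \colon \mathcal{H} \otimes \mathcal{H} \to \HHH^2(\Gg)) \subseteq \Im(\HHH^2(p))$, which is immediate from naturality of cup products: any element of $\mathcal{H}$ has the form $\HHH^1(p)(\alpha)$ for some $\alpha \in \HHH^1(\QG)$, and $\HHH^1(p)(\alpha) \smile \HHH^1(p)(\beta) = \HHH^2(p)(\alpha \smile \beta)$. For the converse inclusion $\Im(\HHH^2(p)) \subseteq \Im(\smile \colon \mathcal{H}\otimes \mathcal{H} \to \HHH^2(\Gg))$, I will use the structure of $\HHH^*(\QG;\RR)$: write $\QG \cong \ZZ^n \oplus T$ with $T$ a finite abelian group. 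Since $T$ is finite, $\HHH^i(T;\RR) = 0$ for $i \geq 1$, so the K\"unneth formula (with real coefficients) gives $\HHH^*(\QG;\RR) \cong \HHH^*(\ZZ^n;\RR) \cong \Lambda^* \HHH^1(\ZZ^n;\RR) \cong \Lambda^* \HHH^1(\QG;\RR)$ as graded algebras. In particular, $\HHH^2(\QG;\RR)$ is spanned by elements of the form $\alpha \smile \beta$ with $\alpha,\beta \in \HHH^1(\QG;\RR)$.

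Applying $\HHH^2(p)$ and using naturality once more, every element of $\Im(\HHH^2(p))$ is a linear combination of elements $\HHH^2(p)(\alpha \smile \beta) = \HHH^1(p)(\alpha) \smile \HHH^1(p)(\beta)$, which lies in $\Im(\smile \colon \mathcal{H} \otimes \mathcal{H} \to \HHH^2(\Gg))$ by the definition of $\mathcal{H}$. This establishes the reverse inclusion, so the two subspaces of $\HHH^2(\Gg)$ coincide, and their dimensions agree with $\Rdim \WW(\Gg,\Ng)$ by the first paragraph. There is no serious obstacle in this proof; the only point that requires attention is the real-coefficient K\"unneth step, where finiteness of the torsion summand $T$ of $\QG$ is essential to guarantee that $\HHH^2(\QG;\RR)$ is purely decomposable as a product of degree-one classes.
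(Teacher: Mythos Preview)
Your proof is correct and follows essentially the same approach as the paper: apply Theorem~\ref{thm=KKMMMmain2} using bounded $3$-acyclicity of the abelian quotient, reduce to $\Im(\HHH^2(p))$ via surjectivity of $c_{\Gg}^2$, and then identify $\Im(\HHH^2(p))$ with the cup-product image on $\mathcal{H}$ by naturality in one direction and the fact that $\HHH^2(\QG;\RR)$ is spanned by degree-one cup products in the other. The paper isolates the last identification as a separate lemma (Lemma~\ref{lem=cup}) and states the decomposability of $\HHH^2(\QG;\RR)$ without elaboration, whereas you supply the explicit K\"unneth justification; otherwise the arguments coincide.
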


Theorem~\ref{thm=Mineyev} yields the following corollary to Theorem~\ref{thm=cup}.

\begin{cor}\label{cor=cup}
Let $\Gg$ be a non-elementary Gromov-hyperbolic group and $\Ng$ a normal subgroup of $\Gg$ such that $\QG=\Gg/\Ng$ is abelian. Let $\mathcal{H}$ be the image of $\HHH^1(p)$, where $p\colon \Gg\twoheadrightarrow \QG$ is the natural group quotient map. Then, $\Rdim \WW(\Gg,\Ng) = \Rdim \Im (\smile \colon \mathcal{H} \otimes \mathcal{H} \to \HHH^2(\Gg))$ holds.
\end{cor}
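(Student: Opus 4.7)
The plan is to check that, under the stated hypotheses, all of the assumptions of Theorem~\ref{thm=cup} are met; once this is verified, the conclusion is simply the conclusion of Theorem~\ref{thm=cup} applied to $(\Gg,\Ng)$. There are only two hypotheses to verify: that $\QG=\Gg/\Ng$ is a finitely generated abelian group, and that the comparison map $c_{\Gg}^2\colon \HHH^2_b(\Gg)\to \HHH^2(\Gg)$ is surjective.

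First I would handle the finite-generation hypothesis. A (non-elementary) Gromov-hyperbolic group is, by definition, a finitely generated group whose Cayley graph is $\delta$-hyperbolic; in particular $\Gg$ is finitely generated. Hence $\QG$, being a quotient of $\Gg$, is also finitely generated, and by the hypothesis of Corollary~\ref{cor=cup} it is abelian. Thus $\QG$ satisfies the first hypothesis of Theorem~\ref{thm=cup}.

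Second, I would invoke Theorem~\ref{thm=Mineyev}: since $\Gg$ is a non-elementary Gromov-hyperbolic group, the comparison map $c_{\Gg}^n\colon \HHH^n_b(\Gg)\to \HHH^n(\Gg)$ is surjective for every $n\in \NN_{\geq 2}$. In particular $c_{\Gg}^2$ is surjective, which is the second hypothesis of Theorem~\ref{thm=cup}. Applying Theorem~\ref{thm=cup} to $(\Gg,\Ng)$ then yields the desired equality $\Rdim \WW(\Gg,\Ng) = \Rdim \Im (\smile \colon \mathcal{H} \otimes \mathcal{H} \to \HHH^2(\Gg))$.

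Since all the work is done by Theorem~\ref{thm=cup} and Theorem~\ref{thm=Mineyev}, there is no real obstacle here; the role of Corollary~\ref{cor=cup} is to package Theorem~\ref{thm=cup} into a cleanly applicable form for the class of examples (non-elementary Gromov-hyperbolic $\Gg$ with abelian $\Gg/\Ng$) that we will most frequently wish to analyze when producing concrete pairs $(\Gg,\Ng)$ with non-zero finite dimensional $\WW(\Gg,\Ng)$ for applications of Theorem~\ref{mthm=main}.
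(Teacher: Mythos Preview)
Your argument is correct and matches the paper's approach: the paper simply states that Theorem~\ref{thm=Mineyev} yields Corollary~\ref{cor=cup} from Theorem~\ref{thm=cup}, and you have spelled out exactly this, including the easy check that $\QG$ is finitely generated.
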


We employ the following lemma for the proof of Theorem~\ref{thm=cup}.

\begin{lem} \label{lem=cup}
Let $\Ng$ be a normal subgroup of a finitely generated group $\Gg$ such that $\QG=\Gg/\Ng$ is abelian, and let $\mathcal{H}$ be the image of $\HHH^1(p)$, where $p\colon \Gg\twoheadrightarrow \QG$ is the natural group quotient map. Then the image of the cup product
\[ \smile \colon \mathcal{H} \otimes \mathcal{H} \to \HHH^2(\Gg) \]
coincides with the image of $\HHH^2(p)$.
\end{lem}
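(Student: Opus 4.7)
The plan is to establish the two inclusions separately; the forward inclusion is a formal consequence of naturality of the cup product, while the reverse inclusion reduces to a standard fact about the cohomology of finitely generated abelian groups with real coefficients.

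First, I would verify the easy inclusion $\Im(\smile) \subseteq \Im(\HHH^2(p))$. Given $\alpha,\beta \in \mathcal{H}$, by definition there exist $\alpha',\beta' \in \HHH^1(\QG)$ with $\alpha = \HHH^1(p)(\alpha')$ and $\beta = \HHH^1(p)(\beta')$; then naturality of the cup product yields $\alpha \smile \beta = \HHH^2(p)(\alpha' \smile \beta') \in \Im(\HHH^2(p))$.

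The heart of the proof is the reverse inclusion $\Im(\HHH^2(p)) \subseteq \Im(\smile)$. The key reduction is the claim that $\HHH^2(\QG;\RR)$ is generated as an $\RR$-vector space by cup products of elements of $\HHH^1(\QG;\RR)$. Granted this, any $\gamma \in \HHH^2(\QG)$ may be written as $\gamma = \sum_i \alpha'_i \smile \beta'_i$ with $\alpha'_i,\beta'_i \in \HHH^1(\QG)$, and then naturality gives
\[
\HHH^2(p)(\gamma) = \sum_i \HHH^1(p)(\alpha'_i) \smile \HHH^1(p)(\beta'_i),
\]
which lies in the image of $\smile \colon \mathcal{H}\otimes\mathcal{H}\to \HHH^2(\Gg)$.

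To prove the claim, I would invoke the structure theorem for finitely generated abelian groups to write $\QG \cong \ZZ^n \oplus T$ with $T$ a finite abelian group. Since $T$ is finite and we work with real coefficients (of characteristic zero), the transfer argument gives $\HHH^i(T;\RR)=0$ for every $i\geq 1$. By the Künneth formula we therefore obtain $\HHH^*(\QG;\RR) \cong \HHH^*(\ZZ^n;\RR)$, and the latter is well known to be the exterior algebra $\Lambda^*\bigl(\HHH^1(\ZZ^n;\RR)\bigr)$; in particular it is generated as an $\RR$-algebra in degree $1$, and $\HHH^2(\QG;\RR)$ is spanned by cup products from $\HHH^1(\QG;\RR)$. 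There is no real obstacle beyond this standard computation, and I do not expect any subtlety specific to the ambient group $\Gg$ to enter: the surjectivity hypothesis on $c_\Gg^2$ from Theorem~\ref{thm=cup} is not needed at the level of the lemma, which only uses that $\QG$ is a finitely generated abelian quotient.
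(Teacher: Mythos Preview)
Your proposal is correct and follows essentially the same approach as the paper: both inclusions are proved by naturality of the cup product together with the fact that $\HHH^2(\QG;\RR)$ is spanned by degree-one cup products when $\QG$ is finitely generated abelian. The paper simply asserts this last fact, whereas you supply the standard justification via the structure theorem, vanishing of real cohomology of finite groups, and K\"unneth.
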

\begin{proof}
Let $\mathcal{H}'$ be the image of $\smile \colon \mathcal{H} \otimes \mathcal{H} \to \HHH^2(\Gg)$. Let $a,b \in \HHH^1(\QG)$. Since $\HHH^1(p) (a) \smile \HHH^1(p)(b) = \HHH^2(p)(a \smile b)$, we have  $\mathcal{H}' \subseteq \Im(\HHH^2(p))$. Conversely, since $\QG$ is a finitely generated abelian group, every element $u \in \HHH^2(\QG)$ may be expressed as a sum of cup products of elements in $\HHH^1(\QG)$:
\[
 u = a_1 \smile b_1 + \cdots + a_{t} \smile b_{t},
\]
where $t\in \NN$ and $a_1,\ldots,a_t, b_1,\ldots,b_t \in \HHH^1(\QG)$. This implies  that $\mathcal{H}' \supseteq \Im(\HHH^2(p))$.
\end{proof}

\begin{proof}[Proof of Theorem~$\ref{thm=cup}$]
Since $\QG$ is abelian and in particular boundedly $3$-acyclic, we conclude by Theorem~\ref{thm=KKMMMmain2} and the surjectivity of $c_{\Gg}^2$ that
\[
\WW(\Gg,\Ng) \cong \Im (\HHH^2(p)).
\]
Now Lemma~\ref{lem=cup} ends the proof.
\end{proof}

\begin{exa}\label{exa=surfacecup}
In this example, we apply Corollary~\ref{cor=cup} to the case where $\Gg$ is the surface group $\pi_1(\Sigma_{\genus})$ of genus $\genus\in \NN_{\geq 2}$:
\begin{equation}\label{eq=surfacegroup}
G=\pi_1(\Sigma_{\genus})= \langle a_1,  \cdots,a_{\genus}, b_1, \cdots, b_{\genus} \, | \, [a_1, b_1] \cdots [a_{\genus}, b_{\genus}]=e_G \rangle.
\end{equation}
Let $\Ng$ be a normal subgroup of $\Gg$ satisfying  $\Ng\geqslant [\Gg,\Gg]$. Set $\QG=\Gg/\Ng$, and let $p\colon \Gg\twoheadrightarrow \QG$ be the natural group quotient map.
By Theorem~\ref{thm=cup}, there exist $a,b \in \Im (\HHH^1(p))$ such that $a \smile b \ne 0$ if and only if $\WW(\Gg,\Ng) \ne 0$.  In what follows, we provide two contrasting examples. Here, let $a_1^\dagger, \cdots, a_\genus^\dagger, b_1^\dagger, \cdots, b_\genus^\dagger \in \HHH^1(\Gg)$ be the dual basis of $[a_1], \cdots, [a_\genus], [b_1], \cdots, [b_\genus]$; for $i\in \{1,\ldots ,\genus\}$, $[a_i]$ and $[b_j]$ respectively denote the elements of $\HHH_1(\Gg)=\HHH_1(\Gg;\RR)$, the first real homology of $\Gg$, determined by $a_i$ and $b_i$. For elements $\lambda_1,\ldots,\lambda_m$ ($m\in\NN$), the symbol $\ZZ[\lambda_1,\ldots,\lambda_m]$ denotes the free abelian group with basis $\lambda_1,\ldots,\lambda_m$.
\begin{enumerate}[label=\textup{(\arabic*)}]
\item Let $\Ng$ be the subgroup of $\Gg = \pi_1(\Sigma_\genus)$ containing $[\Gg, \Gg]$ such that $\QG= \ZZ[p(a_1),\ldots, p(a_{\genus})]$.
We note that for every $i,j\in \{1,\ldots,\genus\}$, $a_i^\dagger \smile a_j^\dagger = 0$ holds. Hence, by Corollary~\ref{cor=cup} we have $\WW(\Gg, \Ng) = 0$.

\item Now, from (1) we change $\Ng$ to be the subgroup of $\Gg$ containing $[\Gg, \Gg]$ such that $\QG = \ZZ[p(a_1), p(b_1)]$.
We note that $a_1^\dagger \smile b_1^\dagger$ generates $\HHH^2(\Gg) \cong \RR$. Hence, by Corollary~\ref{cor=cup}, in this case $\Rdim\WW(\Gg, \Ng) = 1$.
\end{enumerate}
In particular, for the pair $(\Gg,\Ng)$ as in (2), the group triple $(\Gg,\Lg,\Ng)=(\Gg,\Gg,\Ng)$  satisfies condition (b$_2$) in Theorem~\ref{mthm=main} with $\ell=1$.
\end{exa}

As is stated as Corollary~\ref{cor=cup}, Theorem~\ref{thm=cup} works for the case where $\Gg$ is non-elementary Gromov-hyperbolic. If $\Gg$ is a one-relator group,  then we have a criterion of this surjectivity in terms of the simplicial volume of $\Gg$, introduced by  \cite{HL}. We use the following setting.

\begin{setting}\label{setting=SV}
Let $F$ be a finitely generated free group and let $r$ be a non-trivial element in  $\gamma_2(F)$. Set $\Gg_r=F / \llangle r \rrangle$.
\end{setting}
In Setting~\ref{setting=SV}, we have $\HHH_2(\Gg_r; \ZZ) \cong \ZZ$. Motivated by the celebrated concept of the simplicial volume of closed connected oriented manifolds by Gromov \cite{Gr}, Heuer and L\"oh \cite{HL} defined the \emph{simplicial volume of $\Gg_r$} to be the $\ell^1$-seminorm of the generator of $\HHH_2(\Gg_r; \ZZ)$. We collect in Example~\ref{exa=SV} of several examples of one-relator groups with positive simplicial volume by \cite{HL}. Proposition~\ref{prop=SV} explains the importance of the simplical volume in relation to $\WW(\Gg,\Ng)$.

\begin{prop}\label{prop=SV}
Assume Setting~$\ref{setting=SV}$. Then the following are equivalent.
\begin{enumerate}[label=\textup{(\arabic*)}]
\item The simplicial volume of $\Gg_r$ is non-zero.
\item The comparison map $c_{\Gg_r}^2 \colon \HHH^2_b(\Gg_r) \to \HHH^2(\Gg_r)$ is surjective.
\end{enumerate}
\end{prop}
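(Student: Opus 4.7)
The plan is to reduce the statement to the standard duality between the $\ell^1$-seminorm on group homology and bounded cohomology, after first verifying that the relevant cohomology groups are one-dimensional.

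First I would record that $\HHH^2(\Gg_r;\RR)$ is one-dimensional. Indeed, by assumption we have $\HHH_2(\Gg_r;\ZZ)\cong \ZZ$, and the universal coefficient theorem yields
\[
\HHH^2(\Gg_r;\RR) \cong \Hom_{\ZZ}(\HHH_2(\Gg_r;\ZZ),\RR) \oplus \Ext^1_{\ZZ}(\HHH_1(\Gg_r;\ZZ),\RR).
\]
Since $\RR$ is divisible, the $\Ext^1$-term vanishes, so $\HHH^2(\Gg_r;\RR)\cong \RR$. In particular, the natural pairing $\HHH^2(\Gg_r;\RR)\times \HHH_2(\Gg_r;\RR)\to \RR$ is non-degenerate on one-dimensional spaces.

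Next, I would invoke Gromov's duality principle (see, e.g., \cite{Frigerio}): for every class $\alpha\in \HHH_n(\Gg;\RR)$,
\[
\|\alpha\|_1 = \sup\{|\langle \beta,\alpha\rangle|\,:\,\beta\in \HHH^n_b(\Gg;\RR),\ \|\beta\|_{\infty}\leq 1\}.
\]
Applying this to $n=2$ and to a generator $\alpha$ of $\HHH_2(\Gg_r;\RR)$ (coming from the generator of $\HHH_2(\Gg_r;\ZZ)$), the simplicial volume of $\Gg_r$ is non-zero if and only if there exists $\beta\in \HHH_b^2(\Gg_r)$ with $\langle c_{\Gg_r}^2(\beta), \alpha\rangle \neq 0$. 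By non-degeneracy of the pairing and one-dimensionality of $\HHH^2(\Gg_r;\RR)$, this last condition is equivalent to $c_{\Gg_r}^2(\beta)\ne 0$ for some $\beta$, which in turn is equivalent to $\Im (c_{\Gg_r}^2)=\HHH^2(\Gg_r;\RR)$, i.e., to the surjectivity of $c_{\Gg_r}^2$.

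The only delicate point in this plan is making sure the duality formula above is applied in the correct form for groups (as opposed to spaces); but since $\Gg_r$ is a discrete group and the bounded and ordinary cohomology groups in question are those of $\Gg_r$ with trivial real coefficients, this is exactly Gromov's statement for groups, which is available in the standard references. No further work on the one-relator group structure itself (such as asphericity or cohomological dimension) is needed once $\HHH_2(\Gg_r;\ZZ)\cong\ZZ$ is granted, because the collapse of the argument to one-dimensional linear algebra removes any need to understand $c_{\Gg_r}^2$ beyond whether it is zero or not.
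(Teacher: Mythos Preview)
Your proof is correct and follows essentially the same approach as the paper, which simply cites \cite[Lemma~6.1]{Frigerio} (together with \cite[Subsection~2.1]{HL}); you have written out the details behind that citation, namely the one-dimensionality of $\HHH^2(\Gg_r;\RR)$ and Gromov's duality between the $\ell^1$-seminorm and bounded cohomology.
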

\begin{proof}
This follows from \cite[Lemma~6.1]{Frigerio} (see also \cite[Subsection~2.1]{HL}).
\end{proof}

\begin{rem}\label{rem=hyp}
There are some criteria on $r\in \gamma_2(F)$ for $\Gg_r$ being non-elementary Gromov-hyperbolic. Typical ones are  small cancellation conditions (see for instance \cite{Strebel}). Another one comes from  Newman's spelling theorem \cite{Newman}: if $r$ is a proper power and if the rank of $F$ is at least $2$, then $\Gg_r$ is non-elementary Gromov-hyperbolic. We note that if $\Gg_r$ is a non-elementary Gromov-hyperbolic group, then the simplicial volume of $\Gg_r$ is non-zero. Indeed, this follows from Theorem~\ref{thm=Mineyev} and Proposition~\ref{prop=SV}.
\end{rem}

The following theorem extends a previous result \cite[Theorem~4.18]{KKMMM}; we will prove a more general theorem, Theorem~\ref{thm=one_relator}, in Subsection~\ref{subsec=Wcal_nilp}.

\begin{thm}\label{thm=SV}
Assume Setting~$\ref{setting=SV}$. Assume that $r\in \gamma_2(F)\setminus \gamma_3(F)$. Assume that the simplicial volume of $\Gg_r$ is non-zero. Then $\Rdim \WW(\Gg_r,\gamma_2(\Gg_r)) = 1$.
\end{thm}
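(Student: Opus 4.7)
The plan is to apply Theorem~\ref{thm=cup} (via Corollary~\ref{cor=cup}-style reasoning) to reduce the claim to a computation of the image of the map $\HHH^2(\Gamma)\to \HHH^2(\Gg_r)$ induced by the abelianization $p\colon \Gg_r\twoheadrightarrow \Gamma:=\Gg_r/\gamma_2(\Gg_r)$, where $\Gamma=F^{\mathrm{ab}}$ since $r\in \gamma_2(F)$. First I would verify the hypotheses of Theorem~\ref{thm=cup}: the group $\Gamma=F/\gamma_2(F)$ is finitely generated free abelian because $F$ is finitely generated, and the second comparison map $c_{\Gg_r}^2\colon \HHH^2_b(\Gg_r)\to \HHH^2(\Gg_r)$ is surjective by Proposition~\ref{prop=SV} together with the hypothesis that the simplicial volume of $\Gg_r$ is non-zero. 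Then, by Theorem~\ref{thm=cup} and Lemma~\ref{lem=cup}, we reduce to proving $\Rdim \Im(\HHH^2(p))=1$.

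Next I would use Hopf's formula to identify both $\HHH_2$-groups explicitly and show the induced map on $\HHH_2$ is non-zero. Because $r\in [F,F]$, the normal closure $\llangle r\rrangle$ is contained in $[F,F]$, so Hopf's formula gives
\[
\HHH_2(\Gg_r;\ZZ)\cong \llangle r\rrangle/[F,\llangle r\rrangle],\qquad \HHH_2(\Gamma;\ZZ)\cong \gamma_2(F)/\gamma_3(F),
\]
and by naturality of the Hopf formula (applied to the morphism of short exact sequences $\llangle r\rrangle \hookrightarrow \gamma_2(F)$, $F=F$), the functorial map $\HHH_2(\Gg_r;\ZZ)\to \HHH_2(\Gamma;\ZZ)$ is the obvious one sending $r\cdot [F,\llangle r\rrangle]$ to $r\cdot \gamma_3(F)$. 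The hypothesis $r\in \gamma_2(F)\setminus \gamma_3(F)$ is exactly what guarantees that this image is non-zero in the free abelian group $\gamma_2(F)/\gamma_3(F)$.

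To conclude, note that $\HHH_2(\Gg_r;\ZZ)$ is cyclic (generated by the Hopf class of $r$), hence $\HHH_2(\Gg_r;\RR)$ has real dimension at most $1$; the non-vanishing of the simplicial volume forces this dimension to equal $1$. Since the map $\HHH_2(\Gg_r;\RR)\to \HHH_2(\Gamma;\RR)$ sends a generator to a non-zero element (the image of $r$ in $\gamma_2(F)/\gamma_3(F)$ is of infinite order, because $\gamma_2(F)/\gamma_3(F)$ is torsion-free), this map is injective. Dualizing, $\HHH^2(p)\colon \HHH^2(\Gamma;\RR)\to \HHH^2(\Gg_r;\RR)$ is surjective onto the one-dimensional space $\HHH^2(\Gg_r;\RR)$, so $\Rdim\Im(\HHH^2(p))=1$, and the theorem follows.

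The main obstacle I anticipate is the naturality step identifying the functorial map $\HHH_2(\Gg_r;\ZZ)\to \HHH_2(\Gamma;\ZZ)$ with the evident algebraic map $\llangle r\rrangle/[F,\llangle r\rrangle]\to \gamma_2(F)/\gamma_3(F)$; this is standard but must be spelled out carefully (e.g.\ via the naturality of the five-term exact sequence applied to the commuting diagram with rows $1\to \llangle r\rrangle\to F\to \Gg_r\to 1$ and $1\to \gamma_2(F)\to F\to \Gamma\to 1$). Everything else is a matter of assembling the known computations, the hypothesis $r\notin \gamma_3(F)$ being used precisely once to ensure non-triviality of the image class.
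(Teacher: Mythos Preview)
Your proof is correct, and it takes a genuinely different route from the paper. Both arguments reduce, via Theorem~\ref{thm=KKMMMmain2} and Proposition~\ref{prop=SV}, to showing $\Rdim\Im(\HHH^2(p))=1$ for $p\colon \Gg_r\twoheadrightarrow \Gg_r^{\mathrm{ab}}$; from there they diverge. You compute $\HHH_2$ via Hopf's formula, use naturality to see that the Hopf class of $r$ maps to the nontrivial element $r\cdot\gamma_3(F)\in\gamma_2(F)/\gamma_3(F)$, and then dualize to conclude that $\HHH^2(p)$ is surjective onto the one-dimensional space $\HHH^2(\Gg_r)$. The paper instead deduces Theorem~\ref{thm=SV} as the case $q=2$ of Theorem~\ref{thm=one_relator}, whose proof counts dimensions using the five-term exact sequences for $1\to\gamma_s(F)\to F\to F/\gamma_s(F)\to 1$ and $1\to\gamma_s(\Gg)\to\Gg\to\Gg/\gamma_s(\Gg)\to 1$: it identifies $\Rdim\HHH^2(F^{\mathrm{ab}})$ with $\Rdim\HHH^1(\gamma_2(F))^F$ and then invokes Lemma~\ref{lem=H^1dim} to obtain $\Rdim\HHH^1(\gamma_2(\Gg))^{\Gg}=\Rdim\HHH^1(\gamma_2(F))^F-1$, whence $\Rdim\Im(p^\ast)=1$. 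Your Hopf-formula argument is more direct and elementary for $q=2$; the paper's approach has the advantage of extending uniformly to all $q\geq 2$, where Hopf's formula for $\HHH_2$ is no longer the relevant tool. One minor remark: you do not actually need the simplicial volume to verify $\Rdim\HHH_2(\Gg_r;\RR)=1$, since your own naturality step already shows that the generator $r\cdot[F,\llangle r\rrangle]$ maps to an element of infinite order and hence has infinite order itself; but the argument as written is still correct (and the paper records $\HHH_2(\Gg_r;\ZZ)\cong\ZZ$ as a known fact in Setting~\ref{setting=SV} anyway).
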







\begin{exa}\label{exa=SV}
Here we collect some examples appearing in \cite{HL} of groups $\Gg_r$ in Setting~\ref{setting=SV} with non-zero simplicial volume, possibly different from ones appearing in Remark~\ref{rem=hyp}.
\begin{enumerate}[label=\textup{(\arabic*)}]
  \item Let $F=F(a,b)$. Set $r_3=[a,b][a,b^{-3}]$ and $r_4=[a,b][a,b^{-4}]$ in $F$. Then, by \cite[Example~G]{HL} the simplicial volumes of $\Gg_{r_3}$ and $\Gg_{r_4}$ equal $1$ and $\frac{4}{3}$, respectively.
  \item Let $S_1$ and $S_2$ be two disjoint finite sets. Take two non-trivial elements $r_1\in \gamma_2(F(S_1))$ and $r_2\in \gamma_2(F(S_2))$. Then, with regarding $r=r_1r_2$ as an element in $F=F(S_1\sqcup S_2)$, \cite[Theorem~A~1]{HL} showed that the simplicial volume of $\Gg_r$ equals $4(\scl_{F}(r)-1/2)$. Together with \cite[Theorem~2.93]{Calegari}, the simplicial volume of this $\Gg_r$ can be shown to be non-zero.
\end{enumerate}
\end{exa}

\subsection{One-relator groups $\Gg$ with $\Wcal(\Gg,\gamma_{q-1}(\Gg),\gamma_q(\Gg))\ne 0$}\label{subsec=Wcal_nilp}

In Subsections~\ref{subsec=explicitkernel} and \ref{subsec=WW}, we have presented examples of triples $(\Gg,\Lg,\Ng)$ to which Theorem~\ref{mthm=main} and Theorem~\ref{mthm=dim} apply. However, in every example there, the exponent $q$ equals $2$; in particular, we have $\Lg=\Gg$ in these examples. For each fixed $q\in \NN_{>2}$, the following theorem supplies examples   of groups $\Gg$  such that $\Wcal(\Gg,\gamma_{q-1}(\Gg),\gamma_q(\Gg))\ne 0$. Theorem~\ref{thm=one_relator} in particular implies Theorem~\ref{thm=SV}.

\begin{thm}[one-relator group with non-vanishing $\Wcal(\Gg,\gamma_{q-1}(\Gg),\gamma_q(\Gg))$] \label{thm=one_relator}
Let $F$ be a finitely generated free group. Let $q\in \NN_{\geq 2}$. Let $r \in \gamma_q(F) \setminus \gamma_{q+1}(F)$. Set $\Gg=F / \llangle r \rrangle$. Assume that the simplicial volume of $\Gg$ is non-zero.
Then, we have
\[
\WW(\Gg, \gamma_{q-1}(\Gg))=0 \quad \textrm{and} \quad \Rdim \WW(\Gg, \gamma_q(\Gg)) = 1.
\]
In particular, we have $\Rdim\Wcal(\Gg, \gamma_{q-1}(\Gg), \gamma_{q}(\Gg)) = 1$.
\end{thm}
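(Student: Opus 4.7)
The strategy is to invoke Theorem~\ref{thm=KKMMMmain2} for each of the pairs $(\Gg, \gamma_{q-1}(\Gg))$ and $(\Gg, \gamma_{q}(\Gg))$, compute the two resulting subspaces of $\HHH^2(\Gg; \RR)$ via Hopf's formula, and then combine the outcomes using Theorem~\ref{thm=dimWcal}. The starting observation is that since $r \in \gamma_q(F) \subseteq \gamma_{q-1}(F)$, the normal closure $\Rel = \llangle r \rrangle$ is contained in both $\gamma_{q-1}(F)$ and $\gamma_q(F)$. Via the standard identification $\Gg/\gamma_k(\Gg) \cong F/(\Rel \cdot \gamma_k(F))$, this yields natural isomorphisms $\Gg/\gamma_{q-1}(\Gg) \cong F/\gamma_{q-1}(F)$ and $\Gg/\gamma_q(\Gg) \cong F/\gamma_q(F)$, so the relevant nilpotent quotients of $\Gg$ coincide with the free nilpotent groups of the corresponding class on the basis of $F$.

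These quotients are finitely generated nilpotent, hence amenable, hence boundedly $3$-acyclic by Theorem~\ref{thm=bdd_acyc}~(1). By Proposition~\ref{prop=SV}, the hypothesis on the simplicial volume gives surjectivity of $c_\Gg^2 \colon \HHH^2_b(\Gg) \to \HHH^2(\Gg)$. Substituting into Theorem~\ref{thm=KKMMMmain2} therefore yields, for $k \in \{q-1, q\}$,
\[
\WW(\Gg, \gamma_k(\Gg)) \;\cong\; \Im\bigl(p_k^* \colon \HHH^2(\Gg/\gamma_k(\Gg); \RR) \to \HHH^2(\Gg; \RR)\bigr),
\]
where $p_k \colon \Gg \twoheadrightarrow \Gg/\gamma_k(\Gg)$ is the natural projection.

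The key computation is then Hopf and duality. Since $r \in \gamma_2(F)$, we have $\HHH_2(\Gg; \ZZ) \cong \ZZ$ (cf.~Setting~\ref{setting=SV}), generated by the class of $r$ in $(\Rel \cap [F,F])/[F,\Rel]$; also $\HHH_2(F/\gamma_k(F); \ZZ) \cong \gamma_k(F)/\gamma_{k+1}(F)$ for $k \geq 2$. Functoriality of these Hopf presentations identifies $(p_k)_*$ on $\HHH_2$ with the inclusion $\Rel \hookrightarrow \gamma_k(F)$ passed to the appropriate quotients, so the generator $[r]$ maps to $[r] \in \gamma_k(F)/\gamma_{k+1}(F)$. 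The assumption $r \in \gamma_q(F) \setminus \gamma_{q+1}(F)$ then forces this image to be zero when $k = q-1$ and non-zero when $k = q$. Dualizing over $\RR$ and using $\HHH^2(\Gg; \RR) \cong \RR$, we conclude $\Im(p_{q-1}^*) = 0$ and $\Im(p_q^*) = \HHH^2(\Gg; \RR)$. The degenerate case $q=2$ collapses to $\WW(\Gg, \Gg) = 0$, which holds tautologically. Thus $\WW(\Gg, \gamma_{q-1}(\Gg)) = 0$ and $\Rdim \WW(\Gg, \gamma_q(\Gg)) = 1$, and Theorem~\ref{thm=dimWcal} delivers $\Rdim \Wcal(\Gg, \gamma_{q-1}(\Gg), \gamma_q(\Gg)) = 1 - 0 = 1$.

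The step most deserving of care is the functoriality assertion that the two Hopf presentations of $\HHH_2$ are linked by $[r] \mapsto [r]$ under $(p_k)_*$; this is compatibility of the Stallings five-term exact sequence with morphisms of short exact sequences of groups. Everything else is a direct packaging of the cohomological tools collected in Subsection~\ref{subsec=cohom}.
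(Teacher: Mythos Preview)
Your argument is correct. Both your proof and the paper's share the same opening move: apply Theorem~\ref{thm=KKMMMmain2} together with Proposition~\ref{prop=SV} to reduce the problem to computing $\Im\bigl(p_k^{\ast}\colon \HHH^2(\Gg/\gamma_k(\Gg))\to \HHH^2(\Gg)\bigr)$ for $k\in\{q-1,q\}$. Where you diverge is in how this image is computed. The paper stays in cohomology: it compares the five-term exact sequences for the pairs $(F,\gamma_s(F))$ and $(\Gg,\gamma_s(\Gg))$, proves the auxiliary Lemmata~\ref{lem=invariant_homomorphism2}--\ref{lem=H^1dim} to pin down the dimensions of the spaces $\HHH^1(\gamma_s(\cdot))^{\bullet}$, and then reads off $\Rdim\Im(p_s^{\ast})$ by a dimension count. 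You instead pass to integral $\HHH_2$ via Hopf's formula, identify $(p_k)_{\ast}$ with the map $[r]\mapsto[r]\in\gamma_k(F)/\gamma_{k+1}(F)$ induced by functoriality of Hopf under the morphism of presentations $(F,R)\to(F,\gamma_k(F))$, and then dualize over $\RR$. Your route is shorter and bypasses the paper's three preparatory lemmata; the paper's route, on the other hand, keeps everything phrased in terms of the invariant-homomorphism spaces $\HHH^1(\Mf)^{\Ff}$ that feed directly into the core-extractor machinery elsewhere in the paper. For the final step, the paper observes directly that $\WW(\Gg,\gamma_{q-1}(\Gg))=0$ forces $\Wcal(\Gg,\gamma_{q-1}(\Gg),\gamma_q(\Gg))=\WW(\Gg,\gamma_q(\Gg))$, which is exactly the content of your appeal to Theorem~\ref{thm=dimWcal} in this degenerate situation.
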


In what follows, we prove Theorem~\ref{thm=one_relator}. We will employ the following lemmata. The proof of Lemma~\ref{lem=invariant_homomorphism} is straightforward, and we omit the proof.
\begin{setting}\label{setting=nilp}
Let $F$ be a finitely generated free group. Let $q\in \NN_{\geq 2}$. Let $r \in \gamma_q(F) $. Set $\Gg=F/\llangle r\rrangle$.
\end{setting}

\begin{lem} \label{lem=invariant_homomorphism}
Let $\Gg$ be a group and $\Ng$ a normal subgroup of $\Gg$. For a group homomorphism $\kg \colon \Ng \to \RR$, $\kg$ is $\Gg$-invariant if and only if $\Ker(\kg)\geqslant [\Gg,\Ng]$.
\end{lem}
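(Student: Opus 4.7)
The plan is to verify the two implications directly from the definition of $\Gg$-invariance (Definition~\ref{defn=qm}~(1)) and the definition of the mixed commutator subgroup $[\Gg,\Ng]$ as the group generated by simple $(\Gg,\Ng)$-commutators (Definition~\ref{defn=sclGN}~(1)). The key identity, valid for any homomorphism $\kg\colon \Ng\to \RR$, any $\gG\in \Gg$, and any $\xg\in \Ng$, is
\[
\kg([\gG,\xg])=\kg(\gG\xg\gG^{-1})-\kg(\xg),
\]
which makes the two conditions essentially tautologous on the generating set of $[\Gg,\Ng]$.

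For the forward implication, I will assume that $\kg$ is $\Gg$-invariant. Given any simple $(\Gg,\Ng)$-commutator $[\gG,\xg]$, the displayed identity together with $\kg(\gG\xg\gG^{-1})=\kg(\xg)$ gives $\kg([\gG,\xg])=0$. Since $[\Gg,\Ng]$ is generated by the set of such elements (which is closed under inverses) and $\kg$ is a homomorphism, this forces $\Ker(\kg)\geqslant [\Gg,\Ng]$.

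For the backward implication, I will simply reverse the computation. If $\Ker(\kg)\geqslant [\Gg,\Ng]$, then for every $\gG\in \Gg$ and $\xg\in \Ng$ we have $[\gG,\xg]\in [\Gg,\Ng]\leqslant \Ker(\kg)$, so $\kg([\gG,\xg])=0$, and the same identity yields $\kg(\gG\xg\gG^{-1})=\kg(\xg)$, which is exactly $\Gg$-invariance of $\kg$.

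There is no serious obstacle here; the lemma is essentially a bookkeeping exercise matching the generators of $[\Gg,\Ng]$ with the defining relations of $\Gg$-invariance. The only point worth being careful about is to use the correct formulation of $[\Gg,\Ng]$, namely that it is generated by $[\gG,\xg]$ with $\gG\in \Gg$ and $\xg\in \Ng$ (rather than the a priori larger set with $\gG,\xg$ both in $\Gg$), but this matches the range of $(\gG,\xg)$ over which $\Gg$-invariance of $\kg\in \HHH^1(\Ng)^{\Gg}$ is required.
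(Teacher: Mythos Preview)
Your proof is correct and is exactly the straightforward verification the paper has in mind; the paper itself omits the proof, stating only that it is straightforward.
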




\begin{lem} \label{lem=invariant_homomorphism2}
Let $F$ be a finitely generated free group and $q\in \NN_{\geq 2}$. Assume that $r \in \gamma_{q}(F) \setminus \gamma_{q+1}(F)$. Then there exists $\hf \in \HHH^1(\gamma_q(F))^{F}$ such that $\hf (r) \ne 0$.
\end{lem}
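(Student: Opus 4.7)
The plan is to reduce the assertion to a question about the abelian quotient $\gamma_q(F)/\gamma_{q+1}(F)$ and then invoke the classical fact that this quotient is free abelian of finite rank.

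First I would apply Lemma~\ref{lem=invariant_homomorphism} to the pair $(F,\gamma_q(F))$: a homomorphism $\hf\colon \gamma_q(F)\to \RR$ is $F$-invariant if and only if $\Ker(\hf)\geqslant [F,\gamma_q(F)]=\gamma_{q+1}(F)$. Hence constructing $\hf\in \HHH^1(\gamma_q(F))^F$ with $\hf(r)\neq 0$ is equivalent to constructing a homomorphism $\bar{\hf}\colon \gamma_q(F)/\gamma_{q+1}(F)\to \RR$ which does not vanish on the image $\bar{r}$ of $r$ in $\gamma_q(F)/\gamma_{q+1}(F)$; one then pulls back along the quotient map $\gamma_q(F)\twoheadrightarrow \gamma_q(F)/\gamma_{q+1}(F)$.

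Next I would use the classical Magnus--Witt theorem on the lower central factors of a finitely generated free group: for every finitely generated free group $F$ and every $q\in\NN$, the quotient $\gamma_q(F)/\gamma_{q+1}(F)$ is a finitely generated \emph{free} abelian group (it is naturally isomorphic to the degree-$q$ part of the free Lie algebra on the abelianization of $F$). Since $r\in \gamma_q(F)\setminus \gamma_{q+1}(F)$, the element $\bar{r}\in \gamma_q(F)/\gamma_{q+1}(F)$ is non-zero, and since the ambient group is torsion-free, $\bar{r}$ has infinite order.

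Finally, because $\gamma_q(F)/\gamma_{q+1}(F)\cong \ZZ^N$ for some $N\in\NN$ and $\bar{r}$ is a non-zero vector in $\ZZ^N$, there exists a coordinate projection $\ZZ^N\to\ZZ\subseteq\RR$ whose value on $\bar{r}$ is non-zero. Composing this projection with the quotient map $\gamma_q(F)\twoheadrightarrow\gamma_q(F)/\gamma_{q+1}(F)$ yields the desired $\hf\in \HHH^1(\gamma_q(F))^F$ with $\hf(r)\neq 0$. The only non-trivial input is the Magnus--Witt freeness of the lower central factors; everything else is routine linear algebra combined with Lemma~\ref{lem=invariant_homomorphism}.
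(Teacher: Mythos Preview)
Your proposal is correct and follows essentially the same approach as the paper: both reduce to the free abelian quotient $\gamma_q(F)/\gamma_{q+1}(F)$ via the Magnus--Witt theorem, produce a homomorphism to $\RR$ not vanishing on $\bar{r}$, and use Lemma~\ref{lem=invariant_homomorphism} to certify $F$-invariance. The only cosmetic difference is that the paper phrases the existence of $\bar{\hf}$ via the injectivity of $\RR$ as an abelian group, whereas you pick a coordinate projection directly; both arguments are equivalent here.
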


\begin{proof}
It is well known that $\gamma_q(F) / \gamma_{q+1}(F)$ is a free abelian group of finite rank; see for instance \cite[Corollary~5.12~(iv)]{MKSbook}. Since $\RR$ is an injective abelian group, there exists a homomorphism
\[
\bar{\hf} \colon \gamma_q(F) / \gamma_{q+1}(F) \to \RR
\]
such that $\bar{\hf}(r) \ne 0$. Define $\hf$ to be the composition of the sequence $\gamma_q(F) \to \gamma_q(F) / \gamma_{q+1}(F) \xrightarrow{\bar{\hf}} \RR$. Then $\hf(r) \ne 0$; it also follows from Lemma~\ref{lem=invariant_homomorphism} that $\hf$ is $F$-invariant.
\end{proof}

\begin{lem}\label{lem=invhom}
Assume Setting~$\ref{setting=nilp}$.  Then, for every $s\in \NN$ with $s\leq q$, we have
\[
\HHH^1(\gamma_s(\Gg))^{\Gg} \cong \{ \hf \in \HHH^1(\gamma_s(F))^F \, | \, \hf(r) = 0\}.
\]
\end{lem}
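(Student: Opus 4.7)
The plan is to exhibit the isomorphism as pullback along the natural quotient $\pi\colon F \twoheadrightarrow \Gg$. The central observation I will use is that, since $s \leq q$, the element $r$ lies in $\gamma_q(F) \subseteq \gamma_s(F)$; because $\gamma_s(F)$ is a normal subgroup of $F$, it follows that $\llangle r \rrangle \subseteq \gamma_s(F)$. In particular $\gamma_s(F) \cap \llangle r \rrangle = \llangle r \rrangle$, and the restriction $\pi|_{\gamma_s(F)}\colon \gamma_s(F) \twoheadrightarrow \gamma_s(\Gg)$ is surjective with kernel $\llangle r \rrangle$, yielding $\gamma_s(\Gg) \cong \gamma_s(F)/\llangle r \rrangle$.

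First I would check that pullback defines a map in the forward direction: given $\kg \in \HHH^1(\gamma_s(\Gg))^{\Gg}$, set $\hf = \kg \circ \pi|_{\gamma_s(F)}$. Since $\pi$ intertwines the conjugation actions of $F$ and $\Gg$, and $\kg$ is $\Gg$-invariant, $\hf$ is $F$-invariant. Moreover $r \in \llangle r \rrangle = \Ker(\pi|_{\gamma_s(F)})$, so $\hf(r) = 0$. Hence the assignment $\kg \mapsto \hf$ lands in the claimed set.

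Next I would construct the inverse. Given $\hf \in \HHH^1(\gamma_s(F))^{F}$ with $\hf(r) = 0$, the key step is to show $\hf$ vanishes on $\llangle r \rrangle$. Every element of $\llangle r \rrangle$ is a product $\prod_i g_i r^{\epsilon_i} g_i^{-1}$ with $g_i \in F$ and $\epsilon_i \in \{\pm 1\}$; since $\hf$ is a homomorphism and $F$-invariant,
\[
\hf\Bigl(\prod_i g_i r^{\epsilon_i} g_i^{-1}\Bigr) = \sum_i \epsilon_i \hf(g_i r g_i^{-1}) = \sum_i \epsilon_i \hf(r) = 0.
\]
Therefore $\hf$ factors through $\gamma_s(F)/\llangle r \rrangle \cong \gamma_s(\Gg)$, yielding a homomorphism $\kg\colon \gamma_s(\Gg) \to \RR$. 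Finally I would verify $\Gg$-invariance of $\kg$: for $g \in \Gg$ and $x \in \gamma_s(\Gg)$, pick lifts $\tilde g \in F$, $\tilde x \in \gamma_s(F)$; then $\kg(g x g^{-1}) = \hf(\tilde g \tilde x \tilde g^{-1}) = \hf(\tilde x) = \kg(x)$ by $F$-invariance of $\hf$.

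The two constructions are manifestly mutually inverse and $\RR$-linear, giving the desired isomorphism. There is no serious obstacle: the only point requiring care is the containment $\llangle r \rrangle \subseteq \gamma_s(F)$, which hinges on the hypothesis $s \leq q$ together with the normality of the lower central series terms.
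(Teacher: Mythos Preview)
Your proof is correct and follows essentially the same approach as the paper: both establish the isomorphism via pullback along $\pi$, with the key step being that an $F$-invariant homomorphism $\hf$ vanishing at $r$ must vanish on all of $\llangle r \rrangle$. The paper phrases this step slightly more tersely---observing that $\Ker(\hf)$ is a normal subgroup of $F$ containing $r$, hence contains $\llangle r \rrangle$---where you unpack the same fact via the explicit product-of-conjugates computation.
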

\begin{proof}
Set $\mathcal{H}$ as the space of the right hand side. Let $\ppi$ be the natural group quotient map from $F$ onto $\Gg$, and let $\ppi^{\ast}$ denote the map $\HHH^1(\gamma_s(\Gg))^{\Gg} \to \HHH^1(\gamma_s(F))^F$ induced by the projection $\ppi$. Then $\ppi^{\ast}$ is injective since $\ppi$ is surjective. It suffices to show that the image of $\ppi^{\ast}$ coincides with $\mathcal{H}$. It is clear that the image of $\ppi^{\ast}$ is contained in $\mathcal{H}$. Conversely, let $\hf\in \mathcal{H}$. Then, $\Ker(\hf)$ is a normal subgroup of $F$ containing $r$ (note that $r\in \gamma_q(F)\leqslant \gamma_s(F)$). Hence $\hf$ induces a group homomorphism $\kg \colon \gamma_s(G) \to \RR$. Then $\kg$ is $G$-invariant since $\hf$ is $F$-invariant. Hence, we have $\ppi^{\ast} \kg = \hf$. Hence, we have $\ppi^{\ast}(\HHH^1(\gamma_s(\Gg))^{\Gg})\supseteq \mathcal{H}$. We now conclude that $\ppi^{\ast}$ gives an isomorphism $\HHH^1(\gamma_s(\Gg))^{\Gg}\cong \mathcal{H}$, as desired.
\end{proof}

\begin{lem}\label{lem=H^1dim}
Assume Setting~$\ref{setting=nilp}$. Assume moreover that $r\not \in \gamma_{q+1}(F)$. Then, we have
\begin{align*}
&\Rdim \HHH^1(\gamma_{q-1}(\Gg))^{\Gg} = \Rdim \HHH^1(\gamma_{q-1}(F))^F\\
 \textrm{and}\quad &
\Rdim \HHH^1(\gamma_q(\Gg))^{\Gg} = \Rdim \HHH^1(\gamma_q(F))^F - 1.
\end{align*}
\end{lem}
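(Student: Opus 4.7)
The plan is to combine Lemma~\ref{lem=invhom} with Lemma~\ref{lem=invariant_homomorphism} (for the first equality) and Lemma~\ref{lem=invariant_homomorphism2} (for the second equality). For each $s \in \{q-1, q\}$, Lemma~\ref{lem=invhom} identifies $\HHH^1(\gamma_s(\Gg))^{\Gg}$ with the kernel $\mathcal{K}_s$ of the real-linear evaluation map $\mathrm{ev}_r \colon \HHH^1(\gamma_s(F))^{F} \to \RR$, $\hf \mapsto \hf(r)$. Thus the task reduces to computing $\Rdim \mathcal{K}_s$.

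For $s = q-1$, I would argue that $\mathrm{ev}_r$ is identically zero. Indeed, if $\hf \in \HHH^1(\gamma_{q-1}(F))^{F}$, then by Lemma~\ref{lem=invariant_homomorphism} we have $\Ker(\hf) \geqslant [F, \gamma_{q-1}(F)] = \gamma_q(F)$; since $r \in \gamma_q(F)$, this forces $\hf(r) = 0$. Hence $\mathcal{K}_{q-1} = \HHH^1(\gamma_{q-1}(F))^{F}$, which yields the first equality (with no finite dimensionality hypothesis needed).

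For $s = q$, the hypothesis $r \notin \gamma_{q+1}(F)$ is precisely what drives Lemma~\ref{lem=invariant_homomorphism2}, which supplies an $\hf_0 \in \HHH^1(\gamma_q(F))^{F}$ with $\hf_0(r) \neq 0$. Therefore $\mathrm{ev}_r$ is surjective, and its kernel has codimension exactly one — provided the ambient space is finite dimensional, so that the subtraction in the formula is meaningful. This finite dimensionality follows from another invocation of Lemma~\ref{lem=invariant_homomorphism}: an $F$-invariant homomorphism $\gamma_q(F) \to \RR$ factors through $\gamma_q(F)/\gamma_{q+1}(F)$, which is a finitely generated free abelian group (a standard fact for the lower central quotients of a finitely generated free group). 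Hence $\Rdim \HHH^1(\gamma_q(F))^{F} < \infty$ and $\Rdim \mathcal{K}_q = \Rdim \HHH^1(\gamma_q(F))^{F} - 1$, giving the second equality.

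No serious obstacle is anticipated; this is essentially a two-line linear-algebra argument once Lemma~\ref{lem=invhom} is in place. The only point deserving care is verifying finite dimensionality to legitimise the `$-1$' on the right hand side, which is handled by the standard lower-central-quotient computation above.
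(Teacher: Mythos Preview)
Your proof is correct and follows essentially the same approach as the paper: both combine Lemma~\ref{lem=invhom} with the observation that $\hf(r)=0$ automatically for $s=q-1$ (since $r\in\gamma_q(F)$) and with Lemma~\ref{lem=invariant_homomorphism2} for $s=q$. Your explicit check of finite dimensionality via $\gamma_q(F)/\gamma_{q+1}(F)$ is a welcome clarification that the paper leaves implicit.
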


\begin{proof}
The first assertion holds by Lemma~\ref{lem=invhom}; indeed, note that for every $\hf \in \HHH^1(\gamma_{q-1}(F))^F$, $\hf(r)=0$ holds since $r\in \gamma_q(F)$. The second assertion follows from Lemmata~\ref{lem=invariant_homomorphism2} and \ref{lem=invhom}.
\end{proof}

\begin{proof}[Proof of Theorem~$\ref{thm=one_relator}$]
Let $s\in \{q-1,q\}$. Set $\QG^{(s)}=\Gg/ \gamma_{s}(\Gg)$; this  is a nilpotent group and in particular boundedly $3$-acyclic. By Proposition~\ref{prop=SV},  $c_{\Gg}^2\colon \HHH^2_b(\Gg)\to \HHH^2(\Gg)$ is surjective. Hence, Theorem~\ref{thm=KKMMMmain2} implies the isomorphism:
\[
\WW(\Gg, \gamma_{s}(\Gg)) \cong \Im \left(p^{\ast}_s \colon \HHH^2(\QG^{(s)}) \to \HHH^2(\Gg)\right).
\]

Set $\Lambda^{(s)}=F/ \gamma_{s}(F)$, and consider the following two exact sequences:
\begin{equation} \label{eq=one_relator_1}
0 \to \HHH^1(\Lambda^{(s)}) \to \HHH^1(F) \to \HHH^1(\gamma_s(F))^F \to \HHH^2(\Lambda^{(s)})\to  \HHH^2(F),
\end{equation}
and
\begin{equation} \label{eq=one_relator_2}
 0 \to \HHH^1(\QG^{(s)}) \to \HHH^1(\Gg) \to \HHH^1(\gamma_s(\Gg))^{\Gg} \to \HHH^2(\QG^{(s)}) \xrightarrow{p^{\ast}_s} \HHH^2(\Gg).
\end{equation}
In \eqref{eq=one_relator_1}, since $r\in \gamma_q(F)\leqslant \gamma_2(F)$, the abelianization of $F$ and $\Lambda^{(s)}$ coincide. Hence, the map $\HHH^1(\Lambda^{(s)}) \to \HHH^1(F)$ is an isomorphism. Since $\HHH^2(F) = 0$, we have $\Rdim \HHH^2(\Lambda^{(s)}) = \Rdim \HHH^1(\gamma_q(F))^F$. In a manner similar to this argument, in \eqref{eq=one_relator_2}, we have the injectivity of the map $\HHH^1(\gamma_s(\Gg))^{\Gg} \to \HHH^2(\QG^{(s)})$.

Since $r\in \gamma_q(F)\leqslant \gamma_s(F)$, we have $\Lambda^{(s)}\cong \QG^{(s)}$. Together with Lemma~\ref{lem=H^1dim}, we obtain that
\begin{align*}
\Rdim {\rm Im}(p^{\ast}_s) & =  \Rdim \HHH^2(\QG^{(s)}) - \Rdim \HHH^1(\gamma_s(\Gg))^{\Gg} \\
&=  \Rdim \HHH^2(\Lambda^{(s)}) - \Rdim \HHH^1(\gamma_s(\Gg))^{\Gg} \\
&= \Rdim \HHH^1(\gamma_s(F))^F -\Rdim \HHH^1(\gamma_s(\Gg))^{\Gg} \\
&=   \begin{cases}
\Rdim \HHH^1(\gamma_{q-1}(F))^F - \Rdim \HHH^1(\gamma_{q-1}(F))^F, & \textrm{if }s=q-1,\\
\Rdim \HHH^1(\gamma_q(F))^F - \Bigl(\Rdim \HHH^1(\gamma_q(F))^F - 1\Bigr), & \textrm{if }s=q,
\end{cases} \\
& =  \begin{cases}
0, & \textrm{if }s=q-1,\\
1, & \textrm{if }s=q.
\end{cases}
\end{align*}
Therefore, we conclude that $\WW(\Gg, \gamma_{q-1}(\Gg)) = 0$ and $\Rdim \WW(\Gg, \gamma_q(\Gg)) = 1$.

Note that $\WW(\Gg, \gamma_{q-1}(\Gg)) = 0$ means that
\[
\QQQ(\gamma_{q-1}(\Gg))^{\Gg} = \HHH^1(\gamma_{q-1}(\Gg))^{\Gg} + i_{\gamma_{q-1}(\Gg),\Gg}^{\ast} \QQQ(\Gg).
\]
It is then straightforward to deduce that $\Wcal(\Gg, \gamma_{q-1}(\Gg), \gamma_q(\Gg)) = \WW(\Gg, \gamma_q(\Gg))$. In particular, we obtain that $\Rdim \Wcal(\Gg, \gamma_{q-1}(\Gg), \gamma_q(\Gg))=1$. This completes our proof.
\end{proof}

\subsection{Examples of $(\Gg,\Lg,\Ng)$ to which Theorem~\ref{mthm=main} applies with $q>2$}\label{subsec=Wcal}

Theorem~\ref{thm=one_relator}, together with Theorem~\ref{mthm=main}, yields  the following result.

\begin{thm}[coarse kernel for one-relator groups]\label{thm=coarsekernelonerel}
Let $F$ be a finitely generated free group. Let $q\in \NN_{\geq 2}$. Let $r\in \gamma_q(F)\setminus \gamma_{q+1}(F)$. Set $\Gg=F/\llangle r\rrangle$. Assume that the simplicial volume of $\Gg$ is non-zero. Let $r^{\flat}\in \gamma_q(F)$ be an element representing the same equivalence class as $r$ in $\gamma_q(F)/\gamma_{q+1}(F)$. Take an $[F,\gamma_{q-1}(F)]$-expression $\bff^{\flat}=(\ff_1^{\flat},\ldots,\ff_s^{\flat};\ff'_1{}^{\flat},\ldots,\ff'_s{}^{\flat})$ of $r^{\flat}$. For every $i\in \{1,\ldots,s\}$, set $\gG_i^{\flat}=\ppi(\ff_i^{\flat})$ and $\gG'_i{}^{\flat}=\ppi(\ff'_i{}^{\flat})$, where $\ppi\colon F\twoheadrightarrow \Gg$ is the natural group quotient map. Set $\PPs^{\flat}\colon \ZZ\to \gamma_{q+1}(\Gg)$ by
\[
\ZZ\ni m\mapsto [\gG_1^{\flat},(\gG'_1{}^{\flat})^m]\cdots [\gG_s^{\flat},(\gG'_s{}^{\flat})^m]\in \gamma_{q+1}(\Gg).
\]
Then, the coarse subspace represented by $A^{\flat}=\PPs^{\flat}(\ZZ)$ is the coarse kernel of
\[
\iota_{\Gg,\gamma_q(G)}\colon (\gamma_{q+1}(\Gg),d_{\scl_{\Gg,\gamma_q(\Gg)}})\to (\gamma_{q+1}(\Gg),d_{\scl_{\Gg}}),
\]
and $\PPs^{\flat}$, viewed as a map from $(\ZZ,|\cdot|)$ to $(A^{\flat},d_{\scl_{\Gg,\gamma_q(\Gg)}})$, is a coarse isomorphism that is also a quasi-isometry.
\end{thm}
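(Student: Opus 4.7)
The plan is to present Theorem~\ref{thm=coarsekernelonerel} as a direct application of Proposition~\ref{prop=explicitck} (with $\ell=1$) combined with the dimension computation of Theorem~\ref{thm=one_relator}, followed by a short reconciliation of two a priori different coarse kernels that collapse to the same one in this setting. Throughout, I will work with the triple $(\Gg,\Lg,\Ng)=(\Gg,\gamma_{q-1}(\Gg),\gamma_{q}(\Gg))$, which satisfies condition (a${}_q$) of Theorem~\ref{mthm=main}.

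First, I will verify the dimension hypothesis. The assumptions $r\in\gamma_{q}(F)\setminus\gamma_{q+1}(F)$ and the non-vanishing simplicial volume of $\Gg$ are exactly those of Theorem~\ref{thm=one_relator}, which yields both $\WW(\Gg,\gamma_{q-1}(\Gg))=0$ and $\Rdim\WW(\Gg,\gamma_{q}(\Gg))=1$. The last step of the proof of that theorem moreover uses the vanishing $\WW(\Gg,\gamma_{q-1}(\Gg))=0$ to identify $\Wcal(\Gg,\gamma_{q-1}(\Gg),\gamma_{q}(\Gg))=\WW(\Gg,\gamma_{q}(\Gg))$, so in particular $\Rdim\Wcal(\Gg,\gamma_{q-1}(\Gg),\gamma_{q}(\Gg))=1$.

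Next, I will feed this into Proposition~\ref{prop=explicitck} with $\ell=1$. The presentation $(F\,|\,\llangle r\rrangle)$ has $R=\llangle r\rrangle$ normally generated by the single element $r$, and the hypothesis $R\leqslant\gamma_{q}(F)$ holds since $r\in\gamma_{q}(F)$ and $\gamma_{q}(F)$ is normal in $F$. Feeding in the prescribed lift $r^{\flat}$ and its $[F,\gamma_{q-1}(F)]$-expression $\bff^{\flat}$ (identifying $\bffone^{\flat}=\bff^{\flat}$ in the notation of the proposition), Proposition~\ref{prop=explicitck} delivers that $A^{\flat}=\PPs^{\flat}(\ZZ)$ represents the coarse kernel of $\iota_{(\Gg,\gamma_{q-1}(\Gg),\gamma_{q}(\Gg))}$ and that $\PPs^{\flat}\colon(\ZZ,|\cdot|)\to(A^{\flat},d_{\scl_{\Gg,\gamma_{q}(\Gg)}})$ is a coarse group isomorphism realized by a quasi-isometry.

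The only remaining step, and the one point that requires a little care, is to switch from the coarse kernel of $\iota_{(\Gg,\gamma_{q-1}(\Gg),\gamma_{q}(\Gg))}$, whose codomain carries $d_{\scl_{\Gg,\gamma_{q-1}(\Gg)}}$, to the coarse kernel of the map $\iota_{\Gg,\gamma_{q}(\Gg)}=\iota_{(\Gg,\Gg,\gamma_{q}(\Gg))}$ appearing in the theorem, whose codomain carries $d_{\scl_{\Gg}}$. For this I will invoke $\WW(\Gg,\gamma_{q-1}(\Gg))=0$ a second time: by Theorem~\ref{thm=biLip}, $\scl_{\Gg}$ and $\scl_{\Gg,\gamma_{q-1}(\Gg)}$ are bi-Lipschitzly equivalent on $[\Gg,\gamma_{q-1}(\Gg)]=\gamma_{q}(\Gg)$, hence also on its subset $\gamma_{q+1}(\Gg)$. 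Consequently the $d_{\scl_{\Gg}}$-bounded and $d_{\scl_{\Gg,\gamma_{q-1}(\Gg)}}$-bounded subsets of $\gamma_{q+1}(\Gg)$ coincide, so by the characterization of the coarse kernel recalled in Example~\ref{exa=CKiota}, the coarse kernels of $\iota_{(\Gg,\gamma_{q-1}(\Gg),\gamma_{q}(\Gg))}$ and $\iota_{(\Gg,\Gg,\gamma_{q}(\Gg))}$ agree. No step poses a real obstacle; the only care needed is this last metric reconciliation, together with matching the explicit form of $\PPs^{\flat}$ in the theorem with the one produced by Proposition~\ref{prop=explicitck}.
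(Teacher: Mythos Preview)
Your proposal is correct and follows essentially the same route as the paper's own proof: invoke Theorem~\ref{thm=one_relator} for the dimension computation, apply Proposition~\ref{prop=explicitck} with $\ell=1$ to obtain the coarse kernel of $\iota_{(\Gg,\gamma_{q-1}(\Gg),\gamma_q(\Gg))}$ together with the coarse isomorphism by $\PPs^{\flat}$, and then use $\WW(\Gg,\gamma_{q-1}(\Gg))=0$ with Theorem~\ref{thm=biLip} to identify this with the coarse kernel of $\iota_{\Gg,\gamma_q(\Gg)}$. The paper additionally cites Theorem~\ref{mthm=main} alongside Proposition~\ref{prop=explicitck} (to make explicit the quasi-isometry assertion), but this is packaged into Proposition~\ref{prop=boundt} on which Proposition~\ref{prop=explicitck} rests, so your omission is harmless.
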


\begin{proof}
By Theorem~\ref{thm=one_relator}, we have $\Rdim \Wcal(\Gg,\gamma_{q-1}(\Gg),\gamma_q(\Gg))=1$. Then, by Theorem~\ref{mthm=main} and Proposition~\ref{prop=explicitck}, the coarse subspace represented by  $A^{\flat}$ is the coarse kernel of the map
\[
\iota_{(\Gg,\gamma_{q-1}(G),\gamma_q(\Gg))}\colon (\gamma_{q+1}(\Gg),d_{\scl_{\Gg,\gamma_q(\Gg)}})\to (\gamma_{q+1}(\Gg),d_{\scl_{\Gg},\gamma_{q-1}(\Gg)})
\]
and that $\PPs^{\flat}\colon (\ZZ,|\cdot|)\to (A^{\flat},d_{\scl_{\Gg,\gamma_q(\Gg)}})$ is a quasi-isometry and a a coarse isomorphism.

Finally, we move from discussions on $\iota_{(\Gg,\gamma_{q-1}(G),\gamma_q(\Gg))}$ to those on $\iota_{\Gg,\gamma_q(\Gg)}$. By Theorem~\ref{thm=one_relator}, $\WW(\Gg,\gamma_{q-1}(\Gg))=0$. By Theorem~\ref{thm=biLip}, $\scl_{\Gg}$ and $\scl_{\Gg,\gamma_{q-1}(\Gg)}$ are bi-Lipschitzly equivalent on $\gamma_q(\Gg)(\geqslant \gamma_{q+1}(\Gg))$ (in fact, \cite[Theorem~2.1~(3)]{KKMMM} implies that they coincide on $\gamma_q(\Gg)$). Therefore, the coarse subspace represented by $A^{\flat}$ is the coarse kernel of $\iota_{\Gg,\gamma_q(\Gg)}$ as well.
\end{proof}

We note that in the proof of Theorem~\ref{thm=coarsekernelonerel}, we ascend in the lower central series $(\gamma_i(\Gg))_{i\in \NN}$, from $\gamma_q(\Gg)$ to $\gamma_1(\Gg)=\Gg$, \emph{by two steps}: the first step is from $\gamma_q(\Gg)$ to $\gamma_{q-1}(\Gg)$, which falls in the abelian case and we may apply Theorem~\ref{mthm=main}; the second step is from $\gamma_{q-1}(\Gg)$ to $\gamma_{1}(\Gg)=\Gg$.

In this aspect, it may be also important to provide examples for each fixed $q\in \NN_{\geq 3}$ of groups $\Gg$ such that both  $\Wcal(\Gg,\gamma_{q-1}(\Gg),\gamma_{q}(\Gg))$ and $\WW(\Gg,\gamma_{q-1}(\Gg))$ are non-zero spaces. This is possible by Theorem~\ref{thm=one_relator} and the following proposition. Here, recall that a subgroup $H$ of a group $\Gg$ is called a \emph{retract} of $\Gg$ if there exists a group homomorphism $\alpha\colon \Gg\to \Gg$ such that $\alpha(\Gg)=H$ and $\alpha|_H=\mathrm{id}_H$.

\begin{prop}\label{prop=retract}
Let $q\in \NN_{\geq 3}$. Let $H_1$ and $H_2$ be groups such that
\[
\Wcal(H_1,\gamma_{q-1}(H_1),\gamma_q(H_1))\ne 0\quad \textrm{and} \quad\WW(H_2,\gamma_{q-1}(H_2))\ne 0.
\]
Let $\Gg$ be a group such that it admits two isomorphic copies of $H_1$ and $H_2$ that are both retracts of $\Gg$. Then,
\[
\Wcal(\Gg,\gamma_{q-1}(\Gg),\gamma_q(\Gg))\ne 0\quad \textrm{and}\quad  \WW(\Gg,\gamma_{q-1}(\Gg))\ne 0.
\]
\end{prop}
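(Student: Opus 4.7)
\smallskip

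\noindent
\textbf{Proof proposal.} The plan is to transport witnesses of non-vanishing from each retract $H_i$ to the ambient group $\Gg$ by pulling back along the corresponding retraction, and to use the equality $\alpha_i|_{H_i}=\mathrm{id}_{H_i}$ to deduce the non-vanishing on $\Gg$ by contradiction. After identifying $H_1$ and $H_2$ with their isomorphic copies in $\Gg$, fix group homomorphisms $\alpha_i\colon \Gg\to \Gg$ with $\alpha_i(\Gg)=H_i$ and $\alpha_i|_{H_i}=\mathrm{id}_{H_i}$ for $i\in \{1,2\}$. Because $\alpha_i$ is a homomorphism, a straightforward induction on $k$ gives $\alpha_i(\gamma_k(\Gg))\leqslant \gamma_k(H_i)$, and trivially $\gamma_k(H_i)\leqslant \gamma_k(\Gg)$ since $H_i\leqslant \Gg$. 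Thus $\alpha_i$ restricts to a group homomorphism $\gamma_k(\Gg)\to \gamma_k(H_i)$ that is a retraction onto $\gamma_k(H_i)\leqslant \gamma_k(\Gg)$.

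First I would prove $\WW(\Gg,\gamma_{q-1}(\Gg))\neq 0$. Take $\nug_2\in \QQQ(\gamma_{q-1}(H_2))^{H_2}$ representing a non-zero class in $\WW(H_2,\gamma_{q-1}(H_2))$ and set $\tilde{\nug}_2=\nug_2\circ \alpha_2|_{\gamma_{q-1}(\Gg)}$. Homogeneity, finite defect and $\Gg$-invariance are inherited from $\nug_2$ (for invariance, $\tilde{\nug}_2(gxg^{-1})=\nug_2(\alpha_2(g)\alpha_2(x)\alpha_2(g)^{-1})=\nug_2(\alpha_2(x))$ since $\alpha_2(g)\in H_2$ and $\nug_2$ is $H_2$-invariant), so $\tilde{\nug}_2\in \QQQ(\gamma_{q-1}(\Gg))^{\Gg}$. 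If $[\tilde{\nug}_2]=0$ in $\WW(\Gg,\gamma_{q-1}(\Gg))$, write $\tilde{\nug}_2=\kg+\psg|_{\gamma_{q-1}(\Gg)}$ with $\kg\in \HHH^1(\gamma_{q-1}(\Gg))^{\Gg}$ and $\psg\in \QQQ(\Gg)$; restricting both sides to $\gamma_{q-1}(H_2)\leqslant \gamma_{q-1}(\Gg)$ and using $\alpha_2|_{H_2}=\mathrm{id}_{H_2}$ yields
\[
\nug_2=\kg|_{\gamma_{q-1}(H_2)}+(\psg|_{H_2})|_{\gamma_{q-1}(H_2)},
\]
where $\kg|_{\gamma_{q-1}(H_2)}\in \HHH^1(\gamma_{q-1}(H_2))^{H_2}$ and $\psg|_{H_2}\in \QQQ(H_2)$. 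This contradicts $[\nug_2]\neq 0$ in $\WW(H_2,\gamma_{q-1}(H_2))$.

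The assertion $\Wcal(\Gg,\gamma_{q-1}(\Gg),\gamma_q(\Gg))\neq 0$ follows by the parallel argument applied to $\alpha_1$: take $\muf_1\in \QQQ(\gamma_q(H_1))^{H_1}$ with $[\muf_1]\neq 0$ in $\Wcal(H_1,\gamma_{q-1}(H_1),\gamma_q(H_1))$, set $\tilde{\muf}_1=\muf_1\circ \alpha_1|_{\gamma_q(\Gg)}$, and suppose for contradiction $\tilde{\muf}_1=\kg+\psg|_{\gamma_q(\Gg)}$ with $\kg\in \HHH^1(\gamma_q(\Gg))^{\Gg}$ and $\psg\in \QQQ(\gamma_{q-1}(\Gg))^{\Gg}$. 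Restricting to $\gamma_q(H_1)\leqslant \gamma_q(\Gg)$ now writes $\muf_1$ as the sum of $\kg|_{\gamma_q(H_1)}\in \HHH^1(\gamma_q(H_1))^{H_1}$ and the restriction to $\gamma_q(H_1)$ of $\psg|_{\gamma_{q-1}(H_1)}\in \QQQ(\gamma_{q-1}(H_1))^{H_1}$, contradicting $[\muf_1]\neq 0$; the point is that $\gamma_{q-1}(H_1)\leqslant \gamma_{q-1}(\Gg)$, so $\psg$ automatically restricts to an $H_1$-invariant homogeneous quasimorphism on $\gamma_{q-1}(H_1)$, which is exactly what is needed. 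Conceptually, the main obstacle is simply to notice that a retraction is compatible with the lower central series and intertwines $\Gg$-invariance and $H_i$-invariance in the right way; once this functoriality is recorded, both non-vanishing statements reduce to routine restriction-and-contradict arguments.
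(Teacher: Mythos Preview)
Your proof is correct and follows essentially the same approach as the paper: the paper simply records that $H\mapsto \Wcal(H,\gamma_{q-1}(H),\gamma_q(H))$ and $H\mapsto \WW(H,\gamma_{q-1}(H))$ are contravariant functors, so a retract $H_i\hookrightarrow \Gg\to H_i$ induces a retract of vector spaces, forcing non-vanishing on $\Gg$. Your argument unwinds this functoriality explicitly---pulling back a witness along the retraction $\alpha_i$ and restricting back along the inclusion---which is exactly the content of the identity $j^*\circ\alpha_i^*=\mathrm{id}$ on the relevant $\Wcal$ or $\WW$ space.
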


\begin{proof}
Note that  the correspondence $H  \mapsto \Wcal( H, \gamma_{q-1}(H), \gamma_q(H))$ and $H\mapsto \WW(H , \gamma_{q-1}(H))$ (where, $H$ is a group) are both contravariant functors.  By the assumptions on $H_1$ and $H_2$, we conclude that $\Wcal(H_1, \gamma_{q-1}(H_1), \gamma_q(H_1)) \ne 0$ is a retract of $\Wcal(\Gg, \gamma_{q-1}(\Gg),\gamma_{q}(\Gg))$ and that $\WW(H_2, \gamma_{q-1}(H_2)) \ne 0$ is a retract of $\WW(\Gg, \gamma_{q-1}(\Gg)))$, thus obtaining the conclusion.
\end{proof}

\begin{cor}
Let $F^{(1)}$ and $F^{(2)}$ be  finitely generated free groups. Let $q\in \NN_{\geq 3}$. Let $r_1\in \gamma_{q}(F^{(1)})\setminus \gamma_{q+1}(F^{(1)})$ and $r_2\in \gamma_{q-1}(F^{(2)})\setminus \gamma_{q}(F^{(2)})$ . Set $H_1=F^{(1)}/\llangle r_1\rrangle_{F^{(1)}}$ and $H_2=F^{(2)}/\llangle r_2\rrangle_{F^{(2)}}$. Assume that the simplicial volumes of $H_1$ and $H_2$ are both non-zero. Set $\Gg=H_1\star H_2$, the free product of $H_1$ and $H_2$. Then, the spaces $\Wcal(\Gg,\gamma_{q-1}(\Gg),\gamma_q(\Gg))$ and  $\WW(\Gg,\gamma_{q-1}(\Gg))$ are both non-zero finite dimensional.
\end{cor}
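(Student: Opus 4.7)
The plan is to deduce the corollary from the four theorems and propositions that sit immediately above it, essentially as a bookkeeping exercise.

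\textbf{Step 1 (apply Theorem~\ref{thm=one_relator} twice).} First I would invoke Theorem~\ref{thm=one_relator} with the parameter $q$ and the one-relator group $H_1=F^{(1)}/\llangle r_1\rrangle$: since $r_1\in\gamma_q(F^{(1)})\setminus\gamma_{q+1}(F^{(1)})$ and $H_1$ has non-zero simplicial volume, the theorem gives $\WW(H_1,\gamma_{q-1}(H_1))=0$ and $\Rdim\Wcal(H_1,\gamma_{q-1}(H_1),\gamma_q(H_1))=1$. Then I would apply the same theorem with parameter $q-1$ (this requires $q-1\geq 2$, which is exactly why the hypothesis $q\in\NN_{\geq 3}$ is imposed) to $H_2=F^{(2)}/\llangle r_2\rrangle$: since $r_2\in\gamma_{q-1}(F^{(2)})\setminus\gamma_q(F^{(2)})$ and $H_2$ has non-zero simplicial volume, I obtain $\Rdim\WW(H_2,\gamma_{q-1}(H_2))=1$, hence in particular $\WW(H_2,\gamma_{q-1}(H_2))\ne 0$.

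\textbf{Step 2 (free-product factors are retracts).} For the free product $\Gg=H_1\star H_2$, the two projections $\rho_s\colon \Gg\to H_s$ ($s\in\{1,2\}$) that collapse the other factor to the identity are group homomorphisms, and their restrictions to $H_s\leqslant\Gg$ are $\mathrm{id}_{H_s}$. Thus $H_1$ and $H_2$ are retracts of $\Gg$. I can now apply Proposition~\ref{prop=retract} to the triple $(\Gg,H_1,H_2)$ with the parameter $q$: the non-vanishing statements in Step~1 transfer to $\Wcal(\Gg,\gamma_{q-1}(\Gg),\gamma_q(\Gg))\ne 0$ and $\WW(\Gg,\gamma_{q-1}(\Gg))\ne 0$.

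\textbf{Step 3 (finite dimensionality).} Since $F^{(1)}$ and $F^{(2)}$ are finitely generated, so is $\Gg=H_1\star H_2$. Corollary~\ref{cor=nilp}, applied with the normal subgroups $\gamma_{q-1}(\Gg)\geqslant\gamma_{q-1}(\Gg)$ and $\gamma_q(\Gg)\geqslant\gamma_q(\Gg)$, yields that both $\WW(\Gg,\gamma_{q-1}(\Gg))$ and $\WW(\Gg,\gamma_q(\Gg))$ are finite dimensional. For $\Wcal(\Gg,\gamma_{q-1}(\Gg),\gamma_q(\Gg))$, I would invoke Theorem~\ref{thm=dimWcal} with $(\Lg,\Ng)=(\gamma_{q-1}(\Gg),\gamma_q(\Gg))$, which provides the formula $\Rdim\Wcal(\Gg,\gamma_{q-1}(\Gg),\gamma_q(\Gg))=\Rdim\WW(\Gg,\gamma_q(\Gg))-\Rdim i^{\ast}\WW(\Gg,\gamma_{q-1}(\Gg))$ and therefore guarantees finite dimensionality from the finite dimensionality of $\WW(\Gg,\gamma_q(\Gg))$.

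There is essentially no hard part: the obstacles are just to notice (a)~that the shift $q\mapsto q-1$ in the second application of Theorem~\ref{thm=one_relator} is legitimate precisely under the hypothesis $q\geq 3$, and (b)~that the factor-projections of a free product are honest retractions so that Proposition~\ref{prop=retract} is applicable. Both are straightforward.
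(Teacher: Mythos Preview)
Your proof is correct and follows essentially the same approach as the paper: apply Theorem~\ref{thm=one_relator} to $H_1$ (with parameter $q$) and to $H_2$ (with parameter $q-1$), use that free-product factors are retracts to invoke Proposition~\ref{prop=retract}, and finish with Corollary~\ref{cor=nilp}. The only cosmetic difference is that for the finite dimensionality of $\Wcal(\Gg,\gamma_{q-1}(\Gg),\gamma_q(\Gg))$ you route through Theorem~\ref{thm=dimWcal}, whereas the paper implicitly uses that $\Wcal(\Gg,\gamma_{q-1}(\Gg),\gamma_q(\Gg))$ is a quotient of $\WW(\Gg,\gamma_q(\Gg))$; either works.
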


\begin{proof}
By applying Proposition~\ref{prop=retract} and Theorem~\ref{thm=one_relator}, we obtain that  $\Wcal(\Gg,\gamma_{q-1}(\Gg),\gamma_q(\Gg))$ and $\WW(\Gg,\gamma_{q-1}(\Gg))$ are both non-zero spaces. Recall also Corollary~\ref{cor=nilp}.
\end{proof}

\subsection{Subsets on which $\scl_{\Gg,\Lg}$ and $\scl_{\Gg,\Ng}$ are bi-Lipschitzly equivalent}\label{subsec=application_cod}
Corollary~\ref{cor=notbilip} can supply many triples $(\Gg,\Lg,\Ng)$ for which $\scl_{\Gg,\Lg}$ and $\scl_{\Gg,\Ng}$ are not bi-Lipschitzly equivalent on $[\Gg,\Ng]$.  Nevertheless, the following theorem states that if we know the set of zeros for a set of representative of a basis of $\Wcal(\Gg,\Lg,\Ng)$, then we may find a subset $Z\subseteq [\Gg,\Ng]$ on which $\scl_{\Gg,\Lg}$ and $\scl_{\Gg,\Ng}$ are bi-Lipschitzly equivalent. It is worth noting that the sets of zeros of certain \emph{quasi}morphisms have such a property of interest. The following theorem is an application of Theorem~\ref{thm=comparisonDD}.

\begin{thm}\label{thm=application_cod}
Assume Setting~$\ref{setting=GLN}$. Assume that $\Wcal(\Gg,\Lg,\Ng)$ is non-zero finite dimensional, and set $\ell=\Wcal(\Gg,\Lg,\Ng)$. Take an arbitrary basis of $\Wcal(\Gg,\Lg,\Ng)$. Take an arbitrary  set $\{\nu_1,\ldots ,\nu_{\ell}\}\subseteq  \QQQ(\Ng)^{\Gg}$ of representatives of this basis. Let $Z_{\nug_1,\ldots ,\nug_{\ell}}=[\Gg,\Ng]\cap \bigcap\limits_{j\in \{1,\ldots ,\ell\}}\nug_j^{-1}(\{0\})$. Then, we have for every $\zg\in Z_{\nu_1,\ldots ,\nu_{\ell}}$,
\[
\scl_{\Gg,\Ng}(\zg)\leq \mathscr{C}_{1,\mathrm{ctd}}\cdot \scl_{\Gg,\Lg}(\zg),
\]
where $\mathscr{C}_{1,\mathrm{ctd}}$ is the constant associated with $\nu_1,\ldots ,\nu_{\ell}$ appearing in Theorem~$\ref{thm=comparisonDD}$.
\end{thm}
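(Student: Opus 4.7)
The plan is to combine the comparison theorem of defects (Theorem~\ref{thm=comparisonDD}) with Bavard duality (Theorem~\ref{thm=Bavard}) applied on both the $\Ng$-side and the $\Lg$-side. The key observation is that on the subset $Z_{\nug_1,\ldots,\nug_\ell}$, an arbitrary $\nug\in \QQQ(\Ng)^{\Gg}$ behaves as if it were extendable to $\Lg$, because the non-extendable components $\nug_1,\ldots,\nug_\ell$ vanish there by definition, and the $\HHH^1$-part vanishes on $[\Gg,\Ng]$.

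First, fix $\zg\in Z_{\nug_1,\ldots,\nug_\ell}$ and an arbitrary $\nug\in \QQQ(\Ng)^{\Gg}\setminus \HHH^1(\Ng)^{\Gg}$. Apply Theorem~\ref{thm=comparisonDD} to produce $\kg\in \HHH^1(\Ng)^{\Gg}$, $\psg\in \QQQ(\Lg)^{\Gg}$ and $(a_1,\ldots,a_\ell)\in \RR^\ell$ with
\[
\nug=\kg+i^{\ast}\psg+\sum_{j\in \{1,\ldots,\ell\}}a_j\nug_j,\qquad \DD(\psg)\leq \mathscr{C}_{1,\mathrm{ctd}}\cdot \DD(\nug).
\]
Since $\zg\in [\Gg,\Ng]$, we have $\kg(\zg)=0$; since $\zg\in Z_{\nug_1,\ldots,\nug_\ell}$, we have $\nug_j(\zg)=0$ for every $j\in \{1,\ldots,\ell\}$. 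Hence
\[
\nug(\zg)=\psg(\zg).
\]

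Secondly, I would apply Bavard duality on the $\Lg$-side. Since $\zg\in [\Gg,\Ng]\subseteq [\Gg,\Lg]$ and $\psg\in \QQQ(\Lg)^{\Gg}$, Theorem~\ref{thm=Bavard} gives
\[
|\psg(\zg)|\leq 2\DD(\psg)\cdot \scl_{\Gg,\Lg}(\zg)\leq 2\mathscr{C}_{1,\mathrm{ctd}}\cdot \DD(\nug)\cdot \scl_{\Gg,\Lg}(\zg),
\]
where the second inequality uses the defect comparison from the first step. Combining with $\nug(\zg)=\psg(\zg)$, we obtain
\[
\frac{|\nug(\zg)|}{2\DD(\nug)}\leq \mathscr{C}_{1,\mathrm{ctd}}\cdot \scl_{\Gg,\Lg}(\zg).
\]

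Finally, taking the supremum over all $\nug\in \QQQ(\Ng)^{\Gg}\setminus \HHH^1(\Ng)^{\Gg}$ and invoking Bavard duality on the $\Ng$-side (Theorem~\ref{thm=Bavard}) yields $\scl_{\Gg,\Ng}(\zg)\leq \mathscr{C}_{1,\mathrm{ctd}}\cdot \scl_{\Gg,\Lg}(\zg)$, as desired. (Note $\QQQ(\Ng)^{\Gg}\setminus \HHH^1(\Ng)^{\Gg}\ne \emptyset$ because $\Wcal(\Gg,\Lg,\Ng)\ne 0$, so the supremum is not vacuous.) There is essentially no obstacle here: the argument is a direct chaining of Theorems~\ref{thm=Bavard} and \ref{thm=comparisonDD}, and the only role of the restriction to $Z_{\nug_1,\ldots,\nug_\ell}$ is to kill precisely the terms in the decomposition of $\nug$ that prevent the bound $\DD(\psg)\leq \mathscr{C}_{1,\mathrm{ctd}}\cdot \DD(\nug)$ from being converted back into a $\scl$-inequality.
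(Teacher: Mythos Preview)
Your proof is correct and follows essentially the same approach as the paper's: decompose an arbitrary $\nug$ via Theorem~\ref{thm=comparisonDD}, use $\zg\in Z_{\nug_1,\ldots,\nug_\ell}\subseteq[\Gg,\Ng]$ to reduce $\nug(\zg)$ to $\psg(\zg)$, and then chain the two Bavard dualities. The only cosmetic difference is that the paper picks a near-optimal $\nug_\varepsilon$ and lets $\varepsilon\searrow 0$, whereas you bound $\frac{|\nug(\zg)|}{2\DD(\nug)}$ uniformly and take the supremum directly.
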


\begin{proof}
Take the constants $\mathscr{C}_{1,\mathrm{ctd}}, \mathscr{C}_{2,\mathrm{ctd}}$  associated with $\nu_1,\ldots ,\nu_{\ell}$ as in Theorem~\ref{thm=comparisonDD}. Let $\zg\in Z_{\nug_1,\ldots,\nug_{\ell}}$. Let $\varepsilon\in \RR_{>0}$. Then, by Theorem~\ref{thm=Bavard} (for the pair $(\Gg,\Ng)$), we have $\nug_{\varepsilon}\in \QQQ(\Ng)^{\Gg}$ such that
\begin{equation}\label{eq=z1}
|\nug_{\varepsilon}(\zg)|\geq 2(1-\varepsilon)\scl_{\Gg,\Ng}(\zg)\cdot \DD(\nug_{\varepsilon}).
\end{equation}
By applying Theorem~\ref{thm=comparisonDD} to $\nug_{\varepsilon}$, we have $\kg\in \HHH^1(\Ng)^{\Gg}$, $\psg\in \QQQ(\Lg)^{\Gg}$ and $a_1,\ldots,a_{\ell}\in \RR$ such that $\nug_{\varepsilon}=\kg+i^{\ast}\psg+\sum\limits_{j\in\{1,\ldots,\ell\}}a_j\nug_j$ and
\[
\DD(\nug_{\varepsilon})\geq \mathscr{C}_{1,\mathrm{ctd}}^{-1}\left(\DD(\psg)+\mathscr{C}_{2,\mathrm{ctd}}^{-1}\cdot \sum_{j\in \{1,\ldots,\ell\}}|a_j|\right).
\]
In particular, we have $\DD(\psg)\leq \mathscr{C}_{1,\mathrm{ctd}}\cdot \DD(\nug_{\varepsilon})$. Since $\zg\in Z_{\nug_1,\ldots,\nug_{\ell}}$, we have $\psg(\zg)=\nug_{\varepsilon}(\zg)$. Therefore, by \eqref{eq=z1}, Theorem~\ref{thm=Bavard}  (for the pair $(\Gg,\Lg)$) yields that
\[
\mathscr{C}_{1,\mathrm{ctd}}\cdot \scl_{\Gg,\Lg}(\zg)\geq (1-\varepsilon)\scl_{\Gg,\Ng}(\zg).
\]
By letting $\varepsilon \searrow 0$, we have the conclusion.
\end{proof}

\begin{exa}[example for surface groups]
Let $\genus\in \NN_{\geq 2}$. We consider the surface group $\pi_1(\Sigma_{\genus})$ (\eqref{eq=surfacegroup}). As is mentioned in Theorem~\ref{thm=WW}~(2), $\Rdim \WW(\pi_1(\Sigma_{\genus}),\gamma_2(\pi_1(\Sigma_{\genus})))=1$. Define
\[
\overline{G}=\langle c,d\,|\, [c,d]^2=e_{\overline{G}}\rangle.
\]
Let $I_{\genus}=\{(i,j)\in \ZZ^2\,|\,1\leq i<j\leq \genus\}$. For every $(i,j)\in I_{\genus}$, we can define the surjective group homomorphism $p_{(i,j)}\colon \pi_1(\Sigma_{\genus})\twoheadrightarrow \overline{G}$ that sends $a_i$ and $a_j$ to $c$, $b_i$ and $b_j$ to $d$, and $a_s$, $b_s$ for every $s\in \{1,\ldots,\genus\}\setminus \{i,j\}$ to $e_{\overline{G}}$. In \cite[Section~5]{MMM}, an element $\overline{\nug}\in \QQQ(\gamma_2(\overline{G}))^{\overline{G}}$ with the following property is constructed: for every $(i,j)\in I_{\genus}$, $[p_{(i,j)}^{\ast}\overline{\nug}]$ generates the space $\WW(\gamma_2(\pi_1(\Sigma_{\genus})))^{\pi_1(\Sigma_{\genus})}$. Therefore, Theorem~\ref{thm=application_cod} implies that for every $(i,j)\in I_{\genus}$ there exists $C_{(i,j)}\in \RR_{\geq 1}$ such that on
\[
Z_{(i,j)}=\gamma_3(\pi_1(\Sigma_{\genus}))\cap (p_{(i,j)}^{\ast}\overline{\nug})^{-1}(\{0\}),
\]
$\scl_{\pi_1(\Sigma_{\genus}),\gamma_2(\pi_1(\Sigma_{\genus}))}\leq C_{(i,j)}\cdot \scl_{\pi_1(\Sigma_{\genus})}$ holds. Set $Z=\bigcup\limits_{(i,j)\in I_{\genus}}Z_{(i,j)}$ and $C=\max\limits_{(i,j)\in I_{\genus}}C_{(i,j)}$. Then for every $\zg\in Z$, we have $\scl_{\pi_1(\Sigma_{\genus}),\gamma_2(\pi_1(\Sigma_{\genus}))}(\zg)\leq C\cdot \scl_{G_{\genus}}(\zg)$. In particular, $\scl_{\pi_1(\Sigma_{\genus})}$ and $\scl_{\pi_1(\Sigma_{\genus}),\gamma_2(\pi_1(\Sigma_{\genus}))}$ are bi-Lipschitzly equivalent on $Z$.  We note that
\[
Z \supseteq \gamma_3(\pi_1(\Sigma_{\genus}))\cap \left(\bigcup_{(i,j)\in I_{\genus}}\Ker(p_{(i,j)})\right).
\]
We exhibit concrete examples of elements in this set $Z$ when $\genus=4$. Every element of the form
\[
\lambda \lambda'_1[a_1,b_1]^{t}\lambda'_2[a_2^{s_1},b_2^{s_2}]^{s_3}\lambda'_3[a_3,b_3]^{t}\lambda'_4[a_4^{s_4},b_4^{s_5}]^{s_6} \lambda'_5\lambda^{-1},
\]
where $\lambda\in \pi_1(\Sigma_4)$, $\lambda'_1,\lambda'_2,\lambda'_3,\lambda'_4,\lambda'_5\in \gamma_3(\pi_1(\Sigma_4))\cap \langle a_2,b_2,a_4,b_4\rangle$ and $s_1,s_2,s_3,s_4,s_5,s_6,t\in \ZZ$  with $s_1s_2s_3=s_4s_5s_6=t$, belongs to $\gamma_3(\pi_1(\Sigma_{\genus}))\cap\Ker(p_{(1,3)})\subseteq Z$. Every  product of such elements also lies in the set $Z$.
\end{exa}

\section{Applications of the coarse kernels}\label{sec=remarks}
In this section, we present applications of the coarse kernels obtained in Theorem~\ref{mthm=main}. In particular, we prove Proposition~\ref{prop=extexample} and Theorem~\ref{thm=crushingq2}.

\subsection{Coarse kernel and extendability}\label{subsec=coarsekerext}
The following theorem connects the extendability problem up to invariant homomorphisms and behaviors of invariant quasimorphisms on the coarse kernel.

\begin{thm}[coarse kernel and extendability in the abelian case]\label{thm=extkernel}
Assume Settings~$\ref{setting=GLN}$, $\ref{setting=main1}$ and $\ref{setting=main3}$. Let $\bA$ be the coarse kernel of $\iota_{(\Gg,\Lg,\Ng)}$ \textup{(}as a coarse subspace\textup{)}. Let $\nug\in \QQQ(\Ng)^{\Gg}$. For every $i\in \{1,\ldots,\ell\}$, set $\vec{e}_i$ as the unit vector in $\ZZ^{\ell}$ supported on the $i$-th entry. Then, the following are all equivalent.
\begin{enumerate}[label=\textup{(\arabic*)}]
  \item The invariant quasimorphism $\nug$ represents the zero element in $\Wcal(\Gg,\Lg,\Ng)$.
  \item For every representative $A\subseteq [\Gg,\Ng]$ of $\bA$, $\nug$ is bounded on $A$.
  \item There exists a representative $A\subseteq [\Gg,\Ng]$ of $\bA$ such that $\nug$ is bounded on $A$.
  \item For every representative $A\subseteq [\Gg,\Ng]$ of $\bA$, for every coarse group isomorphism $\PPs\colon \ZZ^{\ell}\to A$ and for every $i\in \{1,\ldots,\ell\}$, $\nug$ is bounded on the set $\PPs(\ZZ\vec{e}_i)$.
  \item There exist a representative $A\subseteq [\Gg,\Ng]$ of $\bA$ and a coarse group isomorphism $\PPs\colon \ZZ^{\ell}\to A$ such that for every $i\in \{1,\ldots,\ell\}$, $\nug$ is bounded on the set $\PPs(\ZZ\vec{e}_i)$.
\end{enumerate}
\end{thm}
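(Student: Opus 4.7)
The plan is to close the cycle (1)$\Rightarrow$(2)$\Rightarrow$(3)$\Rightarrow$(1), together with the analogous passages through (4) and (5); this will give all five equivalences. The implications (2)$\Rightarrow$(3), (2)$\Rightarrow$(4), (3)$\Rightarrow$(5), and (4)$\Rightarrow$(5) are immediate from the set-theoretic containment $\PPs(\ZZ\vec{e}_i) \subseteq \PPs(\ZZ^{\ell}) \subseteq A$ and the logical passage from ``for every'' to ``there exists''. Only three genuine steps remain: (1)$\Rightarrow$(2), a direct computation from the decomposition of $\nug$; (5)$\Rightarrow$(3), a short homogenization argument; and (3)$\Rightarrow$(1), the main step, which will exploit the canonical coarse isomorphism of Theorem~\ref{thm=PPs} to read off the coefficients of $\nug$ in the fixed basis $\{[\nug_1], \ldots, [\nug_{\ell}]\}$ of $\Wcal(\Gg,\Lg,\Ng)$.

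For (1)$\Rightarrow$(2), suppose $[\nug] = 0$ and write $\nug = \kg + i^{\ast}\psg$ with $\kg \in \HHH^1(\Ng)^{\Gg}$ and $\psg \in \QQQ(\Lg)^{\Gg}$. The $\Gg$-invariance of $\kg$ gives $\kg([\gG,\xg]) = \kg(\gG\xg\gG^{-1}) - \kg(\xg) = 0$ for every $\gG\in\Gg$, $\xg\in\Ng$, so $\kg$ vanishes on $[\Gg,\Ng]$. Any representative $A$ of $\bA$ is $d_{\scl_{\Gg,\Lg}}$-bounded by the definition of the coarse kernel, and Theorem~\ref{thm=Bavard} applied to $\psg$ yields $|\psg(\yg_1) - \psg(\yg_2)| \leq (2\,\diam_{d_{\scl_{\Gg,\Lg}}}(A) + 1)\,\DD(\psg)$ for every $\yg_1,\yg_2 \in A$, so $\psg$, and hence $\nug$, is bounded on $A$.

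For (5)$\Rightarrow$(3), fix $A$ and a coarse group isomorphism $\PPs \colon \ZZ^{\ell} \to A$ as in the hypothesis. By Bavard duality on $(\Gg,\Ng)$, $\nug$ is large-scale Lipschitz with respect to $d_{\scl_{\Gg,\Ng}}$, and $\PPs$ is a pre-coarse homomorphism, so the composition $\nug \circ \PPs \colon (\ZZ^{\ell},\|\cdot\|_1) \to (\RR,|\cdot|)$ is a quasimorphism on the abelian group $\ZZ^{\ell}$. As in the proof of Lemma~\ref{lem=crushing}, its homogenization is a genuine group homomorphism $\bar{S}\colon \ZZ^{\ell}\to\RR$ with $\|\nug\circ\PPs - \bar{S}\|_{\infty}<\infty$. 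The hypothesis forces $\bar{S}$ to be bounded on each $\ZZ\vec{e}_i$, hence $\bar{S}(\vec{e}_i) = 0$ for every $i$, hence $\bar{S}\equiv 0$, and therefore $\nug\circ\PPs$ is bounded on $\ZZ^{\ell}$. Controlledness of $\nug$ together with $\PPs(\ZZ^{\ell}) \asymp A$ in $d_{\scl_{\Gg,\Ng}}$ then transfers boundedness of $\nug$ to all of $A$.

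The main step is (3)$\Rightarrow$(1). Consider the canonical map $\PPs_0\colon \ZZ^{\ell}\to [\Gg,\Ng]$ constructed from $(\nug_1,\ldots,\nug_{\ell})$ and the fixed $[\Ff,\Kf]$-expressions $\bffone,\ldots,\bffell$ via Theorem~\ref{thm=PPs}; by Theorem~\ref{thm=maingeneral}, its image $A_0 = \PPs_0(\ZZ^{\ell})$ represents $\bA$. Controlledness of $\nug$ together with $A \asymp A_0$ in $d_{\scl_{\Gg,\Ng}}$ propagates boundedness of $\nug$ from $A$ to $A_0$. Since $\{[\nug_1],\ldots,[\nug_{\ell}]\}$ is a basis of $\Wcal(\Gg,\Lg,\Ng)$, decompose $\nug = \kg + i^{\ast}\psg + \sum_{j=1}^{\ell} a_j \nug_j$ with $\kg\in\HHH^1(\Ng)^{\Gg}$, $\psg\in\QQQ(\Lg)^{\Gg}$, and uniquely determined $a_1,\ldots,a_{\ell}\in\RR$. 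Exactly as in the (1)$\Rightarrow$(2) step, $\kg$ vanishes on $[\Gg,\Ng]$ and $\psg$ is bounded on the $d_{\scl_{\Gg,\Lg}}$-bounded set $A_0$. Applying Lemma~\ref{lem=nugalpha}~(2) to $\PPs_0(m\vec{e}_i) = \beta_{\bffi}(m)$ gives $|\nug_j(\PPs_0(m\vec{e}_i)) - m\delta_{i,j}| \leq (2t_i-1)\DD(\nug_j)$ for every $m\in\ZZ$, so $\nug(\PPs_0(m\vec{e}_i)) = a_i m + O(1)$ uniformly in $m$. Boundedness of $\nug$ on $A_0 \supseteq \PPs_0(\ZZ\vec{e}_i)$ then forces $a_i = 0$ for every $i$, yielding $[\nug] = 0$ in $\Wcal(\Gg,\Lg,\Ng)$ and closing the cycle.
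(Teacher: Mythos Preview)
Your proof is correct and follows essentially the same approach as the paper: the paper's cycle runs (1)$\Rightarrow$(2)$\Rightarrow$(4)$\Rightarrow$(5)$\Rightarrow$(3)$\Rightarrow$(2) and closes via (4)$\Rightarrow$(1), while you close via (3)$\Rightarrow$(1), but in both cases the key step evaluates $\nug$ along $\PPs_0(m\vec{e}_i)=\beta_{\bffi}(m)$ and extracts $a_i$ as the linear growth rate (the paper invokes the limit formula of Theorem~\ref{thm=valueofcore}, you invoke Lemma~\ref{lem=nugalpha} directly---the same computation). One small remark: your claim that (3)$\Rightarrow$(5) is ``immediate from set-theoretic containment'' tacitly assumes a coarse group isomorphism $\ZZ^{\ell}\to A$ exists for the given $A$, which needs Theorem~\ref{thm=maingeneral}; but this implication is redundant in your cycle anyway, so it does not affect correctness.
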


\begin{proof}
Recall from Theorem~\ref{thm=maingeneral} that $\bA$ is isomorphic to $(\ZZ^{\ell},\|\cdot\|_1)$ as a coarse group and that the map $\PPs$ defined in Theorem~\ref{thm=PPs} is a coarse homomorphism between $\ZZ^{\ell}$ and a certain representative $A$ of $\bA$. Hence, (2) implies  (4), and (4) implies (5).
Since every representative $A$ of $\bA$ is $d_{\scl_{\Gg,\Lg}}$-bounded and $A\subseteq [\Gg,\Ng]$, by Theorem~\ref{thm=Bavard} (for the pair $(\Gg,\Lg)$) (1) implies (2). Since two representatives $A,A'$ of $\bA$ are asymptotic ($A\asymp A'$), by Theorem~\ref{thm=Bavard} (for the pair $(\Gg,\Ng)$) (3) implies (2). Every element $\vm=(m_1,\ldots,m_{\ell})\in \ZZ^{\ell}$ is written as $\vm=m_1\vec{e}_1+\cdots +m_{\ell}\vec{e}_{\ell}$. Together with Theorem~\ref{thm=Bavard}, we conclude that (5) implies (3).
Now it suffices to show that (4) implies (1) in order to close up our proof. Take a basis of $\Wcal(\Gg,\Lg,\Ng)$, and take a set $\{\nug_1,\ldots,\nug_{\ell}\}$ of  representatives of the basis. For this $(\nug_1,\ldots,\nug_{\ell})$, construct a map $\PPs$ as in Theorem~\ref{thm=PPs}. Write $\nug=\kg+i_{\Ng,\Lg}^{\ast}\psg+\sum\limits_{j\in \{1,\ldots,\ell\}}a_j\nug_j$, where $\kg\in \HHH^1(\Ng)^{\Gg}$, $\psg\in \QQQ(\Lg)^{\Gg}$ and $(a_1,\ldots,a_{\ell})\in \RR^{\ell}$. Then by the limit formula in Theorem~\ref{thm=valueofcore}, the construction of $\PPs$ implies that for every $i\in \{1,\ldots,\ell\}$,
\[
a_i=\lim_{m\to \infty}\frac{\nug(\PPs(m\vec{e}_i))}{m}.
\]
Under (4), the limit above equals zero, and we  obtain (1). This completes our proof.
\end{proof}

\begin{proof}[Proof of Proposition~$\ref{prop=extexample}$]
Recall Lemma~\ref{lem=maintogeneral}. Then, by Proposition~\ref{prop=explicitexample} the equivalence between (1) and (5) of Theorem~\ref{thm=extkernel} ends the proof.
\end{proof}

\subsection{Induced $\RR$-linear maps and the coarse duality formula}\label{subsec=induced}
Let $\Gg,H$ be two groups, and let $\varphi\colon \Gg\to H$ be a group homomorphism. For each $p\in \NN$, $\varphi$ sends every $(\Gg,\gamma_p(\Gg))$-commutator to a  $(H,\gamma_p(H))$-commutator. Therefore, for each $p\in \NN$, $\varphi$ induces a map $\varphi_{p}\colon (\gamma_{p+1}(\Gg),d_{\scl_{\Gg,\gamma_p(\Gg)}})\to (\gamma_{p+1}(H),d_{\scl_{H,\gamma_p(H)}})$. This map $\varphi_p$ is a coarse homomorphism because it does not increase the corresponding $\scl$-almost-metrics. For every $q\in \NN_{\geq 2}$ we present the construction of  induced $\RR$-linear maps $T^{\varphi_{q-1,q}}\colon \Wcal(H,\gamma_{q-1}(H),\gamma_q(H))\to \Wcal(\Gg,\gamma_{q-1}(\Gg),\gamma_q(\Gg))$ and $S_{\varphi_{q-1,q}}$. Here, to define $S_{\varphi_{q-1,q}}$, we assume that $\Wcal(\Gg,\gamma_{q-1}(\Gg),\gamma_q(\Gg))$ and $\Wcal(H,\gamma_{q-1}(H),\gamma_q(H))$ are both finite dimensional; it is satisfied if $\Gg$ and $H$ are finitely generated (Corollary~\ref{cor=nilp}). We briefly discussed the case of $q=2$ in Subsection~\ref{subsec=intro_main}.

In the setting above, let $\ell=\Rdim\Wcal(\Gg,\gamma_{q-1}(\Gg),\gamma_q(\Gg))$ and $\ell'=\Rdim\Wcal(H,\gamma_{q-1}(H),\gamma_q(H))$. Then by Theorem~\ref{mthm=main}, the coarse kernels of $\iota_{(\Gg,\gamma_{q-1}(\Gg),\gamma_q(\Gg))}$ and $\iota_{(H,\gamma_{q-1}(H),\gamma_q(H))}$ are isomorphic to $(\ZZ^{\ell},\|\cdot\|_1)$ and $(\ZZ^{\ell'},\|\cdot\|_1)$ as coarse groups, respectively. Hence, they are isomorphic to $(\RR^{\ell},\|\cdot\|_1)$ and $(\RR^{\ell'},\|\cdot\|_1)$ as coarse groups, respectively.

\begin{prop}\label{prop=induced}
Let $\Gg$ and $H$ be groups, and let $\varphi \colon \Gg\to H$ be a group homomorphism. Let $q\in \NN_{\geq 2}$.
\begin{enumerate}[label=\textup{(\arabic*)}]
  \item The following map is well-defined:
\[
T \colon \Wcal(H,\gamma_{q-1}(H),\gamma_q(H))\to \Wcal(\Gg,\gamma_{q-1}(\Gg),\gamma_q(\Gg));\quad [\nug]\mapsto [\varphi^{\ast}\nug ].
\]
  \item Assume furthermore that $\Wcal(\Gg,\gamma_{q-1}(\Gg),\gamma_q(\Gg))$ and $\Wcal(H,\gamma_{q-1}(H),\gamma_q(H))$ are finite dimensional. Set $\mathbf{A}_{\Gg,q-1,q}$ and $\mathbf{A}_{H,q-1,q}$ as the coarse kernels \textup{(}as coarse subspaces\textup{)} of $\iota_{\Gg,\gamma_{q-1}(\Gg),\gamma_q(\Gg)}$ and $\iota_{H,\gamma_{q-1}(H),\gamma_q(H)}$. Then, the  coarse homomorphism $\varphi_{q}\colon (\gamma_{q+1}(\Gg),d_{\scl_{\Gg,\gamma_q(\Gg)}})\to (\gamma_{q+1}(H),d_{\scl_{H,\gamma_q(H)}})$ induces a coarse homomorphism $\mathbf{S}\colon \mathbf{A}_{\Gg,q-1,q}\to \mathbf{A}_{H,q-1,q}$.
\end{enumerate}
\end{prop}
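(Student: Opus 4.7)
The plan is to handle the two parts separately: Part (1) is essentially formal functoriality, while Part (2) hinges on the inequalities $\scl_{H,\gamma_p(H)}\circ\varphi\leq \scl_{\Gg,\gamma_p(\Gg)}$ for $p\in\{q-1,q\}$ combined with the universal property (maximality) of the coarse kernel.

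For Part (1), the key observation is that any group homomorphism $\varphi\colon \Gg\to H$ satisfies $\varphi(\gamma_p(\Gg))\subseteq \gamma_p(H)$ for every $p\in\NN$, by induction on $p$ using $\varphi([g,x])=[\varphi(g),\varphi(x)]$. In particular $\varphi^{\ast}\nug=\nug\circ\varphi|_{\gamma_q(\Gg)}$ is well-defined for every $\nug\in \QQQ(\gamma_q(H))^{H}$; it is homogeneous, has defect at most $\DD(\nug)$, and is $\Gg$-invariant since $\nug$ is $H$-invariant and $\varphi$ is a homomorphism. Hence pullback is an $\RR$-linear map $\QQQ(\gamma_q(H))^{H}\to \QQQ(\gamma_q(\Gg))^{\Gg}$. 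It sends $\HHH^1(\gamma_q(H))^{H}$ into $\HHH^1(\gamma_q(\Gg))^{\Gg}$ (composition with a homomorphism preserves being a homomorphism) and $i_{\gamma_q(H),\gamma_{q-1}(H)}^{\ast}\QQQ(\gamma_{q-1}(H))^{H}$ into $i_{\gamma_q(\Gg),\gamma_{q-1}(\Gg)}^{\ast}\QQQ(\gamma_{q-1}(\Gg))^{\Gg}$ (again using $\varphi(\gamma_{q-1}(\Gg))\subseteq \gamma_{q-1}(H)$, so that $\varphi^{\ast}(i^{\ast}\psg)=i^{\ast}(\varphi^{\ast}\psg)$). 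Therefore it descends to the desired $\RR$-linear map $T$ between the cokernels.

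For Part (2), the crucial inequality is the following: for every $\yg=[g_1,x_1]\cdots[g_n,x_n]\in [\Gg,\gamma_p(\Gg)]$ with $g_i\in\Gg$ and $x_i\in\gamma_p(\Gg)$, one has $\varphi(\yg)=[\varphi(g_1),\varphi(x_1)]\cdots[\varphi(g_n),\varphi(x_n)]\in [H,\gamma_p(H)]$, so $\cl_{H,\gamma_p(H)}(\varphi(\yg))\leq \cl_{\Gg,\gamma_p(\Gg)}(\yg)$, and passing to the limit in $n$ yields $\scl_{H,\gamma_p(H)}(\varphi(\yg))\leq \scl_{\Gg,\gamma_p(\Gg)}(\yg)$ for $p\in\{q-1,q\}$. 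Thus $\varphi_q\colon (\gamma_{q+1}(\Gg),d_{\scl_{\Gg,\gamma_q(\Gg)}})\to (\gamma_{q+1}(H),d_{\scl_{H,\gamma_q(H)}})$ is a $1$-Lipschitz group homomorphism (in particular a pre-coarse homomorphism and a controlled map), and the restriction of $\varphi_q$ to any $d_{\scl_{\Gg,\gamma_{q-1}(\Gg)}}$-bounded set has $d_{\scl_{H,\gamma_{q-1}(H)}}$-bounded image.

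Now fix representatives $A$ and $B$ of $\mathbf{A}_{\Gg,q-1,q}$ and $\mathbf{A}_{H,q-1,q}$, respectively. By the first defining property of a coarse kernel (see Example~\ref{exa=CKiota}), $A$ is $d_{\scl_{\Gg,\gamma_{q-1}(\Gg)}}$-bounded, and hence $\varphi_q(A)\subseteq \gamma_{q+1}(H)$ is $d_{\scl_{H,\gamma_{q-1}(H)}}$-bounded by the previous paragraph. The second (maximality) property of $\mathbf{A}_{H,q-1,q}$ then gives $\varphi_q(A)\preccurlyeq_{d_{\scl_{H,\gamma_q(H)}}}B$, so for each $a\in A$ one can select $S(a)\in B$ with $d_{\scl_{H,\gamma_q(H)}}(\varphi_q(a),S(a))$ bounded uniformly in $a$. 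The resulting set map $S\colon A\to B$ is close to $\varphi_q|_A$, and since $\varphi_q$ is a controlled pre-coarse homomorphism, so is $S$; thus $S$ represents a coarse homomorphism $\mathbf{S}\colon \mathbf{A}_{\Gg,q-1,q}\to \mathbf{A}_{H,q-1,q}$. The only mildly delicate point is checking that this coarse map does not depend on the choices of $A$, $B$, or $S$: this follows from the standard observation that distinct representatives of the same coarse subspace are asymptotic and that the induced map between representatives is determined up to closeness by $\varphi_q$. I expect this bookkeeping, rather than any single computation, to be the main thing to write out carefully.
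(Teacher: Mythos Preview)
Your proposal is correct and follows essentially the same approach as the paper. For (1) you spell out the functoriality of $\varphi^{\ast}$ on the relevant subspaces exactly as the paper does (just in more detail), and for (2) both you and the paper use the same two-step argument: a representative $A$ of $\mathbf{A}_{\Gg,q-1,q}$ is $d_{\scl_{\Gg,\gamma_{q-1}(\Gg)}}$-bounded, so $\varphi_q(A)$ is $d_{\scl_{H,\gamma_{q-1}(H)}}$-bounded (via the $\cl$- and hence $\scl$-nonincreasing property of $\varphi$, which the paper records in the paragraph preceding the proposition), and then the maximality property of $\mathbf{A}_{H,q-1,q}$ gives $\varphi_q(A)\preccurlyeq B$.
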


The  existences of $\mathbf{A}_{\Gg,q-1,q}$ and $\mathbf{A}_{H,q-1,q}$ are ensured by Theorem~\ref{mthm=main}.

\begin{proof}[Proof of Proposition~$\ref{prop=induced}$]
Item (1) follows because $\varphi^{\ast}$ sends elements in $\HHH^1(\gamma_{q}(H))^H+i_{\gamma_{q}(H),\gamma_{q-1}(H)}^{\ast}\QQQ(\gamma_{q-1}(H))^H$ to those in $\HHH^1(\gamma_{q}(\Gg))^{\Gg}+i_{\gamma_{q}(\Gg),\gamma_{q-1}(\Gg)}^{\ast}\QQQ(\gamma_{q-1}(\Gg))^{\Gg}$. Item (2) follows from the universality of the coarse kernel $\mathbf{A}_{H,q-1,q}$. More precisely, fix representatives $A_{\Gg}\subseteq \gamma_{q+1}(\Gg)$ and $A_{H}\subseteq \gamma_{q+1}(H)$ of $\mathbf{A}_{\Gg,q-1,q}$ and $\mathbf{A}_{H,q-1,q}$, respectively. Then, $A_{\Gg}$ is $d_{\scl_{\Gg,\gamma_{q-1}(\Gg)}}$-bounded. Hence, $\varphi_{q}(A_{\Gg})=\varphi_{q-1}(A_{\Gg})$ is $d_{\scl_{H,\gamma_{q-1}(H)}}$-bounded. Therefore, $\varphi_q(A_{\Gg})$ is coarsely contained in $A_H$. Thus, $\varphi_q$ induces a  coarse homomorphism $\mathbf{S}\colon \mathbf{A}_{\Gg,q-1,q}\to \mathbf{A}_{H,q-1,q}$.
\end{proof}

\begin{defn}[induced $\RR$-linear maps by $\varphi$]\label{defn=induced}
Let $\Gg$ and $H$ be groups, and let $\varphi \colon \Gg\to H$ be a group homomorphism. Let $q\in \NN_{\geq 2}$.
\begin{enumerate}[label=(\arabic*)]
  \item We define the $\RR$-linear map $T^{\varphi_{q-1,q}}$$\colon \Wcal(H,\gamma_{q-1}(H),\gamma_q(H))\to \Wcal(\Gg,\gamma_{q-1}(\Gg),\gamma_q(\Gg))$ by the map defined in Proposition~\ref{prop=induced}~(1).
  \item Assume furthermore that $\Wcal(\Gg,\gamma_{q-1}(\Gg),\gamma_q(\Gg))$ and $\Wcal(H,\gamma_{q-1}(H),\gamma_q(H))$ are both finite dimensional. Set $\ell=\Rdim \Wcal(\Gg,\gamma_{q-1}(\Gg),\gamma_q(\Gg))$ and $\ell'=\Rdim \Wcal(H,\gamma_{q-1}(H),\gamma_q(H))$. Set $\mathbf{A}_{\Gg,q-1,q}$ and $\mathbf{A}_{H,q-1,q}$ as the coarse kernels  of $\iota_{\Gg,\gamma_{q-1}(\Gg),\gamma_q(\Gg)}$ and $\iota_{H,\gamma_{q-1}(H),\gamma_q(H)}$. Fix coarse isomorphisms $\mathbf{A}_{\Gg,q-1,q}\cong (\RR^{\ell},\|\cdot \|_1)$ and $\mathbf{A}_{H,q-1,q}\cong (\RR^{\ell'},\|\cdot \|_1)$. Then, the coarse homomorphism $\mathbf{S}\colon \mathbf{A}_{\Gg,q-1,q}\to \mathbf{A}_{H,q-1,q}$ constructed in Proposition~\ref{prop=induced}~(2) can be seen as a coarse homomorphism $\mathbf{S}\colon (\RR^{\ell},\|\cdot \|_1)\to (\RR^{\ell'},\|\cdot \|_1)$. Define $S_{\varphi_{q-1,q}}$ as the unique  representative of $\mathbf{S}\colon (\RR^{\ell},\|\cdot \|_1)\to (\RR^{\ell'},\|\cdot \|_1)$ being an $\RR$-linear map.
\end{enumerate}
\end{defn}
Here, in Definition~\ref{defn=induced}~(2), the existence and the uniqueness of $S_{\varphi_{q-1,q}}$ are both ensured by Lemma~\ref{lem=crushing}.

Strictly speaking, the $\RR$-linear map $S=S_{\varphi_{q-1,q}}$ in this formulation depends on the choices of coarse isomorphisms $\mathbf{A}_{\Gg,q-1,q}\cong (\RR^{\ell},\|\cdot \|_1)$ and $\mathbf{A}_{H,q-1,q}\cong (\RR^{\ell'},\|\cdot \|_1)$. If we choose such coarse isomorphisms in a certain manner, then we may take $S$ more concretely, as in the following remark.

\begin{rem}\label{rem=S}
In the setting of Definition~\ref{defn=induced}~(2), let $i_{\ell}\colon (\ZZ^{\ell},\|\cdot\|_1)\to (\RR^{\ell},\|\cdot\|_1)$ and $i_{\ell'}\colon (\ZZ^{\ell'},\|\cdot\|_1)\to (\RR^{\ell'},\|\cdot\|_1)$ be the inclusion maps. Let $\rho_{\ell}\colon  (\RR^{\ell},\|\cdot\|_1)\to (\ZZ^{\ell},\|\cdot\|_1)$ and $\rho_{\ell'}\colon (\RR^{\ell'},\|\cdot\|_1) \to (\ZZ^{\ell'},\|\cdot\|_1)$ be coarse inverses to $i_{\ell}$ and $i_{\ell'}$, respectively.
 By applying Theorem~\ref{thm=itte} to the triple $(\Gg,\gamma_{q-1}(\Gg),\gamma_q(\Gg))$, we obtain $\PPh^{\RR}$ and $\PPs$. Set $\PPh^{\RR}_{\Gg,q-1,q}=\PPh^{\RR}$ and $\PPs^{\RR}_{\Gg,q-1,q}=\PPs\circ \rho_{\ell}$. In a similar manner, we construct maps $\PPh^{\RR}_{H,q-1,q}$ and $\PPs^{\RR}_{H,q-1,q}$ for the triple $(H,\gamma_{q-1}(H),\gamma_q(H))$.
Then, by the proof of Proposition~\ref{prop=induced}~(2), the map
\[
S_0=\PPh^{\RR}_{H,q-1,q}\circ \varphi_q\circ \PPs^{\RR}_{\Gg,q-1,q}\colon (\RR^{\ell},\|\cdot\|_1)\to (\RR^{\ell'},\|\cdot\|_1)
\]
is a representative of the coarse homomorphism $\mathbf{S}\colon (\RR^{\ell},\|\cdot \|_1)\to (\RR^{\ell'},\|\cdot \|_1)$. Hence, the $\RR$-linear map $S_{\varphi,q-1,q}$ associated with $\PPh^{\RR}_{\Gg,q-1,q}\colon  (\RR^{\ell},\|\cdot \|_1)\stackrel{\cong}{\to}\mathbf{A}_{\Gg,q-1,q}$ and $\PPh^{\RR}_{H,q-1,q}\colon  (\RR^{\ell'},\|\cdot \|_1)\stackrel{\cong}{\to}\mathbf{A}_{H,q-1,q}$ is the map obtained by Lemma~\ref{lem=crushing} to $S_0$. See the diagram below, which commutes up to closeness.
\begin{equation}\label{eq=diagramch}
 \xymatrix@C=40pt{
(\RR^{\ell}, \| \cdot \|_1) \ar[d]_{S_{\varphi,q-1,q}}\ar[r]^-{\PPs^{\RR}_{\Gg,q-1,q}} & (\gamma_{q+1}(\Gg), d_{\scl_{\Gg, \gamma_q(\Gg)}}) \ar[r]^-{\iota_{\Gg,q-1,q}} \ar[d]^{\varphi_{q}} & (\gamma_{q+1}(\Gg), d_{\scl_{\Gg,\gamma_{q-1}(\Gg)}}) \ar[d]^{\varphi_{q-1}} \\
(\RR^{\ell'}, \| \cdot \|_1) \ar[r]^-{\PPs^{\RR}_{H,q-1,q}} & (\gamma_{q+1}(H), d_{\scl_{H, \gamma_q(H)}}) \ar[r]^-{\iota_{H,q-1,q}} & (\gamma_{q+1}(H), d_{\scl_{H,\gamma_{q-1}(H)}}).
}
\end{equation}
\end{rem}

The two $\RR$-linear maps $S_{\varphi_{q-1,q}}$ and $T^{\varphi_{q-1,q}}$ may be seen as induced maps to the setting of \emph{coarse subspaces of the groups} and \emph{function spaces on the groups}, respectively. As we mentioned in Subsection~\ref{subsec=intro_main} for $q=2$, we obtain the following \emph{coarse duality formula} between them.

\begin{thm}[coarse duality formula]\label{thm=duality}
We stick to the setting of Definition~$\ref{defn=induced}$~\textup{(2)}. Fix  coarse inverses $\rho_{\ell}$ and  $\rho_{\ell'}$ to the inclusion maps $i_{\ell}\colon (\ZZ^{\ell},\|\cdot\|_1)\to (\RR^{\ell},\|\cdot\|_1)$ and $i_{\ell'}\colon (\ZZ^{\ell'},\|\cdot\|_1)\to (\RR^{\ell'},\|\cdot\|_1)$, respectively. Fix tuples $(\nug_1,\ldots,\nug_{\ell})$ and $(\wf_1,\ldots,\wf_{\ell})$ as in Setting~$\ref{setting=main2}$ for $(\Gg,\gamma_{q-1}(\Gg),\gamma_q(\Gg))$; fix tuples $(\nug'_1,\ldots,\nug'_{\ell})$ and $(\wf'_1,\ldots,\wf'_{\ell})$ as in Setting~$\ref{setting=main2}$ for $(H,\gamma_{q-1}(H),\gamma_q(H))$. Let $\PPs^{\RR}_{\Gg,q-1,q}=\PPs_{\Gg}\circ \rho_{\ell}$, where $\PPs_{\Gg}$ is the map constructed in Theorem~$\ref{thm=itte}$ for the tuples $(\nug_1,\ldots,\nug_{\ell})$ and $(\wf_1,\ldots,\wf_{\ell})$; let $\PPs^{\RR}_{H,q-1,q}=\PPs_H\circ \rho_{\ell}$, where $\PPs_H$ is constructed in Theorem~$\ref{thm=itte}$ for the tuples $(\nug'_1,\ldots,\nug'_{\ell})$ and $(\wf'_1,\ldots,\wf'_{\ell})$.  Let $S_{\varphi_{q-1},q}\colon \RR^{\ell}\to \RR^{\ell'}$ be the $\RR$-linear map defined in Definition~$\ref{defn=induced}$~\textup{(2)} with respect to the identifications of coarse kernels to $\RR^{\ell}$ and $\RR^{\ell'}$ by $\PPs^{\RR}_{\Gg,q-1,q}$ and $\PPs^{\RR}_{H,q-1,q}$, respectively. Let $\tilde{T}^{\varphi_{q-1,q}}\colon \RR^{\ell'}\to \RR^{\ell}$ be the $\RR$-linear map defined in Definition~$\ref{defn=induced}$~\textup{(1)} with respect to the identifications of $\Wcal(\Gg,\gamma_{q-1}(\Gg),\gamma_q(\Gg))$ and $\Wcal(H,\gamma_{q-1}(H),\gamma_q(H))$ to $\RR^{\ell}$ and $\RR^{\ell'}$ by the bases $([\nug_1],\ldots,[\nug_{\ell}])$ and $([\nug'_1],\ldots,[\nug'_{\ell'}])$, respectively.
Define two maps $\langle \cdot | \cdot \rangle_{\Gg}\colon \RR^{\ell}\times \RR^{\ell}\to \RR$ and $\langle \cdot | \cdot \rangle_{H}\colon \RR^{\ell'}\times \RR^{\ell'}\to \RR$ in the following manner: for every $\va\in \RR^{\ell}$, $\va'\in \RR^{\ell'}$ and for every $\vb=(b_1,\ldots,b_{\ell})\in \RR^{\ell}$, $\vb'=(b'_1,\ldots,b'_{\ell})\in \RR^{\ell'}$, set
\[
\langle  \vb| \va\rangle_{\Gg} =(b_1\nug_1+\cdots +b_{\ell}\nug_{\ell})\Bigl(\PPs^{\RR}_{\Gg,q-1,q}(\va)\Bigr)\ \ \textrm{and}\ \
\langle  \vb'| \va'\rangle_{H} =(b'_1\nug'_1+\cdots +b'_{\ell}\nug'_{\ell'})\Bigl(\PPs^{\RR}_{H,q-1,q}(\va')\Bigr).
\]
Then, the following hold  true.
\begin{enumerate}[label=\textup{(\arabic*)}]
\item For every $\vb'\in \RR^{\ell'}$, we have the following closeness relation:
\begin{equation}\label{eq=duality}
\langle  \vb'  | S_{\varphi_{q-1,q}}\ \cdot\ \rangle_{H} \approx \langle  \tilde{T}^{\varphi_{q-1,q}} \vb'  | \ \cdot\ \rangle_{\Gg}.
\end{equation}
\item Let $M_{S_{\varphi}}$ and $M_{T^{\varphi}}$ be the representation matrices of $S_{\varphi_{q-1,q}}\colon \RR^{\ell}\to \RR^{\ell'}$ and $\tilde{T}^{\varphi_{q-1,q}}\colon \RR^{\ell'}\to \RR^{\ell}$, respectively. Then,
\[
M_{T^{\varphi}}={}^t M_{S_{\varphi}}
\]
holds. Here, for a matrix $M$, the symbol ${}^tM$ means the transpose of $M$.
\end{enumerate}
\end{thm}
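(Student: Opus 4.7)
My plan is to deduce part~(1) from the commutativity up to closeness of diagram~\eqref{eq=diagramch} together with the defining decomposition of $\tilde T^{\varphi_{q-1,q}}$, and then to derive part~(2) from (1) by linearizing both pairings via Lemma~\ref{lem=nugalpha}~(2).

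For (1), I would fix $\vb' = (b'_1, \ldots, b'_{\ell'}) \in \RR^{\ell'}$ and set $\mu = \sum_{j} b'_j \nug'_j \in \QQQ(\gamma_q(H))^H$. By the definition of $\tilde T^{\varphi_{q-1,q}}$, writing $\tilde T^{\varphi_{q-1,q}}\vb' = (c_1, \ldots, c_\ell)$, there exist $\kg \in \HHH^1(\gamma_q(\Gg))^\Gg$ and $\psg \in \QQQ(\gamma_{q-1}(\Gg))^\Gg$ with
\[
\varphi^* \mu = \sum_{i=1}^\ell c_i \nug_i + \kg + i^*\psg.
\]
Evaluating the left-hand side of~\eqref{eq=duality} at $\va \in \RR^\ell$ gives $\mu(\PPs^\RR_{H,q-1,q}(S_{\varphi_{q-1,q}}\va))$. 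By Remark~\ref{rem=S}, one has $\PPs^\RR_{H,q-1,q}\circ S_{\varphi_{q-1,q}} \approx \varphi_q\circ\PPs^\RR_{\Gg,q-1,q}$ as maps into $(\gamma_{q+1}(H), d_{\scl_{H,\gamma_q(H)}})$; since $\mu$ is Lipschitz on this almost-metric space (by Theorem~\ref{thm=Bavard}), this value equals $\mu(\varphi_q(\PPs^\RR_{\Gg,q-1,q}\va)) + O(1)$ uniformly in $\va$. Now $\mu\circ\varphi_q = (\varphi^*\mu)|_{\gamma_{q+1}(\Gg)}$, and I would expand it via the above display: the term $\kg$ vanishes on $\gamma_{q+1}(\Gg) = [\Gg, \gamma_q(\Gg)]$ by Lemma~\ref{lem=invariant_homomorphism}, while $i^*\psg(\PPs^\RR_{\Gg,q-1,q}\va)$ is bounded uniformly in $\va$, because $\PPs^\RR_{\Gg,q-1,q}$ is $d_{\scl_{\Gg,\gamma_{q-1}(\Gg)}}$-bounded (property~(1-2) of Theorem~\ref{thm=maingeneral}) and Theorem~\ref{thm=Bavard} yields $|\psg(y)| \leq 2\DD(\psg)\cdot\scl_{\Gg,\gamma_{q-1}(\Gg)}(y)$. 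The remaining term $\sum_i c_i \nug_i(\PPs^\RR_{\Gg,q-1,q}\va)$ is exactly $\langle \tilde T^{\varphi_{q-1,q}}\vb' | \va\rangle_\Gg$, which finishes (1).

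For (2), Lemma~\ref{lem=nugalpha}~(2) together with $\|\va - \rho_\ell(\va)\|_1 = O(1)$ yields $\nug_j(\PPs^\RR_{\Gg,q-1,q}\va) = a_j + O(1)$ uniformly in $\va = (a_1, \ldots, a_\ell)$, so
\[
\langle \vb | \va\rangle_\Gg = \sum_{j=1}^\ell b_j a_j + O_{\vb}(1)
\]
with the error bounded in $\va$ for each fixed $\vb$; the analogous estimate holds for $\langle \cdot | \cdot\rangle_H$. Substituting into~(1), for each fixed $\vb'$ the linear functional $\va \mapsto \langle \vb', S_{\varphi_{q-1,q}}\va\rangle_{\mathrm{Euc}} - \langle \tilde T^{\varphi_{q-1,q}}\vb', \va\rangle_{\mathrm{Euc}}$ on $\RR^\ell$ is bounded, hence identically zero, where $\langle\cdot,\cdot\rangle_{\mathrm{Euc}}$ denotes the standard Euclidean inner product. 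Since $\vb' \in \RR^{\ell'}$ is arbitrary, $S_{\varphi_{q-1,q}}$ and $\tilde T^{\varphi_{q-1,q}}$ are adjoint with respect to the standard Euclidean structures, which is precisely $M_{T^\varphi} = {}^t M_{S_\varphi}$.

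The main obstacle is the bookkeeping in the first step: one must simultaneously track (a) the closeness of the two sides of the left square of~\eqref{eq=diagramch} measured in $d_{\scl_{H,\gamma_q(H)}}$, (b) the Lipschitz constant of $\mu$ on $(\gamma_{q+1}(H), d_{\scl_{H,\gamma_q(H)}})$, and (c) the boundedness of $\psg$ on the image of $\PPs^\RR_{\Gg,q-1,q}$, and verify that all the resulting additive errors are uniform in $\va$ (though they may depend on $\vb'$). The linearity argument in step~(2) then upgrades the closeness relation to the sharp matrix identity automatically.
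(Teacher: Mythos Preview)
Your proof is correct and follows essentially the same approach as the paper's: both sides of~\eqref{eq=duality} are shown to be close to $\nug'(\varphi(\PPs^{\RR}_{\Gg,q-1,q}(\va)))$, and then the closeness is upgraded to a matrix identity by recognizing that the pairings differ from the standard Euclidean pairing only by bounded error. The only cosmetic difference is in part~(2): the paper packages the linearization as a limit $\langle \vb | \va\rangle^{\sharp}=\lim_{u\to\infty}\frac{1}{u}\langle \vb | u\va\rangle$ and invokes the limit formula (Theorem~\ref{thm=valueofcore}), whereas you invoke Lemma~\ref{lem=nugalpha}(2) directly to get $\langle \vb | \va\rangle_{\Gg}=\sum_j b_j a_j + O_{\vb}(1)$ and then observe that a bounded linear functional vanishes---these are the same argument in slightly different dress.
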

Here, \eqref{eq=duality} means that
\[
\sup_{\va\in \RR^{\ell}}\left|\langle  \vb' | S_{\varphi_{q-1,q}}\va\rangle_H -\langle  \tilde{T}^{\varphi_{q-1,q}} \vb' | \va \rangle_{\Gg} \right| <\infty.
\]

We note that $\langle \cdot | \cdot \rangle_{\Gg}$ or $\langle \cdot | \cdot \rangle_{H}$ is \emph{not} $\RR$-bilinear in general. Nevertheless, for every $\va\in \RR^{\ell}$ and for every $\va'\in \RR^{\ell'}$, the two maps $\langle \cdot | \va \rangle_{\Gg}\colon \RR^{\ell}\to \RR$ and $\langle \cdot | \va' \rangle_{H}\colon \RR^{\ell'}\to \RR$ are $\RR$-linear.

\begin{proof}[Proof of Theorem~$\ref{thm=duality}$]
Fix $\vb'=(b'_1,\ldots,b'_{\ell'})\in \RR^{\ell'}$. Set $\nug'=b'_1\nug'_1+\cdots +b'_{\ell'}\nug'_{\ell'}$. We use the symbols and diagram \eqref{eq=diagramch} appearing in Remark~\ref{rem=S} for the setting associate with the tuples $(\nug_1,\ldots,\nug_{\ell})$, $(\wf_1,\ldots,\wf_{\ell})$, $(\nug'_1,\ldots,\nug'_{\ell'})$ and $(\wf'_1,\ldots,\wf'_{\ell'})$. Let $\va\in \RR^{\ell}$. Then by the definition of $\tilde{T}^{\varphi_{q-1,q}}$, we have
\begin{align*}
\langle \tilde{T}^{\varphi_{q-1,q}} \vb'| \va\rangle_{\Gg} &=\bigl(\varphi_q^{\ast}(b'_1\nug'_1+\cdots +b'_{\ell'}\nug'_{\ell'})\bigr)\Bigl(\PPs^{\RR}_{\Gg,q-1,q}(\va)\Bigr)\\
&=\bigl(\varphi_q^{\ast}\nug'\bigr)\Bigl(\PPs^{\RR}_{\Gg,q-1,q}(\va)\Bigr)= \nug'\Bigl(\varphi(\PPs^{\RR}_{\Gg,q-1,q}(\va))\Bigr).
\end{align*}
By Remark~\ref{rem=S}, $S_{\varphi_{q-1,q}}\colon (\RR^{\ell},\|\cdot\|_1)\to (\RR^{\ell'},\|\cdot\|_1)$ is close to $S_0=\PPh^{\RR}_{H,q-1,q}\circ \varphi_q\circ \PPs^{\RR}_{\Gg,q-1,q}$. By Theorem~\ref{thm=Bavard} and Theorem~\ref{mthm=main}, the map $(\RR^{\ell'},\|\cdot\|_1)\to (\RR,|\cdot|)$; $\va'\mapsto \langle \vb' | \va'\rangle_H$ is a coarse homomorphism. Hence,
\begin{align*}
\langle \vb' | S_{\varphi_{q-1,q}} \va \rangle_H & \mathrel{\sim_{\vb'}}
\langle \vb' | S_0 \va \rangle_H \\
& =
\nug'\Bigl((\PPs^{\RR}_{H,q-1,q}\circ  \PPh^{\RR}_{H,q-1,q}) (\varphi_{q-1}(\PPs^{\RR}_{\Gg,q-1,q}(\va)))\Bigr) \\
& \mathrel{\sim_{\vb'}}
\nug'\Bigl( \varphi(\PPs^{\RR}_{\Gg,q-1,q}(\va))\Bigr),
\end{align*}
where $\sim_{\vb'}$ means that the difference between two real numbers uniformly bounded in such a way that the bound is independent of  $\va$ (the bound may depend on $\vb'$). Here, recall Theorem~\ref{thm=itte}~(1). Therefore,  \eqref{eq=duality} holds.

Finally, we will deduce (2)  from (1). We can define two forms $\langle \cdot | \cdot\rangle_{\Gg}^{\sharp}$ and $\langle \cdot | \cdot\rangle_{H}^{\sharp}$ as follows:
\begin{align*}
\langle \cdot | \cdot\rangle_{\Gg}^{\sharp}&\colon \RR^{\ell}\times \RR^{\ell}\to \RR;\quad (\va,\vb)\mapsto \langle \vb | \va \rangle_{\Gg}^{\sharp}=\lim_{u\to \infty}\frac{1}{u} \langle \vb | u\va \rangle_{\Gg};\\
\langle \cdot | \cdot\rangle_{H}^{\sharp}&\colon \RR^{\ell'}\times \RR^{\ell'}\to \RR;\quad (\va',\vb')\mapsto \langle \vb' | \va' \rangle_{H}^{\sharp}=\lim_{u\to \infty}\frac{1}{u} \langle \vb' | u\va' \rangle_{H}.
\end{align*}
Then, we can show that $\langle \cdot | \cdot\rangle_{\Gg}^{\sharp}$ and $\langle \cdot | \cdot\rangle_{H}^{\sharp}$ are both genuine $\RR$-bilinear forms. Therefore, the coarse duality formula \eqref{eq=duality} for $\langle \cdot | \cdot\rangle_{\Gg}$ and $\langle \cdot | \cdot\rangle_{H}$ implies the following genuine duality formula for $\langle \cdot | \cdot\rangle_{\Gg}^{\sharp}$ and $\langle \cdot | \cdot\rangle_{H}^{\sharp}$: for every $\vb'\in \RR^{\ell'}$ and for every $\va\in \RR^{\ell}$,
\begin{equation}\label{eq=trueduality}
\langle  \vb'  | S_{\varphi_{q-1,q}}\va \rangle_{H}^{\sharp} = \langle  \tilde{T}^{\varphi_{q-1,q}} \vb'  | \va \rangle_{\Gg}^{\sharp}.
\end{equation}
By our choices, we can moreover show that $\langle \cdot | \cdot\rangle_{\Gg}^{\sharp}$ and $\langle \cdot | \cdot\rangle_{H}^{\sharp}$ are the standard forms; here recall the limit formula (Theorem~\ref{thm=valueofcore}). Now, \eqref{eq=trueduality} implies (2). This completes our proof.
\end{proof}

\subsection{Crushing theorem}\label{subsec=crushing}

The following theorem generalizes Theorem~\ref{thm=crushingq2}.

\begin{thm}[crushing theorem] \label{thm=crushing}
Let $q\in \NN_{\geq 2}$. Let $\Gg$ and $H$ be groups such that $\Wcal(\Gg, \gamma_{q-1}(\Gg),\gamma_q(\Gg))$ and $\Wcal(H, \gamma_{q-1}(H), \gamma_q(H))$ are both finite dimensional. Let  $\varphi \colon G \to H$ be a group homomorphism, and let $T^{\varphi_{q-1,q}}\colon \Wcal(H, \gamma_{q-1}(H),\gamma_q(H))\to \Wcal(\Gg, \gamma_{q-1}(\Gg),\gamma_q(\Gg))$ be the induced map by $\varphi$. Assume that $T^{\varphi_{q-1,q}}$ is not surjective. Set $t^{\varphi}\in \NN$ as $t^{\varphi}=\Rdim \mathrm{Coker}(T^{\varphi_{q-1,q}})$, where $\mathrm{Coker}(T^{\varphi_{q-1,q}})$ means the cokernel of $T^{\varphi_{q-1,q}}$.
Then, there exists $X\subseteq \gamma_{q+1}(\Gg)$ that satisfies the following three conditions:
\begin{enumerate}[label=\textup{(\arabic*)}]
  \item the set $(X,d_{\scl_{\Gg, \gamma_{q}(\Gg)}})$ is isomorphic to $(\ZZ^{t^{\varphi}},\|\cdot\|_1)$ as a coarse group. In particular, $X$ is \emph{not} $d_{\scl_{\Gg,\gamma_{q}(\Gg)}}$-bounded;
  \item the set $X$ is $d_{\scl_{\Gg,\gamma_{q-1}(\Gg)}}$-bounded; and
  \item $\varphi(X)$ is $d_{\scl_{H,\gamma_{q}(H)}}$-bounded.
\end{enumerate}
\end{thm}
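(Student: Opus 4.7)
The plan is to reduce the construction of $X$ to a linear-algebra problem on the two coarse kernels by means of the coarse duality formula (Theorem~\ref{thm=duality}), and then transport the answer back through the map $\PPs^{\RR}_{\Gg,q-1,q}$. Set $\ell=\Rdim\Wcal(\Gg,\gamma_{q-1}(\Gg),\gamma_q(\Gg))$ and $\ell'=\Rdim\Wcal(H,\gamma_{q-1}(H),\gamma_q(H))$, and fix auxiliary data $(\nug_j,\wf_i,\bffi)$ and $(\nug'_j,\wf'_i,\bffi')$ as in Setting~\ref{setting=main2} for the two triples. With the associated maps $\PPs^{\RR}_{\Gg,q-1,q}$, $\PPh^{\RR}_{H,q-1,q}$ of Remark~\ref{rem=S}, the induced maps $S_{\varphi_{q-1,q}}\colon\RR^\ell\to\RR^{\ell'}$ and $T^{\varphi_{q-1,q}}\colon\RR^{\ell'}\to\RR^\ell$ are identified with genuine $\RR$-linear maps. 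Theorem~\ref{thm=duality}~(2) then yields the matrix identity $M_{T^{\varphi_{q-1,q}}}={}^{t}M_{S_{\varphi_{q-1,q}}}$, so
\[
\Rdim\Ker(S_{\varphi_{q-1,q}})=\ell-\mathrm{rank}(S_{\varphi_{q-1,q}})=\ell-\mathrm{rank}(T^{\varphi_{q-1,q}})=\Rdim\mathrm{Coker}(T^{\varphi_{q-1,q}})=t^{\varphi}.
\]

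Next, I would choose $\RR$-linearly independent vectors $\vec{v}_1,\ldots,\vec{v}_{t^{\varphi}}$ in $\Ker(S_{\varphi_{q-1,q}})\subseteq\RR^\ell$ and form the $\RR$-linear injection
\[
\Lambda\colon(\ZZ^{t^{\varphi}},\|\cdot\|_1)\to(\RR^\ell,\|\cdot\|_1);\quad(m_1,\ldots,m_{t^{\varphi}})\mapsto\sum_{i\in\{1,\ldots,t^{\varphi}\}}m_i\vec{v}_i,
\]
which, being linear and injective, is a pre-coarse homomorphism and a quasi-isometric embedding. Set $\Psi=\PPs^{\RR}_{\Gg,q-1,q}\circ\Lambda$ and $X=\Psi(\ZZ^{t^{\varphi}})\subseteq\gamma_{q+1}(\Gg)$. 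By Theorem~\ref{thm=maingeneral}, $\PPs^{\RR}_{\Gg,q-1,q}$ is a pre-coarse homomorphism and a quasi-isometric embedding into $(\gamma_{q+1}(\Gg),d_{\scl_{\Gg,\gamma_q(\Gg)}})$ whose image is $d_{\scl_{\Gg,\gamma_{q-1}(\Gg)}}$-bounded; hence $\Psi$ exhibits $(X,d_{\scl_{\Gg,\gamma_q(\Gg)}})$ as coarsely isomorphic to $(\ZZ^{t^{\varphi}},\|\cdot\|_1)$ via a quasi-isometry, which gives (1), and $X$ is $d_{\scl_{\Gg,\gamma_{q-1}(\Gg)}}$-bounded, which gives (2). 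For (3), the map $\varphi_{q-1}$ is $1$-Lipschitz since $\varphi$ sends simple $(\Gg,\gamma_{q-1}(\Gg))$-commutators to simple $(H,\gamma_{q-1}(H))$-commutators, so $\varphi(X)$ is $d_{\scl_{H,\gamma_{q-1}(H)}}$-bounded. The duality diagram~\eqref{eq=diagramch} yields the closeness $\PPh^{\RR}_{H,q-1,q}\circ\varphi_q\circ\PPs^{\RR}_{\Gg,q-1,q}\approx S_{\varphi_{q-1,q}}$, and since $S_{\varphi_{q-1,q}}\circ\Lambda=0$, the set $\PPh^{\RR}_{H,q-1,q}(\varphi(X))$ is $\|\cdot\|_1$-bounded; applying the QI lower estimate of Theorem~\ref{thm=evmap}~(3) for $\PPh^{\RR}_{H,q-1,q}$ to the $d_{\scl_{H,\gamma_{q-1}(H)}}$-bounded set $\varphi(X)$ then forces $\varphi(X)$ to be $d_{\scl_{H,\gamma_q(H)}}$-bounded.

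The main obstacle is the bookkeeping in the first paragraph: one must align the choices of representatives on the two sides so that Theorem~\ref{thm=duality} applies verbatim and delivers the transpose identity $M_{T^{\varphi_{q-1,q}}}={}^{t}M_{S_{\varphi_{q-1,q}}}$ rather than an equality merely up to conjugation by change-of-basis matrices. Once this dimension count is in hand, the rest is essentially a diagram chase combined with elementary linear algebra, and uses no further cohomological or core-extractor input beyond what is already packaged into Theorem~\ref{thm=maingeneral} and Theorem~\ref{thm=duality}.
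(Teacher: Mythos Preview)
Your proof is correct and follows essentially the same route as the paper: both compute $\Rdim\Ker(S_{\varphi_{q-1,q}})=t^{\varphi}$ via the duality of Theorem~\ref{thm=duality}, then take $X$ to be the image of (a lattice in) this kernel under $\PPs^{\RR}_{\Gg,q-1,q}$. The only notable difference is in the verification of~(3): the paper argues directly from the closeness $\varphi_q\circ\PPs^{\RR}_{\Gg,q-1,q}\approx\PPs^{\RR}_{H,q-1,q}\circ S_{\varphi_{q-1,q}}$ in diagram~\eqref{eq=diagramch}, which on $\Ker(S_{\varphi_{q-1,q}})$ immediately gives that $\varphi(X)$ is close to a single point in $d_{\scl_{H,\gamma_q(H)}}$; your route via $\PPh^{\RR}_{H,q-1,q}$ and the QI lower bound of Theorem~\ref{thm=evmap}~(3) is slightly longer but equally valid.
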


\begin{proof}
Set $\ell=\Rdim \Wcal(\Gg, \gamma_{q-1}(\Gg),\gamma_q(\Gg))$ and $\ell'=\Rdim \Wcal(H, \gamma_{q-1}(H),\gamma_q(H))$. Use the setting of Theorem~\ref{thm=duality}, and take $\RR$-linear maps $S_{\varphi_{q-1,q}}\colon \RR^{\ell}\to \RR^{\ell'}$ and $\tilde{T}^{\varphi_{q-1,q}}\colon \RR^{\ell'}\to \RR^{\ell}$. By the coarse duality formula (Theorem~\ref{thm=duality}), we have
\[
\Rdim \mathrm{Ker}(S_{\varphi_{q-1,q}})=\Rdim \mathrm{Coker}(\tilde{T}^{\varphi_{q-1,q}})=t^{\varphi}.
\]
Define $X$ by $X=\PPs^{\RR}_{\Gg,q-1,q}(\Ker (S_{\varphi_{q-1,q}}))$. In what follows, we will prove (1)--(3). Since $\PPs^{\RR}_{\Gg,q-1,q}(\RR^{\ell})$ is $d_{\scl_{\Gg,\gamma_{q-1}(\Gg)}}$-bounded, (2) holds. Since $\PPs^{\RR}_{H,q-1,q}\circ S_{\varphi}$ is close to $\varphi_q\circ \PPs^{\RR}_{\Gg,q-1,q}$, (3) holds. Since  $(X,d_{\scl_{\Gg, \gamma_q(\Gg)}})\cong(\Ker(S_{\varphi_{q-1,q}}),\|\cdot\|_1)$ as a coarse group, $(X,d_{\scl_{\Gg, \gamma_q(\Gg)}})$ is isomorphic to $(\ZZ^{t^{\varphi}},\|\cdot\|_1)$ as a coarse subgroup. Hence, (1) holds.
\end{proof}

\begin{proof}[Proof of Theorem~$\ref{thm=crushingq2}$]
This is now immediate from Theorem~\ref{thm=crushing} for the case of $q=2$ (recall also Theorem~\ref{thm=asdimZ}). Indeed, under the assumption of Theorem~\ref{thm=crushingq2}, we have $t^{\varphi}\geq \ell-\ell' >0$.
\end{proof}

As we mentioned in Subsection~\ref{subsec=intro_main}, we will extend the study of the $\RR$-linear maps $T^{\varphi_{q-1,q}}$ an $S_{\varphi_{q-1,q}}$ induced by $\varphi$ in a much broader framework; this study will show up as a forthcoming work.

\section*{Acknowledgment}
The authors thank Arielle Leitner and Federico Vigolo for helpful comments. The first-named author and the fifth-named author are partially supported by JSPS KAKENHI Grant Number JP21K13790 and JP21K03241, respectively.
The second-named author is supported by JST-Mirai Program Grant Number JPMJMI22G1.
The third-named author is supported by JSPS KAKENHI Grant Number JP23KJ1938 and JP23K12971.
The fourth-named author is partially supported by JSPS KAKENHI Grant Number JP19K14536 and JP23K12975.

\bibliography{reference}
\bibliographystyle{abbrv}
\end{document}